\newcommand{\vers}{\textup{Rigid Dualizing Complexes over Commutative Rings and their 
Functorial Properties}} 
\title[\vers]{Rigid Dualizing Complexes over Commutative Rings and their Functorial Properties}
\author{Mattia Ornaghi}
\address{Ornaghi: Dipartimento di Matematica,
Università degli Studi di Milano, Milano 20133, Italy}
\email{mattia12.ornaghi@gmail.com}
\author{Saurabh Singh}
\address{Singh: Department of  Mathematics,
Ben Gurion University, Be'er Sheva 84105, Israel}
\email{saurabhsingh61@gmail.com}
\author{Amnon Yekutieli}
\address{Yekutieli: Department of  Mathematics,
Ben Gurion University, Be'er Sheva 84105, Israel}
\email{amyekut@math.bgu.ac.il}
\newtheorem{thm}[equation]{Theorem}
\newtheorem{cor}[equation]{Corollary}
\newtheorem{prop}[equation]{Proposition}
\newtheorem{lem}[equation]{Lemma}
\theoremstyle{definition}
\newtheorem{dfn}[equation]{Definition}
\newtheorem{rem}[equation]{Remark}
\newtheorem{exa}[equation]{Example}
\newtheorem{que}[equation]{Question}
\newtheorem{conv}[equation]{Convention}
\newtheorem{setup}[equation]{Setup}
\numberwithin{equation}{section}
\newcommand{\iso}{\xrightarrow{%
\smash{\raisebox{-0.5ex}{\ensuremath{\scriptstyle \simeq  \mspace{2mu}}}}}} 
\newcommand{\xar}{\xrightarrow}
\newcommand{\sub}{\subseteq}
\newcommand{\opn}{\operatorname}
\newcommand{\cat}[1]{\operatorname{\mathsf{#1}}}
\newcommand{\catt}[1]{{\operatorname{\mathsf{#1}}}}
\newcommand{\cd}{\mspace{1.3mu}{\cdot}\mspace{1.3mu}}
\newcommand{\rmitem}[1]{\item[\textrm{(#1)}]}
\newcommand{\mfrak}[1]{\mathfrak{#1}}
\newcommand{\mcal}[1]{\mathcal{#1}}
\newcommand{\msf}[1]{\mathsf{#1}}
\newcommand{\mrm}[1]{\mathrm{#1}}
\newcommand{\mbb}[1]{\mathbb{#1}}
\newcommand{\OO}{\mcal{O}}
\newcommand{\MM}{\mcal{M}}
\newcommand{\KK}{\mcal{K}}
\newcommand{\Ga}{\Gamma}
\newcommand{\si}{\sigma}
\newcommand{\la}{\lambda}
\renewcommand{\th}{\theta}
\newcommand{\om}{\omega}
\newcommand{\al}{\alpha}
\newcommand{\be}{\beta}
\newcommand{\ga}{\gamma}
\newcommand{\ep}{\epsilon}
\newcommand{\ze}{\zeta}
\newcommand{\Om}{\Omega}
\newcommand{\De}{\Delta}
\renewcommand{\a}{\mfrak{a}}
\newcommand{\p}{\mfrak{p}}
\newcommand{\q}{\mfrak{q}}
\newcommand{\m}{\mfrak{m}}
\newcommand{\bk}{\bsym{k}}
\newcommand{\ba}{\bsym{a}}
\newcommand{\bb}{\bsym{b}}
\newcommand{\bc}{\bsym{c}}
\newcommand{\bm}{\bsym{m}}
\newcommand{\bg}{\bsym{g}}
\newcommand{\bt}{\bsym{t}}
\newcommand{\K}{\mathbb{K}}
\newcommand{\Z}{\mathbb{Z}}
\newcommand{\N}{\mathbb{N}}
\newcommand{\Hom}{\mcal{H}om}
\newcommand{\XX}{\mfrak{X}}
\newcommand{\gfrac}[2]{\genfrac{[}{]}{0pt}{}{#1}{#2}}
\newcommand{\smgfrac}[2]{\scalebox{1.1}{$\genfrac{[}{]}{0pt}{1}{#1}%
{\mspace{2mu} #2}$}}
\newcommand{\smfrac}[2]{\scalebox{1.1}%
{$\genfrac{}{}{0.45pt}{1}{#1}{#2 {}^{\vphantom{X}}}$}}
\newcommand{\tup}[1]{\textup{#1}}
\newcommand{\bsym}[1]{\boldsymbol{#1}}
\newcommand{\boplus}{\bigoplus\nolimits}
\newcommand{\ot}{\otimes}
\newcommand{\wtil}[1]{\widetilde{#1}}
\newcommand{\til}[1]{\tilde{#1}}
\renewcommand{\d}{\mathrm{d}}
\newcommand{\bwedge}{\bigwedge\nolimits}
\newcommand{\lb}{\linebreak}
\newcommand{\abs}[1]{\lvert #1 \rvert}
\newcommand{\bmat}[1]{\begin{bmatrix} #1 \end{bmatrix}}
\newcommand{\twoto}{\Rightarrow}
\newcommand{\twoiso}{\stackrel{\simeq\ }{\Longrightarrow}}
\newcommand{\eftover}{\mspace{0.0mu} /_{\mspace{-2mu} \mrm{eft}}\mspace{1.5mu}}
\renewcommand{\over}{\mspace{0mu} / \mspace{-1.5mu}}
\newcommand{\lsp}{\mspace{1.5mu}}
\newcommand{\cupprod}{{\ensuremath{\mspace{2mu} {\smile} \mspace{2mu}}}}
\begin{document}

\begin{abstract}
In this paper we treat Grothendieck Duality for noetherian rings via rigid dualizing complexes.\
This approach, relying on commutative differential graded rings, was introduced by the third author and J.J. Zhang around 2008.\
Despite promising results, the papers \cite{YZ1} and \cite{YZ2} 
suffered from several mistakes.\ Yet the results of these two papers are correct.\\
The aim of this article (together with \cite{Ye4}) 
is to replace and improve \cite{YZ1} and \cite{YZ2}.\
In particular, we prove that every ring $A$, essentially finite type over a regular base ring $\K$, has a unique rigid dualizing complex $R_A$.\ 
The rigid dualizing complexes have strong functorial properties, allowing us to construct the {\em twisted induction pseudofunctor},
which is our ring-theoretic version of the twisted inverse pseudofunctor $f^{!}$ of \cite{RD}.\\
In the upcoming papers \cite{OY}, \cite{Ye6} and \cite{Ye7}, the results here will be made geometric, 
providing a complete theory of rigid residue complexes on schemes, 
and -for the first time- on Deligne-Mumford stacks.
\end{abstract}

\date{\today}

\subjclass[2020]{Primary: 14F08. 
Secondary: 18G80, 13D09, 16E45}

\keywords{Grothendieck Duality, rigid dualizing complexes, 
rigid residue complexes, derived functors, DG rings, DG modules}

\thanks{The first author is supported by FARE 2018 
HighCaSt, grant no. R18YA3ESPJ.\
The third author is supported by the Israel Science Foundation grant no. 824/12}

\maketitle

\pagestyle{headings}

\tableofcontents

\setcounter{section}{-1}
\section{Introduction}
\label{sec:intro}

Grothendieck Duality (abbreviated GD) has numerous applications across different fields of algebraic and arithmetic geometry, 
including moduli spaces, resolution of singularities, enumerative geometry, etc.\
This pivotal concept was introduced in the 1960s in the celebrated book \cite{RD} by R. Hartshorne, 
which expanded upon prenotes by A. Grothendieck. 
Much later, in 2000s, B. Conrad played a significant role, 
by completing and filling gaps in the proofs of \cite{RD}, and 
also proving the Base Change Theorem, see \cite{Co}.

The intricacy of Grothendieck Duality lies in its interplay 
between its local and global components, between the abstract and concrete formulations, 
and between the formal categorical statements and their geometrical applications.\ 
These interplays are both difficult and captivating.

The extensive efforts dedicated to the study of Grothendieck duality have primarily focused on 
elucidating the dynamics of the local-global relationship and addressing the disparities mentioned earlier.

Historically we can distinguish two main approaches to GD.\
The first one is due to Grothendieck and Hartshorne, see the aforementioned \cite{RD}.\
This can be considered as the most constructive, and it is based on residual dualizing complexes. 
The second one, extremely formal, is due to P. Deligne \cite{De1} and J.L. Verdier \cite{Ve}.\ 
Deligne's idea comes directly from Verdier duality and it passes through Nagata compactification \cite{De2}.\

Despite the importance of the topic and, after a first decade of interesting progress, 
the subsequent thirty years have been characterized by a gradual abandonment of the study of this field.\
There are several reasons for this but, in particular, 
the complexity of the foundations and the numerous highly technical proofs, 
make the whole theory (for most of the mathematicians) too massive and complicated.\

We can distinguish several mathematicians in the last decades 
who continued the study of GD, gaining constant and gradual developments.\ 
We list the three main groups.\

First consists of the third author, with J.J. Zhang and L. Shaul.\
The second group consists of J. Lipman (see \cite{Li}) and his collaborators 
L. Alonso, A. Jerem\'ias, L. Avramov, S. Iyengar, S. Nayak \cite{AILN} and P. Sastry \cite{Sa}.\
The third group is that of A. Neeman \cite{Ne}, collaborating with Lipman.\
The strategy of Lipman can be considered close to that of Deligne, while Neeman's strategy 
is very categorical, and it uses Brown Representability.\ 

We should mention that a revamped interested in GD was brought up very recently in 2019, by D. Clausen and P. Scholze.\ 
They approached to GD in a completely new way, using condensed mathematics, see \cite{Sc}.\

As stated before, a very promising approach was obtained by the third author with J.J. Zhang around 2008,
see \cite{YZ1} and \cite{YZ2}.\
They approached Grothendieck Duality via rigid dualizing complexes, using differential graded methods.\ 
Despite the encouraging results, several major flaws in these papers were found in 2010, see \cite{AILN}.\
Some of these mistakes were fixed by the third author in \cite{Ye4}.\
The aim of this present work is to correct the remaining mistakes in \cite{YZ1} and \cite{YZ2}, 
and to improve their results (especially regarding \'etale ring homomorphisms).\

Moreover, the paper \cite{OY}, which is in preparation, will introduce rigid residue complexes over rings.\
Whereas dualizing complexes live in derived categories, in which morphisms are defined only up to homotopy, 
and cannot be glued (cf. \cite[page 193]{RD}), 
rigid residue complexes are actual complexes and their functoriality is precise, not up to homotopy.\ 
In particular, they form quasi-coherent sheaves in the \'etale topology.\

These articles are part of a bigger project, whose 
final goal is establishing Grothendieck Duality, including global duality for proper maps, 
for Deligne-Mumford stacks, in the papers \cite{Ye6} and \cite{Ye7}.\
We strongly recommend reading the survey \cite{Ye9}, 
where the whole project is described.\

\medskip
Let us now give a preview of the main results of this paper.\

We fix a nonzero finite-dimensional regular noetherian base ring $\K$.\\ 
Our first major result is this:\ Given an essentially finite type (EFT) $\K$-ring $A$, it has a rigid dualizing complex
$(R_A, \rho_{A})$ relative to $\K$, and it is unique, up to a unique 
isomorphism in $\cat{D}(A)_{\mrm{rig} / \K}$, the category of rigid dualizing complexes (see Theorem \ref{thm:1550}).\
This first result allows us to define the {\em rigid autoduality functor} as follows:
\[ \opn{D}_{A}^{\mrm{rig}} : \cat{D}_{\mrm{f}}(A)^{\mrm{op}} \to 
\cat{D}_{\mrm{f}}(A) , \quad
\opn{D}_{A}^{\mrm{rig}} := \opn{RHom}_{A}(-, R_A) \]
which is a controvariant equivalence.\
Our second result is Theorem \ref{prop:1605}. 
Namely: there is a unique normalized pseudofunctor
\[ \opn{TwInd} : \cat{Rng} \eftover \K \to \cat{TrCat} \over \K , \]
called {\em twisted induction}.\ 
To an object $A \in \cat{Rng} \eftover \K$ it assigns the category
$\opn{TwInd}(A) := \cat{D}^{+}_{\mrm{f}}(A) \in \cat{TrCat} \over \K$.\
To a morphism $u:A\to B$ it associates the triangulated functor:
\[ \opn{TwInd}(u) = \opn{TwInd}_{u} := 
\opn{D}_{B}^{\mrm{rig}} \circ \opn{LInd}_u \circ \opn{D}_{A}^{\mrm{rig}}  \]
where $\opn{LInd}_u:=B\ot^{\mrm{L}}_A(\opn{-})$.\

This result is our ring-theoretic version of the twisted inverse pseudofunctor $f\mapsto f^{!}$ from \cite{RD}. 
To see this we provide two "concretizations" of this pseudofunctor:\ 
when $u:A\to B$ is finite, where there is a rigid trace morphism (Proposition \ref{prop:1610}), 
and when $u$ is essentially smooth, where top degree differential forms show up (Proposition \ref{prop:1611}). \
We observe that the pseudofunctor TwInd is constructed purely by algebraic methods.\ 
No geometry is involved, and without global duality, which is the 
key ingredient in all earlier aforementioned approaches to GD.\
To the best of our knowledge the only other purely algebraic construction is 
the one by Scholze and Clausen \cite{Sc}, and so far it works only for finite type $\Z$-rings.\

What follows is an outline of the paper section by section.\
In section \ref{sec:recall-dg} we furnish a concise foundational 
overview of commutative DG rings and their derived categories of DG modules.\
In particular we treat: the restriction and induction functors, semi-free DG rings, 
and nondegenarate backward and forward morphisms.\
We conclude this section with the notions of derived Morita property and derived tensor evaluation.\ 

The main tool we use to prove the uniqueness of the rigid dualizing complex is the \emph{squaring operation}, 
which is introduced in section \ref{sec:squaring}.\
Roughly speaking, the squaring operation is a quadratic endofunctor, resembling derived Hochschild cohomology, 
described in term of pairs of CDG rings.\ 
Given a pair of CDG rings $B/A$ (see Definition \ref{dfn:625}) 
and a DG $B$-module $M$, the squaring of $M$ is denoted by $\opn{Sq}_{A / \K}(M)$.\
A {\em rigidifying isomorphism} for $M$ is an isomorphism $\rho:M\to \opn{Sq}_{A / \K}(M)$ in $\cat{D}(B)$.\ 
A {\em rigid complex} $(M,\rho)$ consist of a complex $M$ and a rigidifying isomorphism $\rho$.
The important property of rigidity is this: if $(M,\rho)$ is a rigid complex and $M$ has the 
derived Morita property (e.g. if $M$ is either a dualizing complex or a tilting complex), 
then the identity is the only automorphism of $(M,\rho)$.\\
We recall that {\em noncommutative dualizing complexes} were introduced in 1992 by the third author \cite{Ye8}.\ 
{\em Noncommutative rigid dualizing complexes} were invented by M. Van den
Bergh in 1997, in his seminal paper \cite{VdB}.\
The idea of importing such concept of rigid dualizing complexes back to
commutative algebra and algebraic geometry is due to third author and J.J. Zhang. 
They added two key features: the functorialty of rigid complexes, and the passage from base field to base ring.
This was done in the papers \cite{YZ1} and \cite{YZ2} from around 2008.\\
The change from base field to base ring required the use of DG ring
resolutions.
Unfortunately we had several serious errors regarding the manipulation of DG
rings, and these affected some constructions and proofs.\
The errors in \cite{YZ1} and \cite{YZ2} were discovered by the authors of \cite{AILN} in
2010, and they also fixed one error. 
Some of the errors were fixed in 2016 \cite{Ye4}, 
and the rest will be fixed in this paper and in the subsequent \cite{OY}. \

Before continuing let us explain briefly the strategy of the proof of Theorem \ref{thm:1550}.\ 
First we take the structure homomorphism 
$\K\to A$.\ 
It factorizes as the composition of: 
a smooth homomorphism $\K\to A_1$, a finite homorphism $A_1\to A_2$ 
and a localization $A_2\to A$.\ 
We note that we have the tautological rigid complex $(\K,\rho^{\mrm{tau}})$ 
over $\K/\K$, cf. Example \ref{exa:675}.\
The challenging part is to understand how essentially smooth and finite homomorphisms induce a rigid complexes, 
by the procedures of {\em induction} and {\em coinduction} respectively.\ 
This will be done in the following sections.\

The main result of section \ref{sec:coinduced} (Theorem \ref{thm:2031}) deals with finite homomorphisms.\ 
Namely given a finite homomorphism $u:B\to C$ between EFT 
$\K$-rings, and a rigid complex $(M,\rho)\in \cat{D}(A)_{\mrm{rig} / \K}$, 
we can construct the {\em{coinduced rigidifying isomorphism}} (Definition \ref{dfn:760}).\

In section \ref{sec:cup-prod} we introduce the cup product for the squaring operation (Theorem \ref{thm:780}). 
In particular we prove that it is an isomorphism under suitable finiteness conditions (Theorem \ref{thm:810}). 
Moreover Definition \ref{dfn:1260} provides the notion of tensor product of rigid complexes.\
Such a tensor product will be used in section \ref{sec:twisted-induced} to construct the twisted induced rigid complexes.\
We conclude this section fixing a few small errors in the paper \cite{Ye4} (see Remark \ref{rem:650})

Section \ref{sec:regular} and \ref{sec:smooth} are of independent interest, beyond their application to the theory of rigid dualizing complexes. 
The main result of section \ref{sec:regular} is the Fundamental Local Isomorphism (Theorem \ref{thm:1080}). 
The section \ref{sec:smooth} contains results on essentially smooth ring homomorphisms.\ 
We make a deeper study of the essentially étale ring homomorphisms in section \ref{sec:etale}, in particular of the diagonal embedding.\

In section \ref{sec:twisted-induced} the following setup is in force:\ 
$A$ is a noetherian ring and $v:B\to C$ is an essentially smooth homomorphism, between essentially finite type $A$-rings, 
of constant differential relative dimension $n$.\ 
We construct the {\em twisted induced rigidifying isomorphism}; see Definition \ref{dfn:1235} and Definition \ref{dfn:1236}.\

In particular, in (most of) section \ref{sec:forward} and \ref{sec:induced-rigidity} 
we consider an essentially étale homomorphism $v:B\to C$ (cf. setup \ref{set:1281}).\
Under this setup, we construct a forward morphism 
$\opn{Sq}_{v / \K}:\opn{Sq}_{B / A}(M)\to\opn{Sq}_{C/A}(N)$ in $\cat{D}(B)$,
for any forward homomorphism $\lambda:M\to N$ in $\cat{D}(B)$ (see Definition \ref{dfn:1290}).\ 
Important properties of this operation are provided by Theorems \ref{thm:1292}, \ref{thm:895} and \ref{thm:1300}.\

In section \ref{sec:induced-rigidity} we endow the complex $N:=C\otimes_BM$ (recall that $C$ is essentially \'etale over $B$) 
with the induced rigidifying isomorphism and we prove that the standard nondegenerate 
forward morphism $q^{\opn{L}}_{v,M}:(M,\rho)\to (N,\sigma)$ is a rigid forward morphism over $v/A$ (Proposition \ref{prop:1700}).
We conclude section \ref{sec:induced-rigidity} with an open question 
(cf. Question \ref{que:1660}) on the construction of induced rigid complex in the previous setup. 
And we provide a positive answer of this question in a particular case (see Appendix \ref{sec:flat-case}).\

In section \ref{sec:dualizing} we apply all the material from the previous sections 
to prove the existence and uniqueness of rigid dualizing complexes (Theorem  \ref{thm:1550}). 
Moreover we discuss the functoriality of rigid dualizing complexes with respect to 
finite ring homomorphisms (Theorem \ref{thm:1551}) and essentially \'etale ring homomorphisms (Theorem \ref{thm:1552}).

In section \ref{sec:twisted} we use the constructions of rigid dualizing complexes 
from the previous section to construct the aforementioned twisted induction pseudofunctor. 
We should point out Remark \ref{rem:1555}, where we discuss the geometric counterpart 
of such a pseudofuntor which we call the \emph{twisted inverse image pseudofunctor} $f\mapsto f^{!}$.\

Section \ref{sec:relative} is about {\em relative rigid dualizing complexes}.\ 
We prove their existence, uniqueness, and some of their functorial properties.

In section \ref{sec:fin-etale} we work out an example of relative rigid dualizing complexes.\ 
We show that, when $A\to B$ is a finite étale homomorphism of noetherian rings, the operator trace 
$\opn{tr}^{\mrm{oper}}_{B / A} : B \to A$ (locally the matrix trace) is a nondegenerate rigid backward morphism.\

The paper ends with Appendix \ref{sec:flat-case}, in which a special case of Question \ref{que:1660} is answered. 
\medskip
\\
{\em Acknowledgments.} 
The authors would like to thank Brian Conrad, Ofer Gabber, Liran Shaul, James Zhang, 
Asaf Yekutieli and Michel Van den Bergh for advice on this work.

\section{Recalling Facts on Derived Categories of DG Modules}
\label{sec:recall-dg}

In this section we review some of the pertinent material on DG algebra and 
derived categories, following the book \cite{Ye5}.
A few new results are also proved here. 
Throughout ``DG'' stands for ``differential graded''.
We are going to ignore set-theoretic issues, as explained in 
\cite[Section 1.1]{Ye5}.
 
\begin{dfn} \label{dfn:615} \mbox{}
A {\em DG ring} is a graded 
ring $A = \bigoplus_{i \in \Z} A^i$, equipped with an additive operator 
$\d_A : A \to A$ of degree $1$ called the {\em differential}, satisfying 
$\d_A \circ \d_A = 0$, and the graded Leibniz rule
\begin{equation} \label{eqn:740}
\d_A(a \cd b) = \d_A(a) \cd b + (-1)^i \cd a \cd \d_A(b)
\end{equation}
for all $a \in A^i$ and $b \in A^j$.
The unit element of $A$ is $1_A$. 
\end{dfn}

We often denote the differential of $A$ by $\d$, for simplification. 
Of course the unit element $1_A \in \opn{Z}^0(A)$, i.e.\ it is a degree $0$ 
cocycle. Most earlier texts would call $A$ a {\em unital associative cochain DG 
algebra}. 
Rings are viewed as DG rings concentrated in degree $0$. 

\begin{dfn} \label{dfn:2040}
Given DG rings $A$ and $B$, a {\em DG ring homomorphism} 
$u : A \to B$ is a graded ring homomorphism (preserving units) that commutes 
with the differentials. 
\end{dfn}

\begin{dfn} \label{dfn:616} 
Let $A$ be a DG ring. 
\begin{enumerate}
\item $A$ is called {\em nonpositive} if $A^i = 0$ for all $i > 0$, or in other 
words if $A =  \bigoplus_{i \leq 0} A^i$.

\item  $A$ is called {\em strongly commutative} if 
$b \cd a = (-1)^{i \cd j} \cd a \cd b$
for all $a \in A^i$ and $b \in A^j$, and $a \cd a = 0$ if $i$ is odd. 

\item $A$ is called a {\em commutative DG ring} if it is both nonpositive and 
strongly commutative.
\end{enumerate}
\end{dfn}

We wish to emphasize one of the conditions in the definition above: 
for an odd degree element $a$ in a commutative DG ring $A$ the equality 
$a^2 = 0$ holds. This condition is absent from many publications 
treating DG theory (perhaps because it is automatic in charactersitic $0$), 
and it is of crucial importance in the arithmetic setting.

\begin{conv} \label{conv:615}
All DG rings in this paper are commutative (Definition \ref{dfn:616}(3)), 
unless explicitly stated otherwise. In particular, all rings are commutative. 
\end{conv}

Sometimes noncommutative rings are unavoidable. For instance, given a 
(commutative) ring $A$ and an $A$-module $M$, the ring 
$\opn{End}_A(M)$ of $A$-linear endomorphisms of $M$ is usually noncommutative. 
When encountering such a situation, we will mention explicitly that the ring 
$\opn{End}_A(M)$ is noncommutative. 

\begin{dfn} \label{dfn:1475}
The category of commutative DG rings, with DG ring homomorphisms, is denoted by 
$\cat{DGRng}$. The category $\cat{Rng}$ of commutative rings 
is a full subcategory of $\cat{DGRng}$. 
\end{dfn}

\begin{rem} \label{rem:670}
The notation in this paper for the category of commutative DG rings,
i.e.\ $\cat{DGRng}$, is not identical to that of \cite{Ye4}, where the notation 
$\catt{DGR}^{\leq 0}_{\mrm{sc}}$ was used, nor to the more expanded notation of 
\cite{Ye5}, which is $\catt{DGRng}^{\leq 0}_{\mrm{sc}}$.
In the current paper we drop the decoration ${}^{\leq 0}_{\mrm{sc}}$ to 
simplify the notation; this is possible thanks to Convention \ref{conv:615}.
Likewise, the category of commutative rings is denoted by 
$\cat{Rng}$ here, unlike the notation $\cat{Rng}_{\mrm{c}}$ used in \cite{Ye5}. 
 \end{rem}

When a DG ring $A$ is specified, there is the category of DG 
$A$-rings, denoted by $\cat{DGRng} \over A$, whose objects are 
pairs $(B, u_B)$, where $B$ is a DG ring, and $u_B : A \to B$ is a DG 
ring homomorphism, called the structural homomorphism of $B$. The morphisms in 
$\cat{DGRng} \over A$ are the ring homomorphisms that respect the 
structural homomorphisms from $A$. If $A$ is a ring, we also have 
the category $\cat{Rng} \over A$ of $A$-rings, and this is a full subcategory 
of $\cat{DGRng} \over A$. Very often we shall keep the structural homomorphisms 
implicit. 

A homomorphism of DG rings $u : A \to B$ is called a {\em quasi-isomorphism} if 
the graded ring homomorphism 
$\opn{H}(u) : \opn{H}(A) \to \opn{H}(B)$ is an isomorphism. 

Let $A$ be a DG ring. A {\em DG $A$-module} is a graded left $A$-module 
$M =  \bigoplus_{i \in \Z} M^i$, equipped with a differential
$\d_M : M \to M$ of degree $1$, satisfying 
$\d_M \circ \lsp \d_M = 0$ and the graded Leibniz rule (like (\ref{eqn:740}), 
but with $m \in M^j$ instead of $b \in A^j$).  
In case $A$ is a ring (i.e.\ $A = A^0$), a DG $A$-module is the same as a 
complex of $A$-modules. 
Since $A$ is commutative, we can make $M$ into a right DG $A$-module, by this 
formula: $m \cd a := (-1)^{i \cd j} \cd a \cd m$ 
for $a \in A^i$ and $m \in M^j$. 

Let $M$ and $N$ be DG $A$-modules. We can form these new DG $A$-modules: 
$M \ot_A N = \bigoplus_{i \in \Z} \, (M \ot_A N)^i$ 
and 
$\opn{Hom}_A(M, N) = \bigoplus_{i \in \Z} \, \opn{Hom}_A(M, N)^i$. 
See \cite[Section 3.3]{Ye5}. 

We shall say {\em DG category} instead of ``DG linear category'', cf.\ 
\cite[Definition 3.1.4]{Ye5}.
The DG category of DG $A$-modules is denoted by $\cat{C}(A)$. 
For $M, N \in \cat{C}(A)$, the DG module of morphisms between them is 
$\opn{Hom}_{\cat{C}(A)}(M, N) = \opn{Hom}_A(M, N)$.
The {\em strict subcategory} of $\cat{C}(A)$ is 
$\cat{C}_{\mrm{str}}(A)$; it has the same objects, but its morphisms are the 
degree $0$ cocycles:
$\opn{Hom}_{\cat{C}_{\mrm{str}}(A)}(M, N) = 
\opn{Z}^0 \bigl( \opn{Hom}_A(M, N) \bigr)$.
In words: the strict morphisms $\phi : M \to N$ are the degree $0$ 
homomorphisms that commute with the differentials. The category 
$\cat{C}_{\mrm{str}}(A)$ is an $A^0$-linear abelian category (recall that $A$ 
is nonpositive). 
If $A$ is a ring, then we also have the abelian category 
$\cat{M}(A) = \cat{Mod}(A)$ of $A$-modules, and there is a fully faithful 
functor $\cat{M}(A) \to \cat{C}_{\mrm{str}}(A)$.
See \cite[Section 3.4]{Ye5}. 

The {\em homotopy category} of DG $A$-modules is the category 
$\cat{K}(A)$, with the same objects as $\cat{C}(A)$, and whose morphisms are 
the homotopy classes of strict homomorphisms, i.e.\ 
$\opn{Hom}_{\cat{K}(A)}(M, N) = \opn{H}^0 \bigl( \opn{Hom}_A(M, N) \bigr)$.
There is an $A^0$-linear full functor 
$\opn{P} : \cat{C}_{\mrm{str}}(A) \to \cat{K}(A)$,
which is the identity of objects. The category $\cat{K}(A)$ is 
an $\opn{H}^0(A)$-linear triangulated category. 
See \cite[Section 5.4]{Ye5}.

The {\em derived category} of DG $A$-modules is the category 
$\cat{D}(A)$, obtained from $\cat{K}(A)$ by formally inverting the 
quasi-isomorphisms. This is also an $\opn{H}^0(A)$-linear triangulated 
category, and there is an $\opn{H}^0(A)$-linear triangulated functor 
$\opn{Q} : \cat{K}(A) \to \cat{D}(A)$, which is the identity of objects. 
If $A$ is a ring, then there is an equivalence of categories 
$\cat{M}(A) \to \cat{D}^0(A)$, where the latter is the full subcategory of 
$\cat{D}(A)$ on the complexes whose cohomology is concentrated in degree $0$. 
See \cite[Chapter 7]{Ye5}.

Let $A$ be a DG ring. 
A DG $A$-module $N$ is called {\em acyclic} if $\opn{H}^q(N) = 0$ for all $q$. 
A DG $A$-module $P$ is called {\em K-projective} if for every acyclic DG 
$A$-module $N$, the DG $A$-module 
$\opn{Hom}_A(P, N)$ is acyclic.   
A DG $A$-module $I$ is called {\em K-injective} if for every acyclic DG 
$A$-module $N$, the DG $A$-module 
$\opn{Hom}_A(N, I)$ is acyclic.   
A DG $A$-module $P$ is called {\em K-flat} if for every acyclic DG 
$A$-module $N$, the DG $A$-module $P \ot_A N$ is acyclic. 
If $P$ is K-projective then it is also K-flat. 
Every DG $A$-module $M$ has K-projective resolution
$\rho : P \to M$, i.e.\ a quasi-isomorphism in 
$\cat{C}_{\mrm{str}}(A)$ from a K-projective DG module $P$. 
This K-projective resolution is unique up to homotopy, in the sense that 
given another K-projective resolution $\rho' : P' \to M$, there is a homotopy 
equivalence $\chi : P \to P'$ such that $\rho' \circ \chi$ is homotopic to 
$\rho$, and moreover $\chi$ is unique up to homotopy. 
Likewise, $M$ has a K-injective resolution $M \to I$,
with the analogous uniqueness up to homotopy.
See \cite[Chapter 11]{Ye5}. 

The resolutions above allow us to derive functors, as explained in 
\cite[Chapters 8-10]{Ye5}. Let us mention the two most 
important derived functors. First there is the {\em right derived Hom bifunctor}
\[ \opn{RHom}_A : \cat{D}(A)^{\mrm{op}} \times \cat{D}(A) \to \cat{D}(A) . \]
For a pair of DG modules $M, N \in \cat{C}(A)$ there is a bifunctorial morphism 
\begin{equation} \label{eqn:618}
\eta^{\mrm{R}}_{M, N} : \opn{Hom}_A(M, N) \to  \opn{RHom}_A(M, N) \
\end{equation}
in $\cat{D}(A)$, and it is an isomorphism if $M$ is K-projective or $N$ is 
K-injective. There is a bifunctorial $\opn{H}^0(A)$-linear isomorphism 
$\opn{H}^0 \bigl( \opn{RHom}_A(M, N) \bigr)  \iso 
\opn{Hom}_{\cat{D}(A)}(M, N)$. 

The second is the {\em left derived tensor bifunctor}
\[ (- \ot^{\mrm{L}}_{A} -) : \cat{D}(A) \times \cat{D}(A) \to 
\cat{D}(A) . \]
For a pair of DG modules $M, N \in \cat{C}(A)$ there is a bifunctorial morphism 
\begin{equation} \label{eqn:621}
\eta^{\mrm{L}}_{M, N} : M \ot^{\mrm{L}}_{A} N \to M \ot_A N  
\end{equation}
in $\cat{D}(A)$, and it is an isomorphism if either $M$ or $N$ 
is K-flat. For details see \cite[Sections 12.2 and 12.3]{Ye5}. 

Given a homomorphism of DG rings $u : A \to B$, there is the {\em restriction 
functor} 
$\opn{Rest}_u = \opn{Rest}_{B / A} : \cat{C}(B) \to \cat{C}(A)$,
which is an exact DG functor, and hence it induces a triangulated functor
\begin{equation} \label{eqn:1870}
\opn{Rest}_u = \opn{Rest}_{B / A} : \cat{D}(B) \to \cat{D}(A) . 
\end{equation}
The {\em induction functor} 
$\opn{Ind}_u = \opn{Ind}_{B / A} : \cat{C}(A) \to \cat{C}(B)$,
$\opn{Ind}_{B / A} := B \ot_A (-)$,  
is DG functor, which is not exact (unless $B$ is K-flat as a DG $A$-module), 
and its left derived functor is the {\em left derived induction functor} 
\begin{equation} \label{eqn:1871}
\opn{LInd}_u = \opn{LInd}_{B / A} : \cat{D}(A) \to \cat{D}(B) , \quad
\opn{LInd}_{B / A} := B \ot^{\mrm{L}}_{A} (-) .
\end{equation}
The {\em coinduction functor} 
$\opn{CInd}_u = \opn{CInd}_{B / A} : \cat{C}(A) \to \cat{C}(B)$,
$\opn{CInd}_{B / A} := \opn{Hom}_A(B, -)$,
is DG functor, which is not exact (unless $B$ is K-projective as a DG 
$A$-module), and its right derived functor is the {\em right derived 
coinduction functor} 
\begin{equation} \label{eqn:1872}
\opn{RCInd}_u = \opn{RCInd}_{B / A} : \cat{D}(A) \to \cat{D}(B) , \quad
\opn{RCInd}_{B / A} := \opn{RHom}_A(B, -) . 
\end{equation}
The DG functors above are related as follows: $\opn{Ind}_{u}$ is left adjoint 
to $\opn{Rest}_{u}$, and $\opn{CInd}_{u}$ is right adjoint to 
$\opn{Rest}_{u}$. Likewise for the triangulated functors above. 
Moreover, for $M \in \cat{D}(A)$ and $N \in \cat{D}(B)$
there is an isomorphism 
\begin{equation} \label{eqn:1480}
\opn{RHom}_A(M, N) \cong \opn{RHom}_B(B \ot^{\mrm{L}}_{A} M, N) 
\end{equation}
in $\cat{D}(B)$, and it is functorial in $M$ and $N$. We call this the {\em 
adjunction isomorphism} for the DG ring homomorphism $A \to B$.
It is proved just like \cite[Proposition 12.10.12]{Ye5} -- we choose a 
K-projective resolution $P \to M$ over $A$; then, on the one hand 
$B \ot_A P$ is K-projective over $B$, and on the other hand there is the 
adjunction isomorphism 
$\opn{Hom}_A(P, N) \cong \opn{Hom}_B(B \ot_{A} P, N)$
in $\cat{C}_{\mrm{str}}(B)$. 

In case the DG ring homomorphism $u : A \to B$ is a quasi-isomorphism, the 
functors $\opn{Rest}_{B / A}$, $\opn{LInd}_{B / A}$ and 
$\opn{RCInd}_{B / A}$ are equivalences. Moreover, there are bifunctorial 
isomorphisms 
\begin{equation} \label{eqn:703}
\opn{Rest}_{B / A} \bigl( \opn{RHom}_B(M, N) \bigr) \iso 
\opn{RHom}_A \bigl( \opn{Rest}_{B / A}(M), \opn{Rest}_{B / A}(N) \bigr)
\end{equation}
and 
\begin{equation} \label{eqn:719}
\opn{Rest}_{B / A}(M) \ot^{\mrm{L}}_{A} \opn{Rest}_{B / A}(N) \iso 
\opn{Rest}_{B / A} (M \ot^{\mrm{L}}_{B} N) 
\end{equation}
in $\cat{D}(A)$, for every $M, N \in \cat{D}(B)$. See 
\cite[Proposition 12.10.4]{Ye5}. 

Let $A$ be a DG ring. A DG $A$-ring $\til{B}$ is called a {\em semi-free DG 
$A$-ring} (in the commutative sense) if there is an isomorphism of graded 
$A^{\natural}$-rings 
$\til{B}^{\natural} \cong A^{\natural}[X]$
for some nonpositive graded set $X$. Here $A^{\natural}$ and $B^{\natural}$ are 
the graded rings obtained by forgetting the differentials, and 
$A^{\natural}[X]$ is the commutative graded polynomial ring on the graded set 
$X$. See \cite[Example 3.1.23]{Ye5}. 

Now let $B$ be a DG $A$-ring. A {\em semi-free} (resp.\ {\em K-projective}, 
{\em K-flat}) {\em DG $A$-ring resolution of $B$} is a surjective 
quasi-isomorphism $\til{B} \to B$ 
in $\cat{DGRng} \over A$, s.t.\ $\til{B}$ is a semi-free DG $A$-ring
(resp.\ K-projective DG $A$-module, K-flat DG $A$-module). Semi-free DG ring 
resolutions always exist; they are also K-projective and K-flat. 
See \cite[Remark 12.8.22]{Ye5} and \cite[Theorem 3.21$\tup{(1)}$]{Ye4}.

Consider a DG ring homomorphism $u : A \to B$. Suppose 
$M \in \cat{C}(A)$, $N \in \cat{C}(B)$, and 
$\th : \opn{Rest}_{B / A}(N) \to M$
is a homomorphism in $\cat{C}_{\mrm{str}}(A)$. Then $\th$ is called a {\em 
backward homomorphism over $u$}, or a {\em backward homomorphism over $B / A$}. 
We shall often omit the restriction functor, and just talk about a 
backward homomorphism $\th : N \to M$ in $\cat{C}_{\mrm{str}}(A)$ over $u$ (or 
over $B / A$). Similarly there are backward morphisms $\th : N \to M$ in 
$\cat{D}(A)$ over $u$. 
For each $M \in \cat{C}(A)$ there is the {\em standard backward homomorphism},
also called {\em evaluation at $1$}, which is the backward homomorphism
\begin{equation} \label{eqn:2040}
\opn{tr}^{}_{u, M} : \opn{Hom}_A(B, M) \to M, \ 
\opn{tr}^{}_{u, M}(\th) := \th(1_B) 
\end{equation}
in $\cat{C}_{\mrm{str}}(A)$ over $u$. 
It has a derived variant
\begin{equation} \label{eqn:655}
\opn{tr}^{\mrm{R}}_{u, M} : \opn{RHom}_A(B, M) = \opn{RCInd}_u(M) \to M , 
\end{equation}
called the {\em standard derived backward morphism}, 
which is a backward morphism in $\cat{D}(A)$ over $u$. The morphism 
$\opn{tr}^{\mrm{R}}_{u, M}$ is
functorial in $M \in \cat{D}(A)$, and when $M = I$ is a K-injective DG 
$A$-module there is equality 
$\opn{tr}^{\mrm{R}}_{u, I} = \opn{tr}_{u, I}$. 

Due to the adjunction between $\opn{Rest}_{u}$ and $\opn{CInd}_{u}$,
to a backward homomorphism $\th : N \to M$ in $\cat{C}_{\mrm{str}}(A)$ over $u$
we associate the unique homomorphism 
$\opn{badj}_{u, M, N}(\th) : N \to \opn{Hom}_{A}(B, M)$ in 
$\cat{C}_{\mrm{str}}(B)$, such that 
$\opn{tr}_{u, M} \circ \opn{badj}_{u, M, N}(\th) = \th$. 
In the derived context, given any backward morphism $\th : N \to M$ in 
$\cat{D}(A)$ over $u$, there is a unique morphism
\begin{equation} \label{eqn:745}
\opn{badj}^{\mrm{R}}_{u, M, N}(\th) : N \to \opn{RHom}_A(B, M)
\end{equation}
in $\cat{D}(B)$, such that 
$\opn{tr}^{\mrm{R}}_{u, M} \circ 
\opn{badj}^{\mrm{R}}_{u, M, N}(\th) = \th$.
See the commutative diagram (\ref{eqn:1873}).

\begin{dfn} \label{dfn:2041}
Let $u : A \to B$ be a DG ring homomorphism, let 
$M \in \cat{D}(A)$, let $N \in \cat{D}(B)$, and let $\th : N \to M$
be a backward morphism in $\cat{D}(A)$ over $u$. 
The backward morphism $\th$ is called a {\em nondegenerate 
backward morphism} if the corresponding morphism 
$\opn{badj}^{\mrm{R}}_{u, M, N}(\th)$, see formula (\ref{eqn:745}), is an 
isomorphism in $\cat{D}(B)$. 
\end{dfn}

Note that the standard derived backward morphism 
$\opn{tr}^{\mrm{R}}_{u, M}$
from formula (\ref{eqn:655}) is nondegenerate; indeed, the corresponding 
morphism in $\cat{D}(B)$ is 
$\opn{id}_{N}$, where $N := \opn{RHom}_A(B, M)$. 
For this reason we refer to $\opn{tr}^{\mrm{R}}_{u, M}$ as the 
{\em standard nondegenerate derived backward morphism}. 

The category $\cat{D}(A)$ is $A^0$-linear, and $\cat{D}(B)$ is $B^0$-linear.
Given a backward morphism $\th : N \to M$ over $u$ and an element 
$b \in B^0$, we define the backward morphism 
$b \cd \th := \th \circ (b \cd \opn{id}_N) : N \to  M$. 
In this way $\opn{Hom}_{\cat{D}(A)}(N, M)$ becomes a $B^0$-module. 
Backward adjunction fits into this commutative diagram of $B^0$-modules:  
\begin{equation} \label{eqn:1873}
\begin{tikzcd} [column sep = 10ex, row sep = 6ex] 
\opn{Hom}_{\cat{D}(A)}(N, M)
\ar[dr, "{\opn{id}}"']
\ar[r, "{\opn{badj}^{\mrm{R}}_{u, M, N}}", "{\simeq}"']
&
\opn{Hom}_{\cat{D}(B)} \bigl(N, \opn{RCInd}_u(M) \bigr)
\ar[d, "{\opn{Hom}_{u}(\opn{id}, \opn{tr}^{\mrm{R}}_{u, M})}", "{\simeq}"']
\\
&
\opn{Hom}_{\cat{D}(A)}(N, M)
\end{tikzcd} 
\end{equation}

\begin{prop} \label{prop:2045}
Let $u : A \to B$ be a DG ring homomorphism, and for $i = 1, 2$ 
let $M_i \in \cat{D}(A)$, $N_i \in \cat{D}(B)$, and let  $\th_i : N_i \to M_i$
be nondegenerate backward morphisms in $\cat{D}(A)$ over $u$. 
\begin{enumerate}
\item Given a morphism $\phi : M_1 \to M_2$ in $\cat{D}(A)$, there exists a 
unique morphism $\psi : N_1 \to N_2$ in $\cat{D}(B)$, such that 
$\th_2 \circ \psi = \phi \circ \th_1$. 

\item If $\phi$ is an isomorphism in $\cat{D}(A)$, then 
$\psi$ is an isomorphism in $\cat{D}(B)$.
\end{enumerate} 
\end{prop}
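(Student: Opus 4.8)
The plan is to exploit the characterization of nondegeneracy: by Definition \ref{dfn:2041}, each $\th_i$ is nondegenerate precisely when $\opn{badj}^{\mrm{R}}_{u, M_i, N_i}(\th_i) : N_i \to \opn{RCInd}_u(M_i) = \opn{RHom}_A(B, M_i)$ is an isomorphism in $\cat{D}(B)$. So the strategy is to transport the problem along these two isomorphisms to the functorial object $\opn{RCInd}_u(M_i)$, where everything is automatic because $\opn{RCInd}_u = \opn{RHom}_A(B, -)$ is a genuine triangulated functor.

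\medskip
\noindent\emph{Step 1.} For part (1), set $\psi := \bigl(\opn{badj}^{\mrm{R}}_{u, M_2, N_2}(\th_2)\bigr)^{-1} \circ \opn{RCInd}_u(\phi) \circ \opn{badj}^{\mrm{R}}_{u, M_1, N_1}(\th_1) : N_1 \to N_2$ in $\cat{D}(B)$. To check that $\th_2 \circ \psi = \phi \circ \th_1$, I would compose on the left with $\opn{tr}^{\mrm{R}}_{u, M_2}$ is unnecessary; rather use the defining property $\opn{tr}^{\mrm{R}}_{u, M_i} \circ \opn{badj}^{\mrm{R}}_{u, M_i, N_i}(\th_i) = \th_i$ together with the functoriality of $\opn{tr}^{\mrm{R}}_{u, -}$ in $M \in \cat{D}(A)$, i.e.\ the naturality square $\opn{tr}^{\mrm{R}}_{u, M_2} \circ \opn{RCInd}_u(\phi) = \phi \circ \opn{tr}^{\mrm{R}}_{u, M_1}$. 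Chaining these gives $\th_2 \circ \psi = \opn{tr}^{\mrm{R}}_{u, M_2} \circ \opn{RCInd}_u(\phi) \circ \opn{badj}^{\mrm{R}}_{u, M_1, N_1}(\th_1) = \phi \circ \opn{tr}^{\mrm{R}}_{u, M_1} \circ \opn{badj}^{\mrm{R}}_{u, M_1, N_1}(\th_1) = \phi \circ \th_1$, as wanted.

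\medskip
\noindent\emph{Step 2 (uniqueness).} Suppose $\psi'$ also satisfies $\th_2 \circ \psi' = \phi \circ \th_1$. Applying $\opn{badj}^{\mrm{R}}_{u, M_2, N_2}$ and using that backward adjunction is a bijection natural in the source (so $\opn{badj}^{\mrm{R}}_{u, M_2, N_2}(\th_2 \circ \psi') = \opn{badj}^{\mrm{R}}_{u, M_2, N_2}(\th_2) \circ \psi'$, which one sees from diagram (\ref{eqn:1873}) and the definition of the $B^0$-action by precomposition), we get $\opn{badj}^{\mrm{R}}_{u, M_2, N_2}(\th_2) \circ \psi' = \opn{badj}^{\mrm{R}}_{u, M_2, N_2}(\phi \circ \th_1) = \opn{RCInd}_u(\phi) \circ \opn{badj}^{\mrm{R}}_{u, M_1, N_1}(\th_1)$; since $\opn{badj}^{\mrm{R}}_{u, M_2, N_2}(\th_2)$ is an isomorphism, $\psi'$ is forced to equal $\psi$. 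For part (2), if $\phi$ is an isomorphism then so is $\opn{RCInd}_u(\phi)$ (functors preserve isomorphisms), hence $\psi$, being a composite of three isomorphisms, is an isomorphism in $\cat{D}(B)$.

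\medskip
\noindent The only real point requiring care — the ``main obstacle'' such as it is — is establishing the compatibility of backward adjunction with precomposition, namely $\opn{badj}^{\mrm{R}}_{u, M, N_1}(\th \circ \psi) = \opn{badj}^{\mrm{R}}_{u, M, N_2}(\th) \circ \psi$ for $\psi : N_1 \to N_2$ in $\cat{D}(B)$, and its analogue for postcomposition by morphisms in $\cat{D}(A)$ (i.e.\ the naturality of $\opn{tr}^{\mrm{R}}_{u,-}$). Both follow from the uniqueness clause in the construction of $\opn{badj}^{\mrm{R}}$ after formula (\ref{eqn:745}): one checks that the proposed composite satisfies the defining equation involving $\opn{tr}^{\mrm{R}}_{u, M}$, and invokes uniqueness. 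I would state these as elementary observations from the material preceding Definition \ref{dfn:2041} rather than belabor them. Everything else is formal manipulation in triangulated categories.
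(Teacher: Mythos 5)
Your proof is correct and follows essentially the same route as the paper: both arguments transport the problem along the isomorphisms $\opn{badj}^{\mrm{R}}_{u,M_i,N_i}(\th_i)$ to reduce to the case $N_i = \opn{RCInd}_u(M_i)$, $\th_i = \opn{tr}^{\mrm{R}}_{u,M_i}$, where $\psi = \opn{RCInd}_u(\phi)$ works and adjunction gives uniqueness, and functoriality of $\opn{RCInd}_u$ gives part (2). You have merely made explicit the conjugation and the naturality of $\opn{tr}^{\mrm{R}}_{u,-}$ that the paper compresses into the phrase ``we can replace.''
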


The morphisms in the proposition are described in the first commutative diagram 
below. 
\begin{equation} \label{eqn:2045}
\begin{tikzcd} [column sep = 7ex, row sep = 6ex]
N_1
\ar[r, dashed, "{\psi}"]
\ar[d, "{\th_1}"']
&
N_2
\ar[d, "{\th_2}"]
\\
M_1
\ar[r, "{\phi}"]
&
M_2
\end{tikzcd} 
\qquad \qquad 
\begin{tikzcd} [column sep = 10ex, row sep = 5ex]
\opn{RCInd}_u(M_1)
\ar[r, dashed, "{\opn{RCInd}_u(\phi)}"]
\ar[d, "{\opn{tr}^{\mrm{R}}_{u, M_1}}"']
&
\opn{RCInd}_u(M_2)
\ar[d, "{\opn{tr}^{\mrm{R}}_{u, M_2}}"]
\\
M_1
\ar[r, "{\phi}"]
&
M_2
\end{tikzcd}
\end{equation}

\begin{proof}
Since $\th_i$ is a nondegenerate backward morphism, we can replace 
$N_i$ by the DG module $\opn{RCInd}_u(M_i) = \opn{RHom}_A(B, M_i)$, and then we 
can replace $\th_i$ by the standard nondegenerate backward morphism 
$\opn{tr}^{\mrm{R}}_{u, M_i}$.
Thus the first diagram in (\ref{eqn:2045}) can be replaced by the second one. 
The morphism $\opn{RCInd}_u(\phi)$ makes this second diagram commutative; and 
it is the only such morphism, because of adjunction. Since 
$\opn{RCInd}_u$ is a functor, if $\phi$ is an isomorphism then so is 
$\opn{RCInd}_u(\phi)$. 
\end{proof}

A morphism $\la : M \to \opn{Rest}_{B / A}(N)$ in $\cat{C}_{\mrm{str}}(A)$
is called a {\em forward homomorphism over $u$}, or a {\em forward homomorphism 
over $B / A$}. We shall usually omit the restriction 
functor, and just talk about a forward homomorphism $\la : M \to N$ in 
$\cat{C}_{\mrm{str}}(A)$ over $u$. There are also forward morphisms 
$\la : M \to N$ in $\cat{D}(A)$ over $u$. 
For each $M \in \cat{C}(A)$ there is the {\em standard forward homomorphism}
\begin{equation} \label{eqn:1476}
\opn{q}^{}_{u, M} : M \to B \ot_A M, \ m \mapsto 1_B \ot m ,  
\end{equation}
which is a forward homomorphism in $\cat{C}_{\mrm{str}}(A)$ over $u$. 
It has a derived variant 
\begin{equation} \label{eqn:656}
\opn{q}^{\mrm{L}}_{u, M} : M \to B \ot^{\mrm{L}}_{A} M , 
\end{equation}
called the {\em standard derived forward morphism}, 
which is a forward morphism in $\cat{D}(A)$ over $u$. 
The morphism $\opn{q}^{\mrm{L}}_{u, M}$ is functorial in $M$, and when 
$M = P$ is K-flat over $A$, then 
$\opn{q}^{\mrm{L}}_{u, P} = \opn{q}_{u, P}$. 

By the adjunction between $\opn{Rest}_{u}$ and $\opn{Ind}_{u}$, given any 
forward morphism $\la : M \to N$ in $\cat{C}_{\mrm{str}}(A)$ over $u$, there is 
a unique homomorphism 
$\opn{fadj}_{u, M, N}(\la) : B \ot_{A} M \to N$ in $\cat{C}_{\mrm{str}}(B)$
such that 
$\opn{fadj}_{u, M, N}(\la) \circ \opn{q}_{u, M} = \la$. 
 There is a derived variant: for a forward morphism $\la : M \to N$ in 
$\cat{D}(A)$ over $u$, there is a unique morphism 
\begin{equation} \label{eqn:741}
\opn{fadj}^{\mrm{L}}_{u, M, N}(\la) : B \ot^{\mrm{L}}_{A} M \to N
\end{equation}
in $\cat{D}(B)$, such that 
$\opn{fadj}^{\mrm{L}}_{u, M, N}(\la) \circ \opn{q}^{\mrm{L}}_{u, M} = \la$. 
See the commutative diagram (\ref{eqn:1874}).

\begin{dfn} \label{dfn:2045}
Let $u : A \to B$ be a DG ring homomorphism, let 
$M \in \cat{D}(A)$, let $N \in \cat{D}(B)$, and let $\la : M \to N$
be a forward morphism in $\cat{D}(A)$ over $u$. 
The forward morphism $\la$ is called a {\em nondegenerate 
forward morphism} if the corresponding morphism 
$\opn{fadj}^{\mrm{L}}_{u, M, N}(\la)$ from equation (\ref{eqn:741})
is an isomorphism in $\cat{D}(B)$. 
\end{dfn}

Observe that the standard derived forward morphism 
$\opn{q}^{\mrm{L}}_{u, M}$
from formula (\ref{eqn:656}) is nondegenerate; indeed, the corresponding 
morphism in $\cat{D}(B)$ is 
$\opn{id}_{N}$, where $N := B \ot^{\mrm{L}}_{A} M$. 
For this reason we refer to $\opn{q}^{\mrm{L}}_{u, M}$ as the 
{\em standard nondegenerate derived forward morphism}. 

Given a forward morphism $\la : M \to N$ over $u$ and an element 
$b \in B^0$, we define the forward morphism 
$b \cd \la :=  (b \cd \opn{id}_N) \circ \la : M \to N$. 
In this way $\opn{Hom}_{\cat{D}(A)}(M, N)$ becomes a $B^0$-module. 
Forward adjunction fits into this commutative diagram of $B^0$-modules:  
\begin{equation} \label{eqn:1874}
\begin{tikzcd} [column sep = 10ex, row sep = 6ex] 
\opn{Hom}_{\cat{D}(A)}(M, N)
\ar[dr, "{\opn{id}}"']
\ar[r, "{\opn{fadj}^{\mrm{L}}_{u, M, N}}", "{\simeq}"']
&
\opn{Hom}_{\cat{D}(B)} \bigl( \opn{LInd}_u(M) , N) \bigr)
\ar[d, "{\opn{Hom}_{u}(\opn{q}^{\mrm{L}}_{u, M}, \opn{id})}", "{\simeq}"']
\\
&
\opn{Hom}_{\cat{D}(A)}(M, N) 
\end{tikzcd} 
\end{equation}

\begin{prop} \label{prop:2046}
Let $u : A \to B$ be a DG ring homomorphism, and for $i = 1, 2$  
let $M_i \in \cat{D}(A)$, $N_i \in \cat{D}(B)$, and let  $\la_i : M_i \to N_i$
be nondegenerate forward morphisms in $\cat{D}(A)$ over $u$. 
\begin{enumerate}
\item Given a morphism $\phi : M_1 \to M_2$ in $\cat{D}(A)$, there exists a 
unique morphism $\psi : N_1 \to N_2$ in $\cat{D}(B)$, such that 
$\la_2 \circ \phi = \psi \circ \la_1$. 

\item If $\phi$ is an isomorphism in $\cat{D}(A)$, then 
$\psi$ is an isomorphism in $\cat{D}(B)$.
\end{enumerate} 
\end{prop}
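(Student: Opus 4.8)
The plan is to mirror exactly the argument of Proposition \ref{prop:2045}, replacing backward morphisms and the coinduction functor by forward morphisms and the induction functor $\opn{LInd}_u$. Since each $\la_i : M_i \to N_i$ is a nondegenerate forward morphism, the adjoint $\opn{fadj}^{\mrm{L}}_{u, M_i, N_i}(\la_i) : \opn{LInd}_u(M_i) = B \ot^{\mrm{L}}_{A} M_i \to N_i$ is an isomorphism in $\cat{D}(B)$. First I would use these isomorphisms to identify $N_i$ with $\opn{LInd}_u(M_i)$ and $\la_i$ with the standard nondegenerate derived forward morphism $\opn{q}^{\mrm{L}}_{u, M_i}$. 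Under this identification, the square to be filled becomes
\begin{equation*}
\begin{tikzcd} [column sep = 10ex, row sep = 5ex]
M_1
\ar[r, "{\phi}"]
\ar[d, "{\opn{q}^{\mrm{L}}_{u, M_1}}"']
&
M_2
\ar[d, "{\opn{q}^{\mrm{L}}_{u, M_2}}"]
\\
\opn{LInd}_u(M_1)
\ar[r, dashed, "{\opn{LInd}_u(\phi)}"]
&
\opn{LInd}_u(M_2)
\end{tikzcd}
\end{equation*}

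For part (1), I would take $\psi := \opn{LInd}_u(\phi)$. That this $\psi$ makes the square commute, i.e.\ $\opn{q}^{\mrm{L}}_{u, M_2} \circ \phi = \opn{LInd}_u(\phi) \circ \opn{q}^{\mrm{L}}_{u, M_1}$, is just the functoriality (naturality) of the standard derived forward morphism $\opn{q}^{\mrm{L}}_{u, M}$ in $M$, which is recorded in the text right after formula (\ref{eqn:656}). For uniqueness, suppose $\psi'$ is another morphism in $\cat{D}(B)$ with $\opn{q}^{\mrm{L}}_{u, M_2} \circ \phi = \psi' \circ \opn{q}^{\mrm{L}}_{u, M_1}$. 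Applying the adjunction isomorphism $\opn{fadj}^{\mrm{L}}_{u, M_1, N_2}$ — equivalently, the vertical bijection in diagram (\ref{eqn:1874}) with source $M_1$ and target $N_2 \cong \opn{LInd}_u(M_2)$ — both $\psi$ and $\psi'$ are sent to the same element $\opn{q}^{\mrm{L}}_{u, M_2} \circ \phi$ of $\opn{Hom}_{\cat{D}(A)}(M_1, N_2)$, and since that map is a bijection we get $\psi' = \psi$. Alternatively one can simply say: $\opn{LInd}_u(\phi)$ is the only morphism making the second square commute, by the uniqueness clause of forward adjunction (formula (\ref{eqn:741})), exactly as in the proof of Proposition \ref{prop:2045}.

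For part (2), if $\phi$ is an isomorphism in $\cat{D}(A)$, then since $\opn{LInd}_u$ is a functor, $\psi = \opn{LInd}_u(\phi)$ is an isomorphism in $\cat{D}(B)$; concretely its inverse is $\opn{LInd}_u(\phi^{-1})$. Translating back along the identifications $N_i \cong \opn{LInd}_u(M_i)$, the original $\psi$ is an isomorphism as well. I do not anticipate any genuine obstacle here: the statement is the forward-morphism analogue of the already-proved Proposition \ref{prop:2045}, and the only thing to be careful about is the direction of the commuting square (note the relation is $\la_2 \circ \phi = \psi \circ \la_1$, reflecting that forward morphisms point from the $A$-module to the $B$-module, opposite to the backward case), together with keeping the roles of $\opn{LInd}_u$ versus $\opn{RCInd}_u$ and of $\opn{q}^{\mrm{L}}$ versus $\opn{tr}^{\mrm{R}}$ straight. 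Everything else is formal adjunction and functoriality.
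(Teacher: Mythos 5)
Your proposal is correct and takes exactly the route the paper intends: the paper's proof simply says the argument is analogous to that of Proposition \ref{prop:2045}, replacing $\opn{RCInd}_u$, $\opn{tr}^{\mrm{R}}_{u, M}$ and backward adjunction by $\opn{LInd}_u$, $\opn{q}^{\mrm{L}}_{u, M}$ and forward adjunction, which is precisely what you carried out in detail. Your remark about watching the direction of the commuting square (the relation $\la_2 \circ \phi = \psi \circ \la_1$) is the only genuine point of care, and you handled it correctly.
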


The morphisms in the proposition are described in the first commutative diagram 
below. 
\begin{equation} \label{eqn:2046} 
\begin{tikzcd} [column sep = 7ex, row sep = 6ex]
M_1
\ar[r, "{\phi}"]
\ar[d, "{\la_1}"']
&
M_2
\ar[d, "{\la_2}"]
\\
N_1
\ar[r, dashed, "{\psi}"]
&
N_2
\end{tikzcd} 
\qquad \qquad 
\begin{tikzcd} [column sep = 10ex, row sep = 5ex]
M_1
\ar[r, "{\phi}"]
\ar[d, "{\opn{q}^{\mrm{L}}_{u, M_1}}"']
&
M_2
\ar[d, "{\opn{q}^{\mrm{L}}_{u, M_2}}"]
\\
\opn{LInd}_u(M_1)
\ar[r, dashed, "{\opn{LInd}_u(\phi)}"]
&
\opn{LInd}_u(M_2)
\end{tikzcd}
\end{equation}

\begin{proof}
The proof is analogous to that of Proposition \ref{prop:2045}.
The second diagram in (\ref{eqn:2046}) is the analogue of the second diagram in 
(\ref{eqn:2045}). 
\end{proof}

\begin{dfn} \label{dfn:676}
Let $A$ be a ring and $M \in \cat{D}(A)$. We say that $M$ has the {\em derived 
Morita property} over $A$ if the derived homothety morphism 
$\opn{hm}^{\mrm{R}}_M : A \to \opn{RHom}_A(M, M)$ 
in $\cat{D}(A)$ is an isomorphism. See 
\cite[Definitions 13.1.2 and 13.1.5]{Ye5}. 
\end{dfn}

Some texts use the name {\em semi-dualizing complex} for a complex $M$ that 
has the derived Morita property. See \cite[Remark 13.1.8]{Ye5} for a discussion 
of these names. 

\begin{prop} \label{prop:730}
Let $A$ be a nonzero ring.
The following conditions are equivalent for $M \in \cat{D}(A)$.
\begin{itemize}
\rmitem{i} $M$ has the  derived Morita property.

\rmitem{ii} The homothety ring homomorphism 
$A \to \opn{End}_{\cat{D}(A)}(M)$ is an isomorphism, and 
$\opn{Hom}_{\cat{D}(A)}(M, M[p]) = 0$ for all $p \neq 0$. 

\rmitem{iii} The $A$-module
$\opn{H}^0(\opn{RHom}_A(M, M))$ is free of rank $1$, with basis the element 
$\opn{id}_M$, and $\opn{H}^p(\opn{RHom}_A(M, M)) = 0$ for all $p \neq 0$. 
\end{itemize}
\end{prop}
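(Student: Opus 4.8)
The plan is to prove the equivalence (i) $\Leftrightarrow$ (ii) $\Leftrightarrow$ (iii) by unwinding the definitions and using the standard cohomological interpretation of $\opn{RHom}$. Recall that $M$ has the derived Morita property means the derived homothety morphism $\opn{hm}^{\mrm{R}}_M : A \to \opn{RHom}_A(M, M)$ in $\cat{D}(A)$ is an isomorphism. Since $A$ is concentrated in degree $0$, this isomorphism in $\cat{D}(A)$ is equivalent to the induced maps on cohomology being isomorphisms in every degree. So first I would observe that $\opn{H}^p(\opn{RHom}_A(M,M)) = \opn{Hom}_{\cat{D}(A)}(M, M[p])$ for all $p$, using the bifunctorial isomorphism $\opn{H}^0(\opn{RHom}_A(M,N)) \cong \opn{Hom}_{\cat{D}(A)}(M,N)$ recalled in the excerpt, applied with $N = M[p]$ together with the shift identity $\opn{H}^0(\opn{RHom}_A(M, M[p])) \cong \opn{H}^p(\opn{RHom}_A(M, M))$.

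Next I would identify what $\opn{H}^p(\opn{hm}^{\mrm{R}}_M)$ does under this identification. For $p = 0$, the map $\opn{H}^0(\opn{hm}^{\mrm{R}}_M) : A \to \opn{H}^0(\opn{RHom}_A(M,M)) \cong \opn{Hom}_{\cat{D}(A)}(M, M) = \opn{End}_{\cat{D}(A)}(M)$ is precisely the homothety ring homomorphism $a \mapsto a \cd \opn{id}_M$; one should note this is a ring homomorphism, so "free of rank $1$ with basis $\opn{id}_M$" in (iii) is the module-theoretic reformulation of "isomorphism of rings" in (ii), since the element $1 \in A$ maps to $\opn{id}_M$. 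For $p \neq 0$, the source $\opn{H}^p(A) = 0$, so $\opn{hm}^{\mrm{R}}_M$ being a quasi-isomorphism in those degrees is exactly the vanishing condition $\opn{H}^p(\opn{RHom}_A(M,M)) = 0$, equivalently $\opn{Hom}_{\cat{D}(A)}(M, M[p]) = 0$.

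With these identifications in hand, the three conditions line up essentially tautologically: (i) says $\opn{hm}^{\mrm{R}}_M$ is an isomorphism in $\cat{D}(A)$, which (since both sides are bounded-below-cohomology objects over a ring, and a morphism in $\cat{D}(A)$ is an isomorphism iff it induces isomorphisms on all cohomology $A$-modules) is equivalent to: $\opn{H}^0(\opn{hm}^{\mrm{R}}_M)$ is an isomorphism and $\opn{H}^p(\opn{RHom}_A(M,M)) = 0$ for $p \neq 0$; this is (iii) after translating "$\opn{H}^0(\opn{hm}^{\mrm{R}}_M)$ iso" into "free of rank $1$ on $\opn{id}_M$", and it is (ii) after further translating via $\opn{H}^0(\opn{RHom}_A(M,M)) \cong \opn{End}_{\cat{D}(A)}(M)$ and $\opn{H}^p(\opn{RHom}_A(M,M)) \cong \opn{Hom}_{\cat{D}(A)}(M, M[p])$. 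I would present this as a short cycle of implications rather than a tripartite case analysis.

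The only genuinely non-formal point — and the one I would flag as the main obstacle, though it is minor — is justifying that a morphism $f : X \to Y$ in $\cat{D}(A)$ with $A$ a ring is an isomorphism if and only if $\opn{H}^p(f)$ is an isomorphism of $A$-modules for all $p$; this is standard (the cone of $f$ is acyclic iff it is zero in $\cat{D}(A)$), and I would simply cite it from \cite{Ye5}. A secondary subtlety is being careful that the identification $\opn{H}^0(\opn{hm}^{\mrm{R}}_M) = $ homothety map is compatible with the ring structures, so that the nonzero-ring hypothesis on $A$ guarantees $\opn{id}_M \neq 0$ and the rank-$1$-freeness statement is not vacuous; this is why the hypothesis "$A$ nonzero" appears, and I would remark on it explicitly. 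Everything else is bookkeeping with the shift functor and the already-recalled bifunctorial isomorphisms.
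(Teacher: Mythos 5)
Your argument is correct and is exactly the standard proof. The paper itself supplies no proof here: it simply cites \cite[Proposition 13.1.6]{Ye5}. Your direct argument — translating the statement ``$\opn{hm}^{\mrm{R}}_M$ is an isomorphism in $\cat{D}(A)$'' into ``$\opn{H}^p(\opn{hm}^{\mrm{R}}_M)$ is an isomorphism for all $p$,'' using that $A$ is concentrated in degree $0$, and then rewriting each $\opn{H}^p(\opn{RHom}_A(M,M))$ as $\opn{Hom}_{\cat{D}(A)}(M,M[p])$ — is precisely the bookkeeping one would find in that reference, so there is no meaningful divergence in approach.
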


Observe that a priori the ring $\opn{End}_{\cat{D}(A)}(M)$ in condition (ii) 
could be noncommutative. 

\begin{proof}
This is \cite[Proposition 13.1.6]{Ye5}.
\end{proof}

We now present two variations of Proposition \ref{prop:730}.

\begin{prop} \label{prop:1871}
Let $u : A \to B$ be a ring homomorphism, let $M \in \cat{D}(A)$, and
let $N \in \cat{D}(B)$.
Assume that $N$ has the derived Morita property over $B$, and that $B \neq 0$.
\begin{enumerate}
\item Let $\th : N \to M$ be a nondegenerate backward  morphism in $\cat{D}(A)$ 
over $u$. Then the $B$-module $\opn{Hom}_{\cat{D}(A)}(N, M)$ is free of rank 
$1$, with basis the backward morphism $\th$, and 
$\opn{Hom}_{\cat{D}(A)} \bigl( N, M[p] \bigr) = 0$ for every  $p \neq 0$.

\item Let $\la : M \to N$ be a nondegenerate forward 
morphism in $\cat{D}(A)$ over $u$. Then the $B$-module 
$\opn{Hom}_{\cat{D}(A)}(M, N)$ is free of rank $1$, with basis the forward 
morphism $\la$, and 
$\opn{Hom}_{\cat{D}(A)} \bigl( M, N[p] \bigr) = 0$
for every  $p \neq 0$.
\end{enumerate}
\end{prop}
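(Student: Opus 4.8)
The plan is to reduce both statements to Proposition \ref{prop:730} applied to the complex $N$ over the ring $B$, using the nondegeneracy hypothesis to transport the computation of $\opn{RHom}$ across the ring homomorphism $u$. For part (1), the key input is the adjunction isomorphism (\ref{eqn:1480}) together with the fact that a nondegenerate backward morphism $\th : N \to M$ yields, via $\opn{badj}^{\mrm{R}}_{u, M, N}(\th)$, an isomorphism $N \iso \opn{RCInd}_u(M) = \opn{RHom}_A(B, M)$ in $\cat{D}(B)$. First I would use this isomorphism to replace $M$ by $N$ in the relevant $\opn{Hom}$-group: more precisely, composing with $\th$ induces an isomorphism of $B^0$-modules
\[ \opn{Hom}_{\cat{D}(B)}\bigl( N, \opn{RCInd}_u(M)[p] \bigr) \iso \opn{Hom}_{\cat{D}(A)}\bigl( N, M[p] \bigr) , \]
which is exactly the right-hand vertical map in diagram (\ref{eqn:1873}) (shifted by $p$); and the source is $\opn{Hom}_{\cat{D}(B)}(N, N[p])$ via $\opn{badj}^{\mrm{R}}_{u, M, N}(\th)$. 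By Proposition \ref{prop:730} (equivalence of (i) and (ii)) applied to $N$ over $B$ — legitimate since $N$ has the derived Morita property and $B \neq 0$ — the group $\opn{Hom}_{\cat{D}(B)}(N, N[p])$ is $0$ for $p \neq 0$ and is free of rank $1$ over $B$ with basis $\opn{id}_N$ for $p = 0$. Chasing $\opn{id}_N$ through the chain of isomorphisms: $\opn{id}_N \mapsto \opn{badj}^{\mrm{R}}_{u,M,N}(\th)$ is sent by $\opn{Hom}_u(\opn{id}, \opn{tr}^{\mrm{R}}_{u,M})$ back to $\th$, by the defining property $\opn{tr}^{\mrm{R}}_{u,M} \circ \opn{badj}^{\mrm{R}}_{u,M,N}(\th) = \th$. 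Hence $\opn{Hom}_{\cat{D}(A)}(N,M)$ is free of rank $1$ over $B$ with basis $\th$.

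Part (2) is entirely parallel, with forward morphisms replacing backward morphisms. A nondegenerate forward morphism $\la : M \to N$ gives via $\opn{fadj}^{\mrm{L}}_{u,M,N}(\la)$ an isomorphism $\opn{LInd}_u(M) = B \ot^{\mrm{L}}_A M \iso N$ in $\cat{D}(B)$. Precomposition with $\la$ is the right-hand vertical map in diagram (\ref{eqn:1874}), an isomorphism $\opn{Hom}_{\cat{D}(B)}(\opn{LInd}_u(M), N[p]) \iso \opn{Hom}_{\cat{D}(A)}(M, N[p])$, and the source identifies via $\opn{fadj}^{\mrm{L}}_{u,M,N}(\la)$ with $\opn{Hom}_{\cat{D}(B)}(N, N[p])$. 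Again Proposition \ref{prop:730} for $N$ over $B$ computes the latter, and tracing $\opn{id}_N$ through, using $\opn{fadj}^{\mrm{L}}_{u,M,N}(\la) \circ \opn{q}^{\mrm{L}}_{u,M} = \la$, shows the basis element maps to $\la$.

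I do not expect a serious obstacle here; the content is bookkeeping with the two adjunction diagrams (\ref{eqn:1873}) and (\ref{eqn:1874}) already set up in the text, plus the observation that these diagrams are diagrams of $B^0$-modules, so that "free of rank $1$ with basis" is preserved under the isomorphisms. The one point requiring a little care is verifying that the identifications are genuinely $B^0$-linear (so that "free of rank $1$ over $B$" transfers correctly, not merely "one-dimensional over some ring"): this is guaranteed because $\opn{RCInd}_u$, $\opn{LInd}_u$, and the adjunction bijections are all $B^0$-linear, as recorded in the discussion preceding the two diagrams. A second minor point is that Proposition \ref{prop:730} is stated for a complex over a ring and its $\opn{RHom}$ endomorphism object; since $N \in \cat{D}(B)$ and $B$ is an honest ring, this applies verbatim, and condition (ii) there gives precisely the rank-one freeness and vanishing we need to pull back along $u$.
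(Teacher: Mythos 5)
Your proposal is correct and is essentially the paper's own proof: both reduce to Proposition \ref{prop:730} applied to $N$ over $B$ by using the adjunction isomorphism ($\opn{badj}^{\mrm{R}}$ or $\opn{fadj}^{\mrm{L}}$) together with the nondegeneracy of $\th$ (resp.\ $\la$) to produce a $B$-linear isomorphism $\opn{Hom}_{\cat{D}(A)}(N,M) \iso \opn{Hom}_{\cat{D}(B)}(N,N)$ (resp.\ $\opn{Hom}_{\cat{D}(A)}(M,N) \iso \opn{Hom}_{\cat{D}(B)}(N,N)$) carrying $\th$ (resp.\ $\la$) to $\opn{id}_N$. One small imprecision to tidy: the right-hand vertical arrow in diagram (\ref{eqn:1873}) is composition with $\opn{tr}^{\mrm{R}}_{u,M}$ (not ``composing with $\th$''), but the element chase you run afterward is the correct one, so the argument goes through unchanged.
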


\begin{proof} 
(1) We have an isomorphism of $B$-modules 
\[ \opn{badj}^{\mrm{R}}_{u, M, N} : \opn{Hom}_{\cat{D}(A)}(N, M) \iso 
\opn{Hom}_{\cat{D}(B)} \bigl( N, \opn{RCInd}_u(M) \bigr) . \]
Because $\th$ is a nondegenerate backward morphism, the morphism 
$\opn{badj}^{\mrm{R}}_{u, M, N}(\th) : N \to \opn{RCInd}_u(M)$
in $\cat{D}(B)$ is an isomorphism. Thus we get a $B$-module isomorphism \lb 
$\opn{Hom}_{\cat{D}(A)}(N, M) \iso \opn{Hom}_{\cat{D}(B)}(N, N)$,
and it sends $\th \mapsto \opn{id}_N$. Now we use Proposition \ref{prop:730}
(for $B$ and $N$). 

The assertion for $p \neq 0$ is proved similarly, and we leave it to 
the reader.

\medskip \noindent
(2) Forward adjunction is an isomorphism of $B$-modules 
\[ \opn{fadj}^{\mrm{L}}_{u, M, N} : \opn{Hom}_{\cat{D}(A)}(M, N) \iso 
\opn{Hom}_{\cat{D}(B)} \bigl( \opn{LInd}_u(M), N \bigr) , \]
see formula (\ref{eqn:741}). 
Because $\la$ is a nondegenerate forward morphism, the morphism 
$\opn{fadj}^{\mrm{L}}_{u, M, N}(\la) :  \opn{LInd}_u(M) \to N$
in $\cat{D}(B)$ is an isomorphism. Thus we get a $B$-module isomorphism 
$\opn{Hom}_{\cat{D}(A)}(M, N) \iso \opn{Hom}_{\cat{D}(B)}(N, N)$,
and it sends $\la \mapsto \opn{id}_N$. Now we use Proposition \ref{prop:730}
(for $B$ and $N$).

The assertion for $p \neq 0$ is proved similarly.
\end{proof}

If $A = B$ and $u = \opn{id}_A$, then backward and forward morphisms over $u$ 
are just morphisms in $\cat{C}_{\mrm{str}}(A)$ or $\cat{D}(A)$, respectively; 
they are nondegenerate iff they are 
isomorphisms. This material is discussed in detail in \cite[Section 12.6]{Ye5}.

If $B$ and $C$ are DG $A$-rings, then so is $B \ot_A C$. 
Here are several results involving tensor products of DG rings, which we shall 
need later in the paper. Their routine verification is left to the reader. 

Let $A \to B_i \xar{u_i} C_i$ be DG ring homomorphisms, for $i = 1, 2$, and 
let $M_i \in \cat{C}(B_i)$ and $N_i \in \cat{C}(C_i)$ be DG modules. 
There is a unique isomorphism 
\begin{equation} \label{eqn:1485}
(M_1 \ot_A M_2) \ot_{B_1 \ot_A B_1} (N_1 \ot_A N_2) \iso 
(M_1 \ot_{B_1} N_2) \ot_A (M_2 \ot_{B_2} N_2)
\end{equation}
in $\cat{C}_{\mrm{str}}(C_1 \ot_A C_2)$, which on pure tensors, 
for $m_i \in M^{j_i}$ and $n_i \in N^{k_i}$, has the formula 
\[ (m_1 \ot m_2) \ot (n_1 \ot n_2) \mapsto 
(-1)^{j_2 \cd k_1} \cd (m_1 \ot n_1) \ot (m_2 \ot n_2) . \]
In the special case when $B_1 = B_2$, i.e.\ when we are given 
DG ring homomorphisms $A \to B \xar{u_i} C_i$, and DG modules
$M \in \cat{C}(B)$ and $N_i \in \cat{C}(C_i)$, there is a unique isomorphism 
\begin{equation} \label{eqn:1486}
M \ot_{B \ot_A B} (N_1 \ot_A N_2) \iso N_1 \ot_B M \ot_B N_1
\end{equation}
in $\cat{C}_{\mrm{str}}(C_1 \ot_A C_2)$, 
which on pure tensors, for $m \in M^j$ and $n_i \in N^{k_i}$,
has the formula 
\[ m \ot (n_1 \ot n_2) \mapsto (-1)^{j \cd k_1} \cd n_1 \ot m \ot n_2 . \]
In this special case, there is a unique isomorphism
\begin{equation} \label{eqn:1487}
B \ot_{B \ot_A B} (C_1 \ot_A C_2) \iso C_1 \ot_B C_2
\end{equation}
in $\cat{DGRng} \over (C_1 \ot_A C_2)$, which on pure tensors, for
$b \in B^j$ and $c_i \in C^{k_i}$, has the formula 
\[ b \ot (c_1 \ot c_2) \mapsto (b \cd c_1) \ot c_2 = 
(-1)^{j \cd k_1} \cd c_1 \ot (b \cd c_2) . \]

In the paragraph above the DG $A$-rings $B_1$ and $B_2$ were distinct, and of 
course so were the DG modules $M_1$ and $M_2$. 
Several times later in the paper it would be advantageous to make artificial 
distinctions, as explained in the following notational convention. 

\begin{conv} \label{conv:1295}
We shall often make a notational distinction between the first and the second 
occurrences of the same DG ring or module in a tensor product. For instance, 
given a DG ring homomorphism $A \to B$, and a DG $B$-module $M$, we 
shall often write 
$B_1 \ot_{A} B_2 := B \ot_{A} B$
and 
$M_1 \ot_{A} M_2 := M \ot_{A} M \in \cat{C}(B_1 \ot_{A} B_2)$.
To emphasize: $M_i = M$ and $B_i = B$, and the subscripts only indicate the 
positions in the tensor product. A pure tensor in $M_1 \ot_{A} M_2$
we be typically denoted by $m_1 \ot m_2$, with $m_i \in M$. 

Copies of DG rings that stand in the middle position of a tensor product
will either remain undecorated by subscripts, or will get the 
subscript $0$, e.g.\ $B_1 \ot_{A_0} B_2 := B \ot_{A} B$.
\end{conv}

Here is some complementary material on categories. 
Recall that a linear category $\cat{C}$ is called {\em additive} if it has a 
zero object and finite coproducts (which are then usually called direct 
sums). 

\begin{exa} \label{exa:1570}
A nonzero DG ring $A$ can be viewed as a single object DG linear category 
$\cat{A}$; and it is not an additive category. On the other hand, the DG 
category $\cat{C}(A)$ of DG $A$-modules is an additive DG category.
\end{exa}

Given a collection $\{ \cat{C}_i \}_{i \in I}$ of categories, 
indexed by a set $I$, there is the product category
$\cat{C} := \prod_{i \in I} \cat{C}_i$. Its object set is 
$\opn{Ob}(\cat{C}) := \prod_{i \in I} \opn{Ob}(\cat{C}_i)$,
and for a pair of objects 
$C = \{ C_i \}_{i \in I}$ and $D = \{ D_i \}_{i \in I}$ in $\cat{C}$, the 
morphism set is
\begin{equation} \label{eqn:2100}
\opn{Hom}_{\cat{C}}(C, D) := 
\prod\nolimits_{i \in I} \opn{Hom}_{\cat{C}_i}(C_i, D_i) .
\end{equation}
Composition and identities in $\cat{C}$ are componentwise.
For every $i \in I$ there is the 
projection functor $P_i : \cat{C} \to \cat{C}_i$. The category $\cat{C}$
is universal, in the sense that for every category $\cat{E}$ the functor 
$\cat{Fun}(\cat{E}, \cat{C}) \to 
\prod\nolimits_{i \in I} \cat{Fun}(\cat{E}, \cat{C}_i)$, 
$F \mapsto \{ P_i \circ F \}_{i \in I}$,
is an isomorphism of categories. 

If the categories $\cat{C}_i$ are all linear, then 
$\cat{C} = \prod_{i \in I} \cat{C}_i$  is linear too, with componentwise 
addition of morphisms. In case the categories $\cat{C}_i$ are graded linear, 
then $\cat{C}$ is graded linear, and its graded abelian groups of morphisms 
are, using the notation of (\ref{eqn:2100}), 
\begin{equation} \label{eqn:2101}
\opn{Hom}_{\cat{C}}(C, D) := 
\boplus_{k \in \Z} \ 
\prod\nolimits_{i \in I} \opn{Hom}_{\cat{C}_i}(C_i, D_i)^k . 
\end{equation}

If the categories $\cat{C}_i$ are all additive, then 
$\cat{C} = \prod_{i \in I} \cat{C}_i$ is additive too, with componentwise 
direct sums. In it there is the full additive subcategory 
$\cat{C}' := \bigoplus_{i \in I} \cat{C}_i \sub \cat{C}$,
whose objects are the collections $C = \{ C_i \}_{i \in I}$
s.t.\ $C_i = 0$, the zero object of $\cat{C}_i$, for all but finitely 
many $i \in I$. In the additive case there are also embedding functors 
$E_i : \cat{C}_i \to \cat{C}$.
These functors satisfy $P_i \circ E_i = \opn{Id}_{\cat{C}_i}$.
If the indexing set $I$ is finite, so that $\cat{C}' = \cat{C}$, 
there is also an equality of functors
$\bigoplus_{i \in I} E_i \circ P_i = \opn{Id}_{\cat{C}}$.

The next two propositions are in the folklore, but we could not find 
references for them. 

\begin{prop} \label{prop:1540}
Let $\{ \cat{C}_i \}_{i \in I}$ be a finite collection of categories, with 
product $\cat{C} := \prod_{i \in I} \cat{C}_i$.
\begin{enumerate}
\item If the categories $\cat{C}_i$ are DG categories, then the category 
$\cat{C}$ has a unique structure of DG category, such that the 
functors $P_i : \cat{C} \to \cat{C}_i$ and 
$E_i : \cat{C}_i \to \cat{C}$ are DG functors. 

\item If the categories $\cat{C}_i$ are triangulated categories, then the 
category $\cat{C}$ has a unique structure of triangulated category, such that 
the functors $P_i : \cat{C} \to \cat{C}_i$ and 
$E_i : \cat{C}_i \to \cat{C}$ are triangulated functors.  
\end{enumerate}
\end{prop}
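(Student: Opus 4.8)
The plan is to construct the DG (resp.\ triangulated) structure on $\cat{C}$ componentwise and verify the axioms coordinate by coordinate, using the fact that the indexing set $I$ is finite so that $\cat{C}' = \cat{C}$ and the identity of $\cat{C}$ decomposes as $\bigoplus_{i} E_i \circ P_i$.

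\begin{proof}[Proof plan]
For part (1), recall from (\ref{eqn:2101}) that $\cat{C}$ is already a graded linear category. First I would equip it with a differential: for a morphism $\phi = \{\phi_i\}_{i \in I}$ in $\opn{Hom}_{\cat{C}}(C, D)$ of degree $k$, set $\d_{\cat{C}}(\phi) := \{\d_{\cat{C}_i}(\phi_i)\}_{i \in I}$. Since each $\d_{\cat{C}_i}$ has degree $1$, squares to zero, and satisfies the graded Leibniz rule with respect to composition in $\cat{C}_i$, the same properties hold componentwise for $\d_{\cat{C}}$; thus $\cat{C}$ becomes a DG category. The functors $P_i$ and $E_i$ are manifestly compatible with composition and preserve degrees, and by construction they intertwine the differentials, so they are DG functors. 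For uniqueness, suppose $\cat{C}$ carries another DG structure making all $P_i$ and $E_i$ into DG functors. Any morphism $\phi$ in $\cat{C}$ satisfies $\phi = \bigl(\bigoplus_{i} E_i \circ P_i\bigr)(\phi)$; applying $P_j$ and using that $P_j$ commutes with the (hypothetical) differential and $P_j \circ E_i \circ P_i = \delta_{ij}\, P_i$, one gets $P_j(\d'_{\cat{C}}(\phi)) = \d_{\cat{C}_j}(P_j(\phi))$, which pins down $\d'_{\cat{C}}$ to be the componentwise differential. Here I am using finiteness of $I$ in an essential way, since $\bigoplus_{i} E_i \circ P_i = \opn{Id}_{\cat{C}}$ only holds then.

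For part (2), the shift functor on $\cat{C}$ is defined componentwise, $(C[1])_i := C_i[1]$, with the evident componentwise isomorphisms; it is an autoequivalence because each $(-)[1]$ on $\cat{C}_i$ is. A candidate triangle $C \xrightarrow{\,u\,} D \xrightarrow{\,v\,} E \xrightarrow{\,w\,} C[1]$ in $\cat{C}$ is declared distinguished precisely when each component $C_i \xrightarrow{u_i} D_i \xrightarrow{v_i} E_i \xrightarrow{w_i} C_i[1]$ is distinguished in $\cat{C}_i$. I would then verify the triangulated axioms (TR1)--(TR4) one at a time, each reducing to the corresponding axiom in every $\cat{C}_i$: existence of a cone comes from choosing cones componentwise; rotation and the identity triangle are componentwise; the octahedral axiom is assembled from the octahedra in each $\cat{C}_i$. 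The functors $P_i$ and $E_i$ send distinguished triangles to distinguished triangles by the very definition, so they are triangulated. Uniqueness of the triangulated structure: the class of distinguished triangles must be closed under isomorphism and contain enough triangles; if $P_i$ is triangulated then every distinguished triangle of $\cat{C}$ maps to a distinguished triangle of $\cat{C}_i$, so the given class is contained in the componentwise class. Conversely, if a componentwise-distinguished triangle were not distinguished in some other structure, one builds a distinguished triangle over the same base maps and compares via the five lemma in each component; this forces the two classes to agree. Again finiteness of $I$ enters through $\bigoplus_{i} E_i \circ P_i = \opn{Id}$, which lets us recover any morphism (and hence any triangle) of $\cat{C}$ from its components.

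The routine verifications are genuinely routine. The one point deserving care — and the place I would expect a subtlety — is the octahedral axiom and the uniqueness argument for the triangulated structure, since there ``distinguished'' is defined by a mere closure property rather than by a universal construction; one must check that the componentwise class really is the smallest class closed under isomorphism and rotation that contains the componentwise cones, and that it is forced by the requirement that $P_i$ be triangulated. I would handle this by invoking the standard fact that a triangulated structure is determined by its class of distinguished triangles together with the shift, and then squeezing the class from both sides as above.
\end{proof}
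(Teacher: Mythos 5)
Your construction and verification strategy match the paper's proof exactly: both define the differential, shift functor, and distinguished triangles componentwise and check the axioms coordinate by coordinate, with the paper simply recording the uniqueness as "easy to see/verify" where you spell out the argument. One minor observation: for the uniqueness of the DG differential (and of the class of distinguished triangles) you only need compatibility with the projections $P_i$, since the morphism groups of $\cat{C}$ are products over $I$ and hence a morphism is determined by its components — the decomposition $\bigoplus_i E_i \circ P_i = \opn{Id}_{\cat{C}}$ and the $E_i$ are not required for that step. Finiteness of $I$ does matter, but the essential place it enters is in identifying $\prod_i \bigoplus_k$ with $\bigoplus_k \prod_i$, so that the graded structure of formula (\ref{eqn:2101}) agrees with the underlying morphism sets of the product category; your invocation of it via the identity decomposition is harmless but not where the finiteness is really doing work.
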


\begin{proof}
(1) The graded modules of morphism in $\cat{C}$ are as in formula 
(\ref{eqn:2101}). The differentials in $\cat{C}$ are componentwise; 
namely, for 
a degree $k$ morphism
$\phi = \{ \phi_i \}_{i \in I} : C \to D$ in $\cat{C}$, its differential is 
$\d(\phi) := \{ \d(\phi_i) \}_{i \in I}$. 
It is easy to see that this is the unique DG structure on $\cat{C}$ that's 
compatible with the functors $P_i$ and $E_i$.

\medskip \noindent 
(2) The morphism abelian groups in $\cat{C}$ are as in  formula 
(\ref{eqn:2100}). The translation functor of $\cat{C}$ is componentwise, i.e.\ 
for an object $C = \{ C_i \}_{i \in I} \in \cat{C}$ its translation is the 
object $C[1] := \{ C_i[1] \}_{i \in I}$, and the same for morphisms. The 
distinguished triangles of $\cat{C}$ are also componentwise. 
It is easy to verify that the axioms of a triangulated category hold,
that this is the only triangulated structure on $\cat{C}$ that is compatible 
with the functors $P_i$ and $E_i$. 
\end{proof}

\begin{prop} \label{prop:1475}
Let $\{ A_i \}_{i\in I}$ be a finite collection of DG rings, and define the DG 
ring $A := \prod_{i \in I} A_i$. 
\begin{enumerate}
\item The functor $F : \cat{C}(A) \to \prod_{i \in I} \cat{C}(A_i)$, 
$M \mapsto \{ A_i \ot_A M \}_{i \in I}$, 
is an equivalence of DG categories.

\item The functor $F : \cat{D}(A) \to \prod_{i \in I} \cat{D}(A_i)$, 
$M \mapsto \{ A_i \ot_A M \}_{i \in I}$, 
is an equivalence of triangulated categories.
\end{enumerate}
\end{prop}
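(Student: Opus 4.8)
The plan is to reduce the entire statement to the idempotent decomposition attached to the product ring $A = \prod_{i \in I} A_i$. Let $e_i \in A^0$ be the idempotent equal to $1_{A_i}$ in the $i$-th coordinate and $0$ elsewhere. Since $A$ is nonpositive we have $A^1 = \prod_{i} A_i^1 = 0$, hence $\d_A(e_i) \in A^1 = 0$; thus the $e_i$ are pairwise orthogonal central idempotents lying in $\opn{Z}^0(A)$, with $\sum_{i \in I} e_i = 1_A$ (here the finiteness of $I$ is used, and is essential). For any $M \in \cat{C}(A)$ the graded $A$-submodule $e_i \cd M$ is stable under $\d_M$, because $\d_M(e_i \cd m) = \d_A(e_i) \cd m + e_i \cd \d_M(m) = e_i \cd \d_M(m)$, so we obtain a decomposition $M = \bigoplus_{i \in I} e_i \cd M$ in $\cat{C}_{\mrm{str}}(A)$. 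The surjection $A \surj A_i$ identifies $A_i$, as a DG $A$-module, with the summand $e_i \cd A$, and therefore there is a strict natural isomorphism $A_i \ot_A M \cong (e_i \cd A) \ot_A M \cong e_i \cd M$ in $\cat{C}_{\mrm{str}}(A_i)$. Up to this identification the functor $F$ of the proposition is $M \mapsto \{ e_i \cd M \}_{i \in I}$.

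For part (1) I would introduce the candidate quasi-inverse $G : \prod_{i \in I} \cat{C}(A_i) \to \cat{C}(A)$, $\{ N_i \}_{i} \mapsto \bigoplus_{i \in I} N_i$, where the right-hand side is made a DG $A$-module via the projections $A \to A_i$ (so $e_j$ acts as $\opn{id}$ on $N_j$ and as $0$ on $N_i$ for $i \ne j$); this is visibly a DG functor. The composites $G \circ F$ and $F \circ G$ are isomorphic to the respective identity functors by strict natural isomorphisms, coming from $M = \bigoplus_i e_i \cd M$ and from $e_j \cd N_j = N_j$, $e_j \cd N_i = 0$. What remains is to check that $F$ is itself a DG functor, i.e.\ that the induced maps of Hom-complexes $\opn{Hom}_A(M, M') \to \prod_{i} \opn{Hom}_{A_i}(e_i \cd M, e_i \cd M')$ are isomorphisms. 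This follows by decomposing both $M$ and $M'$: a graded $A$-linear map sends $e_i \cd M$ into $e_i \cd M'$; one has $\opn{Hom}_A(e_i \cd M, e_j \cd M') = 0$ for $i \ne j$ because $e_i \cd M' \cap e_j \cd M' = 0$; and $\opn{Hom}_A(e_i \cd M, e_i \cd M') = \opn{Hom}_{A_i}(e_i \cd M, e_i \cd M')$ since $A$ acts on these modules through $A \to A_i$ — all of this respecting the grading and the differentials. Hence $F$ is a DG equivalence.

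For part (2) the first point is that $A_i = e_i \cd A$ is a direct summand of $A$, hence K-projective (in particular K-flat) as a DG $A$-module; this is exactly what makes $M \mapsto A_i \ot_A M$ descend to $\cat{D}(A)$ (the functor is exact and preserves quasi-isomorphisms on $\cat{C}_{\mrm{str}}(A)$), and it identifies the part (2) functor with the derived functor of the part (1) functor. Now $F$ and $G$ of part (1) are exact DG functors preserving acyclicity in both directions, since $\opn{H}^q(e_i \cd M) = e_i \cd \opn{H}^q(M)$ and $\opn{H}^q(\bigoplus_i N_i) = \bigoplus_i \opn{H}^q(N_i)$. Therefore they induce mutually quasi-inverse triangulated equivalences between $\cat{K}(A)$ and $\prod_{i \in I} \cat{K}(A_i)$ (homotopies, and hence the homotopy category of a finite product, being componentwise, cf.\ Proposition \ref{prop:1540}), and these carry quasi-isomorphisms to componentwise quasi-isomorphisms and conversely. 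Passing to localizations, and using that localizing a finite product of categories at the componentwise multiplicative system yields the product of the localizations, we get that $F$ descends to a triangulated equivalence $\cat{D}(A) \iso \prod_{i \in I} \cat{D}(A_i)$ with quasi-inverse induced by $G$.

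The one place that deserves care is this last step of part (2): identifying $\prod_i \cat{D}(A_i)$ with the localization of $\prod_i \cat{K}(A_i)$ at the componentwise quasi-isomorphisms, so that the equivalence of homotopy categories descends. One can sidestep it by arguing directly on derived categories: the K-projectivity of $A_i$ over $A$ yields $\opn{RHom}_A(M, M') \cong \prod_{i} \opn{RHom}_{A_i}(e_i \cd M, e_i \cd M')$ for $M, M' \in \cat{D}(A)$ (resolve $e_i \cd M$ by a complex of $e_i \cd A$-modules, which is then K-projective over $A$ as well), and this together with the essential surjectivity of $F$ witnessed by $G$ shows directly that $F$ is an equivalence. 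Everything else is the routine bookkeeping of central orthogonal idempotents.
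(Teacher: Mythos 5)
Your proposal is correct and follows essentially the same route as the paper: the quasi-inverse you build, sending $\{N_i\}_i$ to $\bigoplus_i N_i$ with the $A$-action through the projections, is exactly the paper's $G := \bigoplus_{i} \opn{Rest}_{A_i/A} \circ P_i$. You simply make explicit the central orthogonal idempotent decomposition $M = \bigoplus_i e_i \cd M$ (and the K-projectivity of the $A_i$ over $A$, together with the care needed to descend the equivalence to derived categories) that the paper leaves implicit behind the phrase ``is a quasi-inverse of $F$'' and the one-line remark in part (2).
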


The functor $F$ is item (2) makes sense, because $A_i$ is a K-flat right DG 
$A$-module.  

\begin{proof}
(1) The category $\cat{C}(A)$ is an additive DG category.
For every $i \in I$ consider the DG functor 
$E'_i : \cat{C}(A_i) \to \cat{C}(A)$, 
$E'_i := \opn{Rest}_{A_i / A}$, the restriction functor along 
the DG ring homomorphism $A \to A_i$. 
Then the DG functor 
$G : \prod_{i \in I} \cat{C}(A_i) \to \cat{C}(A)$, 
$G := \bigoplus_{i \in I} E'_i \circ P_i$, 
is a quasi-inverse of $F$. 

\medskip \noindent
(2) The same as (1), only here $F$ and $G$ are triangulated functors. 
\end{proof}

We end this section with a review of some finiteness properties for DG modules 
over DG rings, mostly following \cite[Section 12.4]{Ye5}.
There is also an improved version of the theorem on derived 
tensor evaluation, \cite[Theorem 12.9.10]{Ye5}.

A {\em pseudo-finite semi-free filtration} on a DG $A$-module $P$ is 
an ascending chain of DG submodules
$\{ F_j(P) \}_{j \geq -1}$ of $P$, such that 
$P = \bigcup F_j(P)$, $F_{-1}(P) = 0$, and there are $i_1 \in \Z$ and 
$r_j \in \N$ with 
$\opn{Gr}^F_j(P) \cong A[-i_1 + j]^{\oplus \lsp r_j}$ for all $j \geq 0$. 
See \cite[Definition 11.4.29]{Ye5}.
In other words, the DG $A$-module $P$ admits a semi-basis $(S, F)$,
where the graded set $S$ satisfies 
$S = \coprod_{i \leq i_1} S^i$, each $S^i$ is a finite set of cardinality 
$r_i$, the filtration $F$ on $S$ if 
$F_j(S) = \coprod_{i_1 - j \leq i \leq i_1} S^i$,
and $F_j(P)^{\natural}$ is a free graded $A^{\natural}$-module with basis 
$F_j(S)$. See \cite[Proposition 11.4.31]{Ye5}.
A DG $A$-module $P$ is called {\em pseudo-finite semi-free} if it admits a 
pseudo-finite semi-free filtration. A DG $A$-module $M$ is called 
{\em derived pseudo-finite} if there is an isomorphism $P \iso M$ in 
$\cat{D}(A)$ from a pseudo-finite semi-free DG module $P$. 

A {\em finite semi-free filtration} on a DG $A$-module $P$ is a pseudo-finite 
semi-free filtration $\{ F_j(P) \}_{j \geq -1}$ such that 
$P =  F_{j_1}(P)$ for some $j_1 \in \N$. 
A DG $A$-module $P$ is called {\em finite semi-free} if it admits some 
finite semi-free filtration. See \cite[Definition 14.1.2]{Ye5}. 
According to \cite[Theorem 14.1.22]{Ye5}, a DG $A$-module $L$ is {\em perfect} 
iff it is a direct summand in $\cat{D}(A)$ of a finite semi-free DG module $P$. 

The full subcategory of $\cat{D}(A)$ on the DG modules $M$ whose cohomology 
$\opn{H}(M) = \lb \boplus_{i \in \Z} \opn{H}^i(M)$ is bounded (resp.\ bounded 
above, resp.\ bounded below) is denoted by $\cat{D}^{\mrm{b}}(A)$ 
(resp.\ $\cat{D}^{-}(A)$, resp.\ $\cat{D}^{+}(A)$). These subcategories are 
triangulated. 

A DG ring $A$ is called {\em cohomologically pseudo-noetherian} if 
$\opn{H}^0(A)$ is a noetherian ring, and each $\opn{H}^i(A)$ is a finitely 
generated $\opn{H}^0(A)$-module. When $A$ is cohomologically pseudo-noetherian, 
the full subcategory $\cat{D}_{\mrm{f}}(A)$ of $\cat{D}(A)$, 
consisting of the DG modules $M$ such that every $\opn{H}^i(M)$ is a finitely 
generated $\opn{H}^0(A)$-module, is triangulated.
Combining conditions we obtain full triangulated subcategories 
$\cat{D}^{\star}_{\mrm{f}}(A)$ of $\cat{D}(A)$. The derived 
pseudo-finite DG $A$-modules are precisely the objects of 
$\cat{D}^{-}_{\mrm{f}}(A)$; see \cite[Theorem 11.4.40]{Ye5}.

A graded module $L$ is said to be {\em concentrated} in an integer interval 
$[q_0, q_1] \sub \Z$, for some $-\infty \leq q_0 \leq q_1 \leq \infty$,
if $\{ q \mid L^q \neq 0 \} \sub [q_0, q_1]$.
The {\em concentration} of $L$ is the smallest integer interval
$[q_0, q_1]$ is which $L$ is concentrated, and it is denoted by 
$\opn{con}(L)$. The {\em cohomological concentration} of a DG module $M$ is the
the concentration of the graded module $\opn{H}(M)$.
For instance, a DG $A$-module $M$ belongs to $\cat{D}^{\mrm{b}}(A)$ iff its 
cohomological concentration is bounded. 

A DG module $M \in \cat{D}(A)$ is said to have {\em flat concentration} inside 
an integer interval $[q_0, q_1]$ if 
\[  \opn{con}(\opn{H} (M \ot^{\mrm{L}}_A N)) \sub 
[q_0, q_1] + \opn{con}(\opn{H}(N)) \]
for every $N \in \cat{D}(A)$.
In this case we say that $M$ has {\em flat dimension} $\leq q_1 - q_0$. 
If $A$ is a ring, $M$ has flat concentration inside $[q_0, q_1]$, and 
$q_1 < \infty$, then according to \cite[Proposition 12.4.19]{Ye5}, together  
with a little extra argument contained in the proof of 
\cite[Proposition 12.4.13]{Ye5}, there is a quasi-isomorphism $P \to M$ in 
$\cat{C}_{\mrm{str}}(A)$, where $P$ is a complex of flat $A$-modules  
concentrated in the integer interval $[q_0, q_1]$.

Similarly, a DG module $M \in \cat{D}(A)$ is said to have {\em projective 
concentration} inside an integer interval $[q_0, q_1]$ if 
\[  \opn{con}(\opn{H}(\opn{RHom}_A(M, N))) \sub 
[-q_1, -q_0] + \opn{con}(\opn{H}(N)) \]
for all $N \in \cat{D}(A)$. 
In this case we say that $M$ has {\em projective dimension} $\leq q_1 - q_0$. 
If $A$ is a ring, $M$ has projective concentration inside $[q_0, q_1]$, and 
$q_1 < \infty$, then according to \cite[Proposition 12.4.16]{Ye5}, together 
the same little extra argument referred to above, there is a quasi-isomorphism 
$P \to M$ in $\cat{C}_{\mrm{str}}(A)$ from a complex $P$ of projective 
$A$-modules that is concentrated in degrees $[q_0, q_1]$.

A DG module $M \in \cat{D}(A)$ is said to have {\em injective 
concentration} inside an integer interval $[q_0, q_1]$ if
\[  \opn{con}(\opn{H}(\opn{RHom}_A(N, M))) \sub 
[q_0, q_1] - \opn{con}(\opn{H}(N)) \]
for all $N \in \cat{D}(A)$.
In this case we say that $M$ has {\em injective dimension} $\leq q_1 - q_0$. 
If $A$ is a ring, $M$ has injective concentration inside $[q_0, q_1]$, and 
$q_0 > -\infty$, then according to \cite[Proposition 12.4.13]{Ye5}, together 
the same little extra argument in the proof of that proposition, there 
is a quasi-isomorphism $M \to I$ in $\cat{C}_{\mrm{str}}(A)$ to a complex 
$I$ of injective $A$-modules that is concentrated in degrees $[q_0, q_1]$.

The derived tensor evaluation morphism was introduced in
\cite[Theorem 12.9.10]{Ye5}.

\begin{thm}[Derived Tensor Evaluation] \label{thm:2115}
Let $A$ and $B$ be DG rings, and let $L \in \cat{D}(A)$, 
$M \in \cat{D}(A \ot_{\Z} B)$ and $N \in \cat{D}(B)$ be DG modules. 
Assume the three conditions below hold: 
\begin{itemize}
\rmitem{i} The DG $A$-module $L$ is derived pseudo-finite. 

\rmitem{ii} The DG $(A \ot_{\Z} B)$-module $M$ is cohomologically bounded 
below.  

\rmitem{iii} The DG $B$-module $N$ has bounded below flat concentration.
\end{itemize}
Then the derived tensor evaluation morphism 
\[ \opn{ev}^{\mrm{R}, \mrm{L}}_{L, M, N} : 
\opn{RHom}_{A}(L, M) \ot^{\mrm{L}}_{B} N \to 
\opn{RHom}_{A}(L, M \ot^{\mrm{L}}_{B} N) \]
in $\cat{D}(\Z)$ is an isomorphism.
\end{thm}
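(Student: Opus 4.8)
The plan is to prove the theorem by dévissage on the DG $A$-module $L$, reducing to the case $L = A$, where the statement becomes the trivial compatibility $\opn{RHom}_A(A,M) \ot^{\mrm{L}}_B N \iso M \ot^{\mrm{L}}_B N \iso \opn{RHom}_A(A, M \ot^{\mrm{L}}_B N)$. First I would fix the construction of the morphism $\opn{ev}^{\mrm{R},\mrm{L}}_{L,M,N}$ itself: choose a K-projective resolution $P \to L$ over $A$ and a K-flat resolution $Q \to N$ over $B$ whose underlying complex realizes the bounded-below flat concentration of $N$ (using the discussion before the theorem); then $\opn{RHom}_A(L,M) \ot^{\mrm{L}}_B N$ is computed by $\opn{Hom}_A(P,M) \ot_B Q$, and there is an obvious chain map to $\opn{Hom}_A(P, M\ot_B Q)$ coming from $\phi \ot q \mapsto (p \mapsto \phi(p) \ot q)$, which one checks descends to the displayed morphism in $\cat{D}(\Z)$ independently of the choices.

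Next I would reduce $L$ to a pseudo-finite semi-free DG module via condition (i) and Theorem 11.4.40 of \cite{Ye5}. The key is to exploit the pseudo-finite semi-free filtration $\{F_j(P)\}_{j\ge -1}$ with $\opn{Gr}^F_j(P) \cong A[-i_1+j]^{\oplus r_j}$. Since $\opn{ev}^{\mrm{R},\mrm{L}}_{L,M,N}$ is a morphism of triangulated functors in $L$ (more precisely, compatible with the distinguished triangles $F_{j-1}(P) \to F_j(P) \to \opn{Gr}^F_j(P)$ and with translations), and since it is evidently an isomorphism for $L = A$ — hence for all finite free $A[-i_1+j]^{\oplus r_j}$, hence for each $F_j(P)$ by induction on $j$ using the five lemma in the triangulated setting — the only remaining issue is passing to the colimit $P = \bigcup_j F_j(P)$. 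This is where conditions (ii) and (iii), together with the pseudo-finiteness, do their real work: one needs that $\opn{RHom}_A(P,-)$ commutes with the relevant colimit in the cohomologically bounded-below range. Concretely, because $P$ is bounded above and $M$, $M \ot^{\mrm{L}}_B N$ are bounded below, in each fixed cohomological degree only finitely many of the $\opn{Gr}^F_j(P)$ contribute, so $\opn{H}^k$ of both sides is computed by the corresponding $F_{j}(P)$ for $j \gg 0$ depending on $k$; thus the isomorphism for the $F_j(P)$'s propagates to $P$ degreewise.

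The main obstacle I anticipate is precisely the last step — controlling the passage from the finite stages $F_j(P)$ to the infinite semi-free module $P$, and in particular verifying that the flat-concentration hypothesis on $N$ is exactly what guarantees $M \ot^{\mrm{L}}_B N$ stays cohomologically bounded below (so that the degreewise stabilization argument applies on both sides of $\opn{ev}^{\mrm{R},\mrm{L}}$). One must be careful that replacing $N$ by the finite-amplitude flat complex $Q$ does not lose bounded-belowness of $M \ot_B Q$, which follows from (ii), (iii) and the estimate $\opn{con}(\opn{H}(M \ot^{\mrm{L}}_B N)) \sub \opn{con}(\opn{H}(M)) + [q_0,q_1]$ implicit in the definition of flat concentration. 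A secondary, more bookkeeping-type obstacle is checking the naturality/compatibility of $\opn{ev}^{\mrm{R},\mrm{L}}_{L,M,N}$ with the connecting morphisms of the filtration triangles, so that the five-lemma induction is legitimate; this is routine but needs the explicit cocycle-level description of $\opn{ev}$ from the first step. Once these are in place, the theorem follows by assembling the dévissage.
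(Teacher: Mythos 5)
Your proposal is essentially the same argument as the paper's (Steps 1--5 of the proof of Theorem \ref{thm:2115}): choose a pseudo-finite semi-free resolution $P$ of $L$ with filtration $\{F_j(P)\}$, show the map is an isomorphism at each finite stage $F_j(P)$, and then deduce the result for $P$ degree by degree, using conditions (ii) and (iii) to push the ``difference'' between $P$ and $F_j(P)$ into arbitrarily high cohomological degrees. The organizational differences are cosmetic: the paper reduces the finite-stage case to a chain-level isomorphism (citing \cite[Theorem 12.9.7]{Ye5}), whereas you run a five-lemma induction along the filtration triangles; and the paper controls the tail by the cofiber $\bar{F}_j(P)=P/F_j(P)$ and concentration estimates, whereas you phrase the same stabilization as ``only finitely many graded pieces contribute in a fixed degree.'' Those are the same argument said two ways.

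One point you should tighten. You write that you would ``choose a K-flat resolution $Q\to N$ over $B$ whose underlying complex realizes the bounded-below flat concentration of $N$.'' In the generality of the theorem $B$ is a DG ring, and $N$ is only assumed to have \emph{bounded-below} flat concentration (no upper bound); the discussion preceding the theorem about replacing $N$ by a complex of flat modules concentrated in an integer interval is stated for $A$ a ring and for a \emph{finite} flat concentration, so it does not automatically supply a bounded-below flat $Q$ here. The paper instead takes an arbitrary K-flat $Q\to N$ and uses the flat concentration of $N$ as an abstract estimate: if $X$ is a complex concentrated in degrees $\geq m$, then $\opn{con}(\opn{H}(X\ot^{\mrm{L}}_B N))\sub [m+k_0,\infty]$, applied to $X=\opn{Hom}_A(\bar{F}_j(P),M)$ (equations (\ref{eqn:2121}) and (\ref{eqn:2122})). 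You should phrase your stabilization step the same way — use the flat-concentration hypothesis abstractly rather than trying to realize it on the resolution $Q$ — and then your degreewise argument becomes exactly the paper's estimate that the $\bar{F}_j(P)$-terms vanish in degrees $\le i+1$ once $j\geq i_1-l_0-k_0+i+1$.
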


\begin{proof}
The proof is in five steps. 

\medskip \noindent
Step 1. Since the DG ring $A \ot_{\Z} B$ is nonpositive, we can replace $M$ by 
a suitable smart truncation $\opn{smt}^{\geq i}(M)$, 
see \cite[Proposition 7.3.8]{Ye5}; so we 
might as well assume that $M$ is a bounded below DG $(A \ot_{\Z} B)$-module. 
Let $P \to  L$ be a quasi-isomorphism in 
$\cat{C}_{\mrm{str}}(A)$, where $P$ is a pseudo-finite semi-free DG $A$-module,
with pseudo-finite semi-free filtration $\{ F_j(P) \}_{j \geq -1}$.
By definition there are $i_1 \in \Z$ and $r_j \in \N$ be such that 
\begin{equation} \label{eqn:2123}
\opn{Gr}^F_j(P) \cong A[-i_1 + j]^{\oplus \lsp r_j}
\end{equation}
for all $j \geq 0$.
Let $Q \to  N$ be a K-flat DG module resolution over $B$.  
We keep these resolutions throughout the proof. 

\medskip \noindent
Step 2. Assume that $P$ is a finite semi-free DG $A$-module.
Then, as in step 1 of the proof of 
\cite[Theorem 12.9.7]{Ye5}, the homomorphism 
\[ \opn{ev}_{P, M, Q} : 
\opn{Hom}_{A}(P, M) \ot_{B} Q \to 
\opn{Hom}_{A}(P, M \ot_{B} Q) \]
in $\cat{C}_{\mrm{str}}(\Z)$ is an isomorphism.

\medskip \noindent
Step 3. Let $j \in \N$ be arbitrary. Consider the DG $A$-module 
$\bar{F}_j(P) := P / F_j(P)$.
This is a semi-free DG $A$-module concentrated in the degree interval 
$[- \infty, i_1 - j - 1]$, where $i_1$ is the integer from equation 
(\ref{eqn:2123}). There is a short exact sequence 
\begin{equation} \label{eqn:2118}
0 \to F_j(P) \to P \to \bar{F}_j(P) \to  0 
\end{equation}
in $\cat{C}_{\mrm{str}}(A)$, which is split in 
$\cat{G}_{\mrm{str}}(A^{\natural})$. By applying
$\opn{Hom}_A(-, M)$ to it we obtain a short exact sequence 
\[ 0 \to \opn{Hom}_{A}(\bar{F}_j(P), M)\to 
\opn{Hom}_{A}(P, M) \to \opn{Hom}_{A}(F_j(P), M) \to  0 \]
in $\cat{C}_{\mrm{str}}(A)$.
This gives rise to a distinguished triangle 
\[ \opn{Hom}_{A}(\bar{F}_j(P), M) \to 
\opn{Hom}_{A}(P, M) \to \opn{Hom}_{A}(F_j(P), M) \xar{\ \triangle\ } \]
in $\cat{D}(A)$. Since $Q$ is K-flat, we get a distinguished triangle 
\begin{equation} \label{eqn:2119}
\opn{Hom}_{A}(\bar{F}_j(P), M) \ot_A Q \to 
\opn{Hom}_{A}(P, M) \ot_A Q \to \opn{Hom}_{A}(F_j(P), M) \ot_A Q 
\xar{\ \triangle\ }
\end{equation}
in $\cat{D}(A)$.

Similarly, from (\ref{eqn:2118}) we deduce the existence of a short 
exact sequence 
\[ 0  \to \opn{Hom}_{A}(\bar{F}_j(P), M \ot_A Q) \to 
\opn{Hom}_{A}(P, M \ot_A Q) \to \opn{Hom}_{A}(F_j(P), M \ot_A Q) \to 0 \]
in $\cat{C}_{\mrm{str}}(A)$, and thus distinguished triangle 
\begin{equation} \label{eqn:2120}
\opn{Hom}_{A}(\bar{F}_j(P), M \ot_A Q) \to 
\opn{Hom}_{A}(P, M \ot_A Q) \to \opn{Hom}_{A}(F_j(P), M \ot_A Q) 
\xar{\ \triangle\ }
\end{equation}
in $\cat{D}(A)$.

\medskip \noindent
Step 4. Let $[k_0, \infty]$ be a bounded below integer interval containing the 
flat concentration of $N \in \cat{D}(B)$.
Let $[l_0, \infty]$ be a bounded below integer interval containing the  
concentration of the DG module $M$. Note that $k_0, l_0 \in \Z$. 

We know that the DG module $\bar{F}_j(P)$ is concentrated inside the integer 
interval $[-\infty, i_1 - j - 1]$.  
Therefore the DG module $\opn{Hom}_{A}(\bar{F}_j(P), M)$
is concentrated inside the integer 
interval $[-i_1 + j + 1 + l_0, \infty]$.  
We conclude that the graded module 
\begin{equation} \label{eqn:2121}
\opn{H} \bigr( \opn{Hom}_{A}(\bar{F}_j(P), M) \ot_B Q \bigr) \cong
\opn{H} \bigr( \opn{Hom}_{A}(\bar{F}_j(P), M) \ot^{\mrm{L}}_B N \bigr)
\end{equation}
is concentrated inside the integer 
interval $[-i_1 + j + 1 + l_0 + k_0, \infty]$. 

Since 
$M \ot_B Q \cong M \ot^{\mrm{L}}_B N$,
the concentration of $\opn{H}(M \ot_B Q)$ is contained inside the integer 
interval $[l_0 + k_0, \infty]$.
The projective concentration of the DG $A$-module $\bar{F}_j(P)$ is inside 
$[-\infty, i_1 - j - 1]$. We see that the graded module 
\begin{equation} \label{eqn:2122}
\opn{H} \bigr( \opn{Hom}_{A}(\bar{F}_j(P), M \ot_B Q) \bigr) \cong
\opn{H} \bigr( \opn{RHom}_{A}(\bar{F}_j(P), M \ot_B Q) \bigr)
\end{equation}
is concentrated inside the integer 
interval $[-i_1 + j + 1 + l_0 + k_0, \infty]$.

\medskip \noindent
Step 5. We now prove that for every $i \in \Z$ the homomorphism 
\[ \opn{H}^i(\opn{ev}_{P, M, Q}) :
\opn{H}^i \bigl( \opn{Hom}_{A}(P, M) \ot_{B} Q \bigr) \to 
\opn{H}^i \bigl( \opn{Hom}_{A}(P, M \ot_{B} Q) \bigr) \]
is an isomorphism. 
This will imply that $\opn{ev}_{P, M, Q}$ is a quasi-isomorphism
in $\cat{C}_{\mrm{str}}(\Z)$, and 
hence that $\opn{ev}^{\mrm{R}, \mrm{L}}_{L, M, N}$ is an isomorphism in 
$\cat{D}(\Z)$.  

Given $i$, let $j$ be an integer satisfying 
$j \geq i_1 - l_0 - k_0 + i + 1$.
The intervals of concentration of the graded modules in 
equations (\ref{eqn:2121}) and (\ref{eqn:2122}) tell us that 
\[ \opn{H}^{i'} \bigr( \opn{Hom}_{A}(\bar{F}_j(P), M) \ot_B Q \bigr) = 0 \]
and 
\[ \opn{H}^{i'} \bigr( \opn{Hom}_{A}(\bar{F}_j(P), M \ot_B Q) \bigr) = 0 \]
for all $i' \leq i + 1$. 
The long exact cohomology sequences for the distinguished triangles 
(\ref{eqn:2119}) and (\ref{eqn:2120}) imply that 
\[ \opn{H}^{i} \bigr( \opn{Hom}_{A}(P, M) \ot_B Q \bigr) \to 
\opn{H}^{i} \bigr( \opn{Hom}_{A}(F_j(P), M) \ot_B Q \bigr) \]
and 
\[ \opn{H}^{i} \bigr( \opn{Hom}_{A}(P, M \ot_B Q) \bigr) \to 
\opn{H}^{i} \bigr( \opn{Hom}_{A}(F_j(P), M \ot_B Q) \bigr) \]
are both bijective. 

Finally, consider the commutative diagram of $\Z$-modules 
\begin{equation} \label{eqn:2115}
\begin{tikzcd} [column sep = 6ex, row sep = 5ex] 
\opn{H}^i \bigl( \opn{Hom}_{A}(P, M) \ot_{B} Q \bigr)
\arrow[r, "{}"]
\ar[d, "{\opn{H}^i(\opn{ev}_{P, M, Q})}"']
&
\opn{H}^i \bigl( \opn{Hom}_{A}(F_j(P), M) \ot_{B} Q \bigr)
\ar[d, "{\opn{H}^i(\opn{ev}_{F_j(P), M, Q})}"]
\\
\opn{H}^i \bigl( \opn{Hom}_{A}(P, M \ot_{B} Q) \bigr)
\arrow[r, "{}"]
&
\opn{H}^i \bigl( \opn{Hom}_{A}(F_j(P), M \ot_{B} Q) \bigr)
\end{tikzcd}
\end{equation}
We know that the horizontal arrows are bijective. 
Since $F_j(P)$ is a finite semi-free DG $A$-module, step 2 tells us that 
$\opn{H}^i(\opn{ev}_{F_j(P), M, Q})$. We conclude that 
$\opn{H}^i(\opn{ev}_{P, M, Q})$ is bijective. 
\end{proof}

\begin{rem} \label{rem:2115}
Theorem \ref{thm:2115} holds also for {\em noncommutative} DG rings $A$ and 
$B$, provided they are {\em nonpositive}. The obvious modification is that $M$ 
should a DG module over $A \ot_{\Z} B^{\mrm{op}}$. The proof is essentially the 
same. 

We see that Theorem \ref{thm:2115} is an improvement upon 
\cite[Theorem 12.9.10]{Ye5} -- condition (iii) here is weaker than condition 
(c)(iii) there.

There is a minor error in \cite[Theorem 12.9.10]{Ye5}: in condition (c) there it 
is necessary to require the DG rings $A$ and $B$ to be {\em nonpositive}. 
Otherwise smart truncation of DG $(A \ot_{\Z} B^{\mrm{op}})$-modules might 
not be possible. 
\end{rem}

\begin{rem} \label{rem:2116}
Perfect DG modules have several characterizations, see 
\cite[Theorem 14.1.22]{Ye5}.
One of them is this: a DG $A$-module $L \in \cat{D}(A)$ is perfect iff for 
every $N \in \cat{D}(A)$ the derived tensor-evaluation morphism 
\[ \opn{ev}^{\mrm{R}, \mrm{L}}_{L, A, N} : 
\opn{RHom}_{A}(L, A) \ot^{\mrm{L}}_{A} N \to 
\opn{RHom}_{A}(L, N) \]
in $\cat{D}(\Z)$ is an isomorphism. (This is a slight modification of condition 
(iii) of \cite[Theorem 14.1.22]{Ye5}, but it is equivalent to it, 
as can be seen in the proof there.) Another characterization is this: 
if $A$ and $B$ are DG rings , and 
$F : \cat{D}(A) \to \cat{D}(B)$
is an equivalence of triangulated categories, then a DG module 
$L \in \cat{D}(A)$ is perfect iff the DG module 
$F(L) \in \cat{D}(B)$ is perfect; see \cite[Corollary 12.1.4]{Ye5}.

\end{rem}

\section{The Squaring Operation and Rigid Complexes}
\label{sec:squaring}

In this section we recall the main results of the paper \cite{Ye4} on the 
squaring operation. We then use these results to define rigid complexes and 
rigid morphisms between them. 

Convention \ref{conv:615} is in place: DG rings are commutative by default
(Definition \ref{dfn:616}(3)). The category of commutative DG rings is denoted 
by $\cat{DGRng}$, and its full subcategory of commutative rings is 
$\cat{Rng}$.

The squaring operation is best described in terms of pairs of DG rings; so we 
start with the next definition. 

\begin{dfn} \label{dfn:625} \mbox{}
\begin{enumerate}
\item By a {\em pair of DG rings} $B / A$ we mean a homomorphism 
$A \xar{u} B$ in $\cat{DGRng}$. 

\item Suppose $B' / A' = (A' \xar{u'} B')$ is another pair of DG rings. A 
{\em morphism of pairs} $w / v : B / A \to B' / A'$
consists of DG ring homomorphisms $v : A \to A'$ and 
$w : B \to B'$, such that $u' \circ v = w \circ u$. Namely the diagram 
\[ \begin{tikzcd} [column sep = 6ex, row sep = 5ex] 
A
\arrow[r, "{u}"]
\ar[d, "{v}"']
&
B
\ar[d, "{w}"]
\\
A'
\arrow[r, "{u'}"]
&
B'
\end{tikzcd} \]
in  $\cat{DGRng}$ is commutative. 

\item The pairs of DG rings form a category $\cat{PDGRng}$, with obvious 
compositions. 
\end{enumerate}
\end{dfn}

The pronunciation of the expression ``$B / A$'' is ``B relative to A''.

\begin{rem} \label{rem:2100}
Let $\cat{Arr}$ be the category with objects $0$ and $1$, and with three 
morphisms: a morphism $\ep : 0 \to  1$, and the two identity automorphisms.
Then the category $\cat{PDGRng}$ and the functor category 
$\cat{Fun}(\cat{Arr}, \cat{DGRng})$ are isomorphic. 
\end{rem}

\begin{dfn} \label{dfn:626}\mbox{}
\begin{enumerate}
\item A pair of DG rings $\til{B} / \til{A}$ is called a {\em K-flat pair} if 
$\til{B}$ is K-flat as a DG $\til{A}$-module.

\item Given a pair of DG rings $B / A$, a {\em K-flat resolution} of $B / A$ 
is a morphism of pairs 
$s / r : \til{B} / \til{A} \to B / A$, 
such that $r : \til{A} \to A$ and $s : \til{B} \to B$ are surjective 
quasi-isomorphisms, and $\til{B} / \til{A}$ is a K-flat pair. 

\item Let $w / v : B / A \to B' / A'$ be a morphism of pairs of DG rings.  
A {\em K-flat resolution} of $w / v$ 
consists of K-flat resolutions 
$s / r : \til{B} / \til{A} \to B / A$
and $s' / r' : \til{B}' / \til{A}' \to B' / A'$, 
together with a morphism of pairs 
$\til{w} / \til{v} : \til{B} / \til{A} \to \til{B}' / \til{A}'$,
such that 
\[ (s' / r') \circ (\til{w} / \til{v}) = (w / v) \circ (s / r) \]
in $\cat{PDGRng}$. 
\end{enumerate}
\end{dfn}

Items (2) and (3) of the definition are shown in the next two diagrams. These 
are commutative diagrams in $\cat{DGRng}$ and $\cat{PDGRng}$, respectively. 
The expressions "QI" and "KF" stand for "quasi-isomorphism" and "K-flat", 
respectively. 
\begin{equation} \label{eqn:2102}
 \begin{tikzcd} [column sep = 6ex, row sep = 5ex] 
\til{A}
\arrow[r, "{\til{u}, \ \mrm{KF}}"]
\ar[d, two heads, "{r, \ \mrm{QI}}"']
&
\til{B}
\ar[d, two heads, "{s, \ \mrm{QI}}"]
\\
A
\arrow[r, "{u}"]
&
B
\end{tikzcd}
\qquad \qquad 
\begin{tikzcd} [column sep = 6ex, row sep = 5ex] 
\til{B} / \til{A}
\arrow[r, "{\til{w} / \til{v}}"]
\ar[d, "{s / r}"']
&
\til{B}' / \til{A}'
\ar[d, "{s' / r'}"]
\\
B / A
\arrow[r, "{w / v}"]
&
B' / A'
\end{tikzcd}
\end{equation}

It is known (see \cite[Corollary 3.25]{Ye4}) that K-flat resolutions of pairs, 
and of morphisms of pairs, exist.  

\begin{dfn} \label{dfn:1000}
Fix a pair of DG rings $B / A$. 
\begin{enumerate}
\item The set of K-flat resolutions $\til{B} / \til{A}$ of $B / A$, in the 
sense of Definition \ref{dfn:626}(2), is denoted by 
$\cat{KFRes}(B / A)$. 

\item We make $\cat{KFRes}(B / A)$ into a category, in which the morphisms 
$\til{w} / \til{v} : \til{B} / \til{A} \to \til{B}' / \til{A}'$ 
are the K-flat resolutions of the morphism
$\opn{id}_B / \opn{id}_A : B / A \to B / A$, 
in the sense of Definition \ref{dfn:626}(3).  
\end{enumerate}
\end{dfn}

Let $B / A$ be a pair of DG rings. Given a DG $B$-module $M$, and a 
resolution $\til{B} / \til{A} \in \cat{KFRes}(B / A)$, the 
{\em resolved square of $M$} is the object
\begin{equation} \label{eqn:625}
\opn{Sq}_{B / A}^{\til{B} / \til{A}} (M) := 
\opn{RHom}_{\til{B} \ot_{\til{A}} \til{B}}(B, M \ot_{\til{A}}^{\mrm{L}} M)
\in \cat{D}(B) . 
\end{equation}
The DG $B$-module structure on 
$\opn{Sq}_{B / A}^{\til{B} / \til{A}} (M)$
comes from the action of $B$ on the first argument of $\opn{RHom}$. 
We use the name ``resolved square'' because it depends on the K-flat 
resolution $\til{B} / \til{A}$. In Theorem \ref{thm:631} below this dependence 
is going to be removed. 

The resolved square 
$\opn{Sq}_{B / A}^{\til{B} / \til{A}} (M)$ of $M$ can be presented explicitly 
as 
follows. We first choose a K-projective resolution 
$\til{P} \to M$ in $\cat{C}_{\mrm{str}}(\til{B})$,
so that
$\til{P} \ot_{\til{A}} \til{P} \cong M \ot_{\til{A}}^{\mrm{L}} M$
in $\cat{D}(\til{B} \ot_{\til{A}} \til{B})$. 
Next we choose a K-injective resolution 
$\til{P} \ot_{\til{A}} \til{P} \to \til{I}$ 
in $\cat{C}_{\mrm{str}}(\til{B} \ot_{\til{A}} \til{B})$. These choices are 
unique up to homotopy. They give rise to a canonical isomorphism 
$\opn{Sq}_{B / A}^{\til{B} / \til{A}} (M) \cong 
\opn{Hom}_{\til{B} \ot_{\til{A}} \til{B}}(B, \til{I})$ 
in $\cat{D}(B)$, as explained in formulas (\ref{eqn:618}) and 
(\ref{eqn:621}).

Consider a morphism of pairs of DG rings 
$w / v : B / A \to B' / A'$, such that $v : A \to A'$ is a quasi-isomorphism.
Choose some K-flat resolution
$\til{w} / \til{v} : \til{B} / \til{A} \to \til{B}' / \til{A}'$
of $w / v$. Let $M \in \cat{D}(B)$ and $M' \in \cat{D}(B')$ be DG modules, and 
let $\th : M' \to M$ be a backward morphism in 
$\cat{D}(B)$ over $w$. 
There is a backward morphism
\begin{equation} \label{eqn:628}
\opn{Sq}_{w / v}^{\til{w} / \til{v}} (\th) : 
\opn{Sq}_{B' / A'}^{\til{B}' / \til{A}'} (M') \to
\opn{Sq}_{B / A}^{\til{B} / \til{A}} (M)
\end{equation}
in $\cat{D}(B)$ over $w$, which we call the {\em resolved square of $\th$}. 

Here is an explicit formula for the morphism
$\opn{Sq}_{w / v}^{\til{w} / \til{v}} (\th)$, following 
\cite[Section 5 and Remark 7.5]{Ye4}.
Let $\til{P} \to M$ be a K-projective resolution in 
$\cat{C}_{\mrm{str}}(\til{B})$, and let 
$\til{P} \ot_{\til{A}} \til{P} \to \til{I}$ 
be a K-injective resolution in 
$\cat{C}_{\mrm{str}}(\til{B} \ot_{\til{A}} \til{B})$, as above.
Likewise we choose a K-projective resolution 
$\til{P}' \to M'$ in $\cat{C}_{\mrm{str}}(\til{B}')$,
and a K-injective resolution
$\til{P}' \ot_{\til{A}'} \til{P}' \to \til{I}'$
in $\cat{C}_{\mrm{str}}(\til{B}' \ot_{\til{A}'} \til{B}')$.
Next we choose a K-projective resolution 
$\til{\ga} : \til{Q}' \to \til{P}'$ of $\til{P}'$
in $\cat{C}_{\mrm{str}}(\til{B})$. 
There is a homomorphism 
$\til{\th} : \til{Q}' \to \til{P}$ 
in $\cat{C}_{\mrm{str}}(\til{B})$, unique up to homotopy, such that the 
diagram 
\[ \begin{tikzcd} [column sep = 10ex, row sep = 5ex] 
\til{P}
\ar[d, "{\simeq}"]
&
\til{Q}'
\arrow[l, dashed, "{\opn{Q}(\til{\th})}"']
\arrow[r, "{\opn{Q}(\til{\ga})}", "{\simeq}"']
&
\til{P}'
\ar[d, "{\simeq}"]
\\
M
&
&
M'
\arrow[ll, "{\th}"']
\end{tikzcd} \]
in $\cat{D}(\til{B})$ is commutative. 
(Notice that in this diagram,  $\opn{Q}$ is the localization functor, whereas 
$\til{Q}'$ is a DG module; the fonts are distinct. Also the restriction functor 
$\opn{Rest}_{\til{w}}$ is suppressed.) 
Since $\til{v} : \til{A} \to \til{A}'$ is a quasi-isomorphism of DG rings, 
$\til{Q}'$ is K-flat over $\til{A}$, and $\til{P}'$ is K-flat over $\til{A}'$,
it follows that 
$\til{\ga} \ot_{\til{v}} \til{\ga} : 
\til{Q}' \ot_{\til{A}} \til{Q}' \to \til{P}' \ot_{\til{A}'} \til{P}'$ 
is a quasi-isomorphism
(see \cite[Proposition 2.6(1)]{Ye4}). Hence there is a homomorphism 
$\til{\ze}: \til{I}' \to \til{I}$ in 
$\cat{C}_{\mrm{str}}(\til{B} \ot_{\til{A}} \til{B})$, unique up to homotopy,
such that the diagram 
\begin{equation} \label{eqn:670}
\begin{tikzcd} [column sep = 10ex, row sep = 5ex] 
\til{P} \ot_{\til{A}} \til{P}
\ar[d, "\tup{qu-iso}"]
&
\til{Q}' \ot_{\til{A}} \til{Q}'
\arrow[l, "{\til{\th} \ot \til{\th}}"']
\arrow[r, "{\til{\ga} \ot_{\til{v}} \til{\ga}}", "\tup{qu-iso}"']
&
\til{P}' \ot_{\til{A}'} \til{P}'
\ar[d, "\tup{qu-iso}"]
\\
\til{I}
&
&
\til{I}'
\arrow[dashed, ll, "{\til{\ze}}"']
\end{tikzcd} 
\end{equation}
in $\cat{C}_{\mrm{str}}(\til{B} \ot_{\til{A}} \til{B})$
is commutative up to homotopy. 
Then the homomorphism 
\begin{equation} \label{eqn:890}
\opn{Hom}_{\til{w} \ot_{\til{v}} \til{w}}(w, \til{\ze}) :
\opn{Hom}_{\til{B}' \ot_{\til{A}'} \til{B}'}(B', \til{I}') \to 
\opn{Hom}_{\til{B} \ot_{\til{A}} \til{B}}(B, \til{I}) 
\end{equation}
in $\cat{C}_{\mrm{str}}(B)$ represents
$\opn{Sq}_{w / v}^{\til{w} / \til{v}} (\th)$.

The morphism $\opn{Sq}_{w / v}^{\til{w} / \til{v}} (\th)$ in (\ref{eqn:628}) 
is functorial in all arguments. By this we mean that if
$w' / v' : B' / A' \to B'' / A''$
is another morphism of pairs of DG rings, such that 
$v' : A' \to A''$ is a quasi-isomorphism, with a K-flat resolution 
$\til{w}' / \til{v}' : \til{B}' / \til{A}' \to \til{B}'' / \til{A}''$, 
and if $\th' : M'' \to M'$
is a backward morphism in $\cat{D}(B')$ over $w'$, then 
\begin{equation} \label{eqn:880}
\opn{Sq}_{(w' / v') \circ (w / v)}^{(\til{w}' / \til{v}') \circ 
(\til{w} / \til{v})} (\th \circ \th') = 
\opn{Sq}_{w / v}^{\til{w} / \til{v}} (\th) \circ
\opn{Sq}_{w' / v'}^{\til{w}' / \til{v}'} (\th') ,
\end{equation}
as backward morphisms 
$\opn{Sq}_{B'' / A''}^{\til{B}'' / \til{A}''} (M'') \to
\opn{Sq}_{B / A}^{\til{B} / \til{A}} (M)$
in $\cat{D}(B)$ over $w' \circ w$. 
This is easy to see from the uniqueness up to homotopy of the resolutions of 
the DG modules. 

In case $B' / A' = B / A$ and $w / v = \opn{id}_B / \opn{id}_A$, 
so $\th : M' \to M$ is a morphism in $\cat{D}(B)$, we shall 
often use the following variant of formula (\ref{eqn:628}):
\begin{equation} \label{eqn:1490} 
\opn{Sq}_{B / A}^{\til{w} / \til{v}} (\th) := 
\opn{Sq}_{\opn{id}_B / \opn{id}_A}^{\til{w} / \til{v}} (\th) :
\opn{Sq}_{B / A}^{\til{B}' / \til{A}'} (M') \to
\opn{Sq}_{B / A}^{\til{B} / \til{A}} (M) . 
\end{equation}

Here is the first main result of \cite{Ye4}. 

\begin{thm}[Existence of Squares, {\cite[Theorem 0.3.4]{Ye4}}] \label{thm:631}
Let $A \to B$ be a homomorphism in $\cat{DGRng}$, and let $M \in \cat{D}(B)$.
There is a DG $B$-module $\opn{Sq}_{B / A}(M)$, 
together with a collection 
\[ \bsym{\opn{sq}}_{B / A, M} =
\bigl\{ \opn{sq}_{B / A, M}^{\til{B} / \til{A}} \bigr\}
_{\til{B} / \til{A} \, \in \, \cat{KFRes}(B / A)} \]
of isomorphisms 
\[ \opn{sq}^{\til{B} / \til{A}}_{B / A, M} : 
\opn{Sq}_{B / A}(M) \iso \opn{Sq}_{B / A}^{\til{B} / \til{A}}(M) \]
in $\cat{D}(B)$,
satisfying the following condition\tup{:}
\begin{enumerate}
\item[($*$)] For every morphism 
$\til{w} / \til{v} : \til{B} / \til{A} \to \til{B}' / \til{A}'$
in $\cat{KFRes}(B / A)$,
the diagram 
\[ \begin{tikzcd} [column sep = 14ex, row sep = 6ex] 
\opn{Sq}_{B / A}(M)
\ar[dr, "{\opn{sq}^{\til{B}' / \til{A}'}_{B / A, M}}"]
\ar[d, "{\opn{sq}^{\til{B} / \til{A}}_{B / A, M}}"']
\\
\opn{Sq}_{B / A}^{\til{B}' / \til{A}'}(M)
\ar[r, "{\opn{Sq}_{B / A}^{\til{w} / \til{v}}(\mrm{id}_M)}"']
&
\opn{Sq}_{B / A}^{\til{B} / \til{A}}(M)
\end{tikzcd} \]
of isomorphisms in $\cat{D}(B)$ is commutative. 
\end{enumerate}

The object 
$\opn{Sq}_{B / A}(M) \in \cat{D}(B)$
is unique, up to a unique isomorphism that respects the collection 
of isomorphisms $\bsym{\opn{sq}}_{B / A, M}$. 
\end{thm}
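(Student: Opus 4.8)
The plan is to realize $\opn{Sq}_{B / A}(M)$ as the essentially-constant value of a functor on $\cat{KFRes}(B / A)$; existence, the isomorphisms $\bsym{\opn{sq}}_{B / A, M}$, condition~($*$), and the uniqueness clause will all be read off from the fact that this functor lands in isomorphisms over a cofiltered index category. Throughout, $M \in \cat{D}(B)$ is fixed. First I would assemble the resolved squares into a functor $\Phi_M : \cat{KFRes}(B / A)^{\mrm{op}} \to \cat{D}(B)$, with $\Phi_M(\til{B} / \til{A}) := \opn{Sq}_{B / A}^{\til{B} / \til{A}}(M)$ on objects and $\Phi_M(\til{w} / \til{v}) := \opn{Sq}_{B / A}^{\til{w} / \til{v}}(\opn{id}_M)$ on a morphism $\til{w} / \til{v} : \til{B} / \til{A} \to \til{B}' / \til{A}'$, in the sense of the variant (\ref{eqn:1490}) of (\ref{eqn:628}); compatibility with composition is formula (\ref{eqn:880}). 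For a morphism of K-flat resolutions the relations $r' \circ v = r$ and $s' \circ w = s$ show that $v$ and $w$ are quasi-isomorphisms, so by the compatibility of $\opn{RHom}$ and $\ot^{\mrm{L}}$ with restriction along quasi-isomorphisms (cf.\ (\ref{eqn:703}), (\ref{eqn:719})), together with the explicit model (\ref{eqn:670})--(\ref{eqn:890}) (in which $\til{\ze}$ then becomes a quasi-isomorphism), the morphism $\opn{Sq}_{B / A}^{\til{w} / \til{v}}(\opn{id}_M)$ is an isomorphism in $\cat{D}(B)$. Thus $\Phi_M$ sends every morphism of $\cat{KFRes}(B / A)$ to an isomorphism.

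Next I would establish that $\cat{KFRes}(B / A)$ is cofiltered and that $\Phi_M$ annihilates its monodromy. Nonemptiness is the existence of semi-free resolutions of pairs, \cite[Corollary 3.25]{Ye4}. Given objects $U_0 = \til{B}_0 / \til{A}_0$ and $U_1 = \til{B}_1 / \til{A}_1$, form the strict fibre-product pair $(\til{B}_0 \times_B \til{B}_1) / (\til{A}_0 \times_A \til{A}_1)$: a strict pullback of a surjective quasi-isomorphism is again one, so its two structure morphisms to $B$ and to $A$ are surjective quasi-isomorphisms; a semi-free resolution $W$ of this pair then lies in $\cat{KFRes}(B / A)$, and the two projections furnish morphisms $W \to U_0$ and $W \to U_1$ in $\cat{KFRes}(B / A)$. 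Finally, any two parallel morphisms $f, g : W \to U'$ in $\cat{KFRes}(B / A)$ satisfy $\Phi_M(f) = \Phi_M(g)$: in the model (\ref{eqn:890}) the map $\opn{Sq}_{B / A}^{\til{w} / \til{v}}(\opn{id}_M)$ is $\phi \mapsto \til{\ze} \circ \phi$, and the homotopy class of $\til{\ze}$ is pinned down by the requirement that (\ref{eqn:670}) commute up to homotopy with $\th = \opn{id}_M$ --- which fixes the homotopy class of $\til{\th}$, and hence of $\til{\ze}$ --- independently of the particular $\til{v}, \til{w}$ underlying $f$ or $g$; here one uses only the uniqueness up to homotopy of K-projective and K-injective resolutions.

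From these two facts the theorem follows formally. Fix $U_0 \in \cat{KFRes}(B / A)$, set $\opn{Sq}_{B / A}(M) := \Phi_M(U_0)$, and for each $U = \til{B} / \til{A}$ choose a common refinement $W$ of $U_0$ and $U$ and put $\opn{sq}_{B / A, M}^{\til{B} / \til{A}} := \Phi_M(W \to U)^{-1} \circ \Phi_M(W \to U_0)$. Codirectedness together with the monodromy statement makes this independent of $W$ (pass to a common refinement of two choices), and the same two facts give ($*$): a common refinement $W$ of $U_0$ and $\til{B} / \til{A}$ also refines $\til{B}' / \til{A}'$ through $\til{w} / \til{v}$, and computing $\opn{sq}^{\til{B} / \til{A}}$ and $\opn{sq}^{\til{B}' / \til{A}'}$ from this single $W$ yields $\opn{sq}_{B / A, M}^{\til{B} / \til{A}} = \opn{Sq}_{B / A}^{\til{w} / \til{v}}(\opn{id}_M) \circ \opn{sq}_{B / A, M}^{\til{B}' / \til{A}'}$. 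For uniqueness, given another datum $(\opn{Sq}'_{B / A}(M), \bsym{\opn{sq}}')$, the composite $\opn{Sq}_{B / A}(M) \xrightarrow{\opn{sq}^{U}_{B/A,M}} \opn{Sq}_{B / A}^{U}(M) \xleftarrow{\ \simeq\ } \opn{Sq}'_{B / A}(M)$ (second arrow the $U$-component of $\bsym{\opn{sq}}'$) is, by ($*$) for both families and the connectedness of $\cat{KFRes}(B / A)$, independent of $U$, and is manifestly the only isomorphism intertwining the two collections.

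The one genuinely non-formal point is the monodromy statement in the second step: that the explicit morphisms $\opn{Sq}_{B / A}^{\til{w} / \til{v}}(\opn{id}_M)$ in $\cat{D}(B)$ are independent of the chosen morphism of K-flat resolutions. I expect this to be the main obstacle, as it requires tracking the homotopies in diagram (\ref{eqn:670}) and invoking the homotopy-uniqueness of the auxiliary K-projective and K-injective resolutions; conceptually it is the assertion that $\cat{KFRes}(B / A)$ has connected and simply connected classifying space, so that any functor out of it valued in isomorphisms is canonically constant.
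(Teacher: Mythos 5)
The statement is imported verbatim from \cite[Theorem 0.3.4]{Ye4} and is not reproved in this paper, so there is no in-paper proof to compare against directly; I will assess your argument on its own terms and against the strategy visible from \cite{Ye4}. Your overall plan -- view $\til{B}/\til{A} \mapsto \opn{Sq}^{\til{B}/\til{A}}_{B/A}(M)$ as a functor on $\cat{KFRes}(B/A)$ that inverts every morphism, establish codirectedness, kill monodromy, and read off the essentially constant value -- is a legitimate organizing framework, and the codirectedness part is fine: nonemptiness is \cite[Corollary 3.25]{Ye4}, and a semi-free resolution of the strict fibre-product pair $(\til{B}_0 \times_B \til{B}_1)/(\til{A}_0 \times_A \til{A}_1)$ does refine both $\til{B}_0/\til{A}_0$ and $\til{B}_1/\til{A}_1$, because the projections out of a fibre product of surjective quasi-isomorphisms are again surjective quasi-isomorphisms (their kernels coincide with the acyclic kernels of the opposite legs).

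The gap is in the monodromy step, and unfortunately that step is the entire substance of the theorem. You assert that for two parallel morphisms $\til{w}_1/\til{v}_1,\, \til{w}_2/\til{v}_2 : W \to U'$ in $\cat{KFRes}(B/A)$ the homotopy class of $\til{\ze}$ in (\ref{eqn:670}) is pinned down ``independently of the particular $\til{v}, \til{w}$'', citing only the uniqueness-up-to-homotopy of K-projective and K-injective resolutions. But $\til{v}$ and $\til{w}$ are baked into that diagram: $\til{P}'$ is regarded as a DG $\til{B}$-module through $\opn{Rest}_{\til{w}}$, so the K-projective resolution $\til{Q}' \to \til{P}'$ resolves a $\til{w}$-dependent module, and the arrow $\til{\ga} \ot_{\til{v}} \til{\ga}$ in (\ref{eqn:670}) literally is base change along $\til{v}$. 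Changing $\til{w}_1/\til{v}_1$ to $\til{w}_2/\til{v}_2$ changes these structures, hence the diagram, hence a priori the homotopy class of $\til{\ze}$ in $\cat{C}_{\mrm{str}}(\til{B} \ot_{\til{A}} \til{B})$. What you actually need to prove is that any two parallel morphisms in $\cat{KFRes}(B/A)$ are connected by a DG ring homotopy over $B/A$, and that such a homotopy forces the two resulting $\til{\ze}$'s to be homotopic; this is precisely the content of the lifting-and-homotopy machinery in \cite[Theorem 3.22 and Section 5]{Ye4} (see also Remark \ref{rem:650} in this paper, which patches a small error in that very theorem). Note also that \cite{Ye4} packages the comparison differently -- around a fixed universal semi-free resolution, with comparison morphisms obtained by lifting into it, exactly as is done in the proof of Theorem \ref{thm:780} here -- rather than via a cofiltered index category; but under either packaging the homotopy lemma is what makes the argument close, and it is absent from your sketch.
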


Let us make explicit the uniqueness property of the object
$\opn{Sq}_{B / A}(M)$ with its collection of isomorphisms
$\bsym{\opn{sq}}_{B / A, M}$. 
Suppose we are given another DG module 
$\opn{Sq}'_{B / A}(M) \in \cat{D}(B)$,
and a collection $\bsym{\opn{sq}}'_{B / A, M}$
of isomorphisms
$\opn{sq}'^{\, \til{B} / \til{A}}_{B / A, M} : 
\opn{Sq}'_{B / A}(M) \iso \opn{Sq}_{B / A}^{\til{B} / \til{A}}(M)$
in $\cat{D}(B)$, satisfying condition ($*$). 
Then there is a unique isomorphism 
$\si : \opn{Sq}_{B / A}(M) \iso \opn{Sq}'_{B / A}(M)$
in $\cat{D}(B)$ satisfying this condition: for every resolution 
$\til{B} / \til{A} \in \cat{KFRes}(B / A)$
the diagram 
\[ \begin{tikzcd} [column sep = 8ex, row sep = 6ex] 
\opn{Sq}_{B / A}(M)
\ar[d, "{\opn{sq}^{\til{B} / \til{A}}_{B / A, M}}"']
\ar[r, "{\si}"]
&
\opn{Sq}'_{B / A}(M)
\ar[dl, "{\opn{sq}'^{\, \til{B} / \til{A}}_{B / A, M}}"]
\\
\opn{Sq}_{B / A}^{\til{B} / \til{A}}(M)
\end{tikzcd} \]
of isomorphisms in $\cat{D}(B)$ is commutative. 

Note that in the paper \cite{Ye4}, the isomorphism 
$\opn{sq}^{\til{B} / \til{A}}_{B / A, M}$ was denoted 
by $\opn{sq}^{\til{B} / \til{A}}$.

\begin{dfn} \label{dfn:670}
Let $A \to B$ be a homomorphism in $\cat{DGRng}$, and let $M \in \cat{D}(B)$.
The DG $B$-module $\opn{Sq}_{B / A}(M)$ from Theorem \ref{thm:631}
is called the {\em square of $M$ over $B / A$}.
\end{dfn}
We shall usually keep the collection of isomorphisms
$\bsym{\opn{sq}}_{B / A, M}$, which comes with the object $\opn{Sq}_{B / A}(M)$ 
and determines it, implicit.\
The pronunciation of the expression ``the square of $M$ over $B / A$'' is this:
``the square of $M$ over $B$ relative to $A$''. 

\begin{dfn} \label{dfn:630} \mbox{}
\begin{enumerate}
\item By a {\em triple of DG rings} $C / B / A$ we mean homomorphisms
$A \xar{u} B \xar{v} C$ in $\cat{DGRng}$. 

\item A {\em morphism of triples} of DG rings 
\[ t / s / r : \til{C} / \til{B} / \til{A} \to C / B / A \]
is a commutative diagram
\[ \begin{tikzcd} [column sep = 6ex, row sep = 5ex] 
\til{A}
\arrow[r, "{\til{u}}"]
\ar[d, "{r}"]
&
\til{B}
\ar[d, "{s}"]
\arrow[r, "{\til{v}}"]
&
\til{C}
\ar[d, "{t}"]
\\
A
\arrow[r, "{u}"]
&
B
\arrow[r, "{v}"]
&
C
\end{tikzcd} \]
in $\cat{DGRng}$.

\item A {\em K-flat resolution} of a triple of DG rings $C / B / A$ 
is a morphism of triples
$t / s / r$ as in item (2), in which $r$, $s$ and $t$ are surjective 
quasi-isomorphisms, and $\til{u}$ and $\til{v}$ are K-flat homomorphisms. 
\end{enumerate}
\end{dfn}

Given a triple $C / B / A$ of DG rings, the pronunciation of the expression 
``$C / B / A$'' is ``C relative to B relative to A''.

K-flat resolutions of triples exist; see \cite[Proposition 7.9]{Ye4}.  
Note that a triple of DG rings 
$C / B / A = (A \xar{u} B \xar{v} C)$ can be viewed as a morphism of 
pairs of DG rings $v / \opn{id}_A : B / A \to C / A$. 
Since $\til{v} \circ \til{u}$ is K-flat too, a K-flat resolution 
$\til{C} / \til{B} / \til{A}$ of 
$C / B / A$ can be viewed as a K-flat resolution 
$\til{B} / \til{A} \to \til{C} / \til{A}$
of $B / A \to C / A$. This will be used in the next theorem, which is the 
second main result of \cite{Ye4}. 

\begin{thm}[Square of a Backward Morphism, {\cite[Theorem 0.3.5]{Ye4}}] 
\label{thm:632}
Let $A \to B \xar{v} C$ be homomorphisms in $\cat{DGRing}$, 
let $M \in \cat{D}(B)$, let $N \in \cat{D}(C)$, and let 
$\th : N \to M$ be a backward morphism in $\cat{D}(B)$ over $v$.
There is a unique backward morphism 
\[ \opn{Sq}_{v / \mrm{id}_A}(\th) \ : \
\opn{Sq}_{C / A}(N) \ \to \ \opn{Sq}_{B / A}(M) \]
in $\cat{D}(B)$ over $v$, satisfying the condition\tup{:}
\begin{enumerate}
\item[($**$)] For every K-flat resolution
$\til{A} \to \til{B} \xar{\til{v}} \til{C}$
of the triple of DG rings $A \to B \xar{v} C$, the diagram 
\[ \begin{tikzcd} [column sep = 12ex, row sep = 6ex] 
\opn{Sq}_{C / A}(N)
\ar[r, "{\opn{Sq}_{v / \mrm{id}_A}(\th)}"]
\ar[d, "{\opn{sq}^{\til{C} / \til{A}}_{C / A, N}}"', "{\simeq}"]
&
\opn{Sq}_{B / A}(M)
\ar[d, "{\opn{sq}^{\til{B} / \til{A}}_{B / A, M}}", "{\simeq}"']
\\
\opn{Sq}_{C / A}^{\til{C} / \til{A}}(N)
\ar[r, "{\opn{Sq}_{v / \mrm{id}_A}^{\til{v} / \mrm{id}_{\til{A}}}(\th)}"]
&
\opn{Sq}_{B / A}^{\til{B} / \til{A}}(M)
\end{tikzcd} \]
in $\cat{D}(B)$ is commutative. 
\end{enumerate}
\end{thm}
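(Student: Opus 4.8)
The plan is to settle uniqueness immediately, then to \emph{construct} the morphism using a single chosen resolution, and finally to verify condition ($**$) for \emph{every} K-flat resolution of the triple; this last verification carries essentially all the weight. Uniqueness is formal: by \cite[Proposition 7.9]{Ye4} the triple $A \to B \xar{v} C$ has a K-flat resolution $\til A \to \til B \xar{\til v} \til C$, which (as recalled just before the theorem) we regard as a morphism of pairs $\til v / \mrm{id}_{\til A} : \til B / \til A \to \til C / \til A$ with $\til B / \til A \in \cat{KFRes}(B / A)$ and $\til C / \til A \in \cat{KFRes}(C / A)$. Since the isomorphisms $\opn{sq}^{\til B / \til A}_{B / A, M}$ and $\opn{sq}^{\til C / \til A}_{C / A, N}$ of Theorem \ref{thm:631} are invertible, condition ($**$) for this resolution forces
\[
\opn{Sq}_{v / \mrm{id}_A}(\th) \, = \,
\bigl( \opn{sq}^{\til B / \til A}_{B / A, M} \bigr)^{-1} \circ
\opn{Sq}_{v / \mrm{id}_A}^{\til v / \mrm{id}_{\til A}}(\th) \circ
\opn{sq}^{\til C / \til A}_{C / A, N} ,
\]
where $\opn{Sq}_{v / \mrm{id}_A}^{\til v / \mrm{id}_{\til A}}(\th)$ is the resolved square of the backward morphism from formula (\ref{eqn:628}). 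So at most one such morphism exists.

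For existence, fix one K-flat resolution $\til{\mcal{T}}_0 = (\til A_0 \to \til B_0 \xar{\til v_0} \til C_0)$ of the triple and \emph{define} $\opn{Sq}_{v / \mrm{id}_A}(\th)$ by the formula above applied to $\til{\mcal{T}}_0$; this is a backward morphism in $\cat{D}(B)$ over $v$, and ($**$) holds for $\til{\mcal{T}}_0$ by construction. The heart of the proof is the following compatibility: for any morphism $\til{\mcal{T}} \to \til{\mcal{T}}'$ of K-flat resolutions of $C / B / A$, which induces morphisms $\til w / \til r : \til B / \til A \to \til B' / \til A'$ in $\cat{KFRes}(B / A)$ and $\til t / \til r : \til C / \til A \to \til C' / \til A'$ in $\cat{KFRes}(C / A)$, the square
\[
\begin{tikzcd} [column sep = 13ex, row sep = 5ex]
\opn{Sq}_{C / A}^{\til C' / \til A'}(N)
\ar[r, "{\opn{Sq}_{v / \mrm{id}_A}^{\til v' / \mrm{id}_{\til A'}}(\th)}"]
\ar[d, "{\opn{Sq}_{C / A}^{\til t / \til r}(\mrm{id}_N)}"']
&
\opn{Sq}_{B / A}^{\til B' / \til A'}(M)
\ar[d, "{\opn{Sq}_{B / A}^{\til w / \til r}(\mrm{id}_M)}"]
\\
\opn{Sq}_{C / A}^{\til C / \til A}(N)
\ar[r, "{\opn{Sq}_{v / \mrm{id}_A}^{\til v / \mrm{id}_{\til A}}(\th)}"']
&
\opn{Sq}_{B / A}^{\til B / \til A}(M)
\end{tikzcd}
\]
commutes. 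I would prove this by factoring the backward morphism $\th$ through the morphism of resolutions in two ways: as $\th \circ \mrm{id}_N$ over the composite of pairs $B / A \xar{v / \mrm{id}_A} C / A \xar{\mrm{id}_C / \mrm{id}_A} C / A$, resolved by $\til v / \mrm{id}_{\til A}$ followed by $\til t / \til r$, and as $\mrm{id}_M \circ \th$ over $B / A \xar{\mrm{id}_B / \mrm{id}_A} B / A \xar{v / \mrm{id}_A} C / A$, resolved by $\til w / \til r$ followed by $\til v' / \mrm{id}_{\til A'}$. Applying the functoriality formula (\ref{eqn:880}) to each factorization identifies the two composites around the square with the resolved squares $\opn{Sq}_{v / \mrm{id}_A}^{(\til t / \til r) \circ (\til v / \mrm{id}_{\til A})}(\th)$ and $\opn{Sq}_{v / \mrm{id}_A}^{(\til v' / \mrm{id}_{\til A'}) \circ (\til w / \til r)}(\th)$; these coincide because the two composite morphisms of pairs $\til B / \til A \to \til C' / \til A'$ agree, which is precisely the equality $\til v' \circ \til w = \til t \circ \til v$ built into the morphism $\til{\mcal{T}} \to \til{\mcal{T}}'$ of triples.

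Granting the compatibility square, the rest is a short diagram chase. Condition ($*$) of Theorem \ref{thm:631} gives $\opn{sq}^{\til C / \til A}_{C / A, N} = \opn{Sq}_{C / A}^{\til t / \til r}(\mrm{id}_N) \circ \opn{sq}^{\til C' / \til A'}_{C / A, N}$ and $\opn{sq}^{\til B / \til A}_{B / A, M} = \opn{Sq}_{B / A}^{\til w / \til r}(\mrm{id}_M) \circ \opn{sq}^{\til B' / \til A'}_{B / A, M}$; combined with the compatibility square and the invertibility of all the maps involved, this shows that ($**$) holds for $\til{\mcal{T}}$ if and only if it holds for $\til{\mcal{T}}'$ whenever the two are joined by a morphism of resolutions. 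Finally, any two K-flat resolutions of the triple are dominated by a third — this follows from the existence of K-flat resolutions of morphisms of triples, obtained in \cite{Ye4} just as for pairs, \cite[Corollary 3.25]{Ye4}, and triples, \cite[Proposition 7.9]{Ye4} — so the category of K-flat resolutions of $C / B / A$ is connected, and ($**$) for $\til{\mcal{T}}_0$ propagates to every resolution. This completes the proof.

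I expect the main obstacle to be the compatibility square: one must keep careful track of the source/target directions of the backward morphisms, the primed-versus-unprimed resolution indices, and the order of composition in (\ref{eqn:880}), and verify that the two factorizations of $\th$ really do reproduce the two composites around the square. Once that is in place the argument is purely formal, driven only by Theorem \ref{thm:631} and the functoriality of resolved squares.
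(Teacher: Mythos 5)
Your proof is correct and uses the strategy the paper applies to the analogous Theorems \ref{thm:780} and \ref{thm:1285}; note that the paper itself does not prove Theorem \ref{thm:632}, but defers to \cite[Theorem 0.3.5]{Ye4}. The compatibility square is the genuine content, and your factorization trick handles it correctly: writing $\th$ as $\th \circ \mrm{id}_N$ over the chain $B/A \to C/A \to C/A$ (resolved by $\til{v}/\mrm{id}_{\til{A}}$ then $\til{t}/\til{r}$) and as $\mrm{id}_M \circ \th$ over $B/A \to B/A \to C/A$ (resolved by $\til{w}/\til{r}$ then $\til{v}'/\mrm{id}_{\til{A}'}$), formula (\ref{eqn:880}) turns both composites around the square into resolved squares of $\th$ over the same composite morphism of pairs, since $\til{t} \circ \til{v} = \til{v}' \circ \til{w}$; the quasi-isomorphism hypotheses in (\ref{eqn:880}) hold because the relevant base maps are all identities of $A$ or QIs between resolutions of $A$.

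The one point I would sharpen is the final connectedness step. Existence of K-flat resolutions of morphisms of triples does not, by itself, supply a morphism into two pre-assigned resolutions: it produces a resolved morphism of triples, but does not let you choose its source and target. The clean route, and the one the paper uses in the proof of Theorem \ref{thm:780}, is to take your base-point resolution to be a universal one, built from commutative semi-free DG ring resolutions via \cite[Theorem 3.21(1)]{Ye4}, and then invoke the lifting property \cite[Theorem 3.22(1)]{Ye4} to obtain a morphism from it to an arbitrary K-flat resolution of the triple. Then ($**$) transfers from this weakly initial resolution to all others by your compatibility square, with no appeal to full connectedness of $\cat{KFRes}$. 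Everything else is in order.
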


We sometimes write 
\begin{equation} \label{eqn:1500}
\opn{Sq}_{v / A}(\th) = \opn{Sq}_{C / B / A}(\th) 
:= \opn{Sq}_{v / \mrm{id}_A}(\th) .
\end{equation}
The expression ``over $u / A$'' is pronounced ``over $u$ relative 
to $A$''. 

The next proposition is implicit in \cite{Ye4}. 

\begin{prop} \label{prop:890}
Let $A \to B \xar{v} B' \xar{v'} B''$ be homomorphisms in $\cat{DGRing}$, 
let $M \in \cat{D}(B)$, $M' \in \cat{D}(B')$ and $M'' \in \cat{D}(B'')$
be DG modules, let $\th : M' \to M$ be a backward morphism in $\cat{D}(B)$ 
over $v$, and let  $\th' : M'' \to M'$  be a backward morphism in $\cat{D}(B')$ 
over $v'$. Then there is equality 
\[ \opn{Sq}_{v' \circ v / \mrm{id}_A}(\th \circ \th') =  
\opn{Sq}_{v / \mrm{id}_A}(\th) \circ 
\opn{Sq}_{v' / \mrm{id}_A}(\th') \]
of backward morphisms 
$\opn{Sq}_{B'' / A}(M'') \to \opn{Sq}_{B / A}(M)$
in $\cat{D}(B)$ over $v' \circ v$. 
\end{prop}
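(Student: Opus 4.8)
The plan is to reduce the equality to the uniqueness clause of Theorem \ref{thm:632}. Recall that $\opn{Sq}_{v/\mrm{id}_A}(\th)$ is characterized there as the \emph{unique} backward morphism in $\cat{D}(B)$ over $v$ that is compatible, via the canonical isomorphisms $\bsym{\opn{sq}}$ of Theorem \ref{thm:631}, with the resolved squares $\opn{Sq}_{v/\mrm{id}_A}^{\til v/\mrm{id}_{\til A}}(\th)$; and by formula (\ref{eqn:880}) these resolved squares obey a composition law. Consequently it suffices to fix \emph{one} K-flat resolution of the chain $A\to B\xar{v}B'\xar{v'}B''$ and to chase the corresponding commutative squares.

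So first I would construct a K-flat resolution of this length-three chain: surjective quasi-isomorphisms $\til A\to A$, $\til B\to B$, $\til B'\to B'$, $\til B''\to B''$ together with K-flat DG ring homomorphisms $\til A\xar{\til u}\til B\xar{\til v}\til B'\xar{\til v'}\til B''$ forming a commutative ladder over the given chain. This is produced exactly as in \cite[Proposition 7.9]{Ye4}, with one more step appended: from a K-flat resolution $\til A\to\til B\xar{\til v}\til B'$ of the triple $A\to B\xar{v}B'$ one takes a semi-free (hence K-flat) DG $\til B'$-ring resolution $\til B''\to B''$ of the composite $\til B'\to B'\xar{v'}B''$. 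By transitivity of K-flatness, $\til B'$ and $\til B''$ are then K-flat over $\til A$ and $\til B''$ is K-flat over $\til B$; hence this single resolution simultaneously restricts to K-flat resolutions of the three triples $A\to B\xar{v}B'$, $A\to B'\xar{v'}B''$ and $A\to B\xar{v'\circ v}B''$ in the sense of Definition \ref{dfn:630}(3), and the pairs $\til B/\til A$, $\til B'/\til A$, $\til B''/\til A$ belong to $\cat{KFRes}(B/A)$, $\cat{KFRes}(B'/A)$, $\cat{KFRes}(B''/A)$ respectively. This is the only step requiring genuine care; everything after it is formal.

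Next I would apply condition ($**$) of Theorem \ref{thm:632} to $\th$ over the triple $A\to B\xar{v}B'$, and to $\th'$ over the triple $A\to B'\xar{v'}B''$, using the resolutions just built. Gluing the two resulting commutative squares along their common edge $\opn{sq}^{\til B'/\til A}_{B'/A,M'}$, and then invoking the composition law (\ref{eqn:880}) for resolved squares (legitimate because $\mrm{id}_A$ is a quasi-isomorphism), produces
\[ \opn{sq}^{\til B/\til A}_{B/A,M}\circ\opn{Sq}_{v/\mrm{id}_A}(\th)\circ\opn{Sq}_{v'/\mrm{id}_A}(\th')
= \opn{Sq}_{v'\circ v/\mrm{id}_A}^{\til v'\circ\til v/\mrm{id}_{\til A}}(\th\circ\th')\circ\opn{sq}^{\til B''/\til A}_{B''/A,M''} . \]
On the other hand, condition ($**$) applied to $\th\circ\th'$ over the telescoped triple $A\to B\xar{v'\circ v}B''$, with the \emph{same} resolution, gives
\[ \opn{sq}^{\til B/\til A}_{B/A,M}\circ\opn{Sq}_{v'\circ v/\mrm{id}_A}(\th\circ\th')
= \opn{Sq}_{v'\circ v/\mrm{id}_A}^{\til v'\circ\til v/\mrm{id}_{\til A}}(\th\circ\th')\circ\opn{sq}^{\til B''/\til A}_{B''/A,M''} . \]
Comparing these two identities and cancelling the isomorphism $\opn{sq}^{\til B/\til A}_{B/A,M}$ on the left yields $\opn{Sq}_{v/\mrm{id}_A}(\th)\circ\opn{Sq}_{v'/\mrm{id}_A}(\th') = \opn{Sq}_{v'\circ v/\mrm{id}_A}(\th\circ\th')$, which is the assertion.

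The main obstacle, as indicated, is the construction of the chain-level K-flat resolution and the verification that it restricts compatibly to all three triples — in particular to the telescoped triple $A\to B\xar{v'\circ v}B''$ — which relies only on the existence of semi-free DG ring resolutions and on transitivity of K-flatness. Beyond that there is nothing delicate: no sign computations and no triangulated-category subtleties arise, since the whole argument takes place among morphisms in the derived categories and uses only the uniqueness statements of Theorems \ref{thm:631} and \ref{thm:632} together with (\ref{eqn:880}).
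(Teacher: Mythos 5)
Your proof is correct and follows essentially the same route as the paper's: build a K-flat resolution of the length-three chain, invoke the composition law (\ref{eqn:880}) for resolved squares, and use condition ($**$) of Theorem \ref{thm:632} to transfer the identity from resolved to unresolved squares. You spell out the gluing of the two commutative squares from ($**$) and the cancellation step more explicitly than the paper does, but the substance is identical.
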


\begin{proof}
Choose a K-flat DG ring resolution 
$A \to \til{B} \xar{\til{v}} \til{B}' \xar{\til{v}'} \til{B}''$
of the homomorphisms $A \to B \xar{v} B' \xar{v'} B''$;
by this we mean the obvious generalization of Definition \ref{dfn:630}(3).
According to \cite[Proposition 5.17 and Definitions 7.3 and 7.4]{Ye4}
there is equality 
\[ \opn{Sq}^{\til{v}' \circ \til{v} / \mrm{id}_A}_{v' \circ v / \mrm{id}_A}
(\th \circ \th') =  
\opn{Sq}^{\til{v} / \mrm{id}_A}_{v / \mrm{id}_A}(\th) \circ 
\opn{Sq}^{\til{v}' / \mrm{id}_A}_{v' / \mrm{id}_A}(\th') \]
of backward morphisms 
$\opn{Sq}^{\til{B}'' / A}_{B'' / A}(M'') \to 
\opn{Sq}^{\til{B} / A}_{B / A}(M)$
in $\cat{D}(B)$ over $v' \circ v$. (This is formula (\ref{eqn:880}) above.)
Now we use condition ($**$) of Theorem \ref{thm:632}.
\end{proof}

\begin{dfn} \label{dfn:880}
Let $A \to B$ be a homomorphism in $\cat{DGRing}$, and 
$\th : M \to N$ be a morphism in $\cat{D}(B)$. We define 
the morphism 
\[ \opn{Sq}_{B / A}(\th) : \opn{Sq}_{B / A}(M) \to \opn{Sq}_{B / A}(N) \]
in $\cat{D}(B)$ to be 
$\opn{Sq}_{B / A}(\th) := \opn{Sq}_{\mrm{id}_B / \mrm{id}_A}(\th)$.
\end{dfn}

Like in the proof of Proposition \ref{prop:890}, it is easy to see that 
for a pair of DG rings $B / A$ and $M \in \cat{D}(B)$, there is equality 
$\opn{Sq}_{B / A}(\mrm{id}_M) = \mrm{id}_{\opn{Sq}_{B / A}(M)}$
of automorphisms of $\opn{Sq}_{B / A}(M)$ in 
$\cat{D}(B)$.
We see that:

\begin{cor} \label{cor:880}
Given a DG ring homomorphism $A \to B$, there is a functor 
$\opn{Sq}_{B / A} : \cat{D}(B) \to \cat{D}(B)$,
whose values on objects is specified in Definition \ref{dfn:670}, 
and whose values on morphisms is specified in Definition \tup{\ref{dfn:880}}. 
\end{cor}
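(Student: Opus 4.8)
The plan is to verify the two functoriality axioms, observing that essentially all the work has already been carried out in Theorem~\ref{thm:631}, Theorem~\ref{thm:632} and Proposition~\ref{prop:890}. On objects, $\opn{Sq}_{B / A}$ sends $M \in \cat{D}(B)$ to the object $\opn{Sq}_{B / A}(M) \in \cat{D}(B)$ furnished by Theorem~\ref{thm:631} (Definition~\ref{dfn:670}); on a morphism $\th : M \to N$ in $\cat{D}(B)$ it returns $\opn{Sq}_{B / A}(\th) = \opn{Sq}_{\opn{id}_B / \opn{id}_A}(\th)$ as in Definition~\ref{dfn:880}, which is again a morphism in $\cat{D}(B)$ by Theorem~\ref{thm:632} applied with $C = B$ and $v = \opn{id}_B$. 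So the assignment has the correct source and target.

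For preservation of identities, I would argue as in the remark just before the corollary. Fix a K-flat resolution $\til{B} / \til{A} \in \cat{KFRes}(B / A)$ and inspect the explicit construction of $\opn{Sq}^{\til{B} / \til{A}}_{\opn{id}_B / \opn{id}_A}(\opn{id}_M)$, i.e.\ diagram (\ref{eqn:670}): when $\th = \opn{id}_M$ one may take the auxiliary resolutions $\til{\ga}$ and $\til{\th}$, and hence the map $\til{\ze}$, to be identity maps, so that $\opn{Sq}^{\til{B} / \til{A}}_{\opn{id}_B / \opn{id}_A}(\opn{id}_M) = \opn{id}$. Compatibility with the normalizing isomorphisms $\opn{sq}^{\til{B} / \til{A}}_{B / A, M}$ — condition ($**$) of Theorem~\ref{thm:632} — then forces $\opn{Sq}_{B / A}(\opn{id}_M) = \opn{id}_{\opn{Sq}_{B / A}(M)}$.

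For preservation of composition, let $\th' : L \to M$ and $\th : M \to N$ be composable morphisms in $\cat{D}(B)$. I would invoke Proposition~\ref{prop:890} in the degenerate situation where the triple $A \to B \xar{v} B' \xar{v'} B''$ is taken to be $A \to B \xar{\opn{id}_B} B \xar{\opn{id}_B} B$; then backward morphisms over $v$ and over $v'$ are just morphisms in $\cat{D}(B)$, and the proposition yields $\opn{Sq}_{\opn{id}_B / \opn{id}_A}(\th \circ \th') = \opn{Sq}_{\opn{id}_B / \opn{id}_A}(\th) \circ \opn{Sq}_{\opn{id}_B / \opn{id}_A}(\th')$. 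By Definition~\ref{dfn:880} this is exactly $\opn{Sq}_{B / A}(\th \circ \th') = \opn{Sq}_{B / A}(\th) \circ \opn{Sq}_{B / A}(\th')$. Together with the previous paragraph this shows that $\opn{Sq}_{B / A} : \cat{D}(B) \to \cat{D}(B)$ is a functor. I do not expect a genuine obstacle here: the only point requiring care is the bookkeeping of resolutions in the identity step, and that is already absorbed into clause ($**$) of Theorem~\ref{thm:632} and the uniqueness/normalization built into Theorem~\ref{thm:631}.
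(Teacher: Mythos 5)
Your proof is correct and follows essentially the same route as the paper: the paper dispatches preservation of identities by remarking that it follows "like in the proof of Proposition~\ref{prop:890}" (i.e.\ by inspecting the resolved construction with a fixed K-flat resolution and invoking condition~($**$)), and it leaves composition to Proposition~\ref{prop:890} applied to the degenerate triple $A \to B \xar{\opn{id}} B \xar{\opn{id}} B$, which is exactly your argument. You have simply written out the details that the paper compresses into a one-line remark.
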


The functor $\opn{Sq}_{B / A}$ is not linear -- it is a {\em quadratic 
functor}, as Theorem \ref{thm:672} below shows. 
The category $\cat{D}(C)$ is $C^0$-linear. (In fact it is 
$\opn{H}^0(C)$-linear.)
Therefore, in the situation of Theorem \ref{thm:632}, the backward morphism 
$\th : N \to M$ can be multiplied by an element $c \in C^0$. More precisely, we 
have the backward morphism 
$c \cd \th := \th \circ (c \cd \opn{id}_N) : N \to M$. 

\begin{thm}[Squaring is a Quadratic Functor, {\cite[Theorem 7.16]{Ye4}}] 
\label{thm:672}
In the situation of Theorem \tup{\ref{thm:632}}, for every element $c \in C^0$ 
there is equality 
\[ \opn{Sq}_{v / \mrm{id}_A}(c \cd \th) = 
c^2 \cd \opn{Sq}_{v / \mrm{id}_A}(\th) \]
of backward morphisms 
$\opn{Sq}_{C / A}(N) \to \opn{Sq}_{B / A}(M)$ in $\cat{D}(B)$ over $v$. 
\end{thm}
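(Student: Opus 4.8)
The plan is to reduce the statement to the explicit ``resolved'' description of the squaring operation and then to compute directly with a cocycle lift of $c$. Fix once and for all a K-flat resolution $\til{A} \to \til{B} \xar{\til{v}} \til{C}$ of the triple $A \to B \xar{v} C$, in the sense of Definition \ref{dfn:630}(3). By condition $(**)$ of Theorem \ref{thm:632} the isomorphisms $\opn{sq}^{\til{C} / \til{A}}_{C / A, N}$ and $\opn{sq}^{\til{B} / \til{A}}_{B / A, M}$ intertwine $\opn{Sq}_{v / \mrm{id}_A}(-)$ with the resolved operation $\opn{Sq}^{\til{v} / \mrm{id}_{\til{A}}}_{v / \mrm{id}_A}(-)$, and since they are morphisms in $\cat{D}(C)$ and $\cat{D}(B)$ they are compatible with multiplication by elements of $C^0$. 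Hence it is enough to prove
\[ \opn{Sq}^{\til{v} / \mrm{id}_{\til{A}}}_{v / \mrm{id}_A}(c \cd \th) =
c^2 \cd \opn{Sq}^{\til{v} / \mrm{id}_{\til{A}}}_{v / \mrm{id}_A}(\th) \]
as backward morphisms $\opn{Sq}^{\til{C} / \til{A}}_{C / A}(N) \to \opn{Sq}^{\til{B} / \til{A}}_{B / A}(M)$ in $\cat{D}(B)$ over $v$, these objects now being represented by honest complexes as in formulas (\ref{eqn:670}) and (\ref{eqn:890}).

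\emph{A cocycle lift of $c$.} Since $s : \til{C} \to C$ is a surjective quasi-isomorphism and $\til{C}$ is nonpositive, the map $\opn{Z}^0(\til{C}) \to \opn{Z}^0(C) = C^0$ is surjective; pick $\til{c} \in \opn{Z}^0(\til{C})$ with $s(\til{c}) = c$. Then multiplication by $\til{c}$ is a strict endomorphism $\mu_{\til{c}}$ of the K-projective resolution $\til{P}' \xar{\simeq} N$ over $\til{C}$, and it represents $c \cd \opn{id}_N$ in $\cat{D}(C)$. Running the construction preceding formula (\ref{eqn:890}) for $c \cd \th = \th \circ (c \cd \opn{id}_N)$ amounts to replacing $\til{\th} : \til{Q}' \to \til{P}$ by $\til{\th} \circ \nu$, where $\nu : \til{Q}' \to \til{Q}'$ is a lift over $\til{B}$ of $\mu_{\til{c}}$ along the quasi-isomorphism $\til{\ga} : \til{Q}' \to \til{P}'$ (it exists and is unique up to homotopy because $\til{Q}'$ is K-projective over $\til{B}$). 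Tensoring the homotopy $\til{\ga} \circ \nu \simeq \mu_{\til{c}} \circ \til{\ga}$ with itself over $\til{A}$ --- legitimate since $\til{Q}'$ and $\til{P}'$ are K-flat over $\til{A}$, cf.\ \cite[Proposition 2.6]{Ye4} --- shows that $(\til{\ga} \ot_{\til{v}} \til{\ga}) \circ (\nu \ot \nu)$ is homotopic to $\mu_{\til{c} \ot \til{c}} \circ (\til{\ga} \ot_{\til{v}} \til{\ga})$, where $\mu_{\til{c} \ot \til{c}}$ denotes multiplication by the cocycle $\til{c} \ot \til{c} \in (\til{C} \ot_{\til{A}} \til{C})^0$.

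\emph{Conclusion.} The K-injective resolution $\til{P}' \ot_{\til{A}} \til{P}' \to \til{I}'$ of diagram (\ref{eqn:670}) is $(\til{C} \ot_{\til{A}} \til{C})$-linear, so it commutes with multiplication by $\til{c} \ot \til{c}$; together with the previous paragraph, the K-injectivity of $\til{I}$, and the uniqueness up to homotopy of $\til{\ze}$, a short diagram chase in (\ref{eqn:670}) shows that the copy of $\til{\ze}$ relevant to $c \cd \th$ may be taken to be $\til{\ze} \circ \mu_{\til{c} \ot \til{c}}$, with $\mu_{\til{c} \ot \til{c}}$ now acting on $\til{I}'$. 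Applying $\opn{Hom}_{\til{v} \ot_{\til{A}} \til{v}}(v, -)$ from formula (\ref{eqn:890}) we get that $\opn{Sq}^{\til{v} / \mrm{id}_{\til{A}}}_{v / \mrm{id}_A}(c \cd \th)$ is represented by $\opn{Hom}_{\til{v} \ot_{\til{A}} \til{v}}(v, \til{\ze}) \circ \mu_{\til{c} \ot \til{c}}$, where $\mu_{\til{c} \ot \til{c}}$ is now multiplication by $\til{c} \ot \til{c}$ on $\opn{Hom}_{\til{C} \ot_{\til{A}} \til{C}}(C, \til{I}') \cong \opn{Sq}^{\til{C} / \til{A}}_{C / A}(N)$. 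Since $\til{c} \ot 1$ and $1 \ot \til{c}$ both restrict on $C$ to multiplication by $c$, their difference annihilates $\opn{Hom}_{\til{C} \ot_{\til{A}} \til{C}}(C, -)$, and therefore $\til{c} \ot \til{c} = (\til{c} \ot 1) \cd (1 \ot \til{c})$ acts on $\opn{Sq}^{\til{C} / \til{A}}_{C / A}(N)$ exactly as $c^2$ does in the $C^0$-module structure described before Theorem \ref{thm:672}. Hence $\opn{Sq}^{\til{v} / \mrm{id}_{\til{A}}}_{v / \mrm{id}_A}(c \cd \th) = \opn{Sq}^{\til{v} / \mrm{id}_{\til{A}}}_{v / \mrm{id}_A}(\th) \circ (c^2 \cd \opn{id}) = c^2 \cd \opn{Sq}^{\til{v} / \mrm{id}_{\til{A}}}_{v / \mrm{id}_A}(\th)$, which proves the theorem.

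The step I expect to be the main obstacle is the middle one: one must be careful that the homotopy class of $\nu \ot \nu$ is genuinely represented by the \emph{strict} operator $\mu_{\til{c} \ot \til{c}}$, which relies on the K-flatness over $\til{A}$ of the relevant resolutions and on the compatibility of $(-) \ot_{\til{A}} (-)$ with homotopies. A slightly cleaner, essentially equivalent alternative is to first invoke Proposition \ref{prop:890} with $v' = \mrm{id}_C$ and $\th' = c \cd \opn{id}_N$ to reduce the theorem to the single identity $\opn{Sq}_{C / A}(c \cd \opn{id}_N) = c^2 \cd \opn{id}_{\opn{Sq}_{C / A}(N)}$, which is then the same computation with $M$ and the $v$-direction suppressed.
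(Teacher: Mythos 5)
The paper itself does not give a proof of this theorem: it is imported from \cite[Theorem 7.16]{Ye4} as a cited result, so there is no in-paper argument to compare yours against. Your reconstruction is correct, and it captures the essential content of the cited theorem.

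Two observations. First, the mechanism at the very end --- that $\til{c} \ot 1_{\til{C}}$ and $1_{\til{C}} \ot \til{c}$ act identically on $\opn{Hom}_{\til{C} \ot_{\til{A}} \til{C}}(C, -)$ because $\phi$ is $(\til{C} \ot_{\til{A}} \til{C})$-linear and both elements map to $c$ under $\til{C} \ot_{\til{A}} \til{C} \to C$, so that $\til{c} \ot \til{c}$ acts as $c^{2}$ --- is precisely the content of Lemma \ref{lem:2110} in the paper, proved there for another pair and deployed in the cup-product construction of Section \ref{sec:cup-prod}. It is worth noting that you are using the same observation. Second, a small caveat on your stated worry: you do not need $\nu \ot \nu$ itself to be homotopic to a strict multiplication; what your argument actually establishes, and what suffices, is the weaker intertwining $(\til{\ga} \ot_{\til{v}} \til{\ga}) \circ (\nu \ot \nu) \simeq \mu_{\til{c} \ot \til{c}} \circ (\til{\ga} \ot_{\til{v}} \til{\ga})$, obtained by tensoring the lifting homotopy over $\til{A}$. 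So the ``main obstacle'' you flag is already cleared by what you wrote.

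Finally, the alternative you sketch in the last paragraph --- apply Proposition \ref{prop:890} with $v' = \opn{id}_C$ and $\th' = c \cd \opn{id}_N$ to reduce everything to the identity case $\opn{Sq}_{C / A}(c \cd \opn{id}_N) = c^{2} \cd \opn{id}_{\opn{Sq}_{C / A}(N)}$ --- is exactly the strategy the paper itself uses for the forward analogue: see the proof of Theorem \ref{thm:1860}, which obtains quadraticity over $v$ from the $\opn{id}_C$ case of Theorem \ref{thm:672} together with the compositionality of Theorem \ref{thm:1292}. So the reduction you propose is both valid and the cleaner way to organize the write-up; the cocycle-lift computation you gave then serves as the proof of the remaining identity-case assertion.
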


In our paper we will not need the full generality of Theorems \ref{thm:631}, 
\ref{thm:632} and \ref{thm:672} -- for us the DG rings $A, B$ and $C$ will 
always be rings. 

\begin{prop} \label{prop:1491}
Let $A \to B$ be a ring homomorphism, and let 
$B = \prod_{i \in I} B_i$ be a finite decomposition of the ring $B$, with 
projection homomorphisms $p_i : B \to B_i$. Let $M \in \cat{D}(B)$, and define
$M_i := B_i \ot_B M \in \cat{D}(B_i)$.
There is an isomorphism $M \cong \boplus_i M_i$ in $\cat{D}(B)$, and there are 
backward morphisms $e_i : M_i \to M$ in $\cat{D}(B)$ over $p_i$. 
By Theorem \ref{thm:632}, for every $i$ there is a backward morphism 
$\opn{Sq}_{p_i / A}(e_i) : \opn{Sq}_{B_i / A}(M_i) \to 
\opn{Sq}_{B / A}(M)$
in $\cat{D}(B)$ over $p_i$. Then 
\[ \sum\nolimits_{i} \opn{Sq}_{p_i / A}(e_i) :
\boplus_i \opn{Sq}_{B_i / A}(M_i) \to \opn{Sq}_{B / A}(M) \]
is an isomorphism in $\cat{D}(B)$.
\end{prop}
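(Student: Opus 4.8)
The plan is to reduce everything to the explicit resolved squares of Theorem \ref{thm:631}, to use the orthogonal idempotents of $B=\prod_i B_i$ to kill all ``cross terms'', and then to exploit the single special property of the projections $p_i\colon B\to B_i$ that is actually needed, namely $B_i\ot^{\mrm{L}}_B B_i\iso B_i$. First I would make the problem componentwise. Let $1=\sum_i\ep_i$ be the idempotent decomposition, so that $B_i=\ep_i B$ and each $\opn{Sq}_{B_i/A}(M_i)$ is a DG $B_i$-module. Applying the exact functor $B_i\ot_B(-)$ to $\phi:=\sum_j\opn{Sq}_{p_j/A}(e_j)$ annihilates the summands with $j\neq i$ (there $B_i\ot_B\opn{Sq}_{B_j/A}(M_j)=0$) and produces a morphism $\psi_i\colon\opn{Sq}_{B_i/A}(M_i)\to B_i\ot_B\opn{Sq}_{B/A}(M)$ in $\cat{D}(B_i)$; as $\opn{Sq}_{p_i/A}(e_i)$ is a backward morphism over $p_i$ it already factors through the direct summand $B_i\ot_B\opn{Sq}_{B/A}(M)=\ep_i\cdot\opn{Sq}_{B/A}(M)$, and $\psi_i$ is precisely that factorization. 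By Proposition \ref{prop:1475}, $\phi$ is an isomorphism in $\cat{D}(B)$ if and only if each $\psi_i$ is an isomorphism in $\cat{D}(B_i)$, so I fix $i$ and prove the latter.

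Next I would compute both sides using resolved squares. Take $\til{A}=A$ (allowed since $A$ is a ring), a semi-free DG $A$-ring resolution $\til{B}\to B$, and a K-flat resolution $A\to\til{B}\xar{\til{p}_i}\til{B}_i$ of the triple $A\to B\xar{p_i}B_i$ (these exist by \cite{Ye4}; recall that by definition $\til{B}_i$ is then K-flat over $\til{B}$). Combining the identifications $\opn{sq}$ of Theorem \ref{thm:631}, condition ($**$) of Theorem \ref{thm:632}, and the explicit formula (\ref{eqn:890}) for the square of the backward morphism $e_i$, the morphism $\psi_i$ is identified with the morphism
\[ f^{*}\colon\ \opn{RHom}_{\til{B}_i\ot_A\til{B}_i}\bigl(B_i,\,M_i\ot^{\mrm{L}}_A M_i\bigr)\ \longrightarrow\ \opn{RHom}_{\til{B}\ot_A\til{B}}\bigl(B_i,\,M_i\ot^{\mrm{L}}_A M_i\bigr) \]
induced by restriction of scalars along $f:=\til{p}_i\ot_A\til{p}_i\colon\til{B}\ot_A\til{B}\to\til{B}_i\ot_A\til{B}_i$. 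Here I use that $\opn{RHom}$ commutes with the finite decompositions $B=\boplus_k B_k$ and $M\ot^{\mrm{L}}_A M=\boplus_{k,l}M_k\ot^{\mrm{L}}_A M_l$, together with the vanishing $\opn{RHom}_{\til{B}\ot_A\til{B}}(B_k,\,M_l\ot^{\mrm{L}}_A M_m)=0$ unless $k=l=m$: a cocycle lift of $\ep_l\ot\ep_m$ acts as $\opn{id}$ on $M_l\ot^{\mrm{L}}_A M_m$ but as the scalar $\ep_k\ep_l\ep_m$ on $B_k$, and $\ep_k\ep_l\ep_m=0$ otherwise. In particular the target of $f^{*}$ is canonically the summand $\ep_i\cdot\opn{Sq}^{\til{B}/\til{A}}_{B/A}(M)$, matching $B_i\ot_B\opn{Sq}_{B/A}(M)$.

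It then remains to prove the key lemma, that $f^{*}$ is an isomorphism. By the adjunction isomorphism (\ref{eqn:1480}) for $f$, this is equivalent to the counit $(\til{B}_i\ot_A\til{B}_i)\ot^{\mrm{L}}_{\til{B}\ot_A\til{B}}B_i\to B_i$ being an isomorphism in $\cat{D}(\til{B}_i\ot_A\til{B}_i)$. Since $\til{B}_i$ is K-flat over $\til{B}$, the ring $\til{B}_i\ot_A\til{B}_i$ is K-flat over $\til{B}\ot_A\til{B}$, so this derived tensor product is underived; and, viewing $B_i$ over $\til{B}\ot_A\til{B}$ as the restriction of scalars, along the multiplication $\til{B}\ot_A\til{B}\to\til{B}$, of $B_i$ over $\til{B}$, the pushout identity $(\til{B}_i\ot_A\til{B}_i)\ot_{\til{B}\ot_A\til{B}}\til{B}\cong\til{B}_i\ot_{\til{B}}\til{B}_i$ yields
\[ (\til{B}_i\ot_A\til{B}_i)\ot^{\mrm{L}}_{\til{B}\ot_A\til{B}}B_i\ \cong\ \bigl(\til{B}_i\ot^{\mrm{L}}_{\til{B}}\til{B}_i\bigr)\ot^{\mrm{L}}_{\til{B}}B_i. \]
Finally $\til{B}_i\ot^{\mrm{L}}_{\til{B}}\til{B}_i\iso\til{B}_i$ via multiplication: by K-flatness of $\til{B}_i$ over $\til{B}$ and isomorphism (\ref{eqn:719}) for the quasi-isomorphisms $\til{B}\to B$ and $\til{B}_i\to B_i$ we get $\til{B}_i\ot^{\mrm{L}}_{\til{B}}\til{B}_i\iso B_i\ot^{\mrm{L}}_B B_i$, and $B_i\ot^{\mrm{L}}_B B_i=B_i\ot_B B_i=B_i$ because $B_i=\ep_i B$ is $B$-flat with $\ep_i^2=\ep_i$. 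Substituting back, $(\til{B}_i\ot^{\mrm{L}}_{\til{B}}\til{B}_i)\ot^{\mrm{L}}_{\til{B}}B_i\iso\til{B}_i\ot^{\mrm{L}}_{\til{B}}B_i\iso B_i$, and one checks the resulting composite is the counit. This proves the lemma, hence the proposition.

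The step I expect to be the main obstacle is the key lemma above — concretely, transporting ``$B\to B_i$ is the projection onto a direct factor'' down to the level of the K-flat resolutions, and verifying that the isomorphism produced there really is the counit, so that $f^{*}$ itself is an isomorphism (it would not be enough to know that its source and target are abstractly isomorphic, since $M$ is arbitrary). The conceptual heart is the triviality $B_i\ot^{\mrm{L}}_B B_i\iso B_i$, which fails for a general surjection $B\to C$ — there $C\ot^{\mrm{L}}_B C$ is a nontrivial derived self-intersection — so the hypothesis of a \emph{finite product} decomposition is essential and not incidental. A secondary but purely routine point is the unwinding of formula (\ref{eqn:890}) needed to see that $\psi_i$ coincides with $f^{*}$.
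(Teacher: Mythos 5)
Your proof is correct, and it takes a genuinely different route from the paper's. The paper's entire argument is a single short chain of isomorphisms, made possible by a careful choice of resolution: it picks a K-flat DG ring resolution $\til{B}_i\to B_i$ over $A$ separately for each $i$ and sets $\til{B}:=\prod_i\til{B}_i$. With that choice $\til{B}\ot_A\til{B}=\prod_{j,k}\til{B}_j\ot_A\til{B}_k$, so $\til{B}_i\ot_A\til{B}_i$ is a direct factor (hence flat) of $\til{B}\ot_A\til{B}$, and $(\til{B}_i\ot_A\til{B}_i)\ot_{\til{B}\ot_A\til{B}}B$ is immediately the corresponding direct summand of $B$, namely $B_i$ (the paper's ``$\cong\til{B}_i$'' there is a small typo for ``$\cong B_i$''). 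Adjunction along $\til{p}_i\ot\til{p}_i$ plus the two-index cross-term vanishing $\opn{RHom}_{\til{B}\ot_A\til{B}}(B,M_j\ot^{\mrm{L}}_A M_k)=0$ for $j\neq k$ then finishes with no componentwise reduction and no counit argument. Your version tolerates an arbitrary resolution of the triple $A\to B\to B_i$, at the cost of the reduction via Proposition~\ref{prop:1475}, a three-index cross-term vanishing, the identification of $\psi_i$ with restriction of scalars, and the counit computation assembled from (\ref{eqn:1487}), (\ref{eqn:719}) and $B_i\ot^{\mrm{L}}_B B_i\cong B_i$. What that buys is independence from the choice of resolution and a clean isolation of the algebraic heart of the matter --- the triviality of $B_i\ot^{\mrm{L}}_B B_i$ --- which the paper's choice of $\til{B}$ quietly builds in from the start. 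The two routine verifications you flag (that $\psi_i$ really is the restriction map $f^*$, and that your composite is the counit) do need to be carried out, but nothing in them breaks.
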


\begin{proof}
For each index $i$ we choose a K-flat DG ring resolution $\til{B}_i \to B_i$ 
relative to $A$. Define $\til{B} := \prod_i \til{B}_i$. Then 
$\til{B} \to B$ is a K-flat DG ring resolution relative to $A$. 
There is a morphism 
\[ \sum\nolimits_i e_i \ot^{\mrm{L}}_{{A}} e_i :
\bigoplus\nolimits_i M_i \ot^{\mrm{L}}_{{A}} M_i \to 
M \ot^{\mrm{L}}_{{A}} M \]
in $\cat{D}(\til{B} \ot_{{A}} \til{B})$. According to Theorem \ref{thm:632} 
we can present $\sum\nolimits_{i} \opn{Sq}_{p_i / A}(e_i)$ as follows:
\[ \begin{aligned}
& \boplus_i \opn{Sq}_{B_i / A}^{\til{B}_i / {A}}(M_i) = 
\boplus_i \opn{RHom}_{\til{B}_i \ot_{{A}} \til{B}_i}(B_i, 
M_i \ot^{\mrm{L}}_{{A}} M_i)
\\
& \quad 
\cong^{\tup{(1)}} 
\boplus_i \opn{RHom}_{\til{B} \ot_{{A}} \til{B}}(B, 
M_i \ot^{\mrm{L}}_{{A}} M_i)
\\
& \quad 
\iso^{\tup{(2)}}
\opn{RHom}_{\til{B} \ot_{{A}} \til{B}}(B, 
M \ot^{\mrm{L}}_{{A}} M)
= \opn{Sq}^{\til{B} / {A}}_{B / A}(M \ot^{\mrm{L}}_{{A}} M) .
\end{aligned} \]
Here $\cong^{\tup{(1)}}$ is the direct sum of the adjunction isomorphisms for 
the DG ring homomorphisms
$p_i \ot p_i : \til{B} \ot_{{A}} \til{B} \to 
\til{B}_i \ot_{{A}} \til{B}_i$,
coupled with the DG ring isomorphisms 
$(\til{B}_i \ot_{{A}} \til{B}_i) \ot_{\til{B} \ot_{{A}} \til{B}} B 
\cong \til{B}_i$. The isomorphism $\iso^{\tup{(2)}}$ comes from 
$\sum\nolimits_i e_i \ot^{\mrm{L}}_{{A}} e_i$, with the vanishing 
$\opn{RHom}_{\til{B} \ot_{{A}} \til{B}}(B, 
M_i \ot^{\mrm{L}}_{{A}} M_j)=0$, for $i\not=j$.
\end{proof}

Recall that a ring homomorphism $u : A \to B$ is called a {\em finite type ring
homomorphism}, and $B$ is called a {\em finite type $A$-ring}, if
there exists a surjection of 
$A$-rings $A[t_1, \ldots, t_n] \to B$ from the polynomial ring  in $n$ 
variables, for some natural number $n$. 

A ring  homomorphism $u : A \to B$ is called a {\em localization ring
homomorphism}, and $B$ is called a {\em localization of $A$}, if
there exists an isomorphism of $A$-rings $B \cong A_S$, 
where $S \sub A$ is a multiplicatively closed subset, and 
$A_S = A[S^{-1}]$ is the localization of $A$ with respect to $S$. 

\begin{dfn} \label{dfn:1065}
A ring homomorphism $u : A \to B$ is called an {\em essentially finite type ring
homomorphism}, and $B$ is called an {\em essentially finite type $A$-ring}, if
$u = v \circ \til{u}$, where $\til{u} : A \to \til{B}$ is a finite type ring 
homomorphism, and $v : \til{B} \to B$ is a localization ring homomorphism. 
We sometimes use the abbreviation ``EFT'' for ``essentially finite type''. 
\end{dfn}

The definition is depicted in this commutative diagram of rings:
\[ \begin{tikzcd} [column sep = 12ex, row sep = 8ex] 
A
\ar[rr, bend left = 25, start anchor = north east, end anchor = north west, 
"{u}"]
\ar[r, "{\til{u}}"]
&
\til{B}
\ar[r, "{v}"]
&
B
\end{tikzcd} \]

It is easy to see that if $u : A \to B$ and $v : B \to  C$ are EFT ring 
homomorphisms, then so is $v \circ u : A \to C$. 
If $A$ is noetherian and $B$ is an EFT $A$-ring, then $B$ is noetherian too. 
The EFT property is stable under base change: if $u : A \to B$ is an EFT ring 
homomorphism, and if $A \to A'$ is an arbitrary ring homomorphism, then the 
induced ring homomorphism $u' : A' \to A' \ot_A B$ is EFT. 

Recall from Section \ref{sec:recall-dg} that fixing a ring $A$, there is
the category $\cat{Rng} \over A$ of $A$-rings. An object of $\cat{Rng} \over A$ 
is a ring $B$ equipped with a ring homomorphisms $u : A \to B$, 
called the structural homomorphism. The morphisms in $\cat{Rng} \over A$
are the ring homomorphisms  that respect the structural homomorphisms from $A$.

\begin{dfn} \label{dfn:1066}
Let $A$ be a noetherian ring $A$. We denote by $\cat{Rng} \eftover A$ 
the full subcategory of $\cat{Rng} \over A$ on the EFT $A$-rings. Namely, the 
condition is that the structural homomorphism $u : A \to B$ is EFT.  
\end{dfn}

Clearly all the rings in $\cat{Rng} \eftover A$ are noetherian, and all the 
homomorphisms in $\cat{Rng} \eftover A$ are EFT homomorphisms.

For the rest of the current section, and in many subsequent parts of the paper, 
we will invoke the next convention on finiteness. 

\begin{conv} \label{conv:1070}
All rings are assumed to be noetherian, and all ring homomorphisms 
are assumed to be essentially finite type. 
\end{conv}

Despite this convention, in some important definitions and results (such as 
Definition \ref{dfn:675} below) we will mention these finiteness conditions 
explicitly. 

Let $A$ be a ring. At the end of Section \ref{sec:recall-dg} we reviewed the 
full triangulated subcategories 
$\cat{D}^{\star}, \cat{D}^{\star}_{\mrm{f}} \sub \cat{D}(A)$. 
We also recalled the definition of flat dimension of a complex 
$M \in \cat{D}(A)$.

Here is the main definition of our paper. 

\begin{dfn} \label{dfn:675}
Let $A \to B$ be an EFT homomorphism between noetherian rings. 
\begin{enumerate}
\item A {\em rigid complex over $B / A$} is a pair 
$(M, \rho)$, consisting of a complex 
$M \in \cat{D}^{\mrm{b}}_{\mrm{f}}(B)$,
which has finite flat dimension over $A$, and an isomorphism 
$\rho : M \iso \opn{Sq}_{B / A}(M)$
in $\cat{D}(B)$, called a {\em rigidifying isomorphism}. 

\item Let $(M, \rho)$ and $(N, \si)$ be two rigid complexes over $B / A$. 
A {\em rigid morphism} 
$\th : (M, \rho) \to (N, \si)$
over $B / A$ is a morphism $\th : M \to N$ in 
$\cat{D}(B)$, such that the diagram 
\[ \begin{tikzcd} [column sep = 8ex, row sep = 6ex] 
M
\ar[r, "{\rho}", "{\simeq}"']
\ar[d, "{\th}"']
&
\opn{Sq}_{B / A}(M)
\ar[d, "{\opn{Sq}_{B / A}(\th)}"]
\\
N
\ar[r, "{\si}", "{\simeq}"']
&
\opn{Sq}_{B / A}(N)
\end{tikzcd} \]
in $\cat{D}(B)$ is commutative. Here $\opn{Sq}_{B / A}(\th)$ is the 
morphism from Definition \ref{dfn:880}.

\item The category of rigid complexes and rigid morphisms over $B / A$ is 
denoted by $\cat{D}(B)_{\mrm{rig} / A}$. 
\end{enumerate}
\end{dfn}

\begin{exa} \label{exa:675}
Here are some examples of rigid complexes. First a truly silly example: 
the zero complex $M = 0$ is rigid. 

Next, take a nonzero ring $A$, and let $B := A$ and 
$M := A \in \cat{D}^{\mrm{b}}_{\mrm{f}}(A)$.
It is easy to see that $\opn{Sq}_{A / A}(A) = A$, so 
$\rho^{\mrm{tau}}_{A / A} := \opn{id}_A$ is a rigidifying isomorphism, and 
$(A, \rho_{A / A}^{\mrm{tau}})$ is a nonzero object in 
$\cat{D}(A)_{\mrm{rig} / A}$. We call $(A, \rho_{A / A}^{\mrm{tau}})$ the {\em 
tautological rigid complex} over $A / A$. 

More interesting rigid complexes will show up is Sections 
\ref{sec:coinduced}, \ref{sec:twisted-induced} and \ref{sec:induced-rigidity} 
of the paper. 
\end{exa}

\begin{thm} \label{thm:675}
Let $A \to B$ be an EFT homomorphism of noetherian rings, let
$(M, \rho) \in \cat{D}(B)_{\mrm{rig} / A}$, and assume $M$ has the derived 
Morita property over $B$. Then the only automorphism of
$(M, \rho)$ in $\cat{D}(B)_{\mrm{rig} / A}$ is the identity. 
\end{thm}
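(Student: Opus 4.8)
The plan is to reduce the statement to elementary algebra in the ring $B$, using that the derived Morita property makes $M$ control its own endomorphisms. If $B = 0$ then $M = 0$ and there is nothing to prove, so assume $B \neq 0$. By the derived Morita property and Proposition \ref{prop:730}, the homothety map $B \to \opn{End}_{\cat{D}(B)}(M)$, $b \mapsto b \cd \opn{id}_M$, is a ring isomorphism. Hence the underlying $\cat{D}(B)$-morphism of any automorphism $\phi$ of $(M, \rho)$ in $\cat{D}(B)_{\mrm{rig} / A}$ has the form $\phi = b \cd \opn{id}_M$ for a unique $b \in B$, and since $\phi$ is invertible in $\cat{D}(B)$, this $b$ lies in $B^{\times}$. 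The goal is to prove $b = 1$.

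First I would compute $\opn{Sq}_{B / A}(\phi)$. By Definition \ref{dfn:880} this equals $\opn{Sq}_{\opn{id}_B / \opn{id}_A}(b \cd \opn{id}_M)$, so applying Theorem \ref{thm:672} (squaring is a quadratic functor) in the degenerate situation $C = B$, $v = \opn{id}_B$, with $c = b$ and $\th = \opn{id}_M$, and using the identity $\opn{Sq}_{B / A}(\opn{id}_M) = \opn{id}_{\opn{Sq}_{B / A}(M)}$ noted after Definition \ref{dfn:880}, I get $\opn{Sq}_{B / A}(\phi) = b^2 \cd \opn{id}_{\opn{Sq}_{B / A}(M)}$.

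Then I would substitute this into the rigidity square. Since $\phi$ is an automorphism of $(M, \rho)$, the defining diagram of a rigid morphism (Definition \ref{dfn:675}(2)) with $(N, \si) = (M, \rho)$ commutes, i.e.\ $\opn{Sq}_{B / A}(\phi) \circ \rho = \rho \circ \phi$. Because every morphism of $\cat{D}(B)$ is $B^0$-linear and here $B = B^0$, the right-hand side is $(b \cd \opn{id}_{\opn{Sq}_{B / A}(M)}) \circ \rho$ and the left-hand side is $(b^2 \cd \opn{id}_{\opn{Sq}_{B / A}(M)}) \circ \rho$; cancelling the isomorphism $\rho$ and conjugating back along $\rho^{-1}$ gives $b^2 \cd \opn{id}_M = b \cd \opn{id}_M$ in $\opn{End}_{\cat{D}(B)}(M)$, hence $b^2 = b$ in $B$ via the homothety isomorphism. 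Since also $b \in B^{\times}$, this forces $b = 1$, so $\phi = \opn{id}_M$.

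I do not expect a serious obstacle: this is precisely the informal argument indicated in the Introduction. The only points requiring care are bookkeeping --- that $\opn{End}_{\cat{D}(B)}(M)$, a priori possibly noncommutative, gets identified with the commutative ring $B$ via homothety; that $\rho$ is $B^0$-linear; and that Theorem \ref{thm:672} is being invoked in its degenerate form, where ``backward morphism over $v = \opn{id}_B$'' simply means ``morphism in $\cat{D}(B)$'', which is exactly the setting of Definition \ref{dfn:880}. One should also note why an inverse of a rigid isomorphism is again rigid, which follows from functoriality of $\opn{Sq}_{B / A}$ (Corollary \ref{cor:880}).
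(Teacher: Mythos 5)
Your proof is correct and follows essentially the same route as the paper's: use Proposition \ref{prop:730} to write the automorphism as $b \cd \opn{id}_M$ with $b \in B^{\times}$, apply Theorem \ref{thm:672} to obtain $\opn{Sq}_{B/A}(\phi) = b^2 \cd \opn{id}$, then feed this into the rigidity square to deduce $b^2 = b$ and hence $b = 1$. The extra bookkeeping you flag (the $B = 0$ edge case, $\opn{End}_{\cat{D}(B)}(M)$ being a priori noncommutative, the degenerate-base reading of Theorem \ref{thm:672}) is reasonable hygiene but does not constitute a different argument.
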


\begin{proof}
Let $\theta$ be an automorphism of $(M,\rho)$ in $\cat{D}(B)_{\mrm{rig} / A}$.
By Proposition \ref{prop:730} there is a unique element 
$b \in B^{\times}$ such that $\th = b \cd \opn{id}_{M}$ as automorphisms of $M$ 
in $\cat{D}(B)$. 
According to Theorem \ref{thm:672} we have
\begin{equation} \label{eqn:730}
\opn{Sq}_{B / A}(\th) = 
\opn{Sq}_{B / A}(b \cd \opn{id}_{M})=
b^2 \cd \opn{Sq}_{B / A}(\opn{id}_{M}) 
\end{equation}
as automorphism of $M$ in $\cat{D}(B)$. 

Next, by Definition \ref{dfn:675}, we have the following commutative diagrams
\[ \begin{tikzcd} [column sep = 8ex, row sep = 6ex] 
M
\ar[r, "{\rho}", "{\simeq}"']
\ar[d, "{\opn{id}_{M}}"', "{\simeq}"]
&
\opn{Sq}_{B / A}(M)
\ar[d, "{\opn{Sq}_{B / A}(\opn{id}_{M})}", "{\simeq}"']
\\
M
\ar[r, "{\rho}", "{\simeq}"']
&
\opn{Sq}_{B / A}(M)
\end{tikzcd} 
\mspace{50mu}
\begin{tikzcd} [column sep = 8ex, row sep = 6ex] 
M
\ar[r, "{\rho}", "{\simeq}"']
\ar[d, "{\th}"', "{\simeq}"]
&
\opn{Sq}_{B / A}(M)
\ar[d, "{\opn{Sq}_{B / A}(\th)}", "{\simeq}"']
\\
M
\ar[r, "{\rho}", "{\simeq}"']
&
\opn{Sq}_{B / A}(M)
\end{tikzcd} \]
of isomorphisms in $\cat{D}(B)$.
With formula (\ref{eqn:730}) they tell us that 
$b \cd \opn{id}_{M} = b^2 \cd \opn{id}_{M}$.
Once again using Proposition \ref{prop:730}, we get $b = b^2$. 
But $b$ is invertible in $B$, so we have $b = 1$ and $\th = \opn{id}_{M}$. 
\end{proof}

\begin{prop} \label{prop:1490}
In the situation of Proposition \ref{prop:1491}, assume that for every $i$ we 
are given a rigidifying isomorphism 
$\rho_i : M_i \iso \opn{Sq}_{B_i / A}(M_i)$
in $\cat{D}(B_i)$. Then there is a unique  rigidifying isomorphism 
$\rho : M \iso \opn{Sq}_{B / A}(M)$
in $\cat{D}(B)$ s.t.\ the diagram 
\[ \begin{tikzcd} [column sep = 8ex, row sep = 5ex] 
\boplus_i M_i
\ar[r, "{\sum\nolimits_{i} \rho_i}", "{\simeq}"']
\ar[d, "{\sum\nolimits_{i} e_i}"', "{\simeq}"]
&
\boplus_i \opn{Sq}_{B_i / A}(M_i)
\ar[d, "{\sum\nolimits_{i} \opn{Sq}_{p_i / A}(e_i)}", "{\simeq}"']
\\
M
\ar[r, "{\rho}", "{\simeq}"']
&
\opn{Sq}_{B / A}(M)
\end{tikzcd} \]
in $\cat{D}(B)$ is commutative. Moreover, every rigidifying isomorphism 
$\rho : M \iso \opn{Sq}_{B / A}(M)$
arises this way. 
\end{prop}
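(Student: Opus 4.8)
The plan is to deduce everything from Proposition \ref{prop:1491}, which already identifies $\opn{Sq}_{B / A}(M)$ with the direct sum $\boplus_i \opn{Sq}_{B_i / A}(M_i)$. For existence and uniqueness of $\rho$: in the square of the statement the left vertical $\sum_i e_i$, the top $\sum_i \rho_i$, and --- by Proposition \ref{prop:1491} --- the right vertical $\sum_i \opn{Sq}_{p_i / A}(e_i)$ are all isomorphisms, so I would simply set
\[ \rho := \Bigl( \sum\nolimits_{i} \opn{Sq}_{p_i / A}(e_i) \Bigr) \circ \Bigl( \sum\nolimits_{i} \rho_i \Bigr) \circ \Bigl( \sum\nolimits_{i} e_i \Bigr)^{-1} . \]
This is an isomorphism $M \iso \opn{Sq}_{B / A}(M)$ in $\cat{D}(B)$, i.e.\ a rigidifying isomorphism, and it makes the square commute by construction; conversely, since $\sum_i e_i$ is invertible, commutativity of the square forces $\rho$ to equal this composite, which gives uniqueness.

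For the ``moreover'' clause the essential observation is that under the equivalence $\cat{D}(B) \simeq \prod_{i} \cat{D}(B_i)$ of Proposition \ref{prop:1475} the object $M = \boplus_i M_i$ corresponds to the tuple $\{M_i\}_i$, and --- using Proposition \ref{prop:1491} together with the vanishing $B_i \ot_B \opn{Sq}_{B_j / A}(M_j) = 0$ for $i \neq j$ (the $B$-action on $\opn{Sq}_{B_j / A}(M_j)$ factors through $p_j$) --- the object $\opn{Sq}_{B / A}(M)$ corresponds to $\{\opn{Sq}_{B_i / A}(M_i)\}_i$. Hence, for an arbitrary rigidifying isomorphism $\rho : M \iso \opn{Sq}_{B / A}(M)$, the isomorphism
\[ \psi := \Bigl( \sum\nolimits_{i} \opn{Sq}_{p_i / A}(e_i) \Bigr)^{-1} \circ \rho \circ \Bigl( \sum\nolimits_{i} e_i \Bigr) : \boplus_i M_i \iso \boplus_i \opn{Sq}_{B_i / A}(M_i) \]
in $\cat{D}(B)$ has vanishing off-diagonal components, since $\opn{Hom}_{\cat{D}(B)}(M_i, \opn{Sq}_{B_j / A}(M_j)) = 0$ for $i \neq j$; therefore $\psi$ decomposes as $\psi = \sum_i \rho_i$ for unique isomorphisms $\rho_i : M_i \iso \opn{Sq}_{B_i / A}(M_i)$ in $\cat{D}(B_i)$ (restriction along $p_i$ being fully faithful). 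Unwinding the definition of $\psi$ then shows that $\rho$ is precisely the rigidifying isomorphism produced from the family $\{\rho_i\}_i$ by the first part, as required.

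I do not anticipate a genuine obstacle here: the proposition is a formal consequence of Proposition \ref{prop:1491} plus the product decomposition of $\cat{D}(B)$. The only point requiring a little care is the compatibility of the direct-sum decomposition of $\opn{Sq}_{B / A}(M)$ coming from Proposition \ref{prop:1491} with the equivalence $\cat{D}(B) \simeq \prod_i \cat{D}(B_i)$ --- that is, that each summand $\opn{Sq}_{B_i / A}(M_i)$ is ``supported'' on the factor $B_i$ --- and this is immediate from the fact that its $B$-module structure factors through the projection $p_i$.
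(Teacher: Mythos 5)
Your proof is correct and follows essentially the same route as the paper's: both rely on Propositions \ref{prop:1475}(2) and \ref{prop:1491} to identify $\opn{Hom}_{\cat{D}(B)}(M, \opn{Sq}_{B / A}(M))$ with the product $\prod_i \opn{Hom}_{\cat{D}(B_i)}(M_i, \opn{Sq}_{B_i / A}(M_i))$, under which $\rho$ corresponds to $\{\rho_i\}$. You merely spell out the off-diagonal Hom vanishing and the isomorphism-chasing that the paper leaves implicit.
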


\begin{proof}
According to Propositions \ref{prop:1475}(2) and \ref{prop:1491}
we know that 
\[ \opn{Hom}_{\cat{D}(B)}(M, \opn{Sq}_{B / A}(M)) \cong 
\prod\nolimits_i \opn{Hom}_{\cat{D}(B_i)}(M_i, \opn{Sq}_{B_i / A}(M_i)) . \]
Under this bijection a rigidifying isomorphism $\rho$ goes to 
a collection $\{ \rho_i \}$ of rigidifying isomorphisms.
\end{proof}

\begin{rem} \label{rem:650}
We take this opportunity to list a few small errors in the paper \cite{Ye4} by 
the third author, and to fix them. 
First, we need to adjust notations. For a DG ring $A$, the strict category
of complexes $\underline{\cat{C}}_{\mrm{str}}({A})$ here is denoted by $\underline{\cat{M}}({A})$ in \cite{Ye4}.
The derived category is $\underline{\cat{D}}({A})$ in both papers.
In Theorem 0.3.3 of that paper, and in the text immediately preceding it, one 
needs to assume that the DG ring homomorphism $v : A' \to A$ is a {\em 
quasi-isomorphism}. This very same correction (or its variant, with the DG ring 
homomorphism $v_k : A_k \to A_{k  -1}$) has to be made in several locations in 
that paper: Setup 5.7, Definition 5.8, 
Setup 6.1, Definition 7.4 and Theorem 7.6. The condition that $v$ is a 
quasi-isomorphism implies that the homomorphism 
$\til{v} : \til{A}' \to \til{A}$ between the corresponding resolutions is also 
a quasi-isomorphism; and this is crucial: it implies that the DG module 
homomorphism $\til{\ga} \ot_{\til{v}} \til{\ga}$ in diagram (\ref{eqn:670}) 
above is a quasi-isomorphism. Without this property we would not be able to 
produce the homomorphism $\til{\ze}$ in that diagram. This repeated error did 
not effect Theorems 0.3.4 and 0.3.5 of \cite{Ye4}, since the quasi-isomorphism 
was built into their assumptions. 

A second error is in the proof of Theorem 3.22 of \cite{Ye4}. Just before 
equation (3.24), where we choose an element $b' \in \til{B}^{-i - 1}$
such that $\d(b') = c$, the element $b'$ should satisfy $v(b') = 0$. 
We forgot to take care of this. Here is the fix: Let 
$K := \opn{Ker}(v)$, so there is an exact sequence 
$0 \to K \to \til{B} \xar{v} B \to 0$ 
in $\cat{C}_{\mrm{str}}(\til{B})$. 
Since $v$ is a quasi-isomorphism, the DG module $K$ is acyclic. Now 
$v(c) = 0$ and $\d(c) = 0$, so $c \in \opn{Z}^{-i}(K)$. Becuase 
$\opn{H}^{-i}(K) = 0$, there exists an element $b' \in K^{-i - 1}$
such that $\d(b') = c$. By construction the element $b'$ satisfies 
$v(b') = 0$. 
 
There are two typographic mistakes in \cite{Ye4} that we wish to point out. 
In Definition 5.6 the expression 
$\opn{Hom}_{B^{\mrm{en}}}(B, \til{I})$
should be changed to 
$\opn{Hom}_{\til{B}^{\mrm{en}}}(B, \til{I})$; 
and in the proof of Theorem 6.11, 5 lines before the end, the expression 
$\opn{R}(y^{\dag} / r)$ should be changed to 
$\opn{R}(s^{\dag} / r)$.

The text between Definition 5.3 and Remark 5.4 does not make sense 
as written. However, if we take $\til{B}/\til{A}$ to be $\til{B}_1/\til{A}_1=\til{B}_2/\til{A}_2$,
namely in the setting of Definition 5.3, then the homomorphism 
$\til{\ot}^{\mrm{en}}:=\til{B}\ot_{\til{A}}\til{B}:\til{M}^{\mrm{en}}_0\to \til{M}^{\mrm{en}}_1$
is what we need there.
\end{rem}

\section{Rigid Backward Morphisms and Coinduced Rigidity}
\label{sec:coinduced}

The main result of this section is Theorem \ref{thm:2031}. It says that 
if $u : B \to C$ is a finite $A$-ring homomorphism, and if $\th : N \to M$ is a 
nondegenerate backward morphism over $u$, then $\opn{Sq}_{u / A}(\th)$ is also 
nondegenerate. This allows us, in Theorems \ref{thm:2030} and \ref{thm:680},
to construct the {\em coinduced rigidifying isomorphism}.

Recall that according to Convention \ref{conv:615} all rings and DG rings are 
commutative. In this section we also adopt Convention \ref{conv:1070}, 
so all rings are noetherian and all ring homomorphisms are EFT. 

We start by extending Definition \ref{dfn:675}(2) to a relative setup.

\begin{dfn} \label{dfn:677}
Let $A$ be a noetherian ring, let $u : B \to C$ be a homomorphism of EFT 
$A$-rings, let $(M, \rho) \in \cat{D}(B)_{\mrm{rig} / A}$, and let
$(N, \si) \in \cat{D}(C)_{\mrm{rig} / A}$. A {\em rigid backward morphism}
\[ \th : (N, \si) \to (M, \rho) \]
in $\cat{D}(B)$ over $u / A$, or over $C / B / A$, is a backward morphism 
$\th : N \to M$ in $\cat{D}(B)$ over $u$, such that the diagram 
\[ \begin{tikzcd} [column sep = 8ex, row sep = 5ex] 
N
\ar[r, "{\si}", "{\simeq}"']
\ar[d, "{\th}"']
&
\opn{Sq}_{C / A}(N)
\ar[d, "{\opn{Sq}_{u / A}(\th)}"]
\\
M
\ar[r, "{\rho}", "{\simeq}"']
&
\opn{Sq}_{B / A}(M)
\end{tikzcd} \]
in $\cat{D}(B)$ is commutative. 
\end{dfn}

The backward morphism $\opn{Sq}_{u / A}(\th)$ in the definition 
is the one from Theorem \ref{thm:632} and formula (\ref{eqn:1500}). 

The next proposition says that the composition of rigid backward morphisms is 
rigid. 

\begin{prop} \label{prop:1495}
Let $A \to B \xar{u} C \xar{v} D$ be homomorphisms of rings. Let 
$(L, \rho) \in \cat{D}(B)_{\mrm{rig} / A}$,
$(M, \si) \in \cat{D}(C)_{\mrm{rig} / A}$ and 
$(N, \tau) \in \cat{D}(D)_{\mrm{rig} / A}$ be rigid complexes. Let 
$\th : (M, \si) \to (L, \rho)$ and $\ze : (N, \tau) \to (M, \si)$ be rigid 
backward morphisms over $u / A$ and $v / A$, respectively. Then 
$\th \circ \ze : (N, \tau) \to (L, \rho)$
is a rigid backward morphism over $(v \circ u) / A$. 
\end{prop}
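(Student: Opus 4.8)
The plan is to reduce the claim to a short formal diagram chase, combining the functoriality of the squaring operation (Proposition \ref{prop:890}) with the two given rigidity squares. First I would record that $\th \circ \ze$ is genuinely a backward morphism $N \to L$ in $\cat{D}(B)$ over $v \circ u$: applying the restriction functor $\opn{Rest}_{C / B}$ to the backward morphism $\ze$ over $v$ and post-composing with the backward morphism $\th$ over $u$ produces a morphism $\opn{Rest}_{D / B}(N) \to L$ in $\cat{D}(B)$, which by definition is a backward morphism over $v \circ u$. So the only real content is the commutativity of the rigidity square of Definition \ref{dfn:677} for $\th \circ \ze$, namely the identity
\[ \rho \circ (\th \circ \ze) = \opn{Sq}_{(v \circ u) / A}(\th \circ \ze) \circ \tau \]
in $\cat{D}(B)$.

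The key step is to invoke Proposition \ref{prop:890} for the homomorphisms $A \to B \xar{u} C \xar{v} D$ and the backward morphisms $\th$, $\ze$; it yields
\[ \opn{Sq}_{(v \circ u) / A}(\th \circ \ze) = \opn{Sq}_{u / A}(\th) \circ \opn{Sq}_{v / A}(\ze) \]
as backward morphisms $\opn{Sq}_{D / A}(N) \to \opn{Sq}_{B / A}(L)$ in $\cat{D}(B)$ over $v \circ u$. Feeding this into the rigidity of $\ze$ over $v / A$, i.e.\ $\si \circ \ze = \opn{Sq}_{v / A}(\ze) \circ \tau$, and the rigidity of $\th$ over $u / A$, i.e.\ $\rho \circ \th = \opn{Sq}_{u / A}(\th) \circ \si$, one computes
\[ \rho \circ \th \circ \ze = \opn{Sq}_{u / A}(\th) \circ \si \circ \ze = \opn{Sq}_{u / A}(\th) \circ \opn{Sq}_{v / A}(\ze) \circ \tau = \opn{Sq}_{(v \circ u) / A}(\th \circ \ze) \circ \tau , \]
which is precisely the required commutativity. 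Hence $\th \circ \ze : (N, \tau) \to (L, \rho)$ is a rigid backward morphism over $(v \circ u) / A$, as claimed.

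I do not expect any genuine obstacle: the argument is purely formal, with all the nontrivial input (existence and functoriality of the squaring operation, and in particular the compatibility $\opn{Sq}_{(v \circ u) / A}(\th \circ \ze) = \opn{Sq}_{u / A}(\th) \circ \opn{Sq}_{v / A}(\ze)$) already supplied by Theorem \ref{thm:632} and Proposition \ref{prop:890}. The one point requiring attention is the index bookkeeping when applying Proposition \ref{prop:890} — there the two composable maps are written $v, v'$, which here correspond to $u$ and $v$ respectively — together with the identifications of formula (\ref{eqn:1500}) relating $\opn{Sq}_{u / A}$, $\opn{Sq}_{v / A}$ and $\opn{Sq}_{(v \circ u) / A}$ to the notation $\opn{Sq}_{v / \mrm{id}_A}$ of Theorem \ref{thm:632}. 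Once these conventions are lined up, the proof is the three-line chase above.
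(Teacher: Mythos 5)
Your proof is correct and takes exactly the approach the paper intends: the paper's proof is the one-line remark that the claim is ``an immediate consequence of Proposition \ref{prop:890},'' and your three-equation chase is precisely what that immediacy unpacks to. Nothing to add.
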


\begin{proof}
This is an immediate consequence of Proposition \ref{prop:890}.
\end{proof}

Existence and uniqueness of rigid backward morphisms are much harder to prove. 
The key to understanding them is Theorem \ref{thm:2031} below. 

Nondegenerate backward morphisms were introduced in Section 
\ref{sec:recall-dg}. 
Recall that given a ring homomorphism $u : B \to  C$, and complexes
$M \in \cat{D}(B)$ and $N \in \cat{D}(C)$, a backward 
morphism $\th : N \to  M$ in $\cat{D}(B)$ over $u$ is called nondegenerate if 
the corresponding morphism 
\begin{equation} \label{eqn:2105}
\opn{badj}^{\mrm{R}}_{u, M, N}(\th) : N \to \opn{RCInd}_{u}(M) 
= \opn{RHom}_{B}(C, M)
\end{equation}
in $\cat{D}(C)$ is an isomorphism. 
 
\begin{thm}[Nondegeneracy] \label{thm:2031}
Let $A$ be a noetherian ring, let $u : B \to C$ be a finite homomorphism of EFT 
$A$-rings, let $M \in \cat{D}(B)$, let $N \in \cat{D}(C)$, and let 
$\th : N \to  M$ be a nondegenerate backward morphism in $\cat{D}(B)$ over 
$u$. Assume that $M$ and $N$ have finite flat dimensions over $A$. 
Then 
\[ \opn{Sq}_{u / A}(\th) : \opn{Sq}_{C / A}(N) \to \opn{Sq}_{B / A}(M) \]
is a nondegenerate backward morphism in $\cat{D}(B)$ over $u$. 
\end{thm}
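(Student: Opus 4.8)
The plan is to reduce to the standard trace morphism and then show that, for a finite $u$, the square of a coinduction is the coinduction of a square. First I would use nondegeneracy of $\th$ to pass to a canonical situation: the morphism $\phi := \opn{badj}^{\mrm{R}}_{u, M, N}(\th) : N \to \opn{RHom}_{B}(C, M) = \opn{RCInd}_{u}(M)$ is an isomorphism in $\cat{D}(C)$ with $\th = \opn{tr}^{\mrm{R}}_{u, M} \circ \phi$, so by functoriality of the squaring of backward morphisms (Proposition \ref{prop:890}, Definition \ref{dfn:880}) one gets $\opn{Sq}_{u / A}(\th) = \opn{Sq}_{u / A}(\opn{tr}^{\mrm{R}}_{u, M}) \circ \opn{Sq}_{C / A}(\phi)$, where $\opn{Sq}_{C / A}(\phi)$ is an isomorphism by Corollary \ref{cor:880}. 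Since $\opn{badj}^{\mrm{R}}_{u}(g \circ h) = \opn{badj}^{\mrm{R}}_{u}(g) \circ h$ for $h$ a morphism in $\cat{D}(C)$, and since $\opn{RCInd}_{u}(M) \cong N$ inherits finite flat dimension over $A$, it suffices to prove that $\opn{Sq}_{u / A}(\opn{tr}^{\mrm{R}}_{u, M})$ is nondegenerate; so from now on I take $N = \opn{RCInd}_{u}(M)$ and $\th = \opn{tr}^{\mrm{R}}_{u, M}$.

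Next I would fix a K-flat DG ring resolution $A \to \til{B} \xar{\til{v}} \til{C}$ of the triple $A \to B \xar{u} C$, so that $\til{v} \ot \til{v} : \til{B} \ot_{A} \til{B} \to \til{C} \ot_{A} \til{C}$ is K-flat, and compute everything through these resolutions (by Theorems \ref{thm:631} and \ref{thm:632}): $\opn{Sq}_{B / A}(M) = \opn{RHom}_{\til{B} \ot_{A} \til{B}}(B, M \ot^{\mrm{L}}_{A} M)$ and $\opn{Sq}_{C / A}(N) = \opn{RHom}_{\til{C} \ot_{A} \til{C}}(C, N \ot^{\mrm{L}}_{A} N)$, with $\opn{Sq}_{u / A}(\th)$ presented as in (\ref{eqn:890}). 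The crucial input is a $(\til{C} \ot_{A} \til{C})$-linear isomorphism
\[ \opn{RCInd}_{u}(M) \ot^{\mrm{L}}_{A} \opn{RCInd}_{u}(M) \ \iso \ \opn{RHom}_{\til{B} \ot_{A} \til{B}}(\til{C} \ot_{A} \til{C}, M \ot^{\mrm{L}}_{A} M) = \opn{RCInd}_{\til{v} \ot \til{v}}(M \ot^{\mrm{L}}_{A} M) \]
which carries $\opn{tr}^{\mrm{R}}_{u, M} \ot^{\mrm{L}}_{A} \opn{tr}^{\mrm{R}}_{u, M}$ to the standard trace $\opn{tr}^{\mrm{R}}_{\til{v} \ot \til{v}, \, M \ot^{\mrm{L}}_{A} M}$. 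This is built by applying Derived Tensor Evaluation (Theorem \ref{thm:2115}) twice and then the external Hom--tensor adjunction; its hypotheses hold because $C$ is a finitely generated module over the noetherian ring $B$ (hence derived pseudo-finite over $B$, and likewise $C \ot_{A} C$ over $B \ot_{A} B$), $M$ is cohomologically bounded (it has finite flat dimension over $A$), and $N$ and $M$ have bounded-below flat concentration over $A$.

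Feeding this isomorphism into $\opn{Sq}_{C / A}(N)$ and then composing coinductions via the adjunction isomorphism (\ref{eqn:1480}) along the two factorizations of the same ring map $\til{B} \ot_{A} \til{B} \to C$ — namely through $\til{v} \ot \til{v} : \til{B} \ot_{A} \til{B} \to \til{C} \ot_{A} \til{C} \to C$ and through $\til{B} \ot_{A} \til{B} \to \til{B} \to B \xar{u} C$ — yields $\opn{Sq}_{C / A}(N) \iso \opn{RHom}_{\til{B} \ot_{A} \til{B}}(C, M \ot^{\mrm{L}}_{A} M) \iso \opn{RCInd}_{u}(\opn{Sq}_{B / A}(M))$. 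Then, chasing the presentation of $\opn{Sq}_{u / A}(\opn{tr}^{\mrm{R}}_{u, M})$ from (\ref{eqn:890}) together with the trace-compatibility above and the fact that composing two coinductions sends ``trace after trace'' to ``trace'', I expect to identify $\opn{Sq}_{u / A}(\opn{tr}^{\mrm{R}}_{u, M})$ with the standard nondegenerate backward morphism $\opn{tr}^{\mrm{R}}_{u, \opn{Sq}_{B / A}(M)} : \opn{RCInd}_{u}(\opn{Sq}_{B / A}(M)) \to \opn{Sq}_{B / A}(M)$ under the above isomorphism; hence $\opn{badj}^{\mrm{R}}_{u}$ of it is an isomorphism and the proof is complete.

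The main obstacle is the $(\til{C} \ot_{A} \til{C})$-linear Künneth isomorphism of the second paragraph: one must produce it with exactly the right module structure and check that it intertwines the tensor product of the two trace maps with the single standard trace over $\til{v} \ot \til{v}$ — this, and nothing before it, is where finiteness of $u$ and the finite-flat-dimension hypotheses are really used, via Theorem \ref{thm:2115} — and then, in the third paragraph, one must carefully match $\opn{Sq}_{u / A}$ applied to the trace with the standard trace after all the identifications. Everything else is formal manipulation with resolutions and adjunctions.
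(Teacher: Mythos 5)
Your overall architecture matches the paper's (proof of Theorem~\ref{thm:2031} via Lemma~\ref{lem:2030}): reduce to the standard trace $\opn{tr}^{\mrm{R}}_{u,M}$ by nondegeneracy, establish a natural isomorphism $\opn{Sq}_{C/A}(\opn{RCInd}_u(M)) \cong \opn{RCInd}_u(\opn{Sq}_{B/A}(M))$ commuting with the trace morphisms, and conclude. Where you genuinely diverge is in how the key K\"unneth isomorphism $\opn{RCInd}_u(M) \ot^{\mrm{L}}_{A} \opn{RCInd}_u(M) \iso \opn{RHom}_{\til{B}\ot_{A}\til{B}}(\til{C}\ot_{A}\til{C}, M\ot^{\mrm{L}}_{A}M)$ is produced. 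You propose two applications of the abstract Derived Tensor Evaluation theorem (Theorem~\ref{thm:2115}) followed by external Hom--tensor adjunction. The paper instead works concretely: it first builds a pseudo-finite semi-free DG $\til{B}$-ring resolution $\til{C}\to C$ (Lemma~\ref{lem:605}), then replaces $M$ by a bounded complex $\til{P}$ of flat $A$-modules (using finite flat dimension), then passes from $\ot^{\mrm{L}}_{A}$ to $\ot_{A}$ via Lemma~\ref{lem2.4}, and finally invokes the plain, underived external Hom--tensor homomorphism (\ref{eqn:704}), which is an isomorphism directly because $\til{C}$ is pseudo-finite semi-free over $\til{B}$ and $\til{P}$ is bounded. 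The payoff of the paper's route is that every map in the chain (\ref{eqn:600}) is a genuine homomorphism in $\cat{C}_{\mrm{str}}(\til{C}_1\ot_{\til{A}}\til{C}_2)$, so the $(\til{C}\ot_{A}\til{C})$-linearity and the compatibility with the traces are immediate to read off from the formulas. Your route is shorter in principle, but the morphism of Theorem~\ref{thm:2115} lives in $\cat{D}(\Z)$; you would have to first define the map as a $(\til{C}\ot_{A}\til{C})$-linear morphism and then use Theorem~\ref{thm:2115} merely as an isomorphism test after restriction (the standard conservativity argument, as the paper itself does in the proof of Lemma~\ref{lem:1436}), and you would still need to check trace-compatibility against the explicit resolution-based presentation (\ref{eqn:890}) of $\opn{Sq}_{u/A}(\opn{tr}^{\mrm{R}}_{u,M})$. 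You correctly identify this verification as the crux; with that caveat your plan is sound and would yield a working alternative proof.
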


The proof of this theorem requires much preparation and several lemmas. 

A DG ring $\til{B}$ is called {\em cohomologically pseudo-noetherian} if 
the graded ring $\opn{H}(\til{B})$ is pseudo-noetherian; namely if the ring
$\opn{H}^0(\til{B})$ is noetherian, and the $\opn{H}^0(\til{B})$-modules 
$\opn{H}^q(\til{B})$ are all finitely generated.  See 
\cite[Definitions 11.4.36 and 11.4.37]{Ye5}.
(If the graded ring $\opn{H}(\til{B})$ is both bounded and pseudo-noetherian, 
then it is noetherian; but when $\opn{H}(\til{B})$ is not bounded,  
it might fail to be noetherian -- cf.\ \cite[Example 11.4.38]{Ye5}.)

For a DG ring $A$ we denote by $A^{\natural}$ the graded ring obtained from $A$ 
by forgetting its differential. Similarly, for a DG $A$-module $M$ we write 
$M^{\natural}$ for the underlying graded $A^{\natural}$-module. 

\begin{lem} \label{lem:605}
Let $\til{B} \to C$ be a homomorphism of DG rings, 
where $\til{B}$ is cohomologically pseudo-noetherian, $C$ is a ring, and 
the ring homomorphism $\opn{H}^0(\til{B}) \to C$ is finite. 
Then there exists a K-flat DG $\til{B}$-ring resolution
$v : \til{C} \to C$ of $C$, such that 
\[ \tag{\dag} \til{C}^{\natural} \cong \bigoplus_{q \leq 0} \, 
\til{B}^{\natural} [-q]^{\oplus \mu_q} \]
as graded $\til{B}^{\natural}$-modules, and the multiplicities $\mu_q$ are 
finite. 
\end{lem}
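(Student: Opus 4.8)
The plan is to build $\til C$ by hand as an increasing union $\til C(0) \subseteq \til C(1) \subseteq \cdots$ of DG $\til B$-rings, in the style of a Tate resolution, keeping at every stage the underlying graded $\til B^{\natural}$-module free of finite rank in each degree. The delicate point is the degree-zero layer, so I begin there. By hypothesis $C$ is module-finite over the noetherian ring $\opn{H}^0(\til B)$; choose module generators $\bar c_1, \dots, \bar c_n$ of $C$ and, by integrality, monic polynomials $\bar f_i(t) \in \opn{H}^0(\til B)[t]$ with $\bar f_i(\bar c_i) = 0$. Since $\til B$ is nonpositive, $\til B^1 = 0$, so $\til B^0 = \opn{Z}^0(\til B) \twoheadrightarrow \opn{H}^0(\til B)$ and each $\bar f_i$ lifts to a monic $f_i \in \til B^0[t]$. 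Declaring $\d t_i := 0$, and observing that the coefficients of the $f_i$ are cocycles, the ideal $\bigl( f_1(t_1), \dots, f_n(t_n) \bigr)$ is a differential ideal, so
\[ \til C(0) := \til B[t_1, \dots, t_n] / \bigl( f_1(t_1), \dots, f_n(t_n) \bigr) \]
is a nonpositive DG $\til B$-ring, free as a graded $\til B^{\natural}$-module on the finitely many monomials $t^{\alpha}$ with $0 \le \alpha_i < \deg f_i$. Sending $t_i \mapsto \bar c_i$ defines a surjection $v_0 : \til C(0) \to C$; as the $t^{\alpha}$ are cocycles, $\til C(0) \cong \til B^{\oplus r_0}$ as a DG $\til B$-module, so $\til C(0)$ is free (in particular K-flat) over $\til B$, and $\opn{H}^q(\til C(0)) \cong \opn{H}^q(\til B) \ot_{\til B^0} \til C(0)^0$. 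Hence $\til C(0)$ is cohomologically pseudo-noetherian, with $\opn{H}^0(\til C(0)) = \opn{H}^0(\til B)[t_1, \dots, t_n] / \bigl( \bar f_i(t_i) \bigr)$ noetherian and $\opn{H}^0(v_0)$ surjective.

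Now I iterate. Assume $\til C(k) \to C$ is built so that $\til C(k)$ is a DG $\til B$-ring that is K-flat over $\til B$, free of finite rank over $\til B^{\natural}$ in each degree, and cohomologically pseudo-noetherian, and so that the map to $C$ is an isomorphism on $\opn{H}^j$ for $j > -k$ and surjective on $\opn{H}^{-k}$. Put $K_k := \ker\bigl( \opn{H}^{-k}(\til C(k)) \to \opn{H}^{-k}(C) \bigr)$; it is finitely generated, because $\opn{H}^{-k}(\til C(k))$ is finitely generated over the noetherian ring $\opn{H}^0(\til C(k))$ (which is $C$ for $k \ge 1$). Pick cocycles $z_1, \dots, z_{m_k} \in \opn{Z}^{-k}(\til C(k))$ representing generators of $K_k$ and set
\[ \til C(k+1) := \til C(k) \langle y_1, \dots, y_{m_k} \rangle , \qquad \deg y_l = -k-1 , \qquad \d y_l := z_l . \]
A degree count shows $\til C(k+1)^j = \til C(k)^j$ for all $j \ge -k$, since any nonempty monomial in the new variables has degree $\le -k-1$; meanwhile the new part of the differential into degree $-k$ enlarges the coboundaries there by $\sum_l \til C(k)^0 \cdot z_l$, whose image in $\opn{H}^{-k}$ is exactly $K_k$. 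Consequently $\opn{H}^j(\til C(k+1)) \to \opn{H}^j(C)$ is still an isomorphism for $j > -k$, and $\opn{H}^{-k}(\til C(k+1)) = \opn{H}^{-k}(\til C(k))/K_k \iso \opn{H}^{-k}(C)$. Each single-variable extension $\til C(k)\langle y \rangle$ is semi-free over $\til C(k)$, hence $\til C(k+1)$ is K-flat over $\til B$; only finitely many variables of the single degree $-k-1$ are added and products of them land in degrees $\le -k-1$, so the finite-rank-in-each-degree freeness is preserved, as is cohomological pseudo-noetherianity (a routine long-exact-sequence argument along the finite semi-free filtration by $y$-degree, using that $\opn{H}^0(\til C(k+1))$ is a quotient of $\opn{H}^0(\til C(k))$, hence noetherian).

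Finally set $\til C := \bigcup_k \til C(k)$, a DG $\til B$-ring since the transition maps are inclusions of DG $\til B$-rings; the surjection onto $C$ persists, and $\til C$ is K-flat over $\til B$ because K-flatness passes to filtered colimits (equivalently, concatenate the semi-free filtrations). Because the variables adjoined at stage $k$ have degree $-k-1$, the graded piece $\til C(k)^q$ stabilizes once $k \ge |q|$, so $\til C^{\natural}$ is free over $\til B^{\natural}$ with only finitely many basis elements $\mu_q$ in each degree $q \le 0$; this is condition (\dag). Since cohomology commutes with this filtered colimit, $\opn{H}^j(\til C)$ is the stabilizing colimit of the $\opn{H}^j(\til C(k))$, which is $\opn{H}^j(C)$ for every $j$; hence $v : \til C \to C$ is a surjective quasi-isomorphism, i.e.\ a K-flat DG $\til B$-ring resolution satisfying (\dag).

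The only genuinely non-formal point, and the main obstacle, is the degree-zero layer: the obvious candidate $\til B[t_1, \dots, t_n] \to C$ fails (\dag), because a polynomial ring is not module-finite over $\til B^{\natural}$ in degree $0$. Cutting it down by the monic relations $f_i(t_i)$, lifted from the integral dependence of $C$ over $\opn{H}^0(\til B)$, is exactly what forces the degree-$0$ part to have finite rank; this uses both that $\til B^0 \twoheadrightarrow \opn{H}^0(\til B)$ (so monic lifts exist) and that $\til B$ is nonpositive (so these lifts are cocycles and the relation ideal is a differential ideal). Everything afterward is standard Tate-style bookkeeping with filtered colimits.
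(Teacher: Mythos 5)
Your proof is correct and follows essentially the same strategy as the paper: build $\til{C}$ as an increasing union of DG $\til{B}$-rings, handling the degree-zero layer by quotienting a polynomial ring by lifts of the monic integral relations (which is what forces finite rank in degree $0$ and hence condition (\dag)), and then at each subsequent stage freely adjoining finitely many generators in the next negative degree to kill the kernel of the map on cohomology. The only cosmetic difference is that you first produce monic polynomials in $\opn{H}^0(\til{B})[t]$ and lift them to $\til{B}^0[t]$, whereas the paper directly observes that the generators of $C$ are integral over $\til{B}^0$; both hinge on $\til{B}^0 = \opn{Z}^0(\til{B})$.
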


In other words, $\til{C}$ is pseudo-finite semi-free as a DG 
$\til{B}$-module, see \cite[Definition 11.4.29]{Ye5}.

\begin{proof}
We shall construct $\til{C}$ as the union of an increasing 
sequence of DG $\til{B}$-rings
$F_0(\til{C}) \sub F_1(\til{C}) \sub \cdots$, 
which will be defined recursively. 
At the same time we shall construct a compatible increasing sequence 
of DG $\til{B}$-ring homomorphisms
$v_i : F_i(\til{C}) \to C$,
and an increasing sequence of graded sets 
$F_i(X) \sub F_i(\til{C})$. The homomorphism $v$ will be the 
union of the $v_i$, and the graded set $X$ will be the union of the graded sets 
$F_i(X)$. 

$F_{0}(X) := \emptyset$, and for $i \geq 1$, the following conditions will hold:
\begin{enumerate}
\rmitem{i} $\mrm{H}(v_i) : \mrm{H}(F_i(\til{C})) \to C$
is surjective in degrees $\geq -i$.

\rmitem{ii} $\mrm{H}(v_i) : \mrm{H}(F_i(\til{C})) \to C$
is bijective in degrees $\geq -i + 1$.

\rmitem{iii} $F_i(X) = F_{i - 1}(X) \sqcup Y_i$, where $Y_i$ is a finite graded 
set concentrated in degree $-i$. 

\rmitem{iv} For $i = 0$ the graded ring $F_0(\til{C})^{\natural}$ 
is finite free as a graded $\til{B}^{\natural}$-module, with basis concentrated 
in degree $0$. For $i \geq 1$ we have 
$\mrm{d}(F_i(X)) \sub F_{i-1}(\til{C})$, and
\[ F_i(\til{C})^{\natural} \cong 
F_0(\til{C})^{\natural} \ot_{\Z} \Z[F_i(X)]  \]
as graded $F_0(\til{C})^{\natural}$-rings. 
\end{enumerate}
These conditions imply that the DG ring homomorphism 
$v : \til{C} \to C$ has the required properties. 

The recursive construction is divided into two cases: $i = 0$ and $i \geq 1$. 

\medskip \noindent 
$\vartriangleright$ \hspace{0.3ex} The case $i = 0$. 
Since $C$ is a finite $\opn{H}^0(\til{B})$-ring, it is also a finite 
$\til{B}^0$-ring. Choose elements $c_1, \ldots, c_m \in C$ that generate it as 
a $\til{B}^0$-ring. Since each $c_k$ is integral over $\til{B}^0$, 
there is some monic polynomial 
$p_k(t) \in \til{B}^0[t]$ such that $p_k(c_k) = 0$ in $C$. Let 
$y_1, \ldots, y_m$ be distinct variables of degree $0$. Define the set
$Y_0 := \{ y_1, \ldots, y_m \}$ and the $\til{B}^0$-ring
\[ F_0(\til{C})^0 := \til{B}^0[Y_0] / \big( p_1(y_1), \ldots, p_m(y_m) \big) . 
\]
As a $\til{B}^0$-module it is free  of finite rank. 
Then define the DG $\til{B}$-ring 
$F_0(\til{C}) := \til{B} \ot_{\til{B}^0} F_0(\til{C})^0$.
Let $v_0 : F_0(\til{C}) \to C$ be the 
surjective DG $\til{B}$-ring homomorphism that sends the class of $y_k$ to 
$c_k$. Define $F_0(X) := \emptyset$. 
Then conditions (i)-(iv) hold for $i = 0$.

\medskip \noindent 
$\vartriangleright$ \hspace{0.3ex} The case $i \geq 1$. Now the DG ring $F_{i - 
1}(\til{C})$, the DG ring homomorphism 
$v_{i - 1} : F_{i - 1}(\til{C}) \to C$ and the graded set 
$F_{i - 1}(X)$ have already been constructed, satisfying conditions (i)-(iv)
with index $i - 1$.  
We are going to construct the DG ring $F_{i}(\til{C})$, etc.  

Consider the $\opn{H}^0(\til{B})$-module 
\[  N_{i - 1} := \opn{Ker} \Bigl( \mrm{H}^{-i + 1}(v_{i - 1}) : 
\mrm{H}^{-i + 1} (F_{i - 1} (\til{C})) \to C^{-i + 1} \Bigr) . \]
(Of course $C^{-i + 1} = 0$ for $i \geq 2$.)
Recall that $\til{B}$ is a cohomologically pseudo-noetherian DG ring. 
By construction the $\opn{H}^0(\til{B})$-module 
$\mrm{H}^{-i + 1} (F_{i - 1} (\til{C}))$ is finite, and therefore so is 
$N_{i - 1}$. 
Choose a finite set $Y_{i}$ of degree $-i$ variables, and a degree $1$ 
function 
\begin{equation} \label{eqn:605}
\d : Y_{i} \to \opn{Z}^{-i + 1} (F_{i - 1} (\til{C}))
\end{equation}
such that cohomology classes of the cocycles $\d(y)$, for $y \in Y_{i}$,
generate $N_{i - 1}$ as an $\opn{H}^0(\til{B})$-module. 
 
Define the graded set 
$F_{i}(X) := F_{i - 1}(X) \sqcup Y_{i}$ and the DG ring 
\[ F_{i}(\til{C}) := 
F_0(\til{C}) \otimes_{\Z} \Z[F_i(X)] . \]
The differential $\mrm{d}$ of $F_{i}(\til{C})$ is the unique differential 
that extends the given differential of $F_{i - 1}(\til{C})$ and the function
$\mrm{d} : Y_{i} \to F_{i - 1}(\til{C})$ from (\ref{eqn:605}). 
It is easy to see that conditions (i)-(iv) hold for index $i$.
\end{proof}

\begin{lem} \label{lem2.4} 
Let $A$ be a ring, and let $P$ and $N$ be bounded below complexes of 
$A$-modules. Assume 
that each $P^i$ is a flat $A$-module, and that $N$ has finite 
flat dimension over $A$. Then the canonical morphism 
$P \otimes^{\mrm{L}}_{A} N \to P \otimes_{A} N$
in $\msf{D}(A)$ is an isomorphism. 
\end{lem}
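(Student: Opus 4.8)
The plan is to show that the morphism in question, namely the canonical morphism $\eta^{\mrm{L}}_{P, N} \colon P \otimes^{\mrm{L}}_{A} N \to P \otimes_{A} N$ of (\ref{eqn:621}), is a quasi-isomorphism. First I would exploit the hypotheses on $N$: since $N$ is bounded below and has finite flat dimension over $A$, the discussion preceding Theorem \ref{thm:2115} (on flat concentration, applied to $N$) provides a quasi-isomorphism $f \colon Q \to N$ in $\cat{C}_{\mrm{str}}(A)$ with $Q$ a \emph{bounded} complex of flat $A$-modules. A bounded complex of flat modules is K-flat --- this follows by induction on its length from the brutal-truncation short exact sequences, which are degreewise split, together with the fact that a flat module placed in a single degree is K-flat. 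Hence $\eta^{\mrm{L}}_{P, Q}$ is an isomorphism, $P \otimes^{\mrm{L}}_{A} N$ is computed by $P \otimes_{A} Q$, and under this identification $\eta^{\mrm{L}}_{P, N}$ becomes $\opn{id}_{P} \otimes_{A} f \colon P \otimes_{A} Q \to P \otimes_{A} N$. So it suffices to prove that $\opn{id}_{P} \otimes_{A} f$ is a quasi-isomorphism, equivalently that its mapping cone, which is $P \otimes_{A} C$ with $C := \opn{Cone}(f)$, is acyclic. Note that $C$ is acyclic, since $f$ is a quasi-isomorphism, and bounded below, since $Q$ is bounded and $N$ is bounded below.

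The remaining task is purely a statement about ordinary tensor products: if $P$ is a bounded below complex of flat $A$-modules and $C$ is a bounded below acyclic complex of $A$-modules, then $P \otimes_{A} C$ is acyclic. To prove it, fix integers $a, b$ with $P^{i} = 0$ for $i < a$ and $C^{j} = 0$ for $j < b$. For each integer $n$ let $P_{\le n}$ be the brutal truncation of $P$ in degrees $\le n$, i.e.\ $(P_{\le n})^{i} = P^{i}$ for $i \le n$ and $(P_{\le n})^{i} = 0$ otherwise, with the induced differential; this is a bounded complex of flat modules, and the canonical surjection $P \to P_{\le n}$ has kernel $\sigma^{> n} P$ (the subcomplex of $P$ with terms $P^{i}$ in degrees $i > n$). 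Tensoring with $C$ over $A$, the kernel of $P \otimes_{A} C \to P_{\le n} \otimes_{A} C$ is $\sigma^{> n} P \otimes_{A} C$, which is concentrated in degrees $> n + b$. Hence for a fixed degree $k$, choosing any $n$ with $n + b \ge k + 1$ gives an isomorphism $\opn{H}^{k}(P \otimes_{A} C) \cong \opn{H}^{k}(P_{\le n} \otimes_{A} C)$. But $P_{\le n}$ is a bounded complex of flats, hence K-flat, so $P_{\le n} \otimes_{A} C = P_{\le n} \otimes^{\mrm{L}}_{A} C$, and since $C$ is acyclic this is zero in $\cat{D}(A)$; therefore $\opn{H}^{k}(P \otimes_{A} C) = 0$. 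As $k$ is arbitrary, $P \otimes_{A} C$ is acyclic, and this finishes the proof.

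I do not anticipate a serious obstacle; the argument is standard homological algebra. The two points that deserve a little care are: (a) extracting the bounded flat resolution $Q$ of $N$ from the finite-flat-dimension hypothesis, which is exactly where the cited flat-concentration discussion enters; and (b) the role of the hypothesis that $N$ --- and hence $C$ --- is bounded below: it is what guarantees that each group $\opn{H}^{k}(P \otimes_{A} C)$ depends on only finitely many terms of $P$, so that $P$ may be replaced by a bounded subquotient. One should keep in mind that the finiteness of the flat dimension of $N$ is genuinely needed, since a bounded below complex of flat modules need not be K-flat.
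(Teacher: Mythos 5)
Your proof is correct and follows essentially the same route as the paper's: replace $N$ by a bounded complex of flat modules, reduce to showing $P$ tensored with the (acyclic, bounded below) cone is acyclic, and then compute each $\mathrm{H}^k$ by replacing $P$ with a brutal/stupid truncation $P_{\le n}$, which is bounded and flat, hence K-flat. The paper states the isomorphism $\mathrm{H}^k(P\otimes_A C)\cong\mathrm{H}^k(P_{\le n}\otimes_A C)$ more tersely, whereas you derive it explicitly from the degreewise-split short exact sequence and the long exact cohomology sequence; the substance is the same.
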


The catch here is that the complex $P$ of flat $A$-modules is bounded {\em 
below}, not above. 

\begin{proof}
Choose a bounded flat resolution $\varphi:Q \to N$ over $A$.
We have to show that 
$\opn{id}_P\otimes\varphi:P \otimes_{A} Q \to P \otimes_{A} N$
is a quasi-isomorphism. 
Let $L$ be the standard cone of the homomorphism $\varphi$
in $\cat{C}_{\mrm{str}}(A)$, see \cite[Definition 4.2.1]{Ye5}.
According to \cite[Definition 4.2.5 and 5.4.2]{Ye5} we get a distinguished triangle 
$Q \xar{\varphi} N  \to L \xar{\bigtriangleup}$
in $\cat{K}(A)$. Applying the DG functor $P \ot_A (-)$ to it gives 
a distinguished triangle 
\[ P \ot_A Q \to P \ot_A N \to P \ot_A L \xar{\bigtriangleup} \]
in $\cat{K}(A)$.
Due to the long exact cohomology sequence, it is enough to show that the 
complex $P \otimes_{A} L$ is acyclic. 

It suffices to prove that for any given an integer $i$ the module 
$\mrm{H}^i (P \otimes_{A} L)$ is zero. 
We know that $L$ and $P$ are bounded below complexes.
Therefore, for any sufficiently large integer $i_1$ the complex 
$P' := \opn{stt}^{\leq i_1}(P)$, the stupid truncation of $P$ below $i_1$
(see \cite[Definition 11.2.11]{Ye5}) satisfies 
$\mrm{H}^i (P \otimes_{A} L) \cong \mrm{H}^i (P' \otimes_{A} L)$. 
Now $P'$ is bounded complex of flat modules, so it is K-flat. 
On the other had, the complex $L$ is acyclic. It follows that 
$P' \otimes_{A} L$ is acyclic, and in particular
$\mrm{H}^i (P' \otimes_{A} L) = 0$. 
\end{proof}

Let $A \to B \xar{u} C$ be as in Theorem \ref{thm:2031}.
In the next lemma we use the shorthand notation
\begin{equation} \label{eqn:2031}
u^{\flat} := \opn{RCInd}_u = \opn{RHom}_{B}(C, -) : \cat{D}(B) \to  
\cat{D}(C)
\end{equation}
for the derived coinduction functor along $u$. 
(The algebro-geometric source for this notation is explained in Remark 
\ref{rem:1555}.)
For every $M \in \cat{D}(B)$ there is a nondegenerate backward morphism
$\opn{tr}^{\mrm{R}}_{u, M} : u^{\flat}(M) \to M$ in $\cat{D}(B)$ over $u$,
and it is functorial in $M$. See Section \ref{sec:recall-dg}.

\begin{lem} \label{lem:2030}
Let $A \to B \xar{u} C$ be as in Theorem \ref{thm:2031}, and
let $M \in \cat{D}(B)$. 
Assume that the complexes 
$M$ and $u^{\flat}(M) \in \cat{D}(C)$ have finite flat 
dimensions over $A$. Then there is an isomorphism 
\[ \ga : 
u^{\flat} \bigl( \opn{Sq}_{B / A}(M) \bigr) \iso 
\opn{Sq}_{C / A} \bigl( u^{\flat}(M) \bigr) \]
in $\cat{D}(C)$, such that the diagram
\[ \tag{$\heartsuit$}
\begin{tikzcd} [column sep = 6ex, row sep = 5ex, every label/.append style={font=\footnotesize}] 
u^{\flat} \bigl( \opn{Sq}_{B / A}(M) \bigr)
\ar[r, "{\ga}", "{\simeq}"']
\ar[d, "{\opn{tr}^{\mrm{R}}_{u, \opn{Sq}_{B / A}(M)}}"']
&
\opn{Sq}_{C / A} \bigl( u^{\flat}(M) \bigr)
\ar[d, "{\opn{Sq}_{u / A}(\opn{tr}^{\mrm{R}}_{u, M})}"]
\\
\opn{Sq}_{B / A}(M)
\ar[r, "{\opn{id}}", "{\simeq}"']
&
\opn{Sq}_{B / A}(M)
\end{tikzcd} \]
in $\cat{D}(B)$ is commutative. 
\end{lem}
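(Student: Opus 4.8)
The key idea is to compute everything using a single, carefully chosen K-flat DG ring resolution, and to identify the coinduction functor $u^\flat$ with a $\opn{Hom}$ into a pseudo-finite semi-free resolution of $C$, so that it commutes with the tensor products appearing in the squaring operation. Concretely, I would first fix a K-flat (e.g. semi-free) DG ring resolution $\til{B} \to B$ relative to $A$, with $\til{B}$ cohomologically pseudo-noetherian. Then I would apply Lemma \ref{lem:605} to the composite homomorphism $\til{B} \to B \xar{u} C$ — since $u$ is finite and $\opn{H}^0(\til{B}) \to C$ factors through $B \to C$, hence is finite — to obtain a K-flat DG $\til{B}$-ring resolution $\til{C} \to C$ with $\til{C}$ pseudo-finite semi-free over $\til{B}$. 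This also gives a K-flat resolution $\til{C}/\til{A} \to C/A$ of the pair, compatible with $\til{B}/\til{A} \to B/A$, which is exactly the data needed to compute both $\opn{Sq}_{B/A}$ and $\opn{Sq}_{C/A}$ and the square of the backward morphism $\opn{Sq}_{u/A}(\opn{tr}^{\mrm{R}}_{u,M})$ via Theorems \ref{thm:631} and \ref{thm:632}.

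Next I would unwind the two sides of the desired isomorphism $\ga$ in terms of these resolutions. Choosing a K-projective resolution $\til{P} \to M$ over $\til{B}$, one has $\opn{Sq}_{B/A}^{\til{B}/\til{A}}(M) = \opn{RHom}_{\til{B} \ot_{\til{A}} \til{B}}(B, \til{P} \ot_{\til{A}} \til{P})$, and $u^\flat$ of it is $\opn{RHom}_{\til{B}}(\til{C}, -)$ applied to this. On the other side, $u^\flat(M) \cong \opn{RHom}_{\til{B}}(\til{C}, \til{P})$, which is represented by an honest complex of $\til{C}$-modules, and $\opn{Sq}_{C/A}^{\til{C}/\til{A}}(u^\flat(M)) = \opn{RHom}_{\til{C} \ot_{\til{A}} \til{C}}(C, u^\flat(M) \ot_{\til{A}}^{\mrm{L}} u^\flat(M))$. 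The bridge between the two is derived tensor evaluation (Theorem \ref{thm:2115}): because $\til{C}$ is pseudo-finite semi-free — hence derived pseudo-finite — over $\til{B}$, and because of the finite flat dimension hypotheses on $M$ and $u^\flat(M)$ over $A$ (which, via $\til{A} \to A$ being a quasi-isomorphism, control boundedness of the relevant tensor products), the evaluation morphisms
\[
\opn{RHom}_{\til{B}}(\til{C}, \til{P}) \ot_{\til{A}}^{\mrm{L}} \opn{RHom}_{\til{B}}(\til{C}, \til{P}) \to \opn{RHom}_{\til{B} \ot_{\til{A}} \til{B}}(\til{C} \ot_{\til{A}} \til{C}, \til{P} \ot_{\til{A}} \til{P})
\]
are isomorphisms. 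Combining this with the adjunction isomorphism (\ref{eqn:1480}) for $\til{B} \ot_{\til{A}} \til{B} \to \til{C} \ot_{\til{A}} \til{C}$ and the standard identity $\opn{RHom}_{\til{C} \ot_{\til{A}} \til{C}}(C, -) \circ \opn{RHom}_{\til{B}\ot_{\til{A}}\til{B}}(\til{C}\ot_{\til{A}}\til{C}, -) \cong \opn{RHom}_{\til{B}\ot_{\til{A}}\til{B}}(C, -)$, together with $\opn{RHom}_{\til{B}}(\til{C}, \opn{RHom}_{\til{B}\ot_{\til{A}}\til{B}}(B, -)) \cong \opn{RHom}_{\til{B}\ot_{\til{A}}\til{B}}(B\ot_{\til{B}}\til{C}, -) = \opn{RHom}_{\til{B}\ot_{\til{A}}\til{B}}(C, -)$, yields the isomorphism $\ga$.

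Finally, the commutativity of diagram ($\heartsuit$) I would verify by tracking the explicit representatives: the vertical maps $\opn{tr}^{\mrm{R}}_{u, \opn{Sq}_{B/A}(M)}$ and $\opn{Sq}_{u/A}(\opn{tr}^{\mrm{R}}_{u,M})$ are both, after the identifications above, induced by the evaluation-at-$1$ map $\til{C} \to \til{B}$ — more precisely by the ring map $\til{C} \ot_{\til{A}} \til{C} \to \til{B} \ot_{\til{A}} \til{B}$ realizing $\th = \opn{tr}^{\mrm{R}}_{u,M}$ at the level of the explicit formula (\ref{eqn:890}). Since both routes around ($\heartsuit$) are built from the same underlying $\opn{Hom}$-restriction along $\til{C} \to \til{B}$ and the same evaluation morphisms, compatibility is a diagram chase at the level of $\cat{C}_{\mrm{str}}$-representatives, using the functoriality of $\opn{ev}^{\mrm{R},\mrm{L}}$ and of derived $\opn{Hom}$. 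I expect the main obstacle to be precisely this bookkeeping: ensuring that the various adjunction and evaluation isomorphisms are assembled in a way compatible with the $B$-module (not just $\Z$-module) structures and with the trace morphisms, and checking that the finite-flat-dimension hypotheses on $M$ and $u^\flat(M)$ genuinely suffice to invoke Theorem \ref{thm:2115} (one must pass the bounded-below flat concentration condition through the quasi-isomorphism $\til{A} \to A$ and through the derived tensor $u^\flat(M) \ot_{\til{A}}^{\mrm{L}} u^\flat(M)$). The nondegeneracy statement of Theorem \ref{thm:2031} will then follow in the next step of the paper by combining this lemma with Proposition \ref{prop:2045}.
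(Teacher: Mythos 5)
Your overall skeleton matches the paper: you invoke Lemma \ref{lem:605} to obtain a pseudo-finite semi-free DG $\til{B}$-ring resolution $\til{C} \to C$, identify $u^\flat(M)$ with $\opn{Hom}_{\til{B}}(\til{C},\til{P})$, and assemble the isomorphism $\ga$ from Hom-tensor adjunctions for $\til{B}_1\ot_{\til{A}}\til{B}_2 \to \til{C}_1\ot_{\til{A}}\til{C}_2$ and for $\til{B}_1\ot_{\til{A}}\til{B}_2 \to B$. Those are exactly the steps $\cong^{(5)}$ and $\cong^{(6)}$ of the paper's chain (\ref{eqn:600}), and the Step 4 commutativity check is, as you say, a diagram chase at the strict-complex level.

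The difference, and the place where your plan is genuinely hazier than the paper's, is the central compatibility
\[
\opn{RHom}_{\til{B}_1}(\til{C}_1, \til{P}_1) \ot_{\til{A}}^{\mrm{L}} \opn{RHom}_{\til{B}_2}(\til{C}_2, \til{P}_2) \iso \opn{RHom}_{\til{B}_1 \ot_{\til{A}} \til{B}_2}(\til{C}_1 \ot_{\til{A}} \til{C}_2, \til{P}_1 \ot_{\til{A}} \til{P}_2).
\]
You attribute this to Theorem \ref{thm:2115}, but that theorem is about $\opn{RHom}_A(L,M)\ot^{\mrm{L}}_B N \to \opn{RHom}_A(L, M\ot^{\mrm{L}}_B N)$; the map above is an \emph{external} Hom-tensor product, which is not literally a derived tensor-evaluation morphism. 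To deduce it from Theorem \ref{thm:2115} you would have to chain two evaluations with an intermediate adjunction and verify the boundedness/flat-concentration conditions at each stage — none of which is spelled out. The paper instead sidesteps Theorem \ref{thm:2115} entirely here. In Step 2 of its proof it uses the finite-flat-dimension hypothesis on $M$ to choose, via smart truncation of a bounded-above semi-free resolution, a \emph{bounded} DG $\til{B}$-module $\til{P}$ consisting of flat $A$-modules. With $\til{P}$ bounded and $\til{C}$ pseudo-finite semi-free, the homomorphism (\ref{eqn:704}) is already a \emph{strict} isomorphism in $\cat{C}_{\mrm{str}}(\til{C}_1\ot_{\til{A}}\til{C}_2)$, and Lemma \ref{lem2.4} (the $\ot^{\mrm{L}}$ vs. $\ot$ comparison for bounded-below flat complexes tensored against finite-flat-dimension complexes) is then used for the step $\cong^{(2)}$ to get back to the derived tensor. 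Your proposal omits both the bounded truncation and Lemma \ref{lem2.4}, which are precisely the devices that make the compatibility rigorous; without the bounded truncation, a generic K-projective $\til{P}$ would not make (\ref{eqn:704}) into an isomorphism. You also require $\til{B}$ to be semi-free as a DG $A$-module (not merely cohomologically pseudo-noetherian) in order to interpret $\til{P}$ as a complex of flat $A$-modules — another detail the paper makes explicit and your plan passes over. So the plan is plausible but, as written, leaves the key step unproved and would need either the paper's truncation device or a careful two-fold application of Theorem \ref{thm:2115} to close it.
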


\begin{proof}
The proof is divided into a few steps. 

\medskip \noindent
Step 1. We start by choosing DG ring resolutions. 
Let $\til{B} \to B$ be a K-flat DG $A$-ring resolution of $B$, such that 
$\til{B}$ is semi-free as a DG $A$-module; for instance, $\til{B}$ could be a 
commutative semi-free DG ring resolution of $B$, as in 
\cite[Theorem 3.21$\mrm{(1)}$]{Ye4}.
Since $\opn{H}(\til{B}) = B$, we see that the DG ring $\til{B}$ is 
cohomologically pseudo-noetherian, and the ring homomorphism 
$\opn{H}^0(\til{B}) \to C$ is finite. These are the assumptions of Lemma 
\ref{lem:605}. Therefore we can find a K-flat DG $\til{B}$-ring 
resolution $\til{C} \to C$ of $C$ for which 
formula ($\dag$) of that lemma holds. The DG ring homomorphism 
$\til{B} \to \til{C}$ is denoted by $\til{u}$. 
As a DG $\til{B}$-module, $\til{C}$ is  
pseudo-finite semi-free  (see \cite[Definition 11.4.29]{Ye5}).
Of course $A \to \til{B} \xar{\til{u}} \til{C}$ is a K-flat resolution of 
$A \to B \xar{u} C$, in the sense of Definition \ref{dfn:630}.

With these choices, and in view of condition ($**$) in Theorem \ref{thm:632},
it suffices to find an isomorphism
\begin{equation} \label{eqn:2032}
\til{\ga} : u^{\flat} \bigl( \opn{Sq}^{\til{B} / A}_{B / A}(M) \bigr)
\iso 
\opn{Sq}^{\til{C} / A}_{C / A} \bigl( u^{\flat}(M) \bigr)
\end{equation}
in $\cat{D}(C)$, which makes the diagram 
\begin{equation} \label{eqn:764}
\begin{tikzcd} [column sep = 6ex, row sep = 5ex, every label/.append style={font=\footnotesize}] 
u^{\flat} \bigl( \opn{Sq}^{\til{B} / A}_{B / A}(M) \bigr)
\ar[r, "{\til{\ga}}", "{\simeq}"']
\ar[d, "{\opn{tr}^{\mrm{R}}_{u, \opn{Sq}^{\til{B} / A}_{B / A}(M)}}"']
&
\opn{Sq}^{\til{C} / A}_{C / A} \bigl( u^{\flat}(M) \bigr)
\ar[d, "{\opn{Sq}^{\til{u} / A}_{u / A}(\opn{tr}^{\mrm{R}}_{u, M})}"]
\\
\opn{Sq}^{\til{B} / A}_{B / A}(M)
\ar[r, "{\opn{id}}", "{\simeq}"']
&
\opn{Sq}^{\til{B} / A}_{B / A}(M)
\end{tikzcd}
\end{equation}
in $\cat{D}(B)$ commutative. 

\medskip \noindent
Step 2. Now we choose DG module resolutions. 
Since $H(M)$ is bounded, there is $\til{P}' \to M$ bounded above semi-free DG $\til{B}$-module 
resolution of $M$. As a complex of $A$-modules, $\til{P}'$ is a bounded above 
complex of free modules. For each integer $i_0$ there is the smart truncation 
$\opn{smt}^{\geq i_0}(\til{P}')$, which is a bounded DG $\til{B}$-module, 
and it comes with a canonical homomorphism 
$\til{P}' \to \opn{smt}^{\geq i_0}(\til{P}')$
in $\cat{C}_{\mrm{str}}(\til{B})$; see 
\cite[Definition 7.3.6 and Proposition 7.3.8]{Ye5}.
Since $M$ has finite flat dimension over $A$, it follows 
that for a sufficiently small integer $i_0$ the smart truncation 
$\til{P} := \opn{smt}^{\geq i_0}(\til{P}')$ has these properties: the 
canonical homomorphism $\til{P}' \to \til{P}$ is a quasi-isomorphism, and the 
$A$-module $\til{P}^{i_0}$ is flat; cf.\ \cite[Proposition 12.4.19]{Ye5}.
Thus $\til{P}$ is a bounded DG $\til{B}$-module, and all the $\til{P}^{i}$
are flat $A$-modules.
The quasi-isomorphisms $\til{P}' \to M$ and $\til{P}' \to \til{P}$ give rise to 
an isomorphism $M \cong \til{P}$ in $\cat{D}(\til{B})$. 

\medskip \noindent
Step 3. In this step we are going construct the isomorphism $\til{\ga}$.
We have these isomorphisms in $\cat{D}(\til{C})$~:
\begin{equation} \label{eqn:702}
\begin{aligned}
&
u^{\flat}(M) = \opn{RHom}_B(C, M) 
\cong^{\mrm{(i)}}
\opn{RHom}_{\til{B}}(\til{C}, M) 
\\
& \qquad 
\cong^{\mrm{(ii)}}
\opn{RHom}_{\til{B}}(\til{C}, \til{P}) 
\cong^{\mrm{(iii)}}
\opn{Hom}_{\til{B}}(\til{C}, \til{P}) .
\end{aligned}
\end{equation}
The isomorphism $\cong^{\mrm{(i)}}$ is an instance of (\ref{eqn:703}); the 
isomorphism $\cong^{\mrm{(ii)}}$ comes from $M \cong \til{P}$; and the 
isomorphism $\cong^{\mrm{(iii)}}$ is because $\til{C}$ is a K-projective DG 
$\til{B}$-module. 
Note that $\opn{Hom}_{\til{B}}(\til{C}, \til{P})$ is a bounded below complex of 
flat $A$-module and it has finite flat dimension over $A$.

In what follows we are going to employ Convention \ref{conv:1295} regarding 
subscript indexing of repeated tensor factors. 
There is an obvious homomorphism 
\begin{equation} \label{eqn:704}
\opn{Hom}_{\til{B}_1}(\til{C}_1, \til{P}_1) \ot_{\til{A}}
\opn{Hom}_{\til{B}_2}(\til{C}_2, \til{P}_2) \to 
\opn{Hom}_{\til{B}_1 \ot_{\til{A}} \til{B}_2}
(\til{C}_1 \ot_{\til{A}} \til{C}_2 , \til{P}_1 \ot_{\til{A}} \til{P}_2)
\end{equation}
in $\cat{C}_{\mrm{str}}(\til{C}_1 \ot_{\til{A}} \til{C}_2)$. 
Since $\til{C}$ is a pseudo-finite semi-free DG $\til{B}$-module, and 
$\til{P}$ is a bounded DG $\til{B}$-module, 
it follows that 
$\til{C}_1 \ot_{\til{A}} \til{C}_2$ is a pseudo-finite semi-free DG 
$(\til{B}_1 \ot_{\til{A}} \til{B}_2)$-module, and  
$\til{P}_1 \ot_{\til{A}} \til{P}_2$ is a bounded DG 
$(\til{B}_1 \ot_{\til{A}} \til{B}_2)$-module. 
Therefore the homomorphism (\ref{eqn:704}) is actually an isomorphism
in $\cat{C}_{\mrm{str}}(\til{C}_1 \ot_{\til{A}} \til{C}_2)$.

Consider the following sequence of isomorphisms in  $\cat{D}(C)$.
\begin{equation} \label{eqn:600}
\begin{aligned}
& \opn{Sq}^{\til{C} / A}_{C / A} \bigl( u^{\flat}(M) \bigr) = 
\opn{RHom}_{\til{C}_1 \ot_{A} \til{C}_2} 
\bigl( C , u^{\flat}(M)_1 \ot^{\mrm{L}}_{A} u^{\flat}(M)_2 \bigr)
\\
& \qquad 
\cong^{\mrm{(1)}} \opn{RHom}_{\til{C}_1 \ot_{A} \til{C}_2} 
\bigl( C , \opn{Hom}_{\til{B}_1}(\til{C}_1, \til{P}_1) \ot^{\mrm{L}}_{A}
\opn{Hom}_{\til{B}_2}(\til{C}_2, \til{P}_2) \bigr)
\\
& \qquad 
\cong^{\mrm{(2)}} \opn{RHom}_{\til{C}_1 \ot_{A} \til{C}_2} \bigl( C , 
\opn{Hom}_{\til{B}_1}(\til{C}_1, \til{P}_1) \ot_{A}
\opn{Hom}_{\til{B}_2}(\til{C}_2, \til{P}_2) \bigr)
\\
& \qquad 
\cong^{\mrm{(3)}} 
\opn{RHom}_{\til{C}_1 \ot_{A} \til{C}_2} 
\bigl( C, \opn{Hom}_{\til{B}_1 \ot_{A} \til{B}_2}(\til{C}_1 \ot_{A} \til{C}_2, 
\til{P}_1 \ot_{A} \til{P}_2) \bigr)
\\
& \qquad 
\cong^{\mrm{(4)}} 
\opn{RHom}_{\til{C}_1 \ot_{A} \til{C}_2} 
\bigl( C, \opn{RHom}_{\til{B}_1 \ot_{A} \til{B}_2}(\til{C}_1 \ot_{A} \til{C}_2, 
M_1 \ot^{\mrm{L}}_{A} M_2) \bigr) 
\\
& \qquad 
\cong^{\mrm{(5)}}  
\opn{RHom}_{\til{B}_1 \ot_{A} \til{B}_2} (C , M_1 \ot^{\mrm{L}}_{A} M_2) 
\\
& \qquad 
\cong^{\mrm{(6)}}   
\opn{RHom}_{B} \bigl( C, \opn{RHom}_{\til{B}_1 \ot_{A} \til{B}_2}
(B , M_1 \ot^{\mrm{L}}_{A} M_2) \bigr) 
= u^{\flat} \bigl( \opn{Sq}^{\til{B} / A}_{B / A}(M) \bigr) . 
\end{aligned}
\end{equation}
Here are the explanations of these isomorphisms. 
The isomorphism $\cong^{\mrm{(1)}}$ is by the isomorphism 
$u^{\flat}(M) \cong \opn{Hom}_{\til{B}}(\til{C}, \til{P})$
in $\cat{D}(\til{C})$ from equation (\ref{eqn:702}). 
The isomorphism $\cong^{\mrm{(2)}}$ comes from Lemma \ref{lem2.4}; it
applies because the DG $\til{C}$-module 
$\opn{Hom}_{\til{B}}(\til{C}, \til{P})$ 
is a bounded below complex of flat $A$-modules, and it also has finite flat 
dimension over $A$. Isomorphism $\cong^{\mrm{(3)}}$ is due to the isomorphism 
(\ref{eqn:704}). The isomorphism $\cong^{\mrm{(4)}}$ relies on these facts: 
$\til{C}_1 \ot_{A} \til{C}_2$ is K-projective as a DG 
$(\til{B}_1 \ot_{\til{A}} \til{B}_2)$-module, 
and $\til{P}_1 \ot_{A} \til{P}_2 \cong M_1 \ot^{\mrm{L}}_{A} M_2$
in $\cat{D}(\til{B}_1 \ot_{\til{A}} \til{B}_2)$.
The isomorphism $\cong^{\mrm{(5)}}$ is derived Hom-tensor adjunction for 
the DG ring homomorphism 
$\til{B}_1 \ot_{\til{A}} \til{B}_2 \to \til{C}_1 \ot_{\til{A}} \til{C}_2$. 
Finally, the isomorphism $\cong^{\mrm{(6)}}$ is derived Hom-tensor adjunction 
for the DG ring homomorphism 
$\til{B}_1 \ot_{\til{A}} \til{B}_2 \to B$. 

We define the isomorphism $\til{\ga}$ in formula (\ref{eqn:2032}) to be the 
composition of the isomorphisms in (\ref{eqn:600}),
going from bottom to top. 

\medskip \noindent
Step 4. In this step we are going to prove that the isomorphism $\til{\ga}$ 
makes diagram (\ref{eqn:764}) commutative. 
Recall that 
\[ \opn{Sq}^{\til{B} / A}_{B / A}(M) = 
\opn{RHom}_{\til{B}_1 \ot_{A} \til{B}_2} (B , M_1 \ot^{\mrm{L}}_{A} M_2) 
\cong 
\opn{RHom}_{\til{B}_1 \ot_{A} \til{B}_2} (B , \til{P}_1 \ot_{A} \til{P}_2) . \]
Each of the objects in (\ref{eqn:600}) admits an obvious morphism to 
$\opn{Sq}^{\til{B} / A}_{B / A}(M)$
in the category $\cat{D}(B)$. For the first and the last objects these are the 
vertical morphisms appearing in diagram (\ref{eqn:764}); for the first, 
compare to formula (\ref{eqn:890}).
A rather easy calculation shows that the isomorphisms (1)-(6) in 
(\ref{eqn:600}) commute with these obvious morphisms to 
$\opn{Sq}^{\til{B} / A}_{B / A}(M)$. 
It follows that diagram (\ref{eqn:764}) is commutative. 
\end{proof}

\begin{proof}[Proof of Theorem \tup{\ref{thm:2031}}]
Let's use the shorthand $u^{\flat}$ from formula (\ref{eqn:2031}). 
We are given a nondegenerate backward morphism 
$\th : N \to M$ over $u$. This means that there is an isomorphism 
$N \cong u^{\flat}(M)$, and we can replace $\th : N \to  M$ with the standard 
nondegenerate backward morphism 
$\opn{tr}^{\mrm{R}}_{u, M} : u^{\flat}(M) \to M$. 
We need to prove that the backward morphism 
\[  \opn{Sq}_{u / A}(\opn{tr}^{\mrm{R}}_{u, M}) : 
\opn{Sq}_{C / A}(u^{\flat}(M)) \to \opn{Sq}_{B / A}(M) \]
is nondegenerate. 

We have at our disposal the standard nondegenerate backward morphism 
\[ \opn{tr}^{\mrm{R}}_{u, \opn{Sq}_{B / A}(M)} : 
u^{\flat}(\opn{Sq}_{B / A}(M)) \to \opn{Sq}_{B / A}(M) \]
associated to the object 
$\opn{Sq}_{B / A}(M) \in \cat{D}(B)$. This is the left vertical arrow in the 
commutative diagram ($\heartsuit$) in Lemma \ref{lem:2030}. Because the 
horizontal arrows in that diagram are isomorphisms, it follows that the 
right vertical arrow there is also nondegenerate; but that arrow is 
$\opn{Sq}_{u / A}(\opn{tr}^{\mrm{R}}_{u, M})$.
\end{proof}

The next theorem is a generalization of Theorem \ref{thm:675}. 

\begin{thm} \label{thm:2030}
Let $A$ be a noetherian ring, let $u : B \to C$ be a finite homomorphism of EFT 
$A$-rings, let $(M, \rho) \in \cat{D}(B)_{\mrm{rig} / A}$, let
$(N, \si) \in \cat{D}(C)_{\mrm{rig} / A}$, and let 
$\th' : N \to  M$ be a nondegenerate backward morphism in $\cat{D}(A)$ over 
$u$. Assume that $N$ has the derived Morita property over $C$.
Then there exists a unique nondegenerate rigid backward morphism 
\[ \th : (N, \si) \to (M, \rho) \]
over $u$ relative to $A$. 
\end{thm}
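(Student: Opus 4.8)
The plan is to pin down the rigidifying isomorphism $\th$ by applying the nondegeneracy machinery of Theorem \ref{thm:2031} to transport the square $\opn{Sq}_{u/A}$ along $\th'$, and then to use Proposition \ref{prop:1871} to see that the resulting rigid backward morphism is unique. First I would observe that by hypothesis $\th' : N \to M$ is a nondegenerate backward morphism over $u$, and $M, N$ have finite flat dimension over $A$ (they are rigid complexes over $B/A$ and $C/A$ respectively). Hence Theorem \ref{thm:2031} applies: $\opn{Sq}_{u/A}(\th') : \opn{Sq}_{C/A}(N) \to \opn{Sq}_{B/A}(M)$ is again a nondegenerate backward morphism in $\cat{D}(B)$ over $u$. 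Now I have two nondegenerate backward morphisms landing in the two objects $M$ and $N$: namely $\th'$ and $\opn{Sq}_{u/A}(\th')$. We are also handed the rigidifying isomorphisms $\rho : M \iso \opn{Sq}_{B/A}(M)$ and $\si : N \iso \opn{Sq}_{C/A}(N)$, which are isomorphisms in $\cat{D}(B)$ and $\cat{D}(C)$.

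Next I would invoke Proposition \ref{prop:2045} with the nondegenerate backward morphisms $\th_1 := \th' : N \to M$ and $\th_2 := \opn{Sq}_{u/A}(\th') : \opn{Sq}_{C/A}(N) \to \opn{Sq}_{B/A}(M)$ over $u$, and the isomorphism $\phi := \rho : M \to \opn{Sq}_{B/A}(M)$ in $\cat{D}(B)$. The proposition produces a \emph{unique} morphism $\psi : N \to \opn{Sq}_{C/A}(N)$ in $\cat{D}(C)$ with $\opn{Sq}_{u/A}(\th') \circ \psi = \rho \circ \th'$, and since $\phi$ is an isomorphism, $\psi$ is an isomorphism. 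I then set $\th := \si^{-1} \circ \psi : N \to N$. Wait — that is not quite the shape I want; let me instead argue directly in terms of the morphism $\th : N \to M$ we seek. The correct setup: we want $\th : N \to M$ in $\cat{D}(B)$ over $u$ making the square in Definition \ref{dfn:677} commute, i.e. $\opn{Sq}_{u/A}(\th) \circ \si = \rho \circ \th$. The candidate is forced: since $\th'$ is nondegenerate and $N$ has the derived Morita property over $C$, Proposition \ref{prop:1871}(1) tells us that $\opn{Hom}_{\cat{D}(A)}(N, M)$ is free of rank $1$ over $C$ with basis $\th'$, and $\opn{Hom}_{\cat{D}(A)}(N, M[p]) = 0$ for $p \neq 0$; so every backward morphism $N \to M$ over $u$ is of the form $c \cdot \th'$ for a unique $c \in C$, and $\th$ is a nondegenerate backward morphism iff $c \in C^{\times}$.

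Now I compute the rigidity condition for $\th = c \cdot \th'$. By Theorem \ref{thm:672}, $\opn{Sq}_{u/A}(c \cdot \th') = c^2 \cdot \opn{Sq}_{u/A}(\th')$. So the rigidity diagram commutes iff $c^2 \cdot \bigl( \opn{Sq}_{u/A}(\th') \circ \si \bigr) = c \cdot \bigl( \rho \circ \th' \bigr)$ as backward morphisms $N \to \opn{Sq}_{B/A}(M)$ over $u$ — here I need the $C$-action to interact correctly with composition on both sides, which is routine given that $B \to C$ is a ring map and the actions are by $C$. Both $\opn{Sq}_{u/A}(\th') \circ \si$ and $\rho \circ \th'$ are nondegenerate backward morphisms $N \to \opn{Sq}_{B/A}(M)$ over $u$ (composites of a nondegenerate backward morphism with isomorphisms), and $\opn{Sq}_{B/A}(M)$ — being isomorphic via $\rho^{-1}$ to $M$, hmm, actually one must check $\opn{Sq}_{B/A}(M)$ has the appropriate Morita property; but via $\rho$ it is isomorphic to $M$, and… the cleaner route is: apply Proposition \ref{prop:1871}(1) again with target $\opn{Sq}_{B/A}(M)$ and the nondegenerate backward morphism $\rho \circ \th'$, concluding $\opn{Hom}_{\cat{D}(A)}(N, \opn{Sq}_{B/A}(M))$ is free of rank $1$ over $C$ with basis $\rho \circ \th'$. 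Write $\opn{Sq}_{u/A}(\th') \circ \si = d \cdot (\rho \circ \th')$ for a unique $d \in C$; nondegeneracy of the left side forces $d \in C^{\times}$. Then the rigidity equation becomes $c^2 d = c$ in $C$, i.e. $c(cd - 1) = 0$. Thus $c = 0$ gives the zero morphism (rigid but degenerate), and the nondegenerate solutions are exactly those $c \in C^{\times}$ with $c = d^{-1}$ — a unique one. This $c := d^{-1}$ gives the desired $\th := d^{-1} \cdot \th'$, which is nondegenerate and rigid, and is the unique such.

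\medskip

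The main obstacle I anticipate is not any single deep step but the bookkeeping of the $C$-linear (equivalently $C^0$-linear, and really $\opn{H}^0(C)$-linear) structures: one must check carefully that $\opn{Sq}_{u/A}(c \cdot \th') = c^2 \cdot \opn{Sq}_{u/A}(\th')$ is compatible with pre- and post-composition by $\si$ and $\rho$ in the way used above, and that "nondegenerate" is preserved under composition with isomorphisms and is detected by invertibility of the scalar $c \in C$ — this last point is exactly Proposition \ref{prop:1871}(1) together with the observation that $\th = c \cdot \th'$ corresponds under $\opn{badj}^{\mrm{R}}$ to $c \cdot \opn{id}$, which is an isomorphism iff $c \in C^{\times}$. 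Once these linearity facts are in hand, the equation $c^2 d = c$ closes the argument immediately, giving both existence ($c = d^{-1}$) and uniqueness among nondegenerate rigid backward morphisms.
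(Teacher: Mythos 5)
Your proposal is correct and follows essentially the same route as the paper's proof: apply Theorem \ref{thm:2031} to see that $\opn{Sq}_{u/A}(\th')$ is nondegenerate, use Proposition \ref{prop:1871}(1) (via the derived Morita property of $N$) to see that $\rho\circ\th'$ and $\opn{Sq}_{u/A}(\th')\circ\si$ are both bases of the rank-one free $C$-module $\opn{Hom}_{\cat{D}(B)}(N,\opn{Sq}_{B/A}(M))$, differing by a unit, and then use the quadratic property of $\opn{Sq}$ (Theorem \ref{thm:672}) to rescale $\th'$ by the inverse unit. After discarding the abandoned first attempt via Proposition \ref{prop:2045}, the remaining computation (rigidity forces $c^2 d = c$ with $c$ a unit, hence $c = d^{-1}$) matches the paper's argument precisely, just with slightly different naming of the scalar.
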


\begin{proof}
Consider the diagram 
\begin{equation} \label{eqn:2030}
\begin{tikzcd} [column sep = 8ex, row sep = 5ex] 
N
\ar[r, "{\si}", "{\cong}"']
\ar[d, "{\th'}"']
&
\opn{Sq}_{C / A}(N)
\ar[d, "{\opn{Sq}_{u / A}(\th')}"]
\\
M
\ar[r, "{\rho}", "{\cong}"']
&
\opn{Sq}_{B / A}(M)
\end{tikzcd} 
\end{equation}
in $\cat{D}(B)$, which is not commutative in general. 
According to Theorem \ref{thm:2031} the backward morphism 
$\opn{Sq}_{u / A}(\th')$ is nondegenerate. Proposition 
\ref{prop:1871}(1) tells us that the $C$-module 
$\opn{Hom}_{\cat{D}(B)}(N, \opn{Sq}_{B / A}(M))$
is free of rank $1$, and the nondegenerate backward morphisms 
$\rho \circ \th'$ and 
$\opn{Sq}_{u / A}(\th') \circ \si$ 
are both bases of it. This implies that there is a unique element 
$c \in C^{\times}$ such that 
\begin{equation} \label{eqn:2035}
c \cd (\rho \circ \th') = \opn{Sq}_{u / A}(\th') \circ \si . 
\end{equation}

Define the backward morphism 
\[ \th := c^{-1} \cd \th' : N \to  M \]
in $\cat{D}(B)$ over $u$. It is nondegenerate too. 
Using Theorem \ref{thm:672} and equation 
(\ref{eqn:2035}) we obtain these equalities
\[ \begin{aligned}
&
\opn{Sq}_{u / A}(\th) \circ \si = 
\opn{Sq}_{u / A}(c^{-1} \cd  \th') \circ \si = 
c^{-2} \cd \opn{Sq}_{u / A}(\th') \circ \si 
\\ 
& \qquad 
= c^{-2} \cd c \cd (\rho \circ \th') = \rho \circ (c^{-1} \cd \th') =
\rho \circ \th . 
\end{aligned} \]
We see that $\th$ is a nondegenerate rigid backward morphism. 

The uniqueness of the element $c \in C^{\times}$ implies that $\th$ is the only 
nondegenerate rigid backward morphism $N \to M$. 
\end{proof}

\begin{thm} \label{thm:680}
Let $A$ be a noetherian ring, let $u : B \to C$ be a finite homomorphism 
between EFT $A$-rings, and let 
$(M, \rho) \in \cat{D}(B)_{\mrm{rig} / A}$.
Define the complex 
\[ N := \opn{RCInd}_{u}(M) = \opn{RHom}_B(C, M) \in \cat{D}(C) . \]
Assume that $N$ has finite flat dimension over $A$. 
Then $N$ has a unique rigidifying isomorphism 
$\si : N \iso \opn{Sq}_{C / A}(N)$
in $\cat{D}(C)$ such that 
\[ \opn{tr}^{\mrm{R}}_{u, M} : (N, \si) \to (M, \rho) \]
is a nondegenerate rigid backward morphism over $u$ relative to $A$. 
\end{thm}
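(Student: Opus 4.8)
The plan is to reduce Theorem~\ref{thm:680} directly to Theorem~\ref{thm:2030}. First I would observe that all the hypotheses of Theorem~\ref{thm:2030} are in place here, save one: we need a complex $N$, a nondegenerate backward morphism $\th' : N \to M$ over $u$, and we need $N$ to carry a rigidifying isomorphism and to have the derived Morita property over $C$. The natural candidate is $N := \opn{RCInd}_u(M) = \opn{RHom}_B(C, M)$, with $\th' := \opn{tr}^{\mrm{R}}_{u, M} : N \to M$, which is the standard \emph{nondegenerate} derived backward morphism (its corresponding morphism $\opn{badj}^{\mrm{R}}_{u, M, N}(\th')$ is $\opn{id}_N$). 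So the backward-morphism input is free. What remains is to check the two structural properties of $N$: that $N \in \cat{D}^{\mrm{b}}_{\mrm{f}}(C)$ has the derived Morita property over $C$, and then to feed everything into Theorem~\ref{thm:2030} to produce $\si$.

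For the derived Morita property: $M$ has the derived Morita property over $B$ (this is part of the hypothesis package that makes $(M,\rho)$ relevant — or more precisely, a rigid complex with the derived Morita property is what Theorem~\ref{thm:2030} needs; here I would note that in the intended application $M$ is a rigid \emph{dualizing} complex, hence has this property; but strictly, to invoke Theorem~\ref{thm:2030} one should assume or have available that $N$ has the derived Morita property over $C$). The cleanest route is: since $u : B \to C$ is finite, $C$ is a finitely generated $B$-module, hence $C \in \cat{D}^{-}_{\mrm{f}}(B)$ is derived pseudo-finite; combined with $M \in \cat{D}^{\mrm{b}}_{\mrm{f}}(B)$ one gets $N = \opn{RHom}_B(C,M) \in \cat{D}^{+}_{\mrm{f}}(C)$, and the finite flat dimension hypothesis on $N$ over $A$ together with boundedness considerations (via the EFT hypothesis and Convention~\ref{conv:1070}) places $N$ in $\cat{D}^{\mrm{b}}_{\mrm{f}}(C)$. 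The derived Morita property of $N$ over $C$ should then be deduced from that of $M$ over $B$ using the adjunction isomorphism~(\ref{eqn:1480}) and derived tensor evaluation (Theorem~\ref{thm:2115}): one computes $\opn{RHom}_C(N, N) \cong \opn{RHom}_C\bigl(\opn{RCInd}_u(M), \opn{RHom}_B(C,M)\bigr)$ and wants this to be $C$; this is a standard fact about finite morphisms and coinduced dualizing-type complexes, which I would cite or prove via the coinduction adjunction.

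Once $N$ has the derived Morita property over $C$, is in $\cat{D}^{\mrm{b}}_{\mrm{f}}(C)$, has finite flat dimension over $A$ (given), and $\th' = \opn{tr}^{\mrm{R}}_{u,M} : N \to M$ is a nondegenerate backward morphism over $u$, I apply Theorem~\ref{thm:2030}: it yields a \emph{unique} nondegenerate rigid backward morphism $\th : (N,\si) \to (M,\rho)$ over $u/A$ for a suitable (in fact uniquely determined) rigidifying isomorphism $\si$ on $N$. Examining the proof of Theorem~\ref{thm:2030}: it produces $\th = c^{-1} \cd \th'$ for a unique $c \in C^\times$, together with the rigidifying isomorphism $\si$ making the square commute. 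But I want the trace $\opn{tr}^{\mrm{R}}_{u,M} = \th'$ itself (not a scalar multiple) to be the rigid backward morphism. The fix is to \emph{redefine} $\si$: replace the $\si$ coming from Theorem~\ref{thm:2030} by $c \cd \si$ — or rather, track the scalar. Concretely, Proposition~\ref{prop:1871}(1) says $\opn{Hom}_{\cat{D}(B)}(N, \opn{Sq}_{B/A}(M))$ is free of rank $1$ over $C$ with basis $\rho \circ \th'$; meanwhile $\opn{Sq}_{u/A}(\th') \circ \si_0$ (for any candidate $\si_0$) is another element, and by adjusting $\si$ within the rank-one $C$-module $\opn{Hom}_{\cat{D}(C)}(N, \opn{Sq}_{C/A}(N))$ (free of rank one with basis any fixed rigidifying iso, by Proposition~\ref{prop:1871} and the derived Morita property of $N$) one arranges exactly $\opn{Sq}_{u/A}(\th') \circ \si = \rho \circ \th'$, i.e.\ $\th' = \opn{tr}^{\mrm{R}}_{u,M}$ is rigid. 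Uniqueness of $\si$ follows because the requirement pins down $\si$ as a specific basis element: any two such differ by a unit in $C$, and the rigidity square forces that unit, via Theorem~\ref{thm:672}, to satisfy $b = b^2$, hence $b = 1$ (exactly as in the proof of Theorem~\ref{thm:675}).

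The main obstacle I anticipate is verifying that $N = \opn{RHom}_B(C,M)$ genuinely has the derived Morita property over $C$ — this is the one place where real work (as opposed to bookkeeping) is needed, and it rests on the finiteness of $u$ together with the behavior of $\opn{RHom}$ under the coinduction adjunction; everything else is a matter of invoking Theorems~\ref{thm:2030}, \ref{thm:672}, and Propositions~\ref{prop:1871} and tracking a single unit $c \in C^\times$ to normalize $\si$ so that the trace morphism, rather than a rescaling of it, becomes the rigid backward morphism.
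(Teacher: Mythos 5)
Your proposal hinges on reducing the statement to Theorem~\ref{thm:2030}, but that theorem assumes, as part of its hypotheses, that $N$ already carries a rigidifying isomorphism $\si$ (the hypothesis $(N,\si) \in \cat{D}(C)_{\mrm{rig}/A}$) and that $N$ has the derived Morita property over $C$. Neither is available in Theorem~\ref{thm:680}. A rigid complex $(M,\rho)$ in the sense of Definition~\ref{dfn:675} is merely a complex in $\cat{D}^{\mrm{b}}_{\mrm{f}}(B)$ of finite flat dimension over $A$ together with a rigidifying isomorphism --- no Morita assumption on $M$, and a fortiori none on $N$. You flag the Morita verification for $N$ as the one place where real work is needed, but the deeper problem is structural: the hypothesis is simply not there to verify. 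And even setting that aside, the reduction is circular: Theorem~\ref{thm:2030} takes a rigidifying isomorphism of $N$ as input and normalizes a given backward morphism by a unit, whereas the content of Theorem~\ref{thm:680} is precisely to \emph{produce} a rigidifying isomorphism of $N$. Your attempted patch --- viewing $\opn{Hom}_{\cat{D}(C)}(N, \opn{Sq}_{C/A}(N))$ as a rank-one free $C$-module ``with basis any fixed rigidifying iso'' --- begs the existence question you are trying to settle, and the rank-one statement itself again rests on the unavailable Morita property.

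The paper's proof is both genuinely different and strictly weaker in its demands. It makes no use of derived Morita at all. The argument is: the standard backward morphism $\opn{tr}^{\mrm{R}}_{u,M} : N \to M$ is nondegenerate; by the Nondegeneracy Theorem~\ref{thm:2031} (which only needs finite flat dimensions of $M$ and $N$ over $A$, both of which are available), the backward morphism $\opn{Sq}_{u/A}(\opn{tr}^{\mrm{R}}_{u,M}) : \opn{Sq}_{C/A}(N) \to \opn{Sq}_{B/A}(M)$ is also nondegenerate. Now apply Proposition~\ref{prop:2045}: it is a purely formal lifting lemma which, given an isomorphism $\rho : M \iso \opn{Sq}_{B/A}(M)$ and nondegenerate backward morphisms on both sides, produces a unique morphism $\si : N \to \opn{Sq}_{C/A}(N)$ making the square commute, and it is automatically an isomorphism. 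The commutativity of that square is literally the statement that $\opn{tr}^{\mrm{R}}_{u,M}$ is rigid, and uniqueness of $\si$ is built into Proposition~\ref{prop:2045}. You should invoke Theorem~\ref{thm:2031} directly (rather than through the filter of Theorem~\ref{thm:2030}) and replace all the Morita/unit-normalization bookkeeping with the single application of Proposition~\ref{prop:2045}.
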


\begin{proof} 
In the proof we are going to use the shorthand $u^{\flat}$ from 
formula (\ref{eqn:2031}). 
Consider the solid diagram
\begin{equation} \label{eqn:706}
\begin{tikzcd} [column sep = 10ex, row sep = 5ex] 
u^{\flat}(M)
\ar[r, dashed, "{\si}", "{\simeq}"']
\ar[d, "{\opn{tr}^{\mrm{R}}_{u, M}}"']
&
\opn{Sq}_{C / A}(u^{\flat}(M))
\ar[d, "{\opn{Sq}_{u / A}(\opn{tr}^{\mrm{R}}_{u, M})}"]
\\
M
\ar[r, "{\rho}", "{\simeq}"']
&
\opn{Sq}_{B / A}(M)
\end{tikzcd}
\end{equation}
in $\cat{D}(B)$. The standard backward morphism 
$\opn{tr}^{\mrm{R}}_{u, M}$ is nondegenerate. 
The complex $M$ has finite flat dimension over $A$ by Definition 
\ref{dfn:675}(1), and for the complex $N = u^{\flat}(M)$ this condition is 
assumed. According to Theorem \ref{thm:2031} the backward morphism 
$\opn{Sq}_{u / A}(\opn{tr}^{\mrm{R}}_{u, M})$
is also nondegenerate. By Proposition \ref{prop:2045} there is a unique 
morphism $\si$ on the dashed arrow that makes diagram (\ref{eqn:706}) 
commutative, and this morphism $\si$ is an isomorphism. 
We obtain a rigid complex $(N, \si)$ over $C$ relative to $A$. 
The commutativity of diagram (\ref{eqn:706}) means that 
$\opn{tr}^{\mrm{R}}_{u, M}$ is a rigid backward morphism.
\end{proof}

\begin{dfn}[Coinduced Rigidity] \label{dfn:760}
Under the assumptions and notation of Theorem \ref{thm:680}, the rigidifying 
isomorphism 
$\si : N \iso \opn{Sq}_{C / A}(N)$ 
from Theorem \ref{thm:680}, in the category $\cat{D}(C)$, is called the {\em 
rigidifying isomorphism coinduced from $\rho$}, 
and it is denoted by 
$\opn{RCInd}^{\mrm{rig}}_{u / A}(\rho)$. 

The rigid complex 
\[ \opn{RCInd}^{\mrm{rig}}_{u / A} (M, \rho) := (N, \si) \in 
\cat{D}(C)_{\mrm{rig} / A} \]
is called the {\em rigid complex over $C / A$  coinduced from $(M, \rho)$ along 
$u$}. 
\end{dfn}

We already saw the tautological rigid complex 
$(A, \rho^{\mrm{tau}}_{A / A}) \in \cat{D}(A)_{\mrm{rig} / A}$.
Here is another example of a nonzero rigid complex. 

\begin{exa} \label{exa:1505}
Let $u : A \to B$ be a finite ring homomorphism, and asssume that $B$ is a 
nonzero ring, and it has finite flat dimension as an $A$-module. 
As a complex of $A$-modules, $B$ is perfect.
(Note that if $A$ is a regular ring, then $B$ 
is automatically of finite flat dimension over $A$.)
Then the complex $M :=  \opn{RCInd}_{B / A}(A) \in \cat{D}(B)$ is perfect 
over $A$, and so it has finite flat 
dimension over $A$. By Theorem \ref{thm:680} there is a 
unique rigidifying isomorphism 
\[ \rho := \opn{RCInd}^{\mrm{rig}}_{u / A}(\rho^{\mrm{tau}}_{A / A}) 
: M \iso \opn{Sq}_{B / A}(M) \]
in $\cat{D}(B)$, such that 
$\opn{tr}^{\mrm{R}}_{B / A} : (M, \rho) \to (A, \rho^{\mrm{tau}}_{A / A})$
is a nondegenerate rigid backward morphism over $B / A$. 
We get a nonzero rigid complex 
$(M, \rho) \in \cat{D}(B)_{\mrm{rig} / A}$. 
\end{exa}

A special case of the last example deserves a definition:

\begin{dfn} \label{dfn:1505}
Let $A$ be a noetherian ring, and let $u : A \to B$ be a finite flat ring 
homomorphism. Define the 
{\em dual module of $B$ relative to $A$} to be 
\[ \De^{\mrm{fifl}}_{B / A} := \opn{Hom}_A(B, A) = 
\opn{RCInd}_{u}(A) \in \cat{M}(A) . \]
Its coinduced rigidifying isomorphism is denoted by 
\[ \rho_{B / A}^{\mrm{fifl}} :=
\opn{RCInd}^{\mrm{rig}}_{u / A}(\rho^{\mrm{tau}}_{A / A})
: \De^{\mrm{fifl}}_{B / A} \iso \opn{Sq}_{B / A}(\De^{\mrm{fifl}}_{B / A}) . \]
The rigid complex 
$\bigl( \De^{\mrm{fifl}}_{B / A}, \rho_{B / A}^{\mrm{fifl}} \bigr) 
\in \cat{D}(B)_{\mrm{rig} / A}$
is called the {\em standard rigid complex of the finite flat homomorphism 
$u : A \to B$}. 
As we shall see later, in Section \ref{sec:relative}, this is a relative rigid 
dualizing compex. 
\end{dfn}

\section{The Cup Product Morphism and the Derived Tensor Product of Rigid 
Complexes}
\label{sec:cup-prod}

In this section we introduce the {\em cup product morphism for the squaring 
operation} (Theorem \ref{thm:780}), and prove it is an isomorphism 
under some finiteness conditions (Theorem \ref{thm:810}). In 
Definition \ref{dfn:1260}
we introduce the {\em tensor product of rigid complexes}.
The tensor product of rigid complexes will be used in Section 
\ref{sec:twisted-induced} to construct 
twisted induced rigid complexes. 

Recall that the category $\cat{Rng}$ of commutative rings is a full subcategory 
of the category $\cat{DGRng}$ of commutative DG rings
(Definition \ref{dfn:616}(3)). According to Convention \ref{conv:615}, all 
rings and DG rings are commutative by default.
At the start of this section (until Theorem \ref{thm:810} ) we do not impose 
any finiteness conditions on DG rings and DG module. 
 
We are going to adhere to Convention \ref{conv:1295}, regarding the subscript 
enumeration of repeated tensor factors. For instance, given a DG ring 
homomorphism $\til{B} \xar{} \til{C}$, there is the multiplication DG ring 
homomorphism
\[ \opn{mult}_{\til{C} / \til{B}} : 
\til{C}_1 \ot_{\til{B}_0} \til{C}_2 \to \til{C}_0 ,
\quad \til{c}_1 \ot \til{c}_2 \mapsto \til{c}_1 \cd \til{c}_2 . \] 

From here until Definition \ref{1510} (inclusive), we work in the 
following setup: 

\begin{setup} \label{set:1395} 
There is a triple of rings $C / B / A = (A \xar{u} B \xar{v} C)$, as 
in Definition \ref{dfn:630}(2); a K-flat triple of DG rings 
$\til{C} / \til{B} / \til{A} = (\til{A} \xar{\til{u}} \til{B} \xar{\til{v}} 
\til{C})$;
and a resolution 
$t / s /r : \til{C} / \til{B} / \til{A} \to C / B / A$
of triples, as in Definition \ref{dfn:630}(3). See diagram (\ref{eqn:2055}).
\end{setup}

What Setup \ref{set:1395} says is that there is a commutative diagram 
\begin{equation} \label{eqn:2055}
\begin{tikzcd} [column sep = 6ex, row sep = 5ex] 
\til{A}
\arrow[r, "{\til{u}}"]
\ar[d, "{r}"]
&
\til{B}
\ar[d, "{s}"]
\arrow[r, "{\til{v}}"]
&
\til{C}
\ar[d, "{t}"]
\\
A
\arrow[r, "{u}"]
&
B
\arrow[r, "{v}"]
&
C
\end{tikzcd} 
\end{equation}
in $\cat{DGRng}$, such that $\til{B}$ is K-flat as a DG $\til{A}$-module, 
$\til{C}$ is K-flat as a DG $\til{B}$-module, and hence 
$\til{C}$ is K-flat as a DG $\til{A}$-module. The homomorphisms 
$r, s, t$ are surjective quasi-isomorphisms in $\cat{DGRng}$.  

Let us define the auxiliary DG ring  
\begin{equation} \label{eqn:2056}
\til{D}^{} := (\til{C}_1 \ot_{\til{A}} \til{C}_2) 
\ot_{\til{B}_1 \ot_{\til{A}} \til{B}_2} \til{B}^{}_0 .
\end{equation}
The DG ring homomorphism
\begin{equation} \label{eqn:1510}
g : \til{D}^{} 
\xar{} \til{C}_1 \ot_{\til{B}_0} \til{C}_2 , \quad 
(\til{c}_1 \ot \til{c}_2) \ot \til{b}_0 \mapsto 
\til{c}_1 \ot (\til{c}_2 \cd \til{b}_0) =  
(-1)^{k_2 \cd j} \cd (\til{c}_1 \cd \til{b}_0) \ot \til{c}_2 ,
\end{equation}
for $\til{c}_i \in \til{C}^{k_i}$ and $\til{b}_0 \in \til{C}^{j}$,
is an isomorphism. Hence there is an equivalence of triangulated categories 
\begin{equation} \label{eqn:1446}
\opn{Rest}_{g} : 
\cat{D}(\til{C}_1 \ot_{\til{B}_0} \til{C}_2) 
\xar{} \cat{D}(\til{D}^{}) , 
\end{equation}
which respects the operations 
$\opn{RHom}(-, -)$ and $(- \ot^{\mrm{L}} -)$.
See formulas (\ref{eqn:703}) and (\ref{eqn:719}).
The reason we need the auxiliary DG ring $\til{D}$ is explained in Remark 
\ref{rem:1445}. 

Given a DG module $M \in \cat{C}(\til{B}_1 \ot_{\til{A}} \til{B}_2)$,
consider the DG $\til{A}$-module 
\begin{equation} \label{eqn:2047}
F := \opn{Hom}_{\til{B}_1 \ot_{\til{A}} \til{B}_2}(B_0, M) \in \cat{C}(A) . 
\end{equation}
Since $B_0$ is generated as a DG $(\til{B}_1 \ot_{\til{A}} \til{B}_2)$-module 
by the element $1_{B_0}$, an element $f \in F$ is completely determined by 
its value $f(1_{B_0}) \in M$.
The DG $\til{A}$-module $F$ has three actions by the DG ring 
$\til{B} = \til{B}_0 = \til{B}_1 = \til{B}_2$. 

\begin{lem} \label{lem:2110}
The three actions of the DG ring $\til{B}$ on the DG $\til{A}$-module 
$F$ from formula (\ref{eqn:2047}) are equal. 
\end{lem}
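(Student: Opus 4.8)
The statement is that the three $\til{B}$-module structures on $F = \opn{Hom}_{\til{B}_1 \ot_{\til{A}} \til{B}_2}(B_0, M)$ coincide. These three actions are: (a) the action through $\til{B}_0$, where $\til{B}$ acts on the source $B_0$ via the multiplication map $\til{B}_1 \ot_{\til{A}} \til{B}_2 \to \til{B}_0 \to B_0$; (b) the action through $\til{B}_1$, the first tensor factor; and (c) the action through $\til{B}_2$, the second tensor factor. (Strictly, (b) and (c) act on the target $M$, while (a) acts on the source; but the point is precisely that the source $B_0$-module structure is matched by the target structures.) My plan is to reduce everything to a statement about a single element: since $B_0$ is generated as a DG $(\til{B}_1 \ot_{\til{A}} \til{B}_2)$-module by $1_{B_0}$, any $f \in F$ is determined by $f(1_{B_0}) \in M$, so it suffices to compare the three actions after evaluation at $1_{B_0}$.

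The key computation is then elementary. Take $\til{b} \in \til{B}$, say of degree $j$, and $f \in F$ of degree $k$. For the action (b) through $\til{B}_1$, the element $\til{b} \cd_1 f$ is the homomorphism $x \mapsto (\til{b} \ot 1) \cd f(x)$ (up to the Koszul sign coming from moving $\til{b}$ past $f$), hence $(\til{b} \cd_1 f)(1_{B_0}) = (\til{b} \ot 1) \cd f(1_{B_0})$. For the action (a) through $\til{B}_0$, the element $\til{b} \cd_0 f$ is $x \mapsto f(\bar{\til{b}} \cd x)$ where $\bar{\til{b}}$ is the image of $\til{b}$ in $B_0$; since $f$ is $(\til{B}_1 \ot_{\til{A}} \til{B}_2)$-linear and $\bar{\til{b}} \cd 1_{B_0} = (\til{b} \ot 1) \cd 1_{B_0}$ in $B_0$, we get $(\til{b} \cd_0 f)(1_{B_0}) = f((\til{b} \ot 1) \cd 1_{B_0}) = (\til{b} \ot 1) \cd f(1_{B_0})$, which agrees with (b). For the action (c) through $\til{B}_2$, similarly $(\til{b} \cd_2 f)(1_{B_0}) = (1 \ot \til{b}) \cd f(1_{B_0})$; and since $(\til{b} \ot 1) \cd 1_{B_0} = (1 \ot \til{b}) \cd 1_{B_0}$ in $B_0$ (both equal $\bar{\til{b}} \cd 1_{B_0}$, because $B_0$ is a $\til{B}$-algebra and the two maps $\til{B} \to B_0$ through the two factors agree on $B_0$ — this uses the commutativity of the diagram defining the pair/triple and that $B_0$ is a quotient of $\til{B}_1 \ot_{\til{A}} \til{B}_2$), applying $f$ gives the same element of $M$. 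Hence all three evaluations at $1_{B_0}$ coincide, and therefore the three actions agree.

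The one point requiring care is bookkeeping of Koszul signs: when $\til{b}$ has odd degree and $f$ has odd degree, moving $\til{b}$ past $f$ in the definition of each of the three actions introduces a sign $(-1)^{j \cd k}$, but this sign is the \emph{same} for all three actions (it depends only on $j$ and $k$, not on which tensor slot is used), so it cancels out of the comparison. I expect this sign check to be the only non-immediate aspect, and it is genuinely routine. The conceptual content is simply that $B_0$ is a cyclic module generated by $1_{B_0}$, on which the two $\til{B}$-actions (first slot vs.\ second slot) already agree because $B_0$ is a commutative $\til{B}$-ring, so the induced actions on $\opn{Hom}$ out of $B_0$ must agree as well.
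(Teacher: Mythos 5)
Your proof is correct and essentially the same as the paper's: evaluate at the generator $1_{B_0}$, use $(\til{B}_1 \ot_{\til{A}} \til{B}_2)$-linearity of $f$, and observe that the images of $\til{b}$ in $B_0$ through the two tensor slots coincide (both equal $s(\til{b}) \in B = B_0$), with the Koszul sign $(-1)^{j \cd k}$ appearing uniformly in all three cases. The only cosmetic difference is that you phrase the comparison for actions (b) and (c) by letting $\til{b}\ot 1$ and $1\ot\til{b}$ act on the target $M$ and then transfer to the source via linearity, whereas the paper pushes everything directly into $B_0$ from the start; these are interchangeable one-liners.
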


\begin{proof}
Take homogeneous elements $\til{b} \in \til{B}^k$ and $f \in F^j$. 
For $i = 0, 1, 2$ we shall write $\til{b}_i := b$ for the element $b$ sitting 
in the copy $\til{B}_i$ of $\til{B}$. It suffices to calculate the element 
$(\til{b}_i \cd f)(1_{B_0}) \in M$.
For $i = 0$ we have 
$(\til{b}_0 \cd f) (1_{B_0}) = (-1)^{k \cd j} \cd f(\til{b})$. 
Next, because $f$ is $(\til{B}_1 \ot_{\til{A}} \til{B}_2)$-linear, we get 
\[ (\til{b}_1 \cd f) (1_{B_0}) = ((\til{b} \ot 1_{B_0}) \cd f)(1_{B_0}) = 
(-1)^{k \cd j} \cd f( (\til{b} \ot 1_{B_0}) \cd 1_{B_0}) = 
(-1)^{k \cd j} \cd f(\til{b}) \]
and 
\[ (\til{b}_2 \cd f) (1_{B_0}) = ((1_{B_0} \ot \til{b}) \cd f)(1_{B_0}) = 
(-1)^{k \cd j} \cd f( (1_{B_0} \ot \til{b}) \cd 1_{B_0}) = 
(-1)^{k \cd j} \cd f(\til{b}) . \qedhere \]
\end{proof}

Given complexes $M \in \cat{D}(B)$ and $N \in \cat{D}(C)$, we have the objects
$M_1 \ot^{\mrm{L}}_{\til{A}} M_2 \in \cat{D}(\til{B}_1 \ot_{\til{A}} 
\til{B}_2)$ 
and 
\begin{equation} \label{eqn:2057}
(N_1 \ot^{\mrm{L}}_{\til{B}_0} N_2) \ot^{\mrm{L}}_{\til{B}_0}
\opn{RHom}_{\til{B}_1 \ot_{\til{A}} \til{B}_2}
(B_0, M_1 \ot^{\mrm{L}}_{\til{A}} M_2)  \in 
\cat{D}(\til{D}) ,
\end{equation}
where $\til{D}$ is the DG ring from equation (\ref{eqn:2056}).
The action of the DG ring $\til{B}_0$ on the object in 
(\ref{eqn:2057}) is through $B_0$, and the DG ring 
$\til{C}_1 \ot_{\til{A}} \til{C}_2$ acts through 
$N_1 \ot^{\mrm{L}}_{\til{B}_0} N_2$. 
We can give an explicit presentation of the object in (\ref{eqn:2057}), with 
its various actions, by choosing DG module resolutions. Let $\til{P} \to M$ be 
a K-projective DG module resolution over 
$\til{B}$, let 
$\til{P}_1 \ot_{\til{A}} \til{P}_2 \to \til{I}$
be a K-injective DG module resolution over 
$\til{B}_1 \ot_{\til{A}} \til{B}_2$, 
and let $\til{Q} \to N$ be a K-projective DG module resolution over 
$\til{C}$. Note that $\til{Q}$ is a K-flat DG $\til{B}$-module.
Then there is a canonical isomorphism
\begin{equation} \label{eqn:2058}
\begin{aligned}
&
(N_1 \ot^{\mrm{L}}_{\til{B}_0} N_2) \ot^{\mrm{L}}_{\til{B}_0}
\opn{RHom}_{\til{B}_1 \ot_{\til{A}} \til{B}_2}
(B_0, M_1 \ot^{\mrm{L}}_{\til{A}} M_2) 
\\ & \qquad 
\cong (\til{Q}_1 \ot_{\til{B}_0} \til{Q}_2)  \ot_{\til{B}_0}
\opn{Hom}_{\til{B}_1 \ot_{\til{A}} \til{B}_2}(B_0, \til{I}) .
\end{aligned}
\end{equation}
in $\cat{D}(\til{D})$.

\begin{lem} \label{lem:2055}
With the DG module resolutions chosen above, there is a unique isomorphism 
\[ \tag{$\divideontimes$}
\begin{aligned}
&
\phi_{\til{Q}, \til{I}} : 
(\til{Q}_1 \ot_{\til{B}_0} \til{Q}_2)  \ot_{\til{B}_0}
\opn{Hom}_{\til{B}_1 \ot_{\til{A}} \til{B}_2}(B_0, \til{I}) 
\\ & \qquad 
\iso 
\til{Q}_1 \ot_{\til{B}_1} \til{Q}_2 \ot_{\til{B}_2} 
\opn{Hom}_{\til{B}_1 \ot_{\til{A}} \til{B}_2}(B_0, \til{I}) 
\end{aligned} \] 
in $\cat{C}_{\mrm{str}}(\til{D})$, with formula 
\[ \phi_{\til{Q}, \til{I}} 
\bigl( \til{q}_1 \ot \til{q}_2) \ot \chi \bigr) := 
\til{q}_1 \ot \til{q}_2 \ot \chi  \]
for $\til{q}_i \in \til{Q}$ and 
$\chi \in \opn{Hom}_{\til{B}_1 \ot_{\til{A}} \til{B}_2}(B_0, \til{I})$.
\end{lem}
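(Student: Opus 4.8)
The plan is to read the statement as a formal re-bracketing of an iterated tensor product, with Lemma \ref{lem:2110} supplying exactly the input that makes the bookkeeping collapse. Write $F := \opn{Hom}_{\til{B}_1 \ot_{\til{A}} \til{B}_2}(B_0, \til{I})$, which is the module of formula (\ref{eqn:2047}) with $M$ there taken to be $\til{I}$; by Lemma \ref{lem:2110} its three $\til{B}$-actions, coming from $\til{B}_0$, $\til{B}_1$ and $\til{B}_2$, coincide, so $F$ may be treated as an honest DG $\til{B}$-module. Both the source $(\til{Q}_1 \ot_{\til{B}_0} \til{Q}_2) \ot_{\til{B}_0} F$ and the target $\til{Q}_1 \ot_{\til{B}_1} \til{Q}_2 \ot_{\til{B}_2} F$ of the asserted isomorphism are then assembled from the same three DG modules $\til{Q}$, $\til{Q}$, $F$ by tensoring over copies of the single ring $\til{B}$, differing only in the way the tensor factors are bracketed; what has to be produced is the associativity isomorphism between them, promoted to a strict isomorphism of DG $\til{D}$-modules, where the $\til{D}$-module structures are read off through the isomorphism $g$ of (\ref{eqn:1510}).

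First I would define $\phi_{\til{Q},\til{I}}$ on generators by the displayed formula $(\til{q}_1 \ot \til{q}_2) \ot \chi \mapsto \til{q}_1 \ot \til{q}_2 \ot \chi$, with $\til{q}_i \in \til{Q}$ and $\chi \in F$. Such pure tensors span the source over $\Z$, so any additive map is determined by its values on them; this already yields the uniqueness clause. To see the assignment is well defined I would check that it respects the two families of defining relations of the source. The relation of the inner $\ot_{\til{B}_0}$, of the form $(\til{q}_1 \cd \til{b}) \ot \til{q}_2 = \til{q}_1 \ot (\til{b} \cd \til{q}_2)$ with $\til{b} \in \til{B}$, maps verbatim to the defining relation of $\ot_{\til{B}_1}$ in the target; the relation of the outer $\ot_{\til{B}_0}$, which transfers the residual $\til{B}_0$-action on $\til{Q}_1 \ot_{\til{B}_0} \til{Q}_2$ onto the $\til{B}_0$-action on $F$, maps to the defining relation of $\ot_{\til{B}_2}$ once Lemma \ref{lem:2110} is invoked to identify the $\til{B}_0$- and $\til{B}_2$-actions on $\chi$. (The Koszul signs produced by the right-versus-left module conversions for the $\til{Q}$-factors are the same on both sides, so they cancel and the formula is sign-free.) Carrying out the same matching of relations in the opposite direction defines an additive map back on pure tensors, and the two are mutually inverse on generators, hence mutually inverse isomorphisms of the underlying graded modules.

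It remains to promote $\phi_{\til{Q},\til{I}}$ to a morphism in $\cat{C}_{\mrm{str}}(\til{D})$. Strictness is automatic: it has degree $0$ by construction, and applying $\d$ to a pure tensor via the graded Leibniz rule yields a signed sum of pure tensors which $\phi_{\til{Q},\til{I}}$ sends to the corresponding sum, so it commutes with the differentials. For $\til{D}$-linearity I would unwind both structures through $g \colon \til{D} \iso \til{C}_1 \ot_{\til{B}_0} \til{C}_2$: the ring $\til{C}_1 \ot_{\til{B}_0} \til{C}_2$ acts on the source through its action on $\til{Q}_1 \ot_{\til{B}_0} \til{Q}_2$ — the middle $\til{B}_0$ having been consumed in forming that factor — and on the target through its action on $\til{Q}_1 \ot_{\til{B}_1} \til{Q}_2$; in both cases the action of a pure tensor $\til{c}_1 \ot \til{c}_2$ places $\til{c}_1$ in the first slot and $\til{c}_2$ in the second with the same Koszul sign, so $\phi_{\til{Q},\til{I}}$ intertwines them.

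The only genuine obstacle I foresee is notational: keeping track of which labelled copy of $\til{B}$ is in play at each stage, and of the signs introduced when a one-sided module structure on $\til{Q}$ or $F$ is converted to the other side via the graded-commutativity rule. Lemma \ref{lem:2110} is precisely what removes the ambiguity in the $F$-slot, and once the natural definitions of the residual module structures on the intermediate tensor products are in place, the rest is the standard associativity-of-tensor-product computation.
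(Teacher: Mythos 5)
Your proof is correct and is essentially the same argument as the paper's: both realize the two sides as quotients of the common object $\til{Q}_1 \ot_{\til{A}} \til{Q}_2 \ot_{\til{A}} \opn{Hom}_{\til{B}_1 \ot_{\til{A}} \til{B}_2}(B_0, \til{I})$ and use Lemma~\ref{lem:2110} to see that the three families of relations coincide, so the identity on pure tensors descends. Your write-up unfolds this into an explicit check of well-definedness, strictness and $\til{D}$-linearity that the paper leaves implicit, which is a more verbose but equivalent formulation.
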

 
\begin{proof}
If we replace the operations $(- \ot_{\til{B}_i} -)$ in formula 
($\divideontimes$) with $(- \ot_{\til{A}} -)$,
then both objects there 
become the same, and the isomorphism, let's denote it by 
$\phi'_{\til{Q}, \til{I}}$, is the identity 
automorphism of this object. The passage from $(- \ot_{\til{A}} -)$
to $(- \ot_{\til{B}_i} -)$ amounts to passing to quotients modulo the relations 
arising from the various actions of the DG rings $\til{B}_i$, $i = 0, 1, 2$. 
According to Lemma \ref{lem:2110}  these three actions of the DG ring
$\til{B} = \til{B}_i$ are 
equal. Therefore the relations are also equal, and the isomorphism 
$\phi'_{\til{Q}, \til{I}}$ induces an isomorphism 
$\phi_{\til{Q}, \til{I}}$.
\end{proof}

\begin{lem} \label{lem:1431} 
Let $M \in \cat{D}(B)$ and $N \in \cat{D}(C)$.
There is a unique isomorphism 
\[ \tag{\dag} 
\begin{aligned}
&
\phi^{\lsp \mrm{L}}_{M, N} : 
(N_1 \ot^{\mrm{L}}_{\til{B}_0} N_2) \ot^{\mrm{L}}_{\til{B}_0}
\opn{RHom}_{\til{B}_1 \ot_{\til{A}} \til{B}_2}
(B_0, M_1 \ot^{\mrm{L}}_{\til{A}} M_2) 
\\ & \qquad 
\iso 
N_1 \ot^{\mrm{L}}_{\til{B}_1} \bigr( N_2 \ot^{\mrm{L}}_{\til{B}_2} 
\opn{RHom}_{\til{B}_1 \ot_{\til{A}} \til{B}_2}
(B_0, M_1 \ot^{\mrm{L}}_{\til{A}} M_2) \bigr)
\end{aligned} \]
in $\cat{D}(\til{D}^{})$, such that for every 
K-projective DG module resolution $\til{P} \to M$ over $\til{B}$,
K-injective DG module resolution
$\til{P}_1 \ot_{\til{A}} \til{P}_2 \to \til{I}$
over $\til{B}_1 \ot_{\til{A}} \til{B}_2$, 
and K-projective DG module resolution $\til{Q} \to N$ over $\til{C}$, 
the isomorphism $\phi^{\lsp \mrm{L}}_{M, N}$ 
in $\cat{D}(\til{D})$ is presented by the isomorphism 
$\phi_{\til{Q}, \til{I}}$ in $\cat{C}_{\mrm{str}}(\til{D})$
from Lemma \ref{lem:2055}.
\end{lem}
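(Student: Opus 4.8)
The plan is to define $\phi^{\lsp \mrm{L}}_{M, N}$ as the morphism in $\cat{D}(\til{D})$ induced by the strict isomorphism $\phi_{\til{Q}, \til{I}}$ of Lemma \ref{lem:2055}, after checking that the two complexes in formula $(\divideontimes)$ genuinely represent the two derived objects in $(\dag)$; then to show that this morphism is independent of the chosen resolutions (which gives existence); and finally that its defining property pins it down (which gives uniqueness).

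First I would fix resolutions: a K-projective DG $\til{B}$-module resolution $\til{P} \to M$, a K-injective resolution $\til{P}_1 \ot_{\til{A}} \til{P}_2 \to \til{I}$ over $\til{B}_1 \ot_{\til{A}} \til{B}_2$, and a K-projective DG $\til{C}$-module resolution $\til{Q} \to N$; then $\til{Q}$ is K-flat over $\til{B}$, and $\til{P}_1$ is K-flat over $\til{A}$. For the left-hand side of $(\dag)$ this is exactly the canonical identification (\ref{eqn:2058}) with the left-hand complex of $(\divideontimes)$. For the right-hand side one argues analogously, and as an object of $\cat{D}(\til{D})$: $M_1 \ot^{\mrm{L}}_{\til{A}} M_2$ is computed by $\til{P}_1 \ot_{\til{A}} \til{P}_2$, then $\opn{RHom}_{\til{B}_1 \ot_{\til{A}} \til{B}_2}(B_0, -)$ by $\opn{Hom}_{\til{B}_1 \ot_{\til{A}} \til{B}_2}(B_0, \til{I})$, then the inner $\ot^{\mrm{L}}_{\til{B}_2}$ by $\ot_{\til{B}_2}$ since $\til{Q}_2$ is K-flat over $\til{B}_2 = \til{B}$, and the outer $\ot^{\mrm{L}}_{\til{B}_1}$ by $\ot_{\til{B}_1}$ since $\til{Q}_1$ is K-flat over $\til{B}_1 = \til{B}$; throughout, the coincidence of the three $\til{B}$-actions on $\opn{Hom}_{\til{B}_1 \ot_{\til{A}} \til{B}_2}(B_0, \til{I})$ (Lemma \ref{lem:2110}) is what makes these tensor products well defined over $\til{D}$ and identifies the result with the right-hand complex of $(\divideontimes)$. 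Hence both sides of $(\dag)$ are represented by the complexes of $(\divideontimes)$, and I set $\phi^{\lsp \mrm{L}}_{M, N}$ to be the image of $\phi_{\til{Q}, \til{I}}$ in $\cat{D}(\til{D})$.

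Next I would verify independence of the choices, where the decisive point is that the formula for $\phi_{\til{Q}, \til{I}}$ is simply the regrouping $(\til{q}_1 \ot \til{q}_2) \ot \chi \mapsto \til{q}_1 \ot \til{q}_2 \ot \chi$, so it is manifestly natural in $\til{Q}$ and in $\til{I}$. Given a second admissible triple $\til{P}', \til{I}', \til{Q}'$, the uniqueness up to homotopy of K-projective and K-injective resolutions supplies homotopy equivalences $\til{Q} \to \til{Q}'$ in $\cat{C}_{\mrm{str}}(\til{C})$ over $N$ and $\til{I} \to \til{I}'$ in $\cat{C}_{\mrm{str}}(\til{B}_1 \ot_{\til{A}} \til{B}_2)$ compatible with the resolution maps; by naturality the square relating $\phi_{\til{Q}, \til{I}}$ and $\phi_{\til{Q}', \til{I}'}$ along these equivalences commutes already in $\cat{C}_{\mrm{str}}(\til{D})$, hence a fortiori in $\cat{D}(\til{D})$ under the canonical identifications. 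Therefore $\phi^{\lsp \mrm{L}}_{M, N}$ is well defined, and it is an isomorphism since $\phi_{\til{Q}, \til{I}}$ is. Uniqueness is then immediate: any morphism of $\cat{D}(\til{D})$ presented by $\phi_{\til{Q}, \til{I}}$ for one admissible choice of resolutions equals $\phi^{\lsp \mrm{L}}_{M, N}$, and by the previous paragraph this property does not depend on the choice.

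I expect the only genuine obstacle to be organizational: tracking which of the three coinciding $\til{B}$-actions sits at each tensor symbol, and confirming the handful of K-flatness statements that are needed so that the explicit complex of $(\divideontimes)$ computes the right-hand side of $(\dag)$ as an object of $\cat{D}(\til{D})$ rather than merely of $\cat{D}(\Z)$. Once the strict picture of Lemma \ref{lem:2055} is in place, the passage to $\cat{D}(\til{D})$ and the independence of resolutions are forced by the elementary shape of the formula for $\phi_{\til{Q}, \til{I}}$.
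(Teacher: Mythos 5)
Your proposal is correct and follows essentially the same approach as the paper: both rest on the uniqueness up to homotopy of the chosen K-projective and K-injective resolutions, which forces the strict isomorphisms $\phi_{\til{Q}, \til{I}}$ to induce one and the same morphism in $\cat{D}(\til{D})$. The paper's proof is terser, presupposing the identification of both sides of $(\dag)$ with the complexes in $(\divideontimes)$, which you instead spell out; your extra care about the three coinciding $\til{B}$-actions (Lemma \ref{lem:2110}) and the K-flatness of $\til{Q}$ over $\til{B}$ is exactly the content implicit in the paper's reference to (\ref{eqn:2058}), so the two arguments are the same in substance.
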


\begin{proof}
Since the resolutions $\til{P} \to M$, 
$\til{P}_1 \ot_{\til{A}} \til{P}_2 \to \til{I}$
and $\til{Q} \to N$ are unique up to homotopy equivalences, which are 
themselves unique up to homotopy, the isomorphisms 
$\phi_{\til{Q}, \til{I}}$ are unique up to homotopy. Therefore the 
resulting isomorphisms 
$\phi^{\lsp \mrm{L}}_{M, N} = 
\opn{Q}(\phi_{\til{Q}, \til{I}})$ 
in $\cat{D}(\til{D})$ are all equal.
\end{proof}

\begin{rem} \label{rem:1445}
The reader might wonder at this point why we need the DG ring 
 $\til{D}^{}$ at all. The reason is that the second object in formula 
($\dag$), as written, does not belong to the derived category 
$\cat{D}(\til{C}_1 \ot_{\til{B}_0} \til{C}_2)$; 
it is just an object of the derived category 
$\cat{D}(\til{D})$. Because of the equivalence (\ref{eqn:1446}) 
we can make certain operations in $\cat{D}(\til{D})$, and later transfer them 
to $\cat{D}(\til{C}_1 \ot_{\til{B}_0} \til{C}_2)$. This is what will be done in 
the following lemma.
\end{rem}

The {\em derived tensor evaluation morphism} for commutative DG rings
was constructed in \cite[Theorem 12.10.14]{Ye5}.

\begin{lem} \label{lem:1430}
Let $M \in \cat{D}(B)$ and $N \in \cat{D}(C)$.
There is a unique morphism 
\[ \tag{$\heartsuit$} 
\begin{aligned}
&
(N_1 \ot^{\mrm{L}}_{\til{B}_0} N_2) \ot^{\mrm{L}}_{\til{B}_0}
\opn{RHom}_{\til{B}_1 \ot_{\til{A}} \til{B}_2}
(B_0, M_1 \ot^{\mrm{L}}_{\til{A}} M_2) 
\\ & \quad 
\to 
\opn{RHom}_{\til{C}_1 \ot_{\til{A}} \til{C}_2}
\bigl( \til{C}_1 \ot_{\til{B}_0} \til{C}_2, 
(M_1 \ot^{\mrm{L}}_{B_1} N_1) 
\ot^{\mrm{L}}_{\til{A}} (M_2 \ot^{\mrm{L}}_{B_2} N_2) \bigr)
\end{aligned} \]
in $\cat{D}(\til{C}_1 \ot_{\til{B}_0} \til{C}_2)$, 
whose image under the equivalence $\opn{Rest}_{g}$ from formula 
(\ref{eqn:1446}) is the composed morphism 
\[ \tag{$\til{\heartsuit}^{}$} 
\begin{aligned}
& (N_1 \ot^{\mrm{L}}_{\til{B}_0} N_2) \ot^{\mrm{L}}_{\til{B}_0}
\opn{RHom}_{\til{B}_1 \ot_{\til{A}} \til{B}_2}
(B_0, M_1 \ot^{\mrm{L}}_{\til{A}} M_2)
\\
& \quad \iso^{\mrm{(a)}}
N_1 \ot^{\mrm{L}}_{\til{B}_1} \bigl( N_2 \ot^{\mrm{L}}_{\til{B}_2}
\opn{RHom}_{\til{B}_1 \ot_{\til{A}} \til{B}_2}
(\til{B}_0, M_1 \ot^{\mrm{L}}_{\til{A}} M_2) \bigr)
\\
& \quad \to^{\mrm{(b)}}
N_1 \ot^{\mrm{L}}_{\til{B}_1} 
\opn{RHom}_{\til{B}_1 \ot_{\til{A}} \til{B}_2}
\bigl( \til{B}_0, M_1 \ot^{\mrm{L}}_{\til{A}} (M_2 \ot^{}_{\til{B}_2} N_2) 
\bigr)
\\
& \quad \to^{\mrm{(c)}}
\opn{RHom}_{\til{B}_1 \ot_{\til{A}} \til{B}_2}
\bigl( \til{B}_0, (M_1 \ot^{\mrm{L}}_{\til{B}_1} N_1)
\ot^{\mrm{L}}_{\til{A}} (M_2 \ot^{}_{\til{B}_2} N_2) \bigr)
\\
& \quad \iso^{\mrm{(d)}}
\opn{RHom}_{\til{C}_1 \ot_{\til{A}} \til{C}_2}
\bigl( \til{C}_1 \ot_{\til{B}_0} \til{C}_2, 
(M_1 \ot^{\mrm{L}}_{\til{B}_1} N_1) 
\ot^{\mrm{L}}_{\til{A}} (M_2 \ot^{\mrm{L}}_{\til{B}_2} N_2) \bigr) 
\\
& \quad \iso^{\mrm{(e)}}
\opn{RHom}_{\til{C}_1 \ot_{\til{A}} \til{C}_2}
\bigl( \til{C}_1 \ot_{\til{B}_0} \til{C}_2, 
(M_1 \ot^{\mrm{L}}_{B_1} N_1) 
\ot^{\mrm{L}}_{\til{A}} (M_2 \ot^{\mrm{L}}_{B_2} N_2) \bigr)
\end{aligned} \]
in $\cat{D}(\til{D}^{})$.
Here the isomorphism $\iso^{\mrm{(a)}}$ is the isomorphism 
$\phi^{\lsp \mrm{L}}_{M, N}$from Lemma \ref{lem:1431}.
The morphisms $\to^{\mrm{(b)}}$ and $\to^{\mrm{(c)}}$ are derived tensor 
evaluation morphisms. The isomorphism 
$\iso^{\mrm{(d)}}$ is by adjunction for the DG ring homomorphism 
$\til{B}_1 \ot_{\til{A}} \til{B}_2 \to \til{C}_1 \ot_{\til{A}} \til{C}_2$,
like in formula(\ref{eqn:1480}), together with the isomorphism $g$ from 
formula (\ref{eqn:1510}). 
Finally, the  isomorphism $\iso^{\mrm{(e)}}$ is due to the isomorphism 
$M \ot^{\mrm{L}}_{\til{B}} N \cong M \ot^{\mrm{L}}_{B} N$
in $\cat{D}(\til{C})$, that we have because of the quasi-isomorphism 
$\til{B} \to B$, combined with formula (\ref{eqn:719}). 
\end{lem}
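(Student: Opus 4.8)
The plan is to recognise this statement as a transport of structure along the DG ring isomorphism $g$ of \eqref{eqn:1510}: once the composite $(\til{\heartsuit})$ is known to be a well-defined morphism in $\cat{D}(\til{D})$, the morphism asserted by the lemma is simply its preimage under the functor $\opn{Rest}_g$ of \eqref{eqn:1446}. Since $g$ is an isomorphism of DG rings, $\opn{Rest}_g$ is an equivalence -- indeed an isomorphism -- of triangulated categories, so this preimage exists and is unique. Hence the whole content of the lemma reduces to checking that the five arrows of $(\til{\heartsuit})$, labelled (a)--(e), are genuine morphisms in $\cat{D}(\til{D})$ whose sources and targets match up so that they compose. (The source object lies in $\cat{D}(\til{D})$ as noted at \eqref{eqn:2057}, and the final target in $\cat{D}(\til{C}_1 \ot_{\til{B}_0} \til{C}_2)$ by construction.)

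First I would dispatch the formal arrows. Arrow (a) is, by definition, the isomorphism $\phi^{\lsp \mrm{L}}_{M, N}$ produced in Lemma \ref{lem:1431}, so there is nothing to do. Arrow (d) is the derived Hom--tensor adjunction \eqref{eqn:1480} for the DG ring homomorphism $\til{B}_1 \ot_{\til{A}} \til{B}_2 \to \til{C}_1 \ot_{\til{A}} \til{C}_2$, combined with the identification $(\til{C}_1 \ot_{\til{A}} \til{C}_2) \ot^{\mrm{L}}_{\til{B}_1 \ot_{\til{A}} \til{B}_2} \til{B}_0 \cong \til{C}_1 \ot_{\til{B}_0} \til{C}_2$ furnished by $g$; the tensor product here need not be derived because $\til{C}$ is K-flat over $\til{B}$. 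Arrow (e) is the identification $M \ot^{\mrm{L}}_{\til{B}} N \cong M \ot^{\mrm{L}}_{B} N$ in $\cat{D}(\til{C})$ coming from the quasi-isomorphism $\til{B} \to B$ via \eqref{eqn:719}. Each of (a), (d), (e) is an isomorphism, and tracking the various $\til{D}$-module actions through them is routine bookkeeping.

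The real work lies in the two derived tensor-evaluation arrows. For (b) I would apply the derived tensor evaluation morphism for commutative DG rings, \cite[Theorem 12.10.14]{Ye5}, over the DG ring $\til{B}_1 \ot_{\til{A}} \til{B}_2$, with first Hom-argument $\til{B}_0$, second Hom-argument $M_1 \ot^{\mrm{L}}_{\til{A}} M_2$, and with $N_2$ tensored in along the $\til{B}_2$-action coming from the second tensor factor; the target rewrites as displayed because $\til{B}_2$ acts on the factor $M_2$. In the explicit model built from K-projective resolutions $\til{P} \to M$ over $\til{B}$ and $\til{Q} \to N$ over $\til{C}$ (with $\til{Q}$ then K-flat over $\til{B}$) this inner tensor product may be taken underived, which is why it appears as $M_2 \ot_{\til{B}_2} N_2$. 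Arrow (c) is the analogous tensor evaluation in the first tensor factor, pushing $N_1$ onto $M_1$.

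I expect the pair (b), (c) to be the main obstacle, the delicate point being the consistency of module structures. One must check that the $\til{B}_1$- and $\til{B}_2$-actions along which $N_1$ and $N_2$ are tensored into $\opn{RHom}_{\til{B}_1 \ot_{\til{A}} \til{B}_2}(\til{B}_0, -)$ are the \emph{same} actions along which tensor evaluation delivers $N_1$ and $N_2$ onto the arguments $M_1$ and $M_2$, and moreover that both agree with the structural $\til{B}_0$-action (through $B_0$) governing the outer $\ot^{\mrm{L}}_{\til{B}_0}$ in the source of $(\til{\heartsuit})$. This is precisely the content of Lemma \ref{lem:2110}: the three $\til{B}$-actions on $\opn{Hom}_{\til{B}_1 \ot_{\til{A}} \til{B}_2}(B_0, -)$ coincide, so these ostensibly different actions are one and the same, and (b), (c) are unambiguous. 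Once the five arrows are seen to compose in $\cat{D}(\til{D})$, the morphism required by the lemma is defined as the unique $\opn{Rest}_g$-preimage of $(\til{\heartsuit})$, and uniqueness follows from the faithfulness of $\opn{Rest}_g$.
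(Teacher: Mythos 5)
Your proposal is correct and takes essentially the same approach as the paper: the paper's proof is a one-line remark that $\opn{Rest}_g$ sends the source and target of $(\heartsuit)$ to the source and target of $(\til{\heartsuit})$, so the asserted morphism is by definition the unique $\opn{Rest}_g$-preimage of the composite $(\til{\heartsuit})$, which exists and is unique because $\opn{Rest}_g$ is an equivalence. Your unpacking of why the arrows (a)--(e) compose, and in particular your appeal to Lemma \ref{lem:2110} for the consistency of the $\til{B}$-actions underlying the two tensor-evaluation arrows, is the detail the paper leaves implicit (it is really absorbed into Lemma \ref{lem:1431} and the construction of the objects at \eqref{eqn:2057}).
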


To clarify, the action of the DG ring 
$\til{C}_1 \ot_{\til{B}_0} \til{C}_2$
on the first object in ($\heartsuit$) is through 
$N_1 \ot^{\mrm{L}}_{\til{B}_0} N_2$,
and its action on the second object in ($\heartsuit$) is through itself.

\begin{proof}
The only fact we need to stress is that the first (resp.\ second) object 
appearing in ($\heartsuit$) is sent by the equivalence $\opn{Rest}_{g}$
to the first (resp.\ last) object appearing in 
$\til{\heartsuit}^{}$.
\end{proof}

\begin{dfn} \label{1510}
Let $M \in \cat{D}(B)$ and $N \in \cat{D}(C)$. 
Define the morphism
\[ \opn{cup}^{\til{C} / \til{B} / \til{A}}_{C / B / A, M, N}  \, : \, 
\opn{Sq}^{\til{B} / \til{A}}_{B / A}(M) \ot^{\mrm{L}}_{B}
\opn{Sq}^{\til{C} / \til{B}}_{C / B}(N) \, \to \,
\opn{Sq}^{\til{C} / \til{A}}_{C / A}(M \ot^{\mrm{L}}_{B} N) \]
in $\cat{D}(C)$, called the {\em resolved cup product morphism}, to be the 
composition of all the morphisms in $\cat{D}(C)$ in formula ($\til{\lozenge}$) 
below.
\[ \tag{$\til{\lozenge}$}
\begin{aligned}
& \opn{Sq}^{\til{B} / \til{A}}_{B / A}(M) \ot^{\mrm{L}}_{B}
\opn{Sq}^{\til{C} / \til{B}}_{C / B}(N) 
\iso^{(\mrm{1})} 
\opn{Sq}^{\til{B} / \til{A}}_{B / A}(M) \ot^{\mrm{L}}_{\til{B}}
\opn{Sq}^{\til{C} / \til{B}}_{C / B}(N)
\\
& \quad =
\opn{Sq}^{\til{B} / \til{A}}_{B / A}(M) \ot^{\mrm{L}}_{\til{B}_0}
\opn{RHom}_{\til{C}_1 \ot_{\til{B}_0} \til{C}_2}
(C_0, N_1 \ot^{\mrm{L}}_{\til{B}_0} N_2)
\\
& \quad \xar{\mspace{12mu}}^{(\mrm{2})} 
\opn{RHom}_{\til{C}_1 \ot_{\til{B}_0} \til{C}_2} 
\bigl( C_0, (N_1 \ot^{\mrm{L}}_{\til{B}_0} N_2) \ot^{\mrm{L}}_{\til{B}_0}
\opn{Sq}^{\til{B} / \til{A}}_{B / A}(M) \bigr)
\\
& \quad \xar{\mspace{12mu}}^{(\mrm{3})} 
\opn{RHom}_{\til{C}_1 \ot_{\til{B}_0} \til{C}_2} 
\bigl( C_0, \opn{RHom}_{\til{C}_1 \ot_{\til{A}} \til{C}_2} 
(\til{C}_1 \ot_{\til{B}_0} \til{C}_2, 
L_1 \ot^{\mrm{L}}_{\til{A}} L_2) \bigr) 
\\
& \quad \iso^{(\mrm{4})} 
\opn{RHom}_{\til{C}_1 \ot_{\til{A}} \til{C}_2} 
(C_0, L_1 \ot^{\mrm{L}}_{\til{A}} L_2) 
= \opn{Sq}^{\til{C} / \til{A}}_{C / A}(L) 
= \opn{Sq}^{\til{C} / \til{A}}_{C / A}(M \ot^{\mrm{L}}_{B} N) . 
\end{aligned} \]
Here $L := M \ot_B^{\mrm{L}} N \in \cat{D}(C)$. 
The morphisms in formula ($\til{\lozenge}$) are as follows. 
The isomorphism $\iso^{(\mrm{1})}$ comes from the DG ring quasi-isomorphism 
$\til{B} \to B$, together with formula (\ref{eqn:719}). 
The morphism $\xar{\mspace{12mu}}^{(\mrm{2})}$ 
is an instance of the derived 
tensor-evaluation morphism, see \cite[Theorem 12.10.14]{Ye5}. 
The morphism $\to^{(\mrm{3})}$ is is gotten by applying 
$\opn{RHom}_{\til{C}_1 \ot_{\til{B}_0} \til{C}_2}(C_0, -)$
to the morphism ($\heartsuit$) from Lemma \ref{lem:1430}.
Lastly, the isomorphism $\iso^{(\mrm{4})}$ is by adjunction for the DG ring 
homomorphism 
$\til{C}_1 \ot_{\til{A}} \til{C}_2 \to \til{C}_1 \ot_{\til{B}_0} \til{C}_2$,
see formula (\ref{eqn:1480}).
\end{dfn}

The name ``resolved cup product morphism'' indicates that it is dependent on the
K-flat triple of DG rings resolution $\til{C} / \til{B} / \til{A}$
of $C / B / A$. The next theorem removes this dependence. 

\begin{thm}[Cup Product Morphism] \label{thm:780}
Let $C / B / A$ be a triple of rings, let $M \in \cat{D}(B)$, and   
let $N \in \cat{D}(C)$. 
There is a unique morphism 
\[ \opn{cup}_{C / B / A, M, N} \, : \, 
\opn{Sq}_{B / A}(M) \ot^{\mrm{L}}_{B}
\opn{Sq}_{C / B}(N) \, \to \,
\opn{Sq}_{C / A}(M \ot^{\mrm{L}}_{B} N)  \]
in $\cat{D}(C)$, called the {\em cup product morphism}, with this 
property\tup{:}
\begin{itemize}
\item[($\diamondsuit$)]
Given a K-flat DG ring resolution $\til{C} / \til{B} / \til{A}$ of the triple 
of 
rings $C / B / A$, the diagram 
\[ \begin{tikzcd} [column sep = 18ex, row sep = 6ex] 
\opn{Sq}_{B / A}(M) \ot^{\mrm{L}}_{B} \opn{Sq}_{C / B}(N) 
\arrow[r, "{\opn{cup}_{C / B / A, M, N}}"]
\arrow[d, "{ \opn{sq}_{B / A, M}^{\til{B} / \til{A}} \ot^{\mrm{L}}_{B} 
\opn{sq}_{C / B, N}^{\til{C} / \til{B}} }", "{\simeq}"']
&
\opn{Sq}_{C / A} (M \ot^{\mrm{L}}_{B} N)
\arrow[d, "{ \opn{sq}_{C / A, M \ot^{\mrm{L}}_{B} N}^{\til{C} / \til{A}} }", 
"{\simeq}"']
\\
\opn{Sq}^{\til{B} / \til{A}}_{B / A}(M) \ot^{\mrm{L}}_{B} 
\opn{Sq}^{\til{C} / \til{B}}_{C / B}(N) 
\arrow[r, "{\opn{cup}^{\til{C} / \til{B} / \til{A}}_{C / B / A, M, N}}"]
&
\opn{Sq}^{\til{C} / \til{A}}_{C / A} (M \ot^{\mrm{L}}_{B} N) 
\end{tikzcd} \]
in $\cat{D}(C)$, where 
$\opn{cup}^{\til{C} / \til{B} / \til{A}}_{C / B / A, M, N}$
is the resolved cup product morphism from Definition \ref{1510}, is commutative.
\end{itemize}
\end{thm}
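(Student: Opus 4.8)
The plan is to follow the template used for Theorems \ref{thm:631} and \ref{thm:632}: define $\opn{cup}_{C / B / A, M, N}$ by means of one auxiliary K-flat resolution, and then show that the resulting morphism does not depend on that choice, so that property $(\diamondsuit)$ holds simultaneously for all resolutions and pins the morphism down uniquely. Concretely, I would fix any K-flat DG ring resolution $\til{C} / \til{B} / \til{A}$ of $C / B / A$ (these exist by \cite[Proposition 7.9]{Ye4}) and set
\[ \opn{cup}_{C / B / A, M, N} :=
\bigl( \opn{sq}_{C / A, M \ot^{\mrm{L}}_B N}^{\til{C} / \til{A}} \bigr)^{-1}
\circ \opn{cup}^{\til{C} / \til{B} / \til{A}}_{C / B / A, M, N}
\circ \bigl( \opn{sq}_{B / A, M}^{\til{B} / \til{A}} \ot^{\mrm{L}}_B
\opn{sq}_{C / B, N}^{\til{C} / \til{B}} \bigr) , \]
where $\opn{cup}^{\til{C} / \til{B} / \til{A}}_{C / B / A, M, N}$ is the resolved cup product of Definition \ref{1510} and the $\opn{sq}$'s are the comparison isomorphisms attached to the objects $\opn{Sq}_{-}(-)$ in Theorem \ref{thm:631}. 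With this definition property $(\diamondsuit)$ holds tautologically for the chosen resolution; the real content is that it then holds for every other resolution, and once that is established the vertical isomorphisms in the diagram of $(\diamondsuit)$ immediately force uniqueness.

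Next I would reduce the independence statement to a single naturality assertion about resolved cup products. As in \cite{Ye4}, the K-flat triple resolutions of $C / B / A$ form a category which is nonempty and in which any two objects $\til{C} / \til{B} / \til{A}$ and $\til{C}' / \til{B}' / \til{A}'$ are dominated by a third one admitting morphisms of triple resolutions to both (this is the same cofinality input that is used implicitly in the proof of Theorem \ref{thm:631}). Hence it suffices to prove: for every morphism $\til{t} / \til{s} / \til{r} : \til{C} / \til{B} / \til{A} \to \til{C}' / \til{B}' / \til{A}'$ of K-flat triple resolutions of $C / B / A$, the square built from the two resolved cup products $\opn{cup}^{\til{C} / \til{B} / \til{A}}$ and $\opn{cup}^{\til{C}' / \til{B}' / \til{A}'}$, and from the comparison morphisms of formula (\ref{eqn:1490}) — tensored together on the left for $M$ over $B/A$ and $N$ over $C/B$, and taken for $M \ot^{\mrm{L}}_B N$ over $C / A$ on the right — commutes in $\cat{D}(C)$. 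Feeding this into condition $(*)$ of Theorem \ref{thm:631}, applied separately to $M$ over $B / A$, to $N$ over $C / B$, and to $M \ot^{\mrm{L}}_B N$ over $C / A$, then yields the desired independence of the resolution.

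The heart of the argument — and the step I expect to be the main obstacle — is verifying that last commuting square, which I would carry out by unwinding the four-step construction of $\opn{cup}^{\til{C} / \til{B} / \til{A}}_{C / B / A, M, N}$ in Definition \ref{1510}. Each of the four morphisms $(1)$--$(4)$ occurring there, as well as the morphism $(\heartsuit)$ of Lemma \ref{lem:1430} together with its internal steps (a)--(e) — in particular the isomorphism $\phi^{\mrm{L}}_{M, N}$ of Lemma \ref{lem:1431} and the two derived tensor-evaluation morphisms used in it — is natural with respect to DG ring homomorphisms. What must be checked is that these naturalities are compatible with the way the comparison morphisms $\opn{Sq}^{\til{w} / \til{v}}$ are assembled from diagram (\ref{eqn:670}), i.e.\ from a compatible system of K-projective and K-injective resolutions over $\til{B}$, over $\til{B}_1 \ot_{\til{A}} \til{B}_2$, over $\til{C}$ and over $\til{C}_1 \ot_{\til{B}_0} \til{C}_2$. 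The bookkeeping is made delicate by the auxiliary DG ring $\til{D}$ of (\ref{eqn:2056}) and the equivalence $\opn{Rest}_g$ of (\ref{eqn:1446}) used in Lemma \ref{lem:1430}: one forms the analogous DG rings $\til{D}$ and $\til{D}'$ for the two resolutions, notes that $\til{t} / \til{s} / \til{r}$ induces a DG ring homomorphism relating $\til{D}$ and $\til{D}'$, and verifies that passage through $\opn{Rest}_g$ is compatible with it. Each individual compatibility is routine — it reduces, exactly as in the proof of Proposition \ref{prop:890} and in \cite[Section 7]{Ye4}, to the uniqueness up to homotopy of the chosen module resolutions — but assembling all of them into the single commuting square is where genuine care is required.
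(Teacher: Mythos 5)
Your proposal is correct and matches the paper's proof in all essential respects: pin down $\opn{cup}_{C/B/A, M, N}$ via one resolution, then verify naturality of the resolved cup product (every arrow in $(\til{\lozenge})$, Lemma \ref{lem:1430}, and Lemma \ref{lem:1431}) along a morphism $\til{t}/\til{s}/\til{r}$ of K-flat triple resolutions, and conclude by condition $(*)$ of Theorem \ref{thm:631}. The only cosmetic difference is that the paper fixes a \emph{universal} semi-free triple resolution at the outset, so that \cite[Theorem 3.22(1)]{Ye4} directly furnishes a morphism to any other K-flat resolution, whereas you start from an arbitrary K-flat one and invoke a cofinality/domination statement for the category of resolutions --- which is equivalent content, since the dominating object must again be produced as a semi-free resolution via the same lifting theorem.
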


\begin{proof}
Let us choose a universal K-flat resolution 
$t^{\mrm{u}} / s^{\mrm{u}} / r^{\mrm{u}} : 
\til{C}^{\mrm{u}} / \til{B}^{\mrm{u}} / \til{A}^{\mrm{u}}
\to C / B / A$ 
of the triple of DG rings $C / B / A$. By this we mean that 
$\til{A}^{\mrm{u}} \to A$ is a commutative semi-free DG ring resolution 
relative to $\Z$; $\til{B}^{\mrm{u}} \to B$ is a commutative semi-free DG ring 
resolution relative $\til{A}^{\mrm{u}}$; and $\til{C}^{\mrm{u}} \to C$ is a 
commutative semi-free DG ring resolution relative to $\til{B}^{\mrm{u}}$. This 
is possible by \cite[Theorem 3.21$(1)$]{Ye4}. We define the morphism
$\opn{cup}_{C / B / A, M, N}$
to be the unique morphism that makes the diagram in ($\diamondsuit$) 
commutative for the universal resolution 
$\til{C}^{\mrm{u}} / \til{B}^{\mrm{u}} / \til{A}^{\mrm{u}}$.

Given an arbitrary K-flat resolution 
$t / s / r : \til{C} / \til{B} / \til{A} \to C / B / A$
of the triple of DG rings $C / B / A$,
we must prove that the diagram in ($\diamondsuit$) is commutative for 
$\til{C} / \til{B} / \til{A}$. According to 
\cite[Theorem $\tup{3.22(1)}$]{Ye4} there exists a DG ring homomorphism 
$\til{r} : \til{A}^{\mrm{u}} \to \til{A}$ 
such that $r^{\mrm{u}} = r \circ \til{r}$ (the base DG ring here is $\Z$). 
Similarly there is a DG $\til{A}^{\mrm{u}}$-ring homomorphism 
$\til{s} : \til{B}^{\mrm{u}} \to \til{B}$ 
such that $s^{\mrm{u}} = s \circ \til{s}$, and the same for $\til{C}$. 
We thus have a commutative diagram 
\begin{equation} \label{eqn:840}
\begin{tikzcd} [column sep = 12ex, row sep = 5ex] 
&
\til{C} / \til{B} / \til{A} 
\ar[d, "{t / s / r}"]
\\
\til{C}^{\mrm{u}} / \til{B}^{\mrm{u}} / \til{A}^{\mrm{u}}
\ar[r, "{t^{\mrm{u}} / s^{\mrm{u}} / r^{\mrm{u}}}"']
\ar[ur, "{\til{t} / \til{s} / \til{r}}"]
&
C / B / A
\end{tikzcd} 
\end{equation}
of triples of DG rings. 

Consider the next diagram in $\cat{D}(C)$:
\begin{equation} \label{eqn:841}
\begin{tikzcd} [column sep = 16ex, row sep = 5ex]
\opn{Sq}^{\til{B} / \til{A}}_{B / A}(M) \ot^{\mrm{L}}_{B} 
\opn{Sq}^{\til{C} / \til{B}}_{C / B}(N) 
\arrow[r, "{\opn{cup}^{\til{C} / \til{B} / \til{A}}_{C / B / A, M, N}}"]
\arrow[d, "{ \opn{Sq}_{B / A}^{\til{s} / \til{r}}(\opn{id}) \ot^{\mrm{L}}_{B} 
\opn{Sq}_{C / B}^{\til{t} / \til{s}}(\opn{id}) }", "{\cong}"']
&
\opn{Sq}^{\til{C} / \til{A}}_{C / A} (M \ot^{\mrm{L}}_{B} N) 
\arrow[d, "{ \opn{Sq}_{C / A}^{\til{t} / \til{r}}(\opn{id}) }", 
"{\cong}"']
\\
\opn{Sq}^{\til{B}^{\mrm{u}} / \til{A}^{\mrm{u}}}_{B / A}(M) \ot^{\mrm{L}}_{B} 
\opn{Sq}^{\til{C}^{\mrm{u}} / \til{B}^{\mrm{u}}}_{C / B}(N) 
\arrow[r, "{\opn{cup}^{\til{C}^{\mrm{u}} / \til{B}^{\mrm{u}} / 
\til{A}^{\mrm{u}}}_{C / B / A, M, N}}"']
&
\opn{Sq}^{\til{C}^{\mrm{u}} / \til{A}^{\mrm{u}}}_{C / A} 
(M \ot^{\mrm{L}}_{B} N) 
\end{tikzcd}
\end{equation}
The morphisms $\opn{Sq}_{B / A}^{\til{s} / \til{r}}(\opn{id})$ etc.\ are 
special cases of (\ref{eqn:628}).
All the morphisms that are involved in the construction of the resolved cup 
product $\opn{cup}^{- / - / -}_{C / B / A, M, N}$ in Definition \ref{1510},
namely the morphisms in ($\til{\lozenge}$), 
respect the morphisms induced by 
$\til{t} / \til{s} / \til{r}$. 
Therefore diagram (\ref{eqn:841}) is commutative. Comparing it to the 
commutative diagram in condition ($*$) of Theorem \ref{thm:631},
we conclude that the diagram in ($\diamondsuit$) is commutative for 
$\til{C} / \til{B} / \til{A}$.
\end{proof}

Recall that a complex $M \in \cat{D}(A)$ is called a {\em perfect complex} if 
it is isomorphic, in $\cat{D}(A)$, to a bounded complex of finitely generated 
projective $A$-modules. See \cite[Section 14.1]{Ye5} for several equivalent 
characterizations of perfect complexes, and of perfect DG modules over 
DG rings.

\begin{thm}[Nondegeneracy] \label{thm:810} 
Let $C / B / A$ be a triple of rings, and let $M \in \cat{D}(B)$ and
$N \in \cat{D}(C)$ be DG modules.
Assume that all conditions \tup{(i)--(iv)} below hold\tup{:}
\begin{itemize}
\rmitem{i} $A$ is a noetherian ring, and $B$ and $C$ are essentially finite 
type $A$-rings.

\rmitem{ii} $M$ has finite flat dimension over $A$. 

\rmitem{iii} $N$ has finite flat dimension over $B$.

\rmitem{iv} Either \tup{(a)} or \tup{(b)} below is satisfied\tup{:}
\begin{itemize}
\rmitem{a} The ring $C$ is flat over $B$, and $C$ is a perfect complex over 
$C \ot_B C$. 

\rmitem{b} The complex $\opn{Sq}_{B / A}(M)$ has finite flat dimension over 
$B$. 
\end{itemize}
\end{itemize}
Then the cup product morphism 
\[  \opn{cup}_{C / B / A, M, N} \, : \, 
\opn{Sq}_{B / A}(M) \ot^{\mrm{L}}_{B}
\opn{Sq}_{C / B}(N) \, \to \,
\opn{Sq}_{C / A}(M \ot^{\mrm{L}}_{B} N)  \]
in $\cat{D}(C)$ is an isomorphism. 
\end{thm}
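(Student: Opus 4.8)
The plan is to reduce the statement to a local/derived-category computation by passing to a convenient K-flat resolution $\til{C} / \til{B} / \til{A}$ of the triple $C / B / A$, and then to check that each of the four morphisms $\iso^{(1)}, \xar{}^{(2)}, \to^{(3)}, \iso^{(4)}$ appearing in formula $(\til{\lozenge})$ of Definition \ref{1510} is an isomorphism. The two morphisms labelled with $\iso$ are isomorphisms unconditionally (they are adjunction isomorphisms together with the isomorphisms \eqref{eqn:719} coming from the quasi-isomorphism $\til{B} \to B$), so the real content is to show that $\xar{}^{(2)}$ and $\to^{(3)}$ are isomorphisms under hypotheses (i)--(iv). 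Since $\opn{cup}_{C/B/A,M,N}$ is independent of the resolution (Theorem \ref{thm:780}), it suffices to do this for one well-chosen resolution; I would take a commutative semi-free resolution so that $\til{B}$ is pseudo-finite semi-free over $\til{A}$ and $\til{C}$ is pseudo-finite semi-free over $\til{B}$, and arrange the DG-module resolutions $\til{P} \to M$, $\til{Q} \to N$ to be bounded below complexes of flats over $A$ and $B$ respectively (using (ii), (iii), and smart truncation, exactly as in Step 2 of the proof of Lemma \ref{lem:2030}).

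First I would handle $\xar{}^{(2)}$. This is the derived tensor-evaluation morphism
\[ \opn{RHom}_{\til{C}_1 \ot_{\til{B}_0} \til{C}_2}(C_0, N_1 \ot^{\mrm{L}}_{\til{B}_0} N_2) \ot^{\mrm{L}}_{\til{B}_0} \opn{Sq}^{\til{B} / \til{A}}_{B / A}(M) \to \opn{RHom}_{\til{C}_1 \ot_{\til{B}_0} \til{C}_2}\bigl(C_0, (N_1 \ot^{\mrm{L}}_{\til{B}_0} N_2) \ot^{\mrm{L}}_{\til{B}_0} \opn{Sq}^{\til{B} / \til{A}}_{B / A}(M)\bigr) , \]
so I would invoke Theorem \ref{thm:2115} (Derived Tensor Evaluation): with $A := \til{C}_1 \ot_{\til{B}_0} \til{C}_2$, $B := \til{B}_0$, $L := C_0$, $M := N_1 \ot^{\mrm{L}}_{\til{B}_0} N_2$, $N := \opn{Sq}^{\til{B}/\til{A}}_{B/A}(M)$. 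Condition (i) of that theorem, that $C_0$ is derived pseudo-finite over $\til{C}_1 \ot_{\til{B}_0} \til{C}_2$, holds because $C$ is an EFT $B$-ring: after resolving, $C_0$ has a finite semi-free (in fact it is enough that it be pseudo-finite semi-free) presentation over $\til{C} \ot_{\til{B}} \til{C}$ in case (a) where $C$ is a perfect complex over $C \ot_B C$; in general one needs the pseudo-finiteness which follows from Lemma \ref{lem:605}-type arguments for the diagonal $\til{C} \ot_{\til{B}} \til{C} \to C$. Condition (ii) is cohomological boundedness below, clear from $N \in \cat{D}(C)$ being (after truncation) bounded below. Condition (iii), that $\opn{Sq}^{\til{B}/\til{A}}_{B/A}(M)$ has bounded below flat concentration over $\til{B}_0$, is precisely where hypothesis (iv) enters: in case (b) it is assumed outright, and in case (a) I would deduce it from the cup product itself being compatible with the isomorphism computing $\opn{Sq}_{B/A}(M)$ via $\til{P}$ — more carefully, the flatness of $C$ over $B$ lets one avoid this hypothesis on the $B$-side by instead checking the evaluation morphism after the flat base change $\til{B}_0 \to \til{C}_0$, so that the relevant module is $\opn{Sq}_{C/A}(\ldots)$-adjacent and bounded-below-flat by (ii)--(iii).

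Next I would handle $\to^{(3)}$, obtained by applying $\opn{RHom}_{\til{C}_1 \ot_{\til{B}_0} \til{C}_2}(C_0, -)$ to the morphism $(\heartsuit)$ of Lemma \ref{lem:1430}; so it is enough to show $(\heartsuit)$ is an isomorphism. Under the equivalence $\opn{Rest}_g$ this unwinds to the composite $(\til{\heartsuit})$, whose steps (a), (d), (e) are isomorphisms for formal reasons, and whose steps (b) and (c) are again derived tensor-evaluation morphisms. For (b) I apply Theorem \ref{thm:2115} with $L := \til{B}_0$ over $\til{B}_1 \ot_{\til{A}} \til{B}_2$ (derived pseudo-finite, using that $\til{B}_0$ is pseudo-finite semi-free over $\til{B}_1 \ot_{\til{A}} \til{B}_2$ by the semi-freeness of $\til{B}$ over $\til{A}$), $M := M_1 \ot^{\mrm{L}}_{\til{A}} M_2$ (bounded below, via $\til{P}_1 \ot_{\til{A}} \til{P}_2$), and $N := N_2$ over $\til{B}_2$, whose flat concentration over $\til{B}_2$ is bounded below since $N$ has finite flat dimension over $B$ by (iii) and $\til{Q}$ is a bounded below complex of $B$-flats; the tensor factor $N_1 \ot^{\mrm{L}}_{\til{B}_1}(-)$ in front is harmless since that bifunctor is exact. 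Step (c) is the symmetric evaluation with the roles of the two factors swapped, handled identically using that $M_1 \ot^{\mrm{L}}_{\til{B}_1} N_1$ is again suitably bounded. Assembling, $(\heartsuit)$ is an isomorphism, hence so is $\to^{(3)}$, and therefore $\opn{cup}_{C/B/A,M,N}$ is an isomorphism.

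The main obstacle I anticipate is verifying the hypotheses of Theorem \ref{thm:2115} in the sub-case (iv)(a): one must show that the perfectness of $C$ over $C \ot_B C$ together with flatness of $C$ over $B$ forces the flat-concentration-boundedness input, without assuming (iv)(b); the cleanest route is to check the evaluation isomorphism $\xar{}^{(2)}$ after the faithfully flat (over $B$) base change along $\til{B}_0 \to \til{C}_0$, where the target identifies with $\opn{Sq}_{C/A}$-type data that is automatically bounded-below-flat by (ii)--(iii), and then descend. A secondary technical nuisance is the bookkeeping of the three commuting $\til{B}$-actions (Lemma \ref{lem:2110}) to make sure the evaluation morphisms are being applied over the correct DG ring in each of (b) and (c); this is routine but must be done carefully to land in $\cat{D}(\til{C}_1 \ot_{\til{B}_0} \til{C}_2)$ rather than merely in $\cat{D}(\til{D})$.
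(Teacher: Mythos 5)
The proposal correctly identifies the skeleton: $\iso^{(1)}$ and $\iso^{(4)}$ are formal isomorphisms, $\to^{(3)}$ reduces via Lemma \ref{lem:1430} to the two derived tensor-evaluation morphisms (b) and (c) in $(\til{\heartsuit})$, and those are handled by Theorem \ref{thm:2115} exactly as you say (this matches the paper's Lemma \ref{lem:1436}, whose hypotheses are just (i)--(iii)). So the only substantive issue is your treatment of $\xar{\mspace{12mu}}^{(\mrm{2})}$.

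You apply Theorem \ref{thm:2115} uniformly to $\xar{\mspace{12mu}}^{(\mrm{2})}$, which requires that $\opn{Sq}^{\til{B}/\til{A}}_{B/A}(M)$ have bounded-below flat concentration over $\til{B}_0$. Under hypothesis (iv)(b) this is given. But under (iv)(a) it is not, and you acknowledge this as the main obstacle; the trouble is that the workaround you sketch --- ``check the evaluation morphism after the flat base change $\til{B}_0 \to \til{C}_0$ and descend'' --- does not produce an argument. The morphism $\xar{\mspace{12mu}}^{(\mrm{2})}$ already lives in $\cat{D}(\til{C}_1 \ot_{\til{B}_0} \til{C}_2)$; there is no outer $\til{B}_0$-linearity to base-change along, and the object $\opn{Sq}^{\til{B}/\til{A}}_{B/A}(M)$ sits \emph{inside} the $\opn{RHom}$ and the tensor, not as an external scalar. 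Even setting that aside, faithfully flat descent is not the right tool to upgrade a base-changed isomorphism of $\opn{RHom}$'s whose source was never shown to commute with the base change in the first place. In short, under (iv)(a) Theorem \ref{thm:2115} is simply not applicable, and no amount of base change repairs that.

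The paper uses a genuinely different mechanism for (iv)(a): it bypasses the boundedness condition altogether. It first passes the morphism $\xar{\mspace{12mu}}^{(\mrm{2})}$ to $\cat{D}(\Z)$ (restriction is conservative), observes that under (iv)(a) the DG module $C_0$ is perfect over $C_1 \ot_{B_0} C_2$ and hence, via the equivalence $\opn{Rest}_{t \ot_s t}$ and \cite[Corollary 12.1.4]{Ye5}, perfect over $\til{C}_1 \ot_{\til{B}_0} \til{C}_2$, and then invokes the characterization of perfect DG modules in \cite[Theorem 14.1.22]{Ye5}: for a \emph{perfect} $L$ the derived tensor-evaluation morphism is an isomorphism for \emph{all} coefficient modules, with no boundedness or flat-concentration hypothesis. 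This is the step your proposal is missing; it is also exactly why the theorem is stated with the disjunction (iv)(a)-or-(iv)(b) --- the two cases are resolved by two different mechanisms, and only (iv)(b) goes through Theorem \ref{thm:2115}.

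As a secondary point: for $\to^{(3)}$ you parenthetically suggest that ``the tensor factor $N_1 \ot^{\mrm{L}}_{\til{B}_1}(-)$ in front is harmless since that bifunctor is exact''; it is not exact in general, only triangulated, and the actual reason (as in the paper's Lemma \ref{lem:1436}) is that one applies Theorem \ref{thm:2115} to the inner evaluation first and then tensors, using that evaluation morphisms compose and that $\cat{D}(\Z)$ is the right target to test isomorphisms via conservativity. This is a fixable wording issue but worth tightening.
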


The proof of Theorem \ref{thm:810} requires the next lemma. 
In this lemma we use the ingredients of the theorem, and also Setup
\ref{set:1395}. 

\begin{lem} \label{lem:1436}
Let $M \in \cat{D}(B)$ and $N \in \cat{D}(C)$, and 
assume that conditions \tup{(i)--(iii)} of the theorem hold.
Then the morphism ($\heartsuit$) in Lemma \ref{lem:1430},
in the category $\cat{D}(\til{C}_1 \ot_{\til{B}_0} \til{C}_2)$,
is an isomorphism.
\end{lem}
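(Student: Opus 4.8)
The claim is that the morphism $(\heartsuit)$ of Lemma \ref{lem:1430} is an isomorphism under hypotheses (i)--(iii). Since $(\heartsuit)$ becomes, under the equivalence $\opn{Rest}_g$, the composite $(\til\heartsuit)$ built from the chain of maps $\iso^{\mrm{(a)}}$--$\iso^{\mrm{(e)}}$, it suffices to show that each arrow in $(\til\heartsuit)$ is an isomorphism. The maps $\iso^{\mrm{(a)}}$, $\iso^{\mrm{(d)}}$, $\iso^{\mrm{(e)}}$ are already isomorphisms by construction (they come from $\phi^{\lsp\mrm{L}}_{M,N}$, from adjunction, and from the quasi-isomorphism $\til B \to B$). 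So the real content is to prove that the two derived tensor evaluation morphisms $\to^{\mrm{(b)}}$ and $\to^{\mrm{(c)}}$ are isomorphisms, and this is exactly where Theorem \ref{thm:2115} (Derived Tensor Evaluation), in its improved form, enters.

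First I would set up the resolutions as in Setup \ref{set:1395}: take a commutative semi-free DG ring resolution $\til A \to A$ relative to $\Z$, then $\til B \to B$ semi-free relative to $\til A$, then $\til C \to C$ semi-free relative to $\til B$; since $B$ and $C$ are EFT over the noetherian ring $A$ (hypothesis (i)), I can further arrange $\til B$ to be pseudo-finite semi-free over $\til A$ and $\til C$ pseudo-finite semi-free over $\til B$, hence $\til C_1 \ot_{\til A} \til C_2$ is pseudo-finite semi-free, in particular derived pseudo-finite, over $\til B_1 \ot_{\til A} \til B_2$. This is the source of condition (i) of Theorem \ref{thm:2115} for the outer $\opn{RHom}_{\til B_1 \ot_{\til A} \til B_2}(\til B_0, -)$-type objects appearing in $\to^{\mrm{(c)}}$; for $\to^{\mrm{(b)}}$ one needs instead that $\til B_0 = B$ is derived pseudo-finite over $\til B_1 \ot_{\til A} \til B_2$, which holds because $B$ is EFT (hence module-finite presentation data) over $A$ and the diagonal $\til B_1 \ot_{\til A} \til B_2 \to B$ has a pseudo-finite semi-free resolution. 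Then, for each of $\to^{\mrm{(b)}}$ and $\to^{\mrm{(c)}}$, I would check conditions (ii) (cohomological boundedness below of the "middle" module) and (iii) (bounded-below flat concentration of the module being tensored against): boundedness below comes from smart-truncating $M$ (and $N$) after replacing them by suitable resolutions, as in the proof of Lemma \ref{lem:2030}; the bounded-below flat concentration is supplied by hypothesis (ii), that $M$ has finite flat dimension over $A$, and hypothesis (iii), that $N$ has finite flat dimension over $B$ — these are precisely the finiteness inputs needed so that $N_2 \ot^{\mrm L}_{\til B_2}(-)$ and $(-)\ot^{\mrm L}_{\til B_1} N_1$ preserve the relevant concentrations.

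Concretely, for $\to^{\mrm{(b)}}$ I would apply Theorem \ref{thm:2115} with $L := \til C_1 \ot_{\til B_0} \til C_2$ playing the role of the derived-pseudo-finite module over $A := \til B_1 \ot_{\til A} \til B_2$, with $M := M_1 \ot^{\mrm L}_{\til A} M_2$ (bounded below after truncation) the module over $A \ot_\Z B$, and with $N := N_2$ the module over $B := \til B_2$, whose flat concentration is bounded below by hypothesis (iii) — more precisely $N_2$ has finite flat dimension over $\til B_2 \simeq B$. For $\to^{\mrm{(c)}}$ I would apply the theorem again with $N_1$ over $\til B_1$ (again finite flat dimension over $B$ by (iii)), the second argument now being $M_1 \ot^{\mrm L}_{\til A}(M_2 \ot^{}_{\til B_2} N_2)$, which is bounded below because $M$ has finite flat dimension over $A$ (hypothesis (ii)) so that $M_1 \ot^{\mrm L}_{\til A}(\text{bounded below})$ stays bounded below. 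One must be slightly careful that the "$A \ot_\Z B$"-bimodule structures in Theorem \ref{thm:2115} are present here: the relevant module is a DG module over $(\til B_1 \ot_{\til A} \til B_2) \ot_{\til A}$-something, and since all these DG rings are flat over $\til A$ and $\til A$ is flat (being semi-free) over $\Z$, the tensor-over-$\Z$ and tensor-over-$\til A$ constructions agree up to the needed quasi-isomorphisms; I would invoke Remark \ref{rem:2115} if a noncommutative bookkeeping is needed, but here everything is commutative and nonpositive so the commutative version \cite[Theorem 12.10.14]{Ye5}, together with Theorem \ref{thm:2115}, suffices.

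\textbf{Main obstacle.} The hard part will be verifying the flat-concentration hypothesis (iii) of Theorem \ref{thm:2115} \emph{for the DG rings $\til B_i$} rather than for the ring $B$ itself: we are given that $N$ has finite flat dimension over $B$, but in the middle of the argument $N_2$ is being viewed as a DG module over the DG ring $\til B_2$ (a resolution of $B$), and one must transfer the finite-flat-dimension statement along the quasi-isomorphism $\til B_2 \to B$ and show it still yields bounded-below flat concentration there — this requires knowing that flat concentration is a derived-categorical notion invariant under such quasi-isomorphisms, which follows from formula (\ref{eqn:719}) but needs to be stated carefully. The analogous subtlety for $M$ over $\til B_1 \ot_{\til A}\til B_2$ versus over $A$ (hypothesis (ii) is about flat dimension over $A$, but in $(\til\heartsuit)$ the module $M_1 \ot^{\mrm L}_{\til A} M_2$ is viewed over $\til B_1 \ot_{\til A} \til B_2$) is the other place to be careful: here one uses that $\til B_1 \ot_{\til A} \til B_2$ is K-flat over $\til A$, so bounded-below flat concentration over $\til A$ is inherited. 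Once these transfers are in place, the conclusion is immediate, and then Theorem \ref{thm:810} itself follows by feeding this lemma (and the perfectness/finite-flat-dimension hypothesis (iv)) into the remaining tensor-evaluation step $\xar{}^{(\mrm 2)}$ of Definition \ref{1510} and the adjunction isomorphisms there.
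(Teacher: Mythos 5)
Your overall strategy coincides with the paper's: pass to $\cat{D}(\Z)$ by conservativity of restriction, observe that $(\heartsuit)$ is the restriction of the composite $(\til{\heartsuit})$ whose arrows (a), (d), (e) are already isomorphisms, and verify the hypotheses of Theorem \ref{thm:2115} to conclude that $\to^{\mrm{(b)}}$ and $\to^{\mrm{(c)}}$ are isomorphisms. The transfer of finite flat dimension along the quasi-isomorphisms $\til{A}\to A$ and $\til{B}\to B$, which you single out as the ``main obstacle,'' is indeed the key bookkeeping step, and the paper handles it in the same one-line way you indicate.

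However, the ``Concretely, for $\to^{\mrm{(b)}}$'' paragraph contains a genuine error. You write that the derived-pseudo-finite module in Theorem \ref{thm:2115} should be $L := \til{C}_1 \ot_{\til{B}_0} \til{C}_2$ (and, in the first paragraph, you similarly suggest the pseudo-finiteness of $\til{C}_1 \ot_{\til{A}} \til{C}_2$ supplies condition (i) for step (c)). But the rings $\til{C}_1, \til{C}_2$ do not appear as an $\opn{RHom}$ argument anywhere in (b) or (c); they enter only at step (d), which is the adjunction isomorphism $\iso^{\mrm{(d)}}$ and needs no checking. In both $\to^{\mrm{(b)}}$ and $\to^{\mrm{(c)}}$ the relevant functor is $\opn{RHom}_{\til{B}_1 \ot_{\til{A}} \til{B}_2}(\til{B}_0, -)$, so the $L$ of Theorem \ref{thm:2115} is $\til{B}_0 = B$, over the cohomologically pseudo-noetherian DG ring $\til{B}_1 \ot_{\til{A}} \til{B}_2$; this is the object lying in $\cat{D}^-_{\mrm{f}}(\til{B}_1 \ot_{\til{A}} \til{B}_2)$. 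You actually assert exactly this (correctly) earlier in the same paragraph, so the ``Concretely'' identification contradicts what you just said. With that correction in place — $L := \til{B}_0$ throughout, $M := M_1 \ot^{\mrm{L}}_{\til{A}} M_2$ for (b) and $M := M_1 \ot^{\mrm{L}}_{\til{A}}(M_2 \ot^{}_{\til{B}_2} N_2)$ for (c), both cohomologically bounded by hypothesis (ii), and $N := N_2$ resp.\ $N_1$ of finite flat dimension over $\til{B}$ by hypothesis (iii) — the argument goes through exactly as in the paper, with the pseudo-finiteness of $\til{C}$ over $\til{B}$ playing no role here.
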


\begin{proof}
Since the restriction functor 
$\opn{Rest} : \cat{D}(\til{C}_1 \ot_{\til{B}_0} \til{C}_2) \to \cat{D}(\Z)$
is conservative, is suffices to prove that ($\heartsuit$) is an isomorphism in 
$\cat{D}(\Z)$. Looking at the sequence of morphisms 
($\til{\heartsuit}^{}$) we see that we only need to prove that the 
morphisms  $\to^{\mrm{(b)}}$ and $\to^{\mrm{(c)}}$ there are isomorphisms. 

To show that the morphism $\to^{\mrm{(b)}}$ is an isomorphism, it suffices to 
prove that the derived tensor evaluation morphism 
\[ \begin{aligned}
& 
N_2 \ot^{\mrm{L}}_{\til{B}_2}
\opn{RHom}_{\til{B}_1 \ot_{\til{A}} \til{B}_2}
(\til{B}_0, M_1 \ot^{\mrm{L}}_{\til{A}} M_2) 
\\
& \quad \to
\opn{RHom}_{\til{B}_1 \ot_{\til{A}} \til{B}_2}
\bigl( \til{B}_0, (M_1 \ot^{\mrm{L}}_{\til{A}} M_2) \ot^{}_{\til{B}_2} N_2 
\bigr)
\end{aligned} \]
in $\cat{D}(\til{B}_1)$ is an isomorphism. 
We are going to use Theorem \ref{thm:2115}.
The DG ring $\til{B}_1 \ot_{\til{A}} \til{B}_2$ is cohomologically 
pseudo-noetherian, and the object $\til{B}_0$ belongs to 
$\cat{D}^-_{\mrm{f}}(\til{B}_1 \ot_{\til{A}} \til{B}_2)$,
so condition (i) of this theorem holds. Since $M$ has finite flat dimension 
over $A$, it also has finite flat dimension over $\til{A}$, so 
$M_1 \ot^{\mrm{L}}_{\til{A}} M_2$ is cohomologically bounded, and condition 
(ii) of the theorem holds. Lastly, since $N$ has finite flat dimension over 
$B$, it also has finite flat dimension over $\til{B}$, so condition (iii) of 
Theorem \ref{thm:2115} holds. 

Proving that the morphism $\to^{\mrm{(c)}}$ is an isomorphism is very much the 
same; the only change is that now we need to show that 
$M_1 \ot^{\mrm{L}}_{\til{A}} M_2 \ot^{}_{\til{B}_2} N_2$
has bounded cohomology, but this is shown as above. 
\end{proof}

\begin{proof}[Proof of Theorem \tup{\ref{thm:810}}]
We need to study the morphisms
$\xar{\mspace{12mu}}^{(\mrm{2})}$  
and $\xar{\mspace{12mu}}^{(\mrm{3})}$ in Definition \ref{1510}.
According to conditions (i)--(iii) and Lemma \ref{lem:1436}, 
the morphism $\xar{\mspace{12mu}}^{(\mrm{3})}$ is an isomorphism.
It remains to prove that the morphism 
$\xar{\mspace{14mu}}^{(\mrm{2})}$ in Definition \ref{1510}
is an isomorphism.

We now place ourselves in the situation of Definition \ref{1510}. 
In order to prove that the morphism $\xar{\mspace{14mu}}^{(\mrm{2})}$  
is an isomorphism, we may consider it as a morphism in
$\cat{D}(\Z)$; this is due to the fact that the restriction functor
$\cat{D}(C) \to \cat{D}(\Z)$ is conservative. 
Here is the morphism in question: 
\begin{equation} \label{eqn:2110}
\begin{aligned}
&
\opn{Sq}^{\til{B} / \til{A}}_{B / A}(M) \ot^{\mrm{L}}_{\til{B}_0}
\opn{RHom}_{\til{C}_1 \ot_{\til{B}_0} \til{C}_2}
(C_0, N_1 \ot^{\mrm{L}}_{\til{B}_0} N_2)
\\
& \quad \xar{\mspace{12mu}}^{(\mrm{2})} 
\opn{RHom}_{\til{C}_1 \ot_{\til{B}_0} \til{C}_2} 
\bigl( C_0, (N_1 \ot^{\mrm{L}}_{\til{B}_0} N_2) \ot^{\mrm{L}}_{\til{B}_0}
\opn{Sq}^{\til{B} / \til{A}}_{B / A}(M) \bigr) . 
\end{aligned}
\end{equation}
It is a derived tensor evaluation morphism in $\cat{D}(\Z)$.

Under condition (iv)(a), $C_0$ is a perfect DG module over $C_1 \ot_{B_0} C_2$. 
The equivalence 
\[ \opn{Rest}_{t \ot_{s} t} : \cat{D}(C_1 \ot_{B_0} C_2) \to 
\cat{D}(\til{C}_1 \ot_{\til{B}_0} \til{C}_2) \]
and \cite[Corollary 14.1.24]{Ye5} tell us that 
$C_0$ is a perfect DG module over
$\til{C}_1 \ot_{\til{B}_0} \til{C}_2$.
Hence, by \cite[Theorem 14.1.22]{Ye5}, the morphism (\ref{eqn:2110}) is an 
isomorphism. 

Now let's assume condition (iv)(b). We are given that 
$\opn{Sq}^{\til{B} / \til{A}}_{B / A}(M)$
has finite flat dimension over $B$; hence it is also  has finite flat dimension
over $\til{B}$. 
By condition (i) the DG $(\til{C}_1 \ot_{\til{B}_0} \til{C}_2)$-module $C_0$ 
is derived pseudo-finite (cf.\ proof of Lemma \ref{lem:1436}). 
By condition (iii) the DG module
$N_1 \ot^{\mrm{L}}_{\til{B}_0} N_2$ has bounded cohomology. 
We see that the conditions of Theorem \ref{thm:2115} are satisfied, so 
(\ref{eqn:2110}) is an isomorphism. 
\end{proof}

\begin{lem} \label{lem:1530}
Let $A$ be a noetherian ring, and let $B \to C$ be a homomorphism in
$\cat{Rng} \eftover A$. Suppose $M \in \cat{D}^{\mrm{b}}_{\mrm{f}}(B)$ has 
finite flat dimension over $A$, and $N \in \cat{D}^{\mrm{b}}_{\mrm{f}}(C)$ has 
finite flat dimension over $B$. Then $M \ot^{\mrm{L}}_{B} N$ belongs to 
$\cat{D}^{\mrm{b}}_{\mrm{f}}(C)$, and it has finite flat dimension over $A$. 
\end{lem}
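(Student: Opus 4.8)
The plan is to establish the three assertions — boundedness, cohomological finiteness, and finite flat dimension over $A$ — in that order, using standard spectral-sequence and resolution arguments. First I would address membership in $\cat{D}^{\mrm{b}}_{\mrm{f}}(C)$. Since $N$ has finite flat dimension over $B$, I can choose a bounded complex $Q$ of flat $B$-modules with $Q \cong N$ in $\cat{D}(B)$ (using the characterization of finite flat dimension recalled in Section \ref{sec:recall-dg}, together with \cite[Proposition 12.4.19]{Ye5}); then $M \ot^{\mrm{L}}_{B} N \cong M \ot_B Q$ in $\cat{D}(C)$. As $M$ is cohomologically bounded and $Q$ is a bounded complex of flat modules, the complex $M \ot_B Q$ is cohomologically bounded: one filters by the stupid truncations of $Q$ and uses the long exact sequences, or simply notes that the hyper-Tor spectral sequence $\opn{H}^p(M) \ot_B Q \Rightarrow \opn{H}^{p+q}(M \ot_B Q)$ has only finitely many nonzero columns and each column contributes in a bounded range of degrees. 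For the finiteness of the cohomology modules over $C$: each $\opn{H}^i(M)$ is a finitely generated $B$-module, each term of $Q$ is flat, and $C$ is a noetherian ring (being EFT over the noetherian ring $A$), so the finitely many Tor-modules $\opn{Tor}^B_j(\opn{H}^i(M), N)$ are finitely generated over $C$; the spectral sequence then forces every $\opn{H}^k(M \ot^{\mrm{L}}_B N)$ to be finitely generated over $C$.

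Next I would handle finite flat dimension over $A$. The clean way is to use the transitivity of derived tensor products: for any $L \in \cat{D}(A)$ we have
\[
(M \ot^{\mrm{L}}_{B} N) \ot^{\mrm{L}}_{A} L \cong
N \ot^{\mrm{L}}_{B} (M \ot^{\mrm{L}}_{A} L) .
\]
Since $M$ has finite flat dimension over $A$, the complex $M \ot^{\mrm{L}}_{A} L$ lies in a fixed bounded range of degrees relative to $\opn{con}(\opn{H}(L))$; concretely $\opn{con}(\opn{H}(M \ot^{\mrm{L}}_A L)) \sub [q_0, q_1] + \opn{con}(\opn{H}(L))$ for the flat concentration interval $[q_0, q_1]$ of $M$ over $A$. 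Then, since $N$ has finite flat dimension over $B$ with flat concentration in some $[q_0', q_1']$, applying $N \ot^{\mrm{L}}_{B}(-)$ shifts the concentration by a further bounded amount. Chaining the two inclusions gives $\opn{con}(\opn{H}((M \ot^{\mrm{L}}_B N) \ot^{\mrm{L}}_A L)) \sub [q_0 + q_0', q_1 + q_1'] + \opn{con}(\opn{H}(L))$ for all $L \in \cat{D}(A)$, which is exactly the definition of $M \ot^{\mrm{L}}_B N$ having flat concentration inside $[q_0 + q_0', q_1 + q_1']$, i.e.\ finite flat dimension over $A$.

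I expect the main obstacle to be purely bookkeeping: making sure the base-change / associativity isomorphism for $\ot^{\mrm{L}}$ is applied correctly over the chain of rings $A \to B \to C$, and that the various concentration estimates are combined without off-by-one errors. There is no deep content here — every ingredient (choice of bounded flat resolutions, the hyper-Tor spectral sequence, noetherianness of EFT rings, and the behaviour of flat concentration under $\ot^{\mrm{L}}$) has already been set up in Section \ref{sec:recall-dg}. One minor point to be careful about: to invoke the existence of a \emph{bounded} complex of flats representing $N$ over $B$ one needs $N$ cohomologically bounded, which is given since $N \in \cat{D}^{\mrm{b}}_{\mrm{f}}(B)$; similarly for any auxiliary truncation arguments. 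An alternative to the associativity argument, if one prefers to avoid it, is to represent $M$ by a bounded complex $P$ of flat $A$-modules (possible since $M$ has finite flat dimension over $A$ and is cohomologically bounded) and $N$ by a bounded complex $Q$ of flat $B$-modules, observe that $P \ot_A$ anything computes $\ot^{\mrm{L}}_A$, and that each $P^i \ot_A B$ is flat over $B$ so that $P \ot_A Q$ — which represents $M \ot^{\mrm{L}}_B N$ up to the identification $M \ot^{\mrm{L}}_B N \cong M \ot^{\mrm{L}}_A \cdots$ — is a bounded complex of flat $A$-modules; I would present whichever of the two is shorter.
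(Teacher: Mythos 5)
The paper gives no proof of this lemma --- it explicitly writes ``The easy proof is left to the reader'' --- so there is nothing in the source to compare your argument against; what matters is whether it stands on its own, and for the most part it does. Your treatment of the finite flat dimension over $A$ is the cleanest piece: the associativity isomorphism $(M \ot^{\mrm{L}}_{B} N) \ot^{\mrm{L}}_{A} L \cong N \ot^{\mrm{L}}_{B} (M \ot^{\mrm{L}}_{A} L)$ does hold (one checks it by choosing $P \to M$ K-flat over $B$ and $Q \to L$ K-flat over $A$, and noting that $P \ot_A Q$ is then K-flat over $B$), and chaining the two flat-concentration estimates is exactly right.

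The step establishing $M \ot^{\mrm{L}}_B N \in \cat{D}^{\mrm{b}}_{\mrm{f}}(C)$ is where you are imprecise. The displayed ``hyper-Tor spectral sequence $\opn{H}^p(M) \ot_B Q \Rightarrow \opn{H}^{p+q}(M \ot_B Q)$'' is not in a recognizable form, and the assertion that $\opn{Tor}^B_j(\opn{H}^i(M),N)$ is finitely generated over $C$ is exactly what needs proving rather than something that can be cited: a priori $N$ is only known to have finitely generated cohomology over $C$, not over $B$, so ``finitely generated $B$-module $\ot^{\mrm{L}}_B$ $N$'' does not obviously land in finitely generated $C$-modules. The cleaner route, which you gesture at in your final sentence, is to pick a bounded-above complex $P$ of finite-rank free $B$-modules with $P \cong M$ (possible since $B$ is noetherian and $M \in \cat{D}^{-}_{\mrm{f}}(B)$) and a bounded-above complex $Q'$ of finite-rank free $C$-modules with $Q' \cong N$ (possible since $C$ is noetherian, being EFT over $A$, and $N \in \cat{D}^{-}_{\mrm{f}}(C)$). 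Then $M \ot^{\mrm{L}}_B N \cong P \ot_B N \cong P \ot_B Q'$ in $\cat{D}(C)$, and $P \ot_B Q'$ is a bounded-above complex whose terms are finite direct sums of $P^i \ot_B (Q')^j \cong ((Q')^j)^{\oplus r_i}$, hence finite-rank free $C$-modules; since $C$ is noetherian its cohomology is finitely generated over $C$. Boundedness of the cohomology then follows exactly as you say, from the finite flat dimension of $N$ over $B$ applied to the bounded complex $M$. If you rewrite the finiteness paragraph along those lines the proof is complete and correct.
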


The easy proof is left to the reader.
This lemma  is needed to show that the complex
$M \ot^{\mrm{L}}_{B} N$ in the definition below satisfies the conditions in
Definition \ref{dfn:675}(1).

\begin{dfn}[Derived Tensor Product of Rigid Complexes] \label{dfn:1260}
Let $A$ be a noetherian ring, and let $B \to C$ be a homomorphism in
$\cat{Rng} \eftover A$. 
Let $(M, \rho) \in \cat{D}(B)_{\mrm{rig} / A}$ and
$(N, \si) \in \cat{D}(C)_{\mrm{rig} / B}$
be rigid complexes.
Assume that the cup product morphism 
\[  \opn{cup}_{C / B / A, M, N} \, : \, 
\opn{Sq}_{B / A}(M) \ot^{\mrm{L}}_{B}
\opn{Sq}_{C / B}(N) \, \to \,
\opn{Sq}_{C / A}(M \ot^{\mrm{L}}_{B} N)  \]
in $\cat{D}(C)$ is an isomorphism. Then:
\begin{enumerate}
\item We define the rigidifying isomorphism 
\[ \rho \cupprod \si \,  : \, M \ot^{\mrm{L}}_{B} N \, \iso \, 
\opn{Sq}_{C / A}(M \ot^{\mrm{L}}_{B} N) \]
to be the unique isomorphism in $\cat{D}(C)$ for which the diagram 
\[ \begin{tikzcd} [column sep = 8ex, row sep = 6ex]
M \ot^{\mrm{L}}_{B} N 
\ar[r, "{\rho \cupprod \si}", "{\simeq}"']
\ar[d, "{\rho \, \ot^{\mrm{L}}_{B} \si}"', "{\simeq}"]
&
\opn{Sq}_{C / A}(M \ot_B N) 
\\
\opn{Sq}_{B / A}(M) \ot^{\mrm{L}}_{B}
\opn{Sq}_{C / B}(N)
\ar[ur, "{\opn{cup}_{C / B / A, M, N}}"', "{\simeq}"]
\end{tikzcd} \]
is commutative. 
The rigidifying isomorphism $\rho \cupprod \si$ is called the {\em cup product} 
of $\rho$ and $\si$. 

\item The rigid complex 
\[ (M, \rho) \ot^{\mrm{L, rig}}_{B} (N, \si) := 
\bigl( M \ot^{\mrm{L}}_{B} N, \rho \cupprod \si \bigr)
\in \cat{D}(C)_{\mrm{rig} / A} \]
is called the {\em derived tensor product of $(M, \rho)$ and $(N, \si)$ over 
$B$ relative to $A$}. 
\end{enumerate}
\end{dfn}

\begin{rem} \label{rem:825}
We shall use Theorem \ref{thm:810} in the following situations.
The first situation is when $B \to C$ is essentially smooth, and then 
condition (iii)(a) holds. (See also Remark \ref{rem:1050}.)
The cup product isomorphism, with Definition \ref{dfn:1260}, will be used to obtain induced 
rigid structures, in Section \ref{sec:twisted-induced}.

The other situation that will be important for us -- it will be in the proof of 
the Residue Theorem in the paper \cite{Ye6} -- is when $B$ is a local 
Gorenstein ring and $M$ is the 
rigid dualizing complex of $B / A$. Then 
$\opn{Sq}_{B / A}(M) \cong M \cong B[p]$ for some integer $p$, and 
condition (iii)(b) holds. 
\end{rem}

The following property will be useful to prove Theorem \ref{thm:1905}.

\begin{thm}[Backward Functoriality] \label{thm:845}
Let $A \to B \to C \xar{w} C'$ be ring homomorphisms, let 
$M \in \cat{D}(B)$, $N \in \cat{D}(C)$ and $N' \in \cat{D}(C')$
be DG modules, and let $\psi : N' \to N$ in $\cat{D}(C)$
be a backward morphism over $w$. 
Then the diagram 
\[ \tag{$\triangledown$}
\begin{tikzcd} [column sep = 16ex, row sep = 5ex] 
\opn{Sq}_{B / A}(M) \ot^{\mrm{L}}_{B} \opn{Sq}_{C' / B}(N') 
\arrow[r, "{\opn{cup}_{C' / B / A, M, N'}}"]
\arrow[d, "{ \opn{id} \ot^{\mrm{L}}_{B} \opn{Sq}_{w / B}(\psi) }", "{\cong}"']
&
\opn{Sq}_{C' / A} (M \ot^{\mrm{L}}_{B} N')
\arrow[d, "{ \opn{Sq}_{w / A}(\opn{id} \ot^{\mrm{L}}_{B} \, \psi) }", 
"{\cong}"']
\\
\opn{Sq}_{B / A}(M) \ot^{\mrm{L}}_{B} \opn{Sq}_{C / B}(N) 
\arrow[r, "{\opn{cup}_{C / B / A, M, N}}"']
&
\opn{Sq}_{C / A} (M \ot^{\mrm{L}}_{B} N)
\end{tikzcd} \]
in $\cat{D}(C)$ is commutative.
\end{thm}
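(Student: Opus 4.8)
The plan is to reduce the statement to a commutativity check at the level of \emph{resolved} squarings and resolved cup products, where everything is given by explicit formulas in terms of $\opn{RHom}$ and $\ot^{\mrm{L}}$ over DG rings. First I would fix a K-flat DG ring resolution $\til{C}'/\til{C}/\til{B}/\til{A}$ of the tower $A\to B\to C\xar{w}C'$ (this exists by the obvious iteration of Definition \ref{dfn:630}(3)), so that simultaneously $\til{C}'/\til{B}$ and $\til{C}/\til{B}$ are K-flat and we may apply Theorem \ref{thm:780} (property ($\diamondsuit$)) and Theorem \ref{thm:632} (property ($**$)) to rewrite all four corners of ($\triangledown$) via the canonical comparison isomorphisms $\opn{sq}$. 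By Theorem \ref{thm:780} and Theorem \ref{thm:632}, the diagram ($\triangledown$) commutes if and only if the corresponding diagram with $\opn{Sq}$ replaced by $\opn{Sq}^{\til{?}/\til{?}}$, with $\opn{cup}$ replaced by $\opn{cup}^{\til{C}'/\til{B}/\til{A}}$ etc., and with $\opn{Sq}_{w/B}(\psi)$ replaced by $\opn{Sq}^{\til{w}/\til{B}}_{w/B}(\psi)$ commutes; so the problem becomes a question purely about the explicit morphisms appearing in formula ($\til{\lozenge}$) of Definition \ref{1510}.

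Next I would track the backward morphism $\psi$ through each of the four steps $\iso^{(1)},\to^{(2)},\to^{(3)},\iso^{(4)}$ of ($\til{\lozenge}$). Choosing K-projective resolutions $\til{P}\to M$ over $\til{B}$, $\til{Q}\to N$ over $\til{C}$, and $\til{Q}'\to N'$ over $\til{C}'$ together with a lift $\til\psi:\til Q'\to\til Q$ over $\til w$ (unique up to homotopy, exactly as in the construction preceding Theorem \ref{thm:632}), the map $\opn{Sq}^{\til{w}/\til{B}}_{w/B}(\psi)$ is represented by $\opn{Hom}$ applied to $\til\psi\ot\til\psi$ on a K-injective resolution over $\til{C}_1\ot_{\til{B}_0}\til{C}_2$ (formula (\ref{eqn:890})). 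I would then observe that each of the maps $\iso^{(1)}$ (transfer of scalars along $\til B\to B$, naturality of (\ref{eqn:719})), $\to^{(2)}$ (derived tensor-evaluation, which is functorial in all arguments by \cite[Theorem 12.10.14]{Ye5}), $\to^{(3)}$ (obtained by applying $\opn{RHom}_{\til{C}_1\ot_{\til{B}_0}\til{C}_2}(C_0,-)$ to the natural morphism ($\heartsuit$) of Lemma \ref{lem:1430}, which is itself built from derived tensor-evaluation and adjunctions and hence natural), and $\iso^{(4)}$ (Hom-tensor adjunction, formula (\ref{eqn:1480}), again natural) commute with the morphism induced by $\psi$ on the $N$-factor. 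Concretely, on the left-hand (contravariant) factor $\opn{Sq}_{B/A}(M)$ nothing moves — $\opn{id}\ot^{\mrm L}_B$ stays fixed — and on the right-hand factor the morphism $\psi$ (resp.\ $\til\psi$, resp.\ $\til\psi\ot\til\psi$) is carried coherently through the chain, landing on $\opn{Sq}^{\til w/A}_{w/A}(\opn{id}_M\ot^{\mrm L}_B\psi)$ precisely because $\opn{Sq}^{\til w/\mrm{id}_A}$ of the backward morphism $\opn{id}\ot\psi$ is, by construction in \cite{Ye4}, represented by $\opn{Hom}$ of $(\til{\opn{id}}\ot\til\psi)\ot(\til{\opn{id}}\ot\til\psi)$ after the identification $M\ot^{\mrm L}_{\til B}N\cong M\ot^{\mrm L}_B N$ used in $\iso^{(\mrm e)}$ of Lemma \ref{lem:1430}.

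The main obstacle I anticipate is bookkeeping rather than anything conceptual: one must verify that the several \emph{different} adjunction and tensor-evaluation isomorphisms in ($\til\lozenge$) and in ($\til\heartsuit$) are mutually compatible with the same lift $\til\psi$, i.e.\ that there is a single coherent choice of homotopy-class representatives making every small square commute at the level of $\cat{C}_{\mrm{str}}$, and in particular that the passage through the auxiliary DG ring $\til D$ of (\ref{eqn:2056}) (via the equivalence $\opn{Rest}_g$ of (\ref{eqn:1446})) does not introduce a discrepancy. I would handle this by working entirely with the explicit representatives: pick $\til P,\til I,\til Q,\til Q'$ and $\til\psi$ once and for all, write out $\opn{cup}^{\til C/\til B/\til A}_{C/B/A,M,N}$ and $\opn{cup}^{\til C'/\til B/\til A}_{C/B/A,M,N'}$ as explicit chain maps, and check that $\opn{Hom}(-,\til\psi\ot\til\psi)$ intertwines them; since $\opn{Sq}_{w/B}(\psi)$ and $\opn{Sq}_{w/A}(\opn{id}\ot\psi)$ are defined by exactly these $\opn{Hom}(-,\til\psi\ot\til\psi)$-type formulas, the square ($\triangledown$) commutes on the nose at the resolved level, and hence in $\cat{D}(C)$ by Theorems \ref{thm:780} and \ref{thm:632}. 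I expect the write-up to be short once the coherent choice of resolutions is in place, with the bulk of the work being the observation (routine, left to the reader in the style of this paper) that derived tensor-evaluation and adjunction are bifunctorial in the relevant arguments.
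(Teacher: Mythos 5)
Your proposal follows essentially the same route as the paper: fix a K-flat DG ring resolution of the whole tower $A\to B\to C\to C'$, pass to the resolved cup products $\opn{cup}^{\til C/\til B/\til A}$ and $\opn{cup}^{\til C'/\til B/\til A}$ via condition~($\diamondsuit$) of Theorem~\ref{thm:780} and condition~($**$) of Theorem~\ref{thm:632}, and then track a lift $\til\psi$ of $\psi$ through the explicit chain maps of Definition~\ref{1510} and Lemma~\ref{lem:1430}, relying on naturality of adjunction and derived tensor-evaluation. The paper organizes the last reduction as a commutative cubical diagram and requires $\til w:\til C\to\til C'$ to be K-projective so that a single K-projective resolution $\til Q'$ of $N'$ over $\til C'$ is also K-projective over $\til C$ (saving one auxiliary resolution), but these are presentational rather than substantive differences.
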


\begin{proof} The proof is done is several steps. 

\smallskip \noindent
Step 1. Let us choose DG ring resolutions 
$\til{A} \xar{\til{u}} \til{B} \xar{\til{v}} \til{C} \xar{\til{w}} \til{C}'$ 
of 
$A \xar{u} B \xar{v} C \xar{w} C'$, 
where $\til{u}$ and $\til{v}$ are K-flat, and $\til{w}$ is K-projective. 
We have a commutative diagram 
\[ \begin{tikzcd} [column sep = 8ex, row sep = 4ex] 
\til{A}
\arrow[r, "{\til{u}}"]
\ar[d, "{r}"]
&
\til{B}
\ar[d, "{s}"]
\arrow[r, "{\til{v}}"]
&
\til{C}
\ar[d, "{t}"]
\arrow[r, "{\til{w}}"]
&
\til{C}'
\ar[d, "{t'}"]
\\
A
\arrow[r, "{u}"]
&
B
\arrow[r, "{v}"]
&
C
\arrow[r, "{w}"]
&
C'
\end{tikzcd} \]
in $\cat{DGRng}$, with vertical surjective quasi-isomorphisms. 
Note that $\til{C} / \til{B} / \til{A}$ is a K-flat resolution of the triple of 
DG rings $C / B / A$, and $\til{C}' / \til{B} / \til{A}$ is a K-flat resolution 
of the triple of DG rings $C' / B / A$.

Next we are going to choose DG module resolutions 
$\til{P}$, $\til{Q}$ and $\til{I}$
as in Lemma \ref{lem:1431}. We also choose
a K-projective DG module resolution $\til{Q}' \to N'$ over 
$\til{C}'$. Since $\til{C}'$ is K-projective over $\til{C}$, it follows that 
$\til{Q}'$ is K-projective as a DG $\til{C}$-module. 

\medskip \noindent
Step 2. Lemma \ref{lem:1431} is stated with the triples of DG rings 
$C / B / A$ and $\til{C} / \til{B} / \til{A}$; the DG modules $M, N, \til{P}, 
\til{Q}, \til{I}$; and the auxiliary DG ring $\til{D}$. This lemma yields 
the isomorphism ($\dag$), which is presented by its resolved form 
($\til{\dag}$). 

There is also a version of Lemma \ref{lem:1431} with 
primed objects: the triples of DG rings $C' / B / A$ and 
$\til{C}' / \til{B} / \til{A}$; the DG modules
$M, N', \til{P}, \til{Q}', \til{I}$; and the auxiliary DG ring 
\[ \til{D}' := (\til{C}'_1 \ot_{\til{A}} \til{C}'_2) 
\ot_{\til{B}_1 \ot_{\til{A}} \til{B}_2} \til{B}^{}_0 . \]
The primed version of Lemma \ref{lem:1431} gives us a primed isomorphism 
($\dag'$), which is presented by its resolved form ($\til{\dag}'$).

The backward morphism $\psi : N' \to N$ in $\cat{D}(\til{C})$
is presented by a backward homomorphism
$\til{\psi} : \til{Q}' \to \til{Q}$
in $\cat{C}_{\mrm{str}}(\til{C})$, which is unique up to homotopy. 
The homomorphism $\til{\psi}$ gives rise to a backward morphism in 
$\cat{C}_{\mrm{str}}(\til{D})$ from 
($\til{\dag}$) to ($\til{\dag}'$), and all this is a commutative diagram in 
$\cat{C}_{\mrm{str}}(\til{D})$. Passing to derived categories, 
we get a backward morphism in $\cat{D}(\til{D})$ from ($\dag$) to ($\dag'$),
and together this is a commutative diagram  in $\cat{D}(\til{D})$.

\medskip \noindent
Step 3. There is a primed version of Lemma (\ref{lem:1430}), with a 
morphism ($\heartsuit'$) in the category
$\cat{D}(\til{C}'_1 \ot_{\til{B}_0} \til{C}'_2)$, 
and a sequence of morphisms ($\til{\heartsuit}'$) in 
$\cat{D}(\til{D}')$. 
The backward morphism $\psi : N' \to N$ gives rise to a backward morphism in 
the category $\cat{D}(\til{D})$ from ($\til{\heartsuit}'$) to 
($\til{\heartsuit}$). 
The whole resulting diagram in $\cat{D}(\til{D})$ is commutative; for the 
commutativity at the isomorphism $\iso^{(1)}$ and $\iso^{(1')}$ 
we rely on step 2. 

Passing to $\cat{D}(\til{C}_1 \ot_{\til{B}_0} \til{C}_2)$ using the equivalence 
$\opn{Rest}_g$, we obtain a backward morphism from ($\heartsuit'$)
to ($\heartsuit$), giving us a commutative diagram in 
$\cat{D}(\til{C}_1 \ot_{\til{B}_0} \til{C}_2)$.

\medskip \noindent
Step 4. The primed version of Definition \ref{1510} produces a cup product 
morphism \lb 
$\opn{cup}^{\til{C}' / \til{B} / \til{A}}_{C' / B / A, M, N'}$. 
The diagram 
\begin{equation} \label{eqn:1025}
\begin{tikzcd} [column sep = 18ex, row sep = 4ex] 
\opn{Sq}^{\til{B} / \til{A}}_{B / A}(M) \ot^{\mrm{L}}_{B} 
\opn{Sq}^{\til{C}' / \til{B}}_{C' / B}(N') 
\arrow[r, "{\opn{cup}^{\til{C}' / \til{B} / \til{A}}_{C' / B / A, M, N'}}"]
\arrow[d, "{ \opn{id} \ot^{\mrm{L}}_{B} 
\opn{Sq}^{\til{w}' / \til{B}}_{w / B}(\psi) }"', "{\simeq}"]
&
\opn{Sq}^{\til{C}' / \til{A}}_{C' / A} (M \ot^{\mrm{L}}_{B} N')
\arrow[d, "{ \opn{Sq}^{\til{w} / \til{A}}_{w / A}
(\opn{id} \, \ot^{\mrm{L}}_{B} \, \psi) }", "{\simeq}"']
\\
\opn{Sq}^{\til{B} / \til{A}}_{B / A}(M) \ot^{\mrm{L}}_{B} 
\opn{Sq}^{\til{C} / \til{B}}_{C / B}(N) 
\arrow[r, "{\opn{cup}^{\til{C} / \til{B} / \til{A}}_{C / B / A, M, N}}"]
&
\opn{Sq}^{\til{C} / \til{A}}_{C / A} (M \ot^{\mrm{L}}_{B} N)
\end{tikzcd}
\end{equation}
in $\cat{D}(C)$ is commutative. Here we used step 3 for the commutativity at 
the morphisms $\to^{(\mrm{3})}$ and $\to^{(\mrm{3}')}$. 

\medskip \noindent
Step 5. To finish the proof let us look at the cubical diagram 
(\ref{eqn:1880}) in the category $\cat{D}(C)$.
The top face of the cube is diagram ($\triangledown$), and the
the objects are labelled with the abbreviations 
\[ \opn{Sq}(M) \ot \opn{Sq}(N') := \opn{Sq}_{B / A}(M) \ot^{\mrm{L}}_{B} 
\opn{Sq}_{C' / B}(N') \]
etc., due to lack of space. The bottom face of the cube is diagram 
(\ref{eqn:1025}), and the the objects are labelled with the abbreviations
\[ \wtil{\opn{Sq}}(M) \ot \wtil{\opn{Sq}}(N') := 
\opn{Sq}^{\til{B} / \til{A}}_{B / A}(M) \ot^{\mrm{L}}_{B} 
\opn{Sq}^{\til{C}' / \til{B}}_{C' / B}(N') \]
etc. The horizontal arrows are not labelled, but they are shown in diagrams 
($\triangledown$) and (\ref{eqn:1025}). 
The vertical arrows in diagram (\ref{eqn:1880}) are labelled, and they 
are all isomorphisms.

\begin{equation} \label{eqn:1880}
\begin{tikzcd}[row sep = 6ex, column sep = 2ex]
& 
\opn{Sq}(M) \ot \opn{Sq}(N')
\arrow[dd, "{\mspace{35mu} \opn{sq}^{\til{B} / \til{A}}_{B / A, M} 
\ot^{\mrm{L}}_{B} \opn{sq}^{\til{C}' / \til{B}}_{C' / B, N'}}" description,
near start, "{\simeq}" near end]
\arrow[dl]
\ar[rr] 
& 
& 
\opn{Sq}(M \ot N')
\arrow[dl] 
\arrow[dd, "{\mspace{10mu} 
\opn{sq}^{\til{C}' / \til{A}}_{C' / A, M \ot^{\mrm{L}}_{B} N'}}"'
description, "{\simeq}" near end] 
\\
\opn{Sq}(M) \ot \opn{Sq}(N)
\arrow[dd, "{{\opn{sq}^{\til{B} / \til{A}}_{B / A, M} 
\ot^{\mrm{L}}_{B} \opn{sq}^{\til{C} / \til{B}}_{C / B, N} }}" description,
"{\simeq}" near end] 
\ar[rr, crossing over]
& 
& 
\opn{Sq}(M \ot N)
\\
& 
\wtil{\opn{Sq}}(M) \ot \wtil{\opn{Sq}}(N')
\arrow[dl] 
\arrow[rr]
& 
& 
\wtil{\opn{Sq}}(M \ot N')
\arrow[dl] 
\\
\wtil{\opn{Sq}}(M) \ot \wtil{\opn{Sq}}(N)
\ar[rr]
& 
& 
\wtil{\opn{Sq}}(M\ot N)
\arrow[from=uu, crossing over, 
"{\mspace{0mu} \opn{sq}^{\til{C} / \til{A}}_{C / A, M \ot^{\mrm{L}}_{B} N}}"'
description, near start, "{\simeq}" near end] 
\end{tikzcd} 
\end{equation}
The left and right vertical faces of the cube (\ref{eqn:1880})
are commutative, by condition ($**$) of Theorem \ref{thm:632}.
The front and rear vertical faces of the cube (\ref{eqn:1880}) 
are commutative, because they are instances of the commutative diagram 
($\diamondsuit$) in Theorem \ref{thm:780}. From step 4 we know that the bottom 
face, i.e.\ diagram (\ref{eqn:1025}), is commutative. The conclusion is that the 
top face, namely diagram ($\triangledown$), is commutative.
\end{proof}

\section{Regular Surjections of Rings and the Fundamental Local Isomorphism} 
\label{sec:regular}

In this section of the paper we recall a few facts about regular sequences 
and related concepts, and sharpen some known results. The main result here is 
Theorem \ref{thm:1080}, called the {\em Fundamental Local Isomorphism}, which 
is a slight modification of \cite[Proposition III.7.2]{RD}, and we give a full 
proof of it. The content of this section could be of independent interest, 
beyond its application to the theory rigid dualizing complexes.  

Throughout this section Conventions \ref{conv:615} and \ref{conv:1070} are in 
force. Thus all rings are commutative and noetherian by default.

Let us introduce some useful notation. 
Given a ring $A$ and a multiplicatively closed set $S \sub A$, the localization 
of $A$ with respect to $S$ is the ring $A_S = A[S^{-1}]$ gotten by inverting 
the elements of $S$. It is equipped with the universal ring homomorphism 
$\opn{q}_{A, S} : A \to A_S$. 
As usual, if $S = \{ s^k \}_{k \in \N}$ is a principal multiplicatively closed 
set, generated by some element $s \in A$, then we write 
$A_s = A[s^{-1}] := A_S$. If $S = A - \p$, the complement of some prime ideal 
$\p$, then we write $A_{\p} := A_S$. The corresponding ring homomorphisms are 
$\opn{q}_{A, s} : A \to A_s$ and 
$\opn{q}_{A, \p} : A \to A_{\p}$.
Given an $A$-module (or a complex of 
$A$-modules) $M$, we write $M_S = M[S^{-1}] := A_S \ot_A M$. The corresponding 
$A$-module homomorphism is 
$\opn{q}_{M, S} : M \to M_S$, $\opn{q}_{M, S}(m) := 1 \ot m$.

\begin{rem} \label{rem:1535}
In the terminology of Section \ref{sec:recall-dg}, the $A$-module homomorphism 
$\opn{q}_{M, S} : M \to M_S$ is the standard nondegenerate forward
homomorphism
in $\cat{M}(A)$ over the ring homomorphism $\opn{q}_{A, S} : A \to A_S$; see 
formula (\ref{eqn:1476}). 
\end{rem}

For an element $s \in A$ we identify the affine scheme 
$\opn{Spec}(A_s)$ with the principal open set 
\begin{equation} \label{eqn:1532}
\opn{NZer}_{\opn{Spec}(A)}(s) := 
\{ \p \mid s \notin \p \} \sub \opn{Spec}(A) . 
\end{equation}
Given a surjective ring homomorphism $A \to B$ with kernel $\a$, we identify 
the underlying topological space of the affine scheme $\opn{Spec}(B)$ with the 
closed subset 
\begin{equation} \label{eqn:1533}
\opn{Zer}_{\opn{Spec}(A)}(\a) = \{ \p \mid \a \sub \p \} \sub \opn{Spec}(A) . 
\end{equation}

Suppose $\ba = (a_1, \ldots, a_r)$ is a sequence of elements of $A$, and 
$f : A \to B$ is a ring homomorphism. Then we let 
$f(\ba) :=  (f(a_1), \ldots, f(a_r))$
be the corresponding sequence of elements of $B$. 
Likewise for a sequence $\bm$ of elements of a module $M$ and a module 
homomorphism $\phi : M \to N$. 

In the literature there are several notions of regularity for a sequence 
of elements of a ring. We are interested in three of them. 

An element $a \in A$ is called a {\em regular element} if it is not a 
zero-divisor, i.e.\ if multiplication by $a$ is an injective endomorphism of 
$A$. A sequence $\ba = (a_1, \ldots, a_n)$  in $A$ is called a {\em
regular sequence} if for every $i = 1, \ldots, n$ the image of the element 
$a_i$ in the ring $A / (a_1, \ldots, a_{i - 1})$ is regular, and the ring 
$A / (a_1, \ldots, a_{n})$ is nonzero. 
This definition is consistent with \cite[Section 16]{Ma}.

Let $\a \sub A$ be the ideal generated by the sequence 
$\ba = (a_1, \ldots, a_n)$ .
Consider the associated graded ring 
$\opn{Gr}^{\a}(A) = \boplus_{k \geq 0} \opn{Gr}_k^{\a}(A)$,
where $\opn{Gr}_k^{\a}(A) = \a^k / \a^{k + 1}$, and in particular 
$\opn{Gr}_0^{\a}(A) = A / \a$. 
(In the book \cite[Section 15.1]{Ye5} such a graded ring is called an {\em 
algebraically graded ring}, as opposed to the {\em cohomologically graded 
rings} that underly DG rings, see \cite[Section 3.1]{Ye5}.) 
Let $\bt = (t_1, \ldots, t_n)$ be a sequence of variables, all of degree $1$, 
and let $\opn{Gr}^{\a}_0(A)[\bt]$ be the graded polynomial ring. 
There is a canonical graded $\opn{Gr}^{\a}_0(A)$-ring homomorphism 
\begin{equation} \label{eqn:1086}
\opn{Gr}^{\a}_0(A)[\bt] \to \opn{Gr}^{\a}(A) ,
\end{equation}
that sends the variable $t_i$ to the class $\bar{a}_i$ of the element $a_i$ in
$\opn{Gr}_1^{\a}(A) = \a / \a^2$.
The sequence $\ba$ is called a {\em quasi-regular sequence} if the homomorphism 
(\ref{eqn:1086}) is bijective, and $\opn{Gr}^{\a}_0(A) \neq 0$.
See \cite[Section 16]{Ma} and \cite[Section 15.1]{EGA-0IV}. 

The third notion of regularity is less familiar -- it is not mentioned 
explicitly in either of the references above, but it is mentioned in 
\cite[Remark tag =
\texttt{\href{https://stacks.math.columbia.edu/tag/065M}{065M}}]{SP}
and studied in
\cite[Section tag =
\texttt{\href{https://stacks.math.columbia.edu/tag/062D}{062D}}]{SP}.
To a finite sequence $\ba = (a_1, \ldots, a_n)$ of elements in the ring $A$
we associate the {\em Koszul complex} 
$\opn{K}(A; \ba) = \bigoplus_{-n \leq i \leq 0} \opn{K}^{i}(A; \ba)$.
See \cite[Section 16]{Ma},
\cite[Section
tag = \texttt{\href{https://stacks.math.columbia.edu/tag/062D}{062D}}]{SP}
and \cite[Examples 3.3.9 and 3.3.11]{Ye5}.
Observe that in \cite{Ma} and \cite{SP} the Koszul complex has lower 
(homological) indices.

\begin{dfn} \label{dfn:1540}
We call the sequence 
$\ba = (a_1, \ldots, a_n)$  in $A$ a {\em Koszul regular sequence}  
if the Koszul complex $\opn{K}(A; \bsym{a})$ satisfies 
$\opn{H}^p \bigl( \opn{K}(A; \bsym{a}) \bigr) = 0$ for all $p \neq 0$, and 
$\opn{H}^0 \bigl( \opn{K}(A; \bsym{a}) \bigr) \neq 0$.
\end{dfn}

Note that $\opn{H}^0 \bigl( \opn{K}(A; \bsym{a}) \bigr)$ is canonically 
isomorphic to $A / \a$, where $\a \sub A$ is the ideal generated by the 
sequence $\ba$. Thus, $\ba$ is a Koszul regular sequence iff the augmentation
homomorphism $\opn{K}(A; \bsym{a}) \to A / \a$  
is a free resolution of $A / \a$ as an $A$-module, and $A / \a \neq 0$.

Each of the three regularity conditions has two clauses in it: a vanishing 
clause (e.g.\ the vanishing of the kernel of multiplication by $a_i$ in the 
case of regularity), and a nonvanishing clause (that $A / \a \neq 0$). 

\begin{prop} \label{prop:1090}
Let $\ba = (a_1, \ldots, a_n)$ be a sequence of elements of $A$, and let 
$\a \sub A$ be the ideal generated by $\ba$. Let $S \sub A$ be a 
multiplicatively closed subset such that 
$A_S \ot_A (A / \a) \neq 0$. 
If $\ba$ is a regular sequence (resp.\  quasi-regular sequence, resp.\  
Koszul regular sequence), then $\opn{q}_{A, S}(\ba)$ is a regular 
sequence (resp.\  quasi-regular sequence, resp.\  Koszul regular sequence) in 
$A_S$. 
\end{prop}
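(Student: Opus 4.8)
The plan is to exploit the exactness of the localization functor $A_S \ot_A (-)$, which makes it commute with each of the homological constructions entering the three notions of regularity, and to observe that in every case the ``nonvanishing clause'' is guaranteed by the standing hypothesis $A_S \ot_A (A / \a) \neq 0$. Indeed, $A / \a$, $\opn{Gr}^{\a}_0(A)$ and $\opn{H}^0 \bigl( \opn{K}(A; \ba) \bigr)$ are all canonically isomorphic to $A / \a$, so localizing any of them yields $(A / \a)_S$, which is nonzero by assumption.

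For the case of a regular sequence, I would proceed by induction on $i = 1, \dots, n$. Localization commutes with forming quotient rings, so $\bigl( A / (a_1, \dots, a_{i - 1}) \bigr)_S \cong A_S / \bigl( \opn{q}_{A, S}(a_1), \dots, \opn{q}_{A, S}(a_{i - 1}) \bigr)$; applying the exact functor $A_S \ot_A (-)$ to the injective endomorphism ``multiplication by $a_i$'' of $A / (a_1, \dots, a_{i - 1})$ gives the injective endomorphism ``multiplication by $\opn{q}_{A, S}(a_i)$'' of the localized ring. Together with the nonvanishing clause, this is exactly the assertion that $\opn{q}_{A, S}(\ba)$ is a regular sequence in $A_S$.

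For a quasi-regular sequence, the one point that deserves care is the compatibility of localization with powers of the ideal: since $A \to A_S$ is flat, $(\a^k)_S$ is the image of $\a^k$ in $A_S$, and this image equals $(\a A_S)^k$ because $\a^k$ is generated by $k$-fold products of elements of $\a$. Writing $\a_S := \a A_S$, exactness of localization then yields a canonical isomorphism of algebraically graded rings $\opn{Gr}^{\a_S}(A_S) \cong \opn{Gr}^{\a}(A) \ot_{A / \a} (A / \a)_S$, and likewise $(A / \a)_S[\bt] \cong (A / \a)[\bt] \ot_{A / \a} (A / \a)_S$. The canonical homomorphism (\ref{eqn:1086}) for $A_S$ is obtained from the one for $A$ by applying the functor $(-) \ot_{A / \a} (A / \a)_S$, so it remains bijective; with the nonvanishing of $\opn{Gr}^{\a_S}_0(A_S) = (A / \a)_S$ this shows that $\opn{q}_{A, S}(\ba)$ is quasi-regular.

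For a Koszul regular sequence, I would use that the Koszul complex commutes with base change, $\opn{K}(A_S; \opn{q}_{A, S}(\ba)) \cong \opn{K}(A; \ba) \ot_A A_S$, and that flatness of $A_S$ over $A$ lets cohomology pass through this tensor product: $\opn{H}^p \bigl( \opn{K}(A_S; \opn{q}_{A, S}(\ba)) \bigr) \cong \opn{H}^p \bigl( \opn{K}(A; \ba) \bigr)_S$ for all $p$. The right-hand side vanishes for $p \neq 0$ and equals $(A / \a)_S \neq 0$ for $p = 0$. There is no genuine obstacle in any of this: the entire content is the exactness of localization together with the fact that the nonvanishing hypothesis has been built into the statement; the only steps requiring a moment's thought are the two compatibilities noted above — localization versus quotient rings, and versus powers of an ideal.
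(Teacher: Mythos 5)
Your proof is correct and follows exactly the route the paper takes: the vanishing clauses in all three definitions localize because $\opn{q}_{A,S}$ is flat, and the nonvanishing clause is simply the standing hypothesis. The paper's proof is a two-sentence version of this same observation; you have just spelled out the flatness compatibilities (with quotients, with powers of $\a$, with the Koszul complex) that it leaves implicit.
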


\begin{proof}
The vanishing clause is satisfied because of the flatness of the ring 
homomorphism $\opn{q}_{A, S} : A \to A_S$.
The nonvanishing clause is satisfied by assumption. 
\end{proof}

Here is a partial converse to the previous proposition. 

\begin{prop} \label{prop:1091}
Let $\ba = (a_1, \ldots, a_n)$ be a sequence of elements of $A$, let 
$\a \sub A$ be the ideal generated by $\ba$, and let $B := A / \a$.
\begin{enumerate}
\item Assume that $B \neq 0$, and for every $\p \in \opn{Spec}(B)$ the sequence 
$\opn{q}_{A, \p}(\ba)$ is a quasi-regular sequence (resp.\  Koszul 
regular sequence) in the ring $A_{\p}$. Then $\ba$ is a quasi-regular sequence 
(resp.\  Koszul regular sequence) in $A$. 

\item Let $\p \in \opn{Spec}(B)$, and assume that the sequence 
$\opn{q}_{A, \p}(\ba)$ is a regular sequence in the ring $A_{\p}$. 
Then there exists an element $s \in A - \p$ such that sequence 
$\opn{q}_{A, s}(\ba)$ is a regular sequence in the ring $A_{s}$. 
\end{enumerate}
\end{prop}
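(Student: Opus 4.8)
The plan is to handle the two parts separately, using the standard technique of reducing global statements about regularity to local ones. For part (1), the nonvanishing clause ($B \neq 0$) is assumed, so I only need to verify the vanishing clause in each case. For Koszul regularity, the key observation is that formation of the Koszul complex commutes with localization: $\opn{K}(A_{\p}; \opn{q}_{A, \p}(\ba)) \cong A_{\p} \ot_A \opn{K}(A; \ba)$, and since localization is exact, $\opn{H}^p(\opn{K}(A; \ba))_{\p} \cong \opn{H}^p(\opn{K}(A_{\p}; \opn{q}_{A, \p}(\ba)))$. Now $\opn{H}^p(\opn{K}(A; \ba))$ is a finitely generated $A$-module annihilated by $\a$ (each cohomology of a Koszul complex is a module over $A/\a = B$), so it is a $B$-module; if all its localizations at primes of $B$ vanish, then the module itself vanishes. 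This gives $\opn{H}^p(\opn{K}(A; \ba)) = 0$ for $p \neq 0$, as required. The quasi-regular case is entirely analogous: the associated graded ring construction commutes with localization, $\opn{Gr}^{\a A_{\p}}(A_{\p}) \cong A_{\p} \ot_A \opn{Gr}^{\a}(A)$, and the homomorphism (\ref{eqn:1086}) localizes to the corresponding homomorphism over $A_{\p}$; since the cokernel (and kernel) of a map of finitely generated modules is again finitely generated, and is here a $B$-module, checking bijectivity at all primes of $B$ suffices.

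For part (2), the situation is different because regularity is not a purely local-in-the-module-theoretic-sense condition — being a regular sequence involves the ideals $(a_1, \ldots, a_{i-1})$ at each stage, and a localization statement must be proved by an explicit ``clearing denominators'' argument. I would proceed by induction on $n$. The base case $n = 1$: the hypothesis is that $a_1 / 1$ is a nonzerodivisor in $A_{\p}$, i.e.\ the kernel $K$ of multiplication-by-$a_1$ on $A$ satisfies $K_{\p} = 0$. Since $K$ is a finitely generated $A$-module, there is $s \in A - \p$ with $s K = 0$, hence $K_s = 0$, so $a_1/1$ is a nonzerodivisor in $A_s$; one also needs $A_s/(a_1) \neq 0$, which follows since $A_s/(a_1)$ localizes at $\p$ to the nonzero ring $A_{\p}/(a_1)$ (nonzero by the regularity hypothesis over $A_{\p}$). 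For the inductive step, suppose the claim holds for sequences of length $n - 1$. Applying the induction hypothesis to $(a_1, \ldots, a_{n-1})$, whose image in $A_{\p}$ is a regular sequence, I obtain $s_1 \in A - \p$ making $\opn{q}_{A, s_1}(a_1, \ldots, a_{n-1})$ regular in $A_{s_1}$. Then I must arrange that $a_n$ is a nonzerodivisor modulo $(a_1, \ldots, a_{n-1})$ after a further localization: the kernel $K'$ of multiplication by $a_n$ on $A_{s_1}/(a_1, \ldots, a_{n-1})$ is a finitely generated $A_{s_1}$-module whose localization at the prime $\p A_{s_1}$ vanishes (by the regularity of the full sequence over $A_{\p}$), so there is $s_2 \in A_{s_1} \setminus \p A_{s_1}$ with $s_2 K' = 0$; writing $s_2 = s_2' / s_1^k$, set $s := s_1 \cd s_2' \in A - \p$. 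Finally verify $A_s / (a_1, \ldots, a_n) \neq 0$ by localizing at $\p$ again, giving the nonzero ring $A_{\p}/(a_1, \ldots, a_n)$.

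The main obstacle I anticipate is purely bookkeeping in part (2): keeping track of the successive localizations, making sure each chosen element lies outside $\p$ (not just outside $\p A_{s_1}$), and correctly identifying the localization of $A_{s_1}/(a_1, \ldots, a_{n-1})$ at $\p A_{s_1}$ with $A_{\p}/(a_1, \ldots, a_{n-1}) A_{\p}$ so that the Noetherian ``a finitely generated module killed by an element off $\p$'' argument applies cleanly. There is no deep content here — everything reduces to the two facts that Koszul complexes and associated graded rings commute with flat base change, and that a finitely generated module over a Noetherian ring vanishing at all primes of a quotient ring is killed by an element avoiding any given prime — but the chain of reductions in the inductive step must be arranged carefully to avoid circularity in the choice of the $s_i$.
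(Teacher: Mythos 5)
Your proof is correct and uses the same underlying mechanism as the paper: for part (1), the relevant modules (the Koszul cohomologies, the graded pieces of $\opn{Gr}^{\a}(A)$, and the kernel/cokernel of the canonical map) are all $B$-modules — i.e.\ supported in $\opn{Spec}(B)$ — so vanishing of all their localizations at primes of $B$ forces them to vanish outright; for part (2), the key point is that each kernel of multiplication by $a_i$ modulo $(a_1,\ldots,a_{i-1})$ is a finitely generated module whose localization at $\p$ vanishes, so some $s \notin \p$ kills it. The one organizational difference is that the paper treats part (2) in a single step rather than by induction on $n$: it simply introduces all the finitely generated modules $N_i := \opn{Ker}\bigl(a_i : A/(a_1,\ldots,a_{i-1}) \to A/(a_1,\ldots,a_{i-1})\bigr)$ at once, notes $A_{\p} \ot_A N_i = 0$ for every $i$, and chooses a single $s \in A - \p$ (a product of the finitely many individual choices) killing all of them simultaneously. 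This avoids the nested-localization bookkeeping you flagged as the main hazard in your inductive step, and you may find it cleaner to adopt that formulation.
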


\begin{proof} 
(1) This is because the $A$-modules $\opn{Gr}_k^{\a}(A)$ and 
$\opn{H}^p \bigl( \opn{K}(A; \bsym{a}) \bigr)$
are supported in $\opn{Spec}(B)$.

\medskip \noindent
(2) For every $i = 1, \ldots, n$ let 
$A_i := A / (a_1, \ldots, a_{i - 1})$, and let $N_i \sub A_i$ be the the kernel 
of multiplication by $a_i$. Then $N_i$ is a finitely generated $A$-module.
The regularity of the sequence $\opn{q}_{A, \p}(\ba)$ in the local 
ring $A_{\p}$ means that $A_{\p} \ot_A N_i = 0$ for all $i$. Hence there is 
some element $s \in A - \p$ such that $A_{s} \ot_A N_i = 0$ for all $i$. We see 
that the sequence $\opn{q}_{A, s}(\ba)$ in $A_s$ is regular. 
\end{proof}

The next theorem collects several classical results in the local case.

\begin{thm} \label{thm:1090}
Assume $A$ is a noetherian local ring with maximal ideal $\m$. Let 
$\ba = (a_1, \ldots, a_n)$ be a sequence of elements of $\m$. The following 
three conditions are equivalent:
\begin{itemize}
\rmitem{i} The sequence $\ba$ is regular.

\rmitem{ii} The sequence $\ba$ is quasi-regular.

\rmitem{iii}  The sequence $\ba$ is Koszul regular.
\end{itemize}
\end{thm}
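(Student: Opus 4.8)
The plan is to prove the equivalence of the three regularity conditions for a sequence $\ba = (a_1, \ldots, a_n)$ of elements in the maximal ideal $\m$ of a noetherian local ring $A$ by establishing a cycle of implications, or more conveniently by proving two of them directly and deducing the third. The cleanest route is to show $\mathrm{(i)} \Rightarrow \mathrm{(iii)} \Rightarrow \mathrm{(ii)} \Rightarrow \mathrm{(i)}$, but since the third condition (Koszul regularity) sits naturally in the middle, I would actually organize the argument as $\mathrm{(i)} \Rightarrow \mathrm{(iii)}$, then $\mathrm{(iii)} \Rightarrow \mathrm{(ii)}$, and finally $\mathrm{(ii)} \Rightarrow \mathrm{(i)}$, closing the loop. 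Throughout, the nonvanishing clause $A/\a \neq 0$ is automatic here since $\ba \subseteq \m$ and $A$ is local, so $A/\a$ has $\m/\a$ as a proper ideal and is nonzero; this means the real content is the vanishing clauses.

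First I would treat $\mathrm{(i)} \Rightarrow \mathrm{(iii)}$: if $\ba$ is a regular sequence, then the Koszul complex $\opn{K}(A; \ba)$ is acyclic in nonzero degrees. This is the standard fact that a regular sequence gives an acyclic Koszul complex — one proves it by induction on $n$, using the short exact sequence of complexes relating $\opn{K}(A; a_1, \ldots, a_n)$ to the mapping cone of multiplication by $a_n$ on $\opn{K}(A; a_1, \ldots, a_{n-1})$, together with the long exact cohomology sequence and the fact that $a_n$ is a nonzerodivisor on $A/(a_1, \ldots, a_{n-1})$. For $\mathrm{(iii)} \Rightarrow \mathrm{(ii)}$: if the Koszul complex is a free resolution of $A/\a$, then one computes $\opn{Gr}^{\a}(A)$ via the associated graded of the $\a$-adic filtration. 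The key point is that the Koszul complex carries a natural filtration whose associated graded is the Koszul complex of the "generic" sequence $\bt$ over $\opn{Gr}^{\a}_0(A)$ tensored appropriately, and acyclicity in positive degrees of the original Koszul complex forces the map $\opn{Gr}^{\a}_0(A)[\bt] \to \opn{Gr}^{\a}(A)$ to be an isomorphism. Alternatively — and this is probably the slicker path — I would cite that over a noetherian local ring, $\opn{Tor}$-vanishing and depth arguments link all three, or invoke the characterization via $\opn{depth}$ and the Auslander–Buchsbaum formula: $\ba$ Koszul regular forces $\operatorname{pd}_A(A/\a) = n$, hence $\operatorname{depth}(\a, A) = n$, which by a standard induction gives that $\ba$ can be reordered to a regular sequence — but in a local ring, any sequence in $\m$ that is a "system of parameters up to regularity" and has full depth is itself regular without reordering, which closes $\mathrm{(ii)} \Rightarrow \mathrm{(i)}$ as well.

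For $\mathrm{(ii)} \Rightarrow \mathrm{(i)}$ I would argue directly by induction on $n$. Quasi-regularity of $\ba$ means $\opn{Gr}^{\a}(A) \cong (A/\a)[\bt]$. One first shows $a_1$ is a nonzerodivisor: if $a_1 x = 0$, then looking at the leading form of $x$ in $\opn{Gr}^{\a}(A)$ and using that $\bar a_1 = t_1$ is a nonzerodivisor in the polynomial ring $(A/\a)[\bt]$, one deduces $x \in \bigcap_k \a^k$, which is zero by the Krull intersection theorem (here noetherian local is essential). Then one checks that $(a_2, \ldots, a_n)$ is quasi-regular in $A/(a_1)$, by comparing associated graded rings, and concludes by induction. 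The main obstacle I anticipate is the bookkeeping in $\mathrm{(ii)} \Rightarrow \mathrm{(i)}$ — precisely, verifying that passing to $A/(a_1)$ preserves quasi-regularity of the truncated sequence requires a careful diagram chase with the graded pieces $\a^k/\a^{k+1}$ and the image of $(a_1)$ in them, and this is where the polynomial-ring structure of $\opn{Gr}^{\a}(A)$ must be used in an essential, non-formal way. The invocation of the Krull intersection theorem is the one place the local noetherian hypothesis is genuinely indispensable, so I would flag that explicitly. The other implications are more robust and go through by the standard Koszul-complex machinery recalled just before the theorem statement.
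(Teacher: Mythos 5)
Your chain $\mathrm{(i)} \Rightarrow \mathrm{(iii)} \Rightarrow \mathrm{(ii)} \Rightarrow \mathrm{(i)}$, with the three implications proved as you sketch them, is correct and is essentially the same approach as the paper's: the paper simply cites Matsumura, Theorems 16.2, 16.3 and 16.5, and the arguments you outline are exactly those Matsumura proofs (Koszul acyclicity for a regular sequence by induction via the mapping cone, the graded filtration of the Koszul complex for $\mathrm{(iii)} \Rightarrow \mathrm{(ii)}$, and the leading-form plus Krull-intersection argument for $\mathrm{(ii)} \Rightarrow \mathrm{(i)}$). The nonvanishing clause being automatic because $\ba \subseteq \m$ in a local ring is a worthwhile observation.

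One thing to clean up: the ``alternative'' passage in the middle is logically off. The Auslander--Buchsbaum/depth-sensitivity sketch, if it works, establishes $\mathrm{(iii)} \Rightarrow \mathrm{(i)}$ (Koszul regular implies regular), not $\mathrm{(ii)} \Rightarrow \mathrm{(i)}$ as you claim at the end of that sentence. Also, Auslander--Buchsbaum alone gives $\operatorname{depth}(A) - \operatorname{depth}(A/\a) = n$, not $\operatorname{depth}(\a, A) = n$; to get the latter you need the depth-sensitivity of the Koszul complex, which is close to assuming what you want to prove. Since you then give a separate, direct argument for $\mathrm{(ii)} \Rightarrow \mathrm{(i)}$, the overall proof is not harmed, but this digression should either be corrected (rename it as $\mathrm{(iii)} \Rightarrow \mathrm{(i)}$ and justify the depth step) or deleted.
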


\begin{proof} 

\smallskip \noindent
(i) $\Rightarrow$ (ii): See \cite[Theorem 16.2$\tup{(i)}$]{Ma}.

\medskip \noindent
(ii) $\Rightarrow$ (i): See \cite[Theorem 16.3]{Ma}, noting that $\m$ is 
the radical of $A$.  

\medskip \noindent
(i) $\Rightarrow$ (iii): This is \cite[Theorem 16.5$\tup{(i)}$]{Ma}.

\medskip \noindent
(iii) $\Rightarrow$ (ii): This is \cite[Theorem 16.5$\tup{(ii)}$]{Ma}.
\end{proof}

\begin{cor} \label{cor:1090}
Let $\ba$ be a finite sequence of elements of $A$.
The following two conditions are equivalent:
\begin{itemize}
\rmitem{i} The sequence $\ba$ is quasi-regular.

\rmitem{ii} The sequence $\ba$ is Koszul regular.
\end{itemize}
\end{cor}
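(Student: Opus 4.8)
The plan is to reduce the equivalence to the local situation already settled in Theorem \ref{thm:1090}, using the localization results Propositions \ref{prop:1090} and \ref{prop:1091} to pass between the local and the global statements. First I would observe that both the quasi-regularity condition and the Koszul regularity condition contain one and the same nonvanishing clause, namely $B := A / \a \neq 0$, where $\a \sub A$ denotes the ideal generated by $\ba$; thus in either direction of the desired equivalence this clause is automatically at our disposal, and the entire content lies in comparing the two vanishing clauses.

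Suppose $\ba$ is quasi-regular, and let us show it is Koszul regular. Fix a prime $\p \in \opn{Spec}(B)$, so in particular $\a \sub \p$. Then $A_{\p} \ot_A B = B_{\p}$ is nonzero, hence Proposition \ref{prop:1090} applies and shows that $\opn{q}_{A, \p}(\ba)$ is a quasi-regular sequence in the noetherian local ring $A_{\p}$. Moreover the elements of this sequence lie in the maximal ideal $\p A_{\p}$, since each $a_i \in \a \sub \p$, so Theorem \ref{thm:1090} is applicable and tells us that $\opn{q}_{A, \p}(\ba)$ is also a Koszul regular sequence in $A_{\p}$. As this holds for every $\p \in \opn{Spec}(B)$, and $B \neq 0$, Proposition \ref{prop:1091}(1) allows us to conclude that $\ba$ is a Koszul regular sequence in $A$.

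The converse implication is completely symmetric: assuming $\ba$ is Koszul regular, the identical chain of steps — Proposition \ref{prop:1090} to descend to $A_{\p}$ for each $\p \in \opn{Spec}(B)$, Theorem \ref{thm:1090} to exchange Koszul regularity for quasi-regularity in the local ring, and Proposition \ref{prop:1091}(1) to globalize — yields that $\ba$ is quasi-regular in $A$. I do not anticipate any real obstacle here; the only points deserving a moment's attention are checking that the pertinent localizations of $B$ are nonzero, so that Proposition \ref{prop:1090} applies, and that the localized sequence lands inside the maximal ideal of $A_{\p}$, so that Theorem \ref{thm:1090} applies — both of which are immediate from the fact that $\p \in \opn{Spec}(B)$ means precisely $\a \sub \p$.
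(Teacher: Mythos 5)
Your proof is correct and follows essentially the same route as the paper: localize via Proposition \ref{prop:1090}, apply Theorem \ref{thm:1090} in the local rings $A_{\p}$, and globalize via Proposition \ref{prop:1091}(1). Your write-up is a bit more explicit about the minor hypotheses (nonvanishing of $B_{\p}$, the sequence landing in $\p A_{\p}$), but the substance is identical.
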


\begin{proof}
Let $\a \sub A$ be the ideal generated by the sequence $\ba$, and let 
$B := A / \a$. 
According to Proposition \ref{prop:1090} and Proposition \ref{prop:1091}(1),
$\ba$ is quasi-regular (resp.\ Koszul regular) iff for every 
$\p \in \opn{Spec}(B)$ the sequence $\opn{q}_{A, \p}(\ba)$ is a 
quasi-regular sequence (resp.\  Koszul regular sequence) in the local ring 
$A_{\p}$. Now use Theorem \ref{thm:1090}. 
\end{proof}

\begin{cor} \label{cor:1093}
Let $\ba$ be a regular sequence of elements of $A$. Then $\ba$ is  Koszul 
regular. 
\end{cor}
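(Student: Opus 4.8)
The plan is to deduce Corollary \ref{cor:1093} directly from the two results immediately preceding it. Recall that we must show: if $\ba$ is a regular sequence of elements of $A$, then $\ba$ is Koszul regular. By Corollary \ref{cor:1090}, Koszul regularity is equivalent to quasi-regularity, so it suffices to prove that $\ba$ is quasi-regular.

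First I would reduce to the local case. Let $\a \sub A$ be the ideal generated by $\ba$, and set $B := A / \a$. Since $\ba$ is a regular sequence, by definition $B = A / (a_1, \ldots, a_n) \neq 0$, so $\opn{Spec}(B)$ is nonempty. By Proposition \ref{prop:1091}(1), to show $\ba$ is quasi-regular in $A$ it is enough to show that for every $\p \in \opn{Spec}(B)$ the localized sequence $\opn{q}_{A, \p}(\ba)$ is a quasi-regular sequence in the local ring $A_{\p}$.

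Next, fix $\p \in \opn{Spec}(B)$. Since $\p$ contains $\a$, we have $A_\p \ot_A B = B_\p \neq 0$, so the nonvanishing hypothesis of Proposition \ref{prop:1090} is satisfied for the multiplicative set $S = A - \p$. As $\ba$ is a regular sequence in $A$, Proposition \ref{prop:1090} tells us that $\opn{q}_{A, \p}(\ba)$ is a regular sequence in $A_\p$. Now $A_\p$ is a noetherian local ring, and the elements $\opn{q}_{A, \p}(a_i)$ lie in its maximal ideal $\m_\p$ (again because $\a \sub \p$). Therefore Theorem \ref{thm:1090}, implication (i) $\Rightarrow$ (ii), applies and shows that $\opn{q}_{A, \p}(\ba)$ is a quasi-regular sequence in $A_\p$. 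This holds for all $\p \in \opn{Spec}(B)$, so by Proposition \ref{prop:1091}(1) the sequence $\ba$ is quasi-regular in $A$, and hence Koszul regular by Corollary \ref{cor:1090}.

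I do not expect any genuine obstacle here: the statement is a straightforward assembly of Propositions \ref{prop:1090} and \ref{prop:1091}, Theorem \ref{thm:1090}, and Corollary \ref{cor:1090}. The only point requiring a moment's care is checking that the elements $\opn{q}_{A,\p}(a_i)$ genuinely belong to the maximal ideal of $A_\p$ (needed to invoke Theorem \ref{thm:1090}), which follows since each $a_i \in \a \sub \p$; and that the relevant nonvanishing clauses hold, which is immediate from $B \neq 0$ and $\p \supseteq \a$.
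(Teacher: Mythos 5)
Your proof is correct and follows essentially the same skeleton as the paper's: localize via Proposition \ref{prop:1090}, apply Theorem \ref{thm:1090} in the local ring, then globalize via Proposition \ref{prop:1091}(1). The only cosmetic difference is that the paper invokes Theorem \ref{thm:1090} directly as (i) $\Rightarrow$ (iii) to land on Koszul regularity locally, whereas you route through (i) $\Rightarrow$ (ii) and then use Corollary \ref{cor:1090} to translate quasi-regularity back to Koszul regularity; both are fine, and your explicit check that the $\opn{q}_{A,\p}(a_i)$ lie in the maximal ideal of $A_\p$ is a point the paper leaves implicit.
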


\begin{proof} 
According to Proposition \ref{prop:1090}, for every 
$\p \in \opn{Spec}(B)$ the sequence $\opn{q}_{A, \p}(\ba)$ is a 
regular sequence in the local ring $A_{\p}$. By Theorem \ref{thm:1090}, 
$\opn{q}_{A, \p}(\ba)$ is a Koszul regular sequence in $A_{\p}$. Now 
use Proposition \ref{prop:1091}(1).
\end{proof}

\begin{cor} \label{cor:1091}
Let $\ba$ be a Koszul regular sequence of elements of $A$, let 
$\a \sub A$ be the ideal generated by $\ba$, and let $B := A / \a$.
Then for every $\p \in \opn{Spec}(B)$ there exists an element $s \in A - \p$ 
such that the sequence $\opn{q}_{A, s}(\ba)$ in $A_s$ is regular. 
\end{cor}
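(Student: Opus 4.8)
The plan is to reduce everything to the local case via localization at $\p$, apply Theorem \ref{thm:1090} to upgrade Koszul regularity to plain regularity in the local ring, and then spread out the regularity to a principal localization using Proposition \ref{prop:1091}(2).

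First I would observe that $\p \in \opn{Spec}(B)$ corresponds, under the identification (\ref{eqn:1533}), to a prime ideal of $A$ containing $\a$; in particular $A_{\p} \ot_A (A / \a) = (A / \a)_{\p} \neq 0$. Since $\ba$ is a Koszul regular sequence in $A$ by hypothesis, Proposition \ref{prop:1090} applies with $S := A - \p$ and tells us that $\opn{q}_{A, \p}(\ba)$ is a Koszul regular sequence in the noetherian local ring $A_{\p}$.

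Next, because each $a_i$ lies in $\a$ and $\a \sub \p$, the images $\opn{q}_{A, \p}(a_i)$ all lie in the maximal ideal $\m := \p A_{\p}$ of $A_{\p}$. Hence Theorem \ref{thm:1090} is applicable to the sequence $\opn{q}_{A, \p}(\ba)$, and the implication (iii) $\Rightarrow$ (i) of that theorem shows that $\opn{q}_{A, \p}(\ba)$ is a regular sequence in $A_{\p}$.

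Finally, I would invoke Proposition \ref{prop:1091}(2) for this prime $\p \in \opn{Spec}(B)$: since $\opn{q}_{A, \p}(\ba)$ is a regular sequence in $A_{\p}$, there exists an element $s \in A - \p$ such that $\opn{q}_{A, s}(\ba)$ is a regular sequence in $A_s$, which is exactly the desired conclusion. There is no real obstacle here: the only point requiring a moment's attention is verifying that the localized elements land in the maximal ideal of $A_{\p}$ so that Theorem \ref{thm:1090} may be used, and this is immediate from $\a \sub \p$; the rest is a direct chain of the previously established results.
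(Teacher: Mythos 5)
Your proof is correct and follows the same route as the paper: localize at $\p$, use Proposition~\ref{prop:1090} to get Koszul regularity in $A_\p$, upgrade to plain regularity via Theorem~\ref{thm:1090}, and spread out with Proposition~\ref{prop:1091}(2). The extra remark verifying that the localized elements lie in $\p A_\p$ (so that Theorem~\ref{thm:1090} applies) is a worthwhile detail the paper leaves implicit.
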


\begin{proof}
According to Proposition \ref{prop:1090} the sequence 
$\opn{q}_{A, \p}(\ba)$ is 
a Koszul regular sequence in the local ring $A_{\p}$. By Theorem \ref{thm:1090},
$\opn{q}_{A, \p}(\ba)$ is a regular sequence in $A_{\p}$. Now use 
Proposition \ref{prop:1091}(2).
\end{proof}

The next definition does not seem to appear in standard commutative algebra 
books, but it can be found in \cite[Section 16.9]{EGA-IV} and 
\cite[Section
tag = \texttt{\href{https://stacks.math.columbia.edu/tag/07CU}{07CU}}]{SP}.

\begin{dfn} \label{dfn:1080}
Let $\a \sub A$ be an ideal, and let $B := A / \a$. The ideal $\a$ is called a 
{\em regular ideal} (resp.\ {\em quasi-regular ideal}) if $B \neq 0$,
and for every $\p \in \opn{Spec}(B)$ there exists some element $s \in A - \p$ 
such that the ideal $\a_s \sub A_s$ is generated by a regular sequence
(resp.\ quasi-regular sequence).
\end{dfn}

\begin{dfn} \label{dfn:1081}
A surjective ring homomorphism $f : A \to B$ is called a {\em regular 
surjection} (resp.\ {\em quasi-regular surjection})
if the ideal $\a := \opn{Ker}(f)$ is a regular ideal (resp.\ quasi-regular 
ideal) of $A$, in the sense of Definition \ref{dfn:1080}. 
In particular the ring $B$ is nonzero. 
\end{dfn}

\begin{prop} \label{prop:1130}
An ideal $\a \sub A$ is regular iff it is quasi-regular.
\end{prop}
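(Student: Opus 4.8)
The plan is to reduce Proposition \ref{prop:1130} to the local statements already established, namely Theorem \ref{thm:1090} and its corollaries. Recall that an ideal $\a \sub A$ is regular (resp.\ quasi-regular) iff $B := A/\a \neq 0$ and for every $\p \in \opn{Spec}(B)$ there is $s \in A - \p$ with $\a_s$ generated by a regular (resp.\ quasi-regular) sequence in $A_s$. The key observation is that both conditions are ``local on $\opn{Spec}(B)$'', so it suffices to compare them after localizing at an arbitrary prime $\p \in \opn{Spec}(B)$, where the ideal becomes generated by a suitable sequence and Theorem \ref{thm:1090} applies.

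For the direction ``regular $\Rightarrow$ quasi-regular'': suppose $\a$ is regular and fix $\p \in \opn{Spec}(B)$. By hypothesis there is $s \in A - \p$ such that $\a_s \sub A_s$ is generated by a regular sequence $\bsym{a}$. By Corollary \ref{cor:1093} (or directly by Theorem \ref{thm:1090}, (i)$\Rightarrow$(ii)) $\bsym{a}$ is then quasi-regular in $A_s$, so the same element $s$ witnesses the quasi-regularity condition at $\p$. Since $\p$ was arbitrary and $B \neq 0$, the ideal $\a$ is quasi-regular.

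For the converse ``quasi-regular $\Rightarrow$ regular'': fix $\p \in \opn{Spec}(B)$. There is $s_0 \in A - \p$ with $\a_{s_0}$ generated by a quasi-regular sequence $\bsym{a}$ in $A_{s_0}$. Let $\q \sub A_{s_0}$ be the prime corresponding to $\p$ (so $\q$ lies over $\a_{s_0}$ in the sense that it contains it, and $A_{s_0}/\a_{s_0} \cong B_{s_0}$). By Proposition \ref{prop:1090}, the localized sequence $\opn{q}_{A_{s_0}, \q}(\bsym{a})$ is quasi-regular in the local ring $(A_{s_0})_{\q} = A_{\p}$; here the nonvanishing clause holds because $B_{\p} \neq 0$. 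By Theorem \ref{thm:1090}, (ii)$\Rightarrow$(i), the image of $\bsym{a}$ in $A_{\p}$ is a regular sequence. Now apply Proposition \ref{prop:1091}(2) to the sequence $\bsym{a}$ in $A_{s_0}$: it produces an element $\bar{s} \in A_{s_0} - \q$ such that the image of $\bsym{a}$ in $(A_{s_0})_{\bar{s}}$ is regular. Lifting $\bar{s}$ to an element $s_1 \in A$ and setting $s := s_0 \cdot s_1 \in A - \p$ (note $s \notin \p$ since both factors avoid the relevant primes), we get that $\a_s \sub A_s$ is generated by a regular sequence. As $\p$ was arbitrary and $B \neq 0$, the ideal $\a$ is regular.

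I expect the only mildly delicate point to be the bookkeeping in the converse direction: one must track how localizations compose ($A_{s_0}$ localized further at a prime, versus $A$ localized at $\p$) and verify that the element $s$ produced at the end genuinely lies in $A - \p$ and that $\a_s$ is correctly seen as generated by the transported sequence $\bsym{a}$. None of this is conceptually hard — it is the standard ``spreading out'' argument — but it is the step where a careless identification of primes or localizations could slip in. Everything else is a direct citation of Theorem \ref{thm:1090} and Propositions \ref{prop:1090}, \ref{prop:1091}.
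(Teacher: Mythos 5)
Your overall strategy — localize at each prime of $\opn{Spec}(B)$, apply Theorem \ref{thm:1090}, and spread out — is exactly the content of the paper's one-line proof, which simply combines Corollaries \ref{cor:1090}, \ref{cor:1093} and \ref{cor:1091} (and those corollaries are themselves proved by the local-then-spread-out argument you carry out explicitly). So the route is the same; you have just inlined the corollaries.

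There is, however, a citation slip in the first direction that you should fix. You assert that a regular sequence $\bsym{a}$ in $A_s$ is quasi-regular ``By Corollary \ref{cor:1093} (or directly by Theorem \ref{thm:1090}, (i)$\Rightarrow$(ii))''. Neither reference gives this as stated: Corollary \ref{cor:1093} only upgrades regular to \emph{Koszul regular}, and Theorem \ref{thm:1090} is a statement about \emph{local} rings with the sequence inside the maximal ideal, so it cannot be applied to $A_s$ directly. The correct chain is Corollary \ref{cor:1093} (regular $\Rightarrow$ Koszul regular in $A_s$) followed by Corollary \ref{cor:1090} (Koszul regular $\Leftrightarrow$ quasi-regular in any $A$) — or, equivalently, localize at each $\q \in \opn{Spec}(B_s)$, invoke Theorem \ref{thm:1090} there, and collect via Proposition \ref{prop:1091}(1). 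Either way there is one more step than you wrote.

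On the bookkeeping you flag in the converse direction: it is indeed slightly off to ``lift'' $\bar{s} \in A_{s_0}$ to $s_1 \in A$, since no lift exists in general. What works is to write $\bar{s} = a / s_0^n$ with $a \in A$ and take $s := s_0 \cd a$; then $(A_{s_0})_{\bar{s}} \cong A_{s_0 \cd a}$, and $s_0 \cd a \notin \p$ because $a \notin \q$ forces $a \notin \p$. With this correction the converse direction is complete. I'd also note that $\a_s$ is generated by (the image of) the original sequence because the matrix relating it to a generating sequence of $\a_{s_0}$ becomes a unit after further localization, but since you chose $\bsym{a}$ to already generate $\a_{s_0}$ this is automatic here.
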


Remember that our rings are noetherian, unlike in 
\cite[Section 16]{EGA-IV}. 

\begin{proof}
Combine Corollaries \ref{cor:1090}, \ref{cor:1093} and \ref{cor:1091}.
\end{proof}

\begin{cor} \label{cor:1130}
A ring homomorphism $A \to B$ is a regular surjection iff it is a 
quasi-regular surjection. 
\end{cor}

\begin{proof}
This is immediate from Proposition \ref{prop:1130}.
\end{proof}

In light of Proposition \ref{prop:1130} and Corollary \ref{cor:1130},
from now on we shall only talk about regular ideals and regular surjections of 
rings (and not mention the quasi-regular ones). 

\begin{rem} \label{rem:1130}
Here we are going to compare the algebraic Definition \ref{dfn:1081} to the 
corresponding geometric notion from \cite{EGA-IV}. Let $Y \sub X$ be a closed 
embedding of noetherian schemes. 
According to \cite[Definition 16.9.2]{EGA-IV}, $Y \sub X$ is called a 
{\em regular closed embedding} of schemes if every point $y \in Y$ 
has an affine open neighborhood 
$V = \opn{Spec}(B) = U \cap Y \sub Y$, 
where $U = \opn{Spec}(A) \sub X$ is an affine open subscheme, 
and the ring homomorphism $A \to B$ is a regular 
surjection. This is depicted in Figure \ref{fig:1}. 
The geometric result matching Corollary 
\ref{cor:1130} is  \cite[Proposition 16.9.10]{EGA-IV}. 
Note that the coherent $\OO_V$-module corresponding to the $B$-module 
$\a / \a^2$ is called the {\em conormal sheaf} of $V / U$. 
\end{rem}

\begin{figure}[!tb]
\centering
\includegraphics[scale=0.18]{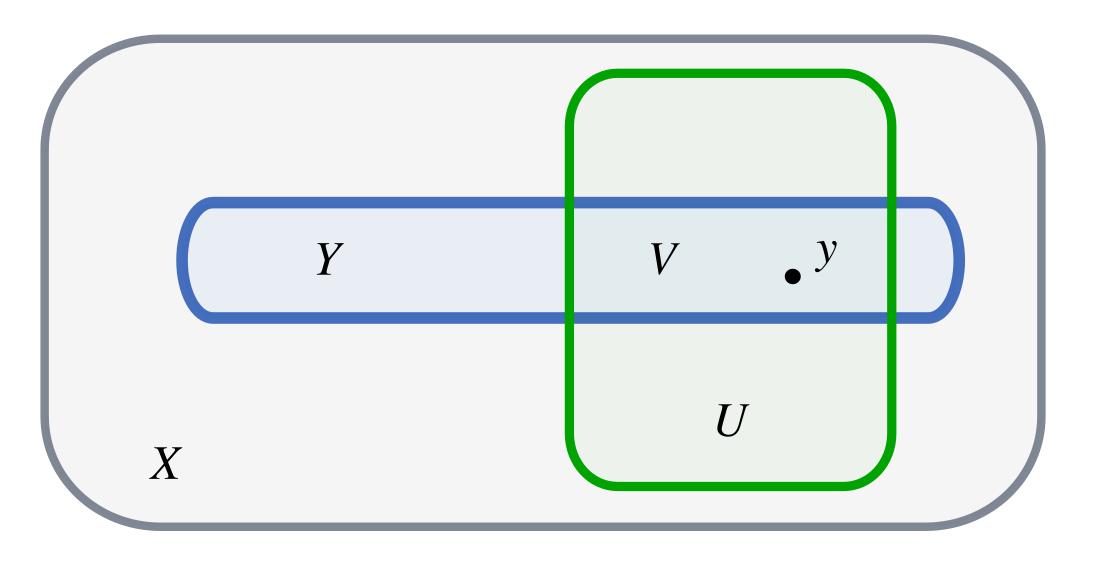}
\caption{An illustration for Remark \ref{rem:1130}.}
\label{fig:1}
\end{figure}

\newpage

Before continuing, we need to say a more about Koszul complexes. 
Given a sequence $\bsym{a} = (a_1, \ldots, a_n)$ in $A$, its associated
Koszul complex $\opn{K}(A; \bsym{a})$ is actually a semi-free commutative DG 
$A$-ring. This means that as a graded $A$-ring
(i.e.\ after forgetting the differential), there is an isomorphism 
\begin{equation} \label{eqn:1100}
\opn{K}(A; \bsym{a})^{\natural} \cong A[\bt] ,
\end{equation}
where the latter is the strongly commutative polynomial ring on a sequence 
of degree $-1$ variables $\bt = (t_1, \ldots, t_n)$. 
Note that here we use {\em cohomological grading}, as in  
\cite[Section 3.1]{Ye5}, and the Koszul sign rule holds. Strong commutativity 
for cohomologically graded rings
implies that $t_j \cd t_i = - t_i \cd t_j$ and $t_i \cd t_i = 0$.
With respect to the graded ring isomorphism (\ref{eqn:1100}), the differential 
of $\opn{K}(A; \bsym{a})$ is $\d(t_i) = a_i$. 
See \cite[Definition 3.1.22, Example 3.1.23 and Example 3.3.11]{Ye5}.
Note that $\opn{K}^{-n}(A, \bsym{a})$ is a free $A$-module of rank $1$, with 
basis the element $t_1 \cdots t_n$.
Letting $\a \sub A$ be the ideal generated by $\ba$, there is a unique
DG $A$-ring homomorphism $\opn{K}(A; \bsym{a}) \to A / \a$,
and it induces an $A$-ring isomorphism
$\opn{H}^0 \bigl( \opn{K}(A; \bsym{a}) \bigr) \cong A / \a$.

Here are several definitions that introduce useful notation.  

\begin{dfn} \label{dfn:1920}
Let $C = \bigoplus_{i \in \Z} C^i$ be a strongly commutative 
cohomologically graded ring, for instance the Koszul complex 
$\opn{K}(A, \bsym{a}) = \bigoplus_{i  \leq 0}\opn{K}^i(A, \bsym{a})$ 
mentioned above, or the de Rham 
complex $\Om_{B / A} = \bigoplus_{i \geq 0} \Om^i_{B / A}$
associated to a ring homomorphism $A \to B$. 
The multiplication in $C$ is denoted either by $c_1 \cdot c_2$ or by 
$c_1 \wedge c_2$.
Given a sequence $\bc = (c_1, \ldots, c_p)$ of odd degree elements of $C$, 
we write 
\[ \opn{wdg}(\bc) := c_1 \wedge \cdots \wedge c_p \in C , \]
and call this element the {\em wedge of $\bc$}. 
\end{dfn}

In the situation of the definition above, let $A := C^0$; the action of an 
element $a \in A$ on an element $c \in C$ is $a \cd c = c \cd a$. Suppose 
$\bc = (c_1, \ldots, c_n)$ and $\bc' = (c'_1, \ldots, c'_n)$ are
sequences of odd degree elements of $C$, viewed as rows, and 
$\ba = [a_{i, j}] \in \opn{Mat}_n(A)$ is a matrix
satisfying $\bc' = \bc \cdot \ba$, namely 
$c'_j = \sum_{i} c_i \cdot a_{i, j}$.  
Due to the strong commutativity of $C$ (i.e.\ the Koszul sign rule) we have 
\begin{equation} \label{eqn:1920}
\opn{wdg}(\bc') =
\opn{wdg}(\bc \cd \ba) = \opn{det}(\ba) \cd \opn{wdg}(\bc) .
\end{equation}

\begin{dfn} \label{dfn:1921}
Let $L$ be a free $A$-module of rank $1$, with basis element $\la$. 
Given another $A$-module $M$, and an element $\mu \in M$, we denote by 
$\smfrac{\, \mu \,}{\la} \in \opn{Hom}_{A}(L, M)$ 
the $A$-linear homomorphism 
$\smfrac{\, \mu \,}{\la} : L \to  M$,   
$\smfrac{\, \mu \,}{\la}(a \cd \la) := a \cd \mu$. 
\end{dfn}

\begin{dfn}[Generalized Fraction] \label{dfn:1922}
Let $\bsym{a} = (a_1, \ldots, a_n)$ be a sequence of elements in the ring $A$,
let $M$ be an $A$-module, and let $\mu \in M$ be an element. 
The {\em generalized fraction}
\[ \smgfrac{\mu}{\bsym{a}} \in 
\opn{H}^n \bigl( \opn{Hom}_{A} \bigl( \opn{K}(A; \bsym{a}), M \bigr) \bigr) \]
is the cohomology class of the homomorphism 
\[ \smfrac{\mu}{\opn{wdg}(\bt)} = \smfrac{\mu}{t_1 \cdots t_n} \in
\opn{Hom}_{A} \bigl( \opn{K}^{-n}(A; \bsym{a}), M \bigr) =
\opn{Hom}_{A} \bigl( \opn{K}(A; \bsym{a}), M \bigr)^{n} . \]
Here the fraction $\smfrac{\mu}{\opn{wdg}(\bt)}$ is the homomorphism from
Definition \ref{dfn:1921}, for the free $A$-module 
$L := \opn{K}^{-n}(A; \bsym{a})$ and its basis
$\la := \opn{wdg}(\bt) = t_1 \cdots t_n$ gotten from the isomorphism
(\ref{eqn:1100}). 
\end{dfn}

The definition makes sense because $\smfrac{\mu}{\opn{wdg}(\bt)}$ is an element 
of top degree in the complex 
$\opn{Hom}_{A} \bigl( \opn{K}(A; \bsym{a}), M \bigr)$,
so it is a cocycle. 

\begin{dfn} \label{dfn:1923}
Suppose $\bsym{a} = (a_1, \ldots, a_n)$ is a Koszul regular sequence in $A$,
and $\a \sub A$ is the ideal generated by $\ba$. Let $B := A / \a$. 
The augmentation homomorphism $\opn{K}(A; \bsym{a}) \to B$ is a
quasi-isomorphism, so $\opn{K}(A; \bsym{a})$ is a free resolution of $B$ as an 
$A$-module. 
\begin{enumerate}
\item By the universal property of the right derived functor, for every 
complex $M \in \cat{D}(A)$ there is a canonical isomorphism 
\[ \opn{kpres}_{\ba} : 
\opn{Hom}_{A} \bigl( \opn{K}(A; \bsym{a}), M \bigr) \iso 
\opn{RHom}_{A}(B, M) \]
in $\cat{D}(A)$, called the {\em Koszul presentation}. 

\item For an $A$-module $M$ and for $i \geq 0$ we get a canonical isomorphism 
of $A$-modules 
\[ \opn{kpres}^i_{\ba} := \opn{H}^i(\opn{kpres}_{\ba}) :
\opn{H}^i \bigl(  \opn{Hom}_{A} \bigl( \opn{K}(A; \bsym{a}), M \bigr) \bigr) 
\iso \opn{Ext}^i_{A}(B, M) \]
\end{enumerate}
\end{dfn}

In the situation of Definition \ref{dfn:1923}(2), for $i = n$ and
$\mu \in M$, we sometimes use the shorthand 
\begin{equation} \label{eqn:1921}
\smgfrac{\mu}{\bsym{a}} \in \opn{Ext}^n_{A}(B, M) ,
\end{equation}
leaving the canonical isomorphism $\opn{kpres}^n_{\ba}$ implicit.
Of course every cohomology class in $\opn{Ext}^n_{A} (B, M )$ can be expressed
this way as a generalized fraction -- but not uniquely. 

\begin{lem} \label{lem:1111}
Let $\ba = (a_1, \ldots, a_n)$ be a Koszul regular sequence in $A$, let $\a 
\sub A$ be the ideal generated by $\ba$, and let $B := A / \a$.
\begin{enumerate}
\item The $B$-module $\a / \a^2$ is free of rank $n$, with basis the 
sequence $\bar{\bsym{a}} = (\bar{a}_1, \ldots, \bar{a}_n)$, 
where $\bar{a}_i$ is the image of $a_i$ in $\a / \a^2$. 

\item The $B$-module $\bwedge^n_B (\a / \a^2)$ is free of rank $1$, with basis 
the element 
\[ \opn{wdg}(\bar{\ba}) = 
\bar{a}_1  \wedge \cdots \wedge \bar{a}_n
\in \bwedge^n_B (\a / \a^2) . \]
\end{enumerate}
\end{lem}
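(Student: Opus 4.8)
The plan is to prove Lemma \ref{lem:1111} by reducing the two statements to the structure of the Koszul complex and its relation to the associated graded ring, exploiting the equivalence of Koszul regularity with quasi-regularity (Corollary \ref{cor:1090}).

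For part (1), I would first observe that since $\ba$ is Koszul regular, by Corollary \ref{cor:1090} it is quasi-regular, meaning the canonical graded $B$-algebra homomorphism $\opn{Gr}^{\a}_0(A)[\bt] \to \opn{Gr}^{\a}(A)$ from formula (\ref{eqn:1086}) is bijective. Restricting this isomorphism to degree $1$ gives an isomorphism of $B$-modules from the free module $B \cd t_1 \oplus \cdots \oplus B \cd t_n$ onto $\opn{Gr}^{\a}_1(A) = \a / \a^2$, sending $t_i$ to $\bar{a}_i$. Hence $\a / \a^2$ is free of rank $n$ with basis $\bar{\bsym{a}} = (\bar{a}_1, \ldots, \bar{a}_n)$. (Alternatively, one can localize at each $\p \in \opn{Spec}(B)$, use Theorem \ref{thm:1090} and Proposition \ref{prop:1091}(2) to reduce to a genuine regular sequence in a local ring where the statement is classical, and then globalize; but the quasi-regularity route is cleaner and gives the statement directly.)

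Part (2) is then immediate formal algebra: if $P$ is a free module of rank $n$ over a commutative ring $B$ with basis $(e_1, \ldots, e_n)$, then $\bwedge^n_B P$ is free of rank $1$ with basis $e_1 \wedge \cdots \wedge e_n$. Applying this with $P := \a / \a^2$ and $e_i := \bar{a}_i$, using part (1), gives that $\bwedge^n_B(\a/\a^2)$ is free of rank $1$ with basis $\opn{wdg}(\bar{\ba}) = \bar{a}_1 \wedge \cdots \wedge \bar{a}_n$.

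The only mild subtlety — and the closest thing to an obstacle — is making sure the degree-$1$ component of the isomorphism (\ref{eqn:1086}) really is an isomorphism of $B$-modules onto $\a/\a^2$ with the stated basis; this is exactly the content of quasi-regularity in degree $1$, so once Corollary \ref{cor:1090} is invoked there is nothing further to do. Everything else is standard exterior-algebra bookkeeping that I would not spell out in detail.
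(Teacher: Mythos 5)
Your proposal is correct and matches the paper's own proof: both invoke Corollary \ref{cor:1090} to pass from Koszul regularity to quasi-regularity, read off part (1) from the degree-$1$ component of the isomorphism (\ref{eqn:1086}), and deduce part (2) as routine exterior-algebra consequence. No meaningful difference in approach.
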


\begin{proof}
(1) By Corollary \ref{cor:1090}, $\ba$ is a quasi-regular sequence in $A$.
Looking at formula (\ref{eqn:1086}), 
we see that $\a / \a^2 = \opn{Gr}_1^{\a}(A)$ is a free module over the ring
$B = \opn{Gr}_0^{\a}(A)$, with the basis the sequence $\bar{\bsym{a}}$.

\medskip \noindent 
(2) This is clear from (1). 
\end{proof}

\begin{lem} \label{lem:1110}
Let $\a \sub A$ be an ideal, let $B := A / \a$, let $\bsym{a}$ and $\ba'$ be 
Koszul regular sequences in $A$ of length $n$ that generate $\a$, and let 
$\bg \in \opn{Mat}_{n}(A)$ 
be a matrix such that $\ba' = \ba \cd \bg$. 
\begin{enumerate}
\item Let $\bar{\bg} \in \opn{Mat}_{n}(B)$ be the image of $\bg$ under the 
surjection $A \to B$. Then the matrix $\bar{\bg}$ is invertible.  

\item Let $M$ be an $A$-module and $\mu \in M$. Then 
$\smgfrac{\mu}{\ba'} = \opn{det}(\bar{\bg})^{-1} \cd \smgfrac{\mu}{\ba}$
in $\opn{Ext}^n_{A} (B, M )$.
\end{enumerate}
\end{lem}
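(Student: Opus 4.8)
The plan is to reduce everything to an explicit computation on Koszul complexes, using the multiplicativity of the determinant that was recorded in formula (\ref{eqn:1920}). Part (1) is quick: since $\ba$ and $\ba' = \ba \cd \bg$ both generate $\a$, there is also a matrix $\bg' \in \opn{Mat}_n(A)$ with $\ba = \ba' \cd \bg'$, hence $\ba = \ba \cd (\bg \bg')$, so $\ba \cd (\bg \bg' - I) = 0$. Reducing mod $\a$ and invoking Lemma \ref{lem:1111}(1), which says $\bar\ba = (\bar a_1, \dots, \bar a_n)$ is a \emph{free} basis of $\a / \a^2$ over $B$, forces $\ol{\bg \bg'} = I$ in $\opn{Mat}_n(B)$; symmetrically $\ol{\bg' \bg} = I$, so $\bar\bg$ is invertible with inverse $\bar{\bg'}$.

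For part (2) the idea is to compare the two Koszul resolutions of $B$. Let $\bt = (t_1, \dots, t_n)$ and $\bt' = (t'_1, \dots, t'_n)$ be the degree $-1$ variables of $\opn{K}(A;\ba)$ and $\opn{K}(A;\ba')$ respectively, as in (\ref{eqn:1100}). There is a DG $A$-ring homomorphism $\kappa : \opn{K}(A;\ba') \to \opn{K}(A;\ba)$ determined by $t'_j \mapsto \sum_i t_i \cd g_{i,j}$, i.e.\ $\bt' \mapsto \bt \cd \bg$; this is compatible with the differentials precisely because $\d(t'_j) = a'_j = \sum_i a_i g_{i,j} = \d(\sum_i t_i g_{i,j})$, and it commutes with the augmentations to $B$. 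Hence $\kappa$ is a quasi-isomorphism of free resolutions of $B$, and so, after applying $\opn{Hom}_A(-,M)$, it realizes the identity of $\opn{RHom}_A(B,M)$ under the two Koszul presentations $\opn{kpres}_{\ba}$ and $\opn{kpres}_{\ba'}$ of Definition \ref{dfn:1923}. On top degree, $\kappa$ sends the basis $\opn{wdg}(\bt') = t'_1 \cdots t'_n$ of $\opn{K}^{-n}(A;\ba')$ to $\opn{wdg}(\bt \cd \bg) = \opn{det}(\bg) \cd \opn{wdg}(\bt)$ by (\ref{eqn:1920}). Therefore the induced map on $\opn{Hom}_A(\opn{K}^{-n}(A;-), M)$ carries $\smfrac{\mu}{\opn{wdg}(\bt)}$ to $\smfrac{\opn{det}(\bg) \cd \mu}{\opn{wdg}(\bt')}$, and passing to cohomology classes gives $\smgfrac{\opn{det}(\bg) \cd \mu}{\ba'} = \smgfrac{\mu}{\ba}$ in $\opn{Ext}^n_A(B,M)$, where I have used that $\opn{det}(\bg)$ acts on $\opn{Ext}^n_A(B,M)$ through its image $\opn{det}(\bar\bg)$ in $B$ (which annihilates nothing extra, as $\opn{Ext}^n_A(B,M)$ is a $B$-module). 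Since $\opn{det}(\bar\bg)$ is a unit by part (1), we may divide to obtain $\smgfrac{\mu}{\ba'} = \opn{det}(\bar\bg)^{-1} \cd \smgfrac{\mu}{\ba}$.

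The only genuinely delicate point is making sure the direction of $\kappa$ and the placement of $\opn{det}(\bg)$ versus $\opn{det}(\bg)^{-1}$ come out right, together with the sign conventions hidden in $\opn{wdg}$ and in the dualization $\opn{Hom}_A(-,M)$ of a complex in negative degrees; this is exactly the kind of bookkeeping that the Koszul sign rule makes error-prone. I would double-check it by testing $n = 1$: there $\opn{K}(A;a) = (A \xrightarrow{a} A)$ in degrees $-1, 0$, the map $\kappa$ is multiplication by $g$ on the degree $-1$ term, dualizing gives multiplication by $g$ on $\opn{Hom}_A(\opn{K}^{-1}, M) = M$, so $\smgfrac{\mu}{a'}$, represented by $\mu \in M$ in the $\bt'$-presentation, corresponds to $g \cd \mu$ in the $\bt$-presentation, i.e.\ $\smgfrac{g\mu}{a} = \smgfrac{\mu}{a'}$ — consistent with $\smgfrac{\mu}{a'} = \bar g^{-1} \smgfrac{\mu}{a}$. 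Everything else — that $\kappa$ is well-defined as a DG ring map, that it induces the identity on $\opn{RHom}$, that $\opn{Ext}^n_A(B,M)$ is naturally a $B$-module so scalars can be reduced mod $\a$ — is routine given Lemma \ref{lem:1111}, Definition \ref{dfn:1923}, and formula (\ref{eqn:1920}).
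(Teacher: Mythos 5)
Your proof is correct and matches the paper's argument: part (2) constructs exactly the same DG $A$-ring homomorphism $\kappa = u_{\bg}$ sending $t'_j \mapsto \sum_i t_i g_{i,j}$, uses its compatibility with the augmentation quasi-isomorphisms to identify the two Koszul presentations, and does the same top-degree determinant computation via formula (\ref{eqn:1920}). Part (1) differs only cosmetically — the paper notes directly that both $\bar\ba$ and $\bar\ba'$ are bases of the free $B$-module $\a/\a^2$ so the change-of-basis matrix $\bar\bg$ is automatically invertible, whereas you introduce an auxiliary $\bg'$ and argue $\ol{\bg\bg'} = I$; both are valid applications of Lemma \ref{lem:1111}(1).
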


Notice that by definition the ring $B$ is nonzero. 
A matrix $\bg$ as above always exists, yet it need not be invertible
in $\opn{Mat}_{n}(A)$. 

\begin{proof} 
(1) By Lemma \ref{lem:1111}(1), the $B$-module 
$\a / \a^2 = \opn{Gr}_1^{\a}(A)$ is a free module, with basis the sequence 
$\bar{\bsym{a}} = (\bar{a}_1, \ldots, \bar{a}_n)$.
The same is true for the sequence $\bar{\ba}'$ in $\a / \a^2$. We know 
that $\bar{\ba}' = \bar{\ba} \cd \bar{\bg}$. This proves that 
$\bar{\bg} \in \opn{GL}_n(B)$.  

\medskip \noindent
(2) Let $\bt = (t_1, \ldots, t_n)$ and $\bt' =  (t'_1, \ldots, t'_n)$ be 
sequences of degree $-1$ variables, and fix graded $A$-ring isomorphisms
$\opn{K}(A; \ba) \cong A[\bt]$ and $\opn{K}(A; \ba') \cong A[\bt']$ 
as in formula (\ref{eqn:1100}). 
Write the matrix $\bg$ as $\bg = [g_{i, j}]$ with $g_{i, j} \in A$. 
There is a unique graded $A$-ring homomorphism
$u_{\bg} : A[\bt'] \to A[\bt]$
such that $u_{\bg}(t'_j) = \sum_i t_i \cd g_{i, j}$.
Due to the Koszul sign rule, in degree $-n$ we get 
\begin{equation} \label{eqn:1110}
u_{\bg}(\opn{wdg}(\bt')) = 
u_{\bg}(t'_1 \cdots t'_n) =
u_{\bg}(t'_1) \cdots u_{\bg}(t'_n) =
\opn{wdg}(\bt \cd \bg) =
\opn{det}(\bg) \cd \opn{wdg}(\bt)
\end{equation}
in $A$.

Now the differentials of $\opn{K}(A; \ba)$ and $\opn{K}(A; \ba')$ satisfy 
$\d(t_i) = a_i$ and $\d(t'_i) = a'_i$, respectively, and they are $A$-linear. 
Hence 
\[ \d(u_{\bg}(t'_j)) = \d \bigl( \sum\nolimits_i t_i \cd g_{i, j} \bigr) = 
\sum\nolimits_i \d(t_i) \cd g_{i, j} = 
\sum\nolimits_i a_i \cd g_{i, j} = a'_j =
u_{\bg}(a'_j) = u_{\bg}(\d(t'_j)) . \]
This means that $u_{\bg} \circ \d = \d \circ u_{\bg}$, so it is a DG
$A$-ring homomorphism
\begin{equation} \label{eqn:1120}
u_{\bg} : \opn{K}(A; \ba') \to \opn{K}(A; \ba) .
\end{equation}
We get a commutative diagram 
\[ \begin{tikzcd} [column sep = 4ex, row sep = 4ex] 
\opn{K}(A; \ba') 
\arrow[r]
\arrow[rr, bend left = 15, start anchor = north, end anchor = north, 
"{u_{\bg}}"]
&
B
&
\opn{K}(A; \ba)
\ar[l]
\end{tikzcd} \]
of DG $A$-rings, where the arrows to $B$ are 
the augmentation quasi-iso\-morphisms. We conclude that in cohomology the 
diagram of $B$-module isomorphisms
\[ \begin{tikzcd} [column sep = 6ex, row sep = 4ex]
\opn{H}^n \bigl(  \opn{Hom}_{A} \bigl( \opn{K}(A; \bsym{a}'), M \bigr) \bigr) 
\arrow[r, "{\opn{kpres}^n_{\ba'}}", "{\simeq}"']
&
\opn{Ext}^n_{A} (B, M)
&
\opn{H}^n \bigl( \opn{Hom}_{A} \bigl( \opn{K}(A; \bsym{a}), M \bigr) \bigr)
\arrow[l, "{\opn{kpres}^n_{\ba}}"', "{\simeq}"]
\arrow[ll, bend right = 15, start anchor = north, end anchor = north, 
"{\opn{H}^n(\opn{Hom}_{A}(u_{\bg}, \opn{id}_M))}"', "{\simeq}"]
\end{tikzcd} \]
is commutative.

Consider the elements
$\smfrac{\mu}{\opn{wdg}(\bt)} \in 
\opn{Hom}_{A} \bigl( \opn{K}^{-n}(A; \bsym{a}), M \bigr)$ 
and 
$\opn{wdg}(\bt') \in \opn{K}^{-n}(A; \bsym{a}')$. 
Let us calculate:
\begin{equation} \label{eqn:1121}
\begin{aligned}
&
\opn{Hom}_A(u_{\bg}, \opn{id}_M) \bigl( \smfrac{\mu}{\opn{wdg}(\bt)} \bigr)
(\opn{wdg}(\bt')) 
= \smfrac{\mu}{\opn{wdg}(\bt)} \bigl( u_{\bg}(\opn{wdg}(\bt')) \bigr) 
\\ & \quad   
= \smfrac{\mu}{\opn{wdg}(\bt)} \big(\opn{det}(\bg) \cd \opn{wdg}(\bt) \bigr) 
= \opn{det}(\bg) \cd \mu 
= \opn{det}(\bg) \cd 
\smfrac{\mu}{\opn{wdg}(\bt')} (\opn{wdg}(\bt')) .  
\end{aligned}
\end{equation}
We are making use of formula (\ref{eqn:1110}). 
The conclusion is that 
\[ \opn{Hom}_A(u_{\bg}, \opn{id}_M) \bigl( \smfrac{\mu}{\opn{wdg}(\bt)} \bigr)
= \opn{det}(\bg) \cd \smfrac{\mu}{\opn{wdg}(\bt')} . \]
Thus in $\opn{Ext}^n_{A} (B, M)$ there is equality
$\smgfrac{\mu}{\ba} =  \opn{det}(\bar{\bg}) \cd \smgfrac{\mu}{\ba'}$.
Dividing both sides by $\opn{det}(\bar{\bg})$ we get the desired equality. 
\end{proof}

\begin{prop} \label{prop:1096}
Let $\a \sub A$ be a regular ideal, with $B := A / \a$. Then:
\begin{enumerate}
\item The $B$-module $\a / \a^2$ is projective. 

\item Assume that the projective $B$-module $\a / \a^2$ has constant rank $n$. 
Let $s \in A$ be an element such that the localized ring $B_s$ is nonzero, and 
such that the ideal $\a_s \sub A_s$ is generated by some Koszul regular
sequence $\ba$. Then the length of $\ba$ is $n$.
\end{enumerate}
\end{prop}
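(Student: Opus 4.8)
The plan is to reduce both claims to the local case via Definition \ref{dfn:1080}, where a regular ideal becomes, Zariski-locally on $\opn{Spec}(B)$, generated by a regular sequence, and then to invoke the structure results already established. For part (1), I would argue that projectivity of the $B$-module $\a/\a^2$ is a local property on $\opn{Spec}(B)$: it suffices to show that for each $\p \in \opn{Spec}(B)$ there is an $s \in A - \p$ such that $(\a/\a^2)_s$ is a free $B_s$-module. By Definition \ref{dfn:1080} (together with Proposition \ref{prop:1130} and Corollary \ref{cor:1130}, so that "regular" and "quasi-regular" are interchangeable), there is some $s \in A - \p$ with $\a_s \sub A_s$ generated by a regular, hence (Corollary \ref{cor:1093}) Koszul regular, sequence $\ba$. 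Then $(\a/\a^2)_s \cong \a_s/\a_s^2$, and Lemma \ref{lem:1111}(1) says this is a free $B_s$-module of rank equal to the length of $\ba$, with basis $\bar{\ba}$. Since $B$ is noetherian, $\a/\a^2$ is finitely generated, and a finitely generated module that is locally free on $\opn{Spec}(B)$ is projective; this finishes (1).

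For part (2), I would combine the hypothesis that $\a/\a^2$ has constant rank $n$ with the local computation above. Given the element $s \in A$ with $B_s \neq 0$ and $\a_s \sub A_s$ generated by the Koszul regular sequence $\ba$ of some length $m$, Lemma \ref{lem:1111}(1) identifies $\a_s/\a_s^2 \cong (\a/\a^2) \ot_B B_s$ as a free $B_s$-module of rank $m$ with basis $\bar{\ba}$. On the other hand, since $\a/\a^2$ is a projective $B$-module of constant rank $n$, its base change $(\a/\a^2)\ot_B B_s$ is a projective $B_s$-module of constant rank $n$; as $B_s$ is nonzero, comparing ranks forces $m = n$. Hence the length of $\ba$ equals $n$, as claimed.

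The only subtlety worth flagging is the passage from "finitely generated and locally free over $\opn{Spec}(B)$" to "projective" in part (1): one needs that $B$ is noetherian (so $\a/\a^2$ is finitely presented) and the standard fact that finitely presented locally free modules are projective; under Convention \ref{conv:1070} all rings here are noetherian, so this is available. I do not expect any genuine obstacle — the whole argument is an assembly of Lemma \ref{lem:1111}, Corollaries \ref{cor:1093} and \ref{cor:1130}, and the definition of a regular ideal, and I would write it up in just a few lines per part, leaving the "locally free $\Rightarrow$ projective" step as a citation to a standard reference.
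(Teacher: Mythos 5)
Your proof is correct and follows essentially the same route as the paper's: localize on $\opn{Spec}(B)$, invoke Lemma \ref{lem:1111}(1) to get local freeness, and compare ranks. You spell out two steps the paper leaves implicit (the passage from the definition's regular sequence to a Koszul regular sequence via Corollary \ref{cor:1093}, and the standard fact that a finitely generated locally free module over a noetherian ring is projective), but there is no substantive difference.
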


\begin{proof} 
(1) The condition is local on $\opn{Spec}(B)$. So it is enough to prove that
for every $s \in A$ such that $B_s \neq 0$, and such that the ideal 
$\a_s \sub A_s$ is generated by a Koszul regular sequence, the $B_s$-module
$\a_s / \a_s^2$ is free. This is done in Lemma \ref{lem:1111}(1).

\medskip \noindent
(2) By Lemma \ref{lem:1111}(1) the rank of the free $B$-module $\a_s / \a_s^2$
equals the length of $\ba$. 
\end{proof}

\begin{dfn} \label{dfn:1095}
Let $f : A \to B$ be a regular surjection, with kernel $\a$. 
We say that $f$ has {\em constant codimension $n$} if the projective 
$B$-module $\a / \a^2$ has constant rank $n$. 
\end{dfn}

In case $\a / \a^2$ is a projective $B$-module of constant rank $n$, the top 
exterior power $\bwedge^n_B (\a / \a^2)$ is a projective $B$-module of constant 
rank $1$.

\begin{dfn} \label{dfn:1085} 
Let $f : A \to B$ be a regular surjection of constant codimension $n$, with 
kernel $\a$. The {\em relative dualizing module of $B / A$} is the $B$-module
\[ \De_{B / A}^{\mrm{rs}} := 
\opn{Hom}_B \bigl( \bwedge^n_B (\a / \a^2) , B \bigr) . \]
\end{dfn}

The superscript ``rs'' refers to ``regular surjection''. 
The $B$-module $\De_{B / A}^{\mrm{rs}}$ is also projective of constant rank $1$.

Given a multiplicatively closed subset $S \sub A$, there are canonical 
$A_S$-module isomorphisms
$\a_S \cong \opn{Ker}(A_S \to B_S)$, 
$A_S \ot_A (\a / \a^2) \cong (\a_S / \a_S^2)$ 
and 
\begin{equation} \label{eqn:1096}
A_S \ot_A \De^{\mrm{rs}}_{B / A} \cong \De^{\mrm{rs}}_{B_S / A_S} .  
\end{equation}

\begin{prop} \label{prop:1095}
Let $\a \sub A$ be a regular ideal, and let $B := A / \a$.
Let $\ba = (a_1, \ldots, a_n)$ be a sequence in $\a$, with image
the sequence $\bar{\ba} = (\bar{a}_1, \ldots, \bar{a}_n)$
in $\a / \a^2$. Assume that for some $\p \in \opn{Spec}(B)$ the sequence 
$\opn{q}_{A, \p}(\bar{\ba})$ is a basis of the $B_{\p}$-module 
$(\a / \a^2)_{\p}$. Then there exists an element $s \in A - \p$ 
such that the sequence $\opn{q}_{A, s}(\ba)$ in the ring $A_s$ is 
regular, and it generates the ideal $\a_{s}$.
\end{prop}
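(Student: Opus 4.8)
The plan is to reduce everything to a localization where Nakayama-type arguments become available, and then use the already-established characterizations of regular/Koszul-regular sequences from Theorem \ref{thm:1090} and Proposition \ref{prop:1091}. First I would record that since $\a$ is a regular ideal, by Proposition \ref{prop:1096}(1) the $B$-module $\a / \a^2$ is projective, and by hypothesis it is free of rank $n$ at the prime $\p$; so after inverting one element of $A - \p$ we may assume $\a / \a^2$ is free of rank $n$ over $B$. The hypothesis that $\opn{q}_{A, \p}(\bar{\ba})$ is a basis of $(\a/\a^2)_\p$ then upgrades: a finite family of elements of a finitely generated module that forms a basis after localizing at $\p$ forms a basis after inverting some further $s \in A - \p$ (this is the standard "spreading out" argument — the transition matrix between $\bar{\ba}$ and a global basis is invertible at $\p$, hence its determinant becomes invertible after a localization). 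So, replacing $A$ by $A_s$ and $B$ by $B_s$, I may assume that $\bar{\ba}$ is an honest $B$-basis of $\a / \a^2$.

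Next I would show that $\ba$ generates $\a$. Since $\a$ is a regular, hence finitely generated, ideal and $B = A/\a$, the elements $\bar a_i$ generate $\a/\a^2$ over $B = A/\a$; by the Nakayama lemma in the form "if $\bar a_i$ generate $\a/\a^2$ then $a_i$ generate $\a$ after localizing at each prime of $\opn{Spec}(B)$", I get that for each $\p' \in \opn{Spec}(B)$ there is $s' \in A - \p'$ with $\a_{s'}$ generated by $\opn{q}_{A,s'}(\ba)$. Using that $\a$ is finitely generated and $\opn{Spec}(B)$ is quasi-compact — or more simply, since we only need the conclusion near the single prime $\p$, I apply Nakayama at $\p$ directly: $\a_\p$ is generated by $\opn{q}_{A,\p}(\ba)$ because these reduce to generators of $(\a/\a^2)_\p$ and $\a_\p$ is finitely generated over the local ring $A_\p$. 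Then I spread out once more: finitely many elements generating a finitely generated ideal after localization at $\p$ generate it after inverting some $s \in A - \p$. So after a further shrink I may assume $\a$ itself is generated by $\ba$.

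Now it remains to see that $\ba$ is a regular sequence in $A$ (after possibly one last localization). Here I would invoke the machinery built in this section: $\a$ is a regular ideal, and at the prime $\p$ the localized ideal $\a_\p \subset A_\p$ is generated by the $n$-element sequence $\opn{q}_{A,\p}(\ba)$, which is then a minimal generating sequence of $\a_\p$ (its images are a $B_\p$-basis of $(\a/\a^2)_\p$). Since $\a$ is a regular ideal, $\a_\p$ is generated by \emph{some} Koszul regular sequence, of length equal to the rank $n$ of the free module $(\a/\a^2)_\p$ by Proposition \ref{prop:1096}(2); by Theorem \ref{thm:1090} over the local ring $A_\p$ this Koszul regular sequence is a regular sequence, and two minimal generating sequences of the same ideal in a noetherian local ring differ by an invertible matrix (Lemma \ref{lem:1110}(1)), so $\opn{q}_{A,\p}(\ba)$ is also a regular sequence in $A_\p$. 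Finally, by Proposition \ref{prop:1091}(2) there is $s \in A - \p$ such that $\opn{q}_{A,s}(\ba)$ is a regular sequence in $A_s$; combining this with the earlier localizations (taking the product of the finitely many elements of $A - \p$ we inverted) gives a single $s \in A - \p$ with $\opn{q}_{A,s}(\ba)$ regular in $A_s$ and generating $\a_s$, as required.

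The main obstacle I anticipate is bookkeeping the several successive localizations and making sure the final $s$ lies in $A - \p$ and simultaneously achieves all three properties (free basis of the conormal module, generation of the ideal, regularity of the sequence); each individual "spreading out" step is routine, but one must be careful that inverting later elements does not invalidate earlier conclusions — this is fine because the relevant conditions (a map being an isomorphism, a sequence generating an ideal, a sequence being regular) are all preserved under further localization by Proposition \ref{prop:1090}. A secondary subtlety is justifying that a sequence which is a basis of $(\a/\a^2)_\p$ and generates $\a_\p$ is automatically a regular sequence in $A_\p$; this is exactly where the equivalence of regular, quasi-regular and Koszul-regular over a local ring (Theorem \ref{thm:1090}) together with the length count (Proposition \ref{prop:1096}(2)) does the work.
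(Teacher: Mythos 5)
Your proof is correct and follows essentially the same route as the paper: both work at the prime $\p$, produce a regular sequence $\ba'$ generating $\a_\p$ of length $n$ via Proposition \ref{prop:1096}(2), compare $\ba'$ with $\opn{q}_{A,\p}(\ba)$ through a transition matrix whose invertibility over $A_\p$ comes from the fact that $\bar\ba$ and $\bar\ba'$ are both $B_\p$-bases of $(\a/\a^2)_\p$ and $A_\p \to B_\p$ is a surjection of local rings, and then spread out via Proposition \ref{prop:1091}(2). Two small bookkeeping remarks: Lemma \ref{lem:1110}(1) as stated requires \emph{both} sequences to be Koszul regular and only yields that $\bar\bg$ is invertible over $B$, so it is not quite the right citation for your ``two minimal generating sequences differ by an invertible matrix over $A_\p$'' step (the paper argues this directly, exactly as you describe, without appealing to that lemma); and your final inference ``so $\opn{q}_{A,\p}(\ba)$ is also a regular sequence'' elides the justification — the paper passes through the DG ring isomorphism $u_{\bg}$ of Koszul complexes and then Theorem \ref{thm:1090} — but the step is sound.
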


\begin{proof}
Since $\a \sub A$ is a regular ideal, the ideal $\a_{\p} \sub A_{\p}$ is 
generated by some regular sequence $\ba'$. 
According to Proposition \ref{prop:1096}(2) the length of $\ba'$ is $n$. 
Consider the length $n$ sequence $\opn{q}_{A, \p}(\ba)$ in $\a_{\p}$. 
There is a matrix $\bsym{g} \in \opn{Mat}_{n}(A_{\p})$ such that 
$\opn{q}_{A, \p}(\ba) = \ba' \cd \bsym{g}$.  

As explained above, the sequence $\bar{\ba}'$, the image of $\ba'$ in the ring 
$B_{\p} = A_{\p} / \a_{\p}$, is a basis of the $B_{\p}$-module 
$\a_{\p} / \a^2_{\p}$. We are given that $\bar{\ba}$ is a basis
of $\a_{\p} / \a^2_{\p}$. It follows that the matrix 
$\bar{\bsym{g}} \in \opn{Mat}_{n}(B_{\p})$, the image of $\bsym{g}$, 
is invertible. Because $A_{\p} \to B_{\p}$ is a surjection of local rings, we 
see that $\bsym{g} \in \opn{GL}_n(A_{\p})$.
This implies that the sequence $\opn{q}_{A, \p}(\ba)$ generates the 
ideal $\a_{\p}$. 

By Theorem \ref{thm:1090} the sequence $\ba'$ in $A_{\p}$ is Koszul regular. 
Let 
$u_{\bg} : \opn{K}(A_{\p}, \opn{q}_{A, \p}(\ba)) \to 
\opn{K}(A_{\p}, \ba')$
be the DG ring homomorphism induced by $\bg$; like in (\ref{eqn:1120}), 
but in reverse direction.
Since $\bsym{g} \in \opn{GL}_n(A_{\p})$, it follows that 
$u_{\bg}$ is a DG ring isomorphism. We conclude that 
$\opn{q}_{A, \p}(\ba)$ is a Koszul regular sequence in $A_{\p}$ too. 
Again applying Theorem \ref{thm:1090}, this says that 
$\opn{q}_{A, \p}(\ba)$ is a 
regular sequence in $A_{\p}$. 

By Proposition \ref{prop:1091} there is an element $s \in A - \p$ 
such that the sequence $\opn{q}_{A, s}(\ba)$ in the ring $A_s$ is 
regular. After possibly changing $s$ we can also make sure that 
sequence $\opn{q}_{A, s}(\ba)$ generates the ideal $\a_{s}$.
\end{proof}

\begin{lem} \label{lem:1105}
Let $\a \sub A$ be an ideal, let $B := A / \a$, and let $\bsym{a}$ be a Koszul
regular sequence in $A$ that generates $\a$. 
\begin{enumerate}
\item The $B$-module $\De^{\mrm{rs}}_{B / A}$ is free, with basis the element 
$\smfrac{1}{\opn{wdg}(\bar{\ba})}$. 

\item Suppose $\ba'$ is another Koszul regular sequence in $A$ that generates 
$\a$, and $\bg \in \opn{Mat}_{n}(A)$ is a matrix such that 
$\ba' = \ba \cd \bg$. Then 
$\smfrac{1}{\opn{wdg}(\bar{\ba}')} = \opn{det}(\bar{\bg})^{-1} \cd 
\smfrac{1}{\opn{wdg}(\bar{\ba})}$
in $\De^{\mrm{rs}}_{B / A}$.
\end{enumerate}
\end{lem}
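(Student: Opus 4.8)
The plan is to prove Lemma \ref{lem:1105} by reducing it to the definition of $\De^{\mrm{rs}}_{B/A}$ and the properties of generalized fractions already established in Lemma \ref{lem:1111} and Lemma \ref{lem:1110}. First I would recall that $\De^{\mrm{rs}}_{B/A} := \opn{Hom}_B\bigl(\bwedge^n_B(\a/\a^2), B\bigr)$, and that by Lemma \ref{lem:1111}(2) the $B$-module $\bwedge^n_B(\a/\a^2)$ is free of rank $1$ with basis $\opn{wdg}(\bar{\ba}) = \bar a_1 \wedge \cdots \wedge \bar a_n$. Therefore the dual $\opn{Hom}_B\bigl(\bwedge^n_B(\a/\a^2), B\bigr)$ is free of rank $1$ with basis the dual functional $\smfrac{1}{\opn{wdg}(\bar{\ba})}$, using the notation of Definition \ref{dfn:1921} (here $L := \bwedge^n_B(\a/\a^2)$, basis element $\la := \opn{wdg}(\bar{\ba})$, and $M := B$, element $\mu := 1$). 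This is essentially immediate; the only thing to check is that the notation $\smfrac{1}{\opn{wdg}(\bar{\ba})}$ from Definition \ref{dfn:1921} indeed denotes this dual basis element, which it does by construction. That settles part (1).

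For part (2), the key input is the transformation law for the wedge under a change of generating sequence. I would argue as follows: by Lemma \ref{lem:1110}(1) the reduced matrix $\bar{\bg} \in \opn{Mat}_n(B)$ is invertible. The images $\bar{\ba}$ and $\bar{\ba}'$ of $\ba$ and $\ba'$ in $\a/\a^2$ are related by $\bar{\ba}' = \bar{\ba} \cd \bar{\bg}$, since reduction modulo $\a$ is a ring homomorphism and the matrix identity $\ba' = \ba \cd \bg$ passes to it (and note $\a \cd \a \sub \a^2$, so the products are unaffected). Now I would invoke the determinant formula for the top exterior power: for a free $B$-module with bases related by an invertible matrix $\bar{\bg}$, the top wedges satisfy $\opn{wdg}(\bar{\ba}') = \opn{det}(\bar{\bg}) \cd \opn{wdg}(\bar{\ba})$ in $\bwedge^n_B(\a/\a^2)$. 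Dualizing, the dual basis vectors transform by the inverse scalar: if $\la' = c \cd \la$ with $c \in B^{\times}$, then $\smfrac{1}{\la'} = c^{-1} \cd \smfrac{1}{\la}$ as elements of $\opn{Hom}_B(L,B)$. Applying this with $c = \opn{det}(\bar{\bg})$ gives $\smfrac{1}{\opn{wdg}(\bar{\ba}')} = \opn{det}(\bar{\bg})^{-1} \cd \smfrac{1}{\opn{wdg}(\bar{\ba})}$, which is exactly the claimed identity.

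I anticipate the main (minor) obstacle to be purely bookkeeping: carefully justifying the determinant transformation law $\opn{wdg}(\bar{\ba}') = \opn{det}(\bar{\bg}) \cd \opn{wdg}(\bar{\ba})$ in the exterior power. This is the linear-algebra analogue of formula (\ref{eqn:1920}) in the excerpt (which was stated for a strongly commutative graded ring $C$), but here it is a statement about ordinary exterior algebra over $B$; I would either cite the elementary fact or derive it by expanding $\bar a'_j = \sum_i \bar a_i \cd \bar g_{i,j}$ and using multilinearity and antisymmetry of $\wedge$ — the same computation that appears in (\ref{eqn:1110}). One should also take care that the sign conventions match: in (\ref{eqn:1110}) the Koszul sign rule is used because the $t_i$ have odd degree, whereas here the $\bar a_i$ sit in an ordinary (ungraded) module and one uses the genuinely alternating $\wedge$; both conventions produce the same determinant. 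Everything else — the freeness of rank-$1$ duals, the invertibility of $\bar{\bg}$ via Lemma \ref{lem:1110}(1), the fact that dualizing inverts the scalar — is routine, so I would keep those verifications brief.
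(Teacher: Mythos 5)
Your proof is correct and follows essentially the same route as the paper: part (1) reduces to Lemma \ref{lem:1111}(2) and the freeness of the rank-one dual, and part (2) reduces $\bar{\ba}' = \bar{\ba}\cd\bar{\bg}$ to $\a/\a^2$, invokes multilinearity and antisymmetry to get $\opn{wdg}(\bar{\ba}') = \opn{det}(\bar{\bg})\cd\opn{wdg}(\bar{\ba})$, and dualizes. You spell out a few steps the paper leaves implicit (that the dual functional transforms by the inverse scalar, that $\a\cd\a\sub\a^2$ makes the reduction of the matrix identity legitimate, and the sign-convention remark), but there is no substantive difference.
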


In the lemma, $\opn{wdg}(\bar{\ba}) \in \bwedge^n_B (\a / \a^2)$ 
is the element from Lemma \ref{lem:1111},  
$\smfrac{1}{\opn{wdg}(\bar{\ba})} \in \De^{\mrm{rs}}_{B / A}$
is the element from Definition \ref{dfn:1921}, 
with $L := \bwedge^n_B (\a / \a^2)$, $M := A$ and 
$\la := \opn{wdg}(\bar{\ba})$. The matrix 
$\bar{\bg} \in \opn{GL}_n(B)$ is the one from Lemma \ref{lem:1110},  

\begin{proof} 
(1) By Lemma \ref{lem:1111}(2) the $B$-module $\a / \a^2$ is free of rank $1$, 
with basis $\opn{wdg}(\bar{\ba})$. 

\medskip \noindent 
(2) Passing from $\a$ to $\a / \a^2$ we have 
$\bar{\ba}'  = \bar{\ba} \cd \bar{\bg}$.
Because of the multilinearity and antisymmetry of the expression
$\opn{wdg}(\bar{\ba}')$, it follows that
$\opn{wdg}(\bar{\ba}') = \opn{wdg}(\bar{\ba}) \cd \opn{det}(\bar{\bg})$.
A calculation that's almost the same as (\ref{eqn:1121})
shows that 
$\smfrac{1}{\opn{wdg}(\bar{\ba})} = 
\opn{det}(\bar{\bg}) \cd \smfrac{1}{\opn{wdg}(\bar{\ba}')}$
in $\De_{B / A}^{\mrm{rs}}$. 
Finally we divide both sides by $\opn{det}(\bar{\bg})$. 
\end{proof}

\begin{lem} \label{lem:1100}
Let $\a \sub A$ be an ideal, and let $B := A / \a$. 
Assume that $\a$ is generated by some Koszul regular sequence, and 
the $B$-module $\a / \a^2$ is free of rank $n$. 
Let $M$ be an $A$-module. Then:
\begin{enumerate}
\item There is a unique $B$-module isomorphism
\[ \opn{fund}_{B / A, M} : 
\De^{\mrm{rs}}_{B / A} \otimes_A M \iso \opn{Ext}^n_{A}(B, M)  \]
with this property: 
\begin{itemize}
\item[($*$)] For every Koszul regular sequence $\bsym{a}$ that generates the 
ideal $\a$, and for every element $\mu \in M$, we have 
\[ \opn{fund}_{B / A, M} 
\bigl( \smfrac{1}{\opn{wdg}(\bar{\bsym{a}})} \ot \mu \bigr) = 
\opn{kpres}^n_{\ba} \bigl( \smgfrac{\mu}{\bsym{a}} \bigr) . \]
\end{itemize}

\item For every $p > n$ we have $\opn{Ext}^p_{A}(B, M) = 0$. 

\item If $M$ is a flat $A$-module, then 
$\opn{Ext}^p_{A}(B, M) = 0$ also for all $0 \leq p < n$.  
\end{enumerate}
\end{lem}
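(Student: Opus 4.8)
The strategy is to reduce everything to a concrete computation with the Koszul complex, then globalize. Start by fixing a Koszul regular sequence $\ba = (a_1, \ldots, a_n)$ that generates $\a$; by hypothesis such a sequence exists, and by Corollary \ref{cor:1090} it is also quasi-regular, so Lemma \ref{lem:1111} applies and tells us $\a / \a^2$ is free on $\bar\ba$ with $\bwedge^n_B(\a/\a^2)$ free on $\opn{wdg}(\bar\ba)$. The Koszul presentation isomorphism $\opn{kpres}_{\ba} : \opn{Hom}_A(\opn{K}(A;\ba), M) \iso \opn{RHom}_A(B, M)$ from Definition \ref{dfn:1923} then identifies $\opn{Ext}^i_A(B, M)$ with the cohomology of the complex $\opn{Hom}_A(\opn{K}(A;\ba), M)$, which in degree $p$ is $M^{\binom{n}{p}}$ with the Koszul differential.

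\textbf{Part (1): construction and well-definedness.} For a fixed $\ba$, I would define a candidate homomorphism $\De^{\mrm{rs}}_{B/A} \ot_A M \to \opn{Ext}^n_A(B,M)$ by sending $\smfrac{1}{\opn{wdg}(\bar\ba)} \ot \mu$ to $\opn{kpres}^n_{\ba}(\smgfrac{\mu}{\ba})$ and extending $B$-linearly; since $\De^{\mrm{rs}}_{B/A}$ is free of rank $1$ on $\smfrac{1}{\opn{wdg}(\bar\ba)}$ (Lemma \ref{lem:1105}(1)) this is a well-defined $B$-module homomorphism, and it is bijective because top Koszul cohomology $\opn{H}^n(\opn{Hom}_A(\opn{K}(A;\ba),M))$ is exactly $M / (\ba \cd M) \cong B \ot_A M$, matched up by the generalized fraction notation. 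The real content is independence of $\ba$: given another Koszul regular sequence $\ba'$ generating $\a$ with $\ba' = \ba \cd \bg$, I invoke Lemma \ref{lem:1105}(2), which gives $\smfrac{1}{\opn{wdg}(\bar\ba')} = \opn{det}(\bar\bg)^{-1} \cd \smfrac{1}{\opn{wdg}(\bar\ba)}$, and Lemma \ref{lem:1110}(2), which gives $\smgfrac{\mu}{\ba'} = \opn{det}(\bar\bg)^{-1} \cd \smgfrac{\mu}{\ba}$; these two scaling factors are identical, so the candidate map is unchanged. This verifies property $(*)$ for \emph{every} Koszul regular generating sequence, and uniqueness is automatic since any one such sequence already pins down the $B$-linear map. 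I should double-check that every Koszul regular generating sequence of $\a$ has length exactly $n$ — this follows from Proposition \ref{prop:1096}(2) applied after localizing, or more directly from Lemma \ref{lem:1111}(1) since $\a/\a^2$ is assumed free of rank $n$.

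\textbf{Parts (2) and (3): vanishing.} For (2), $\opn{Ext}^p_A(B,M) \cong \opn{H}^p(\opn{Hom}_A(\opn{K}(A;\ba),M))$ and $\opn{K}(A;\ba)$ is concentrated in degrees $[-n, 0]$, so $\opn{Hom}_A(\opn{K}(A;\ba),M)$ is concentrated in $[0, n]$ and its cohomology vanishes for $p > n$; this needs no regularity beyond having a length-$n$ free resolution. For (3), when $M$ is flat over $A$, the functor $M \ot_A (-)$ is exact, so $\opn{Hom}_A(\opn{K}(A;\ba), M) \cong \opn{Hom}_A(\opn{K}(A;\ba), A) \ot_A M$, and $\opn{Hom}_A(\opn{K}(A;\ba),A)$ is the "co-Koszul" complex, which (because $\ba$ is Koszul regular, i.e. $\opn{K}(A;\ba)$ resolves $B$ and self-duality of the Koszul complex holds: $\opn{Hom}_A(\opn{K}(A;\ba),A) \cong \opn{K}(A;\ba)[-n]$ up to sign) has cohomology concentrated in degree $n$. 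Tensoring with flat $M$ preserves this concentration, so $\opn{Ext}^p_A(B,M) = 0$ for $p \neq n$, in particular for $0 \le p < n$.

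\textbf{Main obstacle.} The delicate point is not any single step but the bookkeeping of signs and the compatibility of the two determinant-scaling lemmas (\ref{lem:1105}(2) and \ref{lem:1110}(2)); one must be sure the Koszul sign rule is applied consistently so that both generalized fractions scale by \emph{the same} $\opn{det}(\bar\bg)^{-1}$ rather than by $\opn{det}(\bar\bg)^{\pm 1}$ with opposite signs — but since both lemmas are already proved in the excerpt, this reduces to citing them correctly. A secondary subtlety is the self-duality isomorphism $\opn{Hom}_A(\opn{K}(A;\ba),A) \cong \opn{K}(A;\ba)[-n]$ needed for (3); if one prefers to avoid invoking it, (3) can instead be obtained by the local criterion — localize at each $\p \in \opn{Spec}(B)$, use that $\ba$ becomes a regular sequence in $A_\p$ (Theorem \ref{thm:1090}), and for a genuine regular sequence $\opn{Ext}^p_{A_\p}(B_\p, M_\p)$ with $M_\p$ flat is classically concentrated in degree $n$ — then conclude by the local nature of vanishing.
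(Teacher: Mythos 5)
Your argument is correct and follows essentially the same route as the paper: choose one Koszul regular generating sequence (length $n$ by Proposition \ref{prop:1096}(2)), build $\opn{fund}_{B/A,M}$ from the top Koszul cohomology identification $\opn{H}^n(\opn{Hom}_A(\opn{K}(A;\ba),M)) \cong B \ot_A M$, and verify basis-independence by the matching $\opn{det}(\bar\bg)^{-1}$ factors in Lemmas \ref{lem:1105}(2) and \ref{lem:1110}(2); the vanishing in (2) and (3) is argued exactly as in the paper, via the degree concentration of $\opn{K}(A;\ba)$ and Koszul self-duality $\opn{Hom}_A(\opn{K}(A;\ba),A)\cong\opn{K}(A;\ba)[-n]$ with flat $M$. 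The paper phrases the construction slightly more carefully as a composite $\psi_\ba \circ \th_\ba$ of two explicit isomorphisms, which makes the well-definedness of the map out of $\De^{\mrm{rs}}_{B/A}\ot_A M$ transparent (the coboundaries in degree $n$ are exactly $\a\cd M$); you should make that point explicit rather than appealing only to $\De^{\mrm{rs}}_{B/A}$ being free of rank one, since the tensor is over $A$ and not over $B$.
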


\begin{proof}  
(1) Choose some Koszul regular sequence $\ba$ that generates $\a$. It has 
length $n$ by Proposition \ref{prop:1096}(2). Using the isomorphism 
(\ref{eqn:1100}), the element 
$\opn{wdg}(\bt) = t_1 \cdots t_n$ is a basis of the free 
$A$-module $\opn{K}^{-n}(A, \ba)$. 
We get an isomorphism of $A$-modules 
\[ M \iso \opn{Hom}_{A} \bigl( \opn{K}^{-n}(A; \bsym{a}), M \bigr) ,
\quad \mu \mapsto \smfrac{\mu}{\opn{wdg}(\bt)}   . \]
This isomorphism sends the submodule $\a \cd M \sub M$ bijectively to the 
submodule
of coboundaries in the $A$-module 
\[ \opn{Hom}_{A} \bigl( \opn{K}^{-n}(A; \bsym{a}), M \bigr) = 
\opn{Hom}_{A} \bigl( \opn{K}(A; \bsym{a}), M \bigr)^{n} . \]
Therefore in cohomology there is an induced
isomorphism of $B$-modules 
\begin{equation} \label{eqn:1124}
B \ot_A M \iso
\opn{H}^n \bigl( \opn{Hom}_{A} \bigl( \opn{K}(A; \bsym{a}), M \bigr) \bigr) . 
\end{equation}
Using the canonical isomorphism $\opn{kpres}^n_{\ba}$ of Definition
\ref{dfn:1923} implicitly, see formula (\ref{eqn:1921}), 
formula (\ref{eqn:1124}) yields an isomorphism
\[ \psi_{\ba} : B \ot_A M \iso \opn{Ext}^n_{A}(B, M)  , \quad 
1 \otimes \mu \mapsto \smgfrac{\mu}{\bsym{a}} . \]

Next, the element 
$\smfrac{1}{\opn{wdg}(\bar{\bsym{a}})}$ is a basis of the free $B$-module
$\De^{\mrm{rs}}_{B / A}$; so there is an isomorphism of $B$-modules 
\[ \th_{\ba} : \De_{B / A}^{\mrm{rs}} \ot_A M \iso B \ot_A M  ,
\quad \smfrac{1}{\opn{wdg}(\bar{\bsym{a}})} \ot \mu 
\mapsto 1 \otimes \mu . \]
Composing we obtain an isomorphism of $B$-modules
\[ \opn{fund}_{B / A, M} := \psi_{\ba} \circ \th_{\ba} : 
\De^{\mrm{rs}}_{B / A} \ot_A M \iso \opn{Ext}^n_{A}(B, M) , \quad 
\smfrac{1}{\opn{wdg}(\bar{\bsym{a}})} \ot \mu \mapsto
\smgfrac{\mu}{\bsym{a}}  . \]
This isomorphism satisfies condition ($*$) for the sequence 
$\ba$, and it is clearly unique with this property. 

It remains to prove that condition ($*$) holds for every other Koszul regular 
sequence $\ba'$  that generates the ideal $\a$. Here is the calculation, using 
the abbreviation $\phi := \opn{fund}_{B / A, M}$. 
\[ \begin{aligned}
&
\phi \bigl( \smfrac{1}{\opn{wdg}(\bar{\bsym{a}}')} \ot \mu \bigr)
=^{(1)} \phi \bigl( \opn{det}(\bar{\bg})^{-1} \cd 
\smfrac{1}{\opn{wdg}(\bar{\bsym{a}})} \ot \mu \bigr) 
\\
& \quad 
= \opn{det}(\bar{\bg})^{-1} \cd 
\phi \bigl( \smfrac{1}{\opn{wdg}(\bar{\bsym{a}})} \ot \mu \bigr)
= \opn{det}(\bar{\bg})^{-1} \cd \, \smgfrac{\mu}{\bsym{a}} =^{(2)} 
\smgfrac{\mu}{\bsym{a}'}  .
\end{aligned} \]
Here equality $=^{(1)}$ is by Lemma \ref{lem:1105}(2), and 
equality $=^{(2)}$ is by Lemma \ref{lem:1110}(2). 

\medskip \noindent
(2) This is because the complex $\opn{K}(A; \ba)$ is concentrated in degrees 
$-n, \ldots, 0$. 

\medskip \noindent
(3) There is an isomorphism of complexes 
\[ \opn{Hom}_{A} \bigl( \opn{K}(A; \bsym{a}), M \bigr) \cong 
\opn{Hom}_{A} \bigl( \opn{K}(A; \bsym{a}), A \bigr) \ot_A M . \]
Since $M$ is flat, for every $p$ there is an isomorphism of $B$-modules 
\[ \opn{H}^p \bigl( \opn{Hom}_{A} \bigl( \opn{K}(A; \bsym{a}), A \bigr) \ot_A M
\bigr) \cong
\opn{H}^p \bigl( \opn{Hom}_{A} \bigl( \opn{K}(A; \bsym{a}), A \bigr) \bigr)
\ot_A M . \]
Now there is an isomorphism of complexes (not canonical)
$\opn{Hom}_{A} \bigl( \opn{K}(A; \bsym{a}), A \bigl) \cong \lb 
\opn{K}(A; \bsym{a})[-n]$.
Therefore 
$\opn{H}^p \bigl( \opn{Hom}_{A} \bigl( \opn{K}(A; \bsym{a}), A \bigr) = 0$
for all $p \neq n$. 
\end{proof}

Given a multiplicatively closed subset $S \sub A$, there is a canonical 
$A_S$-module isomorphism 
\begin{equation} \label{eqn:1122}
A_S \ot_A \opn{Ext}^n_{A}(B, M) \cong \opn{Ext}^n_{A_S}(B_S, M_S) .  
\end{equation}

Next is a slight modification of \cite[Proposition III.7.2]{RD}.
Recall the notion of regular surjection of rings of constant codimension, from 
Definitions \ref{dfn:1081} and \ref{dfn:1095}. The relative dualizing module 
$\De_{B / A}^{\mrm{rs}}$ was introduced in Definition \ref{dfn:1085}. 
We remind that all rings are noetherian by default.

\begin{thm}[Fundamental Local Isomorphism] \label{thm:1080}
Let $A \to B$ be a regular surjection of noetherian rings, of constant 
codimension $n$, with kernel $\a$. Let $M$ be an $A$-module. 
\begin{enumerate}
\item There is a unique $B$-module isomorphism
\[ \opn{fund}_{B / A, M} : \De^{\mrm{rs}}_{B / A} \otimes_A M 
\iso \opn{Ext}^n_{A}(B, M) \]
with this property: 
\begin{itemize}
\item[($\dag$)] For every element $s \in A$ such that
$B_s \neq 0$, and such that the ideal $\a_s \sub A_s$ is generated by a Koszul 
regular sequence, the diagram
\[ \begin{tikzcd} [column sep = 14ex, row sep = 5ex] 
A_s \ot_A  (\De^{\mrm{rs}}_{B / A} \otimes_A M)
\arrow[d, "{\simeq}"]
\arrow[r, "{\opn{id} \ot_{A} \, \opn{fund}_{B / A, M}}", "{\simeq}"']
&
A_s \ot_A  \opn{Ext}^n_{A}(B, M)
\arrow[d, "{\simeq}"']
\\
\De^{\mrm{rs}}_{B_s / A_s} \otimes_{A_s} M_s
\arrow[r, "{\opn{fund}_{B_s / A, M_s}}", "{\simeq}"']
&
\opn{Ext}^n_{A_s}(B_s, M_s) 
\end{tikzcd} \] 
of $B_s$-modules is commutative. Here the vertical isomorphisms are from 
(\ref{eqn:1096}) and (\ref{eqn:1122}), and the isomorphism 
$\opn{fund}_{B_s / A, M_s}$ is the one from Lemma \ref{lem:1100}(1). 
\end{itemize}

\item For every $p > n$ we have $\opn{Ext}^p_{A}(B, M) = 0$. 

\item If $M$ is a flat $A$-module, then 
$\opn{Ext}^p_{A}(B, M) = 0$ also for all $0 \leq p < n$.  
\end{enumerate}
\end{thm}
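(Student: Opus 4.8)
The plan is to reduce the theorem to the local computation already carried out in Lemma \ref{lem:1100}, using a standard patching argument. First I would establish the existence of the isomorphism $\opn{fund}_{B / A, M}$ locally: since $A \to B$ is a regular surjection, for each $\p \in \opn{Spec}(B)$ there is an element $s \in A - \p$ such that $B_s \neq 0$ and $\a_s \sub A_s$ is generated by a Koszul regular sequence (this follows from Definition \ref{dfn:1081}, Corollary \ref{cor:1130}, and Corollary \ref{cor:1091}). By Proposition \ref{prop:1096}(2) every such Koszul regular sequence has length $n$, because the projective $B$-module $\a / \a^2$ has constant rank $n$. On each such $\Sp(A_s)$, Lemma \ref{lem:1100}(1) furnishes a $B_s$-module isomorphism $\opn{fund}_{B_s / A_s, M_s} : \De^{\mrm{rs}}_{B_s / A_s} \ot_{A_s} M_s \iso \opn{Ext}^n_{A_s}(B_s, M_s)$, characterized by its effect on generalized fractions.

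Next I would check compatibility on overlaps, i.e.\ that these local isomorphisms glue. Given $s, s' \in A$ as above, on the common localization $A_{s s'}$ both $\opn{fund}_{B_{s s'} / A_{s s'}, M_{s s'}}$ obtained by restricting from $A_s$ and the one obtained from $A_{s'}$ satisfy the characterizing property ($*$) of Lemma \ref{lem:1100}(1) for any Koszul regular sequence generating $\a_{s s'}$ — and such sequences exist after possibly further localizing, by Corollary \ref{cor:1091} again. Since the characterizing property ($*$) of Lemma \ref{lem:1100}(1) determines the isomorphism uniquely, and since the generalized fractions $\smgfrac{\mu}{\ba}$ span $\opn{Ext}^n_{A_{s s'}}(B_{s s'}, M_{s s'})$, the two agree on $A_{s s'}$. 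Hence the local isomorphisms are compatible with the localization isomorphisms of (\ref{eqn:1096}) and (\ref{eqn:1122}), and the sheaf-gluing lemma for module homomorphisms over the affine scheme $\Sp(B)$ (which has the $\Sp(B_s)$ as a basis of opens, noting $B$ noetherian hence quasi-compact quasi-separated) yields a unique global $B$-module homomorphism $\opn{fund}_{B / A, M}$ restricting to the local ones. It is an isomorphism because it is so locally on a covering, and it satisfies ($\dag$) by construction; uniqueness follows since any homomorphism satisfying ($\dag$) must agree with each $\opn{fund}_{B_s / A_s, M_s}$ after localization, hence agree globally.

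For parts (2) and (3) I would argue by localization as well: the formation of $\opn{Ext}^p_A(B, M)$ commutes with localization at $s \in A$ in the sense of (\ref{eqn:1122}) (the proof being the flat base change isomorphism $A_s \ot_A \opn{RHom}_A(B, M) \cong \opn{RHom}_{A_s}(B_s, M_s)$, valid since $B$ is finitely presented over the noetherian ring $A$). An $A$-module is zero iff all its localizations at primes of its support are zero, and the support of $\opn{Ext}^p_A(B, M)$ is contained in $\Sp(B)$. So it suffices to verify, for each $\p \in \Sp(B)$, that $\opn{Ext}^p_{A_s}(B_s, M_s) = 0$ for some $s \in A - \p$ with $\a_s$ generated by a Koszul regular sequence: for $p > n$ this is Lemma \ref{lem:1100}(2), and for $0 \le p < n$ with $M$ flat this is Lemma \ref{lem:1100}(3), using that flatness of $M$ over $A$ localizes to flatness of $M_s$ over $A_s$.

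The main obstacle I anticipate is the gluing step — verifying that the locally defined isomorphisms are genuinely compatible, rather than merely compatible up to an invertible scalar. This is where Lemma \ref{lem:1110}(2) together with the precise characterization ($*$) in Lemma \ref{lem:1100}(1) does the real work: the generalized fraction $\smgfrac{\mu}{\ba}$ transforms by $\opn{det}(\bar{\bg})^{-1}$ under a change of Koszul regular sequence $\ba' = \ba \cd \bg$, and the relative dualizing basis element $\smfrac{1}{\opn{wdg}(\bar{\ba})}$ transforms by exactly the same factor (Lemma \ref{lem:1105}(2)), so the two transformations cancel and $\opn{fund}$ is sequence-independent. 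Making sure the bookkeeping of these determinant factors is consistent across all overlaps, and that enough Koszul regular generating sequences exist after localization (repeatedly invoking Corollary \ref{cor:1091}), is the delicate part; everything else is routine sheaf-theoretic patching and flat base change.
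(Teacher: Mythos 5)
Your proof is correct and follows essentially the same patching argument as the paper's: construct the isomorphism locally from Lemma \ref{lem:1100}(1) on each $\opn{Spec}(A_s)$ with $\a_s$ Koszul-regularly generated, check agreement on overlaps $A_{ss'}$ via the characterizing property ($*$) (the paper observes that $s s'$ is itself valid and both restrictions equal $\phi_{ss'}$, so no further localization is actually needed — a localization of a Koszul regular sequence generating $\a_s$ already generates $\a_{ss'}$ by Proposition \ref{prop:1090}), and glue over the covering $\{\opn{Spec}(B_s)\}$ of $\opn{Spec}(B)$; parts (2)–(3) then follow by localizing Lemma \ref{lem:1100}(2,3), exactly as you say.
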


\begin{proof} 
(1) Take an element $s \in A$ satisfying the conditions in property 
($\dag$). Thus 
$\phi_s := \opn{fund}_{B_s / A, M_s}$ is the unique $B_s$-module isomorphism 
such that 
$\phi_s \bigl( \smfrac{1}{\opn{wdg}(\bar{\bsym{a}})} \ot \mu \bigr) =
\smgfrac{\mu}{\bsym{a}}$
for every regular sequence $\ba$ in $A_s$ that generates the ideal 
$\a_s \sub A_s$ and for every element $\mu \in M$; this is condition ($*$) in 
Lemma \ref{lem:1100}(1). 

Consider two such elements $s_1, s_2 \in A$. The isomorphisms 
$\phi_{s_i}$, for $i = 1, 2$, induce isomorphisms of 
$B_{s_1 \cd s_2}$-modules 
\[ \opn{id} \ot \, \phi_{s_i} : 
A_{s_1 \cd s_2} \ot_{A_{s_i}}  (\De^{\mrm{rs}}_{B_{s_i} / A_{s_i}} 
\otimes_{A_{s_i}} M_{s_i}) \iso
A_{s_1 \cd s_2} \ot_{A_{s_i}} \opn{Ext}^n_{A_{s_i}}(A_{s_i}, M_{s_i})  . \]
Both these isomorphisms satisfy condition ($*$) in Lemma \ref{lem:1100}(1),
and therefore they are equal. (Indeed, they are both equal to 
$\phi_{s_1 \cd s_2}$.) 

Let's write $Y := \opn{Spec}(B)$. 
Given an element $s \in A$, define the principal affine open set
$Y_s := \opn{Spec}(B_s) = \opn{NZer}_{Y}(s) \sub Y$.
The principal affine open sets $Y_s \sub Y$, for elements $s \in A$ satisfying 
the conditions in property ($\dag$), cover $Y$. We have shown that 
the isomorphisms $\opn{id} \ot \, \phi_{s}$ satisfy the cocycle condition
on double intersections 
$Y_{s_i} \cap Y_{s_j} = Y_{s_i \cd s_j}$. 
Therefore they glue to a unique $B$-module isomorphism $\phi$ for which 
condition ($\dag$) holds. 

\medskip \noindent
(2, 3) These are clear from Lemma \ref{lem:1100}(2, 3) respectively.
\end{proof}

\begin{dfn} \label{dfn:1543}
Given a nonzero ring $B$, the noetherian affine scheme $\opn{Spec}(B)$ is a 
finite disjoint union of nonempty connected closed-open subschemes
$\opn{Spec}(B) = \coprod\nolimits_i \, \opn{Spec}(B_i)$.
This is the {\em connected component decomposition of $\opn{Spec}(B)$}. 

The corresponding ring isomorphism $B \cong \prod_{i} B_i$ is called the 
{\em connected component decomposition of $B$}. 
\end{dfn}

\begin{cor} \label{cor:1131}
Let $A \to B$ be a regular surjection rings. Then $B$ is perfect as a complex 
of $A$-modules.
\end{cor}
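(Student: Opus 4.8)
The plan is to reduce perfectness to finite projective dimension, and to obtain the latter by localizing. Since $A \to B$ is surjective, $B = A / \a$ (with $\a$ the kernel) is a finitely generated $A$-module, so $B \in \cat{D}^{\mrm{b}}_{\mrm{f}}(A)$. For a finitely generated module over a noetherian ring, being perfect as a complex is equivalent to having finite projective dimension (a finite free resolution, truncated at a high syzygy that turns out to be projective, is a finite projective resolution, and conversely). Moreover projective dimension of a finitely generated module can be tested locally with a uniform bound: if $\opn{pd}_{A_\p}(B_\p) \leq N$ for every $\p \in \opn{Spec}(A)$, then the $N$-th syzygy of $B$ in a resolution by finite free $A$-modules is finitely generated and locally free, hence projective, so $\opn{pd}_A(B) \leq N$. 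Thus the task is to bound $\opn{pd}_{A_\p}(B_\p)$ uniformly in $\p$.

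First I would dispose of the primes $\q \in \opn{Spec}(A)$ with $\a \not\subseteq \q$: there $B_\q = (A/\a)_\q = 0$, so $\opn{pd}_{A_\q}(B_\q)$ is trivially bounded. For $\p \in \opn{Spec}(B)$, identified with a point of $\opn{Zer}_{\opn{Spec}(A)}(\a)$: since $\a$ is a regular ideal, there is $s \in A - \p$ such that $\a_s \subseteq A_s$ is generated by a regular sequence; localizing further at $\p A_s$, and using Proposition \ref{prop:1090} to preserve regularity (the nonvanishing clause holds because $B_\p \neq 0$), we get that $\a_\p$ is generated by a regular sequence $\ba_\p$ in the local ring $A_\p$. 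By Corollary \ref{cor:1093} this sequence is Koszul regular, so the augmentation $\opn{K}(A_\p; \ba_\p) \to B_\p$ is a finite free resolution of $B_\p$ over $A_\p$, and hence $\opn{pd}_{A_\p}(B_\p) \leq \opn{length}(\ba_\p)$.

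To make this bound uniform, I would use that by Proposition \ref{prop:1096}(1) the $B$-module $\a / \a^2$ is finitely generated and projective, so its rank is a locally constant function on the noetherian space $\opn{Spec}(B)$, hence bounded by some $N \in \N$. For $\p \in \opn{Spec}(B)$, Lemma \ref{lem:1111}(1) identifies $\opn{length}(\ba_\p)$ with the rank of the free $B_\p$-module $(\a / \a^2)_\p$, which is $\leq N$. Combined with the vanishing at the primes outside $\opn{Zer}(\a)$, this yields $\sup_{\p} \opn{pd}_{A_\p}(B_\p) \leq N < \infty$, whence $\opn{pd}_A(B) < \infty$ and $B$ is perfect as a complex of $A$-modules.

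The main obstacle is purely the globalization step — turning local finiteness of projective dimension into a global statement. I would handle it via the syzygy argument indicated above (finitely generated plus locally free over a noetherian ring implies projective), the point being that the uniform bound $N$ is available. An alternative I would keep in mind is to cover $\opn{Spec}(A)$ by finitely many principal opens $\opn{NZer}(s)$ on each of which $A_s \otimes_A B$ admits a finite free resolution (either $\a_s$ is generated by a regular sequence, or $B_s = 0$) and invoke the fact that perfectness of a complex is local on the affine base; I would choose whichever formulation fits most smoothly with the surrounding material.
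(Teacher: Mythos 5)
Your argument is correct. The route, though, is genuinely different in organization from the paper's. The paper first decomposes $B \cong \prod_i B_i$ into connected components (Definition \ref{dfn:1543}), so that each $A \to B_i$ is a regular surjection of \emph{constant} codimension $n_i$, then invokes Theorem \ref{thm:1080}(2) directly to get $\opn{Ext}^p_A(B_i, M) = 0$ for $p > n_i$ and all $M$, so $\opn{pd}_A(B) \leq \max_i n_i < \infty$, and finishes with \cite[Theorem 14.1.33]{Ye5}. You instead argue prime by prime: at each $\p$ you produce a Koszul resolution of $B_\p$ of length $\opn{rank}_{B_\p}(\a/\a^2)_\p$, you bound this rank uniformly using the local constancy of ranks of a projective module over the noetherian (hence quasi-compact) space $\opn{Spec}(B)$, and you globalize via the syzygy argument. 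Both proofs are sound and both ultimately bound projective dimension by the codimension, but the paper's is more economical because it reuses the already-established Fundamental Local Isomorphism, while yours is more self-contained, relying only on the earlier lemmas (Propositions \ref{prop:1090}, \ref{prop:1096}, Lemma \ref{lem:1111}, Corollary \ref{cor:1093}) plus the standard localize-and-globalize machinery. The connected-component decomposition the paper uses and the noetherian boundedness you invoke are two ways of extracting the same uniform codimension bound; either works.
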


\begin{proof}
Let $B \cong \prod_{i} B_i$ the connected component decomposition of $B$.
Each $A \to B_i$ is a regular surjection of rings, of some constant codimension 
$n_i$. According to Theorem \ref{thm:1080}, the projective dimension of 
$B_i$ as an $A$-module is $n_i$. Then the projective dimension of $B$ as an 
$A$-module is $n := \opn{max} \, \{ n_i \} < \infty$.
By \cite[Theorem 14.1.33]{Ye5} the $A$-module $B$ is a perfect complex over $A$.
\end{proof}

\begin{prop} \label{prop:1260}
In the situation of Theorem \ref{thm:1080}, let $A \to A'$ be a localization 
homomorphism. Define $B' := A' \ot_A B$ and $M' := A' \ot_A M$. Assume that 
$B' \neq 0$. Then:
\begin{enumerate}
\item The ring homomorphism $A' \to B'$ is a regular surjection. 

\item The diagram of $B$-modules 
\[ \begin{tikzcd} [column sep = 12ex, row sep = 5ex] 
\De_{B / A}^{\mrm{rs}} \otimes_A M
\arrow[d, "{\opn{q}_{B' / B}}"]
\arrow[r, "{\opn{fund}_{B / A, M}}", "{\simeq}"']
&
\opn{Ext}^n_{A}(B, M)
\arrow[d, "{\opn{q}_{B' / B}}"']
&
\\
\De_{B' / A'}^{\mrm{rs}} \otimes_{A'} M'
\arrow[r, "{\opn{fund}_{B' / A', M'}}", "{\simeq}"']
&
\opn{Ext}^n_{A'}(B', M') 
\end{tikzcd} \] 
is commutative. Here the homomorphisms $\opn{q}_{B' / B}$ are the 
localization homomorphisms.
\end{enumerate} 
\end{prop}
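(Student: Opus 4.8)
The statement is a localization-compatibility result for the Fundamental Local Isomorphism, so the plan is to reduce everything to the already-established Theorem \ref{thm:1080} and Lemma \ref{lem:1100}, invoking the uniqueness characterization (property ($\dag$)) to identify the two composite isomorphisms. For part (1), the assertion that $A' \to B'$ is a regular surjection should follow formally from the definition of a regular ideal (Definition \ref{dfn:1080}): since $A \to A'$ is a localization, $\Sp(B')$ is an open subscheme of $\Sp(B)$, and regularity of an ideal is checked pointwise after further localization, so the condition at each $\p' \in \Sp(B')$ is inherited from the condition at the corresponding $\p \in \Sp(B)$. I would also note, using \eqref{eqn:1096}, that $\a' / \a'^2 \cong A' \ot_A (\a / \a^2)$, so the codimension is locally constant and matches; if $A \to A'$ is far enough a localization that $B'$ has constant codimension $n$ too, the setup of Theorem \ref{thm:1080} applies verbatim to $A' \to B'$.

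\textbf{Main step.} For part (2), the strategy is to use the characterization of $\opn{fund}_{B' / A', M'}$ by property ($\dag$) of Theorem \ref{thm:1080}: it is the \emph{unique} $B'$-module isomorphism $\De^{\mrm{rs}}_{B' / A'} \ot_{A'} M' \iso \opn{Ext}^n_{A'}(B', M')$ compatible, via the canonical isomorphisms \eqref{eqn:1096} and \eqref{eqn:1122}, with the Lemma \ref{lem:1100} isomorphisms $\opn{fund}_{B'_s / A', M'_s}$ over a cover of $\Sp(B')$ by principal opens on which $\a'_s$ is generated by a Koszul regular sequence. So it suffices to show that the composite
\[ \De_{B / A}^{\mrm{rs}} \ot_A M \xrightarrow{\opn{fund}_{B / A, M}} \opn{Ext}^n_A(B, M) \xrightarrow{\opn{q}_{B' / B}} \opn{Ext}^n_{A'}(B', M') \]
followed by (the inverse of) $\opn{q}_{B' / B} : \De^{\mrm{rs}}_{B / A} \ot_A M \to \De^{\mrm{rs}}_{B' / A'} \ot_{A'} M'$ also satisfies that compatibility. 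This is where I would descend to an affine open $\Sp(B'_s) \subseteq \Sp(B')$ with $\a'_s$ generated by a Koszul regular sequence $\ba$: by shrinking, $\Sp(B'_s)$ is also a principal open $\Sp(B_t)$ of $\Sp(B)$ on which the ideal $\a_t$ is generated by the \emph{same} sequence $\ba$ (pulled back); then property ($\dag$) for $B \to B$ at $t$ and for $B' \to B'$ at $s$ both reduce, via Lemma \ref{lem:1100}(1) condition ($*$), to the formula $\smfrac{1}{\opn{wdg}(\bar\ba)} \ot \mu \mapsto \smgfrac{\mu}{\ba}$ in terms of generalized fractions. Since localization sends the generalized fraction $\smgfrac{\mu}{\ba}$ in $\opn{Ext}^n_A(B, M)$ to the generalized fraction $\smgfrac{\mu'}{\ba}$ in $\opn{Ext}^n_{A'}(B', M')$ (because the Koszul presentation $\opn{kpres}_{\ba}$ is compatible with the flat base change $A \to A'$, and $\smfrac{\mu}{\opn{wdg}(\bt)}$ localizes to $\smfrac{\mu'}{\opn{wdg}(\bt)}$), the square on the open piece $\Sp(B_t) = \Sp(B'_s)$ commutes. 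By uniqueness in Theorem \ref{thm:1080}(1), the global square commutes.

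\textbf{Expected obstacle.} The main bookkeeping difficulty is matching the two covers: a principal open of $\Sp(B')$ trivializing $\a'$ need not pull back from a principal open of $\Sp(B)$ trivializing $\a$, but after refining (intersecting with a principal open $\Sp(B_t)$ on which $\a_t$ is generated by a Koszul regular sequence, which exists by regularity of $\a$ and Corollary \ref{cor:1091}) one can arrange both, with compatible chosen sequences — one takes the sequence for $\a_t$ and localizes it further to get one for $\a'_s$, using that $A_t \to A'_s$ is a localization and hence the image sequence stays Koszul regular by Proposition \ref{prop:1090}. Once that is set up, everything reduces to the transparent fact that generalized fractions $\smgfrac{\mu}{\ba}$ are compatible with flat localization of the base ring, which is immediate from the definition of $\opn{kpres}_{\ba}$ (Definition \ref{dfn:1923}) via the Koszul resolution and the canonical isomorphism \eqref{eqn:1122}. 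I do not anticipate any genuinely hard point beyond this diagram-chasing; the content is entirely in organizing the reductions and then citing uniqueness.
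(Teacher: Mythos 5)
Your proposal is correct and follows essentially the same route as the paper's (very terse) proof: part (1) reduces to the fact that regularity of a generating sequence survives localization (Proposition \ref{prop:1090}), and part (2) is forced by the uniqueness clause, property ($\dag$), in Theorem \ref{thm:1080}, since after refining to a common cover by principal opens on which the ideal is generated by a single Koszul regular sequence, both sides are computed by the explicit generalized-fraction formula in Lemma \ref{lem:1100}($*$), which is manifestly compatible with localization of the Koszul complex. The one imprecision worth flagging is the aside ``if $A \to A'$ is far enough a localization that $B'$ has constant codimension $n$ too'': this needs no hypothesis, since $\a' / (\a')^2 \cong A' \ot_A (\a / \a^2)$ is automatically projective of constant rank $n$ over $B' \neq 0$, so the setup of Theorem \ref{thm:1080} always applies to $A' \to B'$.
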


\begin{proof} 
(1) By Proposition \ref{prop:1090}, regularity of a sequence is 
preserved under localization.

\medskip \noindent
(2) This is immediate from condition (\dag) of Theorem \ref{thm:1080}, i.e.\ 
both $\opn{fund}_{B / A, M}$ and $\opn{fund}_{B' / A', M'}$ are locally 
determined by the formula in property 
($*$) of Lemma \ref{lem:1100}.
\end{proof}

\section{Essentially Smooth Ring Homomorphisms}
\label{sec:smooth}

This section contains results on essentially smooth ring homomorphisms, 
generalizing some of the results on smoothness from \cite{EGA-I}, 
\cite{EGA-IV} and \cite{EGA-0IV}. The content of this section could be of 
independent interest, beyond its application to the theory of rigidity. 

According to Conventions \ref{conv:615} and \ref{conv:1070}, both in force in 
this section, all rings are commutative and noetherian,
and all ring homomorphisms are EFT. 

Let us recall the definitions of formally smooth and formally 
\'etale ring homomorphisms, following \cite[Definition 17.1.1]{EGA-IV}, 
\cite[Section 19.0]{EGA-0IV} and \cite[Section 28]{Ma} (with the discrete 
topologies on all rings). 
A ring homomorphism $f : A \to B$ is called {\em formally smooth}, and $B$ is 
called a {\em formally  smooth $A$-ring}, if $f$ has the following lifting 
property: given an $A$-ring $C$, a nilpotent ideal $\mathfrak{c} \sub C$, and an $A$-ring 
homomorphism $\bar{g} : B \to C / \mathfrak{c}$, {\em there exists} an $A$-ring 
homomorphism $g : B \to C$ lifting $\bar{g}$. If moreover the lifting 
$g : B \to C$ is always {\em unique}, then the homomorphism $f : A \to B$ is 
called {\em formally \'etale}, and $B$ is called a {\em formally \'etale 
$A$-ring}. See next diagram, in which $\pi: C \to C / \mathfrak{c}$ is the canonical 
surjection.  
\begin{equation} \label{eqn:1015}
\begin{tikzcd} [column sep = 8.6ex, row sep = 5ex] 
A
\arrow[r]
\ar[d, "{f}"']
&
C
\ar[d, "{\pi}"]
\\
B
\ar[r, "{\bar{g}}"']
\ar[ur, dashed, "{g}"]
&
C / \mathfrak{c}
\end{tikzcd}
\end{equation}

In case the homomorphism $f$ is finite 
type and formally smooth (resp.\ formally \'etale), then $f$ is called a {\em 
smooth} (resp.\ {\em \'etale}) homomorphism, and $B$ is called a {\em smooth} 
(resp.\ {\em \'etale}) $A$-ring.
See \cite[Definition 17.3.1]{EGA-IV}, which is slightly more general (because 
the rings are not assumed to be noetherian there).

Essentially finite type ring homomorphisms were introduced in Definition 
\ref{dfn:1065}. 

\begin{dfn} \label{dfn:1015}
Let $f : A \to B$ be a homomorphism between noetherian rings. 
The homomorphism $f$ is called an {\em essentially smooth} (resp.\ 
{\em essentially \'etale}) {\em homomorphism}, 
and $B$ is called an {\em essentially smooth}  (resp.\ {\em essentially 
\'etale}) {\em $A$-ring}, if $f$ is
essentially finite type and formally smooth (resp.\ formally \'etale). 
\end{dfn}

Here is the most basic example of an essentially \'etale homomorphism:

\begin{exa} \label{exa:1050}
If $f : A \to B$ is a localization homomorphism between noetherian rings,
then $f$ is essentially \'etale. All that's needed is to prove that $f$ is 
formally \'etale, and this is implicit in the proof of  
\cite[Proposition 19.3.5$\tup{(iv)}$]{EGA-0IV}. Here is the full argument. Let
$S \sub A$ be a multiplicatively closed set such that $B \cong A[S^{-1}]$.
Given an $A$-ring $C$, a nilpotent ideal $\mathfrak{c} \sub C$, and an $A$-ring 
homomorphism $\bar{g} : B \to C / \mathfrak{c}$, let $T \sub C$ be the image of $S$. 
So $\bar{g}(f(S)) = \pi(T) \sub C / \mathfrak{c}$,
where $\pi : C \to C / \mathfrak{c}$ is the surjection. We see that $\pi(t)$ is invertible 
in $C / \mathfrak{c}$ for every $t \in T$. Since $\mathfrak{c}$ is a nilpotent ideal, it follows that 
$t$ is invertible in $C$. Therefore there's a unique $A$-ring homomorphism 
$g : B \to C$, and it makes diagram (\ref{eqn:1015}) commute. 
\end{exa}

\begin{prop} \label{prop:1015}
Let $f : A \to B$ be an essentially smooth (resp.\ essentially \'etale) 
ring homomorphism.
\begin{enumerate}
\item If $g : B \to C$ is another essentially smooth (resp.\ 
essentially \'etale) homomorphism, then $g \circ f$ is essentially smooth 
(resp.\ essentially \'etale).

\item Let $A \to C$ be any homomorphism to a noetherian ring $C$. Then the 
induced homomorphism $C \to B \ot_A C$ is essentially smooth (resp.\ 
essentially \'etale). 
\end{enumerate}
\end{prop}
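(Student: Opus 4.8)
\textbf{Proof plan for Proposition \ref{prop:1015}.}

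The plan is to reduce the essentially smooth/essentially étale statements to the already-known stability properties of the finite type / essentially finite type condition together with the formal lifting property. Both parts (1) and (2) split into two independent verifications: that the essentially finite type property is preserved, and that the formal smoothness (resp.\ formal étaleness) property is preserved. The EFT part is already handled by the remarks following Definition \ref{dfn:1065}: composition of EFT homomorphisms is EFT, and the EFT property is stable under base change. So the real content is the formal part, and for this I would simply invoke the classical results of \cite{EGA-0IV} -- the relevant statements are \cite[Section 19.3]{EGA-0IV} on formally smooth and formally étale homomorphisms, which establish exactly that formal smoothness (resp.\ formal étaleness) is stable under composition and under arbitrary base change. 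Strictly, those references are stated without noetherian hypotheses and with the discrete topology, which is the setting we want here, so no extra care is needed.

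For part (1): given $f:A\to B$ and $g:B\to C$ both essentially smooth, $g\circ f$ is EFT since a composition of EFT homomorphisms is EFT, and $g\circ f$ is formally smooth by the transitivity of formal smoothness \cite[Proposition 19.3.5]{EGA-0IV}; hence $g\circ f$ is essentially smooth. In the étale case one uses instead the transitivity of formal étaleness, which follows from the uniqueness half of the lifting property: in diagram (\ref{eqn:1015}) one first lifts along $g$ (uniquely over $B$) and then checks the composite is the unique lift over $A$. For part (2): given $f:A\to B$ essentially smooth and any $A\to C$, the base-changed map $C\to B\ot_A C$ is EFT because the EFT property is stable under base change, and it is formally smooth because formal smoothness is stable under base change \cite[Proposition 19.3.5]{EGA-0IV}; the étale case is identical using stability of formal étaleness under base change. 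One can also give the base-change verification by hand directly from the definition: a lifting problem for $C\to B\ot_A C$ against a square-zero (or nilpotent) extension $D\twoheadrightarrow D/\mathfrak{d}$ of $C$-rings is, via the adjunction $\opn{Hom}_{C}(B\ot_A C,-)\cong\opn{Hom}_{A}(B,-)$, the same as a lifting problem for $f:A\to B$ against $D\twoheadrightarrow D/\mathfrak{d}$ viewed as $A$-rings; existence (resp.\ existence and uniqueness) of the latter gives the former.

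The main obstacle here is essentially bookkeeping rather than mathematics: one must make sure the reduction to a \emph{nilpotent} (not merely square-zero) ideal is harmless -- this is standard, since a nilpotent ideal has a finite filtration by square-zero extensions and one lifts step by step -- and one must confirm that the cited \cite{EGA-0IV} results are stated in the generality (discrete topology, no noetherian assumption) that covers our EFT homomorphisms between noetherian rings. Both points are routine, so I expect the proof to be short: a two-paragraph argument citing the EFT stability remarks after Definition \ref{dfn:1065} together with \cite[Section 19.3]{EGA-0IV}, with the self-contained adjunction argument for base change (part (2)) included as an alternative for the reader who prefers not to chase references.
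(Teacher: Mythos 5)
Your proposal is correct and takes essentially the same approach as the paper: split each assertion into the EFT part (handled by the remarks after Definition \ref{dfn:1065}) and the formal smoothness/\'etaleness part (handled by a citation to EGA), the only cosmetic difference being that you cite \cite[Proposition 19.3.5]{EGA-0IV} where the paper cites \cite[Proposition 17.1.3]{EGA-IV}, which record the same stability facts.
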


\begin{proof} 
(1) We know that the composition of two EFT homomorphisms is EFT.
And by \cite[Proposition 17.1.3]{EGA-IV}(ii), the composition of two 
formally smooth (resp.\ formally \'etale) homomorphisms is formally smooth 
(resp.\ formally \'etale).

\medskip \noindent(2) We know that the homomorphism $C \to B \ot_A C$ 
is EFT, and that the ring $B \ot_A C$ is noetherian. 
According to \cite[Proposition 17.1.3]{EGA-IV}(iii), taking $S = X$ there, the 
homomorphism $C \to B \ot_A C$ is formally smooth (resp.\ formally \'etale). 
\end{proof}

\begin{exa} \label{exa:1015}
Suppose $f : A \to B$ is a smooth (resp.\ \'etale) ring homomorphism, 
$\q \sub B$ is a prime ideal, and $\p := f^{-1}(\q) \sub A$. 
Then the induced homomorphism $A_{\p} \to B_{\q}$ is essentially smooth
(resp.\ essentially \'etale). 
This is a consequence of Proposition \ref{prop:1015} and Example \ref{exa:1050}.
\end{exa}

\begin{prop} \label{prop:1055}
Let $f : A \to B$ be a ring homomorphism. The following conditions are 
equivalent:
\begin{enumerate}
\rmitem{i} The homomorphism $f$ is essentially \'etale. 

\rmitem{ii} The homomorphism $f$ is essentially smooth, and  
$\Om^1_{B / A} = 0$.
\end{enumerate}
\end{prop}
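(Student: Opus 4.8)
The plan is to prove the equivalence (i) $\Leftrightarrow$ (ii) for $f : A \to B$ by reducing to the standard first-order deformation-theoretic criterion for formal smoothness and formal étaleness, using the module of Kähler differentials $\Omega^1_{B/A}$. First I would recall the characterization of formally smooth and formally étale homomorphisms in terms of the cotangent module and the lifting of homomorphisms across square-zero extensions: for an $A$-ring $C$ with square-zero ideal $\mathfrak{c}$ (so $\mathfrak{c}^2 = 0$), the obstruction-and-torsor analysis shows that lifts $g : B \to C$ of a given $\bar g : B \to C/\mathfrak{c}$ form a torsor under $\operatorname{Hom}_B(\Omega^1_{B/A}, \mathfrak{c})$ once a lift exists. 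It suffices to treat square-zero $\mathfrak{c}$, since any nilpotent $\mathfrak{c}$ is handled by dévissage along the filtration $\mathfrak{c} \supseteq \mathfrak{c}^2 \supseteq \cdots$, each quotient being square-zero. This is the content used implicitly in \cite[Section 19.0]{EGA-0IV} and \cite[Section 28]{Ma}, which I would cite rather than reprove.

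The implication (i) $\Rightarrow$ (ii) goes as follows. Assume $f$ is essentially étale; then in particular it is formally étale, hence formally smooth, and since it is EFT it is essentially smooth by Definition \ref{dfn:1015}. For the vanishing $\Omega^1_{B/A} = 0$: apply the unique-lifting property to the canonical square-zero extension $C := B \oplus \Omega^1_{B/A}$ (the trivial extension of $B$ by the $B$-module $\Omega^1_{B/A}$, with $\mathfrak{c} := \Omega^1_{B/A}$ and $\mathfrak{c}^2 = 0$), equipped with its natural $A$-ring structure and the identity map $\bar g = \operatorname{id}_B : B \to C/\mathfrak{c} = B$. There are (at least) two lifts $B \to C$: the obvious inclusion $b \mapsto (b, 0)$, and the universal derivation lift $b \mapsto (b, \mathrm{d}(b))$, where $\mathrm{d} : B \to \Omega^1_{B/A}$ is the universal $A$-derivation. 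Uniqueness of lifts forces these to coincide, i.e.\ $\mathrm{d} = 0$; but since $\Omega^1_{B/A}$ is generated as a $B$-module by the image of $\mathrm{d}$, this gives $\Omega^1_{B/A} = 0$.

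The implication (ii) $\Rightarrow$ (i) is the converse direction and is where a little more care is needed. Assume $f$ is essentially smooth and $\Omega^1_{B/A} = 0$. Since $f$ is EFT, we only need formal étaleness, i.e.\ uniqueness of lifts. Given a square-zero extension $C \twoheadrightarrow C/\mathfrak{c}$ and $\bar g : B \to C/\mathfrak{c}$: existence of a lift $g$ is guaranteed by formal smoothness (which we have), so the set of lifts is nonempty, and as recalled above it is a torsor under $\operatorname{Hom}_B(\Omega^1_{B/A}, \mathfrak{c}) = \operatorname{Hom}_B(0, \mathfrak{c}) = 0$; hence the lift is unique. Dévissage along the nilpotency filtration of $\mathfrak{c}$ then promotes this from square-zero to arbitrary nilpotent $\mathfrak{c}$, establishing formal étaleness.

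I expect the main (minor) obstacle to be stating cleanly the torsor structure on the set of lifts and the reduction from nilpotent to square-zero ideals, so that one may legitimately invoke $\operatorname{Hom}_B(\Omega^1_{B/A}, -)$ as the group acting; this is entirely standard (see \cite[Proposition 17.1.1]{EGA-IV} and \cite[Section 28]{Ma}), so in the write-up I would simply cite it and spend the bulk of the argument on the concrete trivial-extension construction in (i) $\Rightarrow$ (ii), which is the one genuinely computational point. Everything else follows formally from the definitions and Proposition \ref{prop:1015}.
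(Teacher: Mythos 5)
Your proof is correct. The paper's own proof is a one-liner: it observes that formal étaleness equals formal smoothness plus formal unramifiedness, and cites \cite[Proposition 17.2.1]{EGA-IV} (or \cite[Proposition 20.7.4]{EGA-0IV}) for the equivalence ``formally unramified $\Leftrightarrow \Omega^1_{B/A}=0$''. What you do instead is unfold that cited equivalence into its standard proof: the trivial square-zero extension $B\oplus\Omega^1_{B/A}$ with the two lifts $b\mapsto(b,0)$ and $b\mapsto(b,\d_{B/A}(b))$ for the direction (i) $\Rightarrow$ (ii), and the torsor structure under $\operatorname{Hom}_B(\Omega^1_{B/A},\mathfrak{c})$ plus dévissage along $\mathfrak{c}\supseteq\mathfrak{c}^2\supseteq\cdots$ for (ii) $\Rightarrow$ (i). The mathematical content is identical; yours is simply the self-contained version of the citation, which is a reasonable stylistic choice, and all the small points (that $b\mapsto(b,\d b)$ is an $A$-ring homomorphism by the Leibniz rule and $\d\circ f=0$, that $\Omega^1_{B/A}$ is generated by the image of $\d$, and the dévissage step) check out.
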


\begin{proof}
The homomorphism $f$ is called {\em formally unramified} if there is always 
{\em at most one} lifting $g$ in diagram (\ref{eqn:1015}); see 
\cite[Definition 17.1.1]{EGA-IV}. Thus $f$ is formally \'etale 
iff it is both formally smooth and formally unramified. 
But by \cite[Proposition 17.2.1]{EGA-IV} or \cite[Proposition 20.7.4]{EGA-0IV}
the homomorphism $f$ is formally unramified iff $\Om^1_{B / A} = 0$.
\end{proof}

A {\em principal localization} of a ring $B$ is a $B$-ring that's isomorphic
to $B_s = B[s^{-1}]$ for some element $s \in B$. The corresponding affine open 
set $\opn{Spec}(B_s) \cong \opn{NZer}_{\opn{Spec}(B)}(s) \sub \opn{Spec}(B)$ 
is called a {\em principal affine open set}. See formula (\ref{eqn:1532}) 
regarding notation. 

\begin{thm} \label{thm:1015}
Let $A \to B$ be an essentially smooth (resp.\ essentially \'etale) 
homomorphism between noetherian rings. There is an affine open covering 
$\opn{Spec}(B) = \bigcup_i \opn{Spec}(B_i)$,
such that each $B_i$ is a principal localization of $B$, and 
the ring homomorphism $A \to B_i$ is the composition of a smooth (resp.\ 
\'etale) homomorphism $A \to \til{B}_i$ with a localization homomorphism 
$\til{B}_i \to B_i$. 
\end{thm}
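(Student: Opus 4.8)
The plan is to reduce the statement to the known structure theory of essentially finite type homomorphisms and the definition of ``essentially smooth/\'etale''. First I would unwind the definitions: by Definition \ref{dfn:1065} the EFT homomorphism $A \to B$ factors as $A \xar{\til{u}} B' \xar{v} B$ with $\til{u}$ finite type and $v$ a localization. By Example \ref{exa:1050} the localization $v$ is essentially \'etale, so the issue is really about the finite type part: I want to show that, locally on $\opn{Spec}(B)$, the composite $A \to B$ can be realized through a genuinely smooth (resp.\ \'etale) $A$-ring followed by a localization. The key technical input is the infinitesimal lifting criterion, together with the Jacobian criterion for smoothness from \cite{EGA-IV}.

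Here is the step-by-step approach. Step 1: Present $B'$ as a quotient $A[t_1,\dots,t_m]/I$, and write $B = (B')_S$ for a multiplicative set $S$, so every point $\q \in \opn{Spec}(B)$ corresponds to a prime $\p' \subset B'$ not meeting $S$, and in turn a prime $\p \subset A[t_1,\dots,t_m]$. Step 2: Fix a point $\q$. Since $A \to B$ is formally smooth (resp.\ formally \'etale) and EFT, the localized homomorphism $A \to B_{\q}$ is formally smooth (resp.\ formally \'etale and EFT); one then invokes the local criterion — essentially \cite[Proposition 17.1.3 and 17.2.1]{EGA-IV} or the noetherian local smoothness criterion — to conclude that in a Zariski neighborhood the module $\Om^1_{B/A}$ is free of finite rank (resp.\ zero), and that the conormal module of $B'$ inside the polynomial ring, localized at $\p'$, is free with the Jacobian matrix of a suitable subset of generators of $I$ being part of a coordinate system. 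Step 3: Use the Jacobian criterion: choose generators $f_1,\dots,f_c \in I$ whose differentials are independent at $\p'$, set $\til{B}_{\q} := (A[t_1,\dots,t_m]/(f_1,\dots,f_c))_h$ for an appropriate element $h \notin \p$, which by the Jacobian criterion (\cite[Theorem 17.11.1]{EGA-IV} or its noetherian version) is a smooth (resp.\ \'etale) $A$-ring. Step 4: Check that after further principal localization the canonical map $\til{B}_{\q} \to B$ is a localization homomorphism — this is because $(f_1,\dots,f_c)$ and $I$ generate the same ideal locally near $\p'$ (both define $B'$ up to localization), and combining with the localization $v$ absorbs into a single principal localization. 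Step 5: Cover $\opn{Spec}(B)$ by finitely many such principal affine opens $\opn{Spec}(B_i)$ using quasi-compactness, and replace each $\til{B}_{\q}$ by a principal localization so that $B_i$ is literally a principal localization of $B$.

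I expect the main obstacle to be Step 2--Step 3, namely extracting from the \emph{formal} smoothness hypothesis the \emph{concrete} Jacobian-type presentation near a point, and doing so uniformly enough that a single element $h$ works on a whole neighborhood rather than just at the prime. The subtlety is that ``essentially smooth'' allows a localization before applying the lifting criterion, so one must be careful that the conormal module $I_{\p'}/I_{\p'}^2$ being free of the right rank (which follows from formal smoothness of $B_\q/A$ via the Lichtenbaum–Schlessinger / cotangent complex vanishing, or more elementarily via \cite[Section 19]{EGA-0IV}) actually propagates to an honest open neighborhood in $\opn{Spec}(B')$, and that the smoothness of $\til{B}_i$ can be verified there. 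The rest — assembling the cover, checking that composites of localizations are localizations, and that a localization of a smooth ring composed with a localization is again of the required form — is routine bookkeeping that I would not spell out in detail. I would also remark that the \'etale case is immediate from the smooth case via Proposition \ref{prop:1055}, since $\Om^1_{B/A} = 0$ forces $c = m$ locally, i.e.\ the smooth $\til{B}_i$ is in fact \'etale over $A$.
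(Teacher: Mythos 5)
Your strategy is essentially correct and runs parallel to the paper's, but the paper takes a shorter route by citing a stronger black box. Where you decompose $B$ via a presentation $B = (A[\bt]/I)_S$ and then try to recover a smooth model through the Jacobian criterion, the paper simply picks any finite-type $A$-subring $B^{\mrm{ft}} \sub B$ with $B$ a localization of $B^{\mrm{ft}}$, observes that for $y \in \opn{Spec}(B) \sub \opn{Spec}(B^{\mrm{ft}})$ the local ring $\OO_{Y^{\mrm{ft}},y} = B_y$ is formally smooth (resp.\ formally \'etale) over $A_{\phi(y)}$, and then invokes EGA IV, Th\'eor\`eme 17.5.1: for a finite-type morphism, formal smoothness of the local ring at a point implies smoothness (of the honest, non-localized ring) in a Zariski neighborhood. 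That citation is precisely the ``local-to-global propagation'' step you flag at the end as the main obstacle, and it subsumes your Steps 2--4; there is no need to redo the Jacobian analysis by hand. Note that the citations you list for that step (\cite[Proposition 17.1.3 and 17.2.1]{EGA-IV}) address composition of formally smooth maps and the characterization of formally unramified via $\Om^1 = 0$, not the neighborhood-propagation statement; the one you actually need is 17.5.1.

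If you do want to carry out the Jacobian argument directly, the gap is in Step 4. You assert that $(f_1,\dots,f_c)$ and $I$ generate the same ideal near $\p'$ with the parenthetical ``both define $B'$ up to localization,'' but this is circular: showing that those $c$ elements cut out $B'$ locally is exactly what needs to be proved. The honest argument is: formal smoothness of $B'_{\p'}$ over $A$ makes the conormal sequence split exact, forcing $(I/I^2)_{\p'}$ to be free of rank $c = m - \opn{rank}\, \Om^1_{B'/A}$ at $\p'$; choosing $f_1,\dots,f_c \in I$ whose images form a basis of $(I/I^2) \ot \bk(\p')$ and applying Nakayama in the noetherian local ring $A[\bt]_{\p'}$ (noting $I_{\p'}$ lies in the maximal ideal) shows $f_1,\dots,f_c$ generate $I_{\p'}$; since $I$ is finitely generated, this persists on a principal open, which gives the needed $h$. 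Also, your remark that the \'etale case ``forces $c = m$ locally'' is a bit glib as stated: $\Om^1_{B/A} = 0$ gives $\Om^1_{\til{B}_i/A} \ot_{\til{B}_i} B_i = 0$ (using that the localization is essentially \'etale, cf.\ Proposition \ref{prop:1141}), and since $\Om^1_{\til{B}_i/A}$ is a finitely generated projective $\til{B}_i$-module vanishing at the relevant point, one shrinks $\til{B}_i$ by a further principal localization to kill it; only then is $\til{B}_i$ genuinely \'etale over $A$.
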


\begin{proof}
Define $X := \opn{Spec}(A)$, $Y := \opn{Spec}(B)$  
and $\phi := \opn{Spec}(f) : Y \to X$. 
Choose a finite type $A$-subring $B^{\mrm{ft}} \sub B$ such that $B$ is a 
localization of $B^{\mrm{ft}}$. We can view the underlying topological space 
of $Y$ as a topological subspace of 
$Y^{\mrm{ft}} := \opn{Spec}(B^{\mrm{ft}})$. 

Take a point $y \in Y$, and let $x := \phi(y) \in X$. 
By Example \ref{exa:1015} the local ring 
$B_y \cong \mcal{O}_{Y, y} \cong \mcal{O}_{Y^{\mrm{ft}}, y}$ 
is a formally smooth (resp.\ formally \'etale) $A_x$-ring. According to 
\cite[Th\'eor\`eme 17.5.1]{EGA-IV}
and \cite[Definition 17.3.7]{EGA-IV}, 
there is an open neighborhood $V^{\mrm{ft}}$ of $y$ in $Y^{\mrm{ft}}$ 
which is smooth (resp.\ \'etale) over $X$. 
Choose an element $s \in B^{\mrm{ft}}$ such 
that the localization $B^{\mrm{ft}}[s^{-1}]$ 
satisfies
$y \in \opn{Spec}(B^{\mrm{ft}}[s^{-1}]) \sub V^{\mrm{ft}}$.
By \cite[Proposition 17.1.6]{EGA-IV} and \cite[Remarques 17.1.2]{EGA-IV} 
we know that $B^{\mrm{ft}}[s^{-1}]$ is a smooth (resp.\ \'etale) $A$-ring.
Let $i$ be an index corresponding to the point $y$, 
and define the $A$-rings $\til{B}_i := B^{\mrm{ft}}[s^{-1}]$ and
$B_i := B[s^{-1}]$. Then 
$\til{B}_i$ is a smooth (resp.\ \'etale) $A$-ring, 
$B_i$ is a localization of $\til{B}_i$ and a principal localization of $B$,
$Y_i := \opn{Spec}(B_i)$ is a principal affine open set in $Y$, and 
$y \in Y_i$. 
See Figure \ref{fig:15}, in which $\til{Y}_i := \opn{Spec}(\til{B}_i)$. 

Performing the construction above for all points $y \in Y$ we obtain a covering 
$Y = \bigcup_i Y_i$ with the required properties. 
\end{proof}

Of course the covering in the theorem can be made finite, because 
$\opn{Spec}(B)$ is quasi-compact. 

\begin{rem} \label{rem:1560}
In the situation of Theorem \ref{thm:1015}, and its proof, the map of schemes
$\til{\phi}_i : \til{Y}_i \to X$ is smooth. 
The map of schemes $Y_i \to \til{Y}_i$ is an embedding of topological spaces, 
but it usually {\em is not an embedding of schemes}, and moreover it 
is usually not a finite type map of schemes. 
The reason is this: let $S_i \sub \til{B}_i$ be a multiplicatively closed set 
such that $B_i \cong (\til{B}_i)_{S_i}$. If $S_i$ is finitely generated as a 
multiplicative monoid, then the ring homomorphism $\til{B}_i \to B_i$ is finite 
type; but otherwise $\til{B}_i \to B_i$ is just essentially finite type. 
\end{rem}

\begin{figure}[!t]
\centering
\includegraphics[scale=0.17]{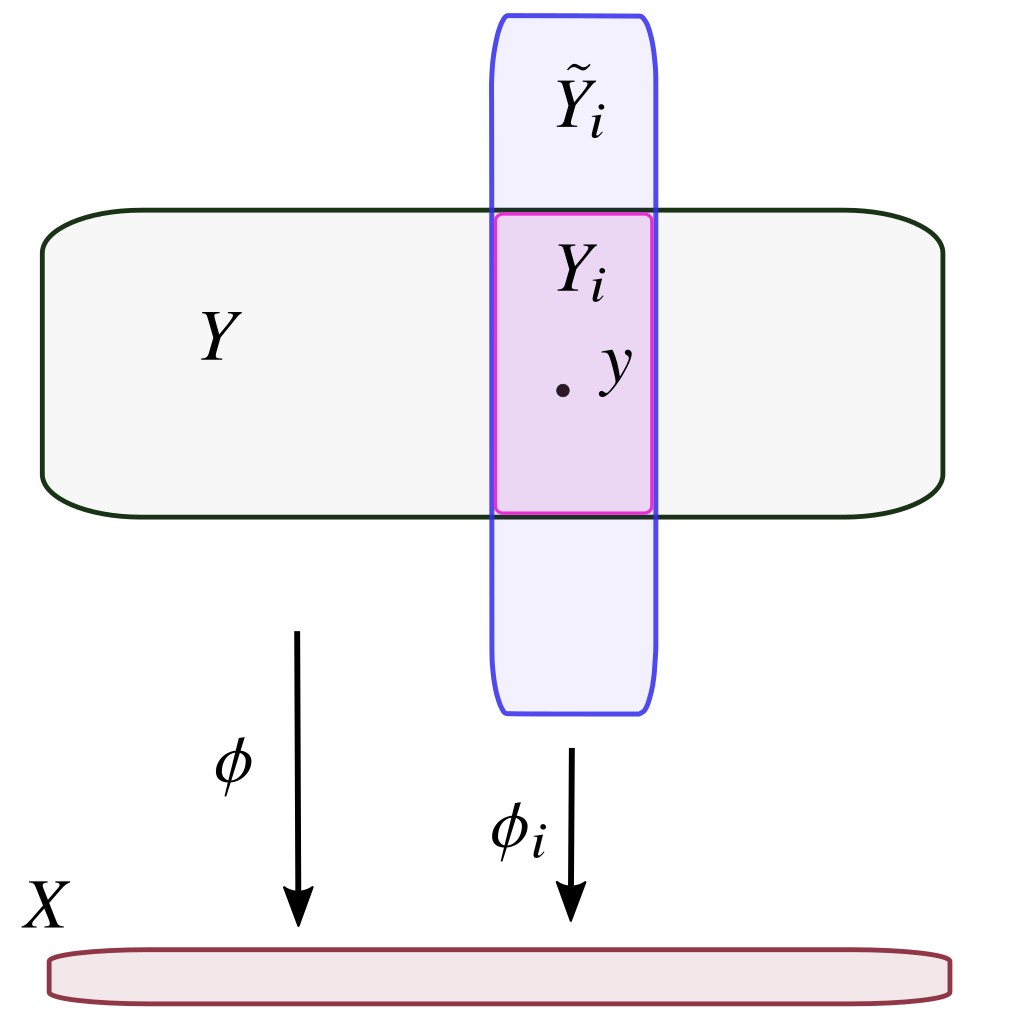}
\caption{Illustration for the proof of Theorem \ref{thm:1015} and for Remark 
\ref{rem:1560}.} 
\label{fig:15}
\end{figure}

Here is a quick review of some facts on {\em differential forms}, following 
\cite[Section 20]{EGA-0IV}, \cite[Section 16]{EGA-IV} and 
\cite[Section 25]{Ma}.
Given an EFT ring homomorphism $A \to B$, the module of differential $1$-forms 
of $B / A$ is $\Om^1_{B / A}$. There is a universal $A$-linear derivation 
$\d_{B / A} : B \to \Om^1_{B / A}$; namely for every $B$-module $N$ the 
$A$-linear homomorphism 
$\opn{Hom}_B(\Om^1_{B / A}, N) \to \opn{Der}_A(B, N)$, 
$\phi \mapsto \phi \circ \d_{B / A}$, 
is bijective. 

There is a standard presentation of the universal derivation
$\bigl( \Om^1_{B / A}, \d_{B / A} \bigr)$, as we explain below. 

\begin{dfn} \label{dfn:1544}
Given a ring homomorphism $A \to B$, we have the multiplication homomorphism
$\opn{mult}_{B / A} : B \ot_A B \to B$, 
$\opn{mult}_{B / A}(b_1 \ot b_2) := b_1 \cd b_2$. 
Its kernel is the diagonal ideal
$\opn{I}_{B / A} := \opn{Ker} (\opn{mult}_{B / A}) \sub B \ot_A B$. 
\end{dfn}

Geometrically, letting $X := \opn{Spec}(A)$ and $Y := \opn{Spec}(B)$, 
the diagonal embedding
$\opn{diag}_{Y / X} : Y \to Y \times_X Y$,
is 
$\opn{diag}_{Y / X} := \opn{Spec}(\opn{mult}_{B / A})$.
In other words, $\opn{I}_{B / A}$ is the defining ideal of the diagonal:
$\opn{diag}_{Y / X}(Y) = \opn{Zer}_{Y \times_X Y}(\opn{I}_{B / A})
\sub Y \times_X Y$. 
This is why we call it the diagonal ideal.

\begin{dfn} \label{dfn:1935}
Let $A \to B$ be a ring homomorphism. For an element $b \in B$ we define 
$\til{\d}_{B / A}(b) := b \ot 1 - 1 \ot b \in \opn{I}_{B / A}$.
\end{dfn}

It is not hard to show that the elements $\til{\d}_{B / A}(b)$ generate 
the ideal $\opn{I}_{B / A} \sub B \ot_A B$. 
There's a unique $B$-module 
isomorphism
\begin{equation} \label{eqn:1154}
\opn{I}_{B / A} \, / \, \opn{I}_{B / A}^2 \iso \Om^1_{B / A}
\end{equation}
with formula  
\begin{equation} \label{eqn:1136}
\bigl( \til{\d}_{B / A}(b) + \opn{I}_{B / A}^2 \bigr) \mapsto \d_{B / A}(b)  
\end{equation}
for $b \in B$. 
Since $B \ot_A B$ is a noetherian ring, it follows that $\opn{I}_{B / A}$ is a 
finitely generated ideal, and hence $\Om^1_{B / A}$ is a 
finitely generated $B$-module. Here is another consequence of this 
presentation: because the elements
$\til{\d}_{B / A}(b)$ generate $\opn{I}_{B / A}$ as an ideal, it follows that 
the $1$-forms $\d_{B / A}(b)$ generate $\Om^1_{B / A}$ as a $B$-module.

Given a sequence $\bb = (b_1, \ldots, b_n)$ of elements in the ring $B$, we let 
\begin{equation} \label{eqn:newa}
\d_{B / A}(\bb) := \bigl( \d_{B / A}(b_1), \ldots, \d_{B / A}(b_n) \bigr)
\end{equation}
and 
\begin{equation} \label{eqn:newb}
\til{\d}_{B / A}(\bb) := \bigl( \til{\d}_{B / A}(b_1), \ldots, 
\til{\d}_{B / A}(b_n) \bigr),
\end{equation}
which are seq\-uences in $\Om^1_{B / A}$ and $\opn{I}_{B / A}$ resp. 

The module of differentials $\Om^1_{- / -}$ has the following  
functorialities. Given a commutative diagram 
\begin{equation} \label{eqn:1665}
\begin{tikzcd} [column sep = 5ex, row sep = 4ex] 
A
\ar[r, "{}"]
\ar[d, "{f}"']
&
B
\ar[d, "{g}"]
\\
A'
\ar[r]
&
B'
\end{tikzcd} 
\end{equation}
in $\cat{Rng}$, there is an induced forward homomorphism of $B$-modules 
$\Om^1_{g / f} : \Om^1_{B / A} \to \Om^1_{B' / A'}$
whose formula is 
$\Om^1_{g / f}(\d_{B / A}(b)) = \d_{B' / A'}(g(b))$.

Now suppose that in diagram (\ref{eqn:1665}) we have $A = A'$. Renaming objects 
and morphisms, we get these ring homomorphisms
$A \xar{f} B \xar{g} C$. Let's write 
$\Om^1_{g / A} := \Om^1_{g / \opn{id}_A} : \Om^1_{B / A} \to \Om^1_{C / A}$ and 
$\Om^1_{C / f} := \Om^1_{\opn{id}_C / f} : \Om^1_{C / A} \to \Om^1_{C / B}$.  
These homomorphisms sit inside an exact sequence of $C$-modules 
\begin{equation} \label{eqn:1070}
C \ot_B \Om^1_{B / A} \xar{\opn{id} \ot \, \Om^1_{g / A}} \Om^1_{C / A} 
\xar{\Om^1_{C / f}} \Om^1_{C / B} \to 0 ,
\end{equation}
called the {\em first fundamental exact sequence}.  
If $g : B \to C$ is formally smooth, then the sequence 
\begin{equation} \label{eqn:1071}
0 \to C \ot_B \Om^1_{B / A} \xar{\opn{id} \ot \, \Om^1_{g / A}} \Om^1_{C / A} 
\xar{\Om^1_{C / f}} \Om^1_{C / B} \to 0
\end{equation}
is split exact. See \cite[Theorem 20.5.7]{EGA-0IV} or 
\cite[Theorem 25.1]{Ma}. A necessary and 
sufficient condition for this split-exactness is that $g$ is  
formally smooth relative to $A$ (a property to be recalled later). 

The last property of $\Om^1_{- / -}$ we want to mention here is this.
Assume that diagram (\ref{eqn:1665}) is cocartesian, i.e.\ 
$B' \cong A' \ot_A B$. Then the homomorphism of $B'$-modules 
\begin{equation} \label{eqn:11665}
\opn{id} \ot \, \Om^1_{g / f} : 
B' \ot_B \Om^1_{B / A} \to \Om^1_{B' / A'}
\end{equation}
is bijective. See \cite[Proposition 20.5.5]{EGA-IV}.

\begin{prop} \label{prop:1141}
Let $A \to B \xar{g} C$ be EFT ring homomorphisms, and assume that $g$ is 
essentially \'etale. Then the homomorphism 
$\opn{id} \ot \, \Om^1_{g / A} : C \ot_B \Om^1_{B / A} \to \Om^1_{C / A}$
is bijective. 
\end{prop}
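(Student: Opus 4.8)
The statement follows by combining the first fundamental exact sequence with the vanishing of relative differentials for an essentially \'etale homomorphism. First I would recall that by Proposition \ref{prop:1055}, since $g : B \to C$ is essentially \'etale, it is in particular essentially smooth, and moreover $\Om^1_{C / B} = 0$. The essential smoothness of $g$ means that the first fundamental sequence is not merely right exact (as in (\ref{eqn:1070})) but split short exact, in the form (\ref{eqn:1071}):
\[
0 \to C \ot_B \Om^1_{B / A} \xar{\opn{id} \ot \, \Om^1_{g / A}} \Om^1_{C / A}
\xar{\Om^1_{C / g}} \Om^1_{C / B} \to 0 .
\]

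Now the right-hand term $\Om^1_{C / B}$ vanishes, so the sequence degenerates to an isomorphism: the map $\opn{id} \ot \, \Om^1_{g / A} : C \ot_B \Om^1_{B / A} \to \Om^1_{C / A}$ is both injective (from exactness of (\ref{eqn:1071}) on the left) and surjective (its cokernel injects into $\Om^1_{C / B} = 0$, using right-exactness of (\ref{eqn:1070})). Hence it is bijective, which is exactly the claim.

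There is essentially no obstacle here; the only point requiring a little care is invoking the \emph{split short exact} version (\ref{eqn:1071}) rather than just the right-exact version (\ref{eqn:1070}) --- for this one needs $g$ to be formally smooth, which is guaranteed by Proposition \ref{prop:1055}(ii) once we know $g$ is essentially \'etale. Alternatively, one could avoid the splitting altogether: surjectivity is immediate from (\ref{eqn:1070}) together with $\Om^1_{C / B} = 0$, and for injectivity one notes that an essentially \'etale homomorphism is in particular formally unramified and formally smooth, so the classical argument (the conormal/cotangent sequence for $A \to B \to C$ with the square-zero extension $C \ot_B C / \opn{I}_{B/A}^2 \to C$) shows the left-hand map is a split monomorphism. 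Either way the proof is two lines once the setup from the excerpt is in place.
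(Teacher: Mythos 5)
Your proof is correct and matches the paper's proof exactly: the paper likewise invokes Proposition \ref{prop:1055} to get $\Om^1_{C/B} = 0$ and then reads off the isomorphism from the short exact sequence (\ref{eqn:1071}). The extra discussion you include about why (\ref{eqn:1071}) rather than just (\ref{eqn:1070}) applies is a correct and sensible clarification, though the paper leaves it implicit.
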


\begin{proof}
By Proposition \ref{prop:1055} we know that 
$\Om^1_{C / B} = 0$. Now use the exact sequence (\ref{eqn:1071}).
\end{proof}

\begin{thm} \label{thm:1140}
Let $f : A \to B$ be an essentially smooth homomorphism between noetherian 
rings. Then:
\begin{enumerate}
\item The ring $B$ is flat over $A$.

\item The module of differentials $\Om^1_{B / A}$ is a finitely generated 
projective $B$-module.
\end{enumerate}
\end{thm}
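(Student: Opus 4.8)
The plan is to reduce both statements to the case of a smooth homomorphism, using the local structure theorem for essentially smooth homomorphisms (Theorem \ref{thm:1015}) together with the base-change and localization properties of $\Om^1_{-/-}$ recalled above. Both flatness and projectivity of $\Om^1_{B/A}$ are local conditions on $\opn{Spec}(B)$, so it suffices to verify them after passing to a principal localization $B_i$ of $B$; and by Theorem \ref{thm:1015} each such $B_i$ factors as $A \to \til{B}_i \to B_i$ with $A \to \til{B}_i$ smooth and $\til{B}_i \to B_i$ a localization. Since localizations are flat, and flatness is transitive, it is enough to prove part (1) when $f$ is smooth; and since $\Om^1_{B_i / A} \cong B_i \ot_{\til{B}_i} \Om^1_{\til{B}_i / A}$ by the base-change isomorphism (\ref{eqn:11665}) (applied to the cocartesian square given by the localization $\til{B}_i \to B_i$, noting $B_i \cong B_i \ot_{\til{B}_i} \til{B}_i$), and a localization of a finitely generated projective module is finitely generated projective, it is enough to prove part (2) when $f$ is smooth.

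So the core is: if $f : A \to B$ is smooth (finite type and formally smooth), then $B$ is flat over $A$ and $\Om^1_{B/A}$ is finitely generated projective over $B$. For part (1) I would cite the standard fact that smooth homomorphisms are flat, e.g. \cite[Th\'eor\`eme 17.5.1]{EGA-IV} or \cite[Theorem 28.M]{Ma} — a smooth $A$-algebra is flat, indeed locally a standard smooth algebra, hence flat. For part (2), finite generation of $\Om^1_{B/A}$ over $B$ was already established in the text (the $1$-forms $\d_{B/A}(b)$ generate it, and $\opn{I}_{B/A}$ is finitely generated since $B\ot_A B$ is noetherian). Projectivity follows from formal smoothness: the obstruction-theoretic characterization of formal smoothness says that $f$ is formally smooth iff the first fundamental sequence (\ref{eqn:1071}) is split exact for every factorization, and more directly, a finitely presented $A$-algebra $B$ is formally smooth over $A$ iff $\Om^1_{B/A}$ is projective over $B$ and the functor of infinitesimal lifts is unobstructed; the relevant reference is \cite[Proposition 17.2.3]{EGA-IV} or \cite[Theorem 25.3]{Ma}, which states precisely that for a smooth $A$-algebra $B$, $\Om^1_{B/A}$ is a projective $B$-module of finite rank. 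Alternatively, one can argue concretely: locally $B$ is standard smooth, $B = (A[t_1,\dots,t_m]/(g_1,\dots,g_c))_h$ with the Jacobian $(\partial g_j/\partial t_i)$ having an invertible $c\times c$ minor, and then (\ref{eqn:1071}) shows $\Om^1_{B/A}$ is a direct summand of the free module $\bigoplus_i B\,\d t_i$ modulo the relations $\d g_j$, which the Jacobian condition makes a free complement — giving projectivity directly.

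The main obstacle is purely bookkeeping rather than mathematical: one must be careful that the base-change isomorphism (\ref{eqn:11665}) is being applied to a genuinely cocartesian square, which it is for the localization $\til{B}_i \to B_i = (\til{B}_i)_{S_i}$ since $B_i \ot_{\til{B}_i} B_i \cong B_i$, so $\Om^1_{B_i/A} \cong B_i \ot_{\til{B}_i}\Om^1_{\til{B}_i/A}$; and that "finitely generated projective" descends correctly through these localizations and ascends from the cover $\opn{Spec}(B) = \bigcup_i \opn{Spec}(B_i)$. One also needs the elementary facts that flatness is preserved under composition and that a module over $B$ which becomes finitely generated projective over each $B_i$ in a finite principal affine cover is itself finitely generated projective over $B$ — both standard. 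I would close by remarking that, in fact, the rank of the projective $B$-module $\Om^1_{B/A}$ is locally constant and equals the relative dimension of $f$, which motivates the notion of \emph{constant differential relative dimension} used later in the paper.
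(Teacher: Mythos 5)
Your strategy matches the paper's exactly: cover $\opn{Spec}(B)$ by principal affines via Theorem \ref{thm:1015}, factor each $A \to B_i$ through a smooth $\til{B}_i$, cite \cite[Corollaire 17.5.2]{EGA-IV} for flatness of $\til{B}_i$ over $A$, cite \cite[Proposition 17.2.3$\tup{(i)}$]{EGA-IV} for projectivity of $\Om^1_{\til{B}_i / A}$, and localize. The one slip is the justification of $\Om^1_{B_i / A} \cong B_i \ot_{\til{B}_i} \Om^1_{\til{B}_i / A}$. You cite the base-change isomorphism (\ref{eqn:11665}), but that result changes the \emph{base} ring: it requires a cocartesian square $B' \cong A' \ot_A B$ and yields $B' \ot_B \Om^1_{B/A} \iso \Om^1_{B'/A'}$ with a different base $A'$. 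There is no way to arrange $A, A', B, B'$ in (\ref{eqn:1665}) so that this gives $\Om^1_{B_i / A} \cong B_i \ot_{\til{B}_i} \Om^1_{\til{B}_i / A}$ with the base $A$ held fixed; in particular your parenthetical square (with $\til{B}_i = \til{B}_i$ on top and $B_i = B_i$ on the bottom, and $A := \til{B}_i$) would produce the vacuous $\Om^1_{\til{B}_i / \til{B}_i} \cong \Om^1_{B_i / B_i}$. What you actually need is that $\Om^1$ commutes with localization of the top ring at fixed base; in the paper's framework this is Proposition \ref{prop:1141} (applied to the essentially \'etale $g : \til{B}_i \to B_i$), which is derived from Proposition \ref{prop:1055} and the split-exact first fundamental sequence (\ref{eqn:1071}). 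Swapping that reference in, the rest of your argument — reduction to the smooth case, the citations for the smooth case, and the remarks on local-to-global descent of flatness and of finitely generated projectivity — is exactly the paper's proof.
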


\begin{proof} 
(1) Take an affine open covering $\opn{Spec}(B) = \bigcup_i \opn{Spec}(B_i)$
as in Theorem \ref{thm:1015}. For each $i$ the $A$-ring $\til{B}_i$ is smooth, 
and hence, according to \cite[Corollaire 17.5.2]{EGA-IV}, $\til{B}_i$ is flat 
over $A$. Because $\til{B}_i \to B_i$ is a localization, we see that $B_i$ is a 
flat $A$-ring. Due to the local nature of flatness, it follows that $B$ is a 
flat $A$-ring. 

\medskip \noindent
(2) We already know that $\Om^1_{B / A}$ is a finitely generated $B$-modules
(because $A \to B$ is EFT). Showing that $\Om^1_{B / A}$ is projective 
(which in this case is the same as being flat) is a local question. 
Take the open covering of $\opn{Spec}(B)$ from above. 
It suffices to prove that for every $i$ the $B_i$-module
$\Om^1_{B_i / A}$ is a projective $B_i$-module.
Since the $A$-ring $\til{B}_i$ from Theorem \ref{thm:1015} is smooth, according 
to \cite[Proposition 17.2.3$\tup{(i)}$]{EGA-IV} the 
$\til{B}_i$-module $\Om^1_{\til{B}_i / A}$ is a finitely generated projective 
module. By Proposition \ref{prop:1141} we know that 
$B_i \ot_{\til{B}_i} \Om^1_{\til{B}_i / A} \cong \Om^1_{B_i / A}$,
and this is is a finitely generated projective $B_i$-module. 
\end{proof}

Suppose $N$ is a finitely generated projective $B$-module. For every prime 
ideal $\q \in \opn{Spec}(B)$ the $B_{\q}$-module $N_{\q}$ is free, and its  
rank is denoted by $\opn{rank}_{B_{\q}}(N_{\q})$. The function 
$\q \mapsto \opn{rank}_{B_{\q}}(N_{\q})$ is constant on connected components of 
$\opn{Spec}(B)$. Sometimes the rank is constant on $\opn{Spec}(B)$, and then we 
denote it by $\opn{rank}_{B}(N)$. 
In the next definition this is done for the $B$-module 
$N := \Om^1_{B / A}$, which by Theorem \ref{thm:1015} is finitely 
generated projective.

\begin{dfn} \label{dfn:1067}
Let $f : A \to B$ be an essentially smooth homomorphism between noetherian 
rings. If the projective $B$-module $\Omega^1_{B / A}$ has constant rank $n$, 
then we call $f$ an {\em essentially smooth homomorphism of constant 
differential relative dimension $n$}, and $B$ is called an {\em essentially 
smooth $A$-ring of constant differential relative dimension $n$}.
\end{dfn}

By Proposition \ref{prop:1055}, an essentially \'etale homomorphism is the same 
as an essentially smooth homomorphism of differential relative dimension $0$.

\begin{rem} \label{rem:1065}
If $f : A \to B$ is a smooth homomorphism, then the differential relative  
dimension of $f$ equals the geometric relative dimension of $f$. Namely, for a 
prime ideal $\q \sub B$ with preimage $\p := f^{-1}(\q) \sub A$,
there is equality 
$\opn{rank}_{B_{\q}}(\Omega^1_{B_{\q} / A_{\p}}) = 
\opn{dim} \bigl( \bk(\p) \ot_{A_{\p}} B_{\q} \bigr)$.
Here $\opn{dim}$ is Krull dimension, and $\bk(\p)$ is the residue field
of $A_{\p}$. See \cite[Proposition 17.10.2]{EGA-IV}.

This can be false when $f : A \to B$ is essentially smooth (but not finite 
type). Take for example a field $A$, and let $B := A(t)$, the field of rational 
functions in a single variable $t$. Then the differential relative dimension of 
$f$ is $1$, whereas the geometric relative dimension of $f$ is $0$. 
\end{rem}

\begin{prop} \label{prop:1140}
Let $A \to B$ be an essentially smooth ring homomorphism of constant 
differential relative dimension $n$. Given $\p \in \opn{Spec}(B)$, there exists 
a sequence $\bb = (b_1, \ldots, b_n)$ in $B$ and an element $s \in B - \p$, 
s.t.\ the sequence $\d_{B / A}(\bb)$ is a basis of the the 
$B_s$-module 
$B_s \ot_B \Om^1_{B / A} \cong \Om^1_{B_s / A}$. 
\end{prop}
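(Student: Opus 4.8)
The statement is local on $\opn{Spec}(B)$, so I would work near a fixed prime $\p \in \opn{Spec}(B)$. The module $N := \Om^1_{B / A}$ is finitely generated projective of constant rank $n$ by Theorem \ref{thm:1140}(2), hence $N_{\p}$ is a free $B_{\p}$-module of rank $n$. The universal derivation $\d_{B / A} : B \to \Om^1_{B / A}$ has the property that the $1$-forms $\d_{B / A}(b)$, for $b \in B$, generate $\Om^1_{B / A}$ as a $B$-module (this was recalled after formula (\ref{eqn:1136})). Consequently their images in the free module $N_{\p}$ span it, so I can choose finitely many elements $b_1, \ldots, b_n \in B$ whose differentials $\d_{B / A}(b_1), \ldots, \d_{B / A}(b_n)$ form a basis of $N_{\p}$ over $B_{\p}$. (Concretely: pick $b$'s whose differentials generate $N_{\p}$, then use that a generating set of size $n$ in a free rank-$n$ module over a local ring is automatically a basis, by Nakayama.)

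\medskip
The remaining point is to spread this basis out to a principal localization. Let $\bb = (b_1, \ldots, b_n)$ be the chosen sequence, and consider the $B$-linear homomorphism $\phi : B^{\oplus n} \to \Om^1_{B / A}$ sending the $i$-th standard basis vector to $\d_{B / A}(b_i)$. Both source and target are finitely generated $B$-modules, $B$ is noetherian, and $\phi_{\p}$ is an isomorphism by construction. Therefore $\opn{Ker}(\phi)$ and $\opn{Coker}(\phi)$ are finitely generated $B$-modules that vanish after localizing at $\p$; a standard argument (clearing finitely many denominators, using that each generator of $\opn{Ker}(\phi)$ and $\opn{Coker}(\phi)$ is killed by some element of $B - \p$) produces a single element $s \in B - \p$ with $\opn{Ker}(\phi)_s = \opn{Coker}(\phi)_s = 0$. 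Then $\phi_s : B_s^{\oplus n} \iso (\Om^1_{B / A})_s$ is an isomorphism, i.e. $\d_{B / A}(b_1), \ldots, \d_{B / A}(b_n)$ is a basis of $B_s \ot_B \Om^1_{B / A}$. Finally, the first fundamental exact sequence and the compatibility of $\Om^1$ with base change (formula (\ref{eqn:11665}), applied to the localization $B \to B_s$) identify $B_s \ot_B \Om^1_{B / A}$ with $\Om^1_{B_s / A}$, matching $\d_{B / A}(b_i)$ with $\d_{B_s / A}(b_i)$.

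\medskip
There is no serious obstacle here — the statement is essentially a local-triviality/spreading-out argument for the finitely generated projective module $\Om^1_{B / A}$, combined with the fact that it is generated by exact differentials. The only mild care needed is in the passage from $\p$ to a principal localization $B_s$: one must check that finitely many denominators suffice, which uses the noetherian hypothesis (both $\opn{Ker}(\phi)$ and $\opn{Coker}(\phi)$ are finitely generated, so a common $s$ exists). Everything else is immediate from Theorem \ref{thm:1140} and the elementary properties of $\Om^1_{- / -}$ reviewed in this section.
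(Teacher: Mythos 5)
Your proof is correct and follows essentially the same route as the paper: use Theorem \ref{thm:1140} to get projectivity of $\Om^1_{B/A}$, use that exact differentials generate it to produce (via Nakayama at $\p$) a sequence $\bb$ of length $n$ whose differentials form a basis of the stalk, and then spread out to a principal open $\opn{Spec}(B_s)$. The paper compresses the spreading-out step to ``the standard argument shows,'' which you have correctly spelled out via the finitely generated kernel and cokernel of $\phi$.
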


\begin{proof}
We know, from Theorem \ref{thm:1140}, that $\Om^1_{B / A}$ is a projective 
$B$-module of rank $n$. Therefore $\bk(\p) \ot_B \Om^1_{B / A}$ 
is a free $\bk(\p)$-module of rank $n$. We also know that 
$\Om^1_{B / A}$ is generated as a $B$-module by the differential forms
$\d_{B / A}(b)$, $b \in B$. This implies that there exists a sequence 
$\bb = (b_1, \ldots, b_n)$ in $B$ 
s.t.\ the image of the sequence of forms $\d_{B / A}(\bb)$ in 
$\bk(\p) \ot_B \Om^1_{B / A}$ is a basis of this $\bk(\p)$-module.
By the Nakayama Lemma, the  sequence of forms $\d_{B / A}(\bb)$
is a basis of the free $B_{\p}$-module 
$B_{\p} \ot_B \Om^1_{B / A}$. The standard argument shows that this remains 
true on a suitable principal open neighborhood 
$\opn{Spec}(B_s)$ of $\p$ in $\opn{Spec}(B)$.
\end{proof}

\begin{rem} \label{rem:1220}
We shall encounter several situations of localization of tensor products of 
rings. First, let $s \in B$. Then there is a unique isomorphism of
$(B \ot_A B)$-rings 
$B_s \ot_A B_s \cong (B \ot_A B)_{s \ot s}$. The image of 
$\til{s} := s \ot s$ in $B$, under the homomorphism 
$\opn{mult}_{B / A}$, is $s^2$; but $B_{s^2} = B_s$, so 
$(B \ot_A B)_{\til{s}} \ot_{B \ot_A B} B \cong B_s$. 

Second, suppose we are given an element $\til{s} \in B \ot_A B$, with image
$s := \opn{mult}_{B / A}(\til{s}) \in B$. Then 
$(B \ot_A B)_{\til{s}} \ot_{B \ot_A B} B \cong B_s$. 
However, in general the $(B \ot_A B)$-rings
$(B \ot_A B)_{\til{s}}$ and $B_s \ot_A B_s$ are not isomorphic -- see 
Remark \ref{rem:1180}.
\end{rem}

\begin{thm} \label{thm:1141}
Let $A \to B$ be an essentially smooth homomorphism between noetherian rings. 
Then the multiplication homomorphism 
$\opn{mult}_{B / A} : B \ot_A B \to B$
is a regular surjection. 
\end{thm}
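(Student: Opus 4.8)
The plan is to show that the kernel of the (clearly surjective) homomorphism $\opn{mult}_{B / A}$, namely the diagonal ideal $\opn{I}_{B / A} \sub B \ot_A B$, is a regular ideal in the sense of Definition \ref{dfn:1080}; by Definition \ref{dfn:1081} this is exactly the assertion (in the nontrivial case $B \neq 0$). Regularity of an ideal is a condition checked locally on $\opn{Spec}$ of the quotient ring; since localizing $B \ot_A B$ at an element $s \ot s$ amounts to replacing $B$ by $B_s$ and $\opn{I}_{B / A}$ by $\opn{I}_{B_s / A}$ (Remark \ref{rem:1220}), and since regular sequences are preserved under localization (Proposition \ref{prop:1090}), a routine localization argument lets us freely replace $B$ by a principal localization. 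Covering $\opn{Spec}(B)$ by principal opens on which $\opn{rank}(\Om^1_{B / A})$ is constant (possible by Theorem \ref{thm:1140}(2)), we reduce to the case where $A \to B$ has constant differential relative dimension $n$; then, fixing $\p \in \opn{Spec}(B)$ and invoking Proposition \ref{prop:1140}, after one more localization we may assume there is a sequence $\bb = (b_1, \ldots, b_n)$ in $B$ such that $\d_{B / A}(\bb)$ is a $B$-basis of $\Om^1_{B / A}$.

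Next I would recognize the $A$-ring homomorphism $h : A[\bt] \to B$, $t_i \mapsto b_i$, as essentially \'etale. It is essentially finite type: choosing a finite type $A$-subring $B^{\mrm{ft}} \sub B$ containing the $b_i$ with $B$ a localization of $B^{\mrm{ft}}$, the map factors as the finite type homomorphism $A[\bt] \to B^{\mrm{ft}}$ followed by a localization. The induced map $B \ot_{A[\bt]} \Om^1_{A[\bt] / A} \to \Om^1_{B / A}$ carries the basis $\d_{A[\bt] / A}(\bt)$ to the basis $\d_{B / A}(\bb)$, hence is an isomorphism; since $A \to B$ is formally smooth (so $\Om^1_{B / A}$ is projective), the standard cancellation criterion for smoothness (\cite[\S 17.11]{EGA-IV}, \cite[\S 20]{EGA-0IV}) shows $h$ is essentially smooth, and $\Om^1_{B / A[\bt]} = 0$, so by Proposition \ref{prop:1055} the homomorphism $h$ is essentially \'etale. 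In particular $B$ is flat over $A[\bt]$, by Theorem \ref{thm:1140}(1).

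The key point is then that, near the diagonal, the diagonal ideal of $B / A$ is the extension of that of $A[\bt] / A$. Write $A[\bt] \ot_A A[\bt] \cong A[t_1, \ldots, t_n, u_1, \ldots, u_n]$; under $A[\bt] \ot_A A[\bt] \to B \ot_A B$ each $t_i - u_i$ maps to $\til{\d}_{B / A}(b_i)$, the ideal $\opn{I}_{A[\bt] / A} = (t_1 - u_1, \ldots, t_n - u_n)$ maps into $\opn{I}_{B / A}$, and $\opn{I}_{B / A} \big/ \bigl( \opn{I}_{A[\bt] / A} \cd (B \ot_A B) \bigr)$ is the diagonal ideal $\opn{I}_{B / A[\bt]} \sub B \ot_{A[\bt]} B$. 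Because $h$ is essentially \'etale we have $\Om^1_{B / A[\bt]} = 0$, so $\opn{I}_{B / A[\bt]}$ is an idempotent finitely generated ideal of the noetherian ring $B \ot_{A[\bt]} B$, hence generated by an idempotent; a lift $\til{s} \in B \ot_A B$ of the complementary idempotent automatically maps to $1 \in B$, so $\til{s} \equiv 1 \pmod{\opn{I}_{B / A}}$, $\til{s}$ lies outside every prime over $\opn{I}_{B / A}$, and inverting it yields $(\opn{I}_{B / A})_{\til{s}} = \opn{I}_{A[\bt] / A} \cd (B \ot_A B)_{\til{s}}$.

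Finally, $(t_1 - u_1, \ldots, t_n - u_n)$ is visibly a regular sequence in the polynomial ring $A[t_1, \ldots, t_n, u_1, \ldots, u_n]$ with nonzero quotient $A[u_1, \ldots, u_n]$, and $(B \ot_A B)_{\til{s}}$ is flat over this polynomial ring (as $B$ is flat over $A[\bt]$, so $B \ot_A B$ is flat over $A[\bt] \ot_A A[\bt]$, and localizations of flat algebras are flat). Hence, by the argument proving Proposition \ref{prop:1090} (flatness gives the vanishing clause, and the quotient is $B \neq 0$ for the nonvanishing clause), the images $\til{\d}_{B / A}(\bb)$ form a regular sequence in $(B \ot_A B)_{\til{s}}$ generating $(\opn{I}_{B / A})_{\til{s}}$. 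This proves $\opn{I}_{B / A}$ is a regular ideal, i.e.\ $\opn{mult}_{B / A}$ is a regular surjection. The step I expect to be the main obstacle is recognizing $h : A[\bt] \to B$ as essentially \'etale, which requires the non--finite-type form of the cancellation criterion for smoothness; alternatively one can bypass it by first reducing to the case $A \to B$ smooth via Theorem \ref{thm:1015} and then appealing to the classical fact that the diagonal of a smooth morphism is a regular immersion (\cite[\S 16.9, \S 17.12]{EGA-IV}).
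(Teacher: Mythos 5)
Your proof is correct, but it takes a genuinely different route from the paper. The paper's own proof reduces to the finite-type smooth case via Theorem \ref{thm:1015}, then cites \cite[Proposition 17.12.4]{EGA-IV} (the diagonal of a smooth morphism is a quasi-regular immersion), applies Corollary \ref{cor:1130} to pass from quasi-regular to regular, and descends back to the essentially smooth case using Lemma \ref{lem:1195} and a delicate bookkeeping of principal opens in $\til{Y}' \times_X \til{Y}'$. Your argument instead anticipates the notion of \'etale coordinate system (Definition \ref{dfn:1175}) and the implication (ii) $\Rightarrow$ (i) of Theorem \ref{thm:1175}: you find coordinates $\bb$, recognize $A[\bt] \to B$ as essentially \'etale via the cancellation criterion for formal smoothness, observe that $\opn{I}_{B / A[\bt]}$ is an idempotent finitely generated ideal in a noetherian ring and hence generated by an idempotent (a more elementary route to the content of Corollary \ref{cor:1280} than Theorem \ref{thm:1240}), and then produce an explicit regular sequence $\til{\d}_{B / A}(\bb)$ by flat base change from the polynomial ring $A[\bt] \ot_A A[\bt]$. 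The one small imprecision is the phrase ``regular sequence'': flat base change from a regular sequence with nonzero quotient does in fact give a regular sequence in the sense of Matsumura (tensor the short exact sequences of multiplication maps step by step), so the conclusion is fine, but it is worth being explicit that this is the mechanism rather than merely gesturing at Proposition \ref{prop:1090}. What your approach buys is an explicit local generating regular sequence for $\opn{I}_{B / A}$ in terms of differentials --- which the paper only extracts later, in the proof of Theorem \ref{thm:1175}(ii)$\Rightarrow$(iii) where it \emph{uses} Theorem \ref{thm:1141} --- at the cost of needing the non-finite-type cancellation criterion up front. The paper's route is shorter and logically cleaner within its own ordering of results, but relies on the heavier citation of EGA IV \S 17.12. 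Both are valid; there is no circularity in yours, since Theorem \ref{thm:1175}(ii)$\Rightarrow$(i) and the idempotent splitting of $B \ot_{A[\bt]} B$ depend only on material preceding Theorem \ref{thm:1141}.
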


See Definition \ref{dfn:1081} regarding the concept of regular surjection, and 
Definition \ref{dfn:1544} regarding the multiplication homomorphism. 

We shall need a lemma for the proof. An $A$-ring homomorphism 
$g : B \to B'$ induces an $A$-ring homomorphism 
$g \ot g : B \ot_A B \to B' \ot_A B'$,
and this induces a $(B \ot_A B)$-module homomorphism 
$\opn{I}_{g / A} : \opn{I}_{B / A} \to \opn{I}_{B' / A}$. 
There is a commutative diagram 
\begin{equation} \label{eqn:1666}
\begin{tikzcd} [column sep = 5ex, row sep = 4ex] 
\opn{I}_{B / A}
\ar[r, "{\opn{I}_{g / A}}"]
\ar[d]
&
\opn{I}_{B' / A}
\ar[d]
\\
\Om^1_{B / A}
\ar[r, "{\Om^1_{g / A}}"]
&
\Om^1_{B' / A}
\end{tikzcd} 
\end{equation}
of $(B \ot_A B)$-modules, in which the vertical arrows come from 
(\ref{eqn:1154}). 

\begin{lem} \label{lem:1195}
Let $A$ be a ring, and let $g : B \to B'$ be a localization 
homomorphism of $A$-rings. Then the $(B' \ot_A B')$-module  homomorphism  
\[ \opn{id} \ot \, \opn{I}_{g / A} :
(B' \ot_A B') \ot_{B \ot_A B}  \opn{I}_{B / A} \to \opn{I}_{B' / A} \]
is bijective. 
\end{lem}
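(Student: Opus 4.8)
The claim is that for a localization homomorphism $g:B\to B'$ of $A$-rings, the base-change map $\opn{id}\ot\opn{I}_{g/A}:(B'\ot_A B')\ot_{B\ot_A B}\opn{I}_{B/A}\to\opn{I}_{B'/A}$ is an isomorphism. The plan is to realize both sides as kernels of multiplication maps and use flatness of localization. Concretely, let $S\sub B$ be a multiplicatively closed set with $B'\cong B_S$. Then $B'\ot_A B'\cong(B\ot_A B)_{\til S}$, where $\til S$ is the image in $B\ot_A B$ of the multiplicatively closed set $S\ot 1$ together with $1\ot S$ (equivalently, the localization of $B\ot_A B$ inverting all elements $s\ot 1$ and $1\ot s$ for $s\in S$); this is the standard identification $B_S\ot_A B_S\cong(B\ot_A B)[(S\ot 1)^{-1}][(1\ot S)^{-1}]$. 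In particular $B'\ot_A B'$ is a flat $(B\ot_A B)$-algebra, being a localization.

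First I would write the two defining short exact sequences
\[ 0\to\opn{I}_{B/A}\to B\ot_A B\xar{\opn{mult}_{B/A}} B\to 0 \]
and
\[ 0\to\opn{I}_{B'/A}\to B'\ot_A B'\xar{\opn{mult}_{B'/A}} B'\to 0 \]
in $\cat{M}(B\ot_A B)$. Applying the exact functor $(B'\ot_A B')\ot_{B\ot_A B}(-)$ to the first sequence (exact because $B'\ot_A B'$ is flat, indeed a localization, over $B\ot_A B$) gives a short exact sequence
\[ 0\to(B'\ot_A B')\ot_{B\ot_A B}\opn{I}_{B/A}\to B'\ot_A B'\to (B'\ot_A B')\ot_{B\ot_A B} B\to 0 . \]
The middle terms of this sequence and of the second defining sequence agree, and the canonical map between them is $\opn{mult}_{B'/A}$ on the right end once we identify $(B'\ot_A B')\ot_{B\ot_A B} B$ with $B'$. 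So the key point reduces to checking that the natural map $(B'\ot_A B')\ot_{B\ot_A B} B\to B'$ is an isomorphism, i.e. that $\opn{mult}_{B/A}$ base-changes correctly along $B\ot_A B\to B'\ot_A B'$. This is exactly Remark \ref{rem:1220}: the image of $\til s=s\ot s$ under $\opn{mult}_{B/A}$ is $s^2$, and $B_{s^2}=B_s$, so more generally $(B'\ot_A B')\ot_{B\ot_A B} B\cong B_S = B'$, compatibly with the multiplication maps. Then the five lemma (or simply comparing the two short exact sequences, whose right-hand and middle terms are identified) forces $\opn{id}\ot\opn{I}_{g/A}$ to be an isomorphism.

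I expect the only mild obstacle to be bookkeeping: verifying carefully that the identification $B'\ot_A B'\cong(B\ot_A B)_{\til S}$ is a localization (so that tensoring is exact) and that under it $\opn{mult}_{B'/A}$ corresponds to the localized multiplication map, so that the comparison of the two short exact sequences is compatible on the nose. Once those identifications are in place — and they are essentially spelled out in Remark \ref{rem:1220} and Example \ref{exa:1050} — the proof is just the flat base change of a short exact sequence. I would keep the write-up to a few lines, citing Remark \ref{rem:1220} for the ring identifications and using exactness of localization.
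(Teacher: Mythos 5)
Your proof is correct and follows essentially the same route as the paper's: both realize $\opn{I}_{B/A}$ and $\opn{I}_{B'/A}$ as kernels of the multiplication maps, identify $B'\ot_A B'$ as a localization (hence flat) of $B\ot_A B$, note that $(B'\ot_A B')\ot_{B\ot_A B}B\to B'$ is bijective, and compare the two short exact sequences. The paper's write-up is terser (it just exhibits the commutative diagram with exact rows and observes the second is the base change of the first), but the underlying argument is identical to yours.
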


\begin{proof}
We know that the ring homomorphism
$(B' \ot_A B') \ot_{B \ot_A B} B \to B'$
is bijective. Consider the commutative diagram with exact rows 
\[ \begin{tikzcd} [column sep = 5ex, row sep = 4ex] 
0
\ar[r]
&
\opn{I}_{B / A}
\ar[r]
\ar[d, "{\opn{I}_{g / A}}"]
&
B \ot_A B
\ar[r, "{\opn{mult}}"]
\ar[d, "{g \ot g}"]
&
B
\ar[r]
\ar[d, "{g}"]
&
0
\\
0
\ar[r]
&
\opn{I}_{B' / A}
\ar[r]
&
B' \ot_A B'
\ar[r, "{\opn{mult}}"]
&
B'
\ar[r]
&
0
\end{tikzcd} \]
We see that the second row is gotten from the first row by applying the 
operation \lb $(B' \ot_A B') \ot_{B \ot_A B} (-)$.
\end{proof}

\begin{proof}[Proof of Theorem \tup{\ref{thm:1141}}]
Let's write $X := \opn{Spec}(A)$ and $Y := \opn{Spec}(B)$. 
Geometrically we have to prove that the diagonal embedding 
$\opn{diag}_{Y / X} : Y \to Y \times_X Y$
is a regular closed embedding of schemes. 
For convenience we sometimes identify $Y$ with the closed subscheme 
$\opn{diag}_{Y / X}(Y) \sub Y \times_X Y$. 
Thus, according to Definitions \ref{dfn:1080} and \ref{dfn:1081}, given a point 
$y \in Y \sub Y \times_X Y$, we must prove that there is some element 
$\til{s} \in B \ot_A B$ such that 
$y \in \opn{Spec} \bigl( (B \ot_A B)_{\til{s}} \bigr) = 
\opn{NZer}_{Y \times_X Y}(\til{s}) \sub Y \times_X Y$,
and such that the ideal 
\[(B \ot_A B)_{\til{s}} \ot_{B \ot_A B} \opn{I}_{B / A} \sub 
(B \ot_A B)_{\til{s}} \]
is generated by a regular sequence. 
According to Theorem \ref{thm:1015} we can find an affine open neighborhood
$Y' = \opn{Spec}(B')$ of $y$ in $Y$, 
s.t.\ the ring $B'$ is a principal localization of $B$, and the ring 
homomorphisms $A \to B'$ factors through a smooth homomorphism
$A \to \til{B}'$ and a localization homomorphism 
$g : \til{B}' \to B'$. 
Note that $Y'$ can be seen as a topological subspace of the scheme
$\til{Y}' := \opn{Spec}(\til{B}')$. See Figure \ref{fig:5}.
Likewise the product 
$Y' \times_X Y' = \opn{Spec}(B' \ot_A B')$ 
is a topological subspace of the scheme
$\til{Y}' \times_X \til{Y}' =  \opn{Spec}(\til{B}' \ot_A \til{B}')$.
See Figure \ref{fig:6}.

\begin{figure}[h]
\centering
\includegraphics[scale=0.13]{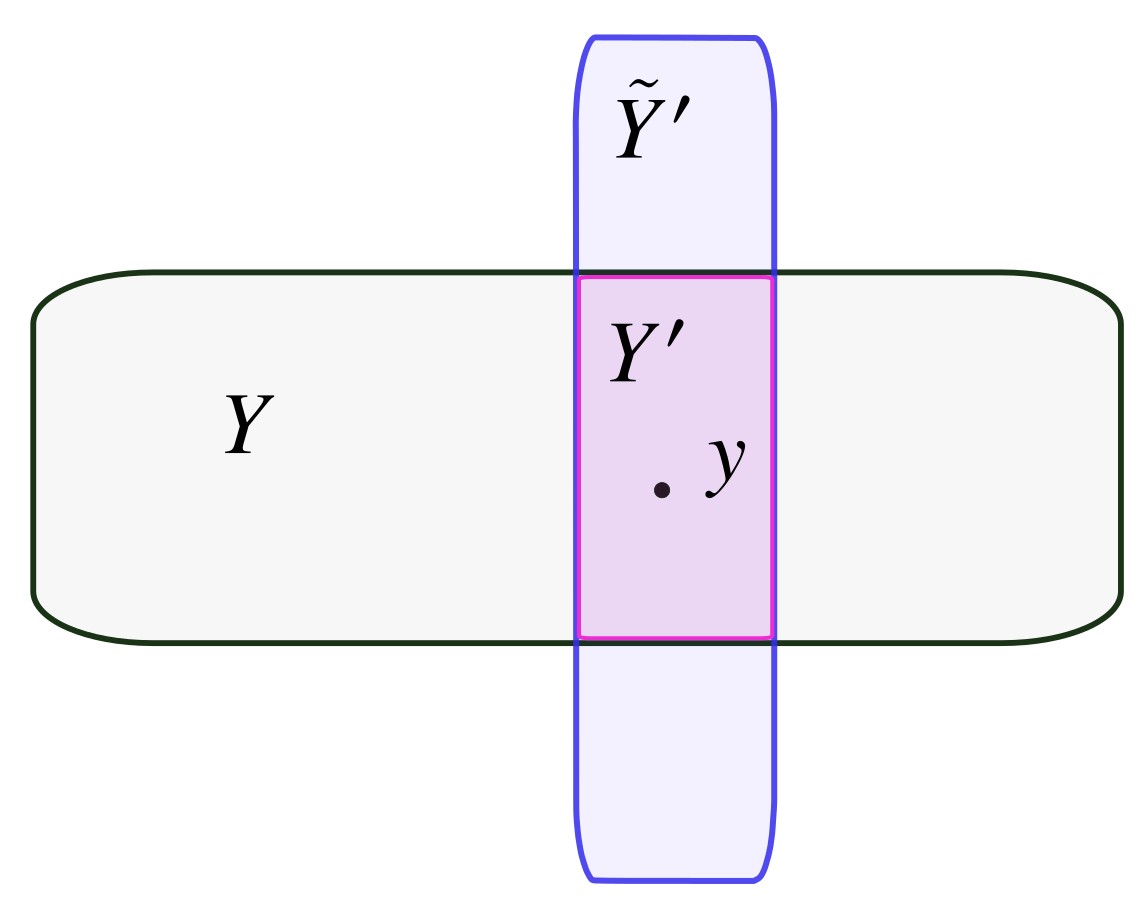}
\caption{
$y \in Y' \sub Y$, and $Y' \sub \til{Y}'$.} 
\label{fig:5}
\end{figure}

\begin{figure}[h]
\centering
\includegraphics[scale=0.23]{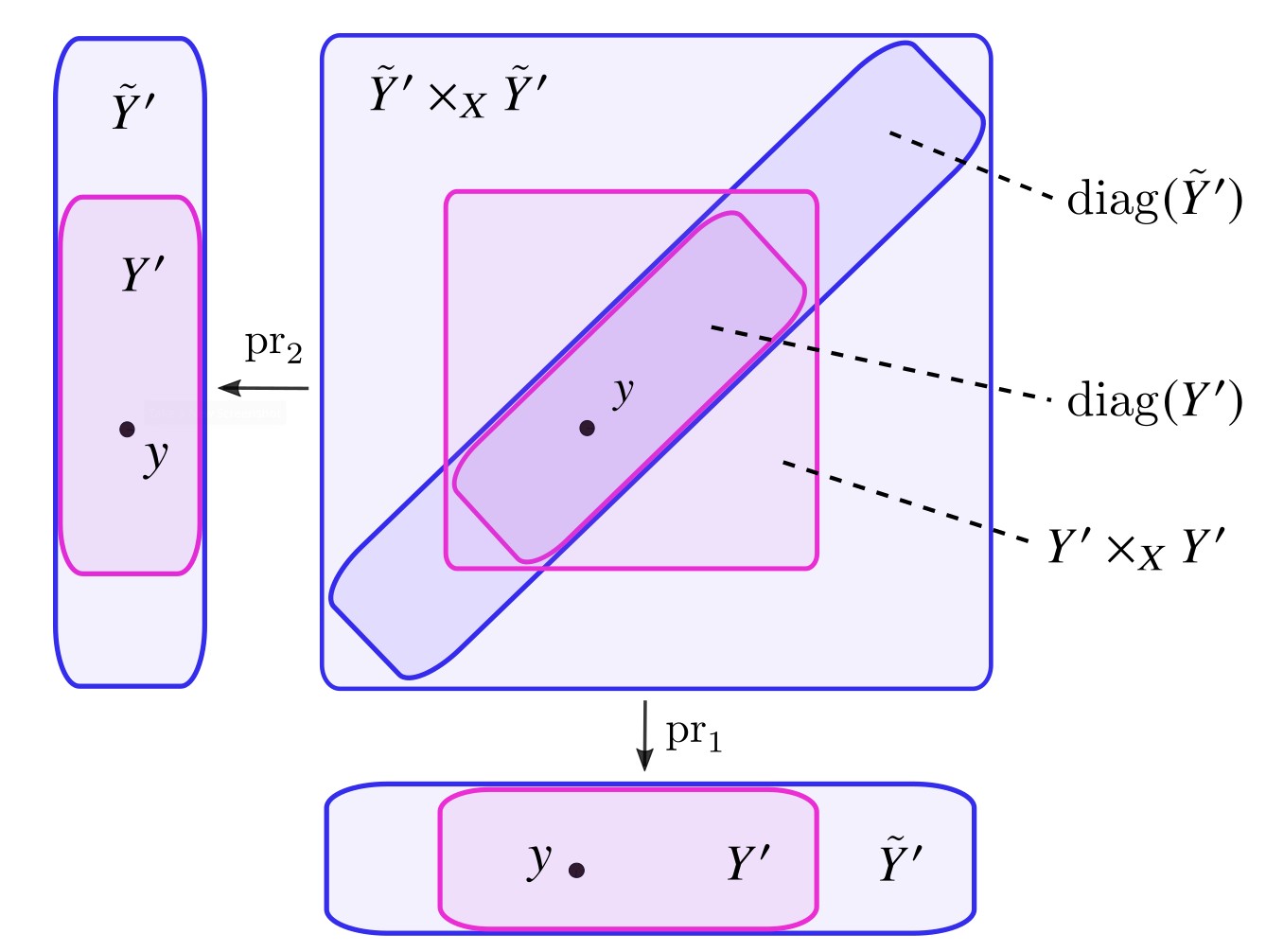}
\caption{
$Y' \times_X Y'$ is a 
topological subspace of $\til{Y}' \times_X \til{Y}'$, and \\
$\opn{diag}(Y') = \opn{diag}(\til{Y}') \cap (Y' \times_X Y')$.} 
\label{fig:6}
\end{figure}

Since $\til{Y}' \to X$ is a smooth map of schemes, by
\cite[Proposition 17.12.4]{EGA-IV} the diagonal map 
$\opn{diag}_{\til{Y}' / X} : \til{Y}' \to \til{Y}' \times_X \til{Y}'$
is a quasi-regular closed embedding of schemes. This means that the 
multiplication homomorphism 
$\opn{mult}_{\til{B}' / A} : \til{B}' \ot_A \til{B}' \to \til{B}'$
is a quasi-regular surjection of rings. But then, according to Corollary 
\ref{cor:1130}, the homomorphism $\opn{mult}_{\til{B}' / A}$
is a regular surjection of rings. 

The definition of a regular surjection of rings says that we can find an 
element $\til{r} \in \til{B}' \ot_A \til{B}'$ such that
$y \in \opn{Spec} \bigl( (\til{B}' \ot_A \til{B}')_{\til{r}} \bigr) = 
\opn{NZer}_{\til{Y}' \times_X \til{Y}'}(\til{r}) \sub 
\til{Y}' \times_X \til{Y}'$,
and such that the ideal 
\[ (\til{B}' \ot_A \til{B}')_{\til{r}} 
\ot_{\til{B}' \ot_A \til{B}'} \opn{I}_{\til{B}' / A} \sub
(\til{B}' \ot_A \til{B}')_{\til{r}} \]
is generated by a regular sequence, say $\til{\bb}$. 
Let's write 
$\til{V} := \opn{NZer}_{\til{Y}' \times_X \til{Y}'}(\til{r})$; 
it is shown in Figure \ref{fig:7}.

\begin{figure}[!t]
\centering
\includegraphics[scale=0.22]{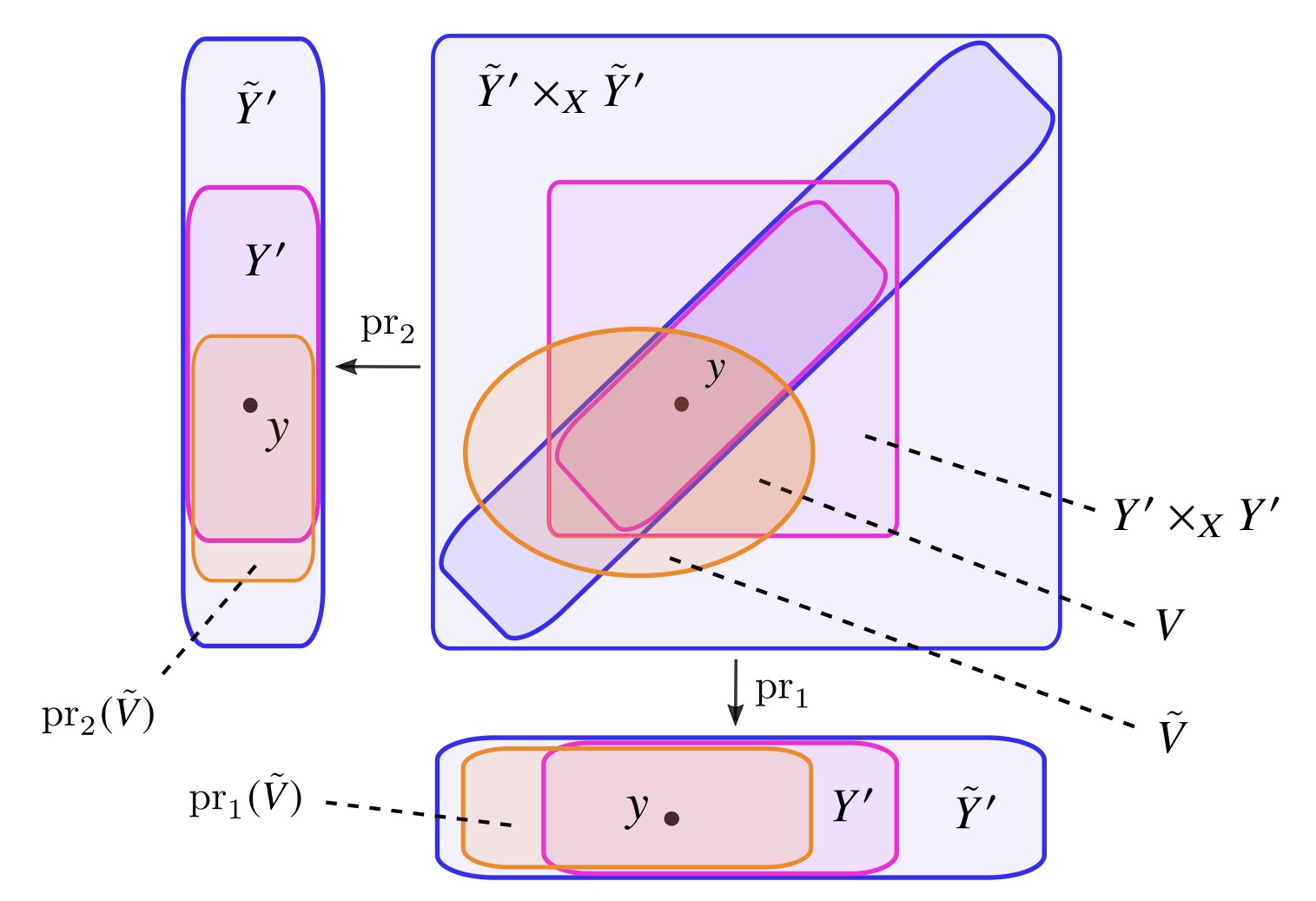}
\caption{For the proof of Theorem \ref{thm:1141}: 
$V = \til{V} \cap (Y' \times_X Y')$ and $y \in V$.}
\label{fig:7}
\end{figure}

Consider the localization homomorphism 
$g \ot g : \til{B}' \ot_A \til{B}' \to B' \ot_A B'$,
and define the element
$r := (g \ot g)(\til{r}) \in B' \ot_A B'$.
Writing $V :=  \opn{NZer}_{Y' \times_X Y'}(r)$, we have 
$V = \til{V} \cap (Y' \times_X Y')$.
See Figure \ref{fig:7}.

Because $g \ot g$ is a localization homomorphism, according to Lemma 
\ref{lem:1195} there is equality
$\opn{I}_{B' / A} = (B' \ot_A B') \ot_{\til{B}' \ot_A \til{B}'}
\opn{I}_{\til{B}' / A}$
of ideals of the ring $B' \ot_A B'$.
Therefore 
\[ (B' \ot_A B')_r \ot_{B' \ot_A B'} \opn{I}_{B' / A} 
= (B' \ot_A B')_r \ot_{(\til{B}' \ot_A \til{B}')_{\til{r}}}
\bigl( (\til{B}' \ot_A \til{B}')_{\til{r}} 
\ot_{\til{B}' \ot_A \til{B}'} \opn{I}_{\til{B}' / A} \bigr) \]
as ideals of the ring $(B' \ot_A B')_r$. 
This means that this ideal is generated by the sequence of elements 
$\bb := (g \ot g)_{\til{r}}(\til{\bb})$, where  
$(g \ot g)_{\til{r}} : (\til{B}' \ot_A \til{B}')_{\til{r}} \to 
(B' \ot_A B')_r$
is the localization ring homomorphism obtained from $g \ot g$ by inverting 
$\til{r}$. 
By Proposition \ref{prop:1090} the sequence $\bb$ in the ring 
$(B' \ot_A B')_r$ is regular. 

Finally, we know that $B'$ is a principal localization of $B$, i.e.\
$B' \cong B_t$ for some element $t \in B$. Then 
$B' \ot_A B' \cong (B \ot_A B)_{t \ot t}$.
We can express $r \in B' \ot_A B'$ as 
$r = r' \cd (t \ot t)^{-m}$ for some $r' \in B \ot_A B$ and $m \geq 0$.
Let us define $s := r' \cd (t \ot t) \in B \ot_A B$.
Then 
$(B \ot_A B)_s \cong (B' \ot_A B')_r$
as $(B \ot_A B)$-rings. This concludes the proof.
\end{proof}

\begin{rem} \label{rem:1180}
Take a point $y \in Y \sub Y \times_X Y$, in the notation of the proof of the 
theorem, and let $V$ be an open neighborhood of $y$ in $Y \times_X Y$.
The reader might wonder if there exists an open neighborhood $U$ of $y$ in $Y$ 
s.t.\ $U \times_X U \sub V$. The answer is negative, as the counterexample 
below shows. 

Take $A := \mbb{R}$ and $B := \mbb{C}$. Then 
$Y = \opn{Spec}(B)$ has one point, which we call $y$. The product 
$Y \times_X Y$ has two points: the diagonal copy of $y$, and another point, say 
$y'$. Take $V := Y = \{ y \}$, which is closed and open in 
$Y \times_X Y$. Clearly $V$ is not $U \times_X U$ for any open set  $U$ in  
$Y$. What is true is that 
$V = \opn{NZer}_{Y \times_X Y}(s)$, 
for the element 
$s := \mrm{i} \otimes 1 + 1 \otimes \mrm{i} \in B \ot_A B$.
\end{rem}

The relative dualizing module $\De_{-/-}^{\mrm{rs}}$ of a regular surjection was
introduced Definition \ref{dfn:1085}.

\begin{cor} \label{cor:1925}
Let $A \to B$ be an essentially smooth ring homomorphism, of constant 
relative differential dimension $n$. 
Write $B^{\mrm{en}} := B \ot_A B$, so
$\opn{mult}_{B / A} : B^{\mrm{en}} \to B$ is a regular surjection. Then the
relative dualizing module
$\De_{B / B^{\mrm{en}}}^{\mrm{rs}}$
admits a canonical isomorphism of $B$-modules 
\[ \opn{Hom}_{B}(\Om^n_{B / A}, B) \iso
\De_{B / B^{\mrm{en}}}^{\mrm{rs}} =
\opn{Hom}_{B} \bigl( \bwedge^n_B(\opn{I}_{B / A} \, / \, \opn{I}_{B / A}^2),
B \bigr) . \]
\end{cor}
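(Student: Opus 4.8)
The plan is to construct the isomorphism locally and then glue, mirroring the structure of Theorem \ref{thm:1080}. Write $B^{\mrm{en}} := B \ot_A B$ and $\opn{I} := \opn{I}_{B / A} \sub B^{\mrm{en}}$. By Theorem \ref{thm:1141} the multiplication homomorphism $\opn{mult}_{B / A} : B^{\mrm{en}} \to B$ is a regular surjection, and its kernel is $\opn{I}$. By the isomorphism (\ref{eqn:1154}), we have $\opn{I} / \opn{I}^2 \cong \Om^1_{B / A}$ as $B$-modules, and since $A \to B$ has constant differential relative dimension $n$, the module $\Om^1_{B / A}$ is projective of constant rank $n$ (Theorem \ref{thm:1140}). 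Hence $\opn{mult}_{B / A}$ has constant codimension $n$ in the sense of Definition \ref{dfn:1095}, so $\De_{B / B^{\mrm{en}}}^{\mrm{rs}}$ is defined, and the isomorphism (\ref{eqn:1154}) induces an isomorphism of $B$-modules
\[ \bwedge^n_B (\opn{I} / \opn{I}^2) \iso \Om^n_{B / A} := \bwedge^n_B \Om^1_{B / A} , \]
which upon applying $\opn{Hom}_B(-, B)$ gives exactly the stated isomorphism. So at the level of the underlying module the statement is essentially immediate; the substance of ``canonical'' is that this isomorphism does not depend on auxiliary choices, which it does not, since (\ref{eqn:1154}) is canonical and taking top exterior powers and $B$-linear duals are functorial operations.

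First I would recall the explicit form of (\ref{eqn:1154}): it sends the class of $\til{\d}_{B / A}(b) = b \ot 1 - 1 \ot b$ modulo $\opn{I}^2$ to $\d_{B / A}(b)$, as in (\ref{eqn:1136}). This is a well-defined $B$-module isomorphism by the standard theory of Kähler differentials. Taking $\bwedge^n_B$ of an isomorphism of $B$-modules yields an isomorphism of the top exterior powers, and applying the contravariant functor $\opn{Hom}_B(-, B)$ yields an isomorphism in the opposite direction. Composing, I obtain the $B$-module isomorphism
\[ \opn{fund}^{\flat}_{B / A} : \opn{Hom}_{B}(\Om^n_{B / A}, B) \iso
\opn{Hom}_{B} \bigl( \bwedge^n_B(\opn{I} / \opn{I}^2), B \bigr)
= \De_{B / B^{\mrm{en}}}^{\mrm{rs}} . \]
All three operations — the canonical isomorphism (\ref{eqn:1154}), $\bwedge^n_B(-)$, and $\opn{Hom}_B(-, B)$ — are canonical, so the composite is canonical, with no dependence on a choice of generators of $\opn{I}$ or of a smooth presentation of $B / A$.

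One point deserving care is that the rank of $\opn{I} / \opn{I}^2$ as a projective $B$-module must genuinely be the constant $n$, so that $\bwedge^n_B(\opn{I} / \opn{I}^2)$ is the ``top'' exterior power and $\De_{B / B^{\mrm{en}}}^{\mrm{rs}}$ is line-bundle-like; this is exactly where the hypothesis ``constant differential relative dimension $n$'' enters, via $\opn{I} / \opn{I}^2 \cong \Om^1_{B / A}$ and Theorem \ref{thm:1140}(2). I would also note the compatibility with localization: for $s \in B$ with $\til{s} := s \ot s \in B^{\mrm{en}}$, Remark \ref{rem:1220} gives $(B^{\mrm{en}})_{\til{s}} \ot_{B^{\mrm{en}}} B \cong B_s$ and the identifications $(\opn{I}_{B / A})_{\til{s}} \cong \opn{I}_{B_s / A}$, $B_s \ot_B \Om^1_{B / A} \cong \Om^1_{B_s / A}$ (formulas (\ref{eqn:11665}) and the surrounding discussion), so that the isomorphism $\opn{fund}^{\flat}_{B / A}$ restricts to $\opn{fund}^{\flat}_{B_s / A}$; this is what makes the construction well-behaved for the later applications, though it is not strictly needed for the bare statement of the corollary. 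I do not expect any serious obstacle here: the only ingredients are the canonicity of the Kähler differential presentation of $\opn{I} / \opn{I}^2$ and the functoriality of $\bwedge^n$ and $\opn{Hom}_B(-, B)$, both of which are standard; the main ``work'' is simply assembling these and invoking Theorem \ref{thm:1141} to know that $\De_{B / B^{\mrm{en}}}^{\mrm{rs}}$ is defined in the first place.
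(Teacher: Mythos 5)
Your proof is correct and follows the paper's own argument: both apply the canonical isomorphism (\ref{eqn:1154}), take $n$-th exterior powers, and dualize via $\opn{Hom}_B(-,B)$. The extra remarks on localization compatibility and on why $n$ is the correct top degree are not in the paper's three-line proof but are sound and harmless.
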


\begin{proof}
Formula (\ref{eqn:1154}) gives a canonical isomorphism 
$\opn{I}_{B / A} \, / \, \opn{I}_{B / A}^2 \iso \Om^1_{B / A}$,
from which we get a canonical isomorphism
$\bwedge^n_B(\opn{I}_{B / A} \, / \, \opn{I}_{B / A}^2) \iso
\Om^n_{B / A}$.
Now just take duals.
\end{proof}

\begin{cor} \label{cor:1060}
Let $A \to B$ be an essentially smooth ring homomorphism. Then $B$ is a 
perfect complex over the ring $B \ot_A B$. 
\end{cor}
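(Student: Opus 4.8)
The plan is to reduce Corollary \ref{cor:1060} to Theorem \ref{thm:1141} together with Corollary \ref{cor:1131}. First I would recall that by Theorem \ref{thm:1141}, the multiplication homomorphism $\opn{mult}_{B / A} : B \ot_A B \to B$ is a regular surjection of rings. This is exactly the hypothesis needed for Corollary \ref{cor:1131}, which says that a regular surjection $A' \to B'$ makes $B'$ a perfect complex over $A'$.

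Concretely, I would apply Corollary \ref{cor:1131} with $A' := B \ot_A B$ and $B' := B$, and with the ring homomorphism $\opn{mult}_{B / A} : B \ot_A B \to B$. Since $A \to B$ is essentially smooth, and hence in particular EFT, the ring $B \ot_A B$ is noetherian and $B$ is an EFT $(B \ot_A B)$-ring, so Convention \ref{conv:1070} is respected. By Theorem \ref{thm:1141} the homomorphism $\opn{mult}_{B / A}$ is a regular surjection, so Corollary \ref{cor:1131} yields that $B$ is perfect as a complex of $(B \ot_A B)$-modules. That is precisely the assertion.

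There is essentially no obstacle here: the corollary is an immediate consequence of two results already established in the excerpt. The only thing to check is that the finiteness conventions match up, which they do since the essential-finite-type property is stable under base change (as recalled after Definition \ref{dfn:1065}), so $B \ot_A B$ is noetherian and $\opn{mult}_{B/A}$ is a homomorphism between noetherian rings.

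\begin{proof}
By Theorem \ref{thm:1141} the multiplication homomorphism
$\opn{mult}_{B / A} : B \ot_A B \to B$ is a regular surjection of noetherian
rings. Applying Corollary \ref{cor:1131} to this ring homomorphism, we
conclude that $B$ is perfect as a complex of $(B \ot_A B)$-modules.
\end{proof}
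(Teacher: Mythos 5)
Your proof is correct and matches the paper's own proof exactly: both combine Theorem \ref{thm:1141} (the multiplication map is a regular surjection) with Corollary \ref{cor:1131} (a regular surjection makes the target perfect over the source).
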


\begin{proof}
Combine Theorem \ref{thm:1141} and Corollary \ref{cor:1131}.
\end{proof}

\begin{dfn} \label{dfn:1175}
Let $A \to B$ be an essentially smooth homomorphism between noetherian rings, 
of constant differential relative dimension $n$. An {\em \'etale coordinate 
system of $B$ relative to $A$} is a sequence 
$\bb = (b_1, \ldots, b_n)$
of elements of $B$, such that $A$-ring homomorphism 
$A[\bt] \to B$ from the polynomial ring 
$A[\bt] = A[t_1, \ldots, t_n]$,  sending $t_i \mapsto b_i$, is essentially 
\'etale. 
\end{dfn}

To study \'etale coordinate systems we need to recall the notion of relative 
formal smoothness from \cite[Definition 19.9.1]{EGA-0IV}.
Suppose $A \to B \to C$ are ring homomorphisms (with no finiteness 
conditions). We say that $B \to C$ is {\em formally smooth relative to A}
if it has the following lifting property: given a $B$-ring $D$, a nilpotent 
ideal $\mathfrak{d} \sub D$, and a $B$-ring homomorphism $\bar{g} : C \to D / \mathfrak{d}$,
such there exists an an $A$-ring homomorphism $g_A : C \to D$ lifting 
$\bar{g}$, then  there exists a $B$-ring 
homomorphism $g_B : C \to D$ lifting $\bar{g}$. 
See next commutative diagrams, where $\pi : D \to D / \mathfrak{d}$ is the canonical 
surjection.
\begin{equation} \label{eqn:1177}
\begin{tikzcd} [column sep = 8.6ex, row sep = 5ex] 
B
\arrow[r]
\ar[d]
&
D
\ar[d, "{\pi}"]
\\
C
\ar[r, "{\bar{g}}"']
&
D / \mathfrak{d}
\end{tikzcd}
\qquad
\begin{tikzcd} [column sep = 8.6ex, row sep = 6ex] 
A
\arrow[r]
\ar[d]
&
D
\ar[d, "{\pi}"]
\\
C
\ar[r, "{\bar{g}}"']
\ar[ur, dashed, "{g_A}"]
&
D / \mathfrak{d}
\end{tikzcd}
\qquad  
\begin{tikzcd} [column sep = 8.6ex, row sep = 6ex] 
B
\arrow[r]
\ar[d]
&
D
\ar[d, "{\pi}"]
\\
C
\ar[r, "{\bar{g}}"']
\ar[ur, dashed, "{g_B}"]
&
D / \mathfrak{d}
\end{tikzcd}
\end{equation}

\begin{lem} \label{lem:1175}
Let $A \to B \to C$ be ring homomorphisms, and assume $A \to C$ is formally 
smooth. Then the following conditions are equivalent:
\begin{itemize}
\rmitem{i} $B \to C$ is formally smooth.

\rmitem{ii} $B \to C$ is formally smooth relative to $A$.
\end{itemize}
\end{lem}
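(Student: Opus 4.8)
The statement is Lemma \ref{lem:1175}: given ring homomorphisms $A \to B \to C$ with $A \to C$ formally smooth, the conditions ``$B \to C$ is formally smooth'' and ``$B \to C$ is formally smooth relative to $A$'' are equivalent. The plan is to check the two implications directly from the definitions via the lifting diagrams (\ref{eqn:1015}) and (\ref{eqn:1177}), using formal smoothness of $A \to C$ only to supply the hypothesis ``there exists an $A$-ring lifting $g_A$'' that appears in the definition of relative formal smoothness.

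For the direction (i) $\Rightarrow$ (ii), I would start with the data of a $B$-ring $D$, a nilpotent ideal $\mfrak{d} \sub D$, and a $B$-ring homomorphism $\bar{g} : C \to D / \mfrak{d}$, together with the assumption that some $A$-ring lift $g_A : C \to D$ of $\bar{g}$ exists. But condition (i) says $B \to C$ is formally smooth, so a $B$-ring lift $g_B : C \to D$ of $\bar{g}$ exists unconditionally (we never even need $g_A$ here). Hence $B \to C$ is formally smooth relative to $A$; this implication is essentially trivial, since relative formal smoothness is a weaker requirement (it only demands a $B$-lift when an $A$-lift is already known to exist).

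For the direction (ii) $\Rightarrow$ (i), take the same data: a $B$-ring $D$, a nilpotent ideal $\mfrak{d} \sub D$, and a $B$-ring homomorphism $\bar{g} : C \to D / \mfrak{d}$; we must produce a $B$-ring lift $g_B : C \to D$. The point is that since $A \to C$ is formally smooth, and $D$ is in particular an $A$-ring with nilpotent ideal $\mfrak{d}$ and $A$-ring homomorphism $\bar{g} : C \to D / \mfrak{d}$ (restrict the $B$-structure to $A$, and note $\bar{g}$ is automatically $A$-linear), there exists an $A$-ring lift $g_A : C \to D$ of $\bar{g}$. Now the hypothesis of relative formal smoothness (ii) is satisfied — an $A$-lift exists — so it yields a $B$-ring lift $g_B : C \to D$ of $\bar{g}$. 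Therefore $B \to C$ is formally smooth.

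The only subtlety worth spelling out — and the ``main obstacle'' such as it is — is the careful bookkeeping of which ring structures the various lifts respect: one must verify that the $\bar{g}$ given as a $B$-ring homomorphism is, when the base is forgotten down to $A$, exactly the $A$-ring homomorphism to which formal smoothness of $A \to C$ is applied, and that the nilpotence of $\mfrak{d}$ does not depend on the chosen base. Both are immediate, so the proof is short. I would write it as two compact paragraphs, one per implication, explicitly invoking diagrams (\ref{eqn:1015}) and (\ref{eqn:1177}) and the definition of formal smoothness relative to $A$ from \cite[Definition 19.9.1]{EGA-0IV}.
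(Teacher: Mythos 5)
Your proof is correct and follows exactly the route the paper takes: the paper's proof is the one-liner ``Immediate from the definitions, because a lifting $g_A$ always exists,'' and your two paragraphs simply unwind that remark into the explicit lifting-diagram bookkeeping.
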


\begin{proof}
Immediate from the definitions, because a lifting $g_A$ always exists.
\end{proof}

\begin{thm} \label{thm:1175}
Let $A \to B$ be an essentially smooth homomorphism between noetherian rings,
of constant differential relative dimension $n$, and write 
$B^{\mrm{en}} := B \ot_A B$. Let 
$\bb = (b_1, \ldots, b_n)$ be a sequence of elements in $B$.
The following conditions are equivalent:
\begin{itemize}
\rmitem{i} The sequence $\bb$ is an \'etale coordinate system of $B$ relative
to $A$.

\rmitem{ii} The sequence $\d_{B / A}(\bb)$ is a basis of the $B$-module 
$\Om^1_{B / A}$. 

\rmitem{iii} For every prime ideal $\p \sub B$ there exists an element 
$\til{s} \in B^{\mrm{en}}$ such that 
$\opn{mult}_{B / A}(\til{s}) \notin \p$, and such that the sequence 
$\opn{q}_{B^{\mrm{en}} ,\til{s}}(\til{\d}_{B / A}(\bb))$ in the ring 
$(B^{\mrm{en}})_{\til{s}}$ is 
regular, and it generates the ideal 
$(B^{\mrm{en}})_{\til{s}} \ot_{B^{\mrm{en}}} \opn{I}_{B / A} 
\sub (B^{\mrm{en}})_{\til{s}}$.
\end{itemize}
See formulas (\ref{eqn:newa}) and (\ref{eqn:newb}) regarding the sequences 
$\d_{B / A}(\bb)$ and $\til{\d}_{B / A}(\bb)$.
\end{thm}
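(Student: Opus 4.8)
The plan is to prove the chain of equivalences by establishing $(\mrm{i}) \Leftrightarrow (\mrm{ii})$ and $(\mrm{ii}) \Leftrightarrow (\mrm{iii})$ separately. First I would fix notation: write $C := A[\bt] = A[t_1, \ldots, t_n]$, a smooth (polynomial) $A$-ring with $\Om^1_{C / A}$ free over $C$ on the basis $\d_{C / A}(\bt)$, and let $g : C \to B$ be the $A$-ring homomorphism with $g(t_i) = b_i$; note $B$ is essentially finite type over $C$. By Theorem \ref{thm:1140} the $B$-module $\Om^1_{B / A}$ is finitely generated projective, and since $A \to B$ has constant differential relative dimension $n$ it has constant rank $n$. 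I would record the elementary fact that $n$ given elements of a finitely generated projective $B$-module of constant rank $n$ form a basis if and only if they generate the module, if and only if they generate it after localizing at every prime of $B$ (using that a surjective endomorphism of a finitely generated module over a commutative ring is bijective); this lets all three conditions be checked locally on $\Sp(B)$.

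For $(\mrm{i}) \Rightarrow (\mrm{ii})$: if $g$ is essentially étale, Proposition \ref{prop:1141} says $\opn{id} \ot \Om^1_{g / A} : B \ot_C \Om^1_{C / A} \to \Om^1_{B / A}$ is bijective; it carries the basis $(1 \ot \d_{C / A}(t_i))$ to $\d_{B / A}(\bb)$, so $\d_{B / A}(\bb)$ is a basis. For $(\mrm{ii}) \Rightarrow (\mrm{i})$: assuming $\d_{B / A}(\bb)$ is a basis of $\Om^1_{B / A}$, the same map $\opn{id} \ot \Om^1_{g / A}$ carries a basis to a basis, hence is bijective; the first fundamental exact sequence (\ref{eqn:1070}), applied to $A \to C \xar{g} B$, then forces $\Om^1_{B / C} = 0$, i.e.\ $g$ is formally unramified. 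Moreover the sequence (\ref{eqn:1071}) for $A \to C \xar{g} B$ is (trivially) split exact, so by the criterion recalled right after it (\cite[Theorem 20.5.7]{EGA-0IV}) the homomorphism $g$ is formally smooth relative to $A$; since $A \to B$ is formally smooth, Lemma \ref{lem:1175} upgrades this to $g$ being formally smooth. A formally smooth and formally unramified EFT homomorphism is essentially étale, so $\bb$ is an étale coordinate system.

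For $(\mrm{ii}) \Leftrightarrow (\mrm{iii})$: by Theorem \ref{thm:1141} the homomorphism $\opn{mult}_{B / A} : B^{\mrm{en}} \to B$ is a regular surjection with kernel $\opn{I}_{B / A}$ and $B^{\mrm{en}} / \opn{I}_{B / A} = B$, so $\opn{I}_{B / A}$ is a regular ideal, and via the canonical isomorphism (\ref{eqn:1154}) the image of the sequence $\til{\d}_{B / A}(\bb) \sub \opn{I}_{B / A}$ in $\opn{I}_{B / A} / \opn{I}_{B / A}^2$ corresponds to $\d_{B / A}(\bb) \in \Om^1_{B / A}$. Given a prime $\p \sub B$, put $\til{\p} := \opn{mult}_{B / A}^{-1}(\p)$; then $\til{s} \notin \til{\p}$ iff $\opn{mult}_{B / A}(\til{s}) \notin \p$, and because $\opn{I}_{B / A} / \opn{I}_{B / A}^2$ is a $B$-module, localizing it at $\til{\p}$ over $B^{\mrm{en}}$ agrees with localizing $\Om^1_{B / A}$ at $\p$ over $B$. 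Now $(\mrm{ii}) \Rightarrow (\mrm{iii})$: for each prime $\p$ of $B$, condition (ii) makes $\d_{B / A}(\bb)$ a basis of the localized module, so Proposition \ref{prop:1095} applied to the ring $B^{\mrm{en}}$, the regular ideal $\opn{I}_{B / A}$, the sequence $\til{\d}_{B / A}(\bb)$ and the prime $\til{\p}$ yields $\til{s} \in B^{\mrm{en}} - \til{\p}$ for which $\opn{q}_{B^{\mrm{en}}, \til{s}}(\til{\d}_{B / A}(\bb))$ is regular in $(B^{\mrm{en}})_{\til{s}}$ and generates $(\opn{I}_{B / A})_{\til{s}} = (B^{\mrm{en}})_{\til{s}} \ot_{B^{\mrm{en}}} \opn{I}_{B / A}$ — this is (iii). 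Conversely $(\mrm{iii}) \Rightarrow (\mrm{ii})$: for a prime $\p$, take $\til{s}$ as in (iii) and set $s := \opn{mult}_{B / A}(\til{s}) \notin \p$; the regular sequence $\opn{q}_{B^{\mrm{en}}, \til{s}}(\til{\d}_{B / A}(\bb))$ generating $(\opn{I}_{B / A})_{\til{s}}$ is Koszul regular by Corollary \ref{cor:1093}, so by Lemma \ref{lem:1111}(1) its image in $(\opn{I}_{B / A})_{\til{s}} / (\opn{I}_{B / A})_{\til{s}}^2 \cong (\Om^1_{B / A})_s \cong \Om^1_{B_s / A}$ is a basis; this image is $\d_{B / A}(\bb)$, so $\d_{B / A}(\bb)$ is a basis over $B_s$ and a fortiori after localizing at $\p$. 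Letting $\p$ range over $\Sp(B)$ gives (ii).

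The main obstacle I anticipate is not conceptual but bookkeeping: carefully matching localizations at $\til{s} \in B^{\mrm{en}}$ with localizations at $s = \opn{mult}_{B / A}(\til{s}) \in B$, checking that the conormal module $\opn{I}_{B / A} / \opn{I}_{B / A}^2$ is identified with $\Om^1_{B / A}$ compatibly with these localizations, and keeping the renamings straight in the invocation of the fundamental exact sequence, the relative-formal-smoothness criterion, and Lemma \ref{lem:1175}. All the substantial inputs — Theorem \ref{thm:1141} that the diagonal is a regular surjection, Proposition \ref{prop:1095}, Proposition \ref{prop:1141}, and the cited EGA facts on relative formal smoothness — are already in place, so what remains is routine verification.
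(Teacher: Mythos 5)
Your proof is correct and follows essentially the same route as the paper's: $(\mrm{i}) \Leftrightarrow (\mrm{ii})$ via the first fundamental exact sequence, the relative formal smoothness criterion from \cite[Theorem 20.5.7]{EGA-0IV}, and Lemma \ref{lem:1175}; and $(\mrm{ii}) \Leftrightarrow (\mrm{iii})$ via Theorem \ref{thm:1141}, Proposition \ref{prop:1095}, Corollary \ref{cor:1093}, and Lemma \ref{lem:1111}(1). The minor differences are only presentational — you invoke Proposition \ref{prop:1141} directly in $(\mrm{i}) \Rightarrow (\mrm{ii})$ instead of re-deriving it, and you are a bit more explicit about the identification of localizations at $\til{s} \in B^{\mrm{en}}$ with those at $s = \opn{mult}_{B/A}(\til{s}) \in B$ in $(\mrm{iii}) \Rightarrow (\mrm{ii})$ — but the substance is the same.
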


In condition (iii) we have the localization ring homomorphism 
$\opn{q}_{B^{\mrm{en}}, \til{s}} : B^{\mrm{en}} \to (B^{\mrm{en}})_{\til{s}}$. 
Here is the geometric interpretation
of the requirement that $\opn{mult}_{B / A}(\til{s}) \notin \p$. Let's write
$X := \opn{Spec}(A)$, $Y := \opn{Spec}(B)$ and $y := \p \in Y$. Then 
$\opn{mult}_{B / A}(\til{s}) \notin \p$ iff 
$\opn{diag}_{Y / X}(y) \in \opn{NZer}_{Y \times_X Y}(\til{s}) 
\sub Y \times_X Y$.
 
\begin{proof} 
(i) $\Rightarrow$ (ii):
The ring homomorphism $f : A \to A[\bt]$ is smooth of constant differential 
relative dimension $n$, and the ring homomorphism $g : A[\bt] \to B$,
$g(t_i) = b_i$, is essentially \'etale. Because $g$ is essentially \'etale,
there is a split short exact sequence 
\begin{equation} \label{eqn:1175}
0 \to B \ot_{A[\bt]} \Om^1_{A[\bt] / A} 
\xar{\opn{id} \ot \, \Om^1_{g / A}} \Om^1_{B / A} 
\xar{\Om^1_{B / f}} \Om^1_{B / A[\bt]} \to 0 \ ; 
\end{equation}
this is an instance of the sequence (\ref{eqn:1071}).
By Proposition \ref{prop:1055} we know that the module
$\Om^1_{B / A[\bt]}$ is zero. It follows that the homomorphism 
\begin{equation} \label{eqn:1176}
\opn{id} \ot \, \Om^1_{g / A} : 
B \ot_{A[\bt]} \Om^1_{A[\bt] / A} \to \Om^1_{B / A} 
\end{equation}
is an isomorphism. 
Since the sequence 
$\d_{A[\bt] / A}(\bt)$ is a basis of the $A[\bt]$-module 
$\Om^1_{A[\bt] / A}$, we see that the sequence $\d_{B / A}(\bb)$ is a basis of 
the $B$-module $\Om^1_{B / A}$. 

\medskip \noindent
(ii) $\Rightarrow$ (i):
Here the homomorphism (\ref{eqn:1176}) is an isomorphism. 
According to \cite[Theorem 20.5.7]{EGA-0IV}, the homomorphism 
$A[\bt] \to B$ is formally smooth relative to $A$.
But $A \to C$ is formally smooth, so by Lemma \ref{lem:1175} we know that 
$A[\bt] \to B$ is formally smooth.
Therefore $A[\bt] \to B$ is essentially smooth. 
The exact sequence 
\[ B \ot_{A[\bt]} \Om^1_{A[\bt] / A} 
\xar{\opn{id} \ot \, \Om^1_{g / A}} 
\Om^1_{B / A} \xar{\Om^1_{B / f}} \Om^1_{B / A[\bt]} \to 0,  
\]
which is part of (\ref{eqn:1175}), with the fact that 
(\ref{eqn:1176}) is an isomorphism, say that 
$\Om^1_{B / A[\bt]} = 0$. From Proposition \ref{prop:1055} 
we see that $A[\bt] \to B$ is essentially \'etale.
Thus $\bb$ is an \'etale coordinate system.

\medskip \noindent
(ii) $\Rightarrow$ (iii): By Theorem \ref{thm:1141} the multiplication 
homomorphism $\opn{mult}_{B / A} :B^{\mrm{en}} \to B$ is a regular surjection, 
i.e.\ the ideal $\opn{I}_{B / A} \sub B^{\mrm{en}}$ is regular. 
Now use Proposition \ref{prop:1095}, applied to the 
$B$-module 
$\Om^1_{B / A} \cong \opn{I}_{B / A} / (\opn{I}_{B / A})^2$,
noting formulas (\ref{eqn:1154}) and (\ref{eqn:1136}). 

\medskip \noindent
(iii) $\Rightarrow$ (ii): By Lemma \ref{lem:1111}(1) the sequence 
$\d_{B / A}(\bb)$ is a basis of the $B_{\p}$-module 
$\Om^1_{B_{\p} / A} \cong B_{\p} \ot_B \Om^1_{B / A}$
for every $\p \in \opn{Spec}(B)$. This implies that $\d_{B / A}(\bb)$ is a 
basis 
of the $B$-module $\Om^1_{B / A}$.
\end{proof}

\begin{cor} \label{cor:1175}
Let $A \to B$ be an essentially smooth homomorphism of constant 
differential relative dimension $n$. Given a prime ideal $\p \sub B$, there 
exists an element $s \in B - \p$ such the ring $B_s = B[s^{-1}]$ admits some 
\'etale coordinate system relative to $A$. 
\end{cor}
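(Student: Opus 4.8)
The plan is to reduce the existence of an \'etale coordinate system to the equivalence of conditions (i) and (ii) in Theorem \ref{thm:1175}, combined with Proposition \ref{prop:1140}, which supplies, after a principal localization, a basis of the module of differentials consisting of exact forms. So the first step is to invoke Proposition \ref{prop:1140}: given the prime ideal $\p \sub B$, there is a sequence $\bb = (b_1, \ldots, b_n)$ in $B$ and an element $s \in B - \p$ such that $\d_{B / A}(\bb)$ is a basis of the free $B_s$-module $B_s \ot_B \Om^1_{B / A} \cong \Om^1_{B_s / A}$. Here I would use the canonical isomorphism $B_s \ot_B \Om^1_{B / A} \cong \Om^1_{B_s / A}$, which is the localization case of the base-change formula (\ref{eqn:11665}), since $B \to B_s$ is a localization hence $B_s \cong B_s \ot_B B$.

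The second step is to check that the localized homomorphism $A \to B_s$ is still essentially smooth of constant differential relative dimension $n$. Essential smoothness is preserved under composition with the localization $B \to B_s$ by Proposition \ref{prop:1015}(1) together with Example \ref{exa:1050} (a localization homomorphism is essentially \'etale). The differential relative dimension stays $n$ because, as just noted, $\Om^1_{B_s / A} \cong B_s \ot_B \Om^1_{B / A}$ is free of rank $n$ over $B_s$. Thus $B_s$ falls under the hypotheses of Definition \ref{dfn:1067} and of Theorem \ref{thm:1175}.

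The third step is to apply Theorem \ref{thm:1175} to the essentially smooth homomorphism $A \to B_s$ and the sequence $\bb' := \opn{q}_{B, s}(\bb)$ in $B_s$. By construction $\d_{B_s / A}(\bb')$ is a basis of the $B_s$-module $\Om^1_{B_s / A}$ (this is exactly the statement of condition (ii) there, using the compatibility $\d_{B_s / A}(\opn{q}_{B, s}(b)) = \Om^1_{\opn{q}_{B, s} / A}(\d_{B / A}(b))$ of the universal derivation with the localization map). The equivalence (ii) $\Rightarrow$ (i) of Theorem \ref{thm:1175} then says precisely that $\bb'$ is an \'etale coordinate system of $B_s$ relative to $A$, i.e., the $A$-ring homomorphism $A[t_1, \ldots, t_n] \to B_s$ sending $t_i \mapsto \opn{q}_{B, s}(b_i)$ is essentially \'etale. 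This is the desired conclusion, with the element $s \in B - \p$ as produced in step one.

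There is essentially no serious obstacle here: the corollary is a straightforward packaging of Proposition \ref{prop:1140} and Theorem \ref{thm:1175}. The only mildly delicate point to get right is the bookkeeping of the identifications $\Om^1_{B_s / A} \cong B_s \ot_B \Om^1_{B / A}$ and the compatibility of $\d_{-/-}$ with localization, so that ``$\d_{B / A}(\bb)$ is a basis of $B_s \ot_B \Om^1_{B / A}$'' is literally the hypothesis (ii) of Theorem \ref{thm:1175} for the ring $B_s$. Once that is spelled out, the proof is three lines long. I would present it in that compact form, citing Proposition \ref{prop:1140}, Proposition \ref{prop:1015}(1), Example \ref{exa:1050}, and the implication (ii) $\Rightarrow$ (i) of Theorem \ref{thm:1175}.
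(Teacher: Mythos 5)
Your proposal is correct and follows the same route as the paper, which also derives the corollary directly from Proposition \ref{prop:1140} and the implication (ii) $\Rightarrow$ (i) of Theorem \ref{thm:1175}. You have merely spelled out the bookkeeping (base change of $\Om^1$, preservation of essential smoothness and of the constant rank under localization) that the paper leaves implicit.
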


\begin{proof}
This is by Proposition \ref{prop:1140} and Theorem \ref{thm:1175}. In fact, 
Proposition \ref{prop:1140} says that we can find a sequence $\bb$ in $B$
whose image in $B_s$ is an \'etale coordinate system relative to $A$. 
\end{proof}

\begin{cor} \label{cor:1930} 
Let $A \to B$ be an essentially smooth homomorphism of constant relative 
differential dimension $n$, and assume there exists an \'etale coordinate system 
$\bb$ for $B$ relative to $A$. Then:
\begin{enumerate}
\item The $B$-module $\Om^n_{B / A}$ is free with
basis the element $\opn{wdg}(\d(\bb))$.

\item The $B$-module
$\bwedge^n_B(\opn{I}_{B / A} \, / \, \opn{I}_{B / A}^2)$
is free  with basis the element
$\opn{wdg}(\til{\d}(\bb))$.

\item The isomorphism
\[ \opn{Hom}_{B}(\Om^n_{B / A}, B) \iso
\De_{B / B^{\mrm{en}}}^{\mrm{rs}} \]
from Corollary \ref{cor:1925} sends
$\smfrac{1}{\opn{wdg}(\d(\bb))} \mapsto
\smfrac{1}{\opn{wdg}(\til{\d}(\bb))}$.
\end{enumerate}
See Definition \ref{dfn:1920} regarding $\opn{wdg}$ and Definition \ref{dfn:1921} regarding fractions.
\end{cor}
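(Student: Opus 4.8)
The plan is to deduce all three statements from Theorem \ref{thm:1175}, the standard presentation $\opn{I}_{B / A} / \opn{I}_{B / A}^2 \cong \Om^1_{B / A}$, and elementary multilinear algebra; no new input is needed. First I would handle (1): by the equivalence (i) $\Leftrightarrow$ (ii) of Theorem \ref{thm:1175}, the hypothesis that $\bb = (b_1, \ldots, b_n)$ is an \'etale coordinate system of $B$ relative to $A$ means precisely that $\d_{B / A}(\bb)$ is a basis of the $B$-module $\Om^1_{B / A}$. Then I invoke the standard fact that if a module $N$ over a commutative ring $B$ is free with basis $(x_1, \ldots, x_n)$, then $\bwedge^n_B N$ is free of rank $1$ with basis $x_1 \wedge \cdots \wedge x_n$; applying this to $N = \Om^1_{B / A}$ gives that $\Om^n_{B / A} = \bwedge^n_B \Om^1_{B / A}$ is free with basis $\opn{wdg}(\d(\bb))$.

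For (2), I would first recall that by Theorem \ref{thm:1141} the homomorphism $\opn{mult}_{B / A} : B^{\mrm{en}} \to B$ is a regular surjection, so $\opn{I}_{B / A}$ is a regular ideal and $\opn{I}_{B / A} / \opn{I}_{B / A}^2$ is a projective $B$-module of rank $n$ (Proposition \ref{prop:1096}); but the crux is the canonical $B$-module isomorphism $\opn{I}_{B / A} / \opn{I}_{B / A}^2 \iso \Om^1_{B / A}$ of formula (\ref{eqn:1154}), which by (\ref{eqn:1136}) carries the class of $\til{\d}_{B / A}(b)$ to $\d_{B / A}(b)$. Transporting the basis $\d_{B / A}(\bb)$ of $\Om^1_{B / A}$ back along this isomorphism shows that the sequence of classes $\til{\d}_{B / A}(b_i) + \opn{I}_{B / A}^2$ is a basis of $\opn{I}_{B / A} / \opn{I}_{B / A}^2$, and then the multilinear-algebra fact used in (1) gives that $\bwedge^n_B(\opn{I}_{B / A} / \opn{I}_{B / A}^2)$ is free of rank $1$ with basis $\opn{wdg}(\til{\d}(\bb))$.

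Finally, for (3) I would unwind the construction of the isomorphism in Corollary \ref{cor:1925}: it is the functor $\opn{Hom}_B(-, B)$ applied to the canonical isomorphism $\phi : \bwedge^n_B(\opn{I}_{B / A} / \opn{I}_{B / A}^2) \iso \Om^n_{B / A}$ obtained from (\ref{eqn:1154}), so it sends $\psi \mapsto \psi \circ \phi$. By (\ref{eqn:1136}) and functoriality of $\bwedge^n_B$, $\phi$ carries $\opn{wdg}(\til{\d}(\bb))$ to $\opn{wdg}(\d(\bb))$. By Definition \ref{dfn:1921} the element $\smfrac{1}{\opn{wdg}(\d(\bb))}$ is the $B$-linear map $a \cd \opn{wdg}(\d(\bb)) \mapsto a$; composing it with $\phi$ sends $a \cd \opn{wdg}(\til{\d}(\bb)) \mapsto a \cd \opn{wdg}(\d(\bb)) \mapsto a$, which is exactly $\smfrac{1}{\opn{wdg}(\til{\d}(\bb))}$. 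This yields the asserted value of the isomorphism. The only point requiring genuine care --- there is no real obstacle --- is this last step: one must keep straight the direction of the dualization in Corollary \ref{cor:1925}, the covariant exterior-power functor, and the fraction notation of Definition \ref{dfn:1921}, and check that they compose so that a dual basis element goes to a dual basis element; everything else is a direct transcription of Theorems \ref{thm:1175} and \ref{thm:1141}.
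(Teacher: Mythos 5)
Your proposal is correct and takes exactly the approach the paper intends: the paper's own proof is a three-line citation of Theorem \ref{thm:1175}, formula (\ref{eqn:1154}), and Corollary \ref{cor:1925}, and your write-up simply fills in the routine details of those citations. In particular your unwinding of item (3) — identifying the isomorphism of Corollary \ref{cor:1925} as $\opn{Hom}_B(\phi, B): \psi \mapsto \psi \circ \phi$ and checking $\smfrac{1}{\opn{wdg}(\d(\bb))} \circ \phi = \smfrac{1}{\opn{wdg}(\til{\d}(\bb))}$ on the generator — is precisely what the paper leaves implicit, and the direction of dualization is handled correctly.
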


\begin{proof}
Item (1) is clear from the implication (i) $\Rightarrow$ (ii) in Theorem 
\ref{thm:1175}. This, with formula (\ref{eqn:1154})  give us item (2).
Item (3) is clear from Corollary \ref{cor:1925} and its proof.
\end{proof}

The Koszul presentation $\opn{kpres}^i_{\ba, M}$ of $\opn{Ext}^i$ was 
introduced in Definition \ref{dfn:1923}. 

\begin{cor} \label{cor:1931} 
Let $A \to B$ be an essentially smooth homomorphism of constant relative 
differential dimension $n$, and write $B^{\mrm{en}} := B \ot_A B$. 
Assume there exists an \'etale coordinate system $\bb= (b_1, \ldots, b_n)$ for $B$ relative to $A$. 
Given a $B^{\mrm{en}}$-module $M$, there is a unique $B$-module homomorphism 
\[ \opn{kprox}^n_{\til{\d}(\bb), M} :
\opn{H}^n \bigl( \opn{Hom}_{B^{\mrm{en}}} 
\bigl( \opn{K}(B^{\mrm{en}}; \til{\d}(\bb)), M \bigr) \bigr) \to 
\opn{Ext}^n_{B^{\mrm{en}}}(B, M)
 \]
called {\em Koszul approximation} with this property: 
\begin{itemize}
\item[($*$)] For every element $\til{s} \in B^{\mrm{en}}$ 
such that the ring $(B^{\mrm{en}})_{\til{s}}$ is nonzero,  the sequence 
\begin{equation}\label{neww}
\opn{q}_{B^{\mrm{en}} ,\til{s}}(\til{\d}_{B / A}(\bb))
\end{equation}
in  
$(B^{\mrm{en}})_{\til{s}}$ is regular, and it generates the ideal 
$(B^{\mrm{en}})_{\til{s}} \ot_{B^{\mrm{en}}} \opn{I}_{B / A} 
\sub (B^{\mrm{en}})_{\til{s}}$,  
the diagram 
\[ \begin{tikzcd} [column sep = 10ex, row sep = 5ex] 
\opn{H}^n \bigl( \opn{Hom}_{B^{\mrm{en}}} 
\bigl( \opn{K}(B^{\mrm{en}}; \til{\d}(\bb)), M \bigr) \bigr)
\arrow[r, "{\opn{kprox}^n_{\til{\d}(\bb), M}}"]
\ar[d, "{\opn{q}_{B^{\mrm{en}} ,\til{s}}}"']
&
\opn{Ext}^n_{B^{\mrm{en}}}(B, M)
\ar[d, "{\opn{q}_{B^{\mrm{en}} ,\til{s}}}"]
\\
\opn{H}^n \bigl( \opn{Hom}_{(B^{\mrm{en}})_{\til{s}}} 
\bigl( \opn{K}((B^{\mrm{en}})_{\til{s}}; \til{\d}(\bb)), M_{\til{s}} 
\bigr) \bigr)
\arrow[r, "{\opn{kpres}^n_{\til{\d}(\bb), M_{\til{s}}}}", "{\simeq}"']
&
\opn{Ext}^n_{(B^{\mrm{en}})_{\til{s}}}(B_{\til{s}}, M_{\til{s}})
\end{tikzcd} \]
is commutative. 
\end{itemize}
\end{cor}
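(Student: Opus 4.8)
The plan is to construct $\opn{kprox}^n_{\til{\d}(\bb), M}$ by a gluing argument over $\opn{Spec}(B)$, mirroring the proof of the Fundamental Local Isomorphism (Theorem \ref{thm:1080}), since the local data is precisely what Theorem \ref{thm:1175}(iii) provides. First I would observe that, by Theorem \ref{thm:1141}, $\opn{mult}_{B/A} : B^{\mrm{en}} \to B$ is a regular surjection, and since there is an \'etale coordinate system $\bb$ for $B$ relative to $A$, Theorem \ref{thm:1175} tells us that the sequence $\d_{B/A}(\bb)$ is a basis of $\Om^1_{B/A}$, while condition (iii) there gives, for each $\p \in \opn{Spec}(B)$, an element $\til{s} \in B^{\mrm{en}}$ with $\opn{mult}_{B/A}(\til{s}) \notin \p$ such that $\opn{q}_{B^{\mrm{en}}, \til{s}}(\til{\d}_{B/A}(\bb))$ is a regular sequence generating $(B^{\mrm{en}})_{\til{s}} \ot_{B^{\mrm{en}}} \opn{I}_{B/A}$. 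For such $\til{s}$ the localized Koszul complex $\opn{K}((B^{\mrm{en}})_{\til{s}}; \til{\d}(\bb))$ is a free resolution of $B_{\til{s}} = (B^{\mrm{en}})_{\til{s}} \ot_{B^{\mrm{en}}} B$ over $(B^{\mrm{en}})_{\til{s}}$, so $\opn{kpres}^n_{\til{\d}(\bb), M_{\til{s}}}$ of Definition \ref{dfn:1923}(2) is an isomorphism onto $\opn{Ext}^n_{(B^{\mrm{en}})_{\til{s}}}(B_{\til{s}}, M_{\til{s}})$. This is what makes the bottom edge of the required square an isomorphism.

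Next I would address existence and uniqueness of the top horizontal map. Uniqueness is immediate: the elements $\til{s}$ satisfying the condition in ($*$) have the property that the affine opens $\opn{NZer}_{\opn{Spec}(B)}(\opn{mult}_{B/A}(\til{s}))$ cover $\opn{Spec}(B)$ (this uses Theorem \ref{thm:1175}(iii) applied at every prime $\p$), and a homomorphism of $B$-modules whose localization at each member of an open cover is prescribed is determined. For existence, I would show the candidate local maps glue. Define, for each admissible $\til{s}$, the homomorphism $\phi_{\til{s}}$ to be $\opn{kpres}^n_{\til{\d}(\bb), M_{\til{s}}}$ precomposed with the localization map on the Koszul cohomology — but the point is rather to run the argument on the $B$-module $\opn{H}^n\bigl(\opn{Hom}_{B^{\mrm{en}}}(\opn{K}(B^{\mrm{en}}; \til{\d}(\bb)), M)\bigr)$ directly. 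Since both $\opn{H}^n\bigl(\opn{Hom}_{B^{\mrm{en}}}(\opn{K}(B^{\mrm{en}}; \til{\d}(\bb)), M)\bigr)$ and $\opn{Ext}^n_{B^{\mrm{en}}}(B, M)$ are finitely generated $B$-modules (the former because the Koszul complex is a bounded complex of finite free modules, the latter because $B$ is a perfect $B^{\mrm{en}}$-complex by Corollary \ref{cor:1060}), and localization commutes with both $\opn{Hom}$ out of the finite-free Koszul complex and with $\opn{Ext}^n$ (formula (\ref{eqn:1122})), the prescriptions $\opn{q}_{B^{\mrm{en}}, \til{s}_i \cd \til{s}_j}$-compatibility of the $\opn{kpres}$ isomorphisms on double overlaps $(B^{\mrm{en}})_{\til{s}_i \cd \til{s}_j}$ follows from the uniqueness statement in Definition \ref{dfn:1923}(2)/Lemma \ref{lem:1100}(1): on the overlap both $\opn{kpres}$'s agree since the Koszul presentation is canonical. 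Hence the cocycle condition holds and the local maps glue to a global $B$-module homomorphism $\opn{kprox}^n_{\til{\d}(\bb), M}$, which by construction makes the square in ($*$) commute for every admissible $\til{s}$.

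The main obstacle I anticipate is bookkeeping around the subtlety that $\til{s}$ lives in $B^{\mrm{en}} = B \ot_A B$, not in $B$, so the affine cover of $\opn{Spec}(B)$ is indexed by elements of $B^{\mrm{en}}$ via $\opn{mult}_{B/A}$, and one must be careful that $(B^{\mrm{en}})_{\til{s}} \ot_{B^{\mrm{en}}} B \cong B_{\opn{mult}_{B/A}(\til{s})}$ (cf. Remark \ref{rem:1220}) so that the localized Koszul complex really resolves the right ring and the $\opn{Ext}$ groups localize as claimed; this is precisely the phenomenon illustrated in Remark \ref{rem:1180}, that $(B^{\mrm{en}})_{\til{s}}$ need not be $B_t \ot_A B_t$. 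The other point requiring a little care is verifying the double-overlap compatibility genuinely reduces to the canonicity of $\opn{kpres}$ rather than requiring the full force of Lemma \ref{lem:1110}'s determinant bookkeeping — but since we fix a single sequence $\til{\d}(\bb)$ throughout and only localize it, no change of sequence occurs, so Lemma \ref{lem:1110} is not needed and the overlaps are automatic. Everything else is a routine transcription of the gluing argument from the proof of Theorem \ref{thm:1080}.
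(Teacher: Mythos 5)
Your approach is the same as the paper's: construct $\opn{kprox}^n$ by gluing the local maps $\opn{kpres}^n_{\til{\d}(\bb),M_{\til s}}\circ\opn{q}_{\til s}$ over the cover of the diagonal provided by Theorem \ref{thm:1175}(iii), using canonicity of $\opn{kpres}$ for the overlap compatibility. That is correct, and the observation at the end — that $(B^{\mrm{en}})_{\til s}\ot_{B^{\mrm{en}}}B\cong B_{\opn{mult}_{B/A}(\til s)}$ is the mechanism relating $B^{\mrm{en}}$-localization at $\til s$ to $B$-localization — is exactly the right thing to notice.

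There is, however, one false step in the justification. You claim both $\opn{H}^n\bigl(\opn{Hom}_{B^{\mrm{en}}}(\opn{K}(B^{\mrm{en}};\til\d(\bb)),M)\bigr)$ and $\opn{Ext}^n_{B^{\mrm{en}}}(B,M)$ are finitely generated $B$-modules; this is false in general, since $M$ is an arbitrary $B^{\mrm{en}}$-module, and both of these are subquotients of finite direct sums of copies of $M$. Fortunately finite generation is not what makes the gluing work. What makes it work — and what the paper states as the very first sentence of its proof — is that $\opn{Ext}^n_{B^{\mrm{en}}}(B,M)$ is \emph{supported on the diagonal} $\opn{diag}_{Y/X}(Y)\sub\opn{Spec}(B^{\mrm{en}})$. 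The opens $\opn{NZer}_{\opn{Spec}(B^{\mrm{en}})}(\til s)$ do not cover $\opn{Spec}(B^{\mrm{en}})$; they only cover the diagonal. Since the target (quasi-coherent) vanishes off the diagonal, the sheaf axiom lets you glue the compatible family of sections $\opn{kpres}(\opn{q}_{\til s}(x))\in\opn{Ext}^n_{(B^{\mrm{en}})_{\til s}}(B_{\til s},M_{\til s})$ to a unique element of $\opn{Ext}^n_{B^{\mrm{en}}}(B,M)$. Replace the finite-generation claim by this support argument and the proof is complete; the rest is as you say.
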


\begin{proof}
Let's write $X := \opn{Spec}(A)$ and $Y := \opn{Spec}(B)$.
Note that the module
$\opn{Ext}^n_{B^{\mrm{en}}}(B, M)$
is supported on $\opn{diag}_{Y / X}(X) \cong X$.
By Theorem \ref{thm:1175}, the principal open sets
$\opn{NZer}_{Y \times_X Y}(\til{s}) \sub Y \times_X Y =  
\opn{Spec}(B^{\mrm{en}})$, 
for elements $\til{s} \in B^{\mrm{en}}$
satisfying the conditions in ($*$), cover 
$\opn{diag}_{Y / X}(X)$. For every such element
$\til{s}$ the homomorphism
\begin{equation} \label{eqn:1935}
\opn{kpres}^n_{\til{\d}(\bb), M_{\til{s}}} \circ \opn{q}_{B^{\mrm{en}} ,\til{s}}
:
\opn{H}^n \bigl( \opn{Hom}_{B^{\mrm{en}}} 
\bigl( \opn{K}(B^{\mrm{en}}; \til{\d}(\bb)), M \bigr) \bigr) \to 
\opn{Ext}^n_{(B^{\mrm{en}})_{\til{s}}}(B_{\til{s}}, M_{\til{s}})
\end{equation}
exists. Now there is a canonical isomorphism 
\[ (B^{\mrm{en}})_{\til{s}} \ot_{B^{\mrm{en}}}  
\opn{Ext}^n_{B^{\mrm{en}}}(B, M)
\iso \opn{Ext}^n_{(B^{\mrm{en}})_{\til{s}}}(B_{\til{s}}, M_{\til{s}}) . \]
Because of the explicit formula for 
$\opn{kpres}^n_{\til{\d}(\bb), M_{\til{s}}}$, the homomorphisms (\ref{eqn:1935})
agree on double intersections
\[ \opn{NZer}_{Y \times_X Y}(\til{s}_1) \cap 
\opn{NZer}_{Y \times_X Y}(\til{s}_2) 
= \opn{NZer}_{Y \times_X Y}(\til{s}_1 \cd \til{s}_2) . \]
Hence they glue uniquely to a homomorphism 
$\opn{kprox}^n_{\til{\d}(\bb), M}$.
\end{proof}

\begin{rem} \label{rem:2125}
The homomorphism $\opn{kprox}^n_{\til{\d}(\bb), M}$ need not be an isomorphism.
That is why we call it $\opn{kprox}$; it is defined on the whole $\opn{Spec}(B^{\mrm{en}})$ 
but it approximates the local isomorphism $\opn{kpres}$.
One obstruction is that the sequence $\til{\d}(\bb)$ might not generate the
ideal $\opn{I}_{B / A}$ in $B^{\mrm{en}}$, in which case the homomorphism 
$\opn{H}^0 \bigl( \opn{K}(B^{\mrm{en}}; \til{\d}(\bb)) \bigr) \to B$
will not be bijective.

Indeed, this is what happens in the very easy case of Remark \ref{rem:1180},
where $A := \mbb{R}$ and $B := \mbb{C}$. The relative dimension here is
$n = 0$, the empty sequence $\bb$ is an \'etale coordinate sequence,
but the ideal $\opn{I}_{B / A}$ is not zero.
\end{rem}

\begin{rem} \label{rem:1050}
It is possible to characterize essentially smooth homomorphisms in several 
ways. Indeed, the following conditions 
are equivalent for an essentially finite type ring homomorphism $f : A \to B$ 
between noetherian rings:
\begin{itemize}
\rmitem{i} The ring $B$ is essentially smooth over $A$.

\rmitem{ii} For every prime ideal $\q \sub B$, with 
$\p := f^{-1}(\q) \sub A$, the ring $B_{\q}$ is essentially smooth over 
$A_{\p}$.

\rmitem{iii} The ring $B$ is flat over $A$, and for every prime ideal
$\q \sub B$, with $\p := f^{-1}(\q) \sub A$, the $\bk(\p)$-ring 
$B \ot_A \bk(\p)$ is geometrically regular.

\rmitem{iv} The ring $B$ is flat over $A$, and $B$ is a perfect complex over 
the ring $B \ot_A B$. 
\end{itemize}

For condition (iii), recall that given a field $K$, a noetherian $K$-ring 
$C$ is said to be geometrically regular if for every finite field extension 
$K \to L$ the ring $C \ot_K L$ is regular.

We shall not require these characterizations in our paper. Some of the 
implications were proved above. As for the rest, the interested reader can 
find proofs for them in the locations listed below. 
The proof of the implication (ii) $\Rightarrow$ (i) is hinted at in 
the proof of Theorem \ref{thm:1015}. The equivalence between (i) and (iii)
is \cite[Corollaire 17.5.2]{EGA-IV}. (Warning: the phrasing of 
\cite[Th\'eor\`eme 17.5.1 and Th\'eor\`eme 6.8.6]{EGA-IV} is confusing -- 
regularity there means geometric regularity.)
For the proof of the implication (iv) $\Rightarrow$ (i) see \cite[Lemma 23.10.1]{SP}.
\end{rem}

\section{Twisted Induced Rigidity}
\label{sec:twisted-induced}

The purpose of this section is the construction of the {\em twisted induced 
rigidifying isomorphism}; see Definitions \ref{dfn:1235} and \ref {dfn:1236}.
Recall that according to Conventions \ref{conv:615} and \ref{conv:1070}, both 
of which are in force in this section, all rings are commutative and noetherian 
by default, and all ring homomorphisms are EFT by default. 

For most of this section, until and including Definition 
\ref{dfn:1225}, we work in the following setup:

\begin{setup} \label{set:1165}
There is an essentially smooth homomorphism $A \to B$ between 
noetherian rings, of constant differential relative dimension $n$.
We write $B^{\mrm{en}} := B \ot_A B$.
\end{setup}

The module of relative differential forms $\Om^1_{B / A}$ is a projective 
$B$-module of constant rank $n$. There is a canonical $B$-module isomorphism
$\opn{I}_{B / A} / \opn{I}^2_{B / A} \iso \Om^1_{B / A}$, 
see Definition \ref{dfn:1544} and formulas (\ref{eqn:1154}) and 
(\ref{eqn:1136}). 

Recall that the de Rham complex 
$\Om_{B/A} = \bigoplus_{i = 0}^{n} \, \Om^i_{B/A}$ 
is a strongly commutative central DG $A$-ring.
As a graded ring it is the strongly commutative graded tensor $B$-ring on the 
graded $B$-module $\Om^1_{B / A}$, where $\Om^1_{B / A}$ is concentrated 
in degree $1$. In the ungraded sense, this just means that
$\Om^i_{B / A} = \bwedge^i_B(\Om^1_{B / A})$. 
In particular, $\Om^0_{B / A} = B$, and $\Om^n_{B / A}$ is a projective 
$B$-module of rank $1$. The differential $\d_{B / A}$ of $\Om_{B / A}$
is the unique degree $1$ derivation on this graded ring that extends the 
universal derivation
$\d_{B / A} : B \to \Om^{1}_{B / A}$.
An $A$-ring homomorphism $g : B \to C$ extends uniquely to a DG $A$-ring 
homomorphism $\Om_{g / A} : \Om_{B / A} \to \Om_{C / A}$. 

The multiplication homomorphism 
$\opn{mult}_{B / A} : B^{\mrm{en}} \to B$
is a regular surjection of rings, of constant codimension $n$ (Theorem 
\ref{thm:1141}). Recall the relative dualizing module 
\begin{equation} \label{eqn:1207}
\De^{\mrm{rs}}_{B / B^{\mrm{en}}} = 
\opn{Hom}_B \bigl( \bwedge^n_B (\opn{I}_{B / A} / \opn{I}_{B / A}^2) , B \bigr)
\end{equation}
associated to the regular surjection $\opn{mult}_{B / A}$, see 
Definition \ref{dfn:1085}. 
The $B$-module isomorphism (\ref{eqn:1154}) induces, upon taking top exterior 
powers, a $B$-module isomorphism 
$\bwedge^n_B (\opn{I}_{B / A} / \opn{I}_{B / A}^2) \cong \Om^n_{B / A}$.
These are rank $1$ projective $B$-modules. Therefore we have a canonical  
isomorphism of $B$-modules 
\begin{equation} \label{eqn:1215}
\De^{\mrm{rs}}_{B / B^{\mrm{en}}} \ot_B \Om^n_{B / A} \iso B , \quad 
\phi \ot \om  \mapsto \phi(\om) ,  
\end{equation}
see Corollary \ref{cor:1925}.

The isomorphism (\ref{eqn:1215}) respects localization over $g$, in the following sense.
Suppose $g : B \to C$ is a localization $A$-ring homomorphism. Let's write 
$C^{\mrm{en}} := C \ot_A C$
and $g^{\mrm{en}} := g \ot g : B^{\mrm{en}} \to C^{\mrm{en}}$. 
By Lemma \ref{lem:1195} there is a canonical nondegenerate forward homomorphism
$\opn{I}_{g / A} : \opn{I}_{B / A} \to \opn{I}_{C / A}$; 
and by Proposition \ref{prop:1141} there is a canonical nondegenerate forward 
homomorphism 
$\Om^1_{g / A} : \Om^1_{B / A} \to \Om^1_{C / A}$.
Therefore we get canonical nondegenerate forward homomorphisms 
$\De^{\mrm{rs}}_{g / g^{\mrm{en}}} :
\De^{\mrm{rs}}_{B / B^{\mrm{en}}} \to \De^{\mrm{rs}}_{C / C^{\mrm{en}}}$
and 
$\Om^n_{g / A} : \Om^n_{B / A} \to \Om^n_{C / A}$. 
For them the diagram 
\begin{equation} \label{eqn:1208}
\begin{tikzcd} [column sep = 7ex, row sep = 5ex]
\De^{\mrm{rs}}_{B / B^{\mrm{en}}} \ot_B \Om^n_{B / A}
\ar[r, "{\simeq}"]
\ar[d, "{\De^{\mrm{rs}}_{g / g^{\mrm{en}}} \, \ot_{g} \, \Om^n_{g / A}}"']
&
B
\ar[d, "{g}"]
\\
\De^{\mrm{rs}}_{C / C^{\mrm{en}}} \ot_C \Om^n_{C / A} 
\ar[r, "{\simeq}"]
&
C
\end{tikzcd}
\end{equation}
in which the horizontal isomorphisms are (\ref{eqn:1215}), is 
commutative. If $C\simeq B[S^{-1}]$ for some multiplicative closed set $S\subset B$.
So $C$ is also flat over $A$.

Theorem \ref{thm:1080}, applied to the regular surjection of rings 
$\opn{mult}_{B / A}:B^{\mrm{en}}\to B$ and to the $B^{\mrm{en}}$-module 
$M := \Om^n_{B / A} \ot_A \Om^n_{B / A}$, produces a $B$-module isomorphism 
\begin{equation} \label{eqn:1210}
\opn{fund}_{B / A, M} :
\De^{\mrm{rs}}_{B / B^{\mrm{en}}} \ot_{B^{\mrm{en}}} 
\bigl( \Om^n_{B / A} \ot_A \Om^n_{B / A} \bigr) \iso
\opn{Ext}^n_{B^{\mrm{en}}} 
\bigl( B, \Om^n_{B / A} \ot_A \Om^n_{B / A} \bigr)  . 
\end{equation}

\begin{dfn}[Residue Isomorphism] \label{dfn:1215}
Under Setup \ref{set:1165}, the {\em residue isomorphism} is the 
$B$-module isomorphism
\[ \opn{res}_{B / A} : 
\Om^n_{B / A} \iso  
\opn{Ext}^n_{B^{\mrm{en}}} \bigl( B, \Om^n_{B / A} \ot_A \Om^n_{B / A} \bigr) \]
obtained by composing the isomorphisms 
\begin{equation} \label{eqn:1271}
\begin{aligned}
& 
\Om^n_{B / A} = B \ot_B \Om^n_{B / A}
\iso^{\mrm{(1)}}
\bigl( \De^{\mrm{rs}}_{B / B^{\mrm{en}}} \ot_B \Om^n_{B / A} 
\bigr) \ot_B \Om^n_{B / A}
\\
& \quad \iso^{\mrm{(2)}}
\De^{\mrm{rs}}_{B / B^{\mrm{en}}} \ot_{B^{\mrm{en}}} 
\bigl( \Om^n_{B / A} \ot_A \Om^n_{B / A} \bigr) 
\end{aligned} 
\end{equation}
with the isomorphism (\ref{eqn:1210}).
Here $\iso^{\mrm{(1)}}$ comes from 
applying $(-) \ot_B \Om^n_{B / A}$ to the inverse of the isomorphism 
(\ref{eqn:1215}), and $\iso^{\mrm{(2)}}$ is an instance of the canonical 
isomorphism (\ref{eqn:1486}). 
\end{dfn}

The residue isomorphism will be made explicit in the theorem below. 

Recall that for an element $b \in B$ we write 
$\til{\d}_{B / A}(b) = b \ot 1 - 1 \ot b \in \opn{I}_{B / A}$. For a 
sequence $\bb = (b_1, \ldots, b_n)$ in $B$ we write 
$\til{\d}_{B / A}(\bb) = \bigl( \til{\d}_{B / A}(b_1), \ldots \bigr)$
and 
$\d_{B / A}(\bb) = \bigl( \d_{B / A}(b_1), \ldots \bigr)$,
which are sequences in $\opn{I}_{B / A}$ and $\Om^1_{B / A}$ respectively. 
The element $\opn{wdg}(\d_{B / A}(\bb)) \in \Om^n_{B / A}$ 
was defined is in Definition \ref{dfn:1920}.

A localization ring homomorphism $g : B \to C$ induces nondegenerate forward 
homomorphisms 
$\Om^n_{g / A} : \Om^n_{B / A} \to \Om^n_{C / A}$
and 
\begin{equation} \label{eqn:1937}
\opn{Ext}^n_{B^{\mrm{en}}} 
\bigl( B, \Om^n_{B / A} \ot_A \Om^n_{B / A} \bigr) \to 
\opn{Ext}^n_{C^{\mrm{en}}} 
\bigl( C, \Om^n_{C / A} \ot_A \Om^n_{C / A} \bigr) . 
\end{equation}

\begin{thm} \label{thm:1935}
Under Setup \ref{set:1165}, the residue isomorphism $\opn{res}_{B / A}$ has the 
following two properties, and it is determined by them.
\begin{enumerate}
\item If $g : B \to C$ is a localization ring homomorphism, then the diagram  
\[ \begin{tikzcd} [column sep = 8ex, row sep = 5ex]
\Om^n_{B / A}
\ar[d, "{\Om^n_{g / A}}"']
\ar[r, "{\opn{res}_{B / A}}", "{\simeq}"']
&
\opn{Ext}^n_{B^{\mrm{en}}} 
\bigl( B, \Om^n_{B / A} \ot_A \Om^n_{B / A} \bigr)
\ar[d]
&
\\
\Om^n_{C / A}
\ar[r, "{\opn{res}_{C / A}}", "{\simeq}"']
&
\opn{Ext}^n_{C^{\mrm{en}}} 
\bigl( C, \Om^n_{C / A} \ot_A \Om^n_{C / A} \bigr)
\end{tikzcd} \]
in which the unlabeled arrow is the nondegenerate forward homomorphism 
(\ref{eqn:1937}), is commutative.

\item Suppose $B$ admits an \'etale coordinate system $\bb$ relative to $A$,
so that $\be := \opn{wdg}(\d_{B / A}(\bb))$ is a basis of the free $B$-module
$\Om^n_{B / A}$. Then 
\[ \opn{res}_{B / A}(\be) =  
\opn{kprox}^n_{\bb, M} 
\Bigl( \gfrac{\be \ot \be}{\til{\d}_{B / A}(\bb)} \Bigr) , \]
where $M := \Om^n_{B / A} \ot_A \Om^n_{B / A}$, 
$\smgfrac{\be \ot \be}{\til{\d}_{B / A}(\bb)}$
is the generalized fraction from Definition \ref{dfn:1922}, 
and $\opn{kprox}^n_{\bb, M}$ is 
the Koszul approximation from Corollary \ref{cor:1931}. 
\end{enumerate}
\end{thm}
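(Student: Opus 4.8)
\textbf{Proof plan for Theorem \ref{thm:1935}.}

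The plan is to establish the two properties separately and then argue uniqueness. The key point is that both the residue isomorphism and its intended characterizing properties are "local on $\opn{Spec}(B^{\mrm{en}})$ along the diagonal", so I will reduce everything to the local picture where an étale coordinate system exists, and there use the explicit formulas coming from Koszul complexes and generalized fractions that were set up in Sections \ref{sec:regular} and \ref{sec:smooth}.

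First I would prove property (1), the compatibility with a localization homomorphism $g : B \to C$. Here the strategy is to unwind Definition \ref{dfn:1215}: $\opn{res}_{B / A}$ is the composite of $\iso^{\mrm{(1)}}$, $\iso^{\mrm{(2)}}$ and the Fundamental Local Isomorphism (\ref{eqn:1210}). Each of these three building blocks is itself compatible with $g$: for $\iso^{\mrm{(1)}}$ this is exactly the commutativity of diagram (\ref{eqn:1208}) tensored with $\Om^n_{g / A}$; for $\iso^{\mrm{(2)}}$ it is the functoriality of the canonical isomorphism (\ref{eqn:1486}) under ring homomorphisms; and for (\ref{eqn:1210}) it is Proposition \ref{prop:1260}(2) applied to the regular surjection $\opn{mult}_{B / A} : B^{\mrm{en}} \to B$, the localization $B^{\mrm{en}} \to C^{\mrm{en}}$ (noting $C^{\mrm{en}} \cong C^{\mrm{en}} \ot_{B^{\mrm{en}}} B^{\mrm{en}}$ and that $\opn{mult}_{C/A}$ is the localized surjection, as in Remark \ref{rem:1220}), and the $B^{\mrm{en}}$-module $M = \Om^n_{B / A} \ot_A \Om^n_{B / A}$, whose localization is $\Om^n_{C / A} \ot_A \Om^n_{C / A}$ by (\ref{eqn:11665}). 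Pasting the three resulting commutative squares gives property (1).

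Next I would prove property (2), the explicit formula in the presence of an étale coordinate system $\bb = (b_1,\dots,b_n)$. By Theorem \ref{thm:1175}, $\be := \opn{wdg}(\d_{B/A}(\bb))$ is a basis of the free $B$-module $\Om^n_{B/A}$, and by Corollary \ref{cor:1930}(2) the element $\opn{wdg}(\til{\d}_{B/A}(\bb))$ is a basis of $\bwedge^n_B(\opn{I}_{B/A} / \opn{I}^2_{B/A})$, so $\smfrac{1}{\opn{wdg}(\til{\d}_{B/A}(\bb))}$ is a basis of $\De^{\mrm{rs}}_{B/B^{\mrm{en}}}$. The plan is to trace the basis element $\be$ through the chain (\ref{eqn:1271}) and then through (\ref{eqn:1210}), keeping track of the distinguished generators at each stage. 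Under $\iso^{\mrm{(1)}}$ (the inverse of (\ref{eqn:1215}) tensored with $\Om^n_{B/A}$) and $\iso^{\mrm{(2)}}$ the element $\be$ goes to $\smfrac{1}{\opn{wdg}(\til{\d}_{B/A}(\bb))} \ot (\be \ot \be)$; this is a direct computation using Corollary \ref{cor:1930}(3), which identifies $\smfrac{1}{\opn{wdg}(\d(\bb))}$ with $\smfrac{1}{\opn{wdg}(\til{\d}(\bb))}$ under the canonical isomorphism of Corollary \ref{cor:1925}. Then (\ref{eqn:1210}), which is $\opn{fund}_{B/A, M}$, sends $\smfrac{1}{\opn{wdg}(\til{\d}_{B/A}(\bb))} \ot (\be \ot \be)$ to $\opn{kpres}^n_{\til{\d}(\bb)}\bigl(\smgfrac{\be \ot \be}{\til{\d}_{B/A}(\bb)}\bigr)$ — this is precisely condition ($*$) of Lemma \ref{lem:1100}(1), provided $\til{\d}_{B/A}(\bb)$ is an actual Koszul regular sequence generating $\opn{I}_{B/A}$. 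The subtlety, already flagged in Remark \ref{rem:2125}, is that $\til{\d}_{B/A}(\bb)$ need \emph{not} generate $\opn{I}_{B/A}$ globally on $B^{\mrm{en}}$; this is why the statement uses the Koszul \emph{approximation} $\opn{kprox}^n_{\bb, M}$ from Corollary \ref{cor:1931} rather than $\opn{kpres}^n$. So the correct argument is local: by Theorem \ref{thm:1175}(iii), for each $\p \in \opn{Spec}(B)$ there is $\til{s} \in B^{\mrm{en}}$ with $\opn{mult}_{B/A}(\til{s}) \notin \p$ such that after localizing at $\til{s}$ the sequence $\opn{q}_{B^{\mrm{en}},\til{s}}(\til{\d}_{B/A}(\bb))$ is regular and generates $(B^{\mrm{en}})_{\til{s}} \ot_{B^{\mrm{en}}} \opn{I}_{B/A}$; over such $(B^{\mrm{en}})_{\til{s}}$ the computation above goes through verbatim with $\opn{kpres}^n_{\til{\d}(\bb), M_{\til{s}}}$, and by the defining property ($*$) of $\opn{kprox}^n_{\til{\d}(\bb), M}$ these local identities assemble. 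Combined with the localization-compatibility of $\opn{res}$ from property (1) (applied to $B \to B_{\opn{mult}(\til{s})}$ and matched against the localization of the scheme $\opn{Spec}(B^{\mrm{en}})$), this gives the formula in (2). Finally, uniqueness: any $B$-module homomorphism $\Om^n_{B/A} \to \opn{Ext}^n_{B^{\mrm{en}}}(B, \Om^n_{B/A} \ot_A \Om^n_{B/A})$ satisfying (1) and (2) is forced, because by Corollary \ref{cor:1175} the principal open sets $\opn{Spec}(B_s)$ over which an étale coordinate system exists cover $\opn{Spec}(B)$, property (2) pins the homomorphism down on each such $B_s$ (the target is a $B$-module and $\be$ is a generator there), and property (1) guarantees these local determinations are compatible with restriction, hence glue to a unique global homomorphism; since $\opn{res}_{B/A}$ is one such, it is the only one.

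The main obstacle I anticipate is precisely the bookkeeping around the fact that $\til{\d}_{B/A}(\bb)$ is only \emph{locally} a regular generating sequence for the diagonal ideal: one must be careful to formulate property (2) via $\opn{kprox}$ and to run the verification on the covering of $\opn{diag}_{Y/X}(X)$ by the principal opens $\opn{NZer}_{Y\times_X Y}(\til{s})$ of Theorem \ref{thm:1175}(iii), rather than naively over all of $\opn{Spec}(B^{\mrm{en}})$, and to check that the gluing data (cocycle condition on double intersections $\opn{NZer}(\til{s}_1 \til{s}_2)$) is consistent — but this consistency is exactly what Corollary \ref{cor:1931} already provides, so the remaining work is organizational rather than conceptual.
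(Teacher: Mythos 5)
Your proposal is correct and follows the same approach as the paper's proof, which merely cites Propositions~\ref{prop:1141} and~\ref{prop:1260} and formula~(\ref{eqn:1208}) for (1), and says (2) is ``just an unraveling'' of Theorem~\ref{thm:1080}($\dag$), Lemma~\ref{lem:1100}($*$) and formula~(\ref{eqn:1271}). Your write-up usefully spells out the terse references, and in particular correctly handles the subtlety the paper leaves implicit in (2): $\til{\d}_{B/A}(\bb)$ need not globally generate $\opn{I}_{B/A}$, so the verification must be run on the cover by the principal opens $\opn{NZer}_{Y\times_X Y}(\til{s})$ of Theorem~\ref{thm:1175}(iii) and assembled via the defining property of $\opn{kprox}$ in Corollary~\ref{cor:1931}; you also supply the uniqueness argument (gluing over the principal opens of Corollary~\ref{cor:1175} via (1) and (2)) that the paper's proof omits despite the ``determined by them'' clause in the statement.
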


\begin{proof}
(1) The ring homomorphism $g^{\mrm{en}} : B^{\mrm{en}} \to C^{\mrm{en}}$ is also a 
localization. The assertion is clear from Proposition \ref{prop:1141},
Proposition \ref{prop:1260} and formula (\ref{eqn:1208}).

\medskip \noindent
(2) This is just an unraveling of the Fundamental Local Isomorphism
(Theorem \ref{thm:1080}), specifically the formula in property
($\dag$) of the theorem, that refers to the formula in property ($*$) in Lemma 
\ref{lem:1100}; combined with formula (\ref{eqn:1271}) in the definition of 
$\opn{res}_{B / A}$.
\end{proof}

Recall the squaring operation from Theorem \ref{thm:631}. Because $A \to B$ is 
a flat ring homomorphism, for every $M \in \cat{D}(B)$ we have the isomorphism 
\begin{equation} \label{eqn:1216}
\opn{sq}^{B / A}_{B / A, M} : 
\opn{Sq}_{B / A}(M) \iso \opn{Sq}_{B / A}^{B / A}(M) = 
\opn{RHom}_{B^{\mrm{en}}}(B, M \ot^{\mrm{L}}_{A} M) 
\end{equation}
in $\cat{D}(B)$. See formula (\ref{eqn:625}) with 
$\til{B} / \til{A} = B / A$.

The vanishing of all cohomologies of the complex
$\opn{RHom}_{B^{\mrm{en}}} \bigl( B, \Om^n_{B / A} \ot_A \Om^n_{B / A} \bigr)$, 
except for $\opn{H}^n$, which is due to Theorem \ref{thm:1080}, implies that 
there is a unique isomorphism 
\begin{equation} \label{eqn:1225}
\opn{RHom}_{B^{\mrm{en}}} \bigl( B, \Om^n_{B / A} \ot_A \Om^n_{B / A} \bigr)
\cong 
\opn{Ext}^n_{B^{\mrm{en}}} \bigl( B, \Om^n_{B / A} \ot_A \Om^n_{B / A} 
\bigr)[-n]
\end{equation}
in $\cat{D}(B)$ that gives the identity after applying $\opn{H}^n$ to it. 
By translating some of the complexes in (\ref{eqn:1225}) we get an isomorphism 
\begin{equation} \label{eqn:1226}
\begin{aligned}
&
\opn{Ext}^n_{B^{\mrm{en}}} \bigl( B, \Om^n_{B / A} \ot_A \Om^n_{B / A} \bigr)[n]
\\
& \quad 
\iso  
\opn{RHom}_{B^{\mrm{en}}} \bigl( B, \Om^n_{B / A}[n] \ot_A \Om^n_{B / A}[n] 
\bigr) 
\iso   
\opn{Sq}^{B / A}_{B / A} \bigl( \Om^n_{B / A}[n] \bigr) 
\end{aligned}
\end{equation}
in $\cat{D}(B)$.

\begin{dfn} \label{dfn:1225}
Under Setup \ref{set:1165}, let 
\[ \rho_{B / A}^{\mrm{esm}} \, : \,
\Om^n_{B / A}[n] \, \iso \, \opn{Sq}_{B / A} \bigl( \Om^n_{B / A}[n] \bigr) \]
be the unique isomorphism in $\cat{D}(B)$ such that the diagram 
\[ \begin{tikzcd} [column sep = 8ex, row sep = 5ex]
\Om^n_{B / A}[n]
\ar[r, dashed, "{\rho_{B / A}^{\mrm{esm}}}", "{\simeq}"']
\ar[d, "{\opn{res}_{B / A}[n]}", "{\simeq}"'] 
&
\opn{Sq}_{B / A} \bigl( \Om^n_{B / A}[n] \bigr) 
\ar[d, "{\opn{sq}^{B / A}_{B / A, \Om^n_{B / A}[n]} }", , "{\simeq}"']
\\
\opn{Ext}^n_{B^{\mrm{en}}} 
\bigl( B, \Om^n_{B / A} \ot_A \Om^n_{B / A} \bigr)[n]
\ar[r, "{\simeq}"]
&
\opn{Sq}^{B / A}_{B / A} \bigl( \Om^n_{B / A}[n] \bigr) 
\end{tikzcd} \]
in which the bottom horizontal arrow is (\ref{eqn:1226}), 
the left vertical arrow is the residue isomorphism from Definition 
\ref{dfn:1215}, and the right vertical arrow is the isomorphism 
(\ref{eqn:1216}), is commutative.

The rigid complex 
\[ \bigl( \Om^n_{B / A}[n], \, \rho_{B / A}^{\mrm{esm}} \bigr)
\in \cat{D}(B)_{\mrm{rig} / A} \]
is called the {\em standard rigid complex of the essentially smooth ring 
homomorphism $A \to  B$}.
\end{dfn}

From here on in this section we abandon Setup \ref{set:1165}.

\begin{prop} \label{prop:1270}
Let $A$ be a noetherian ring, and let $B \to C$ be an essentially smooth 
homomorphism between EFT $A$-rings, of constant differential relative 
dimension $n$. Let $M \in \cat{D}(B)$ be a complex that has finite flat 
dimension over $A$. 
\begin{enumerate}
\item The complex $M \ot_B \Om^n_{C / B}[n]$ belongs to 
$\cat{D}^{\mrm{b}}_{\mrm{f}}(C)$, and it has 
finite flat dimension over $A$.

\item The cup product morphism 
\[ \begin{aligned}
&
\opn{cup}_{C / B / A, M, \Om^n_{C / B}[n]} \, : \, 
\opn{Sq}_{B / A}(M) \ot^{\mrm{L}}_{B}
\opn{Sq}_{C / B} \bigl( \Om^n_{C / B}[n] \bigr) 
\\
& \quad \qquad 
\to \,
\opn{Sq}_{C / A} \bigl( M \ot^{\mrm{L}}_{B} \Om^n_{C / B}[n] \bigr) 
\end{aligned} \]
from Theorem \ref{thm:780} is an isomorphism.
\end{enumerate}
\end{prop}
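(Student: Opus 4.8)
The statement has two parts. Part (1) is a direct consequence of Lemma \ref{lem:1530}: since $A \to B \to C$ are EFT homomorphisms of noetherian rings and $B \to C$ is essentially smooth, Theorem \ref{thm:1140} tells us that $C$ is flat over $B$ and $\Om^1_{C/B}$ is a finitely generated projective $C$-module of rank $n$; hence $\Om^n_{C/B} = \bwedge^n_C(\Om^1_{C/B})$ is an invertible $C$-module, so $\Om^n_{C/B}[n] \in \cat{D}^{\mrm{b}}_{\mrm{f}}(C)$ and it is flat (indeed perfect) over $B$, in particular of finite flat dimension over $B$. Now apply Lemma \ref{lem:1530} with $N := \Om^n_{C/B}[n]$: the hypotheses are that $M \in \cat{D}^{\mrm{b}}_{\mrm{f}}(B)$ has finite flat dimension over $A$ (given, via Definition \ref{dfn:675}(1) if $M$ is part of a rigid complex, but here it is simply assumed) and $N \in \cat{D}^{\mrm{b}}_{\mrm{f}}(C)$ has finite flat dimension over $B$. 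The conclusion is exactly that $M \ot^{\mrm{L}}_B N$ lies in $\cat{D}^{\mrm{b}}_{\mrm{f}}(C)$ and has finite flat dimension over $A$. (Note $M \ot_B \Om^n_{C/B}[n] = M \ot^{\mrm{L}}_B \Om^n_{C/B}[n]$ since $\Om^n_{C/B}$ is flat over $B$.)

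\textbf{Part (2): verifying the hypotheses of Theorem \ref{thm:810}.} The plan is to check conditions (i)--(iv) of Theorem \ref{thm:810} with $N := \Om^n_{C/B}[n]$. Condition (i) holds: $A$ is noetherian and $B, C$ are EFT $A$-rings (Convention \ref{conv:1070}, or as assumed). Condition (ii): $M$ has finite flat dimension over $A$ by assumption. Condition (iii): $N = \Om^n_{C/B}[n]$ has finite flat dimension over $B$ --- in fact $\Om^n_{C/B}$ is a projective $C$-module, hence flat over $B$ since $C$ is flat over $B$ by Theorem \ref{thm:1140}(1), so $N$ is even a flat complex (shifted). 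For condition (iv), I would use alternative (a): the ring $C$ is flat over $B$ (Theorem \ref{thm:1140}(1)), and $C$ is a perfect complex over $C \ot_B C$. This last fact is precisely Corollary \ref{cor:1060}: an essentially smooth ring homomorphism $B \to C$ makes $C$ a perfect complex over $C \ot_B C$. With all of (i)--(iv) verified, Theorem \ref{thm:810} gives that $\opn{cup}_{C / B / A, M, \Om^n_{C / B}[n]}$ is an isomorphism in $\cat{D}(C)$, which is exactly part (2).

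\textbf{Main obstacle.} Frankly, there is no serious obstacle: the proposition is designed as a packaging of Lemma \ref{lem:1530}, Theorem \ref{thm:1140}, Corollary \ref{cor:1060} and Theorem \ref{thm:810}. The only point requiring a word of care is the bookkeeping that $\Om^n_{C/B}$ is genuinely flat (even projective) over $B$ --- one needs both that $\Om^1_{C/B}$ is projective over $C$ (Theorem \ref{thm:1140}(2) applied to $B \to C$) and that $C$ is flat over $B$ (Theorem \ref{thm:1140}(1)) --- and that $\Om^n_{C/B} \ne 0$ so that shifting by $[n]$ does not produce the zero complex artificially; but since the relative differential dimension is constant equal to $n$, the invertible $C$-module $\Om^n_{C/B}$ is locally free of rank one and hence nonzero. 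So the proof is a short verification-of-hypotheses argument; I would simply cite the four earlier results in sequence, first disposing of (1) via Lemma \ref{lem:1530} and then running through conditions (i)--(iv)(a) of Theorem \ref{thm:810} for (2).
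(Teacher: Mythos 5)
Your proof is correct and follows essentially the same approach as the paper: part (2) matches verbatim, verifying conditions (i)--(iv)(a) of Theorem \ref{thm:810} with (iv)(a) supplied by Corollary \ref{cor:1060}. For part (1), the paper gives the explicit cohomology computation (showing $\opn{H}^i(M \ot_B \Om^n_{C/B}[n]) \cong \opn{H}^{i+n}(M) \ot_B \Om^n_{C/B}$ is finitely generated and bounded, then bounding the flat dimension over $A$ via a bounded flat $A$-resolution of $M$), whereas you route through Lemma \ref{lem:1530}, which packages exactly this conclusion; since the paper leaves that lemma's proof to the reader, your citation is arguably the cleaner bookkeeping, while the paper's proof is effectively the lemma's argument unpacked in the case at hand. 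You also correctly spotted that the proposition's stated hypothesis ($M \in \cat{D}(B)$ of finite flat dimension over $A$) is slightly weaker than what the argument uses ($M \in \cat{D}^{\mrm{b}}_{\mrm{f}}(B)$); the paper silently supplies the missing $\cat{D}^{\mrm{b}}_{\mrm{f}}(B)$ condition by citing Definition \ref{dfn:675}(1), i.e.\ by tacitly assuming $M$ is part of a rigid complex over $B/A$, exactly as you observed.
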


\begin{proof} 
(1) By Definition \ref{dfn:675}(a) we know that 
$M \in \cat{D}^{\mrm{b}}_{\mrm{f}}(B)$ and it has finite flat dimension over 
$A$. Since $C$ is flat over $B$ and $\Om^n_{C / B}$ is a finitely generated 
flat $C$-module, for every $i$ we have an isomorphism of $C$-modules 
\[ \opn{H}^i \big( M \ot_B \Om^n_{C / B}[n] \bigl) \cong 
\opn{H}^{i + n}(M) \ot_B \Om^n_{C / B} , \]
and this is a finitely generated $C$-module, vanishing for $\abs{i} \gg 0$. 
This means that \lb 
$M \ot_B \Om^n_{C / B}[n] \in \cat{D}^{\mrm{b}}_{\mrm{f}}(C)$.

Regarding the finite flat dimension over $A$: we must prove that there is a 
natural number $d$, such that for every $A$-module $L$ the cohomology of the 
complex 
$L \ot^{\mrm{L}}_{A} M \ot^{\mrm{L}}_{C} \Om^n_{C / B}$
is concentrated in the integer interval $[-d, d]$. 
Consider the complex $L \ot^{\mrm{L}}_{A} M \in \cat{D}(B)$. 
To calculate it we choose a K-flat resolution $P \to L$ over $A$; and then 
$L \ot^{\mrm{L}}_{A} M \cong P \ot_A M$ in $\cat{D}(B)$.
But if we just want to know the concentration of the cohomology of the complex 
$L \ot^{\mrm{L}}_{A} M$, we can view it as a complex in 
$\cat{D}(\Z)$ via the restriction functor. 
The complex $M$ has finite flat dimension over $A$, and thus there is a 
quasi-isomorphism $Q \to M$ in $\cat{C}_{\mrm{str}}(A)$ from a 
bounded complex of flat $A$-modules $Q$. Let $d$ be such that the complex $Q$ 
is concentrated in the integer interval $[-d, d]$. Since 
$L \ot^{\mrm{L}}_{A} M \cong L \ot_A Q$ in $\cat{D}(\Z)$, we see that the 
concentration of the cohomology of the complex 
$L \ot^{\mrm{L}}_{A} M$ is inside $[-d, d]$. Now for every $i$ we have 
\[ \opn{H}^i \big( L \ot^{\mrm{L}}_{A} M \ot_B \Om^n_{C / B} \bigl) \cong 
\opn{H}^{i}(L \ot^{\mrm{L}}_{A} M) \ot_B \Om^n_{C / B} , \]
and this vanishes for $i \notin [-d, d]$. 

\medskip \noindent
(2) Conditions (i), (ii), (iii) and (iv.a) of Theorem \ref{thm:810},
with $N := \Om^n_{C / B}[n]$, are satisfied, the last one by Corollary 
\ref{cor:1060}.
\end{proof}

\begin{dfn}[Twisted Induced Rigid Complex] \label{dfn:1235}
Let $A$ be a noetherian ring, and let $v : B \to C$ be an essentially smooth 
homomorphism between essentially finite type $A$-rings, of constant 
differential relative dimension $n$. Let $(M, \rho)$ be a rigid complex over 
$B$ relative to $A$. Define the complex 
\[ \opn{TwInd}^{\mrm{esm}}_{C / B}(M) = \opn{TwInd}^{\mrm{esm}}_{v}(M) := 
M \ot_B \Om^n_{C / B}[n] \in \cat{D}(C) \]
and the rigidifying isomorphism 
\[ \begin{aligned}
& 
\opn{TwInd}^{\mrm{rig, esm}}_{C / B / A}(\rho) = 
\opn{TwInd}^{\mrm{rig, esm}}_{v / A}(\rho) \, := \,
\rho \cupprod \rho_{C / B}^{\mrm{esm}} \, : 
\\
& \qquad \qquad \quad 
M \ot_B \Om^n_{C / B}[n] \, \iso \, 
\opn{Sq}_{C / A} \bigl( M \ot_B \Om^n_{C / B}[n] \bigr) ,
\end{aligned} \]
where 
$\bigl( \Om^n_{C / B}[n], \, \rho_{C / B}^{\mrm{esm}} \bigr)$
is the rigid complex from Definition \ref{dfn:1225}, and 
$\rho \cupprod \rho_{C / B}^{\mrm{esm}}$
is the rigidifying isomorphism from from Definition \ref{dfn:1260}(1). 

The rigid complex 
\[ \opn{TwInd}^{\mrm{rig, esm}}_{C / B / A}(M, \rho) = 
\opn{TwInd}^{\mrm{rig}}_{v / A}(M, \rho) :=
\bigl( M \ot_B \Om^n_{C / B}[n], \, \rho \cupprod \rho_{B / A}^{\mrm{esm}} 
\bigr)
\in \cat{D}(C)_{\mrm{rig} / A} \]
is called the {\em twisted induced rigid complex}.
\end{dfn}

Observe that Definitions \ref{dfn:1260} and \ref{dfn:675} apply, because of 
Proposition \ref{prop:1270}.

\begin{dfn} \label{dfn:1236}
Let $A$ be a noetherian ring, and let $v : B \to C$ be an essentially smooth 
homomorphism between essentially finite type $A$-rings. Let 
$C = \prod_{i = 1}^r C_i$ be the connected component decomposition of $C$;
so for each $i$ the homomorphism $B \to C_i$ has constant differential relative 
dimension, and Definition \ref{dfn:1235} applies to it.
Given a rigid complex 
$(M, \rho) \in \cat{D}(B)_{\mrm{rig} / A}$, 
define 
\[ \opn{TwInd}^{\mrm{rig, esm}}_{C / B / A}(M, \rho) =
\opn{TwInd}^{\mrm{rig, esm}}_{v / A}(M, \rho) :=
\bigoplus\nolimits_{i = 1}^r \, \opn{TwInd}^{\mrm{rig, esm}}_{C_i / B / A}
(M, \rho)\in \cat{D}(C)_{\mrm{rig} / A} , \]
using Proposition \ref{prop:1490}. 
\end{dfn}

\section{Essentially \'Etale Ring Homomorphisms}
\label{sec:etale}

In Section \ref{sec:smooth} we studied the algebra and geometry of essentially 
smooth ring homomorphisms. Here we specialize to essentially \'etale ring 
homomorphisms (see Definition \ref{dfn:1015}), and make a much deeper 
investigation of the structure, in particular of the diagonal embedding.
These results could be of independent interest, beyond the context of our paper 
e.g. the pro-\'etale site \cite{BS}.
 
According to Conventions \ref{conv:615} and \ref{conv:1070}, both assumed in 
this section, all rings are commutative and noetherian, and all ring 
homomorphisms are EFT. 

Essentially \'etale ring homomorphisms are ubiquitous. We already saw that 
localizations are essentially \'etale (Example \ref{exa:1050}). Here is another 
example. 

\begin{exa} \label{exa:1240}
Let $f : A \to B$ be a homomorphism between noetherian rings. 
If $\opn{Spec}(f) : \opn{Spec}(B) \to \opn{Spec}(A)$ is an open embedding, then 
$f$ is \'etale. Indeed, by \cite[Proposition 6.2.5]{EGA-I}, $B$ is a finite 
type $A$-ring. For every prime $\q \sub B$, with preimage 
$\p := f^{-1}(\q) \sub A$, the ring homomorphism $A_{\p} \to B_{\q}$ is 
bijective, so a fortiori it is formally \'etale.  By 
\cite[Theorem 17.6.1]{EGA-IV} the homomorphism $A \to B$ is \'etale. 
\end{exa}

Besides localizations and open embeddings, one also has the important class of
finite \'etale homomorphisms $A \to B$, that are high dimensional 
generalizations of finite separable field extensions. 
Such ring homomorphisms are studied in Section \ref{sec:fin-etale}.

From here until Corollary \ref{cor:1246} (inclusive) we assume the
following setup:

\begin{setup} \label{set:1280}
We are given an essentially \'etale homomorphism $A \to B$ between 
noetherian rings, and we write $X := \opn{Spec}(A)$ and $Y := \opn{Spec}(B)$.
\end{setup}

Recall (from Section \ref{sec:smooth}) the multiplication ring homomorphism 
$\opn{mult}_{B / A} : B \ot_A B \to  B$
and the diagonal embedding 
$\opn{diag}_{Y / X} : Y \to  Y \times_X Y$.
They are related by this formula: 
$\opn{diag}_{Y / X} = \opn{Spec}(\opn{mult}_{B / A})$.
Of course $\opn{diag}_{Y / X}$ is a closed embedding of affine schemes. 

\begin{thm} \label{thm:1240}
Under Setup \ref{set:1280}, the diagonal embedding 
$\opn{diag}_{Y / X} : Y \to  Y \times_X Y$ is closed and open. 
\end{thm}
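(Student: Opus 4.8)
The diagonal embedding $\opn{diag}_{Y/X}$ is a closed embedding for any EFT homomorphism, so the content of the theorem is that it is also \emph{open}, i.e.\ that $\opn{diag}_{Y/X}(Y)$ is an open subset of $Y \times_X Y$. The plan is to exhibit, for each point in the diagonal, a principal open neighborhood inside $Y \times_X Y$ on which the diagonal ideal $\opn{I}_{B/A}$ vanishes. First I would invoke Theorem \ref{thm:1240}'s hypothesis together with Proposition \ref{prop:1055}: since $A \to B$ is essentially \'etale, it is essentially smooth of differential relative dimension $0$, so $\Om^1_{B/A} = 0$. By the canonical isomorphism (\ref{eqn:1154}), $\opn{I}_{B/A}/\opn{I}_{B/A}^2 = 0$ as well.

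Next I would use Theorem \ref{thm:1141}: $\opn{mult}_{B/A} : B^{\mrm{en}} := B\ot_A B \to B$ is a regular surjection. By definition (Definitions \ref{dfn:1080} and \ref{dfn:1081}), for every prime $\p \in \opn{Spec}(B) = \opn{diag}_{Y/X}(Y) \sub \opn{Spec}(B^{\mrm{en}})$ there is an element $\til{s} \in B^{\mrm{en}}$ with $\opn{mult}_{B/A}(\til{s}) \notin \p$ such that the localized ideal $(B^{\mrm{en}})_{\til{s}} \ot_{B^{\mrm{en}}} \opn{I}_{B/A}$ is generated by a regular sequence $\bsym{a}$ in $(B^{\mrm{en}})_{\til{s}}$. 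The length of $\bsym{a}$ equals the rank of the free module over $(B^{\mrm{en}})_{\til{s}} / (\text{that ideal}) = B_{\opn{mult}(\til{s})}$ of the conormal module, by Lemma \ref{lem:1111}(1) (or Proposition \ref{prop:1096}(2)); but that conormal module is a localization of $\opn{I}_{B/A}/\opn{I}_{B/A}^2 = 0$, hence is zero. So $\bsym{a}$ is the empty sequence, which means the ideal $(B^{\mrm{en}})_{\til{s}} \ot_{B^{\mrm{en}}} \opn{I}_{B/A}$ is the zero ideal. Therefore the localization homomorphism $(B^{\mrm{en}})_{\til{s}} \to B_{\opn{mult}_{B/A}(\til{s})}$ is an isomorphism, which geometrically says that the principal open set $\opn{NZer}_{Y\times_X Y}(\til{s})$ is contained in $\opn{diag}_{Y/X}(Y)$ and in fact maps isomorphically to a principal open subscheme of $Y$.

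Finally I would observe that as $\p$ ranges over all of $\opn{Spec}(B)$, the corresponding open sets $\opn{NZer}_{Y\times_X Y}(\til{s})$ cover $\opn{diag}_{Y/X}(Y)$, so $\opn{diag}_{Y/X}(Y)$ is a union of open subsets of $Y \times_X Y$, hence open. Combined with the fact that it is closed, this proves the claim.

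\textbf{Main obstacle.} There is no deep obstacle here; the essential input is Theorem \ref{thm:1141} (the diagonal of an essentially smooth map is a regular surjection) together with the vanishing of $\Om^1_{B/A}$ in the \'etale case, and the argument is really a matter of correctly matching up the regular-surjection bookkeeping. The only point requiring slight care is the passage from ``the conormal module is zero'' to ``the relevant localization of $\opn{I}_{B/A}$ is zero'': one needs that a regular sequence generating a localized ideal has length equal to the rank of the corresponding free conormal module (Lemma \ref{lem:1111}(1)), so that a rank-zero conormal module forces the empty generating sequence and hence the zero ideal. Everything else is routine gluing of principal opens, as in the proof of the Fundamental Local Isomorphism.
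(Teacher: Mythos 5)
Your proof is correct, and it takes a genuinely different route from the paper's. The paper chooses an affine open covering $Y = \bigcup_i Y_i$ (via Theorem \ref{thm:1015}) in which each $A \to B_i$ factors through a finite-type \'etale $A$-ring $\til{B}_i$, reduces the openness of $\opn{diag}(Y_i)$ in $Y_i \times_X Y_i$ to the openness of $\opn{diag}(\til{Y}_i)$ in $\til{Y}_i \times_X \til{Y}_i$, and then cites \cite[Corollary 17.4.2]{EGA-IV}, which asserts that the diagonal of a finite-type \'etale map is an open embedding. You instead stay entirely in the essentially-\'etale setting: you feed $\Om^1_{B/A} = 0$ (Proposition \ref{prop:1055}) through the isomorphism (\ref{eqn:1154}) to conclude $\opn{I}_{B/A}/\opn{I}_{B/A}^2 = 0$, invoke Theorem \ref{thm:1141} to get that $\opn{mult}_{B/A}$ is a regular surjection, and then use the rank bookkeeping of Lemma \ref{lem:1111}(1) / Proposition \ref{prop:1096}(2) to force the local generating regular sequence to be empty, so that the ideal $\opn{I}_{B/A}$ vanishes on a principal open neighborhood of each diagonal point. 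This is a clean and valid argument: the rank of the conormal module really does pin down the length, the empty regular sequence generates the zero ideal, and $(B^{\mrm{en}})_{\til{s}} \to B_{\opn{mult}(\til{s})}$ is then an isomorphism, so $\opn{NZer}_{Y\times_X Y}(\til{s})$ is an open neighborhood of the diagonal point sitting inside the diagonal.

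What each approach buys: the paper's proof is self-contained modulo the single EGA citation on the \'etale diagonal; your proof avoids the re-descent to finite-type \'etale rings and the accompanying topological-versus-scheme-theoretic care about $Y_i \sub \til{Y}_i$ (cf.\ Remark \ref{rem:1560}), at the cost of invoking Theorem \ref{thm:1141}, which is itself proved by a similar reduction to smooth finite-type $A$-rings and the EGA fact that the smooth diagonal is quasi-regular \cite[Proposition 17.12.4]{EGA-IV}. So the two routes ultimately rest on neighboring EGA results, but yours is organized more intrinsically and is arguably more in the spirit of this paper's machinery, since it exercises Theorem \ref{thm:1141}, Proposition \ref{prop:1096}, and Lemma \ref{lem:1111} rather than re-running the covering argument from scratch. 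Both proofs handle $B = 0$ trivially. No gaps.
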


\begin{proof}
Let $C := B \ot_A B$ and $Z := \opn{Spec}(C) = Y\times_X Y$. 
We need to prove that the subset
$\opn{diag}_{Y / X}(Y)$ is open in $Z$.

By Theorem \ref{thm:1015} there is an affine open covering 
$Y = \bigcup_i Y_i$, $Y_i = \opn{Spec}(B_i)$, such that for every $i$
the homomorphism $A \to B_i$ factors through an \'etale homomorphism 
$A \to \til{B}_i$ and a localization homomorphism $\til{B}_i \to B_i$.
Let's define the rings $C_i := B_i \ot_A B_i$ and the affine schemes
$Y_i := \opn{Spec}(B_i)$ and 
$Z_i := \opn{Spec}(C_i)$.
Then the schemes $Z_i = Y_i \times_X Y_i$ are open in $Z$, and 
$\opn{diag}(Y_i) \sub Z_i$. Thus it suffices to prove that for every $i$ the 
subset $\opn{diag}(Y_i)$ is open in $Z_i$.

Define $\til{C}_i := \til{B}_i \ot_A \til{B}_i$,
$\til{Y}_i := \opn{Spec}(\til{B}_i)$ and 
$\til{Z}_i := \opn{Spec}(\til{C}_i)$. 
We get a cartesian diagram of schemes
\begin{equation} \label{eqn:1525}
\begin{tikzcd} [column sep = 6ex, row sep = 4ex] 
Y_i
\arrow[r]
\ar[d, "{\mrm{diag}}"']
&
\til{Y}_i
\ar[d, "{\opn{diag}}"]
\\
Z_i
\ar[r]
&
\til{Z}_i
\end{tikzcd} 
\end{equation}
in which all the arrows are topological embeddings, namely the topological 
spaces $Y_i, Z_i, \til{Y}_i$ are homeomorphic to their images inside the 
topological space $\til{Z}_i$. Thus 
$Y_i = Z_i \cap \til{Y}_i \sub \til{Z}_i$,
and it suffices to prove that $\til{Y}_i$ is open in $\til{Z}_i$.
Warning: in general $Y_i$ is not a subscheme of $\til{Y}_i$; see Remark 
\ref{rem:1560}.  

Now the scheme map $\til{Y}_i \to X$ is \'etale, and 
$\til{Z}_i = \til{Y}_i \times_X \til{Y}_i$.
According to \cite[Corollary 17.4.2]{EGA-IV}, the diagonal embedding 
$\opn{diag}_{\til{Y}_i / X} : \til{Y}_i \to \til{Z}_i$
is an open embedding.
\end{proof}

\begin{figure}[h]
\centering
\includegraphics[scale=0.18]{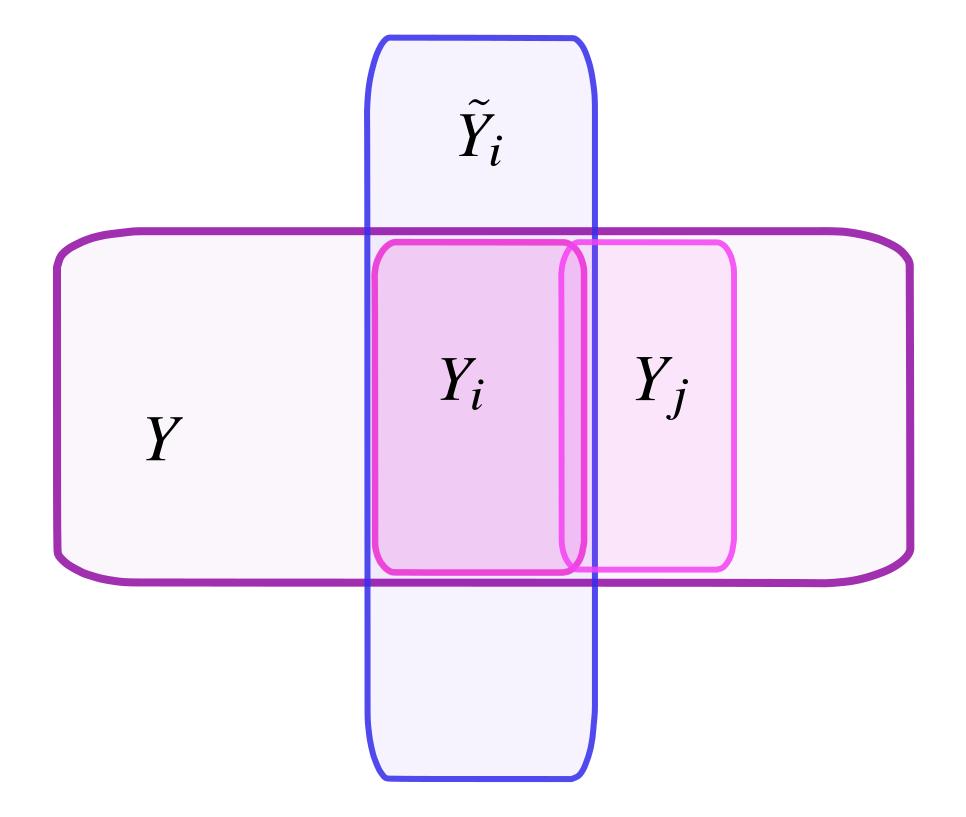}
\caption{An illustration for the proof of Theorem \ref{thm:1240}.}
\label{fig:10}
\end{figure}

\begin{figure}[h]
\centering
\includegraphics[scale=0.20]{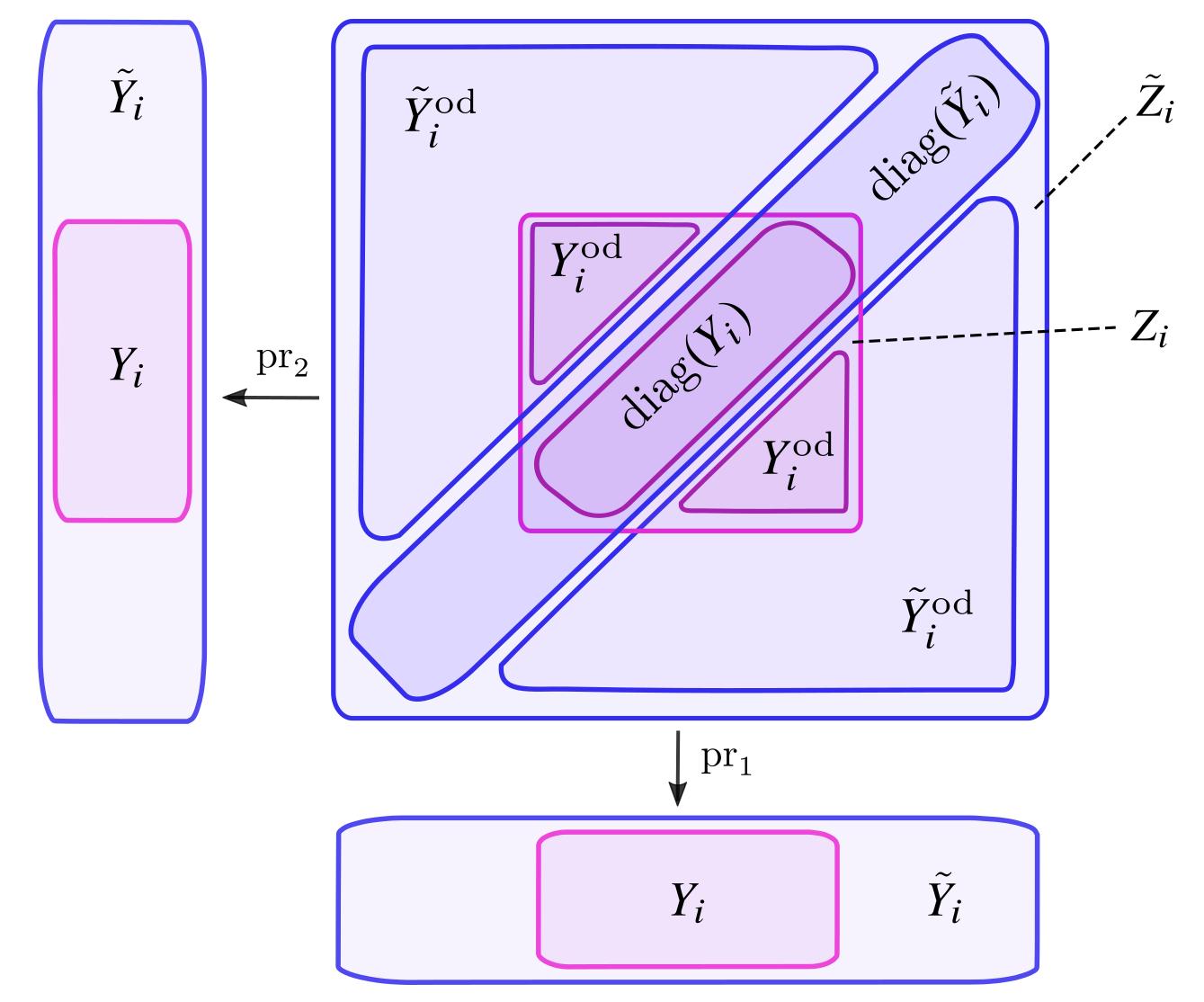}
\caption{An illustration for the proof of Theorem \ref{thm:1240}}
\label{fig:11}
\end{figure}

\newpage

\begin{cor} \label{cor:1280}
Under Setup \ref{set:1280}, there is a unique ring isomorphism 
\[ \tag{\dag} B \ot_A B \iso B \times B^{\mrm{od}} \]
such that the diagram 
\[ \tag{\ddag}
\begin{tikzcd} [column sep = 6ex, row sep = 4ex] 
B \ot_A B 
\arrow[r, "{\simeq}"]
\ar[dr, "{\mrm{mult}_{B / A}}"']
&
B \times B^{\mrm{od}} 
\ar[d, "{\mrm{pr}_1}"]
\\
&
B
\end{tikzcd} \]
of rings, where $\mrm{pr}_1$ is the projection to the first factor, is 
commutative.
\end{cor}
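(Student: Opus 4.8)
The statement is an immediate consequence of Theorem \ref{thm:1240}. The plan is to translate the topological decomposition of $\opn{Spec}(B\ot_A B)$ into the ring-theoretic idempotent decomposition, and then identify the first factor with $B$ via $\opn{mult}_{B/A}$.

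First I would invoke Theorem \ref{thm:1240}: the diagonal embedding $\opn{diag}_{Y/X}\colon Y\to Y\times_X Y = \opn{Spec}(B\ot_A B)$ is both closed and open. A closed-open subscheme of an affine noetherian scheme $\opn{Spec}(C)$ corresponds to an idempotent $e\in C$, giving a ring decomposition $C\cong Ce\times C(1-e)$, where $\opn{Spec}(Ce)$ is the given closed-open subset. Applying this with $C := B\ot_A B$ and the closed-open subset $\opn{diag}_{Y/X}(Y)$, we obtain a decomposition $B\ot_A B\cong C_1\times C_2$, and since $\opn{diag}_{Y/X} = \opn{Spec}(\opn{mult}_{B/A})$ realizes $Y$ as this closed-open piece, the surjection $\opn{mult}_{B/A}\colon B\ot_A B\to B$ factors as projection onto $C_1$ followed by an isomorphism $C_1\iso B$ (the projection onto the first factor followed by $\opn{mult}_{B/A}$ restricted to $C_1$ is the inverse of the closed immersion $\opn{Spec}(B)\hookrightarrow\opn{Spec}(C_1)$, which is an isomorphism of affine schemes since $C_1$ is the coordinate ring of the closed-open subset that $\opn{diag}_{Y/X}$ maps onto). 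Setting $B^{\mrm{od}} := C_2$, this produces the ring isomorphism $(\dag)$ and the commutative diagram $(\ddag)$.

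For uniqueness: any ring isomorphism $\psi\colon B\ot_A B\iso B\times B^{\mrm{od}}$ satisfying $(\ddag)$ is determined by its induced homeomorphism on spectra, which must send $\opn{Spec}(B)$ (the first factor) onto the closed-open subset $\opn{diag}_{Y/X}(Y)$; two idempotents cutting out the same closed-open subset of a spectrum are equal, so the pair $(C_1, C_2)$ of factors is uniquely pinned down as $(B\ot_A B)e$ and $(B\ot_A B)(1-e)$ for the unique idempotent $e$ with $\opn{Spec}((B\ot_A B)e) = \opn{diag}_{Y/X}(Y)$; and once $C_1$ is fixed, the requirement $\opn{pr}_1\circ\psi = \opn{mult}_{B/A}$ forces the component $C_1\to B$ of $\psi$, while the component $C_2\to B^{\mrm{od}}$ is $\opn{id}$ after we \emph{define} $B^{\mrm{od}}$ to be $C_2$. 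Hence $\psi$ is unique.

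I expect no genuine obstacle here, since all the real content is in Theorem \ref{thm:1240}; the only mild care needed is the standard fact that idempotents in a commutative ring correspond bijectively to closed-open subsets of its spectrum, together with the observation that the restriction of $\opn{mult}_{B/A}$ to the relevant factor is the inverse of the (iso)morphism of affine schemes underlying $\opn{diag}_{Y/X}$. I would state the idempotent correspondence explicitly (it is standard, e.g.\ a closed-open $Z'\sub\opn{Spec}(C)$ forces $\mcal{O}(Z')$ to be a direct factor of $C$) and then the argument is three short lines.
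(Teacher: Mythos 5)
Your proof is correct and uses the same engine as the paper — both reduce the existence to Theorem~\ref{thm:1240} and the standard dictionary between idempotents and closed-open subsets of $\opn{Spec}$. The only place where you and the paper diverge is the uniqueness argument. You go back to the topology: the idempotent cutting out $\opn{diag}_{Y/X}(Y)$ is unique, hence the factor pair is pinned down. The paper stays entirely on the ring side: commutativity of $(\ddag)$ forces the ideal $\psi^{-1}(0\times B^{\mrm{od}})$ to equal $\opn{Ker}(\opn{mult}_{B/A})=\opn{I}_{B/A}$, and then the complementary factor must be $\opn{Ann}_{C}(\opn{I}_{B/A})$ by the structure of product rings, so the decomposition $C=\opn{Ann}_{C}(\opn{I}_{B/A})\oplus\opn{I}_{B/A}$ is forced without ever mentioning idempotents or spectra. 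The paper's version is slightly shorter and produces the explicit description of the two ideals (which is actually used again implicitly in the surrounding material, e.g.\ in the discussion around formula~(\ref{eqn:1245})), whereas your version makes the link to the closed-open decomposition more visible; both are valid, and the topological uniqueness of an idempotent is itself just a repackaging of the algebraic fact the paper uses.

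One small precision worth noting: the sentence ``any ring isomorphism $\psi$ \dots is determined by its induced homeomorphism on spectra'' is not literally true of ring isomorphisms; what you mean (and what you actually use) is that the induced homeomorphism determines the pair of complementary ideals, after which the requirement $\opn{pr}_1\circ\psi=\opn{mult}_{B/A}$ pins down the first component of $\psi$, and the second component is a tautology once $B^{\mrm{od}}$ is defined to be the second ideal. Phrased that way it matches the paper's intent: ``unique'' here means unique as an internal direct-product decomposition of $B\ot_A B$, not unique as an abstract isomorphism to an unspecified target.
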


\begin{proof} 
Let's start with the existence of the ring isomorphism (\dag). By the theorem, 
and using the notation of its proof, we know that 
$Z = Y \times_X Y$ has a decomposition
$Z = \lb \opn{diag}_{Y / X}(Y) \, \sqcup \, Y^{\mrm{od}}$ 
into a disjoint union of two open and closed affine subschemes. Passing to 
rings this becomes the decomposition (\dag) with 
$B^{\mrm{od}} := \Ga(Y^{\mrm{od}}, \OO_Z)$. 

Now to the uniqueness of (\dag). Recall that 
$\opn{I}_{B / A} = \opn{Ker} \bigl (\opn{mult}_{B / A} : C \to B \bigr)$
is an ideal of the ring $C = B \ot_A B$. The commutativity of diagram (\ddag) 
implies that $B^{\mrm{od}} = \opn{I}_{B / A}$
as submodules (i.e.\ ideals) of $C$. 
Therefore the direct sum decomposition (\dag) of $C$ into $C$-submodules is 
$C = \opn{Ann}_{C}(\opn{I}_{B / A}) \oplus \opn{I}_{B / A}$.
This proves that the decomposition (\dag) is unique as $C$-modules, and 
therefore also as rings. 
\end{proof}

\begin{cor} \label{cor:1245}
Under Setup \ref{set:1280} the following hold:
\begin{enumerate}
\item The $(B \ot_A B)$-module $B$ is projective.

\item The ring homomorphism $\opn{mult}_{B / A} : B \ot_A B \to B$
is a localization. 
\end{enumerate}
\end{cor}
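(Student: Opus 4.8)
The plan is to deduce both statements directly from Corollary \ref{cor:1280}. We are given (Setup \ref{set:1280}) that $A \to B$ is essentially \'etale, so by Corollary \ref{cor:1280} there is a ring isomorphism $B \ot_A B \iso B \times B^{\mrm{od}}$ compatible with the multiplication map, i.e.\ under this isomorphism $\opn{mult}_{B / A}$ becomes the first projection $\opn{pr}_1 : B \times B^{\mrm{od}} \to B$.

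For part (1): a projection $R = R_1 \times R_2 \to R_1$ realizes $R_1$ as a direct summand of $R$ as an $R$-module (the complementary summand being $\{0\} \times R_2$). Hence $B$, viewed as a $(B \ot_A B)$-module via $\opn{mult}_{B / A}$, is a direct summand of the free module $B \ot_A B$ of rank $1$, and so it is a projective $(B \ot_A B)$-module. No serious calculation is needed here; it is just the observation that $B \times B^{\mrm{od}} = (B \times 0) \oplus (0 \times B^{\mrm{od}})$ as $(B \ot_A B)$-modules, and $B \times 0$ corresponds to $B$ with its $\opn{mult}_{B / A}$-action.

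For part (2): writing $C := B \ot_A B$, the connected-component / idempotent decomposition $C \cong B \times B^{\mrm{od}}$ gives a central idempotent $e \in C$ such that $e \cd C \cong B$ as rings, with the isomorphism being induced by $\opn{mult}_{B / A}$, and $(1 - e) \cd C \cong B^{\mrm{od}}$. Then $S := \{ 1, e \}$ is a multiplicatively closed subset of $C$ (since $e^2 = e$), and $C_S = C[e^{-1}] \cong e \cd C$, because inverting an idempotent kills the complementary summand: $(1 - e)$ maps to $0$ in $C[e^{-1}]$. Composing with $e \cd C \cong B$ exhibits $\opn{mult}_{B / A} : C \to B$ as (isomorphic to) the localization homomorphism $C \to C_S$. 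Thus $\opn{mult}_{B / A}$ is a localization homomorphism in the sense of the definition recalled before Definition \ref{dfn:1065}.

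I do not anticipate a genuine obstacle: both claims are formal consequences of Corollary \ref{cor:1280} together with elementary facts about finite products of rings (projections split as module maps; inverting an idempotent projects onto the corresponding factor). The only point requiring a line of care is matching the ring isomorphism $e \cd C \cong B$ with $\opn{mult}_{B / A}$ — but that compatibility is exactly the content of diagram (\ddag) in Corollary \ref{cor:1280}, so it is immediate. One could alternatively phrase part (2) by taking $S$ to be the image in $C$ of a suitable element, but the idempotent description is cleanest.
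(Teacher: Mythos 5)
Your proof is correct and takes essentially the same route as the paper: part (1) reads the $(B \ot_A B)$-module direct-sum decomposition off Corollary \ref{cor:1280}, and part (2) identifies the idempotent $e$ generating the diagonal factor and observes $(B \ot_A B)[e^{-1}] \cong e \cd (B \ot_A B) \cong B$, with the compatibility coming from diagram (\ddag). The paper phrases the same argument in marginally more compressed form.
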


\begin{proof} 
(1) By Corollary \ref{cor:1280} we have a decomposition 
\begin{equation} \label{eqn:1255}
B \ot_A B \cong B \oplus B^{\mrm{od}}
\end{equation}
of $(B \ot_A B)$-modules.

\medskip \noindent
(2) The decomposition (\ref{eqn:1255}) exhibits $B$ and $B^{\mrm{od}}$ as 
ideals of the ring $B \ot_A B$; and by the ring isomorphism 
(\dag) in Corollary \ref{cor:1280} these ideals are generated
by idempotent elements $e, e^{\mrm{od}} \in B \ot_A B$ respectively,
which satisfy $e \cd e^{\mrm{od}} = 0$ and $e + e^{\mrm{od}} = 1$. 
Therefore 
$B \cong (B \ot_A B)_{e} = (B \ot_A B)[e^{-1}]$ as $(B \ot_A B)$-rings. 
\end{proof}

There is a canonical $(B \ot_A B)$-module isomorphism 
\begin{equation} \label{eqn:1245}
\opn{Hom}_{B \ot_A B}(B, B \ot_A B) \iso 
\opn{Ann}_{B \ot_A B}(\opn{I}_{B / A}) \sub B \ot_A B , \quad \phi \mapsto 
\phi(1) .
\end{equation}
This allows us to consider $\opn{Hom}_{B \ot_A B}(B, B \ot_A B)$ as a 
$(B \ot_A B)$-submodule of $B \ot_A B$.

\begin{cor} \label{cor:1246}
Under Setup \ref{set:1280}, the following hold:
\begin{enumerate}
\item There is a unique $(B \ot_A B)$-module homomorphism 
\[ \opn{sec}_{B / A} : B \to B \ot_A B  \]
such that 
$\opn{mult}_{B / A} \circ \opn{sec}_{B / A} = \opn{id}_B$.

\item The homomorphism $\opn{sec}_{B / A}$ induces an isomorphism of 
$(B \ot_A B)$-modules 
\[ \opn{sec}_{B / A} : B \iso \opn{Hom}_{B \ot_A B}(B, B \ot_A B) . \]
\end{enumerate}
\end{cor}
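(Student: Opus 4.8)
The plan is to derive both statements directly from the decomposition already established in Corollary \ref{cor:1280} and Corollary \ref{cor:1245}. Write $C := B \ot_A B$, and recall from Corollary \ref{cor:1280} the ring isomorphism $C \iso B \times B^{\mrm{od}}$ sitting over $\opn{mult}_{B / A}$, so that as $C$-modules there is the decomposition $C \cong B \oplus B^{\mrm{od}}$ with $B^{\mrm{od}} = \opn{I}_{B / A}$ and $B = \opn{Ann}_{C}(\opn{I}_{B / A})$. Under this decomposition $\opn{mult}_{B / A}$ is the projection $\opn{pr}_1$ onto the first factor. For part (1), define $\opn{sec}_{B / A} : B \to C$ to be the composite of the canonical isomorphism $B \iso \opn{Ann}_{C}(\opn{I}_{B / A})$ (the first summand) with the inclusion $\opn{Ann}_{C}(\opn{I}_{B / A}) \hookrightarrow C$; this is a $C$-module homomorphism, and $\opn{pr}_1 \circ \opn{sec}_{B / A} = \opn{id}_B$ by construction, i.e.\ $\opn{mult}_{B / A} \circ \opn{sec}_{B / A} = \opn{id}_B$. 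For uniqueness: if $\sigma : B \to C$ is any $C$-linear section of $\opn{mult}_{B / A}$, then $\sigma$ is determined by $\sigma(1_B) \in C$; $C$-linearity of $\sigma$ and the fact that $B$ is annihilated by $\opn{I}_{B / A}$ as a $C$-module force $\opn{I}_{B / A} \cd \sigma(1_B) = 0$, so $\sigma(1_B) \in \opn{Ann}_{C}(\opn{I}_{B / A})$; and the section condition forces $\opn{mult}_{B / A}(\sigma(1_B)) = 1_B$, which pins down $\sigma(1_B)$ uniquely as the idempotent $e$ from Corollary \ref{cor:1245}(2). Hence $\sigma = \opn{sec}_{B / A}$.

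For part (2), observe that the canonical isomorphism \eqref{eqn:1245} identifies $\opn{Hom}_{C}(B, C)$ with $\opn{Ann}_{C}(\opn{I}_{B / A}) \sub C$ via $\phi \mapsto \phi(1_B)$. Under this identification the map $\opn{sec}_{B / A} : B \to C$ lands inside $\opn{Ann}_{C}(\opn{I}_{B / A})$ (by the computation in part (1)), and in fact is exactly the inverse of the summand isomorphism $\opn{Ann}_{C}(\opn{I}_{B / A}) \iso B$ coming from the decomposition $C \cong B \oplus B^{\mrm{od}}$. Therefore $\opn{sec}_{B / A}$, viewed as a map $B \to \opn{Hom}_{C}(B, C)$, is an isomorphism of $C$-modules. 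Concretely, one checks that $\opn{sec}_{B / A}(b) = b \cd e$, where $e \in C$ is the idempotent with $C_e \cong B$; so the induced map $B \to \opn{Hom}_C(B, C)$ sends $b$ to the homomorphism $x \mapsto b \cd x \cd e$, and this is inverted by $\phi \mapsto \opn{mult}_{B/A}(\phi(1_B))$.

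The only point requiring a little care is the uniqueness in (1): one must use that the $C$-module $B$ is annihilated precisely by $\opn{I}_{B / A}$ (equivalently, that $B \cong C / \opn{I}_{B / A}$ as a $C$-module, which is how $\opn{mult}_{B / A}$ presents $B$), so that $C$-linearity really does constrain $\sigma(1_B)$ to the annihilator summand. Everything else is bookkeeping with the idempotent decomposition of Corollaries \ref{cor:1280} and \ref{cor:1245}. I do not expect a genuine obstacle here; the substance of the corollary is entirely carried by Theorem \ref{thm:1240} and its corollaries already proved above.
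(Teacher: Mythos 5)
Your proof is correct and follows essentially the same route as the paper's: both use the decomposition $B \ot_A B \cong B \oplus B^{\mrm{od}}$ from Corollary \ref{cor:1280}, define $\opn{sec}_{B/A}$ as the inclusion of the summand $B = \opn{Ann}_{B \ot_A B}(\opn{I}_{B/A})$, and establish uniqueness by observing that $(B \ot_A B)$-linearity forces any section to land in the annihilator of $\opn{I}_{B/A}$, on which $\opn{mult}_{B/A}$ restricts to a bijection. The identification of $\sigma(1_B)$ with the idempotent $e$ is a nice explicit touch but not a departure.
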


\begin{proof} 
(1) Existence: We have the $(B \ot_A B)$-module decomposition (\ref{eqn:1255}),
and we define $\opn{sec}_{B / A}$ to be the inclusion of the direct summand 
$B \sub B \ot_A B$ under this decomposition. Thus 
\begin{equation} \label{eqn:1256}
\opn{sec}_{B / A} : B \to \opn{Ann}_{B \ot_A B}(\opn{I}_{B / A}) = 
\opn{Hom}_{B \ot_A B}(B, B \ot_A B)
\end{equation}
is bijective. 

Uniqueness: Suppose $\si : B \to B \ot_A B$ is a $(B \ot_A B)$-module 
homomorphism s.t.\ 
$\opn{mult}_{B / A} \circ \, \si = \opn{id}_B$.
Because the $(B \ot_A B)$-module $B$ is annihilated by the ideal 
$\opn{I}_{B / A}$, we see that the image of $\si$ must lie inside 
$\opn{Ann}_{B \ot_A B}(\opn{I}_{B / A})$.
Since 
\[ \opn{mult}_{B / A}|_{\, \opn{Ann}_{B \ot_A B}(\opn{I}_{B / A})} : 
\opn{Ann}_{B \ot_A B}(\opn{I}_{B / A}) \to B \]
is bijective, it follows that $\si = \opn{sec}_{B / A}(B)$. 

\medskip \noindent
(2) This is the isomorphism (\ref{eqn:1256}).
\end{proof}

\begin{dfn} \label{dfn:}
The $(B \ot_A B)$-module homomorphism $\opn{sec}_{B / A} : B \to B \ot_A B$ 
is called the essentially \'etale section homomorphism of $B$ relative to $A$. 
\end{dfn}

\section{Squaring of Forward Morphisms}
\label{sec:forward}

The goal of this section is this: given an essentially \'etale homomorphism 
$v : B \to C$ in $\cat{Rng} \eftover A$, where $A$ is some noetherian base 
ring, and a forward morphism $\la : M \to N$ in $\cat{D}(B)$ over $v$,
we are going to construct a forward morphism 
\[ \opn{Sq}_{v / A}(\la) = \opn{Sq}_{C / B / A}(\la) : 
\opn{Sq}_{B / A}(M) \to \opn{Sq}_{C / A}(N)  \]
in $\cat{D}(B)$ over $v$. 
See Definition \ref{dfn:1290}. The important properties of this operation -- 
Theorems \ref{thm:1292}, \ref{thm:895} and \ref{thm:1300} --  
will be established in this section and in the subsequent one.
The main complication in the construction of the forward morphism 
$\opn{Sq}_{v / A}(\la)$ is this: unlike a localization homomorphism $v : B \to 
C$, where $C \ot_B C = C$, in the essentially \'etale case one has a ring 
decomposition $C \ot_B C = C \times C^{\mrm{od}}$, 
where $C^{\mrm{od}}$ is the ``off-diagonal'' ring; see Theorem \ref{thm:1240}. 
The challenge, as we shall see, is to carefully remove the off-diagonal 
component of the complex $\opn{Sq}_{B / A}(N)$. 

According to Conventions \ref{conv:615} and  \ref{conv:1070}, both of which are 
assumed here, all rings are commutative noetherian, and all ring homomorphisms 
are EFT. 

Throughout this section, until the end of the proof of Theorem \ref{thm:1860}, 
the next setup will be used.

\begin{setup} \label{set:1281}
We are given be a noetherian ring $A$, and an essentially \'etale 
homomorphism $v : B \to C$ between essentially finite type $A$-rings.
\end{setup}

Let's denote the ring homomorphism $A \to B$ by $u$. Then 
$(A \xar{u} B \xar{v} B) = C / B / A$
is a {\em triple of rings}; this is a slight modification of Definition 
\ref{dfn:630}(1). Moreover, since there is a fully faithful embedding 
$\cat{Rng} \sub \cat{DGRng}$, we can also consider $C / B  / A$ as a triple of 
DG rings.  

Recall the notion of a K-flat resolution 
\begin{equation} \label{eqn:1280}
t / s / r : \til{C} / \til{B} / \til{A} \to C / B / A 
\end{equation}
of the triple of DG rings $C / B / A$ from Definition \ref{dfn:630}. It says 
that there is a commutative diagram 
in $\cat{DGRng}$
\[ \begin{tikzcd} [column sep = 6ex, row sep = 4ex] 
\til{A}
\arrow[r, "{\til{u}}"]
\ar[d, "{r}"]
&
\til{B}
\ar[d, "{s}"]
\arrow[r, "{\til{v}}"]
&
\til{C}
\ar[d, "{t}"]
\\
A
\arrow[r, "{u}"]
&
B
\arrow[r, "{v}"]
&
C
\end{tikzcd} \]
such that $r, s, t$ are surjective quasi-isomorphisms, and $\til{u}$, $\til{v}$ 
are K-flat (i.e.\ $\til{B}$ and $\til{C}$ are K-flat as DG modules over 
$\til{A}$ and $\til{B}$, respectively).
Given a resolution (\ref{eqn:1280}), we define the auxiliary DG ring 
\begin{equation} \label{eqn:1320}
\til{D} := (\til{C}_1 \ot_{\til{A}} \til{C}_2)
\ot_{\til{B}_1 \ot_{\til{A}} \til{B}_2} \til{B}_0.
\end{equation}
Observe the similarity to the DG ring $\til{D}$ from Setup \ref{set:1395}.

\begin{lem} \label{lem:1315}
Given a resolution (\ref{eqn:1280}), the DG ring homomorphism 
\[ \begin{aligned}
& 
g : \til{D} = (\til{C}_1 \ot_{\til{A}} \til{C}_2)
\ot_{\til{B}_1 \ot_{\til{A}} \til{B}_2} {B}_0 \to C_1 \ot_{{B}_0} C_2 , 
\\
& 
g \bigl( (\til{c}_1 \ot \til{c}_2) \ot b_0 \bigr) :=  
(b_0 \cd t(\til{c}_1)) \ot t(\til{c}_2) = 
t(\til{c}_1) \ot (b_0 \cd t(\til{c_2}))
\end{aligned} \]
is a quasi-isomorphism.
\end{lem}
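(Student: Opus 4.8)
\textbf{Proof proposal for Lemma \ref{lem:1315}.}

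The plan is to reduce the claim to a statement about the homology of the two DG rings and then compute both sides. First I would observe that, since $\til{A} \to \til{B}$ and $\til{B} \to \til{C}$ are K-flat, the tensor factors appearing in $\til{D} = (\til{C}_1 \ot_{\til{A}} \til{C}_2) \ot_{\til{B}_1 \ot_{\til{A}} \til{B}_2} B_0$ compute the corresponding derived tensor products; in particular $\til{C}_1 \ot_{\til{A}} \til{C}_2 \simeq C_1 \ot^{\mrm{L}}_{A} C_2$, and because $A \to B$ and $B \to C$ are flat ring homomorphisms (localizations are flat; essentially smooth maps are flat by Theorem \ref{thm:1140}, hence so is the essentially \'etale $v$), these derived tensor products are concentrated in degree $0$. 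Thus $\opn{H}(\til{C}_1 \ot_{\til{A}} \til{C}_2) = C \ot_A C$, $\opn{H}(\til{B}_1 \ot_{\til{A}} \til{B}_2) = B \ot_A B$, and one gets
\[
\opn{H}(\til{D}) \cong (C \ot_A C) \ot_{B \ot_A B} B .
\]
I would make this precise using the standard fact (cf.\ \cite[Proposition 2.6]{Ye4} or the K-flatness bookkeeping recalled in Section \ref{sec:recall-dg}) that tensoring a quasi-isomorphism between K-flat DG modules against a K-flat DG module again yields a quasi-isomorphism, applied to the surjective quasi-isomorphisms $r, s, t$.

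Next I would identify the target. The DG ring $C_1 \ot_{B_0} C_2 = C \ot_B C$ is already a ring, so its homology is itself. Now the key input is Corollary \ref{cor:1280} (or equivalently the decomposition (\ref{eqn:1245}) and Corollary \ref{cor:1245}): since $v : B \to C$ is essentially \'etale, $C \ot_B C \cong C \times C^{\mrm{od}}$ compatibly with $\opn{mult}_{C / B}$, and, crucially, $\opn{mult}_{C / B} : C \ot_B C \to C$ is a \emph{localization} of rings — it is the projection onto the idempotent-generated factor $C$. On the other hand, base change of the multiplication map $\opn{mult}_{B / A} : B \ot_A B \to B$ along $B \ot_A B \to C \ot_A C$ gives exactly $(C \ot_A C) \ot_{B \ot_A B} B$, and one checks directly from the explicit formula for $g$ that the composite $C \ot_A C \to (C \ot_A C) \ot_{B \ot_A B} B$ followed by $\opn{H}(g)$ equals the canonical map $C \ot_A C \to C \ot_B C$. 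So I must show the natural ring homomorphism
\[
(C \ot_A C) \ot_{B \ot_A B} B \longrightarrow C \ot_B C
\]
is an isomorphism. This is a purely ring-theoretic statement: $C \ot_B C$ is the pushout of $C \leftarrow B \to C$, while $(C \ot_A C) \ot_{B \ot_A B} B$ is $(C \ot_A C)$ with the two $B$-actions identified, which is the same pushout computed in two steps ($C \ot_A C$ then coequalize the $B$-actions). One writes down the two obvious ring maps and checks they are mutually inverse on generators $c_1 \ot c_2 \leftrightarrow (c_1 \ot c_2) \ot 1$ — a routine verification I would leave to the reader, exactly in the spirit of formulas (\ref{eqn:1486})–(\ref{eqn:1487}).

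Finally I would assemble: $g$ is a homomorphism of DG rings whose source has homology $(C \ot_A C) \ot_{B \ot_A B} B$ and whose target is the ring $C \ot_B C$, and under these identifications $\opn{H}(g)$ is the canonical isomorphism above; hence $g$ is a quasi-isomorphism. I do not expect a serious obstacle here — the only care needed is the order of operations in step one (making sure the flatness of $u$ and $v$ is what collapses the derived tensor products to ordinary ones, and that K-flatness of the resolutions is what lets $r, s, t$ stay quasi-isomorphisms after tensoring). The ``hard part,'' such as it is, is purely notational: keeping the three copies $\til{B}_0, \til{B}_1, \til{B}_2$ (Convention \ref{conv:1295}) straight while verifying that the formula for $g$ indeed lands in $C \ot_B C$ and is well defined, i.e.\ that the two expressions $(b_0 \cd t(\til{c}_1)) \ot t(\til{c}_2)$ and $t(\til{c}_1) \ot (b_0 \cd t(\til{c}_2))$ agree in $C \ot_B C$ — which they do, since $b_0$ is being moved across the $B$-balanced tensor.
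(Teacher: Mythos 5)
Your argument has a real gap at the outset: you assert that $A \to B$ is a flat ring homomorphism, but Setup \ref{set:1281} makes no such assumption.  It only says that $A$ is noetherian and that $B, C$ are essentially finite type $A$-rings, with $v : B \to C$ essentially \'etale.  The justifications you offer in your parenthetical remark (localizations and essentially \'etale maps are flat) all pertain to $v : B \to C$, not to the structural map $A \to B$.  Without flatness of $A \to B$ (hence of $A \to C$), your first step collapses: $\opn{H}(\til{C}_1 \ot_{\til{A}} \til{C}_2) = \opn{H}(C \ot^{\mrm{L}}_A C)$ is \emph{not} concentrated in degree $0$ in general, and likewise for $\til{B}_1 \ot_{\til{A}} \til{B}_2$, so the claimed identification $\opn{H}(\til{D}) \cong (C \ot_A C) \ot_{B \ot_A B} B$ does not follow.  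The whole reason the paper resolves $A \to B$ by a K-flat DG ring $\til{A} \to \til{B}$ in the first place is precisely to handle the non-flat case; were $A \to B$ flat, the resolutions would be unnecessary.

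The paper's proof avoids any appeal to flatness over $A$ by taking a different route through the intermediate object $\til{C}_1 \ot_{\til{B}_0} \til{C}_2$.  Concretely, one factors $g$ as a composite, sandwiched into a commutative square: the quasi-isomorphism $\til{B}_0 \to B_0$ is tensored against $\til{C}_1 \ot_{\til{A}} \til{C}_2$ over $\til{B}_1 \ot_{\til{A}} \til{B}_2$ (a K-flat extension since $\til{v}$ is K-flat), giving a quasi-isomorphism
$(\til{C}_1 \ot_{\til{A}} \til{C}_2) \ot_{\til{B}_1 \ot_{\til{A}} \til{B}_2} \til{B}_0 \to \til{D}$;
the source is identified with $\til{C}_1 \ot_{\til{B}_0} \til{C}_2$ by the DG ring isomorphism of formula (\ref{eqn:1487}); and finally $t_1 \ot_{s_0} t_2 : \til{C}_1 \ot_{\til{B}_0} \til{C}_2 \to C_1 \ot_{B_0} C_2$ is a quasi-isomorphism by \cite[Proposition 2.6(1)]{Ye4}, using only that $\til{v}$ and $v$ are K-flat.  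Notice that the only flatness this route exploits is that of $B \to C$ (so that $C \ot^{\mrm{L}}_B C \simeq C \ot_B C$), which is genuinely given.  The ring-theoretic identity $(C \ot_A C) \ot_{B \ot_A B} B \cong C \ot_B C$ that you isolate is correct and is indeed formula (\ref{eqn:1487}) in the present paper, but it must be applied at the DG-ring level \emph{before} passing to cohomology, not as a post-hoc identification of $\opn{H}(\til{D})$.
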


\begin{proof}
The homomorphism $g$ sits as the right vertical arrow inside this commutative 
diagram of DG rings: 
\[ \begin{tikzcd} [column sep = 8ex, row sep = 4ex] 
(\til{C}_1 \ot_{\til{A}} \til{C}_2) \ot_{\til{B}_1 \ot_{\til{A}} \til{B}_2} 
\til{B}_0 
\ar[d]
\ar[r, "{\opn{id} \ot \, s_0}"]
&
(\til{C}_1 \ot_{\til{A}} \til{C}_2) \ot_{\til{B}_1 \ot_{\til{A}} \til{B}_2} B_0
\ar[d, "{g}"]
\\
\til{C}_1 \ot_{\til{B}_0} \til{C}_2
\ar[r, "{t_1 \ot_{s_0} t_2}"]
&
C_1 \ot_{B_0} C_2
\end{tikzcd} \]
The top horizontal arrow is a quasi-isomorphism, because 
$\til{B}_1 \ot_{\til{A}} \til{B}_2 \to \til{C}_1 \ot_{\til{A}} \til{C}_2$
is K-flat and $\til{B} \to B$ is a quasi-isomorphism. 
The left vertical arrow is an isomorphism (it is the isomorphism $g$ in 
formula (\ref{eqn:1510})). 
The bottom horizontal arrow is a quasi-isomorphism, because 
$\til{B} \to \til{C}$ and $B \to C$ are both K-flat, and 
$\til{B} \to B$ and $\til{C} \to C$ are both quasi-isomorphisms; cf.\ 
\cite[Proposition 2.6$\tup{(1)}$]{Ye4}. Therefore the right vertical arrow
is a quasi-isomorphism.
\end{proof}

\begin{dfn} \label{dfn:1282}
Under Setup \ref{set:1281}, let 
$N \in \cat{D}(C)$, and let $\til{C} / \til{B} / \til{A}$ be a K-flat 
resolution of the triple of rings $C / B / A$. 
Define the morphism
\[ \opn{red}^{\til{C} / \til{B} / \til{A}}_{C/ B / A, N} : 
\opn{Sq}_{B / A}^{\til{B} / \til{A}}(N) \to 
\opn{Sq}_{C / A}^{\til{C} / \til{A}}(N) \]
in $\cat{D}(B)$, called the {\em resolved reduction morphism}, 
 to be the composition of the morphisms below:
\begin{equation} \label{eqn:1290}
\begin{aligned}
& 
\opn{Sq}_{B / A}^{\til{B} / \til{A}}(N)
= \opn{RHom}_{\til{B}_1 \ot_{\til{A}} \til{B}_2}
\bigl( B_0, N_1 \ot_{\til{A}}^{\mrm{L}} N_2 \bigr)
\\
& \quad 
\iso^{\mrm{(1)}} 
\opn{RHom}_{\til{C}_1 \ot_{\til{A}} \til{C}_2}
\bigl( (\til{C}_1 \ot_{\til{A}} \til{C}_2) 
\ot_{\til{B}_1 \ot_{\til{A}} \til{B}_2} B_0, 
N_1 \ot_{\til{A}}^{\mrm{L}} N_2 \bigr)
\\
& \quad 
\iso^{\mrm{(2)}} 
\opn{RHom}_{\til{C}_1 \ot_{\til{A}} \til{C}_2}
\bigl( C_1 \ot_{B_0} C_2, N_1 \ot_{\til{A}}^{\mrm{L}} N_2 \bigr)
\\
& \quad 
\to^{\mrm{(3)}} 
\opn{RHom}_{\til{C}_1 \ot_{\til{A}} \til{C}_2}
\bigl( C_0, N_1 \ot_{\til{A}}^{\mrm{L}} N_2 \bigr)
= \opn{Sq}_{C / A}^{\til{C} / \til{A}}(N) . 
\end{aligned} 
\end{equation}
Here we use Convention \ref{conv:1295} to distinguish, by number subscripts,
between the various positions that DG rings and modules appear in tensor 
products. The DG ring $B$ acts on all objects in (\ref{eqn:1290}) through 
$B_0$, 
with the exception of the objects in the last line, where it acts through $C_0$.
The isomorphism $\iso^{\mrm{(1)}}$ is Hom-tensor adjunction for the DG 
ring homomorphism 
$\til{B}_1 \ot_{\til{A}} \til{B}_2 \to \til{C}_1 \ot_{\til{A}} \til{C}_2$.
The isomorphism $\iso^{\mrm{(2)}}$ comes from Lemma \ref{lem:1315}.
Finally, the morphism $\to^{\mrm{(3)}}$ is 
$\opn{RHom}(\opn{sec}_{C / B}, \opn{id})$, 
see Corollary \ref{cor:1246}(1). 
\end{dfn}

\begin{thm} \label{thm:1285}
Under Setup \ref{set:1281}, given a complex $N \in \cat{D}(C)$, there is a 
unique morphism 
\[ \opn{red}_{C/ B / A, N} : 
\opn{Sq}_{B / A}(N) \to \opn{Sq}_{C / A}(N) \]
in $\cat{D}(B)$, called the {\em reduction to the diagonal}, with this 
property: 
\begin{itemize}
\item[($*$)] For every K-flat resolution 
$\til{C} / \til{B} / \til{A}$ of the triple of DG rings $C / B / A$,
the diagram 
\[ \begin{tikzcd} [column sep = 12ex, row sep = 5ex] 
\opn{Sq}_{B / A}(N)
\ar[r, "{\opn{red}_{C/ B / A, N}}"]
\ar[d, "{\opn{sq}^{\til{B} / \til{A}}_{B / A, N}}"', "{\simeq}"]
&
\opn{Sq}_{C / A}(N)
\ar[d, "{\opn{sq}^{\til{C} / \til{A}}_{C / A, N}}", "{\simeq}"']
\\
\opn{Sq}_{B / A}^{\til{B} / \til{A}}(N)
\ar[r, "{\opn{red}^{\til{C} / \til{B} / \til{A}}_{C/ B / A, N}}"]
&
\opn{Sq}_{C / A}^{\til{C} / \til{A}}(N)
\end{tikzcd} \]
in $\cat{D}(B)$ is commutative.  
\end{itemize}
\end{thm}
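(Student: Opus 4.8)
The statement has the exact shape of the ``existence of a canonical morphism compatible with all resolutions'' results that appear earlier (Theorems \ref{thm:631}, \ref{thm:632}, \ref{thm:780}), and the proof should follow the same template. The plan is to fix one convenient ``universal'' resolution, \emph{define} $\opn{red}_{C/B/A,N}$ by declaring condition ($*$) to hold for that resolution (after transporting through the canonical isomorphisms $\opn{sq}$ of Theorem \ref{thm:631}), and then check that ($*$) holds for an arbitrary K-flat resolution. Uniqueness is then automatic: any morphism satisfying ($*$) for the universal resolution must equal the one we defined, since the vertical maps $\opn{sq}^{\til{B}/\til{A}}_{B/A,N}$ and $\opn{sq}^{\til{C}/\til{A}}_{C/A,N}$ are isomorphisms.

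Concretely, first I would invoke \cite[Theorem 3.21$(1)$]{Ye4} (as in the proof of Theorem \ref{thm:780}) to pick a \emph{universal} K-flat resolution $t^{\mrm{u}}/s^{\mrm{u}}/r^{\mrm{u}} : \til{C}^{\mrm{u}}/\til{B}^{\mrm{u}}/\til{A}^{\mrm{u}} \to C/B/A$ built from iterated commutative semi-free DG ring resolutions relative to $\Z$. Define $\opn{red}_{C/B/A,N}$ to be the unique morphism making the square in ($*$) commute for this $\til{C}^{\mrm{u}}/\til{B}^{\mrm{u}}/\til{A}^{\mrm{u}}$; this makes sense because $\opn{sq}^{\til{B}^{\mrm{u}}/\til{A}^{\mrm{u}}}_{B/A,N}$ and $\opn{sq}^{\til{C}^{\mrm{u}}/\til{A}^{\mrm{u}}}_{C/A,N}$ are isomorphisms in $\cat{D}(B)$.

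Next, given an arbitrary K-flat resolution $\til{C}/\til{B}/\til{A}$, I would use \cite[Theorem 3.22$(1)$]{Ye4} exactly as in the proof of Theorem \ref{thm:780} to produce a morphism of triples $\til{t}/\til{s}/\til{r} : \til{C}^{\mrm{u}}/\til{B}^{\mrm{u}}/\til{A}^{\mrm{u}} \to \til{C}/\til{B}/\til{A}$ over $\opn{id}_{C/B/A}$. The key technical point is then a naturality lemma: the resolved reduction morphism $\opn{red}^{\til{C}/\til{B}/\til{A}}_{C/B/A,N}$ of Definition \ref{dfn:1282} is compatible with the comparison morphisms $\opn{Sq}^{\til{s}/\til{r}}_{B/A}(\opn{id}_N)$ and $\opn{Sq}^{\til{t}/\til{r}}_{C/A}(\opn{id}_N)$ of formula (\ref{eqn:628}) / (\ref{eqn:1490}). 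That is, the square
\[
\begin{tikzcd}[column sep = 12ex, row sep = 5ex]
\opn{Sq}_{B/A}^{\til{B}/\til{A}}(N)
\ar[r, "{\opn{red}^{\til{C}/\til{B}/\til{A}}_{C/B/A,N}}"]
\ar[d, "{\opn{Sq}_{B/A}^{\til{s}/\til{r}}(\opn{id})}"', "{\simeq}"]
&
\opn{Sq}_{C/A}^{\til{C}/\til{A}}(N)
\ar[d, "{\opn{Sq}_{C/A}^{\til{t}/\til{r}}(\opn{id})}", "{\simeq}"']
\\
\opn{Sq}_{B/A}^{\til{B}^{\mrm{u}}/\til{A}^{\mrm{u}}}(N)
\ar[r, "{\opn{red}^{\til{C}^{\mrm{u}}/\til{B}^{\mrm{u}}/\til{A}^{\mrm{u}}}_{C/B/A,N}}"]
&
\opn{Sq}_{C/A}^{\til{C}^{\mrm{u}}/\til{A}^{\mrm{u}}}(N)
\end{tikzcd}
\]
commutes. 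This is checked by going through the three morphisms $\iso^{\mrm{(1)}}$, $\iso^{\mrm{(2)}}$, $\to^{\mrm{(3)}}$ in (\ref{eqn:1290}) one at a time: $\iso^{\mrm{(1)}}$ is Hom-tensor adjunction and is natural in the DG ring homomorphism; $\iso^{\mrm{(2)}}$ comes from Lemma \ref{lem:1315} and the isomorphism $g$ of (\ref{eqn:1510}), which are natural in the resolution; and $\to^{\mrm{(3)}}$ is $\opn{RHom}(\opn{sec}_{C/B},\opn{id})$, where $\opn{sec}_{C/B}$ (Corollary \ref{cor:1246}) is a purely ring-theoretic datum not depending on the resolution at all. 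Combining this commuting square with condition ($*$) for the universal resolution and with condition ($*$) of Theorem \ref{thm:631} (comparing the $\opn{sq}$ isomorphisms for $\til{C}/\til{A}$ and $\til{C}^{\mrm{u}}/\til{A}^{\mrm{u}}$), one concludes that ($*$) holds for the arbitrary resolution $\til{C}/\til{B}/\til{A}$ as well.

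I expect the main obstacle to be the careful bookkeeping in the naturality lemma above, in particular verifying the compatibility of $\iso^{\mrm{(2)}}$ (Lemma \ref{lem:1315}) with the change-of-resolution maps: one must track how the auxiliary DG ring $\til{D}$ of (\ref{eqn:1320}) and the quasi-isomorphism $g : \til{D} \to C_1 \ot_{B_0} C_2$ transform under $\til{t}/\til{s}/\til{r}$, and that all the relevant DG modules stay K-flat / K-projective so that the derived functors are computed correctly on both sides. Everything else — adjunction naturality, functoriality of $\opn{RHom}$ in the first argument via $\opn{sec}_{C/B}$, and the formal ``define on a universal resolution, then propagate'' argument — is routine and mirrors the proofs of Theorems \ref{thm:631} and \ref{thm:780} verbatim.
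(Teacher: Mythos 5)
Your proposal is correct and follows the paper's proof of Theorem \ref{thm:1285} essentially verbatim: both fix the universal semi-free K-flat resolution as in the proof of Theorem \ref{thm:780}, define $\opn{red}_{C/B/A,N}$ by condition ($*$) for that resolution, lift an arbitrary resolution via \cite[Theorem 3.22$(1)$]{Ye4} as in diagram (\ref{eqn:840}), and verify naturality of the three morphisms in (\ref{eqn:1290}) with respect to the comparison maps $\opn{Sq}^{\til{s}/\til{r}}$, $\opn{Sq}^{\til{t}/\til{r}}$. The only difference is that you spell out the naturality check for $\iso^{\mrm{(2)}}$ and $\to^{\mrm{(3)}}$ in somewhat more detail than the paper, which simply says ``going over the morphisms in (\ref{eqn:1290})''.
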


\begin{proof}
Choose a universal K-flat DG ring resolution
\[ t^{\mrm{u}} / s^{\mrm{u}} / r^{\mrm{u}} : 
\til{C}^{\mrm{u}} / \til{B}^{\mrm{u}} / \til{A}^{\mrm{u}} \to C / B / A  \]
of the triple of rings $C / B / A$, 
as in the proof of Theorem \ref{thm:780}, and define 
$\opn{red}_{C/ B / A, N}$
to be the unique morphism that makes the diagram in property ($*$) 
commutative for this resolution.

Given some other K-flat resolution $\til{C} / \til{B} / \til{A}$ of the triple 
of DG rings $C / B / A$, choose a lifting 
$\til{t} / \til{s} / \til{r}$ like in diagram (\ref{eqn:840}).
Going over the morphisms in (\ref{eqn:1290}) we see that the diagram 
\[ \begin{tikzcd} [column sep = 12ex, row sep = 5ex] 
\opn{Sq}_{B / A}^{\til{B} / \til{A}}(N)
\ar[r, "{\opn{red}^{\til{C} / \til{B} / \til{A}}_{C/ B / A, N}}"]
\ar[d, "{\opn{Sq}^{\til{s} / \til{r}}_{B / A}(\opn{id}_N)}"', "{\simeq}"]
&
\opn{Sq}_{C / A}^{\til{C} / \til{A}}(N)
\ar[d, "{\opn{Sq}^{\til{t} / \til{r}}_{C / A}(\opn{id}_N)}", "{\simeq}"']
\\
\opn{Sq}_{B / A}^{\til{B}^{\mrm{u}} / \til{A}^{\mrm{u}}}(N)
\ar[r, "{\opn{red}^{\til{C}^{\mrm{u}} / \til{B}^{\mrm{u}} / 
\til{A}^{\mrm{u}}}_{C/ B / A, N}}"]
&
\opn{Sq}_{C / A}^{\til{C}^{\mrm{u}} / \til{A}^{\mrm{u}}}(N)
\end{tikzcd} \]
in which the vertical morphisms are from formula (\ref{eqn:1490}),
in $\cat{D}(B)$ is commutative. This implies that the diagram in property ($*$) 
is commutative for the resolution $\til{C} / \til{B} / \til{A}$.
\end{proof}

\begin{dfn}[Squaring of a Forward Morphism] \label{dfn:1290} 
Under Setup \ref{set:1281}, let $M \in \cat{D}(B)$ and $N \in \cat{D}(C)$, and 
let $\la : M \to N$ be a forward morphism in $\cat{D}(B)$ over $v$.
Define the forward morphism 
\[ \opn{Sq}_{v / A}(\la) = \opn{Sq}_{C / B / A}(\la) : 
\opn{Sq}_{B / A}(M) \to \opn{Sq}_{C / A}(N) \]
in $\cat{D}(B)$ over $v$ to be the composition 
\[ \opn{Sq}_{v / A}(\la) :=  \opn{red}_{C/ B / A, N} \circ 
\opn{Sq}_{B / A}(\la) , \] 
where $\opn{Sq}_{B / A}(\la)$ 
is the morphism from Definition \ref{dfn:880}. 

In other words, the diagram 
\[ \begin{tikzcd} [column sep = 12ex, row sep = 6ex] 
\opn{Sq}_{B / A}(M)
\ar[rr, bend left = 15, start anchor = north, end anchor = north,
, "{\opn{Sq}_{v / A}(\la)}"]
\ar[r, "{\opn{Sq}_{B / A}(\la)}"]
&
\opn{Sq}_{B / A}(N)
\ar[r, "{\opn{red}_{C/ B / A, N}}"]
&
\opn{Sq}_{C / A}(N)
\end{tikzcd} \]
in $\cat{D}(B)$ is commutative.
\end{dfn}

In case $B = C$ and $v = \opn{id}_B$, there is potential for a conflict between 
Definitions \ref{dfn:1290} and \ref{dfn:880}. This is settled in the next 
proposition.  

\begin{prop} \label{prop:1671}
Assume $B = C$ and $v = \opn{id}_B$. 
Given a morphism $\la : M \to N$ in $\cat{D}(B)$, there is equality 
$\opn{Sq}_{v / A}(\la) = \opn{Sq}_{B / A}(\la)$
of morphisms $\opn{Sq}_{B / A}(M) \to \opn{Sq}_{B / A}(N)$
in $\cat{D}(B)$.
\end{prop}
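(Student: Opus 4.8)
\textbf{Proof proposal for Proposition \ref{prop:1671}.}

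The plan is to trace through Definition \ref{dfn:1290} in the degenerate case $B = C$, $v = \opn{id}_B$, and show that the only extra ingredient introduced there — the reduction morphism $\opn{red}_{B / B / A, N}$ — is the identity. Since by definition $\opn{Sq}_{v / A}(\la) = \opn{red}_{B / B / A, N} \circ \opn{Sq}_{B / A}(\la)$, once we know $\opn{red}_{B / B / A, N} = \opn{id}_{\opn{Sq}_{B / A}(N)}$ we are done. By the uniqueness clause in Theorem \ref{thm:1285}, it suffices to check that $\opn{id}_{\opn{Sq}_{B / A}(N)}$ satisfies property ($*$) of that theorem for one convenient K-flat resolution, i.e.\ that the resolved reduction morphism $\opn{red}^{\til{B} / \til{B} / \til{A}}_{B / B / A, N}$ equals the identity of $\opn{Sq}^{\til{B} / \til{A}}_{B / A}(N)$ (here the triple resolution is $\til{B} / \til{B} / \til{A}$, with $\til{v} = \opn{id}_{\til{B}}$ and $t = s$).

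Next I would unravel the three morphisms composing $\opn{red}^{\til{C} / \til{B} / \til{A}}_{C / B / A, N}$ in formula (\ref{eqn:1290}) under the specialization $\til{C} = \til{B}$, $C = B$, $\til{v} = \opn{id}$. The isomorphism $\iso^{\mrm{(1)}}$ is Hom–tensor adjunction for the identity homomorphism $\til{B}_1 \ot_{\til{A}} \til{B}_2 \to \til{B}_1 \ot_{\til{A}} \til{B}_2$, hence the identity up to the canonical identification. The isomorphism $\iso^{\mrm{(2)}}$ comes from Lemma \ref{lem:1315}: when $\til{C} = \til{B}$ and $C = B$, the map $g$ becomes the canonical isomorphism $\til{D} = (\til{B}_1 \ot_{\til{A}} \til{B}_2) \ot_{\til{B}_1 \ot_{\til{A}} \til{B}_2} B_0 \xrightarrow{\simeq} B_1 \ot_{B_0} B_2 = B$, which again collapses to the canonical identification. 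The morphism $\to^{\mrm{(3)}}$ is $\opn{RHom}(\opn{sec}_{C / B}, \opn{id})$; here I would invoke the degenerate form of Corollary \ref{cor:1246}(1): for $C = B$, $v = \opn{id}_B$, the multiplication homomorphism $\opn{mult}_{B / B} : B \ot_B B \to B$ is already an isomorphism, so the essentially étale section $\opn{sec}_{B / B} : B \to B \ot_B B$ is its inverse, i.e.\ the canonical identification $B \cong B \ot_B B$. Therefore $\opn{RHom}(\opn{sec}_{B / B}, \opn{id})$ is also the identity (under the canonical identifications). Composing the three, $\opn{red}^{\til{B} / \til{B} / \til{A}}_{B / B / A, N} = \opn{id}$, which is what property ($*$) requires.

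The one point that needs a little care — the main (mild) obstacle — is bookkeeping the canonical identifications consistently, in particular checking that the $B$-module structures match up: in Definition \ref{dfn:1290} the ambient category is $\cat{D}(B)$ with $B$ acting through $B_0$ on the source and through $C_0 = B_0$ on the target, and one must confirm that all three canonical isomorphisms above are $B$-linear for these actions. This is straightforward from the explicit formulas for $g$ in Lemma \ref{lem:1315} and for the adjunction isomorphism, since in the degenerate case every tensor factor over $\til{B}_i$ or $B_i$ is taken over the \emph{same} ring and the "off-diagonal" component $C^{\mrm{od}}$ vanishes (indeed $B \ot_B B = B$ with no complementary summand). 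Having verified this, the uniqueness in Theorem \ref{thm:1285} forces $\opn{red}_{B / B / A, N} = \opn{id}_{\opn{Sq}_{B / A}(N)}$, and hence $\opn{Sq}_{v / A}(\la) = \opn{id} \circ \opn{Sq}_{B / A}(\la) = \opn{Sq}_{B / A}(\la)$, completing the proof.
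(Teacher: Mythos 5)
Your proposal is correct and follows precisely the same route as the paper: the paper's proof is the single line observing that $\opn{red}_{B/B/A,N} = \opn{id}_{\opn{Sq}_{B/A}(N)}$, and you have simply supplied the verification of that equality (reducing to one resolution via the uniqueness clause of Theorem \ref{thm:1285}, then checking that each of the three morphisms in formula (\ref{eqn:1290}) collapses to a canonical identification — in particular that $\opn{sec}_{B/B}$ is the canonical isomorphism $B \cong B \ot_B B$). Your bookkeeping of the module structures is accurate.
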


\begin{proof}
This is because $\opn{red}_{B / B / A, N} = \opn{id}_{\opn{Sq}_{B / A}(N)}$.
\end{proof}

\begin{thm}[Squaring of Composable Forward Morphisms] \label{thm:1292}
Under Setup \ref{set:1281}, let $w : C \to D$ be another essentially \'etale 
ring homomorphism, let $\la : L \to M$ be a forward morphism in $\cat{D}(B)$ 
over $v$, and let $\mu : M \to N$ be a forward morphism in $\cat{D}(C)$ 
over $w$. Then 
\[ \opn{Sq}_{w / A}(\mu) \circ \opn{Sq}_{v / A}(\la) =
\opn{Sq}_{w \circ v / A}(\mu \circ \la) , \]
as forward morphisms
$\opn{Sq}_{B / A}(L) \to \opn{Sq}_{D / A}(N)$ in $\cat{D}(B)$ over $w \circ v$.
\end{thm}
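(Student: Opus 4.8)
The statement decomposes the squaring of a forward morphism into two pieces, $\opn{Sq}_{v/A}(\la) = \opn{red}_{C/B/A,N}\circ\opn{Sq}_{B/A}(\la)$, so the plan is to reduce the assertion to two separate compatibilities: one for the ordinary squaring functor $\opn{Sq}_{-/A}$ applied to composable morphisms in a fixed derived category, and one for the reduction-to-the-diagonal morphisms $\opn{red}$. First I would write out both sides. By Definition \ref{dfn:1290}, the left-hand side is
\[ \opn{Sq}_{w/A}(\mu)\circ\opn{Sq}_{v/A}(\la) =
\opn{red}_{D/C/A,N}\circ\opn{Sq}_{C/A}(\mu)\circ\opn{red}_{C/B/A,M}\circ\opn{Sq}_{B/A}(\la), \]
while the right-hand side is $\opn{red}_{D/B/A,N}\circ\opn{Sq}_{B/A}(\mu\circ\la)$. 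Since $\opn{Sq}_{B/A}$ is a functor on $\cat{D}(B)$ (Corollary \ref{cor:880}), $\opn{Sq}_{B/A}(\mu\circ\la)=\opn{Sq}_{B/A}(\mu)\circ\opn{Sq}_{B/A}(\la)$, so it suffices to prove the identity
\[ \opn{red}_{D/C/A,N}\circ\opn{Sq}_{C/A}(\mu)\circ\opn{red}_{C/B/A,M} =
\opn{red}_{D/B/A,N}\circ\opn{Sq}_{B/A}(\mu) \]
of morphisms $\opn{Sq}_{B/A}(M)\to\opn{Sq}_{D/A}(N)$ in $\cat{D}(B)$.

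Next I would split this into two claims. Claim (a): a naturality statement for $\opn{red}$, namely that for the forward morphism $\mu:M\to N$ over $w$ the square
\[ \begin{tikzcd}[column sep=12ex, row sep=5ex]
\opn{Sq}_{B/A}(M) \ar[r, "{\opn{red}_{C/B/A,M}}"] \ar[d, "{\opn{Sq}_{B/A}(\mu)}"']
& \opn{Sq}_{C/A}(M) \ar[d, "{\opn{Sq}_{C/A}(\mu)}"]
\\
\opn{Sq}_{B/A}(N) \ar[r, "{\opn{red}_{C/B/A,N}}"'] & \opn{Sq}_{C/A}(N)
\end{tikzcd} \]
commutes in $\cat{D}(B)$ — here on the right $\opn{Sq}_{C/A}(\mu)$ is the squaring of $\mu$ viewed as a morphism over $w$ in $\cat{D}(C)$, i.e.\ Definition \ref{dfn:1290} applied to $w$. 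Claim (b): transitivity of $\opn{red}$ along the composable pair $D/C/B$, i.e.\ $\opn{red}_{D/C/A,N}\circ\opn{red}_{C/B/A,N}=\opn{red}_{D/B/A,N}$ as morphisms $\opn{Sq}_{B/A}(N)\to\opn{Sq}_{D/A}(N)$. Granting both, the desired identity follows: rewrite the left-hand side using Claim (a) to move $\opn{red}_{C/B/A,M}$ past $\opn{Sq}_{C/A}(\mu)$, obtaining $\opn{red}_{D/C/A,N}\circ\opn{red}_{C/B/A,N}\circ\opn{Sq}_{B/A}(\mu)$, and then apply Claim (b).

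Both claims I would verify by passing to a K-flat resolution and unwinding the explicit formulas. Choose a K-flat DG ring resolution $\til{D}/\til{C}/\til{B}/\til{A}\to D/C/B/A$ of the quadruple (the obvious extension of Definition \ref{dfn:630}(3)); by Theorems \ref{thm:631}, \ref{thm:1285} and Definition \ref{dfn:1282} it is enough to check the corresponding statements for the resolved morphisms $\opn{red}^{-/-/-}$ and $\opn{Sq}^{-/-}$, because the comparison isomorphisms $\opn{sq}^{\til{(-)}/\til{(-)}}$ intertwine everything (this is exactly condition $(*)$ of the relevant theorems, used the way it is used at the end of the proof of Theorem \ref{thm:780}). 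For Claim (a) I would trace the three steps $\iso^{(1)},\iso^{(2)},\to^{(3)}$ of formula (\ref{eqn:1290}): steps (1) and (2) are $\opn{RHom}$-adjunctions and the quasi-isomorphism $g$ of Lemma \ref{lem:1315}, both manifestly natural in the second argument $N_1\ot^{\mrm{L}}_{\til{A}}N_2$; step (3) is $\opn{RHom}(\opn{sec}_{C/B},\opn{id})$, also applied in the second argument, hence again natural. So the whole composite $\opn{red}^{\til{C}/\til{B}/\til{A}}_{C/B/A,-}$ is natural with respect to any morphism induced on $N_1\ot^{\mrm{L}}_{\til{A}}N_2$, in particular the one coming from a forward morphism $\mu$ over $w$. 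For Claim (b), I expect the main subtlety: one must identify the section homomorphisms correctly, showing that $\opn{sec}_{D/B}$ factors compatibly through $\opn{sec}_{D/C}$ and $\opn{sec}_{C/B}$ along the tower $D\ot_B D\to C\ot_B C$-type decompositions, i.e.\ that the idempotent of Corollary \ref{cor:1245}/\ref{cor:1246} cutting out the diagonal of $D$ over $B$ is the product of the pulled-back idempotents for $D/C$ and $C/B$. Concretely, one chains the adjunction isomorphisms $\iso^{(1)},\iso^{(2)}$ for the two steps $\til{B}\to\til{C}$ and $\til{C}\to\til{D}$ into the single one for $\til{B}\to\til{D}$, and checks that $\opn{sec}_{D/C}\circ\opn{sec}_{C/B}=\opn{sec}_{D/B}$ under the canonical identifications — this is a statement purely about the $(B\ot_A B)$-/$(C\ot_A C)$-/$(D\ot_A D)$-module sections, which follows from their uniqueness characterization in Corollary \ref{cor:1246}(1) (the composite satisfies the defining property $\opn{mult}_{D/B}\circ(\opn{sec}_{D/C}\circ\opn{sec}_{C/B})=\opn{id}_D$ because $\opn{mult}_{D/B}=\opn{mult}_{C/B}\circ(\opn{id}\ot\opn{mult}_{D/C}\ot\opn{id})$-type factorizations hold, together with $(B\ot_A B)$-linearity). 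The hard part is bookkeeping the many tensor factors and the Convention \ref{conv:1295} subscripts so that these factorizations are literally the ones appearing in (\ref{eqn:1290}); once that is set up, both claims are formal. I would present Claim (b) as a short lemma proved via the uniqueness clause of Corollary \ref{cor:1246}(1), then assemble the proof of the theorem from (a), (b) and functoriality of $\opn{Sq}_{B/A}$ in three lines.
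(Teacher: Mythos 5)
Your proof is correct and takes essentially the same route as the paper: decompose using $\opn{Sq}_{v/A}(\la) = \opn{red}_{C/B/A,\cdot}\circ\opn{Sq}_{B/A}(\la)$, reduce via functoriality of $\opn{Sq}_{B/A}$ (the paper cites Proposition~\ref{prop:890}; you cite Corollary~\ref{cor:880}, which is the same fact), then verify your Claim (a) by an explicit resolved diagram and your Claim (b) via the section-homomorphism identity, exactly as the paper does in its Lemma~\ref{lem:1395} (proved through the uniqueness clause of Corollary~\ref{cor:1246}(1)). One small terminological slip: in your Claim (a) diagram the right vertical arrow $\opn{Sq}_{C/A}(\mu):\opn{Sq}_{C/A}(M)\to\opn{Sq}_{C/A}(N)$ is the plain squaring of the morphism $\mu:M\to\opn{Rest}_w(N)$ in $\cat{D}(C)$ (Definition~\ref{dfn:880}/Corollary~\ref{cor:880}), not ``Definition~\ref{dfn:1290} applied to $w$'' --- the latter would be $\opn{Sq}_{w/A}(\mu)$, which additionally composes with $\opn{red}_{D/C/A,N}$ and therefore lands in $\opn{Sq}_{D/A}(N)$, not $\opn{Sq}_{C/A}(N)$. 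The diagram and the subsequent assembly are correct; only the parenthetical gloss is off.
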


A lemma is required before the proof of the theorem. 

\begin{lem} \label{lem:1395}
Let 
\[ \opn{id} \ot \opn{sec}_{C / B} : D \ot_C D \to D \ot_B D \]
be the $(D \ot_B D)$-module homomorphism gotten by applying 
$(D \ot_B D) \ot_{C \ot_B C} (-)$
to the $(C \ot_B C)$-module homomorphism 
$\opn{sec}_{C / B} : C \to C \ot_B C$. Then 
\[ \tag{$\heartsuit$}
(\opn{id} \ot \opn{sec}_{C / B}) \circ \opn{sec}_{D / C} = 
\opn{sec}_{D / B} , \]
as $(D \ot_B D)$-module homomorphisms $D \to D \ot_B D$. 
\end{lem}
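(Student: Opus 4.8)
\textbf{Proof plan for Lemma \ref{lem:1395}.}

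The plan is to prove the identity $(\heartsuit)$ by reducing everything to the characterization of the section homomorphisms from Corollary \ref{cor:1246}(1): under Setup \ref{set:1281} (applied to the various essentially \'etale homomorphisms at hand), $\opn{sec}_{C/B}$ is the \emph{unique} $(C\ot_B C)$-module homomorphism $C\to C\ot_B C$ splitting $\opn{mult}_{C/B}$, and similarly $\opn{sec}_{D/C}$, $\opn{sec}_{D/B}$ are characterized by splitting $\opn{mult}_{D/C}$ and $\opn{mult}_{D/B}$. Note first that $w\circ v:B\to D$ is essentially \'etale by Proposition \ref{prop:1015}(1), so $\opn{sec}_{D/B}$ is defined, and the hypotheses of the lemma make sense. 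By the uniqueness clause, it suffices to verify two things: that the composite $\xi:=(\opn{id}\ot\opn{sec}_{C/B})\circ\opn{sec}_{D/C}$ is $(D\ot_B D)$-linear as a map $D\to D\ot_B D$, and that $\opn{mult}_{D/B}\circ\xi=\opn{id}_D$.

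First I would check linearity. The map $\opn{sec}_{D/C}:D\to D\ot_C D$ is $(D\ot_C D)$-linear; the map $\opn{id}\ot\opn{sec}_{C/B}:D\ot_C D\to D\ot_B D$ is $(D\ot_B D)$-linear by construction, being obtained by base change along the ring homomorphism $C\ot_B C\to D\ot_B D$ from the $(C\ot_B C)$-linear map $\opn{sec}_{C/B}$. Composing, $\xi$ is $(D\ot_B D)$-linear once we know the $(D\ot_C D)$-module structure on $D\ot_C D$ used for $\opn{sec}_{D/C}$ is the restriction of the $(D\ot_B D)$-module structure along the natural surjection $D\ot_B D\surj D\ot_C D$; this is immediate from the definitions. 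So the only substantive point is the second: $\opn{mult}_{D/B}\circ\xi=\opn{id}_D$.

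For that step I would argue diagrammatically. There is a commutative square relating $\opn{mult}_{D/B}$, $\opn{mult}_{D/C}$ and $\opn{mult}_{C/B}$: precisely, $\opn{mult}_{D/B}=\opn{mult}_{D/C}\circ(\opn{id}_D\ot\,\opn{mult}_{C/B})$ as ring homomorphisms $D\ot_B D\to D$, where $\opn{id}_D\ot\,\opn{mult}_{C/B}$ denotes the map $D\ot_B D\to D\ot_C D$ induced by $\opn{mult}_{C/B}:C\ot_B C\to C$ via base change to $D$ (one checks this on pure tensors $d_1\ot d_2\mapsto d_1 d_2$). Now composing with $\xi$ and using $\opn{mult}_{C/B}\circ\opn{sec}_{C/B}=\opn{id}_C$ (so that $(\opn{id}_D\ot\,\opn{mult}_{C/B})\circ(\opn{id}\ot\opn{sec}_{C/B})=\opn{id}_{D\ot_C D}$) gives
\[
\opn{mult}_{D/B}\circ\xi
=\opn{mult}_{D/C}\circ(\opn{id}_D\ot\,\opn{mult}_{C/B})\circ(\opn{id}\ot\opn{sec}_{C/B})\circ\opn{sec}_{D/C}
=\opn{mult}_{D/C}\circ\opn{sec}_{D/C}
=\opn{id}_D,
\]
using Corollary \ref{cor:1246}(1) for the homomorphism $w:C\to D$ in the last equality. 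By the uniqueness part of Corollary \ref{cor:1246}(1) applied to $w\circ v:B\to D$, we conclude $\xi=\opn{sec}_{D/B}$, which is exactly $(\heartsuit)$.

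I expect the main obstacle to be purely bookkeeping: making sure the various base-change identifications of module structures (the $(C\ot_B C)$-module structure versus the $(D\ot_B D)$-module structure, and the identification of $D\ot_C D$ with a quotient of $D\ot_B D$) are spelled out carefully enough that linearity of $\xi$ and the compatibility $\opn{mult}_{D/B}=\opn{mult}_{D/C}\circ(\opn{id}_D\ot\,\opn{mult}_{C/B})$ are genuinely routine checks on pure tensors rather than hand-waving. Once those identifications are pinned down, the whole argument is the two-line computation above together with the uniqueness in Corollary \ref{cor:1246}(1).
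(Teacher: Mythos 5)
Your proposal is correct and follows essentially the same strategy as the paper: verify $(D\ot_B D)$-linearity of the composite, verify it splits $\opn{mult}_{D/B}$ via the factorization $\opn{mult}_{D/B}=\opn{mult}_{D/C}\circ(\opn{id}_D\ot\opn{mult}_{C/B})$ together with $\opn{mult}_{C/B}\circ\opn{sec}_{C/B}=\opn{id}_C$, and then invoke the uniqueness clause of Corollary \ref{cor:1246}(1). The paper organizes this as one large commutative diagram rather than a chain of equalities, but the content is identical.
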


\begin{proof}
We start with this commutative diagram: 
\begin{equation} \label{eqn:1395}
\begin{tikzcd} [column sep = 10ex, row sep = 6ex] 
C
\ar[rr, bend left = 18, start anchor = north east, end anchor = north west,
"{\opn{id}_{}}"]
\ar[r, "{\opn{sec}_{C / B} }"]
&
C \ot_B C
\ar[r, "{\opn{mult}_{C / B} }"]
&
C
\end{tikzcd} 
\end{equation}
in $\cat{M}(C \ot_B C)$. See Corollary \ref{cor:1246}(1). After applying 
$(D \ot_B D) \ot_{C \ot_B C} (-)$
to it, and inserting $D$ in three positions, we get this diagram 
\begin{equation} \label{eqn:1420}
\begin{tikzcd} [column sep = 10ex, row sep = 5ex] 
D
\ar[r, "{\opn{sec}_{D / C}}"]
\ar[dr, bend left = -15, start anchor = south, end anchor = west, "{\opn{id}}"']
&
D \ot_C D
\ar[d, "{\opn{mult}_{D/C}}"]
\ar[rr, bend left = 15, start anchor = north, end anchor = north,
"{\opn{id}_{}}"]
\ar[r, "{\opn{id} \ot \opn{sec}_{C / B}}"]
&
D \ot_B D
\ar[r, "{\opn{id} \ot \opn{mult}_{C / B}}"]
\ar[d, "{\opn{mult}_{D / B}}"]
&
D \ot_C D
\ar[d, "{\opn{mult}_{D / C}}"]
\\
&
D
\ar[r, "{\opn{id}}"]
&
D
\ar[r, "{\opn{id}}"]
&
D
\end{tikzcd}
\end{equation}
in $\cat{M}(D \ot_B D)$. The top curved area is commutative because
(\ref{eqn:1395}) is commutative. An easy calculation with elements shows that 
the rectangular region at the bottom right is commutative.
By Corollary \ref{cor:1246}(1) the homomorphism 
$\opn{mult}_{D / C} \circ \opn{sec}_{D / C}$
equals $\opn{id}_D$. This implies that the whole diagram is commutative.
The commutativity of the curved region at the bottom left, together with 
the uniqueness in Corollary \ref{cor:1246}(1), tell us that 
the equality ($\heartsuit$) holds. 
\end{proof}

\begin{proof}[Proof of Theorem \tup{\ref{thm:1292}}]
We need to prove that the diagram 
\begin{equation}\label{eqn:14001}
\begin{tikzcd} [column sep = 10ex, row sep = 5ex] 
\opn{Sq}_{B / A}(L)
\ar[r, "{\opn{Sq}_{v / A}(\la)}"]
\ar[dr, "{\opn{Sq}_{w \circ v / A}(\mu \circ \la)}"']
&
\opn{Sq}_{C / A}(M)
\ar[d, "{\opn{Sq}_{w / A}(\mu)}"]
\\
&
\opn{Sq}_{D / A}(N)
\end{tikzcd} 
\end{equation}
in $\cat{D}(B)$ is commutative. Let's expand this diagram using Definition 
\ref{dfn:1290}:
\begin{equation} \label{eqn:1400}
\begin{tikzcd} [column sep = 10ex, row sep = 5ex] 
\opn{Sq}_{B / A}(L)
\ar[r, "{\opn{Sq}_{B / A}(\la)}"]
\ar[dr, "{\opn{Sq}_{B / A}(\mu \circ \la)}"']
&
\opn{Sq}_{B / A}(M)
\ar[r, "{\opn{red}_{C / B / A, M}}"]
\ar[d, "{\opn{Sq}_{B / A}(\mu)}"]
&
\opn{Sq}_{C / A}(M)
\ar[d, "{\opn{Sq}_{C / A}(\mu)}"]
\\
&
\opn{Sq}_{B / A}(N)
\ar[r, "{\opn{red}_{C / B / A, N}}"]
\ar[dr, "{\opn{red}_{D / B / A, N}}"']
&
\opn{Sq}_{C / A}(N)
\ar[d, "{\opn{red}_{D / C / A, N}}"]
\\
&
&
\opn{Sq}_{D / A}(N)
\end{tikzcd}
\end{equation}
Thus the outer paths in (\ref{eqn:1400}) are the (\ref{eqn:14001}).
The top left triangle is commutative by Proposition \ref{prop:890}. To prove 
that the rectangle and the bottom right triangle are commutative, we will 
choose a K-flat resolution  
$\til{D} / \til{C} / \til{B} / \til{A}$ 
of the quadruple of DG rings $D / C / B  /A$ 
(this is the obvious modification of Definition \ref{dfn:630}(3)). 
Using this resolution we can replace 
$\opn{Sq}_{B / A}(L)$ with 
$\opn{Sq}_{B / A}^{\til{B} / \til{A}}(L) =
\opn{RHom}_{\til{B} \ot_{\til{A}} \til{B}}(B, L \ot^{\mrm{L}}_{\til{A}} L)$, 
etc. Consider the following diagram 
\[ \begin{tikzcd} [column sep = 13ex, row sep = 5ex] 
\opn{RHom}_{\til{B} \ot_{\til{A}} \til{B}}(B, M \ot^{\mrm{L}}_{\til{A}} M)
\ar[r, "{\opn{RHom}(\opn{id}, \mu \ot^{\mrm{L}} \mu)}"]
\ar[d, "{\opn{adj}}"', "{\simeq}"]
&
\opn{RHom}_{\til{B} \ot_{\til{A}} \til{B}}(B, N \ot^{\mrm{L}}_{\til{A}} N)
\ar[d, "{\opn{adj}}"', "{\simeq}"]
\\
\opn{RHom}_{\til{C} \ot_{\til{A}} \til{C}}
(C \ot_B C, M \ot^{\mrm{L}}_{\til{A}} M)
\ar[r, "{\opn{RHom}(\opn{id}, \mu \ot^{\mrm{L}} \mu)}"]
\ar[d, "{\opn{RHom}(\opn{sec}_{C / B}, \opn{id})}"]
&
\opn{RHom}_{\til{C} \ot_{\til{A}} \til{C}}
(C \ot_B C, N \ot^{\mrm{L}}_{\til{A}} N)
\ar[d, "{\opn{RHom}(\opn{sec}_{C / B}, \opn{id})}"']
\\
\opn{RHom}_{\til{C} \ot_{\til{A}} \til{C}}(C, M \ot^{\mrm{L}}_{\til{A}} M)
\ar[r, "{\opn{RHom}(\opn{id}, \mu \ot^{\mrm{L}} \mu)}"]
&
\opn{RHom}_{\til{C} \ot_{\til{A}} \til{C}}(C, N \ot^{\mrm{L}}_{\til{A}} N)
\end{tikzcd} \]
in $\cat{D}(B)$. The vertical arrows labeled ``adj'' are adjunction for the DG 
ring homomorphism 
$\til{B} \ot_{\til{A}} \til{B} \to \til{C} \ot_{\til{A}} \til{C}$.
 It is easy to see that this diagram is commutative. A comparison with 
formula (\ref{eqn:1290}) shows that the composed vertical arrows in it are 
$\opn{red}^{\til{C} / \til{B} / \til{A}}_{C/ B / A, M}$
and 
$\opn{red}^{\til{C} / \til{B} / \til{A}}_{C/ B / A, N}$, 
respectively. Also, the top and bottom horizontal arrows are 
$\opn{Sq}_{B / A}^{\til{B} / \til{A}}(\mu)$
and 
$\opn{Sq}_{C / A}^{\til{C} / \til{A}}(\mu)$, respectively.  
The conclusion is that the rectangular subdiagram in (\ref{eqn:1400}) is 
commutative. 

Finally we are going to handle the bottom right triangle in diagram 
(\ref{eqn:1400}). After passing to the parameterized version (the one with 
tildes), and applying the adjunction isomorphisms, we end up with this 
diagram: 
\[ \begin{tikzcd} [column sep = 17ex, row sep = 5ex] 
\opn{RHom}_{\til{D} \ot_{\til{A}} \til{D}}
(D \ot_B D, N \ot^{\mrm{L}}_{\til{A}} N)
\ar[r, "{\opn{RHom}(\opn{id} \ot \opn{sec}_{C / B}, \opn{id})}"]
\ar[dr, "{\opn{RHom}(\opn{sec}_{D / B}, \opn{id})}"']
&
\opn{RHom}_{\til{D} \ot_{\til{A}} \til{D}}
(D \ot_C D, N \ot^{\mrm{L}}_{\til{A}} N)
\ar[d, "{\opn{RHom}(\opn{sec}_{D / C}, \opn{id})}"]
\\
&
\opn{RHom}_{\til{D} \ot_{\til{A}} \til{D}}
(D, N \ot^{\mrm{L}}_{\til{A}} N)
\end{tikzcd} \]
This diagram is commutative by Lemma \ref{lem:1395}.
\end{proof}

Given a complex $M \in \cat{D}(B)$, the morphism 
$\eta^{\mrm{L}}_{M, C} : M \ot^{\mrm{L}}_{B} C \iso M \ot_{B} C$
from (\ref{eqn:621}) is an isomorphism; this is because $B \to C$ is flat. 
For convenience let us temporarily consider this isomorphism as an equality. 
Then there is the standard nondegenerate forward morphism  
\begin{equation} \label{eqn:1293}
\opn{q}^{\mrm{L}}_{C / B, M} = \opn{q}_{C / B, M} = \opn{q}_{v, M}
: M \to M \ot_B C , \quad \opn{q}_{v, M}(m) = m \ot 1 ,  
\end{equation}
see formula (\ref{eqn:656}). Recall that 
a forward morphism $\la : M \to N$ in $\cat{D}(B)$ over $v$ is called 
nondegenerate if the morphism
$\opn{fadj}^{\mrm{L}}_{v, M, N}(\la) : M \ot^{\mrm{L}}_B C \to N$
in $\cat{D}(C)$, which corresponds to $\la$ by forward adjunction, is an 
isomorphism. Here is the commutative diagram in $\cat{D}(B)$ depicting this 
situation:
\begin{equation} \label{eqn:1321}
\begin{tikzcd} [column sep = 10ex, row sep = 5ex] 
M
\ar[r, "{\opn{q}^{\mrm{L}}_{v, M}}"]
\ar[dr, "{\la}"']
&
M \ot^{\mrm{L}}_B C
\ar[d, "{\opn{fadj}^{\mrm{L}}_{v, M, N}(\la)}"]
\\
&
N
\end{tikzcd}
\end{equation}

The next theorem is a variant of Theorem \ref{thm:672}. 

\begin{thm} \label{thm:1860}
Under Setup \ref{set:1281}, let $M \in \cat{D}(B)$,  
$N \in \cat{D}(C)$, $\la : M \to  N$ a forward morphism in $\cat{D}(B)$ over 
$v$, and $c \in C$. Then 
\[ \opn{Sq}_{v / A}(c \cd \la) = c^2 \cd \opn{Sq}_{v / A}(\la) \]
as forward morphisms $\opn{Sq}_{B / A}(M) \to \opn{Sq}_{C / A}(N)$
in $\cat{D}(B)$ over $v$. 
\end{thm}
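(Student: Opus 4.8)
The plan is to reduce the statement to the already-established quadratic behavior of the squaring operation on backward morphisms, namely Theorem \ref{thm:672}, via the definition of $\opn{Sq}_{v/A}(\la)$ as the composition $\opn{red}_{C/B/A,N}\circ\opn{Sq}_{B/A}(\la)$ (Definition \ref{dfn:1290}). First I would observe that the reduction morphism $\opn{red}_{C/B/A,N}$ depends only on $N$ and the triple $C/B/A$, not on the forward morphism $\la$; it is an additive (indeed $B^0$-linear) operation on morphisms in the appropriate sense, so it commutes with scalar multiplication by elements of $C$. More precisely, multiplication by $c\in C$ acts on $\opn{Sq}_{C/A}(N)$ through the $C$-module structure (action on the first $\opn{RHom}$-argument, which survives to $\opn{Sq}_{C/A}(N)$), and $\opn{red}_{C/B/A,N}$ is a morphism in $\cat{D}(B)$ that is compatible with the full $C$-action because each of the isomorphisms $\iso^{(1)}$, $\iso^{(2)}$ and the morphism $\to^{(3)}$ in \eqref{eqn:1290} is built from $\opn{RHom}$ and adjunction functors that are $C$-linear on the relevant argument. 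Hence $\opn{red}_{C/B/A,N}(c^2\cdot\xi)=c^2\cdot\opn{red}_{C/B/A,N}(\xi)$ for any $\xi$.

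Next I would handle the inner factor $\opn{Sq}_{B/A}(\la)$. Here $\la:M\to N$ is a morphism in $\cat{D}(B)$ after restriction, i.e.\ a backward morphism over $\opn{id}_B$ in the sense of Theorem \ref{thm:632}, and $c\cdot\la$ makes sense because $N$ is a $C$-module hence carries the $C$-action; thus $c\cdot\la=\la\circ(c\cdot\opn{id}_M)$ is not in general a morphism over $\opn{id}_B$ unless we remember that the codomain is a $C$-module. The cleanest route is: $\opn{Sq}_{B/A}(\la)$ is by Definition \ref{dfn:880} the same as $\opn{Sq}_{\opn{id}_B/\opn{id}_A}(\la)$, viewing $\la$ as a backward morphism $N\to M$... — but $\la$ points $M\to N$, so I should instead think of it directly via Corollary \ref{cor:880}, where $\opn{Sq}_{B/A}$ is a functor on $\cat{D}(B)$. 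The functor $\opn{Sq}_{B/A}:\cat{D}(B)\to\cat{D}(B)$ is quadratic in the precise sense that for a morphism $\phi$ in $\cat{D}(B)$ and $b\in B^0$ one has $\opn{Sq}_{B/A}(b\cdot\phi)=b^2\cdot\opn{Sq}_{B/A}(\phi)$; this is exactly Theorem \ref{thm:672} applied with $A=B=C$ as rings and $v=\opn{id}_B$. Applying this with the codomain viewed through its $C$-structure, $\opn{Sq}_{B/A}(c\cdot\la)=c^2\cdot\opn{Sq}_{B/A}(\la)$, where on the right $c^2$ acts on $\opn{Sq}_{B/A}(N)$ through the $C$-module structure on $N$ (which is precisely the one used to define $\opn{red}_{C/B/A,N}$ as a morphism respecting the $C$-action). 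One must be slightly careful that Theorem \ref{thm:672} is stated for backward morphisms over a ring homomorphism $v$, so I would either cite the $\opn{id}$-specialization explicitly (as in Theorem \ref{thm:672} with $C^0$ scalars, taking the homomorphism to be $B\to N$'s structure, or rather $B\to C$ acting on $N$) or record the trivial observation that the functorial $\opn{Sq}_{B/A}$ inherits quadraticity from the general theorem.

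Combining the two steps:
\[
\opn{Sq}_{v/A}(c\cdot\la)=\opn{red}_{C/B/A,N}\circ\opn{Sq}_{B/A}(c\cdot\la)
=\opn{red}_{C/B/A,N}\circ\bigl(c^2\cdot\opn{Sq}_{B/A}(\la)\bigr)
=c^2\cdot\bigl(\opn{red}_{C/B/A,N}\circ\opn{Sq}_{B/A}(\la)\bigr)=c^2\cdot\opn{Sq}_{v/A}(\la),
\]
which is the desired equality of forward morphisms $\opn{Sq}_{B/A}(M)\to\opn{Sq}_{C/A}(N)$ in $\cat{D}(B)$ over $v$.

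The main obstacle I anticipate is bookkeeping of module structures: making rigorous that $\opn{red}_{C/B/A,N}$ is $C$-linear in the appropriate sense, so that pushing the scalar $c^2$ through it is legitimate. This requires unwinding Definition \ref{dfn:1282}: the isomorphism $\iso^{(1)}$ is $\opn{RHom}$-$\ot$ adjunction along $\til B_1\ot_{\til A}\til B_2\to\til C_1\ot_{\til A}\til C_2$, and the $C$-action we care about comes from $C_1$ acting on the first slot, which is untouched by this adjunction; $\iso^{(2)}$ is induced by the quasi-isomorphism of Lemma \ref{lem:1315}, again affecting only the first $\opn{RHom}$-argument in a $C$-equivariant way; and $\to^{(3)}=\opn{RHom}(\opn{sec}_{C/B},\opn{id})$, where $\opn{sec}_{C/B}:C\to C\ot_BC$ is a $(C\ot_BC)$-module map by Corollary \ref{cor:1246}, hence in particular $C$-linear. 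So each arrow in \eqref{eqn:1290} commutes with multiplication by $c\in C$ on the target's first-argument structure, and since the $C$-module structure on $\opn{Sq}_{C/A}(N)$ is exactly this one (and similarly $\opn{Sq}_{B/A}(N)$ gets its $C$-structure from $N$ in the second variable, but the two agree up to the coherent identifications used here), the scalar passes through. Once this linearity is nailed down, the rest is the two-line computation above.
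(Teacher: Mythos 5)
Your route is genuinely different from the paper's, and it contains a real gap at the pivotal step.

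The issue is the intermediate claim $\opn{Sq}_{B/A}(c\cdot\la)=c^{2}\cdot\opn{Sq}_{B/A}(\la)$. For $N\in\cat{D}(C)$, the object $\opn{Sq}_{B/A}(N)=\opn{RHom}_{\til B_1\ot_{\til A}\til B_2}(B_0,N_1\ot^{\mrm{L}}_{\til A}N_2)\in\cat{D}(B)$ does not carry a canonical $C$-module structure. It is naturally a $(\til C_1\ot_{\til A}\til C_2)$-module through $N_1\ot N_2$, so it has \emph{two} $C$-actions — one for each tensor slot — and they do not coincide, because the first $\opn{RHom}$-argument here is $B_0$, not $C_0$, so nothing forces $c\ot 1-1\ot c$ (for $c\in C$) to act as zero. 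Your phrase ``through the $C$-module structure on $N$'' presupposes a single action where there are two, so the expression $c^{2}\cdot\opn{Sq}_{B/A}(\la)$ is not well-defined. What is actually true, unwinding formula \eqref{eqn:890}, is that $\opn{Sq}_{B/A}(c\cdot\opn{id}_N)$ is multiplication by the element $c\ot c\in C_1\ot_{\til A}C_2$, not by $c^{2}$ under any single $C$-action. Your parenthetical remark ``the two agree up to the coherent identifications used here'' is precisely what fails: the agreement of the two actions is exactly what the reduction morphism enforces, and it holds on the target $\opn{Sq}_{C/A}(N)$ (where all three $C$-actions on $\opn{RHom}_{\til C_1\ot_{\til A}\til C_2}(C_0,-)$ coincide, as in Lemma~\ref{lem:2110}) but not on the source.

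To repair your argument along its own lines, you would need to (a) prove $\opn{Sq}_{B/A}(c\cdot\la)=(c\ot c)\cdot\opn{Sq}_{B/A}(\la)$ with $c\ot c\in C\ot_A C$ acting through the tensor factors, (b) verify that each arrow in \eqref{eqn:1290} is $(C\ot_A C)$-linear, and (c) observe that $c\ot c$ acts as $c^{2}$ on $\opn{Sq}_{C/A}(N)$ because that object is supported on the diagonal of $C\ot_B C$. None of these are in the paper as stated, and (a) in particular requires reproving quadraticity with the finer module structure rather than quoting Theorem~\ref{thm:672}. The paper sidesteps all of this: it factors $c\cdot\la=(c\cdot\opn{id}_N)\circ\la$ \emph{before} squaring, applies Theorem~\ref{thm:1292} (squaring of composable forward morphisms) to get $\opn{Sq}_{v/A}(c\cdot\la)=\opn{Sq}_{\opn{id}_C/A}(c\cdot\opn{id}_N)\circ\opn{Sq}_{v/A}(\la)$, and then applies Theorem~\ref{thm:672} to $c\cdot\opn{id}_N$, which lives entirely in $\cat{D}(C)$, where the $C$-action on $\opn{Sq}_{C/A}(N)$ is unambiguous. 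That factorization is exactly the device that keeps the scalar from ever entering $\opn{Sq}_{B/A}(-)$, where the module structure is the awkward one. I would recommend adopting the paper's factorization rather than trying to push the scalar through $\opn{red}_{C/B/A,N}$.
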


\begin{proof}
The ring homomorphism $v$ factors into $v = \opn{id}_C \circ \, v$, and the 
forward morphism $\la$ factors into $\la = \opn{id}_N \circ \, \la$.
See commutative diagrams below. 
\[ \begin{tikzcd} [column sep = 6ex, row sep = 4ex] 
B
\ar[rr, bend left = 30, 
start anchor = north east, end anchor = north west, "{v}"]
\ar[r, "{v}"]
&
C
\ar[r, "{\opn{id}_C}"]
&
C
\end{tikzcd} 
\qquad 
 \begin{tikzcd} [column sep = 6ex, row sep = 4ex] 
M
\ar[rr, bend left = 30, 
start anchor = north east, end anchor = north west, "{\la}"]
\ar[r, "{\la}"]
&
N
\ar[r, "{\opn{id}_N}"]
&
N
\end{tikzcd} \]
A morphism $\phi : N \to N$ in $\cat{D}(C)$ 
can be viewed either as a forward morphism 
or as a backward morphism in $\cat{D}(C)$ over $\opn{id}_C$. But according to 
Proposition \ref{prop:1671} its square 
$\opn{Sq}_{\opn{id}_C / A}(\phi) = \opn{Sq}_{C / A}(\phi)$
is the same in both interpretations, that of Definition \ref{dfn:1290} 
and that of Theorem \ref{thm:632} and Definition \ref{dfn:880}. We are going to 
use this for $\phi = c \cd \opn{id}_N$.

By Theorem \ref{thm:1292} we have equality
\begin{equation} \label{eqn:1861}
\opn{Sq}_{v / A}(c \cdot \la) = 
\opn{Sq}_{(\opn{id}_C \circ \, v) / A}
\bigl( (c \cd \opn{id}_N) \circ \la \bigr) =
\opn{Sq}_{\opn{id}_C / A}(c \cd \opn{id}_N) \circ \opn{Sq}_{v / A}(\la) 
\end{equation}
of forward morphisms $\opn{Sq}_{B / A}(M) \to \opn{Sq}_{C / A}(N)$
in $\cat{D}(B)$ over $v$. Next, by Theorem \ref{thm:672} we know that 
\[ \opn{Sq}_{\opn{id}_C / A}(c \cd \opn{id}_N) = 
c^2 \cd \opn{Sq}_{\opn{id}_C / A}(\opn{id}_N) = 
c^2 \cd \opn{id}_{\opn{Sq}_{C / A}(N)} . \]
Plugging this into formula (\ref{eqn:1861}) we obtain 
$\opn{Sq}_{v / A}(c \cdot \la) = c^2 \cd \opn{Sq}_{v / A}(\la)$.
\end{proof}

From here to the end of the section we abandon Setup 
\ref{set:1281}.

\begin{thm}[Forward and Backward Compatibility] \label{thm:895}
Let $A$ be a noetherian ring, and let $u : B \to C$ and 
$v : B \to  B'$ be homomorphisms in $\cat{Rng} \eftover A$.
Define the ring $C' := B' \ot_{B} C$, and let 
$u' : B' \to C'$ and $w : C \to C'$ be the induced ring homomorphisms.
Assume that $v : B \to  B'$ is an essentially \'etale homomorphism. 

Let $M \in \cat{D}(B)$ and $N \in \cat{D}(C)$ be complexes, and let 
$\th : N \to M$ be a backward morphism in $\cat{D}(B)$ over $u$. 
Define the complexes 
$M' := B' \ot_B M \in \cat{D}(B')$ and 
$N' := C' \ot_C N \in \cat{D}(C')$, and let 
$\th' : N' \to M'$ be the backward morphism in $\cat{D}(B')$ over $u'$ that's
induced from $\th$, so there is equality 
$\opn{q}^{\mrm{L}}_{v, M} \circ \, \th = \th' \circ \opn{q}^{\mrm{L}}_{w, N}$
of morphisms $N \to M'$ in $\cat{D}(B)$.  

Then there is equality 
\[ \opn{Sq}_{v / A}(\opn{q}^{\mrm{L}}_{v, M}) \circ \opn{Sq}_{u / A}(\th) = 
\opn{Sq}_{u' / A}(\th') \circ \opn{Sq}_{w / A}(\opn{q}^{\mrm{L}}_{w, N}) \]
of morphisms in $\cat{D}(B)$.
\end{thm}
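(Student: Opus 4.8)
The plan is to peel the four squaring operations apart using the functoriality already available, reduce the assertion to a single commutative square, and then verify that square by passing to resolved squares. To begin, $w : C \to C'$ is essentially \'etale, being the base change of $v$ along $u$ (Proposition \ref{prop:1015}(2)), so $\opn{Sq}_{w / A}(\opn{q}^{\mrm{L}}_{w, N})$ is defined through Definition \ref{dfn:1290}; and $u' : B' \to C'$ is an EFT homomorphism, so $\opn{Sq}_{u' / A}(\theta')$ is defined through Theorem \ref{thm:632}. Moreover $v$ essentially \'etale forces $B'$ flat over $B$ (Theorem \ref{thm:1140}(1)), hence $M' = B' \ot_B M = B' \ot^{\mrm{L}}_B M$ and $C' = B' \ot_B C = B' \ot^{\mrm{L}}_B C$, and likewise $N' = C' \ot^{\mrm{L}}_C N$; in particular $\opn{q}^{\mrm{L}}_{v, M}$ and $\opn{q}^{\mrm{L}}_{w, N}$ are represented by honest forward homomorphisms.

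First I would expand the two essentially \'etale squares: by Definition \ref{dfn:1290}, $\opn{Sq}_{v / A}(\opn{q}^{\mrm{L}}_{v, M}) = \opn{red}_{B' / B / A, M'} \circ \opn{Sq}_{B / A}(\opn{q}^{\mrm{L}}_{v, M})$ and $\opn{Sq}_{w / A}(\opn{q}^{\mrm{L}}_{w, N}) = \opn{red}_{C' / C / A, N'} \circ \opn{Sq}_{C / A}(\opn{q}^{\mrm{L}}_{w, N})$, the inner arrows being the ordinary squares of Definition \ref{dfn:880}. Using that $\opn{Sq}_{B / A}$ is a functor (Corollary \ref{cor:880}) and that squaring of composable backward morphisms is functorial (Proposition \ref{prop:890}, applied to $A \to B \xar{\opn{id}_B} B \xar{u} C$ and to $A \to B \xar{u} C \xar{\opn{id}_C} C$), together with the hypothesis $\opn{q}^{\mrm{L}}_{v, M} \circ \theta = \theta' \circ \opn{q}^{\mrm{L}}_{w, N}$, one obtains
\[ \opn{Sq}_{B / A}(\opn{q}^{\mrm{L}}_{v, M}) \circ \opn{Sq}_{u / A}(\theta) = \opn{Sq}_{u / A}(\opn{q}^{\mrm{L}}_{v, M} \circ \theta) = \opn{Sq}_{u / A}(\bar{\theta}' \circ \opn{q}^{\mrm{L}}_{w, N}) = \opn{Sq}_{u / A}(\bar{\theta}') \circ \opn{Sq}_{C / A}(\opn{q}^{\mrm{L}}_{w, N}), \]
where $\bar{\theta}' : \opn{Rest}_{C' / C}(N') \to M'$ denotes $\theta'$ regarded as a backward morphism in $\cat{D}(B)$ over $u$. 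Since the factor $\opn{Sq}_{C / A}(\opn{q}^{\mrm{L}}_{w, N})$ also occurs as the rightmost factor of the other composite, the theorem is reduced to the commutative square
\[ \opn{red}_{B' / B / A, M'} \circ \opn{Sq}_{u / A}(\bar{\theta}') \ = \ \opn{Sq}_{u' / A}(\theta') \circ \opn{red}_{C' / C / A, N'} \tag{$\star$} \]
of morphisms $\opn{Sq}_{C / A}(\opn{Rest}_{C' / C}(N')) \to \opn{Sq}_{B' / A}(M')$ in $\cat{D}(B)$.

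To prove $(\star)$ I would pass to resolved squares. Choose a K-flat resolution of the commutative square of rings $\{ A \to B \to C,\ B \to B' \}$, extended over $C' = B' \ot_B C$: concretely, take a K-flat DG ring resolution $\til{A} \to \til{B} \to \til{C}$ of $A \to B \to C$, a K-flat resolution $\til{B} \to \til{B}'$ of $B \to B'$ over $\til{B}$, and $\til{C}' := \til{C} \ot_{\til{B}} \til{B}'$; using flatness of $B'$ over $B$ one checks that $\til{C}' \to C'$ is a surjective quasi-isomorphism and that $\til{C}'$ is K-flat over both $\til{C}$ and $\til{B}'$, so that $\til{C}' / \til{C} / \til{A}$ and $\til{C}' / \til{B}' / \til{A}$ are K-flat resolutions of $C' / C / A$ and $C' / B' / A$ compatibly with $C' = B' \ot_B C$ (this is the evident diagram version of the existence results of \cite{Ye4}, in the spirit of \cite[Proposition 7.9]{Ye4} used in the proof of Theorem \ref{thm:1292}). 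Invoking condition $(*)$ of Theorem \ref{thm:1285} and condition $(**)$ of Theorem \ref{thm:632}, the square $(\star)$ is identified with the outer rectangle of an explicit diagram whose objects are the derived Hom modules $\opn{RHom}_{\til{B}_1 \ot_{\til{A}} \til{B}_2}(-,\, N'_1 \ot^{\mrm{L}}_{\til{A}} N'_2)$ and their analogues over $\til{C}$, $\til{B}'$, $\til{C}'$ (and with $M'$ in place of $N'$), and whose arrows are: the Hom--tensor adjunction isomorphisms for $\til{B}_1 \ot_{\til{A}} \til{B}_2 \to \til{C}_1 \ot_{\til{A}} \til{C}_2$ and $\til{B}'_1 \ot_{\til{A}} \til{B}'_2 \to \til{C}'_1 \ot_{\til{A}} \til{C}'_2$; the isomorphisms of Lemma \ref{lem:1315}; the coefficient morphisms $\opn{RHom}(\opn{id},\, \bar{\theta}' \ot^{\mrm{L}} \bar{\theta}')$ and $\opn{RHom}(\opn{id},\, \theta' \ot^{\mrm{L}} \theta')$; and the morphisms $\opn{RHom}(\opn{sec}_{B' / B}, \opn{id})$ and $\opn{RHom}(\opn{sec}_{C' / C}, \opn{id})$ supplied by formula (\ref{eqn:1290}) and Corollary \ref{cor:1246}(1). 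Every sub-square not involving the section homomorphisms commutes by functoriality of $\opn{RHom}(-, -)$ in both arguments and naturality of adjunction; the unique sub-square involving them commutes once one knows that, under the canonical identifications $C' \ot_C C' \cong (B' \ot_B B') \ot_B C$ and $C' \cong B' \ot_B C$ coming from $C' = B' \ot_B C$, the section $\opn{sec}_{C' / C}$ is obtained from $\opn{sec}_{B' / B}$ by applying $(-) \ot_B C$.

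This base-change compatibility of the essentially \'etale section is the only genuinely new ingredient, and I would isolate it as a lemma proved by uniqueness: $\opn{sec}_{B' / B} \ot_B \opn{id}_C$ is a $(C' \ot_C C')$-module homomorphism splitting $\opn{mult}_{C' / C}$, hence equals $\opn{sec}_{C' / C}$ by the uniqueness clause of Corollary \ref{cor:1246}(1) (equivalently, tensoring the decomposition $B' \ot_B B' \cong B' \times (B')^{\mrm{od}}$ of Corollary \ref{cor:1280} with $C$ over $B$ yields the decomposition $C' \ot_C C' \cong C' \times (C')^{\mrm{od}}$, under which the two sections correspond). The main obstacle I anticipate is organizational rather than conceptual: carefully tracking the many restriction functors, adjunction isomorphisms, and the four tensor positions in the resolved rectangle, and verifying that $\til{C}' = \til{C} \ot_{\til{B}} \til{B}'$ is an admissible resolution. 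Once the resolved picture is written out cleanly, every square in it except the section square is a formal naturality statement, and the section square is dispatched by the lemma.
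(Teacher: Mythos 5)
Your proposal is correct and follows essentially the same route as the paper: you reduce, via Proposition \ref{prop:890}, to the single square $(\star)$ relating $\opn{red}_{B'/B/A,M'} \circ \opn{Sq}_{u/A}(\bar{\theta}')$ and $\opn{Sq}_{u'/A}(\theta') \circ \opn{red}_{C'/C/A,N'}$, which is exactly the bottom rectangle in the paper's diagram (\ref{eqn:1463}); you pass to the same compatible K-flat resolution $\til{C}' := \til{C} \ot_{\til{B}} \til{B}'$; and you isolate the same crux lemma, namely the base-change compatibility of the section homomorphism, which is the paper's Lemma \ref{lem:1455}, proved there too by the uniqueness clause of Corollary \ref{cor:1246}(1). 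The only cosmetic difference is that you state the section lemma as $\opn{sec}_{B'/B} \ot_B \opn{id}_C = \opn{sec}_{C'/C}$ rather than as a commutative square over $u' \ot_u u'$, but these are the same assertion.
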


Note that $w : C \to C'$ is an essentially \'etale ring homomorphism, so the 
forward morphism $\opn{Sq}_{w / A}(\opn{q}^{\mrm{L}}_{w, N})$ exists. 
Here are the commutative diagrams depicting the homomorphisms in 
$\cat{Rng} \eftover A$ and the morphisms in $\cat{D}(B)$ mentioned in the 
theorem.
\begin{equation} \label{eqn:1565}
\begin{tikzcd} [column sep = 6ex, row sep = 5ex] 
B
\ar[r, "{u}"]
\ar[d, "{v}"]
&
C
\ar[d, "{w}"]
\\
B'
\ar[r, "{u'}"]
&
C' 
\end{tikzcd} 
\quad 
\begin{tikzcd} [column sep = 6ex, row sep = 5ex] 
M
\ar[d, "{\opn{q}^{\mrm{L}}_{v, M}}"]
&
N
\ar[l, "{\th}"']
\ar[d, "{\opn{q}^{\mrm{L}}_{w, N}}"]
\\
M'
&
N' 
\ar[l, "{\th'}"']
\end{tikzcd}
\quad 
\begin{tikzcd} [column sep = 12ex, row sep = 5ex] 
\opn{Sq}_{B / A}(M)
\ar[d, "{\opn{Sq}_{v / A}(\opn{q}^{\mrm{L}}_{v, M})}"]
&
\opn{Sq}_{C / A}(N)
\ar[l, "{\opn{Sq}_{u / A}(\th)}"']
\ar[d, "{\opn{Sq}_{w / A}(\opn{q}^{\mrm{L}}_{w, N})}"]
\\
\opn{Sq}_{B' / A}(M')
&
\opn{Sq}_{C' / A}(N') 
\ar[l, "{\opn{Sq}_{u' / A}(\th')}"]
\end{tikzcd} 
\end{equation}

We need this lemma before proving the theorem. 

\begin{lem} \label{lem:1455}
In the situation of the theorem, the diagram 
\[ \tag{$\dag$}
\begin{tikzcd} [column sep = 8ex, row sep = 5ex] 
B'
\ar[d, "{u'}"]
\ar[r, "{\opn{sec}_{B' / B}}"]
&
B' \ot_B B' 
\ar[d, "{u' \ot_u u'}"]
\\
C'
\ar[r, "{\opn{sec}_{C' / C}}"]
&
C' \ot_C C' 
\end{tikzcd} \]

in $\cat{M}(B' \ot_B B')$ is commutative. 
\end{lem}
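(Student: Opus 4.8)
The claim is a compatibility between the essentially étale section homomorphisms and the ring homomorphisms $u'$, $u' \ot_u u'$. The plan is to reduce everything to the uniqueness characterization of $\opn{sec}_{C'/C}$ from Corollary \ref{cor:1246}(1). First I would recall that, by Proposition \ref{prop:1015}(2), since $v : B \to B'$ is essentially étale, so is $w : C \to C'$ (being the base change of $v$ along $u$); hence $\opn{sec}_{C'/C} : C' \to C' \ot_C C'$ is defined, and it is the \emph{unique} $(C' \ot_C C')$-module homomorphism whose composition with $\opn{mult}_{C'/C}$ is $\opn{id}_{C'}$. So to prove the diagram $(\dag)$ commutes, it suffices to show that the composite
\[ (u' \ot_u u') \circ \opn{sec}_{B'/B} \circ (u')^{-1}\text{-ish} \]
— more precisely, the $(C' \ot_C C')$-module homomorphism $C' \to C' \ot_C C'$ obtained by base change from $\opn{sec}_{B'/B}$ — satisfies the same defining property.

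Concretely, here are the steps in order. (1) Observe that $C' \ot_C C' \cong (B' \ot_B B') \ot_{B \ot_B B} (C \ot_B C) \cong (B' \ot_B B') \ot_{B} C$, using $C \ot_B C \cong C \ot_B B \ot_B B$ wait — more carefully, $C' \ot_C C' = (B' \ot_B C) \ot_C (B' \ot_B C) \cong (B' \ot_B B') \ot_B C$ via the rearrangement isomorphism (\ref{eqn:1485})/(\ref{eqn:1486}); and under this identification the homomorphism $u' \ot_u u' : B' \ot_B B' \to C' \ot_C C'$ is the base-change map $\opn{q}$, i.e.\ $b' \ot b'' \mapsto (b' \ot b'') \ot 1_C$. (2) Therefore, applying the functor $(C' \ot_C C') \ot_{B' \ot_B B'} (-) = C \ot_B (-)$ to the defining diagram (\ref{eqn:1395}) for $\opn{sec}_{B'/B}$, namely
\[ \opn{mult}_{B'/B} \circ \opn{sec}_{B'/B} = \opn{id}_{B'}, \]
produces a $(C' \ot_C C')$-module homomorphism $\sigma : C' \to C' \ot_C C'$ with $\opn{mult}_{C'/C} \circ \sigma = \opn{id}_{C'}$, because base change commutes with $\opn{mult}$ and takes $\opn{id}_{B'}$ to $\opn{id}_{C'}$; here one uses $C' = C \ot_B B'$ and $C' \ot_C C' = C \ot_B (B' \ot_B B')$, and that $\opn{mult}_{C'/C}$ is $\opn{id}_C \ot \opn{mult}_{B'/B}$ under these identifications. (3) By the uniqueness clause of Corollary \ref{cor:1246}(1), $\sigma = \opn{sec}_{C'/C}$. (4) Finally, unwind what $\sigma$ is as a map $C' \to C' \ot_C C'$: it is precisely $\opn{id}_C \ot \opn{sec}_{B'/B}$ post-composed with the rearrangement isomorphism, and chasing the identifications back one sees this equals $(u' \ot_u u') \circ \opn{sec}_{B'/B}$ after precomposing appropriately — i.e.\ the square $(\dag)$ commutes on the nose. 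This last bookkeeping step, tracking the canonical isomorphisms (\ref{eqn:1485})–(\ref{eqn:1487}) through the computation, is where all the sign-free but fiddly work lies.

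I expect the main obstacle to be \textbf{step (4)}: making the identification $C' \ot_C C' \cong C \ot_B (B' \ot_B B')$ precise enough that the statement ``$\opn{sec}_{C'/C}$ is the base change of $\opn{sec}_{B'/B}$'' literally matches the commutativity of $(\dag)$, rather than a diagram that commutes only up to the canonical rearrangement isomorphism. One clean way to sidestep sign issues entirely is to note that all modules here are in degree $0$ (these are ordinary rings, not DG rings), so the Koszul sign rule contributes nothing, and the rearrangement isomorphisms of (\ref{eqn:1485})–(\ref{eqn:1487}) are the naive ones; then the whole argument is a routine diagram chase on elements: $\opn{sec}_{B'/B}(b')$ is the component of $b'$ in the summand $B' \subseteq B' \ot_B B'$ of Corollary \ref{cor:1280}, and one checks directly that $(u' \ot_u u')(\opn{sec}_{B'/B}(b'))$ lands in $\opn{Ann}_{C' \ot_C C'}(\opn{I}_{C'/C})$ and maps to $u'(b')$ under $\opn{mult}_{C'/C}$, hence equals $\opn{sec}_{C'/C}(u'(b'))$ by the uniqueness in Corollary \ref{cor:1246}(1). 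That element-level version avoids the functorial subtleties and is probably the shortest route to write down.
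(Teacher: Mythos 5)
Your proposal is correct, and it takes a route that is close in spirit to the paper's but a bit more conceptual. The paper factors $u' \ot_u u'$ through the intermediate ring $B' \ot_B C'$, patches together two copies of the defining diagram for $\opn{sec}$ (its (\ref{eqn:1456}) and (\ref{eqn:1457})), and then proves by an explicit element calculation (formulas (\ref{eqn:1460})--(\ref{eqn:1462})) that the resulting section map $\tau$ is $(C' \ot_C C')$-linear, at which point uniqueness (Corollary~\ref{cor:1246}(1)) finishes. You instead note that the base-change functor $(C' \ot_C C') \ot_{B' \ot_B B'} (-) \cong C \ot_B (-)$ automatically takes $(B' \ot_B B')$-module maps to $(C' \ot_C C')$-module maps, so the linearity that the paper verifies by hand is free; uniqueness then identifies $\sigma = C \ot_B \opn{sec}_{B'/B}$ with $\opn{sec}_{C'/C}$. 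What this buys you is eliminating the by-hand linearity check; what it costs you is exactly the bookkeeping you flag in your step (4): you must verify that the canonical identification $C' \ot_C C' \cong (B' \ot_B B') \ot_B C$ is \emph{the one implemented by $u' \ot_u u'$}, and that under this identification $\sigma \circ u' = (u' \ot_u u') \circ \opn{sec}_{B'/B}$ holds on the nose (not merely up to isomorphism). This is precisely what the paper's more explicit factoring takes care of, so the two proofs are trading the same work between two places.

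Your element-level fallback is the cleanest route and is essentially complete, with one small gap: to conclude that $(u' \ot_u u')(\opn{sec}_{B'/B}(b'))$ lies in $\opn{Ann}_{C' \ot_C C'}(\opn{I}_{C'/C})$ you need that $u' \ot_u u'$ carries $\opn{Ann}_{B' \ot_B B'}(\opn{I}_{B'/B})$ into $\opn{Ann}_{C' \ot_C C'}(\opn{I}_{C'/C})$. You should state why: the split decomposition $B' \ot_B B' \cong B' \oplus (B')^{\mrm{od}}$ of Corollary~\ref{cor:1280} is given by complementary idempotents, and $u' \ot_u u'$ sends these to the corresponding idempotents of $C' \ot_C C' \cong C' \oplus (C')^{\mrm{od}}$ (indeed $C' \ot_C C' = C \ot_B (B' \ot_B B')$ and the decomposition base-changes), whence the annihilator of $\opn{I}_{B'/B}$ lands in the annihilator of $\opn{I}_{C'/C}$. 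With that filled in, your identification $\opn{mult}_{C'/C}\circ(u'\ot_u u') = u'\circ\opn{mult}_{B'/B}$ and the fact that $\opn{mult}_{C'/C}$ restricts to an isomorphism on $\opn{Ann}_{C'\ot_C C'}(\opn{I}_{C'/C})$ (Corollary~\ref{cor:1246}(2)) force the equality $(u'\ot_u u')(\opn{sec}_{B'/B}(b')) = \opn{sec}_{C'/C}(u'(b'))$.
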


\begin{proof}
Consider the following commutative diagram in $\cat{M}(B' \ot_B B')$, coming from Corollary \ref{cor:1246}(1):
\begin{equation} \label{eqn:1455}
\begin{tikzcd} [column sep = 10ex, row sep = 6ex] 
B'
\ar[rr, bend left = 20, start anchor = north east, end anchor = north west,
"{\opn{id}_{B'}}"]
\ar[r, "{\opn{sec}_{B' / B} }"]
&
B' \ot_B B'
\ar[r, "{\opn{mult}_{B' / B} }"]
&
B'
\end{tikzcd} 
\end{equation}
After applying the functor $(-) \ot_B C$ to diagram (\ref{eqn:1455}), and using 
the ring isomorphism 
\[ (B' \ot_B B') \ot_B C \cong B' \ot_B C' , \
(b'_1 \ot b'_2) \ot c \mapsto b'_1 \ot (u'(b'_2) \cd w(c)) , \]
we obtain this commutative diagram 
\begin{equation} \label{eqn:1456}
\begin{tikzcd} [column sep = 12ex, row sep = 5.5ex] 
B'
\ar[rr, bend left = 15, start anchor = north east, end anchor = north west,
"{\opn{id}_{B'}}"]
\ar[r, "{\opn{sec}_{B' / B} }"]
\ar[d, "{u'}"]
&
B' \ot_B B'
\ar[r, "{\opn{mult}_{B' / B} }"]
\ar[d, "{\opn{id}_{B'} \ot_{\opn{id}_{B}} \, u'}"]
&
B'
\ar[d, "{u'}"]
\\
C'
\ar[rr, bend right = 15, start anchor = south east, end anchor = south west,
"{\opn{id}_{C'}}"']
\ar[r, "{\opn{sec}_{B' / B} \ot \opn{id}_C}"]
&
B' \ot_B C'
\ar[r, "{\opn{mult}_{B' / B}  \ot \opn{id}_C}"]
&
C'
\end{tikzcd} 
\end{equation}
in $\cat{M}(B' \ot_B B')$. Moreover, the bottom curved region of 
(\ref{eqn:1456}) is a commutative diagram in $\cat{M}(B' \ot_B C')$.

Next we patch the bottom curved region of diagram (\ref{eqn:1456}), with the  
``$C$ version'' of diagram (\ref{eqn:1455}), both flipped vertically.
This gives us the following solid commutative diagram 
\begin{equation} \label{eqn:1457}
\begin{tikzcd} [column sep = 14ex, row sep = 5.5ex] 
C'
\ar[rr, bend left = 18, start anchor = north east, end anchor = north west,
"{\opn{id}_{C'}}"]
\ar[r, "{\opn{sec}_{B' / B} \ot \opn{id}_C}"]
\ar[d, "{\opn{id}_{C'}}"]
\ar[dr, dashed, "{\tau}"]
&
B' \ot_B C'
\ar[r, "{\opn{mult}_{B' / B}  \ot \opn{id}_C}"]
\ar[d, dashed, "{u' \ot_u \opn{id}_{C'}}"]
&
C'
\ar[d, "{\opn{id}_{C'}}"]
\\
C'
\ar[rr, bend right = 15, start anchor = south east, end anchor = south west,
"{\opn{id}_{C'}}"']
\ar[r, "{\opn{sec}_{C' / C}}"]
&
C' \ot_C C'
\ar[r, "{\opn{mult}_{C' / C}}"]
&
C'
\end{tikzcd} 
\end{equation}
in $\cat{M}(B' \ot_B C')$. 
We insert two dashed arrows into diagram (\ref{eqn:1457}): the 
$(B' \ot_B C')$-module homomorphisms 
$\tau := (u' \ot_u \opn{id}_{C'}) \circ (\opn{sec}_{B' / B} \ot \opn{id}_C)$
and $u' \ot_u \opn{id}_{C'}$. 
By definition, the middle triangular region in (\ref{eqn:1457}) is 
commutative. A calculation with elements shows that the right rectangular 
region 
in (\ref{eqn:1457}) is also commutative. 

We claim that the homomorphism $\tau$ is $(C' \ot_C C')$-linear. This is shown 
by a calculation. Take some elements $c' \in C'$ and 
$c'_1 \ot c'_2 \in C' \ot_C C'$. We need to prove that 
\begin{equation} \label{eqn:1460}
(c'_1 \ot c'_2) \cd \tau(c') = \tau(c'_1 \cd c'_2 \cd c') 
\in C' \ot_C C' .
\end{equation}
We can assume that $c'_i \in C' = C \ot_B B'$ are $c'_i = c_i \cd b'_i$, for suitable 
$c_i \in C$ and $b'_i \in B'$. (The ring homomorphisms $w$ and $u'$ are kept 
implicit here, and we can replace tensors with pure tensors.) Then
\begin{equation} \label{eqn:1461}
(c'_1 \ot c'_2) \cd \tau(c') = 
(b'_1 \ot b'_2) \cd (c_1 \ot c_2) \cd \tau(c') .
\end{equation}
But $\tau(c') \in C' \ot_C C'$, so we have 
\begin{equation} \label{eqn:1462}
(c_1 \ot c_2) \cd \tau(c') = (1 \ot (c_1 \cd c_2)) \cd \tau(c')\in C'\ot_C C' .
\end{equation}
The homomorphism $\tau$ is $(B' \ot_B C')$-linear; therefore, using formulas 
(\ref{eqn:1461}) and (\ref{eqn:1462}), we get 
\[ (c'_1 \ot c'_2) \cd \tau(c') = 
(b'_1 \ot (b'_2 \cd c_1 \cd c_2)) \cd \tau(c') = 
\tau(b'_1 \cd b'_2 \cd c_1 \cd c_2 \cd c') = 
\tau((c'_1 \ot c'_2) \cd c')) . \]
This is the equality (\ref{eqn:1460}). 

After all these calculations we now know that $\tau$ is $(C' \ot_C C')$-linear.
The commutativity of the middle triangle and the rectangle in diagram 
(\ref{eqn:1457}) tells us that 
$\opn{mult}_{C' / C} \circ \, \tau = \opn{id}_{C'}$.
The uniqueness in Corollary \ref{cor:1246}(1) implies that 
$\tau = \opn{sec}_{C' / C}$. Thus the whole diagram (\ref{eqn:1457}), dashed 
arrows included, is commutative. Combining that with the commutativity of 
diagram (\ref{eqn:1456}), we conclude that diagram ($\dag$) is commutative.
\end{proof}

\begin{proof}[Proof of Theorem \tup{\ref{thm:895}}]
Let us expand the third diagram in (\ref{eqn:1565}) using Definition 
\ref{dfn:1290}: 
\begin{equation} \label{eqn:1463}
\begin{tikzcd} [column sep = 10ex, row sep = 5ex] 
\opn{Sq}_{B / A}(M)
\ar[d, "{\opn{Sq}_{B / A}(\opn{q}^{\mrm{L}}_{v, M})}"']
&
\opn{Sq}_{C / A}(N)
\ar[l, "{\opn{Sq}_{u / A}(\th)}"']
\ar[d, "{\opn{Sq}_{C / A}(\opn{q}^{\mrm{L}}_{w, N})}"]
\\
\opn{Sq}_{B / A}(M')
\ar[d, "{\opn{red}_{B' / B / A, M}}"']
&
\opn{Sq}_{C / A}(N') 
\ar[l, "{\opn{Sq}_{u / A}(\th')}"']
\ar[d, "{\opn{red}_{C' / C / A, N}}"]
\\
\opn{Sq}_{B' / A}(M')
&
\opn{Sq}_{C' / A}(N') 
\ar[l, "{\opn{Sq}_{u' / A}(\th')}"']
\end{tikzcd} 
\end{equation}
Consider the first diagram below, which is a commutative diagram in 
$\cat{Rng} \eftover A$~:
\begin{equation} \label{eqn:1464}
\begin{tikzcd} [column sep = 8ex, row sep = 5ex] 
B
\ar[r, "{u}"]
\ar[d, "{\opn{id}_B}"']
&
C
\ar[d, "{\opn{id}_C}"']
\\
B
\ar[r, "{u}"]
&
C
\end{tikzcd} 
\qquad
\begin{tikzcd} [column sep = 8ex, row sep = 5ex] 
M
\ar[d, "{\opn{q}^{\mrm{L}}_{v, M}}"']
&
N
\ar[l, "{\th}"']
\ar[d, "{\opn{q}^{\mrm{L}}_{w, N}}"]
\ar[dl, dashed]
\\
M'
&
N' 
\ar[l, "{\th'}"']
\end{tikzcd} 
\end{equation}
The solid part of the second diagram in (\ref{eqn:1464}) is just the second 
diagram in (\ref{eqn:1565});
it is a commutative diagram of backward morphisms in 
$\cat{D}(B)$, with respect to the diagram of rings in the first diagram. 
We add to it the dashed arrow that retains the commutativity.
Applying Proposition \ref{prop:890} twice to the second diagram in 
(\ref{eqn:1464}), we see that the top 
rectangular region in diagram (\ref{eqn:1463}) is commutative. 

To treat the bottom rectangular region in diagram (\ref{eqn:1463}), we will 
choose K-flat resolutions 
$\til{B} / \til{A} \to B / A$,
$\til{C} / \til{B} \to C / B$ and
$\til{B}' / \til{B} \to B' / B$
of of these pairs of DG rings. Then we let 
$\til{C}' := \til{C} \ot_{\til{B}} \til{B}'$. So there is a commutative diagram 
\begin{equation} \label{eqn:1465}
\begin{tikzcd} [column sep = 8.5ex, row sep = 4ex] 
\til{B}
\ar[r, "{\til{u}}"]
\ar[d, "{\til{v}}"']
&
\til{C}
\ar[d, "{\til{w}}"]
\\
\til{B'}
\ar[r, "{\til{u}'}"]
&
\til{C'} 
\end{tikzcd} 
\end{equation}
in $\cat{DGRng} \over \til{A}$, in which all homomorphisms (including the 
structural homomorphisms from $\til{A}$) are K-flat. 
The bottom rectangular region in diagram (\ref{eqn:1463}) becomes, with these 
resolutions, isomorphic to the outer boundary of the big diagram 
\begin{equation} \label{eqn:1466}
\begin{tikzcd} [column sep = 8ex, row sep = 4ex] 
\opn{RHom}_{\til{B} \ot_{\til{A}} \til{B}}(B, M' \ot^{\mrm{L}}_{\til{A}} M')
\ar[d, "{\opn{adj}}", "{\simeq}"']
&
\opn{RHom}_{\til{C} \ot_{\til{A}} \til{C}}(C, N' \ot^{\mrm{L}}_{\til{A}} N')
\ar[l, "{\opn{RHom}_{}(u, \th' \ot^{\mrm{L}} \th')}"']
\ar[d, "{\opn{adj}}"', "{\simeq}"]
\\
\opn{RHom}_{\til{B}' \ot_{\til{A}} \til{B}'}(B' \ot_B B', 
M' \ot^{\mrm{L}}_{\til{A}} M')
\ar[d, "{\opn{RHom}(\opn{sec}_{B' / B} , \opn{id})}"]
&
\opn{RHom}_{\til{C}' \ot_{\til{A}} \til{C}'}(C' \ot_C C', 
N' \ot^{\mrm{L}}_{\til{A}} N')
\ar[l, "{\al}"']
\ar[d, "{\opn{RHom}(\opn{sec}_{C' / C} , \opn{id})}"']
\\
\opn{RHom}_{\til{B}' \ot_{\til{A}} \til{B}'}(B', 
M' \ot^{\mrm{L}}_{\til{A}} M')
&
\opn{RHom}_{\til{C}' \ot_{\til{A}} \til{C}'}(C', 
N' \ot^{\mrm{L}}_{\til{A}} N')
\ar[l, "{\opn{RHom}_{}(u', \th' \ot^{\mrm{L}} \th')}"']
\end{tikzcd}
\end{equation}
where
$\al := \opn{RHom}_{}(u' \ot_u u', \th' \ot^{\mrm{L}} \th')$,
and the isomorphisms marked ``$\opn{adj}$'' come from DG ring adjunctions and 
quasi-isomorphisms, like the isomorphism marked
$\iso^{\mrm{(2)}}$ in formula (\ref{eqn:1290}) in Definition \ref{dfn:1282}, 
but with the required changes. 
The top rectangular region in diagram (\ref{eqn:1466}) is commutative, since 
the adjunctions and quasi-isomorphisms involved commute. The bottom rectangular 
region in diagram (\ref{eqn:1466}) is commutative due to Lemma \ref{lem:1455}.
We conclude that diagram (\ref{eqn:1466}) is commutative; and therefore so is 
the bottom rectangular region in diagram (\ref{eqn:1463}).

We have thus proved that diagram (\ref{eqn:1463}) is commutative. Therefore the 
third diagram in (\ref{eqn:1565}) is commutative. 
\end{proof}

\section{Induced Rigidity and Rigid Forward Morphisms}
\label{sec:induced-rigidity}

Suppose we are given a noetherian ring $A$ and an essentially \'etale 
homomorphism $v : B \to C$ in $\cat{Rng} \eftover A$; 
this is Setup \ref{set:1281}. Let 
$(M, \rho) \in \cat{D}(B)_{\mrm{rig} / A}$.
The purpose of this section is to endow the complex 
$N := C \ot_B M \in \cat{D}(C)$
with the {\em induced rigidifying isomorphism} 
$\si := \opn{Ind}^{\mrm{rig}}_{v / A}(\rho)$,
and to prove that the standard nondegenerate forward morphism 
$\opn{q}^{\mrm{L}}_{v, M} : (M, \rho) \to (N, \si)$
is a {\em rigid forward morphism} over $v / A$. 
See Definitions \ref{dfn:1300} and \ref{dfn:1293} and Proposition 
\ref{prop:1700}. 

Throughout this section we adhere to Conventions \ref{conv:615} and 
\ref{conv:1070}, so by default all rings are commutative noetherian, and all 
ring homomorphisms are EFT. Moreover, we shall mostly work under the 
assumptions of Setup \ref{set:1281}.

The next theorem is the key to induced rigidity. 

\begin{thm}[Nondegeneracy of forward squaring] \label{thm:1300}
Under Setup \ref{set:1281}, let $M \in \cat{D}(B)$, let $N \in \cat{D}(C)$, 
and let $\la : M \to N$ be a nondegenerate forward morphism in $\cat{D}(B)$ 
over $v$. Assume that $M$ has finite flat dimension over $A$. Then 
\[  \opn{Sq}_{v / A}(\la) : \opn{Sq}_{B / A}(M) \to \opn{Sq}_{C / A}(N) \]
is a nondegenerate forward morphism in $\cat{D}(B)$ over $v$. 
\end{thm}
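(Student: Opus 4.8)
The plan is to reduce the statement about the forward morphism $\opn{Sq}_{v/A}(\la)$ to the already-established nondegeneracy of squaring for the standard forward morphism $\opn{q}^{\mrm{L}}_{v,M}$, exactly as Theorem \ref{thm:2031} was reduced to Lemma \ref{lem:2030}. First I would use Proposition \ref{prop:2046} together with the nondegeneracy of $\la$ to replace $N$ by $C\ot^{\mrm{L}}_B M \cong C\ot_B M$ (the $\eta^{\mrm{L}}$-morphism is an isomorphism since $v$ is flat, by Theorem \ref{thm:1140}), and to replace $\la$ by the standard nondegenerate derived forward morphism $\opn{q}^{\mrm{L}}_{v,M}$. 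Concretely, by Definition \ref{dfn:2045} and Proposition \ref{prop:2046}, whether $\opn{Sq}_{v/A}(\la)$ is nondegenerate is unchanged if we compose on the target with the isomorphism $\opn{Sq}_{C/A}$ applied to $\opn{fadj}^{\mrm{L}}_{v,M,N}(\la)$; here Theorem \ref{thm:1292} (squaring of composable forward morphisms, with the second morphism an identity interpreted via Proposition \ref{prop:1671}) and Theorem \ref{thm:1860} ensure that $\opn{Sq}_{v/A}$ behaves functorially under this composition, so it suffices to prove that $\opn{Sq}_{v/A}(\opn{q}^{\mrm{L}}_{v,M})$ is a nondegenerate forward morphism.

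Next I would identify $\opn{Sq}_{v/A}(\opn{q}^{\mrm{L}}_{v,M})$ with the standard forward morphism $\opn{q}^{\mrm{L}}_{v,\opn{Sq}_{B/A}(M)}:\opn{Sq}_{B/A}(M)\to \opn{LInd}_v(\opn{Sq}_{B/A}(M))$, up to a canonical isomorphism on the target. This is the forward-morphism analogue of the diagram $(\heartsuit)$ in Lemma \ref{lem:2030}: one shows there is a canonical isomorphism
\[ \ga : \opn{LInd}_v\bigl(\opn{Sq}_{B/A}(M)\bigr) \iso \opn{Sq}_{C/A}\bigl(\opn{LInd}_v(M)\bigr) \]
in $\cat{D}(C)$ compatible with $\opn{q}^{\mrm{L}}$ on both sides, i.e.\ such that $\ga\circ \opn{q}^{\mrm{L}}_{v,\opn{Sq}_{B/A}(M)} = \opn{Sq}_{v/A}(\opn{q}^{\mrm{L}}_{v,M})$. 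Since $\opn{q}^{\mrm{L}}_{v,\opn{Sq}_{B/A}(M)}$ is, by the very discussion following Definition \ref{dfn:2045}, the standard nondegenerate derived forward morphism, and $\ga$ is an isomorphism, it follows that $\opn{Sq}_{v/A}(\opn{q}^{\mrm{L}}_{v,M})$ is nondegenerate. To construct $\ga$ I would pick a K-flat resolution $\til C/\til B/\til A$ of the triple $C/B/A$ (using the fact that $\cat{Rng}\subset\cat{DGRng}$), write the resolved squares $\opn{Sq}^{\til B/\til A}_{B/A}(M)=\opn{RHom}_{\til B_1\ot_{\til A}\til B_2}(B,M_1\ot^{\mrm{L}}_{\til A}M_2)$, and chase the morphisms (1)--(3) in Definition \ref{dfn:1282} together with adjunction and the flatness of $v$; here the hypothesis that $M$ has finite flat dimension over $A$ is used, just as in Lemma \ref{lem:2030}, to commute $\opn{LInd}_v=C\ot_B(-)$ past $\opn{RHom}_{\til B_1\ot_{\til A}\til B_2}(B,-)$ via a (derived tensor evaluation) argument invoking Theorem \ref{thm:2115} or the relevant special case.

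The main obstacle I expect is the careful bookkeeping in this last step: one must verify that the isomorphism $\ga$ obtained via the resolved-square description is independent of the chosen K-flat resolution (this is handled by the universal-resolution trick of Theorems \ref{thm:780} and \ref{thm:1285}, i.e.\ condition $(*)$), and, more delicately, that $\ga$ genuinely intertwines $\opn{q}^{\mrm{L}}$ with $\opn{Sq}_{v/A}(\opn{q}^{\mrm{L}}_{v,M})$ rather than with some twist of it. The cleanest route is probably to avoid building $\ga$ from scratch and instead argue directly: interpret $\opn{q}^{\mrm{L}}_{v,M}=\opn{fadj}^{\mrm{L}}$-inverse of $\opn{id}$, apply $\opn{Sq}_{v/A}$, and use Theorem \ref{thm:895} (forward/backward compatibility) with $u=\opn{id}_B$ to reduce the nondegeneracy of $\opn{Sq}_{v/A}(\opn{q}^{\mrm{L}}_{v,M})$ to the statement that the reduction-to-the-diagonal morphism $\opn{red}_{C/B/A,\opn{LInd}_v(M)}$, composed with $\opn{Sq}_{B/A}(\opn{q}^{\mrm{L}}_{v,M})$, presents the $\opn{q}^{\mrm{L}}$ of $\opn{Sq}_{B/A}(M)$; this last identification is exactly the content of the resolved computation above, with the finite-flat-dimension hypothesis ensuring the relevant tensor-evaluation maps are isomorphisms.
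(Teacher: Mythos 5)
Your plan matches the paper's proof in all essential respects: you correctly reduce (via the nondegeneracy of $\la$) to the standard forward morphism $\opn{q}^{\mrm{L}}_{v,M}$ with target $N=C\ot_B M$; you correctly identify that the heart of the argument is a canonical isomorphism $\ga:\opn{Sq}_{B/A}(M)\ot^{\mrm{L}}_B C\iso\opn{Sq}_{C/A}(N)$ intertwining the two standard forward morphisms; and you correctly locate where the finite-flat-dimension hypothesis enters, namely in a derived tensor-evaluation step made possible by Theorem \ref{thm:2115}. This is exactly what the paper does: it fixes a K-flat resolution $\til C/\til B/\til A$, builds the chain of isomorphisms (1315) out of the auxiliary DG ring $\til D$ (Lemmas \ref{lem:1315}, \ref{lem:1310}, \ref{lem:1340}, \ref{lem:1341}), and reads off nondegeneracy from the commutative triangle with $\opn{q}^{\mrm{L}}_{C/B,\opn{Sq}^{\til B/\til A}_{B/A}(M)}$.

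One small caveat: your suggested ``cleanest route'' in the final paragraph — invoking Theorem \ref{thm:895} with $u=\opn{id}_B$ to avoid constructing $\ga$ — does not actually save the work. The unpacking $\opn{Sq}_{v/A}(\opn{q}^{\mrm{L}}_{v,M})=\opn{red}_{C/B/A,N}\circ\opn{Sq}_{B/A}(\opn{q}^{\mrm{L}}_{v,M})$ is Definition \ref{dfn:1290} itself, not an output of Theorem \ref{thm:895}; and Theorem \ref{thm:895} is a compatibility square, not a nondegeneracy statement, so it cannot discharge the final verification. You still need the resolved-square calculation (with the tensor-evaluation isomorphisms) to identify the target of $\opn{Sq}_{v/A}(\opn{q}^{\mrm{L}}_{v,M})$ with $\opn{LInd}_v(\opn{Sq}_{B/A}(M))$ in a $\opn{q}^{\mrm{L}}$-compatible way. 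Also, your worry about resolution-independence of $\ga$ is a non-issue in the paper's formulation: by Theorem \ref{thm:1285} it suffices to check nondegeneracy for one fixed resolved square, since $\opn{sq}^{\til B/\til A}$ and $\opn{sq}^{\til C/\til A}$ are isomorphisms.
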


We need a few lemmas before giving the proof of the theorem. 
Fix some K-flat DG ring resolution (\ref{eqn:1280}) of the triple 
of rings $C / B / A$. This choice will be used in the next lemmas and in the 
proof of the theorem.

According to  Corollary \ref{cor:1245}(2), the ring homomorphism 
$\opn{mult}_{C / B} : C \ot_B C \to C$ is a localization, and in particular it 
is flat. Given a complex $K \in \cat{D}(C \ot_{B} C)$ we have the standard 
nondegenerate forward morphism 
\[ \opn{q}^{\mrm{L}}_{\opn{mult}_{C / B}, K} = 
\opn{q}^{\mrm{L}}_{C / (C \ot_B C), K}  
: K \to K \ot_{C \ot_{B} C} C \]
in $\cat{D}(C \ot_{B} C)$ over $\opn{mult}_{C / B}$;
cf.\ (\ref{eqn:1293}). 

In what follows we are going to use Convention \ref{conv:1295} regarding the 
numbering of copies of DG rings and modules appearing in tensor products. 

\begin{lem} \label{lem:1310}
Given a DG module $L \in \cat{D}(\til{C}_1 \ot_{\til{A}} \til{C}_2)$, there is 
a unique isomorphism 
\[ \phi_L : 
\opn{RHom}_{\til{C}_1 \ot_{\til{A}} \til{C}_2}(C_0, L) \iso 
\opn{RHom}_{\til{C}_1 \ot_{\til{A}} \til{C}_2}(C_1 \ot_{B_0} C_2, L) 
\ot_{C_1 \ot_{B_0} C_2} C_0  \]
in $\cat{D}(C_1 \ot_{B_0} C_2)$, which is functorial in $L$, and makes the 
diagram 
\[ \begin{tikzcd} [column sep = 8ex, row sep = 6ex] 
\opn{RHom}_{\til{C}_1 \ot_{\til{A}} \til{C}_2}(C_1 \ot_{B_0} C_2, L) 
\ar[d, "{\opn{RHom}(\opn{sec}_{C / B}, \opn{id})}"']
\ar[dr, "{\opn{q}^{\mrm{L}}_{\opn{mult}_{C / B}, K}}"]
\\
\opn{RHom}_{\til{C}_1 \ot_{\til{A}} \til{C}_2}(C_0, L)
\ar[r, "{\phi_L}", "{\simeq}"']
&
\opn{RHom}_{\til{C}_1 \ot_{\til{A}} \til{C}_2}(C_1 \ot_{B_0} C_2, L) 
\ot_{C_1 \ot_{B_0} C_2} C_0
\end{tikzcd} \]
commutative. Here  
\[ \tag{$\star$} 
K := \opn{RHom}_{\til{C}_1 \ot_{\til{A}} \til{C}_2}(C_1 \ot_{B_0} C_2, L)
\in \cat{D}(C_1 \ot_{B_0} C_2) . \]
\end{lem}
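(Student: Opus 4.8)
The statement is essentially an ``$\mathrm{RHom}$ into a localized module'' identity. The ring homomorphism $\opn{mult}_{C / B} : C \ot_B C \to C$ is a localization by Corollary \ref{cor:1245}(2), hence flat, and moreover (by Corollary \ref{cor:1246}) it admits the canonical $(C \ot_B C)$-linear section $\opn{sec}_{C / B} : C \to C \ot_B C$, so $C$ is a direct summand of $C \ot_B C$ as a $(C \ot_B C)$-module, corresponding to an idempotent $e \in C \ot_B C$ with $C \cong (C \ot_B C)_e$. The first step is to transport this situation along the quasi-isomorphism $g : \til D \to C_1 \ot_{B_0} C_2$ from Lemma \ref{lem:1315} (and the isomorphism $g$ of formula (\ref{eqn:1510})), so that everything can be computed in $\cat D(C_1 \ot_{B_0} C_2)$; this is the analogue of the equivalence (\ref{eqn:1446}) and Remark \ref{rem:1445}, and it lets me replace $C_0$ and $C_1 \ot_{B_0} C_2$ by honest modules over that DG ring.

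Next I would construct $\phi_L$. Since $C \cong (C \ot_B C)_e$, applying the functor $\opn{RHom}_{\til{C}_1 \ot_{\til{A}} \til{C}_2}(-, L)$ to the split injection $\opn{sec}_{C / B}$ identifies $\opn{RHom}_{\til{C}_1 \ot_{\til{A}} \til{C}_2}(C_0, L)$ with the direct summand of $K := \opn{RHom}_{\til{C}_1 \ot_{\til{A}} \til{C}_2}(C_1 \ot_{B_0} C_2, L)$ cut out by the idempotent $e$ acting through the $(C_1 \ot_{B_0} C_2)$-module structure on the source argument. On the other hand, because $C_0 \cong (C_1 \ot_{B_0} C_2)_e$ is a localization, $K \ot_{C_1 \ot_{B_0} C_2} C_0 = K \ot_{C_1 \ot_{B_0} C_2} (C_1 \ot_{B_0} C_2)_e \cong K_e$ is exactly that same summand of $K$, and $\eta^{\mrm L}$ of (\ref{eqn:621}) is an isomorphism here since $C_0$ is flat (indeed a localization) over $C_1 \ot_{B_0} C_2$. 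So I define $\phi_L$ to be the composite of the inverse of $\opn{RHom}(\opn{sec}_{C / B}, \opn{id}) : K \to \opn{RHom}(C_0, L)$ restricted to summands, with the canonical isomorphism onto $K \ot_{C_1 \ot_{B_0} C_2} C_0$; equivalently, $\phi_L$ is the unique morphism making the triangle commute, where $\opn{q}^{\mrm{L}}_{\opn{mult}_{C / B}, K} : K \to K \ot_{C_1 \ot_{B_0} C_2} C_0$ is the standard nondegenerate forward morphism. Functoriality in $L$ is immediate, since every arrow in sight is natural in $L$.

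For uniqueness and commutativity of the triangle: the standard forward morphism $\opn{q}^{\mrm L}$ is, by Definition \ref{dfn:2045} and the remark following it, the \emph{standard nondegenerate} forward morphism over $\opn{mult}_{C / B}$, so $\opn{fadj}^{\mrm L}(\opn{q}^{\mrm L}) = \opn{id}$; this forces $\phi_L$ to be the unique isomorphism with $\phi_L \circ \opn{RHom}(\opn{sec}_{C / B}, \opn{id}) = \opn{q}^{\mrm{L}}_{\opn{mult}_{C / B}, K}$, which is precisely the asserted commutative triangle together with the claim that $\opn{RHom}(\opn{sec}_{C / B}, \opn{id})$ and $\opn{q}^{\mrm L}$ have a common ``localization'' description. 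The main obstacle I anticipate is purely bookkeeping: one must check that the $(C_1 \ot_{B_0} C_2)$-module structure used for the localization $(-) \ot_{C_1 \ot_{B_0} C_2} C_0$ is the one coming from the \emph{source} argument of $\opn{RHom}$ (the structure on $K$ induced from $C_1 \ot_{B_0} C_2$), and that it agrees with the action induced by $\opn{sec}_{C / B}$ and the idempotent $e$ — i.e.\ that the three potential $(C_1\ot_{B_0}C_2)$-actions in play coincide. This is the analogue of Lemma \ref{lem:2110}, and I expect it to go through by the same kind of explicit element chase, using that $C$ is annihilated by the diagonal ideal $\opn{I}_{C/B}$ and that $\opn{sec}_{C/B}$ lands in $\opn{Ann}(\opn{I}_{C/B})$ (Corollary \ref{cor:1246}). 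Once that identification is in place, the rest is formal.
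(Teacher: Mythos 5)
Your core argument matches the paper's: use the idempotent decomposition $C \ot_B C \cong C^{\mrm{diag}} \oplus C^{\mrm{od}}$ from Corollary \ref{cor:1280}, apply $\opn{RHom}_{\til{C}_1 \ot_{\til A} \til{C}_2}(-,L)$ to it, observe that both $\opn{RHom}(\opn{sec}_{C/B}, \opn{id})$ and $(-) \ot_{C_1 \ot_{B_0} C_2} C_0$ are precisely the projection onto the diagonal summand (the latter because $C_0 \cong (C_1 \ot_{B_0} C_2)_e$ is a flat localization), and deduce $\phi_L$ as the resulting comparison isomorphism; uniqueness because $\opn{RHom}(\opn{sec}_{C/B}, \opn{id})$ is a split epimorphism, equivalently by nondegeneracy of $\opn{q}^{\mrm{L}}_{\opn{mult}_{C/B}, K}$. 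This is exactly the paper's proof.

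Two small points of friction, neither fatal. First, your opening step of transporting along $g : \til D \to C_1 \ot_{B_0} C_2$ is an unnecessary detour: the complexes $\opn{RHom}_{\til{C}_1 \ot_{\til A} \til{C}_2}(C_0, L)$ and $K = \opn{RHom}_{\til{C}_1 \ot_{\til A} \til{C}_2}(C_1 \ot_{B_0} C_2, L)$ already carry their $(C_1 \ot_{B_0} C_2)$-module structures through the first argument, so no passage via $\til D$ or the equivalence (\ref{eqn:1446}) is needed, and the paper's proof does not invoke it. Second, the ``main obstacle'' you flag does not actually arise. There is only one $(C_1 \ot_{B_0} C_2)$-action on $K$ — the one through the first argument — and the localization $K \ot_{C_1 \ot_{B_0} C_2} C_0$, the idempotent decomposition, and the map $\opn{RHom}(\opn{sec}_{C/B}, \opn{id})$ are all computed with respect to it; $\opn{RHom}(\opn{sec}_{C/B}, \opn{id})$ is $(C_1 \ot_{B_0} C_2)$-linear simply because $\opn{sec}_{C/B}$ is $(C \ot_B C)$-linear. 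No analogue of Lemma \ref{lem:2110} is needed here, and the paper disposes of the issue with the single remark that $C_1 \ot_{B_0} C_2$ acts on $K$ through the first argument of $\opn{RHom}$.
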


\begin{proof}
By Corollary \ref{cor:1280} there is a decomposition
$C_1 \ot_B C_2 = C^{\mrm{diag}} \oplus C^{\mrm{od}}$ of the ring 
$C_1 \ot_B C_2 = C_1 \ot_{B_0} C_2$ into a direct sum of 
ideals, and these ideals are idempotent and perpendicular to each other. In 
fact the ideals $C^{\mrm{diag}}, C^{\mrm{od}} \sub C \ot_B C$ are generated idempotent 
elements $e, e^{\mrm{od}}$ respectively, which satisfy
$e + e^{\mrm{od}} = 1$ and $e \cd e^{\mrm{od}} = 0$.
This implies that when we view $C^{\mrm{diag}}$ and $C^{\mrm{od}}$ as projective 
$(C \ot_B C)$-modules, for every complex 
$K' \in \cat{D}(C \ot_{B} C)$
there is a decomposition
\begin{equation} \label{eqn:1421}
K' \cong (K' \ot_{C \ot_{B} C} C) \oplus 
(K' \ot_{C \ot_{B} C} C^{\mrm{od}}) .
\end{equation}
Furthermore $C \ot_{C \ot_{B} C} C = C$ 
and $C^{\mrm{od}} \ot_{C \ot_{B} C} C = 0$. 

Let's apply the decomposition (\ref{eqn:1421}) to the complex $K' := K$ from 
formula ($\star$), remembering that $C_1 \ot_{B_0} C_2$ acts on $K$ through 
itself (sitting in the first argument of $\opn{RHom}$). We get 
\[ \begin{aligned}
&
\opn{RHom}_{\til{C}_1 \ot_{\til{A}} \til{C}_2}(C, L) = 
\opn{RHom}_{\til{C}_1 \ot_{\til{A}} \til{C}_2} (C^{\mrm{diag}} \oplus C^{\mrm{od}}, L) 
\\
& \quad 
\cong \opn{RHom}_{\til{C}_1 \ot_{\til{A}} \til{C}_2}(C^{\mrm{diag}}, L) \oplus
\opn{RHom}_{\til{C}_1 \ot_{\til{A}} \til{C}_2}(C^{\mrm{od}}, L) . 
\end{aligned} \]
The operation 
$(-) \ot_{C_1 \ot_{B_0} C_2} C$ kills the second direct summand above, and 
retains the first direct summand unchanged. But this is precisely what the 
operation 
$\opn{RHom}(\opn{sec}_{C / B}, \opn{id}_L)$ does.

The uniqueness of $\phi_L$ is because 
$\opn{q}^{\mrm{L}}_{\opn{mult}_{C / B}, K}$
is a nondegenerate forward morphism. 
\end{proof}

\begin{lem} \label{lem:1340}
There is an isomorphism 
\[ \tag{\dag} 
\begin{aligned}
&
\opn{RHom}_{\til{B}_1 \ot_{\til{A}} \til{B}_2} 
(B_0, M_1 \ot_{\til{A}}^{\mrm{L}} M_2) \ot^{\mrm{L}}_{B_0} C_0
\\
& \quad \iso  
\bigl( \opn{RHom}_{\til{B}_1 \ot_{\til{A}} \til{B}_2} 
(B_0, M_1 \ot_{\til{A}}^{\mrm{L}} M_2)
\ot^{\mrm{L}}_{\til{B}_1 \ot_{\til{A}} \til{B}_2} 
(\til{C}_1 \ot_{\til{A}} \til{C}_2) \bigr) 
\ot^{\mrm{L}}_{\til{D}} C_0 \end{aligned} \]
in $\cat{D}(\til{D})$.
Here $\til{D}$ acts on the first object in ($\dag$) via the obvious DG ring 
homomorphism 
$\til{D} \to C_0 \ot_{\til{B}_1 \ot_{\til{A}} \til{B}_2} B_0$
and the actions of $B_0$ and $C_0$ on themselves; and $\til{D}$ acts on the 
second object  in ($\dag$) via $B_0$ and $\til{C}_1 \ot_{\til{A}} \til{C}_2$.
Moreover, after passing to $\cat{D}(B)$ by the restriction 
functor $\opn{Rest}_{\til{D} / B}$, 
the isomorphism \tup{($\dag$)} commutes with the morphisms from
$\opn{Sq}_{B / A}^{\til{B} / \til{A}}(M)$. 
\end{lem}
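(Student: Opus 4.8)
The plan is to establish Lemma~\ref{lem:1340} by a straightforward manipulation of derived tensor products and adjunctions, without introducing any new machinery. The isomorphism ($\dag$) is essentially an associativity statement for $\opn{LInd}$ along the two DG ring homomorphisms $\til{B}_1\ot_{\til{A}}\til{B}_2\to\til{C}_1\ot_{\til{A}}\til{C}_2$ and $\til{B}_1\ot_{\til{A}}\til{B}_2\to B_0$, whose tensor product over $\til{B}_1\ot_{\til{A}}\til{B}_2$ is exactly $\til{D}$ (by definition (\ref{eqn:1320})). So first I would set $E:=\opn{RHom}_{\til{B}_1\ot_{\til{A}}\til{B}_2}(B_0,M_1\ot_{\til{A}}^{\mrm{L}}M_2)$, viewed as an object of $\cat{D}(B_0)$ via the action on the first argument. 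Then I would observe that $C_0$ is a ring, hence $E\ot^{\mrm{L}}_{B_0}C_0=\opn{LInd}_{B_0\to C_0}(E)$, and use the base-change identity
\[
C_0\cong \til{D}\ot_{\til{C}_1\ot_{\til{A}}\til{C}_2}(\til{C}_1\ot_{\til{A}}\til{C}_2)\cong (\til{C}_1\ot_{\til{A}}\til{C}_2)\ot_{\til{B}_1\ot_{\til{A}}\til{B}_2}B_0,
\]
together with the isomorphism (\ref{eqn:703}), to rewrite $C_0\cong \til{C}_1\ot_{\til{A}}\til{C}_2\ot_{\til{B}_1\ot_{\til{A}}\til{B}_2}B_0$ as a DG $\til{D}$-ring.

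The key computational step is then a chain of canonical isomorphisms in $\cat{D}(\til{D})$:
\[
E\ot^{\mrm{L}}_{B_0}C_0
\cong E\ot^{\mrm{L}}_{B_0}\bigl((\til{C}_1\ot_{\til{A}}\til{C}_2)\ot_{\til{B}_1\ot_{\til{A}}\til{B}_2}^{\mrm{L}}B_0\bigr)
\cong \bigl(E\ot^{\mrm{L}}_{\til{B}_1\ot_{\til{A}}\til{B}_2}(\til{C}_1\ot_{\til{A}}\til{C}_2)\bigr)\ot^{\mrm{L}}_{\til{D}}C_0,
\]
where the middle step uses flatness of $\til{C}$ over $\til{B}$ (so that $(\til{C}_1\ot_{\til{A}}\til{C}_2)\ot_{\til{B}_1\ot_{\til{A}}\til{B}_2}B_0$ computes the derived tensor product), and the last step is the standard associativity of derived tensor product over a pushout of DG rings, together with the identification of $\til{D}$ as $(\til{C}_1\ot_{\til{A}}\til{C}_2)\ot_{\til{B}_1\ot_{\til{A}}\til{B}_2}\til{B}_0$ followed by reduction modulo $\til{B}\to B$ (here one uses that $E$ is already a DG $B_0$-module, so the extra tensor factor $\til{B}_0\to B_0$ is harmless). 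I would be careful to track which of the three middle-position DG rings ($\til{B}_0$, $B_0$, $\til{D}$) acts via which structure map, using Convention~\ref{conv:1295} for the subscript bookkeeping; the identification of the two stated $\til{D}$-module structures is then a matter of checking they both restrict to the same $B$-action and the same $(\til{C}_1\ot_{\til{A}}\til{C}_2)$-action, which pins down the $\til{D}$-action since $\til{D}$ is generated by these.

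For the final clause — compatibility with the morphisms to $\opn{Sq}_{B/A}^{\til{B}/\til{A}}(M)=E$ under $\opn{Rest}_{\til{D}/B}$ — I would note that both objects in ($\dag$) come with a canonical morphism to $E$ in $\cat{D}(B)$: on the left, the standard nondegenerate forward morphism $\opn{q}^{\mrm{L}}_{B_0\to C_0,E}$ composed with... wait, more precisely, the morphism $E\ot^{\mrm{L}}_{B_0}C_0\to E\ot_{B_0}B_0=E$ induced by $\opn{mult}$; on the right, the corresponding morphism through $\til{D}\to B$. Since every isomorphism in the chain above is built from adjunctions and flat base change, each of which manifestly commutes with the structural augmentations to $E$, the composite isomorphism ($\dag$) does too; this is the kind of "rather easy calculation" already invoked in the proof of Lemma~\ref{lem:2030}, step~4, and I would phrase it the same way. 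The main obstacle, such as it is, will be purely notational: keeping the four tensor factors and their three possible "middle" DG rings straight so that the reader can verify the $\til{D}$-module structures agree — there is no real mathematical content beyond associativity of $\ot^{\mrm{L}}$ and flatness of $\til{C}/\til{B}$, both of which are available from Section~\ref{sec:recall-dg}.
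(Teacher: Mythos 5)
Your overall strategy is right — associativity of $\ot^{\mrm{L}}$, K-flatness of $\til{C}$ over $\til{B}$, and the factorization of $B\to C$ through $\til{D}$ are indeed what drives the argument, which is essentially the route the paper takes. But there is a genuine error in the ``base-change identity''. You assert
\[
C_0\cong (\til{C}_1\ot_{\til{A}}\til{C}_2)\ot_{\til{B}_1\ot_{\til{A}}\til{B}_2}B_0 .
\]
The right-hand side is exactly $\til{D}$, and Lemma~\ref{lem:1315} tells you that $\til{D}$ is quasi-isomorphic to $C_1\ot_{B_0}C_2=C\ot_B C$, \emph{not} to $C$. Since $v$ is essentially \'etale, $C\ot_B C\cong C\times C^{\mrm{od}}$ (Corollary~\ref{cor:1280}), and in general $C^{\mrm{od}}\neq 0$; the only case where your claimed identification holds is when $v$ is an epimorphism of rings (e.g.\ a localization). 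Consequently the middle term of your chain, $E\ot^{\mrm{L}}_{B_0}\til{D}$, is not isomorphic to $E\ot^{\mrm{L}}_{B_0}C_0$: the first arrow of your chain fails, and the second arrow, read literally, would require $(-)\ot^{\mrm{L}}_{\til{D}}C_0$ to be the identity functor, which it isn't. The two errors happen to produce the correct final object, which may be why the slip is easy to miss.

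The correct move — and what the paper actually does in (\ref{eqn:1426}) — is not to \emph{identify} $C_0$ with $\til{D}$, but to \emph{insert} $\til{D}$ using the factorization $B_0\to\til{D}\to C_0$ of diagram~(\ref{eqn:1450}):
\[
E\ot^{\mrm{L}}_{B_0}C_0 \ \iso\ E\ot^{\mrm{L}}_{B_0}\til{D}\ot^{\mrm{L}}_{\til{D}}C_0\ \iso\ E\ot_{B_0}\til{D}\ot^{\mrm{L}}_{\til{D}}C_0 ,
\]
where the second isomorphism uses that $\til{D}$ is K-flat over $B_0$. After that one expands the definition of $\til{D}$, cancels the factor $\ot_{B_0}B_0$, and uses K-flatness of $\til{C}_1\ot_{\til{A}}\til{C}_2$ over $\til{B}_1\ot_{\til{A}}\til{B}_2$ to restore the derived tensor, arriving at the right-hand side of ($\dag$). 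The compatibility clause at the end of your write-up is fine in principle once the chain is corrected: every step there is an associativity or a flatness replacement and visibly respects the augmentations to $E$.
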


\begin{proof}
There is a commutative diagram of DG rings 
\begin{equation} \label{eqn:1450}
\begin{tikzcd} [column sep = 8ex, row sep = 5ex] 
B_0
\ar[rrr, bend left = 18, start anchor = north east, end anchor = north west,
"{v}"]
\ar[r]
&
\til{D}
\ar[r, "{g}"]
&
C_1 \ot_{B_0} C_2
\ar[r, "{\opn{mult}_{C / B} \, }"]
&
C_0
\end{tikzcd} 
\end{equation}
This tells us that the homomorphism $v : B \to C$ factors through $\til{D}$. 

Consider the sequence of isomorphisms 
\begin{equation} \label{eqn:1426}
\begin{aligned}
&
\opn{RHom}_{\til{B}_1 \ot_{\til{A}} \til{B}_2} 
(B_0, M_1 \ot_{\til{A}}^{\mrm{L}} M_2) \ot^{\mrm{L}}_{B_0} C_0
\\ 
& \quad \iso^{\mrm{(1)}}
\opn{RHom}_{\til{B}_1 \ot_{\til{A}} \til{B}_2} 
(B_0, M_1 \ot_{\til{A}}^{\mrm{L}} M_2) \ot^{}_{B_0} 
\til{D}_0 \ot^{\mrm{L}}_{\til{D}_0} C_0 
\\ 
& \quad \iso^{\mrm{(2)}}
\bigl( \opn{RHom}_{\til{B}_1 \ot_{\til{A}} \til{B}_2} 
(B_0, M_1 \ot_{\til{A}}^{\mrm{L}} M_2) \ot^{}_{B_0} 
\bigl( B_0 \ot^{}_{\til{B}_1 \ot_{\til{A}} \til{B}_2} 
(\til{C}_1 \ot_{\til{A}} \til{C}_2) \bigr) \bigr)
\ot^{\mrm{L}}_{\til{D}_0} C_0
\\ 
& \quad \iso^{\mrm{(3)}}
\bigl( \opn{RHom}_{\til{B}_1 \ot_{\til{A}} \til{B}_2} 
(B_0, M_1 \ot_{\til{A}}^{\mrm{L}} M_2) 
\ot^{}_{\til{B}_1 \ot_{\til{A}} \til{B}_2} 
(\til{C}_1 \ot_{\til{A}} \til{C}_2) \bigr)
\ot^{\mrm{L}}_{\til{D}_0} C_0 
\\ 
& \quad \iso^{\mrm{(4)}}
\bigl( \opn{RHom}_{\til{B}_1 \ot_{\til{A}} \til{B}_2} 
(B_0, M_1 \ot_{\til{A}}^{\mrm{L}} M_2) 
\ot^{\mrm{L}}_{\til{B}_1 \ot_{\til{A}} \til{B}_2} 
(\til{C}_1 \ot_{\til{A}} \til{C}_2) \bigr)
\ot^{\mrm{L}}_{\til{D}_0} C_0 
\end{aligned}
\end{equation}
in $\cat{D}(\til{D})$. 
The isomorphism $\iso^{\mrm{(1)}}$ is because the homomorphism $v : B \to C$ 
factors through $\til{D}$. The operation 
$\ot^{\mrm{L}}_{B_0} \til{D}$ can be replaced by $\ot^{}_{B_0} \til{D}$, 
because $\til{D}_0$ is K-flat over $B_0$. 
The isomorphism $\iso^{\mrm{(2)}}$ consists of expanding the DG ring 
$\til{D}$ according to its definition (see formula (\ref{eqn:1320})), with a 
permutation of the tensor factors. 
In the isomorphism $\iso^{\mrm{(3)}}$ we cancel the term 
$\ot^{}_{B_0} B_0$. Finally, in isomorphism $\iso^{\mrm{(4)}}$ 
we replace the operation 
$\ot^{}_{\til{B}_1 \ot_{\til{A}} \til{B}_2} 
(\til{C}_1 \ot_{\til{A}} \til{C}_2)$
with 
$\ot^{\mrm{L}}_{\til{B}_1 \ot_{\til{A}} \til{B}_2} 
(\til{C}_1 \ot_{\til{A}} \til{C}_2)$,
and this is permitted because $\til{C}_1 \ot_{\til{A}} \til{C}_2$ is K-flat 
over $\til{B}_1 \ot_{\til{A}} \til{B}_2$. 
The isomorphism ($\dag$) is defined to be the composition of the isomorphisms 
in (\ref{eqn:1426}). 

It is plain to see that the isomorphisms in (\ref{eqn:1426}) respect the 
morphisms from 
\[ \opn{Sq}_{B / A}^{\til{B} / \til{A}}(M) = 
\opn{RHom}_{\til{B} \ot_{\til{A}} \til{B}}
(B_0, M_1 \ot^{\mrm{L}}_{\til{A}} M_2) . \qedhere \]
\end{proof}

\begin{lem} \label{lem:1341}
There is an isomorphism 
\[ \tag{\ddag} 
\begin{aligned}
& 
\opn{RHom}_{\til{B}_1 \ot_{\til{A}} \til{B}_2} 
(B_0, M_1 \ot_{\til{A}}^{\mrm{L}} M_2)
\ot^{\mrm{L}}_{\til{B}_1 \ot_{\til{A}} \til{B}_2} (\til{C}_1 \ot_{\til{A}} 
\til{C}_2)
\\
& \quad 
\iso \opn{RHom}_{\til{B}_1 \ot_{\til{A}} \til{B}_2} 
(B_0, N'_1 \ot_{\til{A}}^{\mrm{L}} N'_2) 
\end{aligned} \]
in $\cat{D}(\til{D})$.
Here $N' := C \ot^{\mrm{L}}_{B} M \in \cat{D}(C)$. The DG ring 
$\til{D}$ acts on the first object via $B_0$ and 
$\til{C}_1 \ot_{\til{A}} \til{C}_2$, and on the second object via $B_0$ and 
$N'_1 \ot_{\til{A}}^{\mrm{L}} N'_2$.
Moreover, after passing to $\cat{D}(B)$ by the restriction functor 
$\opn{Rest}_{\til{D} / B_0}$, 
the isomorphism \tup{($\ddag$)} commutes with the morphisms from
$\opn{Sq}_{B / A}^{\til{B} / \til{A}}(M)$. 
\end{lem}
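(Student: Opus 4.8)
\textbf{Plan of proof for Lemma \ref{lem:1341}.}

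The plan is to build the isomorphism \tup{($\ddag$)} as a derived tensor-evaluation type manipulation, using the fact that $\til C_1 \ot_{\til A} \til C_2$ is K-flat over $\til B_1 \ot_{\til A} \til B_2$, together with the decomposition $\til C_1 \ot_{\til A}\til C_2 \cong (\til C_1 \ot_{\til B_1} \til B_1) \ot_{\til A} (\til B_2 \ot_{\til B_2} \til C_2)$ obtained from the formula \eqref{eqn:1485}. Concretely, first I would observe that by formula \eqref{eqn:719} (for the quasi-isomorphism $\til B \to B$ or, more to the point, for the K-flat homomorphism $\til B_i \to \til C_i$) one has $N'_i = C_i \ot^{\mrm L}_{B_i} M_i \cong \til C_i \ot^{\mrm L}_{\til B_i} M_i$ in $\cat D(\til C_i)$, and hence $N'_1 \ot^{\mrm L}_{\til A} N'_2 \cong (\til C_1 \ot^{\mrm L}_{\til B_1} M_1) \ot^{\mrm L}_{\til A} (\til C_2 \ot^{\mrm L}_{\til B_2} M_2)$ as objects of $\cat D(\til C_1 \ot_{\til A}\til C_2)$. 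Then I would rewrite the right-hand side of \eqref{eqn:1485} so that
\[ (\til C_1 \ot^{\mrm L}_{\til B_1} M_1) \ot^{\mrm L}_{\til A} (\til C_2 \ot^{\mrm L}_{\til B_2} M_2) \cong (M_1 \ot^{\mrm L}_{\til A} M_2) \ot^{\mrm L}_{\til B_1 \ot_{\til A}\til B_2} (\til C_1 \ot_{\til A}\til C_2) , \]
which is the content of formula \eqref{eqn:1485} (with the roles of the $B_i$'s and $C_i$'s played by $\til B_i$ and $\til C_i$, and $M_i$, $\til C_i$ as the modules), combined with the base-change isomorphism for derived tensor products of DG rings.

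Next I would apply the derived tensor-evaluation morphism $\opn{ev}^{\mrm R, \mrm L}$ of Theorem \ref{thm:2115} for the DG ring homomorphism $\til B_1 \ot_{\til A}\til B_2 \to \til C_1 \ot_{\til A} \til C_2$, in the form
\[ \opn{RHom}_{\til B_1 \ot_{\til A}\til B_2}(B_0, M_1 \ot^{\mrm L}_{\til A} M_2) \ot^{\mrm L}_{\til B_1 \ot_{\til A}\til B_2} (\til C_1 \ot_{\til A}\til C_2) \to \opn{RHom}_{\til B_1 \ot_{\til A}\til B_2}\bigl(B_0, (M_1 \ot^{\mrm L}_{\til A} M_2) \ot^{\mrm L}_{\til B_1 \ot_{\til A}\til B_2} (\til C_1 \ot_{\til A}\til C_2)\bigr). \]
For this to be an isomorphism I must check the three hypotheses of Theorem \ref{thm:2115} with $A := \til B_1 \ot_{\til A}\til B_2$, $L := B_0$, and $N := \til C_1 \ot_{\til A}\til C_2$: the DG $(\til B_1 \ot_{\til A}\til B_2)$-module $B_0$ is derived pseudo-finite (the Koszul/semi-free resolution of $B$ over $B^{\mrm{en}}$, lifted to the resolutions, shows this — it is pseudo-finite semi-free, cf.\ the proof of Lemma \ref{lem:1436}); $M_1 \ot^{\mrm L}_{\til A} M_2$ has bounded-below (indeed bounded, under the finiteness conventions) cohomology; and $\til C_1 \ot_{\til A}\til C_2$ is K-flat, hence has flat concentration $\{0\}$, over $\til B_1 \ot_{\til A}\til B_2$. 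Here I would be careful that Theorem \ref{thm:2115} is literally about $A \ot_{\Z} B$-modules, so I would invoke Remark \ref{rem:2115} (nonpositive noncommutative version) or simply note that in our situation everything factors through honest commutative DG rings, so the commutative statement applies after identifying $B_0$ with the relevant $A\ot_{\Z} B$-module structure.

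Finally I would compose: \tup{($\ddag$)} is the composite of the tensor-evaluation isomorphism just established, then the isomorphism identifying $(M_1 \ot^{\mrm L}_{\til A} M_2) \ot^{\mrm L}_{\til B_1 \ot_{\til A}\til B_2}(\til C_1 \ot_{\til A}\til C_2) \cong N'_1 \ot^{\mrm L}_{\til A} N'_2$ via \eqref{eqn:1485} and the base-change identities above, landing in $\opn{RHom}_{\til B_1 \ot_{\til A}\til B_2}(B_0, N'_1 \ot^{\mrm L}_{\til A} N'_2)$. The $\til D$-equivariance is built in, since each step is a morphism in the appropriate derived category and the $\til D$-action on both sides is through the undecorated copies of $B_0$ and $\til C_1 \ot_{\til A}\til C_2$ (resp.\ $N'_1 \ot^{\mrm L}_{\til A} N'_2$); I would just record the restriction $\opn{Rest}_{\til D / B_0}$ and note that the evaluation morphism and the associativity/base-change isomorphisms are all functorial, so they commute with the canonical morphisms from $\opn{Sq}^{\til B / \til A}_{B / A}(M) = \opn{RHom}_{\til B \ot_{\til A}\til B}(B_0, M_1 \ot^{\mrm L}_{\til A} M_2)$ (the latter induced by the structural maps $\til B_i \to \til C_i$ and $B \to N'$). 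The main obstacle I anticipate is bookkeeping: getting the tensor-factor positions and the $\til D$-actions to line up exactly as stated, and verifying — without hand-waving — that $B_0$ genuinely satisfies the derived pseudo-finiteness hypothesis of Theorem \ref{thm:2115} over $\til B_1 \ot_{\til A}\til B_2$ in this relative (triple) setting; that is where the choice of a pseudo-finite semi-free resolution $\til C \to C$ from Lemma \ref{lem:605}, or rather an analogous statement for $B$ over $\til B \ot_{\til A}\til B$, will be invoked.
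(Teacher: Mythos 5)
Your construction of the morphism is the same as the paper's: you compose the derived tensor-evaluation morphism with the application of $\opn{RHom}_{\til{B}_1 \ot_{\til{A}} \til{B}_2}(B_0, -)$ to the reshuffling isomorphism (an instance of \eqref{eqn:1485} in its derived form, followed by replacing $\til{B}_i, \til{C}_i$ with $B_i, C_i$ via the quasi-isomorphisms and \eqref{eqn:719}). To prove it is an isomorphism, the paper restricts to $\cat{D}(\Z)$, factors the tensor by $\til{C}_1 \ot_{\til{A}} \til{C}_2$ into the two one-sided tensors $(-)\ot^{\mrm{L}}_{\til{B}_1} C_1$ and then $(-)\ot_{\til{B}_2} C_2$, and applies Theorem \ref{thm:2115} twice exactly as in Lemma \ref{lem:1436}; condition (iii) there is the easy statement that $C$, a flat $B$-module concentrated in degree $0$, has flat concentration $\{0\}$ over $\til{B}$. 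You instead apply Theorem \ref{thm:2115} once in the degenerate case $A = B = \til{B}_1 \ot_{\til{A}}\til{B}_2$, with $L = B_0$ and $N = \til{C}_1 \ot_{\til{A}}\til{C}_2$. That one-shot route is slightly more economical and can be made to work, but as written it has a genuine gap.

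Your verification of condition (iii) is incorrect: K-flatness of $\til{C}_1 \ot_{\til{A}}\til{C}_2$ over $\til{B}_1 \ot_{\til{A}}\til{B}_2$ does \emph{not} imply flat concentration $\{0\}$. For example $A \oplus A[1]$ (with zero differential) is K-flat over $A$ but has flat concentration $[-1, 0]$. K-flatness only kills the distinction between $\ot^{\mrm{L}}$ and $\ot$; it says nothing by itself about where the cohomology lands. What you actually need, and what is true here, is that $\til{B}_1 \ot_{\til{A}}\til{B}_2 \to B \ot_A B$ and $\til{C}_1 \ot_{\til{A}}\til{C}_2 \to C \ot_A C$ are quasi-isomorphisms (using K-flatness of $\til{B} / \til{A}$ and $\til{C} / \til{B}$ as in \cite[Proposition 2.6(1)]{Ye4}), and that $C \ot_A C$ is a flat $(B \ot_A B)$-module concentrated in degree $0$ because $B \to C$ is flat; transporting along the restriction equivalence $\cat{D}(B \ot_A B) \to \cat{D}(\til{B}_1 \ot_{\til{A}}\til{B}_2)$, which respects derived tensor, then gives flat concentration $\{0\}$. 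Once this is repaired your argument is sound. A minor additional nit: the derived pseudo-finiteness of $B_0$ over $\til{B}_1 \ot_{\til{A}}\til{B}_2$ does not rest on Lemma \ref{lem:605} (which produces a resolution of $\til{C}$ over $\til{B}$); it is the simpler observation, as in the proof of Lemma \ref{lem:1436}, that $\til{B}_1 \ot_{\til{A}}\til{B}_2$ is cohomologically pseudo-noetherian and $B_0 \in \cat{D}^{-}_{\mrm{f}}(\til{B}_1 \ot_{\til{A}}\til{B}_2)$.
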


\begin{proof}
The morphism  (\ddag) is the composition of the derived tensor-evaluation 
morphism 
\[ \begin{aligned}
& 
\opn{ev}^{\mrm{R, L}} : 
\opn{RHom}_{\til{B}_1 \ot_{\til{A}} \til{B}_2} 
(B_0, M_1 \ot_{\til{A}}^{\mrm{L}} M_2)
\ot^{\mrm{L}}_{\til{B}_1 \ot_{\til{A}} \til{B}_2} (\til{C}_1 \ot_{\til{A}} 
\til{C}_2)
\\
& \quad \to
\opn{RHom}_{\til{B}_1 \ot_{\til{A}} \til{B}_2} 
\bigl( B_0, (M_1 \ot_{\til{A}}^{\mrm{L}} M_2)
\ot^{\mrm{L}}_{\til{B}_1 
\ot_{\til{A}} \til{B}_2} (\til{C}_1 \ot_{\til{A}} \til{C}_2) \bigr)
\end{aligned} \]
from \cite[Theorem 12.10.14]{Ye5}
with the application of 
$\opn{RHom}_{\til{B}_1 \ot_{\til{A}} \til{B}_2}(B_0, -)$
to the obvious isomorphisms 
\[ \begin{aligned}
&
(M_1 \ot_{\til{A}}^{\mrm{L}} M_2)
\ot^{\mrm{L}}_{\til{B}_1 
\ot_{\til{A}} \til{B}_2} (\til{C}_1 \ot_{\til{A}} \til{C}_2)
\\ & \quad 
\iso (M_1 \ot_{\til{B}_1}^{\mrm{L}} \til{C}_1) \ot^{\mrm{L}}_{\til{A}} 
(M_2 \ot_{\til{B}_2}^{\mrm{L}} \til{C}_2)
\iso N'_1 \ot_{\til{A}}^{\mrm{L}} N'_2
\end{aligned} \]
in $\cat{D}(\til{B}_1 \ot_{\til{A}} \til{B}_2)$. 

It is clear that the restriction of the morphism ($\ddag$) to $\cat{D}(B)$ 
commutes with the morphisms from $\opn{Sq}_{B / A}^{\til{B} / \til{A}}(M)$. 

To prove that ($\ddag$) is an isomorphism, we can pass to the category 
$\cat{D}(\Z)$ using the restriction functor. 
There is an obvious isomorphism 
\begin{equation} \label{eqn:1451}
\begin{aligned}
&
\opn{RHom}_{\til{B}_1 \ot_{\til{A}} \til{B}_2} 
(B_0, M_1 \ot_{\til{A}}^{\mrm{L}} M_2)
\ot^{\mrm{L}}_{\til{B}_1 \ot_{\til{A}} \til{B}_2} (\til{C}_1 \ot_{\til{A}} 
\til{C}_2)
\\ & \quad 
\iso \bigl( \opn{RHom}_{\til{B}_1 \ot_{\til{A}} \til{B}_2} 
(B_0, M_1 \ot_{\til{A}}^{\mrm{L}} M_2)
\ot^{\mrm{L}}_{\til{B}_1} C_1 \bigr) \ot_{\til{B}_2} C_2
\end{aligned}
\end{equation}
in $\cat{D}(\Z)$; see formula (\ref{eqn:1486}). 
The arguments used in the proof of Lemma \ref{lem:1436} 
can be used here as well, 
to show that the two derived tensor-evaluation morphisms 
\[ \begin{aligned}
&
\bigl( \opn{RHom}_{\til{B}_1 \ot_{\til{A}} \til{B}_2} 
(B_0, M_1 \ot_{\til{A}}^{\mrm{L}} M_2)
\ot^{\mrm{L}}_{\til{B}_1} C_1 \bigr) \ot_{\til{B}_2} C_2
\\ & \quad 
\to 
\opn{RHom}_{\til{B}_1 \ot_{\til{A}} \til{B}_2} 
\bigl( B_0, (M_1 \ot^{\mrm{L}}_{\til{B}_1} C_1) \ot_{\til{A}}^{\mrm{L}} M_2 
\bigr) \ot_{\til{B}_2} C_2
\\ & \quad 
\to 
\opn{RHom}_{\til{B}_1 \ot_{\til{A}} \til{B}_2} 
\bigl( B_0, (M_1 \ot_{\til{B}_1} C_1)
\ot_{\til{A}}^{\mrm{L}} (M_2 \ot^{\mrm{L}}_{\til{B}_2} C_2) \bigr) 
\end{aligned} \]
are isomorphisms in $\cat{D}(\Z)$.
\end{proof}

\begin{proof}[Proof of Theorem \tup{\ref{thm:1300}}]
Define the morphism
\[ \opn{Sq}_{C / B / A}^{\til{C} / \til{B} / \til{A}}(\la) := 
\opn{red}_{C / B / A, N}^{\til{C} / \til{B} / \til{A}} \circ 
\opn{Sq}_{B / A}^{\til{B} / \til{A}}(\la) : 
\opn{Sq}_{B / A}^{\til{B} / \til{A}}(M) \to 
\opn{Sq}_{C / A}^{\til{C} / \til{A}}(N) \]
in $\cat{D}(B)$. It suffices to prove that 
$\opn{Sq}_{C / B / A}^{\til{C} / \til{B} / \til{A}}(\la)$
is a nondegenerate forward morphism over $v = C / B$. 
Because $\la$ is a nondegenerate forward morphism, we may assume that 
$N = M \ot_B C$ and $\la = \opn{q}^{\mrm{L}}_{C / B, M}$;
cf.\ diagram (\ref{eqn:1321}). 
Thus what we need to prove is that the forward morphism
\begin{equation} \label{eqn:1350}
\opn{Sq}_{C / B / A}^{\til{C} / \til{B} / \til{A}} 
(\opn{q}^{\mrm{L}}_{C / B, M}) : 
\opn{Sq}_{B / A}^{\til{B} / \til{A}}(M) \to 
\opn{Sq}_{C / A}^{\til{C} / \til{A}}(M \ot^{\mrm{L}}_B C)
\end{equation}
in $\cat{D}(C)$ over $C / B$ is nondegenerate. 

Consider the following sequence of isomorphisms in $\cat{D}(C)$, in which the 
DG ring $\til{D}$ from (\ref{eqn:1320}) is used. 
\begin{equation} \label{eqn:1315}
\begin{aligned}
& 
\opn{Sq}_{B / A}^{\til{B} / \til{A}}(M) \ot^{\mrm{L}}_{B_0} C_0
= \opn{RHom}_{\til{B}_1 \ot_{\til{A}} \til{B}_2}
\bigl( B_0, M_1 \ot_{\til{A}}^{\mrm{L}} M_2 \bigr) \ot^{\mrm{L}}_{B_0} C_0
\\
& \quad 
\iso^{\mrm{(1)}} 
\opn{RHom}_{\til{B}_1 \ot_{\til{A}} \til{B}_2} 
\bigl( B_0, M_1 \ot_{\til{A}}^{\mrm{L}} M_2 \bigr) 
\ot^{\mrm{L}}_{\til{B}_1 \ot_{\til{A}} \til{B}_2} (\til{C}_1 \ot_{\til{A}} 
\til{C}_2)
\ot^{\mrm{L}}_{\til{D}_0} C_0
\\
& \quad 
\iso^{\mrm{(2)}} 
\opn{RHom}_{\til{B}_1 \ot_{\til{A}} \til{B}_2} 
(B_0, N_1 \ot_{\til{A}}^{\mrm{L}} N_2) \ot_{\til{D}_0}^{\mrm{L}} C_0
\\
& \quad 
\iso^{\mrm{(3)}} 
\opn{RHom}_{\til{C}_1 \ot_{\til{A}} \til{C}_2} 
\bigl( (\til{C}_1 \ot_{\til{A}} \til{C}_2) \ot_{\til{B}_1 \ot_{\til{A}} 
\til{B}_2} B_0,
N_1 \ot_{\til{A}}^{\mrm{L}} N_2 \bigr) \ot_{\til{D}_0}^{\mrm{L}} C_0
\\
& \quad 
\iso^{\mrm{(4)}} 
\opn{RHom}_{\til{C}_1 \ot_{\til{A}} \til{C}_2} 
\bigl( C_1 \ot_{B_0} C_2, N_1 \ot_{\til{A}}^{\mrm{L}} N_2 \bigr) 
\ot_{\til{D}_0}^{\mrm{L}} C_0
\\
& \quad 
\iso^{\mrm{(5)}} 
\opn{RHom}_{\til{C}_1 \ot_{\til{A}} \til{C}_2} 
\bigl( C_1 \ot_{B_0} C_2, N_1 \ot_{\til{A}}^{\mrm{L}} N_2 \bigr) 
\ot^{\mrm{L}}_{C_1 \ot_{B_0} C_2} C_0
\\
& \quad 
\iso^{\mrm{(6)}} 
\opn{RHom}_{\til{C}_1 \ot_{\til{A}} \til{C}_2} 
(C_0, N_1 \ot_{\til{A}}^{\mrm{L}} N_2) 
= \opn{Sq}_{C / A}^{\til{C} / \til{A}}(N) . 
\end{aligned} 
\end{equation}
The action of the ring $C$ on these objects is through $C_0$. 
Here are the explanations of the various isomorphisms.
The isomorphism $\iso^{\mrm{(1)}}$ is due to Lemma \ref{lem:1340}.
The isomorphism $\iso^{\mrm{(2)}}$ is due to Lemma \ref{lem:1341}.
$\iso^{\mrm{(3)}}$ comes from Hom-tensor
adjunction for the DG ring homomorphism 
$\til{B}_1 \ot_{\til{A}} \til{B}_2 \to \til{C}_1 \ot_{\til{A}} \til{C}_2$. 
For $\iso^{\mrm{(4)}}$ we use the DG ring quasi-isomorphism from 
Lemma \ref{lem:1315}.
The isomorphism $\iso^{\mrm{(5)}}$ uses the DG ring quasi-isomorphism from 
Lemma \ref{lem:1315}, and formula (\ref{eqn:719}). 
Lastly, $\iso^{\mrm{(6)}}$ is the inverse of the isomorphism 
$\phi_L$ in Lemma \ref{lem:1310}, for 
$L := N_1 \ot_{\til{A}}^{\mrm{L}} N_2$.

The object $\opn{Sq}_{B / A}^{\til{B} / \til{A}}(M) \in \cat{D}(B)$ has an 
obvious morphism in the category $\cat{D}(B)$ to each of the objects 
appearing in (\ref{eqn:1315}); and the isomorphisms in (\ref{eqn:1315}) commute 
with the morphisms from 
$\opn{Sq}_{B / A}^{\til{B} / \til{A}}(M)$. 
We obtain a commutative diagram 
\[ \begin{tikzcd} [column sep = 12ex, row sep = 5ex] 
\opn{Sq}_{B / A}^{\til{B} / \til{A}}(M)
\ar[r, "{\opn{q}^{\mrm{L}}_{C / B, \opn{Sq}_{B / A}^{\til{B} / \til{A}}(M)}}"]
\ar[dr, "{\opn{Sq}_{C / B / A}^{\til{C} / \til{B} / \til{A}} 
(\opn{q}^{\mrm{L}}_{C / B, M})}"']
&
\opn{Sq}_{B / A}^{\til{B} / \til{A}}(M) \ot^{\mrm{L}}_B C
\ar[d, "{\tup{(\ref{eqn:1315})}}", "{\simeq}"']
\\
&
\opn{Sq}_{C / A}^{\til{C} / \til{A}}(N) 
\end{tikzcd} \]
in $\cat{D}(B)$. Since the forward morphism
$\opn{q}^{\mrm{L}}_{C / B, \opn{Sq}_{B / A}^{\til{B} / \til{A}}(M)}$
is nondegenerate, the same holds for  
$\opn{Sq}_{C / B / A}^{\til{C} / \til{B} / \til{A}} 
(\opn{q}^{\mrm{L}}_{C / B, M})$. 
\end{proof}

Recall that the ring homomorphism $B \to C$ is flat, so 
$M \ot^{}_B C = M \ot^{L}_B C$. 
Because the standard forward morphism 
$\opn{q}^{}_{v, M} : M \to M \ot^{}_B C$ 
in $\cat{D}(B)$ over $v$ is nondegenerate, Theorem \ref{thm:1300} says that the 
forward morphism 
\[ \opn{Sq}_{v / A}(\opn{q}_{v, M}) : 
\opn{Sq}_{B / A}(M)  \to \opn{Sq}_{C / A}(M \ot^{}_B C) \]
is nondegenerate too. (It is just the morphism (\ref{eqn:1350}), written 
without 
resolutions.) 
This means that the morphism 
\begin{equation} \label{eqn:1352}
\opn{fadj}^{\mrm{L}}_{v} \bigl( \opn{Sq}_{v / A}
(\opn{q}^{}_{v, M}) \bigr) : 
\opn{Sq}_{B / A}(M) \ot^{}_B C \to 
\opn{Sq}_{C / A}(M \ot^{}_B C) 
\end{equation}
in $\cat{D}(C)$, which corresponds to
$\opn{Sq}_{v / A}(\opn{q}^{}_{v, M})$
by adjunction, is an isomorphism. 

\begin{dfn}[Induced Rigidity] \label{dfn:1300} 
Under Setup \ref{set:1281}, let $(M, \rho) \in \cat{D}(B)_{\mrm{rig} / A}$. 
Define the {\em induced rigidifying isomorphism} 
\[ \opn{Ind}^{\mrm{rig }}_{v / A}(\rho) = 
\opn{Ind}^{\mrm{rig }}_{C / B / A}(\rho) :
M \ot^{}_B C \iso \opn{Sq}_{C / A}(M \ot^{}_B C) \]
in $\cat{D}(C)$ to be
\[ \opn{Ind}^{\mrm{rig}}_{v / A}(\rho) := 
\opn{fadj}^{\mrm{L}}_{v} \bigl( \opn{Sq}_{v / A}
(\opn{q}^{}_{v, M}) \bigr) \circ (\rho \ot^{}_{B} \opn{id}_C) . \]
The rigid complex 
\[ \opn{Ind}^{\mrm{rig}}_{v / A}(M, \rho) := 
\bigl( M \ot^{}_B C, \, \opn{Ind}^{\mrm{rig}}_{v / A}(\rho) \bigr) 
\in \cat{D}(C)_{\mrm{rig} / A} \]
is called the {\em induced rigid complex}. 
\end{dfn}

Observe that the complex $M \ot^{}_B C$ belongs to 
$\cat{D}^{\mrm{b}}_{\mrm{f}}(C)$, and it has 
finite flat dimension over $A$; see Proposition \ref{prop:1270}(1).
Therefore $\opn{Ind}^{\mrm{rig}}_{v / A}(M, \rho)$ is really an object of 
$\cat{D}(C)_{\mrm{rig} / A}$. 
Here is Definition \ref{dfn:1300}, shown as a commutative diagram of 
isomorphisms in $\cat{D}(C)$~:
\begin{equation} \label{eqn:1700}
\begin{tikzcd} [column sep = 10ex, row sep = 5ex]
M \ot^{}_B C
\ar[r, "{\rho \, \ot^{}_{B} \, \opn{id}_C}", "{\simeq}"']
\ar[dr, "{\opn{Ind}^{\mrm{rig}}_{v / A}(\rho)}"', "{\simeq}"] 
&
\opn{Sq}_{B / A}(M) \ot^{}_B C  
\ar[d, "{\opn{fadj}^{\mrm{L}}_{v}(\opn{Sq}_{v / A}
(\opn{q}^{\mrm{L}}_{C / B, M}))}", , "{\simeq}"']
\\
&
\opn{Sq}_{C / A}(M \ot^{}_B C)
\end{tikzcd}
\end{equation}
See Question \ref{que:1660} below regarding how Definition \ref{dfn:1300}
relates to Definition \ref{dfn:1235}. 

\begin{prop} \label{prop:1300}
Under Setup \ref{set:1281},  
the formula 
$\opn{Ind}^{\mrm{rig}}_{C / B / A}(M, \rho)$ 
from Definition \ref{dfn:1300}
for an object $(M, \rho)$ in $\cat{D}(B)_{\mrm{rig} / A}$, and 
$\opn{Ind}^{\mrm{rig}}_{C / B / A}(\phi) := \opn{Ind}_v(\phi) 
= \phi \ot_B \opn{id}_C$
for a morphism 
$\phi : (M, \rho) \to (N, \si)$
in $\cat{D}(B)_{\mrm{rig} / A}$, 
is a functor
\[ \opn{Ind}^{\mrm{rig}}_{C / B / A} : 
\cat{D}(B)_{\mrm{rig} / A} \to \cat{D}(C)_{\mrm{rig} / A} . \]
\end{prop}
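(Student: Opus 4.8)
The statement asserts that $\opn{Ind}^{\mrm{rig}}_{C / B / A}$ is a functor, which amounts to three things: (a) that it is well-defined on objects, i.e.\ that $\opn{Ind}^{\mrm{rig}}_{v / A}(M, \rho)$ really is an object of $\cat{D}(C)_{\mrm{rig} / A}$; (b) that for a rigid morphism $\phi : (M, \rho) \to (N, \si)$ the map $\opn{Ind}_v(\phi) = \phi \ot_B \opn{id}_C$ is again a rigid morphism, so that the functor lands in $\cat{D}(C)_{\mrm{rig} / A}$; and (c) that composition and identities are respected. Point (a) is already secured by the discussion immediately preceding the proposition: $M \ot_B C \in \cat{D}^{\mrm{b}}_{\mrm{f}}(C)$ with finite flat dimension over $A$ by Proposition \ref{prop:1270}(1), and $\opn{Ind}^{\mrm{rig}}_{v / A}(\rho)$ is an isomorphism in $\cat{D}(C)$ by construction (diagram (\ref{eqn:1700})), since it is a composite of the isomorphism $\rho \ot_B \opn{id}_C$ with the isomorphism $\opn{fadj}^{\mrm{L}}_{v}(\opn{Sq}_{v / A}(\opn{q}^{\mrm{L}}_{C / B, M}))$ from (\ref{eqn:1352}), the latter being an isomorphism because of Theorem \ref{thm:1300}. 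So the real content is (b), and (c) will be essentially formal.

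For (b), I would unwind what it means for $\phi \ot_B \opn{id}_C$ to be a rigid morphism $\opn{Ind}^{\mrm{rig}}_{v / A}(M, \rho) \to \opn{Ind}^{\mrm{rig}}_{v / A}(N, \si)$: by Definition \ref{dfn:675}(2) one must show the square with vertical arrows $\phi \ot_B \opn{id}_C$ and $\opn{Sq}_{C / A}(\phi \ot_B \opn{id}_C)$ and horizontal arrows $\opn{Ind}^{\mrm{rig}}_{v / A}(\rho)$, $\opn{Ind}^{\mrm{rig}}_{v / A}(\si)$ commutes in $\cat{D}(C)$. Using the definition $\opn{Ind}^{\mrm{rig}}_{v / A}(\rho) = \opn{fadj}^{\mrm{L}}_{v}(\opn{Sq}_{v / A}(\opn{q}_{v, M})) \circ (\rho \ot_B \opn{id}_C)$, this square decomposes into two: an outer square involving $\rho$, $\si$, $\phi$ and $\opn{Sq}_{B / A}(\phi) \ot_B \opn{id}_C$, which commutes because $\phi$ is rigid over $B/A$ (apply $(-) \ot_B \opn{id}_C$ to the defining square of a rigid morphism); and an inner square relating $\opn{Sq}_{B / A}(\phi) \ot_B \opn{id}_C$ to $\opn{Sq}_{C / A}(\phi \ot_B \opn{id}_C)$ via the forward-adjunction isomorphisms $\opn{fadj}^{\mrm{L}}_{v}(\opn{Sq}_{v / A}(\opn{q}_{v, -}))$. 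The commutativity of this inner square is exactly a naturality statement: the forward morphism $\opn{Sq}_{v / A}(\opn{q}_{v, M}) : \opn{Sq}_{B / A}(M) \to \opn{Sq}_{C / A}(M \ot_B C)$ is natural in $M$, which follows from the functoriality of $\opn{Sq}_{v / A}$ on forward morphisms — more precisely, one should apply Theorem \ref{thm:1292} (squaring of composable forward morphisms) to the two factorizations of $\opn{q}_{v, N} \circ \phi = (\phi \ot_B \opn{id}_C) \circ \opn{q}_{v, M}$ (the first as a morphism in $\cat{D}(B)$ followed by a forward morphism over $v$, the second as a forward morphism over $v$ followed by a morphism in $\cat{D}(C)$), together with Proposition \ref{prop:1671} to identify $\opn{Sq}_{v/A}$ with $\opn{Sq}_{B/A}$ (resp.\ $\opn{Sq}_{C/A}$) on the morphisms that are actual morphisms in $\cat{D}(B)$ (resp.\ $\cat{D}(C)$). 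Translating the resulting equality of forward morphisms through $\opn{fadj}^{\mrm{L}}_{v}$ gives the commutativity of the inner square.

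Once (b) is in hand, (c) is routine: $\opn{Ind}_{\opn{id}_B}(\phi) = \phi \ot_B \opn{id}_B = \phi$ on morphisms, and on objects the identity of $B$ gives $\opn{red}_{B/B/A,M} = \opn{id}$ (Proposition \ref{prop:1671} and the remark after Theorem \ref{thm:1285}), so $\opn{Ind}^{\mrm{rig}}_{\opn{id}_B}$ is the identity functor; compatibility with composition of two morphisms $\phi, \psi$ in $\cat{D}(B)_{\mrm{rig}/A}$ reduces to $(\psi \circ \phi) \ot_B \opn{id}_C = (\psi \ot_B \opn{id}_C) \circ (\phi \ot_B \opn{id}_C)$, which holds in $\cat{D}(C)$. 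I expect the main obstacle to be the naturality step in (b) — keeping track of which instance of squaring (forward vs.\ the linear $\opn{Sq}_{B/A}$ of Definition \ref{dfn:880}) is being used, and correctly invoking Theorem \ref{thm:1292} and Proposition \ref{prop:1671} to glue them — rather than any genuinely hard computation; everything else is bookkeeping with the commutative diagram (\ref{eqn:1700}) and the definitions.
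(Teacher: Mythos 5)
Your proof is correct and follows the same overall structure as the paper's: decompose the required commutative square, via the factorization $\opn{Ind}^{\mrm{rig}}_{v / A}(\rho) = \opn{fadj}^{\mrm{L}}_{v}(\opn{Sq}_{v / A}(\opn{q}_{v, M})) \circ (\rho \ot_B \opn{id}_C)$ from diagram (\ref{eqn:1700}), into a top square (which commutes by applying $(-) \ot_B C$ to the defining square of the rigid morphism $\phi$) and a bottom square (a naturality assertion for the forward morphism $\opn{Sq}_{v/A}(\opn{q}_{v,-})$). This is exactly the decomposition of the paper's diagram (\ref{eqn:1876}).

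Where you differ is in the supporting theorem for the naturality step. You invoke Theorem \ref{thm:1292} (squaring of composable forward morphisms) on the two factorizations of $\opn{q}_{v,N} \circ \phi = (\phi \ot_B \opn{id}_C) \circ \opn{q}_{v,M}$ — once as forward over $\opn{id}_B$ then over $v$, once as forward over $v$ then over $\opn{id}_C$ — and then use Proposition \ref{prop:1671} to identify the forward squaring over the identity with the linear functor $\opn{Sq}_{B/A}$ (resp.\ $\opn{Sq}_{C/A}$). The paper instead invokes Theorem \ref{thm:895} (Forward and Backward Compatibility) specialized to $u = \opn{id}_B$, using the fact that a morphism in $\cat{D}(B)$ is a backward morphism over $\opn{id}_B$, and Definition \ref{dfn:880} reconciles the backward squaring over $\opn{id}_B$ with $\opn{Sq}_{B/A}$. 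Both routes degenerate to exactly the same intermediate equality, then translate through $\opn{fadj}^{\mrm{L}}_v$ identically. Your route is the lighter one: Theorem \ref{thm:1292} plus Proposition \ref{prop:1671} is a less elaborate input than the full forward-backward compatibility of Theorem \ref{thm:895}, which is really designed for the nontrivial case $u \neq \opn{id}_B$ and whose proof involves Lemma \ref{lem:1455} and a DG-resolution calculation. So this is a modest but genuine economy.
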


\begin{proof}
The proof relies on Theorem \ref{thm:895}, and therefore it is going 
to be more convenient to change notation to the setup of that theorem, but with 
$C = B$ and $u = \opn{id}_B$. Thus we are going to consider an essentially 
\'etale homomorphism of $A$-rings $v : B \to B'$, and a morphism 
$\th : (N, \si) \to (M, \rho)$
in $\cat{D}(B)_{\mrm{rig} / A}$.
We define the rigid complexes 
$(N', \si') := \opn{Ind}^{\mrm{rig}}_{v / A}(N, \si)$
and 
$(M', \rho')  := \opn{Ind}^{\mrm{rig}}_{v / A}(M, \rho)$
in $\cat{D}(B)_{\mrm{rig} / A}$, and the morphism 
$\th' := \opn{Ind}_v(\th) = \th \ot_B \opn{id}_C : N' \to  M'$.
See the first commutative diagram in (\ref{eqn:1875}) below. 
Our first goal is to prove that 
$\th' : (N', \si') \to (M', \rho')$
is a morphism in $\cat{D}(B')_{\mrm{rig} / A}$,
i.e.\ that it is a rigid morphism over $B' / A$.

Theorem \ref{thm:895} tells us that the second diagram in (\ref{eqn:1875})
is commutative. Indeed, the two diagrams below are precisely the second and 
third commutative diagrams in (\ref{eqn:1565}), but with $C = B$,  
$u = \opn{id}_B$ and $w = v$.  
\begin{equation} \label{eqn:1875}
\begin{tikzcd} [column sep = 6ex, row sep = 5ex] 
M
\ar[d, "{\opn{q}^{\mrm{L}}_{v, M}}"']
&
N
\ar[l, "{\th}"']
\ar[d, "{\opn{q}^{\mrm{L}}_{v, N}}"]
\\
M'
&
N' 
\ar[l, "{\th'}"']
\end{tikzcd}
\quad 
\begin{tikzcd} [column sep = 10ex, row sep = 5ex] 
\opn{Sq}_{B / A}(M)
\ar[d, "{\opn{Sq}_{v / A}(\opn{q}^{\mrm{L}}_{v, M})}"']
&
\opn{Sq}_{B / A}(N)
\ar[l, "{\opn{Sq}_{B / A}(\th)}"']
\ar[d, "{\opn{Sq}_{v / A}(\opn{q}^{\mrm{L}}_{v, N})}"]
\\
\opn{Sq}_{B' / A}(M')
&
\opn{Sq}_{B' / A}(N') 
\ar[l, "{\opn{Sq}_{B' / A}(\th')}"']
\end{tikzcd} 
\end{equation}

Let us examine the following diagram in $\cat{D}(B')$. 
The top square is commutative because $\th$ is a rigid morphism over $B / A$. 
The bottom square is obtained from the second diagram in (\ref{eqn:1875}) by 
forward adjunction, so it is commutative too. 
\begin{equation} \label{eqn:1876}
\begin{tikzcd}[column sep = 15ex, row sep = 5ex] 
M' = M \ot_B B'
\ar[d, "{\rho \lsp \ot_B \lsp \opn{id}_{B'}}"', , "{\simeq}"]
&
N' = N \ot_B B'
\ar[d, "{\si \lsp \ot_B \lsp \opn{id}_{B'}}", "{\simeq}"']
\ar[l, "{\th' \lsp = \lsp \th \lsp \ot_B \lsp \opn{id}_{B'}}"']
\\
\opn{Sq}_{B / A}(M)\ot_B B'
\ar[d, "{\opn{fadj}^{\mrm{L}}_{v}(\opn{Sq}_{v / A}
(\opn{q}^{\mrm{L}}_{v, M}))}"', "{\simeq}"]
&
\opn{Sq}_{B / A}(N)\ot_B B'
\ar[l, "{\opn{Sq}_{B / A}(\th) \lsp \ot_B \lsp \opn{id}_{B'}}"']
\ar[d, "{\opn{fadj}^{\mrm{L}}_{v}(\opn{Sq}_{v / A}
(\opn{q}^{\mrm{L}}_{v, N}))}", "{\simeq}"']
\\
\opn{Sq}_{B' / A}(M')
&
\opn{Sq}_{B' / A}(N')
\ar[l, "{\opn{Sq}_{B' / A}(\th')}"']
\end{tikzcd}
\end{equation}
We conclude that the outer square is  commutative. But by Definition 
\ref{dfn:1300} the composed vertical arrows in (\ref{eqn:1876}) are 
$\opn{Ind}^{\mrm{rig}}_{B' / B / A}(\rho)$ and 
$\opn{Ind}^{\mrm{rig}}_{B' / B / A}(\si)$, respectively.  
Thus $\th'$ is rigid, as claimed.

It remains to show that $\opn{Ind}^{\mrm{rig}}_{v / A}$ respects 
compositions and identity automorphism; but this is clear, since on morphisms 
we have by definition 
$\opn{Ind}^{\mrm{rig}}_{v / A}(\phi) = \opn{LInd}_v(\phi)$.
\end{proof}

\begin{dfn}[Rigid Forward Morphism] \label{dfn:1293} 
Under Setup \ref{set:1281}, let $(M, \rho) \in \cat{D}(B)_{\mrm{rig} / A}$ and 
$(N, \si) \in \cat{D}(C)_{\mrm{rig} / A}$. A {\em rigid forward morphism}
\[ \la : (M, \rho) \to (N, \si) \]
{\em over $v / A$}, or {\em over over $C / B / A$}, is a forward morphism 
$\la : M \to N$ in $\cat{D}(B)$ over $v$, such that the diagram 
\[ \tag{$*$}
\begin{tikzcd} [column sep = 8ex, row sep = 5ex] 
M
\ar[r, "{\rho}", "{\simeq}"']
\ar[d, "{\la}"']
&
\opn{Sq}_{B / A}(M)
\ar[d, "{\opn{Sq}_{v / A}(\la)}"]
\\
N
\ar[r, "{\si}", "{\simeq}"']
&
\opn{Sq}_{C / A}(N)
\end{tikzcd} \]
in $\cat{D}(B)$ is commutative.
\end{dfn}

\begin{prop} \label{prop:1700}
Under Setup \ref{set:1281}, let $(M, \rho) \in \cat{D}(B)_{\mrm{rig} / A}$, and 
define 
$(N, \si) := \lb \opn{Ind}^{\mrm{rig}}_{v / A}(M, \rho) \in 
\cat{D}(C)_{\mrm{rig} / A}$,
as in Definition \ref{dfn:1300}. Then the standard nondegenerate forward 
morphism
$\opn{q}^{\mrm{L}}_{v, M} : (M, \rho) \to (N, \si)$
is a rigid forward morphism over $v / A$. 
\end{prop}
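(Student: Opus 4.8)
The plan is to verify directly the commutativity of diagram ($*$) in Definition \ref{dfn:1293} for $\la = \opn{q}^{\mrm{L}}_{v, M}$, i.e.\ that $\si \circ \opn{q}^{\mrm{L}}_{v, M} = \opn{Sq}_{v / A}(\opn{q}^{\mrm{L}}_{v, M}) \circ \rho$ as morphisms $M \to \opn{Sq}_{C / A}(N)$ in $\cat{D}(B)$, where $N = M \ot_B C$. By Definition \ref{dfn:1300} (see diagram (\ref{eqn:1700})) we have $\si = \opn{Ind}^{\mrm{rig}}_{v / A}(\rho) = \opn{fadj}^{\mrm{L}}_{v}\bigl( \opn{Sq}_{v / A}(\opn{q}^{\mrm{L}}_{v, M}) \bigr) \circ (\rho \ot_B \opn{id}_C)$, and all objects and morphisms here make sense: $\opn{Sq}_{v/A}(\opn{q}^{\mrm{L}}_{v,M})$ is a forward morphism over $v$ in $\cat{D}(B)$ (Definition \ref{dfn:1290}), it is nondegenerate by Theorem \ref{thm:1300} since $M$ has finite flat dimension over $A$, and $M \ot_B C \in \cat{D}^{\mrm{b}}_{\mrm{f}}(C)$ has finite flat dimension over $A$ by Proposition \ref{prop:1270}(1), so $(N,\si)$ is indeed an object of $\cat{D}(C)_{\mrm{rig}/A}$.

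The argument then has two formal inputs. First, the standard derived forward morphism is natural: $X \mapsto \opn{q}^{\mrm{L}}_{v, X}$ is a natural transformation $\opn{Id}_{\cat{D}(B)} \Rightarrow \opn{Rest}_{v} \circ \opn{LInd}_{v}$, so applying it to $\rho : M \to \opn{Sq}_{B / A}(M)$ in $\cat{D}(B)$ gives $(\rho \ot_B \opn{id}_C) \circ \opn{q}^{\mrm{L}}_{v, M} = \opn{q}^{\mrm{L}}_{v, \opn{Sq}_{B / A}(M)} \circ \rho$. (Here we use, as tacitly throughout Section \ref{sec:induced-rigidity}, that $v$ is flat, so $(-) \ot_B C = \opn{LInd}_v(-) = (-) \ot^{\mrm{L}}_B C$ on all of $\cat{D}(B)$, and that $\opn{q}^{\mrm{L}}_{v, M} = \opn{q}_{v, M}$ under this identification, as in formula (\ref{eqn:1293}).) Second, the defining property of forward adjunction, formula (\ref{eqn:741}) and diagram (\ref{eqn:1874}): $\opn{fadj}^{\mrm{L}}_{v, X, Y}(\mu) \circ \opn{q}^{\mrm{L}}_{v, X} = \mu$ for every forward morphism $\mu : X \to Y$ over $v$. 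Chaining these with $X = \opn{Sq}_{B / A}(M)$, $Y = \opn{Sq}_{C / A}(N)$ and $\mu = \opn{Sq}_{v / A}(\opn{q}^{\mrm{L}}_{v, M})$ yields
\[ \si \circ \opn{q}^{\mrm{L}}_{v, M} = \opn{fadj}^{\mrm{L}}_{v}\bigl( \opn{Sq}_{v / A}(\opn{q}^{\mrm{L}}_{v, M}) \bigr) \circ \opn{q}^{\mrm{L}}_{v, \opn{Sq}_{B / A}(M)} \circ \rho = \opn{Sq}_{v / A}(\opn{q}^{\mrm{L}}_{v, M}) \circ \rho , \]
which is exactly the rigidity condition. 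Since $\opn{q}^{\mrm{L}}_{v,M}$ is the standard derived forward morphism it is automatically nondegenerate, so it is a nondegenerate rigid forward morphism over $v/A$.

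I do not expect a genuine obstacle: this proposition is a formal unwinding of Definition \ref{dfn:1300}, with all the substantive content (nondegeneracy of $\opn{Sq}_{v/A}(\opn{q}^{\mrm{L}}_{v,M})$, hence that $\opn{fadj}^{\mrm{L}}_{v}$ of it is an isomorphism) already provided by Theorem \ref{thm:1300}. The only points needing a little care are the two flatness identifications just mentioned and ensuring the naturality of $X \mapsto \opn{q}^{\mrm{L}}_{v,X}$ is invoked for $\rho$ as a morphism in $\cat{D}(B)$ (not merely in $\cat{C}_{\mrm{str}}(B)$), both of which are standard.
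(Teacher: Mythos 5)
Your proposal is correct and takes essentially the same approach as the paper: the paper dismisses this as ``immediate from Definitions \ref{dfn:1300} and \ref{dfn:1293}, cf.\ the commutative diagram (\ref{eqn:1700})'', and what you have done is spell out exactly the formal chain --- naturality of the standard derived forward morphism and the defining identity $\opn{fadj}^{\mrm{L}}_{v}(\mu) \circ \opn{q}^{\mrm{L}}_{v, X} = \mu$ --- that makes the ``immediate'' assertion go through.
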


\begin{proof}
This is immediate from Definitions \ref{dfn:1300}  and \ref{dfn:1293}. Cf.\ the 
commutative diagram (\ref{eqn:1700}). 
\end{proof}

\begin{prop} \label{prop:1675}
Under Setup \ref{set:1281}, let $w : C \to D$ be another essentially \'etale 
ring homomorphism. Suppose 
$(L, \rho) \in  \cat{D}(B)_{\mrm{rig} / A}$, 
$(M, \si) \in  \cat{D}(D)_{\mrm{rig} / A}$ and 
$(N, \tau) \in  \cat{D}(D)_{\mrm{rig} / A}$ are rigid complexes,  
$\la : (L, \rho) \to (M, \si)$
is a rigid forward morphism over $v / A$, and $\mu : (M, \si) \to (N, \tau)$
is a rigid forward morphism over $w / A$. Then 
$\mu \circ \la : (L, \rho) \to (N, \tau)$
is a rigid forward morphism over $(w \circ v) / A$.
\end{prop}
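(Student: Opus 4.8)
The plan is to verify directly that $\mu \circ \la$ satisfies condition $(*)$ of Definition \ref{dfn:1293} for the triple $D / B / A$, i.e.\ that the square with top edge $\rho$, bottom edge $\tau$, left edge $\mu \circ \la$, and right edge $\opn{Sq}_{w \circ v / A}(\mu \circ \la)$ commutes in $\cat{D}(B)$. Before doing so I would record that $\mu \circ \la : L \to N$ is genuinely a forward morphism over $w \circ v : B \to D$: since $\opn{Rest}_{D / B} = \opn{Rest}_{C / B} \circ \opn{Rest}_{D / C}$, the composite $\opn{Rest}_{C / B}(\mu) \circ \la$ is a morphism $L \to \opn{Rest}_{D / B}(N)$ in $\cat{D}(B)$, which is what ``forward morphism over $w \circ v$'' means. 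Hence Definition \ref{dfn:1293} applies to $\mu \circ \la$, with $\opn{Sq}_{w \circ v / A}(\mu \circ \la)$ the morphism from Definition \ref{dfn:1290}.

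The argument itself is a short diagram chase. I would stack the two commutativity squares provided by the hypotheses:
\[
\begin{tikzcd}[column sep = 10ex, row sep = 5ex]
L \ar[r, "{\rho}", "{\simeq}"'] \ar[d, "{\la}"'] & \opn{Sq}_{B / A}(L) \ar[d, "{\opn{Sq}_{v / A}(\la)}"] \\
M \ar[r, "{\si}", "{\simeq}"'] \ar[d, "{\mu}"'] & \opn{Sq}_{C / A}(M) \ar[d, "{\opn{Sq}_{w / A}(\mu)}"] \\
N \ar[r, "{\tau}", "{\simeq}"'] & \opn{Sq}_{D / A}(N)
\end{tikzcd}
\]
The top square commutes because $\la$ is a rigid forward morphism over $v / A$, and the bottom square commutes because $\mu$ is a rigid forward morphism over $w / A$ (both by Definition \ref{dfn:1293}, the lower square read in $\cat{D}(C)$ and then transported to $\cat{D}(B)$ by $\opn{Rest}_{C / B}$). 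Therefore the outer rectangle commutes. Its top edge is $\rho$, its bottom edge is $\tau$, its left edge is $\mu \circ \la$, and its right edge is $\opn{Sq}_{w / A}(\mu) \circ \opn{Sq}_{v / A}(\la)$. By Theorem \ref{thm:1292} (squaring of composable forward morphisms, applicable since $v$ and $w$ are essentially \'etale), this right edge equals $\opn{Sq}_{w \circ v / A}(\mu \circ \la)$. Thus the outer rectangle is precisely the commutative square required by Definition \ref{dfn:1293}, and $\mu \circ \la : (L, \rho) \to (N, \tau)$ is a rigid forward morphism over $(w \circ v) / A$.

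I expect essentially no obstacle here beyond correctly invoking Theorem \ref{thm:1292}; the only point requiring a little care is bookkeeping of ambient categories, namely that $\opn{Sq}_{v / A}(\la)$, $\opn{Sq}_{w / A}(\mu)$, and their composite are all to be regarded as morphisms in $\cat{D}(B)$ over the appropriate ring homomorphisms (which they are by construction in Definition \ref{dfn:1290}), and that the lower square must be restricted from $\cat{D}(C)$ to $\cat{D}(B)$ before being pasted to the upper one. No finiteness hypotheses on the complexes are needed for this statement, since the squaring of forward morphisms and Theorem \ref{thm:1292} are available under Setup \ref{set:1281} alone.
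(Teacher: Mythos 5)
Your proof is correct and is exactly the argument the paper has in mind: the paper simply says ``Clear from Theorem \ref{thm:1292},'' and your stacked-square diagram chase is precisely the unpacking of that remark. Nothing further to add.
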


\begin{proof}
Clear from Theorem \ref{thm:1292}. 
\end{proof} 

Here is an analogue of Theorem \ref{thm:2030}; 
it is another generalization of Theorem \ref{thm:675}. 

\begin{thm} \label{thm:1863} 
Under Setup \ref{set:1281}, let $(M, \rho) \in \cat{D}(B)_{\mrm{rig} / A}$
and $(N, \si) \in \cat{D}(C)_{\mrm{rig} / A}$. Assume that $N$ has the derived 
Morita property over $C$. Then there is at most one nondegenerate rigid 
forward morphism $\la : (M, \rho) \to  (N, \si)$
over $v$ relative to $A$.
\end{thm}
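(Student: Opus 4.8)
<br>

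The plan is to mimic the uniqueness argument in Theorem \ref{thm:675} and Theorem \ref{thm:2030}, but using forward morphisms and their squaring operation instead of backward ones. Suppose $\la_1, \la_2 : (M, \rho) \to (N, \si)$ are both nondegenerate rigid forward morphisms over $v/A$. Since $N$ has the derived Morita property over $C$ and $\la_1$ is a nondegenerate forward morphism over $v$, Proposition \ref{prop:1871}(2) tells us that $\opn{Hom}_{\cat{D}(B)}(M, N)$ is a free $C$-module of rank $1$ with basis $\la_1$. Hence there is a unique element $c \in C$ with $\la_2 = c \cd \la_1$; and since $\la_2$ is also nondegenerate, i.e.\ a $C$-module basis of the same free rank-$1$ module, $c$ must lie in $C^{\times}$.

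First I would write out the rigidity condition $(*)$ from Definition \ref{dfn:1293} for both $\la_1$ and $\la_2$: we have $\si \circ \la_i = \opn{Sq}_{v/A}(\la_i) \circ \rho$ in $\cat{D}(B)$. Next I would apply Theorem \ref{thm:1860} (the forward analogue of Theorem \ref{thm:672}) to compute $\opn{Sq}_{v/A}(\la_2) = \opn{Sq}_{v/A}(c \cd \la_1) = c^2 \cd \opn{Sq}_{v/A}(\la_1)$ as forward morphisms $\opn{Sq}_{B/A}(M) \to \opn{Sq}_{C/A}(N)$ over $v$. Substituting into the rigidity diagram for $\la_2$ gives
\[ \si \circ \la_2 = (c^2 \cd \opn{Sq}_{v/A}(\la_1)) \circ \rho = c^2 \cd (\opn{Sq}_{v/A}(\la_1) \circ \rho) = c^2 \cd (\si \circ \la_1) = \si \circ (c^2 \cd \la_1). \]
On the other hand $\si \circ \la_2 = \si \circ (c \cd \la_1)$ by the definition of $c$. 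So I would conclude $\si \circ (c \cd \la_1) = \si \circ (c^2 \cd \la_1)$, i.e.\ $c \cd (\si \circ \la_1) = c^2 \cd (\si \circ \la_1)$ as forward morphisms $M \to \opn{Sq}_{C/A}(N)$ over $v$.

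To finish, I need to cancel $\si \circ \la_1$. Since $\si$ is an isomorphism and $\la_1$ is a nondegenerate forward morphism, the composite $\si \circ \la_1$ is again a nondegenerate forward morphism over $v$ (nondegeneracy is preserved under post-composition with an isomorphism, by Proposition \ref{prop:2046} applied with $\phi = \si$, or directly from the adjunction description). Therefore Proposition \ref{prop:1871}(2), applied to $\si \circ \la_1$, says that $\opn{Hom}_{\cat{D}(B)}(M, \opn{Sq}_{C/A}(N))$ is a free $C$-module of rank $1$ with basis $\si \circ \la_1$; in particular the map $C \to \opn{Hom}_{\cat{D}(B)}(M, \opn{Sq}_{C/A}(N))$, $a \mapsto a \cd (\si \circ \la_1)$, is injective. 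From $c \cd (\si \circ \la_1) = c^2 \cd (\si \circ \la_1)$ we then get $c = c^2$ in $C$, and since $c \in C^{\times}$ this forces $c = 1$, whence $\la_2 = \la_1$. The main obstacle I anticipate is verifying carefully that $\si \circ \la_1$ is honestly a \emph{nondegenerate} forward morphism over $v$ (so that Proposition \ref{prop:1871}(2) applies to it and not merely to $\la_1$), and keeping the $C$-module structure on the relevant $\opn{Hom}$-groups (via Definition \ref{dfn:2045}'s scalar action $b \cd \la = (b \cd \opn{id}_N) \circ \la$) consistent with the identity $\opn{Sq}_{v/A}(c \cd \la) = c^2 \cd \opn{Sq}_{v/A}(\la)$ from Theorem \ref{thm:1860}; everything else is a direct transcription of the proof of Theorem \ref{thm:2030}.
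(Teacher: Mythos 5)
Your proposal is correct and follows the same route as the paper's proof: both obtain $c \in C^\times$ with $\la_2 = c \cd \la_1$ via Proposition \ref{prop:1871}(2), then use the rigidity diagrams together with the quadratic identity of Theorem \ref{thm:1860} to force $c^2 = c$, hence $c = 1$. Your final cancellation step is handled slightly more elaborately than strictly needed (one can also simply cancel the isomorphism $\si$ directly, as in the proof of Theorem \ref{thm:675}, rather than invoking Proposition \ref{prop:1871}(2) a second time for $\si \circ \la_1$), but it is sound — $\opn{Sq}_{C/A}(N)$ inherits the derived Morita property from $N$ via the isomorphism $\si$.
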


\begin{proof}
Let $\la, \la' : (M, \rho) \to (N, \si)$ be two nondegenerate rigid forward 
morphisms over $v$. By Proposition \ref{prop:1871}(2) there 
is a unique element $c \in C^{\times}$  such that 
$\la' = c \cd \la$ in $\opn{Hom}_{\cat{D}(B)}(M, N)$.
Proceeding like in the proof of Theorem \ref{thm:675}, but using Theorem 
\ref{thm:1860} instead of Theorem \ref{thm:672}, 
we show that $c^2 = c$, and hence $c = 1$ and $\la' = \la$. 
\end{proof}

\begin{prop} \label{prop:1867}
Under Setup \ref{set:1281}, let 
$(M, \rho) \in  \cat{D}(B)_{\mrm{rig} / A}$.
Define $N := C \ot_B M \in \cat{D}(C)$, and assume that the complex $N$
has the derived Morita property over $C$. Then 
$\si := \opn{Ind}^{\mrm{rig}}_{v / A}(\rho)$ 
is the only rigidifying isomorphism 
$N \iso \opn{Sq}_{C / A}(N)$ in $\cat{D}(C)$ 
for which the standard nondegenerate forward morphism 
$\opn{q}^{\mrm{L}}_{v, M} : (M, \rho) \to (N, \si)$
is a rigid forward morphism over $v / A$. 
\end{prop}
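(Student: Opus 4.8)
The plan is to combine the existence part, which is exactly Proposition \ref{prop:1700}, with a uniqueness argument modeled on Theorem \ref{thm:1863}. First I would record that by Proposition \ref{prop:1700} the isomorphism $\si := \opn{Ind}^{\mrm{rig}}_{v / A}(\rho)$ does make $\opn{q}^{\mrm{L}}_{v, M} : (M, \rho) \to (N, \si)$ a rigid forward morphism over $v / A$, so at least one such rigidifying isomorphism exists. The content of the proposition is therefore the word ``only''.

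For uniqueness, suppose $\si'$ is another rigidifying isomorphism $N \iso \opn{Sq}_{C / A}(N)$ in $\cat{D}(C)$ for which $\opn{q}^{\mrm{L}}_{v, M} : (M, \rho) \to (N, \si')$ is a rigid forward morphism over $v / A$. Then both $\opn{q}^{\mrm{L}}_{v, M} : (M, \rho) \to (N, \si)$ and $\opn{q}^{\mrm{L}}_{v, M} : (M, \rho) \to (N, \si')$ are nondegenerate rigid forward morphisms over $v / A$ (nondegeneracy of $\opn{q}^{\mrm{L}}_{v, M}$ holds by the remark following Definition \ref{dfn:2045}). Since $N$ has the derived Morita property over $C$, Theorem \ref{thm:1863} applies once I observe that its hypotheses are met: both morphisms have the same source rigid complex $(M, \rho)$ and, a priori, possibly different target rigid structures on $N$. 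The subtlety is that Theorem \ref{thm:1863} as stated compares two rigid forward morphisms into a \emph{fixed} rigid complex $(N, \si)$, whereas here I have one underlying morphism $\opn{q}^{\mrm{L}}_{v, M}$ that is rigid with respect to two candidate structures $\si, \si'$. So I would instead re-run the short argument of Theorem \ref{thm:1863} directly: write $\la := \opn{q}^{\mrm{L}}_{v, M}$, which is rigid for $\si$ by Proposition \ref{prop:1700}; then the diagram ($*$) of Definition \ref{dfn:1293} commutes both with $\si$ and with $\si'$ in the bottom row. Since $\la$ is nondegenerate and $N$ has the derived Morita property over $C$, Proposition \ref{prop:1871}(2) shows that a rigidifying isomorphism of $N$ compatible with $\la$ is unique: indeed, the composite $\si \circ \la = \opn{Sq}_{v/A}(\la) \circ \rho = \si' \circ \la$ forces $\si \circ \la = \si' \circ \la$ as forward morphisms $M \to \opn{Sq}_{C/A}(N)$ over $v$, and since $\la$ is nondegenerate, post-composition by $\la$ is injective on $\opn{Hom}_{\cat{D}(B)}(\cdot, \opn{Sq}_{C/A}(N))$-type groups in the relevant sense (again via forward adjunction, $\opn{fadj}^{\mrm{L}}_{v}$ is a bijection). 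Hence $\si = \si'$.

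The main obstacle I anticipate is making the last injectivity step precise: I need to argue that if $\la : M \to N$ is a nondegenerate forward morphism over $v$ and $\alpha, \beta : N \to K$ are morphisms in $\cat{D}(C)$ with $\alpha \circ \la = \beta \circ \la$ as forward morphisms over $v$, then $\alpha = \beta$. This follows because $\opn{fadj}^{\mrm{L}}_{v, M, K}$ is a bijection (diagram (\ref{eqn:1874})) and $\opn{fadj}^{\mrm{L}}_{v, M, K}(\alpha \circ \la) = \alpha \circ \opn{fadj}^{\mrm{L}}_{v, M, N}(\la)$, where $\opn{fadj}^{\mrm{L}}_{v, M, N}(\la)$ is an isomorphism by nondegeneracy; so $\alpha$ is determined. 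Applying this with $K := \opn{Sq}_{C/A}(N)$, $\alpha := \si$, $\beta := \si'$ completes the proof. Everything else is bookkeeping: verifying that the complex $N = C \ot_B M$ indeed lies in $\cat{D}(C)_{\mrm{rig}/A}$ when equipped with either $\si$ or $\si'$ (which is automatic once one such structure exists, by Proposition \ref{prop:1270}(1)), and noting that the finite-flat-dimension-over-$A$ hypothesis needed for Definition \ref{dfn:1293} to make sense is supplied by Proposition \ref{prop:1270}(1).
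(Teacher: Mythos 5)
Your proof is correct, and the cancellation step is exactly the right mechanism: rigidity of $\la := \opn{q}^{\mrm{L}}_{v,M}$ with respect to both $\si$ and $\si'$ gives $\si \circ \la = \opn{Sq}_{v/A}(\la) \circ \rho = \si' \circ \la$, and then nondegeneracy of $\la$ together with the forward-adjunction bijection (diagram (\ref{eqn:1874}), plus the compatibility $\opn{fadj}^{\mrm{L}}_{v,M,K}(\alpha\circ\la) = \alpha\circ\opn{fadj}^{\mrm{L}}_{v,M,N}(\la)$, which is forced by the defining property of $\opn{fadj}^{\mrm{L}}$) lets you cancel $\la$ on the right and conclude $\si = \si'$. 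You also correctly observe that Theorem \ref{thm:1863} does not apply verbatim, since the two candidate rigid structures on $N$ are a priori distinct. Your route is genuinely different from the paper's: the paper argues in the style of Theorem \ref{thm:675}, invoking the derived Morita property through Proposition \ref{prop:1871}(2) to identify a unit $c \in C^{\times}$ measuring the discrepancy, and then using the quadraticity of squaring (Theorem \ref{thm:1860}) to force $c = c^2 = 1$. Your adjunction-only argument is shorter and, notably, it demonstrates that the derived Morita hypothesis on $N$ is in fact superfluous for this particular statement (it is load-bearing only in the companion uniqueness result Theorem \ref{thm:1863}). One stylistic remark: delete the sentence in the middle of your write-up that cites Proposition \ref{prop:1871}(2); the ``indeed'' clause that follows it is self-contained, and keeping the citation gives the misleading impression that the Morita hypothesis does real work in your argument when it does not.
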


\begin{proof}
According to Proposition \ref{prop:1700}, 
$\la := \opn{q}^{}_{v, M} : (M, \rho) \to (N, \si)$
is a nondegenerate rigid forward morphism over $v / A$. 
Suppose $\la' :  (M, \rho) \to (N, \si)$
is some nondegenerate rigid forward morphism over $v / A$; we must prove 
that $\la' = \la$. 

By Proposition \ref{prop:1871}(2) the $C$-module 
$\opn{Hom}_{\cat{D}(B)}(M, N)$ is free of rank $1$, with bases $\la$ and 
$\la'$. Therefore  there is a unique element $c \in C^{\times}$ such that 
$\la' = c \cd \la$. 
Because $\la$ is rigid we have 
$\opn{Sq}_{v / A}(\la) \circ \rho = \si \circ \la$. 
Likewise, because $\la'$ is rigid we have 
$\opn{Sq}_{v / A}(\la') \circ \rho = \si \circ \la'$. 
By  Theorem \ref{thm:1860} we know that 
$\opn{Sq}_{v / A}(\la') = \opn{Sq}_{v / A}(c \cd \la) 
= c^2 \cd \opn{Sq}_{v / A}(\la)$.
Combining these equalities we obtain 
\[ c \cd (\si \circ \la) = \si \circ \la' = 
\opn{Sq}_{v / A}(\la') \circ \rho = 
c^2 \cd (\opn{Sq}_{v / A}(\la) \circ \rho) = 
c^2 \cd (\si \circ \la) . \]
Since $\si : N \to \opn{Sq}_{C / A}(N)$ is an isomorphism, it follows that 
the forward morphism $\si \circ \la$ is nondegenerate. Using Proposition 
\ref{prop:1700} once more we see that 
$\opn{Hom}_{\cat{D}(B)}(M, \opn{Sq}_{C / A}(N))$ is a free $C$-module of 
rank $1$, with basis $\si \circ \la$. We conclude that $c = c^2$, and hence 
$c = 1$ and $\la' = \la$. 
\end{proof}

\begin{que} \label{que:1660}
In the situation of Setup \ref{set:1281}, where $v : B \to C$ is an essentially 
\'etale ring homomorphism, let 
$(M, \rho) \in \cat{D}(B)_{\mrm{rig} / A}$. 
In Definition \ref{dfn:1300} we have the induced rigid complex 
\[ \opn{Ind}^{\mrm{rig}}_{v / A}(M, \rho) := 
\bigl( M \ot^{}_B C, \, \opn{Ind}^{\mrm{rig}}_{v / A}(\rho) \bigr) 
\in \cat{D}(C)_{\mrm{rig} / A} . \]
But $v$ is essentially smooth of differential relative dimension $0$, 
so according to Definition \ref{dfn:1235} we also have the twisted induced 
rigid 
complex
\[ \opn{TwInd}^{\mrm{rig}}_{C / B / A}(M, \rho) :=
\bigl( M \ot_B C, \, \rho \cupprod \rho_{B / A}^{\mrm{esm}} 
\bigr) \in \cat{D}(C)_{\mrm{rig} / A} . \]
The obvious question is whether the rigidifying isomorphisms 
$\opn{Ind}^{\mrm{rig}}_{v / A}(\rho)$ and 
$\rho \cupprod \rho_{B / A}^{\mrm{esm}}$ on $M \ot_B C$ are equal. 

We believe the answer is positive, yet we were only able to prove it in
the special case when $A \to B$ is flat; see Appendix \ref{sec:flat-case}. 
\end{que}

\begin{thm}[Cup-Induced Rigid Forward Morphism] \label{thm:1905} 
Let $A$ be a noetherian ring, let $B \to C \xar{w} C'$ be homomorphisms
in $\cat{Rng} \eftover A$, and assume that $w$ is essentially \'etale. 
Let $(M, \rho) \in \cat{D}(B)_{\mrm{rig} / A}$,
$(N, \si) \in \cat{D}(C)_{\mrm{rig} / B}$ and 
$(N', \si') \in \cat{D}(C')_{\mrm{rig} / B}$
be rigid complexes, and let 
$\la : (N, \si) \to (N', \si')$
be a rigid forward morphism in $\cat{D}(C)$ over $w / B$.
Assume that the cup product morphisms 
$\opn{cup}_{C / B / A, M, N}$ and $\opn{cup}_{C' / B / A, M, N'}$ are 
isomorphisms. Then 
\[ \opn{id}_M \ot^{\mrm{L}}_{B} \, \la :
\bigl( M \ot^{\mrm{L}}_{B} N, \rho \cupprod \si \bigr) \to
\bigl( M \ot^{\mrm{L}}_{B} N', \rho \cupprod \si' \bigr)  \]
is a rigid forward morphism in $\cat{D}(C)$ over $w / A$. 
\end{thm}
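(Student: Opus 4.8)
The plan is to unwind Definition~\ref{dfn:1260} and reduce the theorem to a single naturality statement for the cup product morphism --- the forward analogue of the Backward Functoriality Theorem~\ref{thm:845}. First I would check that the statement is well posed: by Lemma~\ref{lem:1530}, $M \ot^{\mrm{L}}_{B} N \in \cat{D}^{\mrm{b}}_{\mrm{f}}(C)$ and $M \ot^{\mrm{L}}_{B} N' \in \cat{D}^{\mrm{b}}_{\mrm{f}}(C')$ both have finite flat dimension over $A$, so together with the hypothesis that the two cup morphisms are isomorphisms, $\rho \cupprod \si$ and $\rho \cupprod \si'$ are genuine rigidifying isomorphisms and $\opn{id}_M \ot^{\mrm{L}}_{B} \la$ is a forward morphism in $\cat{D}(C)$ over $w$ between objects of the appropriate rigid categories. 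Now, by Definition~\ref{dfn:1260}(1), $\rho \cupprod \si = \opn{cup}_{C / B / A, M, N} \circ (\rho \ot^{\mrm{L}}_{B} \si)$ and $\rho \cupprod \si' = \opn{cup}_{C' / B / A, M, N'} \circ (\rho \ot^{\mrm{L}}_{B} \si')$; and since $\la$ is rigid over $w / B$, Definition~\ref{dfn:1293} gives $\si' \circ \la = \opn{Sq}_{w / B}(\la) \circ \si$, whence $(\rho \ot^{\mrm{L}}_{B} \si') \circ (\opn{id}_M \ot^{\mrm{L}}_{B} \la) = (\opn{id} \ot^{\mrm{L}}_{B} \opn{Sq}_{w / B}(\la)) \circ (\rho \ot^{\mrm{L}}_{B} \si)$. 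Substituting into the defining square of Definition~\ref{dfn:1293} for $\opn{id}_M \ot^{\mrm{L}}_{B} \la$ and cancelling the isomorphism $\rho \ot^{\mrm{L}}_{B} \si$, the theorem becomes equivalent to commutativity of
\[ \begin{tikzcd} [column sep = 13ex, row sep = 6ex]
\opn{Sq}_{B / A}(M) \ot^{\mrm{L}}_{B} \opn{Sq}_{C / B}(N)
\ar[r, "{\opn{cup}_{C / B / A, M, N}}"]
\ar[d, "{\opn{id} \, \ot^{\mrm{L}}_{B} \, \opn{Sq}_{w / B}(\la)}"']
&
\opn{Sq}_{C / A}(M \ot^{\mrm{L}}_{B} N)
\ar[d, "{\opn{Sq}_{w / A}(\opn{id}_M \ot^{\mrm{L}}_{B} \la)}"]
\\
\opn{Sq}_{B / A}(M) \ot^{\mrm{L}}_{B} \opn{Sq}_{C' / B}(N')
\ar[r, "{\opn{cup}_{C' / B / A, M, N'}}"']
&
\opn{Sq}_{C' / A}(M \ot^{\mrm{L}}_{B} N')
\end{tikzcd} \]
in $\cat{D}(C)$. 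This reduction uses no finiteness, only that $w$ is essentially \'etale so that $\opn{Sq}_{w / B}(\la)$ and $\opn{Sq}_{w / A}(\opn{id}_M \ot \la)$ are defined.

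To prove that this square commutes I would factor $\la$ through forward adjunction. Since $w$ is flat, $N \ot^{\mrm{L}}_{C} C' = N \ot_{C} C' =: N^{C'}$; with $\opn{q} := \opn{q}^{\mrm{L}}_{w, N} : N \to N^{C'}$ the standard nondegenerate forward morphism over $w$ and $\be := \opn{fadj}^{\mrm{L}}_{w}(\la) : N^{C'} \to N'$ in $\cat{D}(C')$, we have $\la = \be \circ \opn{q}$, hence $\opn{id}_M \ot^{\mrm{L}}_{B} \la = (\opn{id}_M \ot^{\mrm{L}}_{B} \be) \circ \opn{q}^{\mrm{L}}_{w, \, M \ot^{\mrm{L}}_{B} N}$. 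By Theorem~\ref{thm:1292} (with Proposition~\ref{prop:1671} handling the identity factor) the operation $\opn{Sq}_{w / A}$ respects this factorisation, and similarly $\opn{Sq}_{w / B}(\la) = \opn{Sq}_{C' / B}(\be) \circ \opn{Sq}_{w / B}(\opn{q})$. Consequently the square above is the vertical paste of two squares: the lower one involves only the morphism $\be$ of $\cat{D}(C')$ and is precisely Theorem~\ref{thm:845} applied with $\opn{id}_{C'}$ in place of $w$; the upper one is
\[ \begin{tikzcd} [column sep = 13ex, row sep = 6ex]
\opn{Sq}_{B / A}(M) \ot^{\mrm{L}}_{B} \opn{Sq}_{C / B}(N)
\ar[r, "{\opn{cup}_{C / B / A, M, N}}"]
\ar[d, "{\opn{id} \, \ot^{\mrm{L}}_{B} \, \opn{Sq}_{w / B}(\opn{q})}"']
&
\opn{Sq}_{C / A}(M \ot^{\mrm{L}}_{B} N)
\ar[d, "{\opn{Sq}_{w / A}(\opn{q}^{\mrm{L}}_{w, \, M \ot^{\mrm{L}}_{B} N})}"]
\\
\opn{Sq}_{B / A}(M) \ot^{\mrm{L}}_{B} \opn{Sq}_{C' / B}(N^{C'})
\ar[r, "{\opn{cup}_{C' / B / A, M, N^{C'}}}"']
&
\opn{Sq}_{C' / A}(M \ot^{\mrm{L}}_{B} N^{C'})
\end{tikzcd} \]
which I will call $(\dag)$. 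Square $(\dag)$ --- forward functoriality of the cup product along the standard forward morphism of the essentially \'etale $w$ --- is the main obstacle.

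I would prove $(\dag)$ by the resolution-tracking scheme of the proof of Theorem~\ref{thm:845}. Choose a K-flat resolution $\til{C}' / \til{C} / \til{B} / \til{A}$ of the quadruple $C' / C / B / A$ with the appropriate K-flat and K-projective structural homomorphisms; choose K-projective DG-module resolutions $\til{P} \to M$ and $\til{Q} \to N$ and a K-injective one as in Lemma~\ref{lem:1431}, and set $\til{Q}' := \til{Q} \ot_{\til{C}} \til{C}'$, so that $\opn{q}^{\mrm{L}}_{w, N}$ is presented by the tautological inclusion $\til{q} \mapsto \til{q} \ot 1$. Then unwind the resolved cup products of Definition~\ref{1510} and the resolved form of the forward squaring of Definitions~\ref{dfn:1282} and~\ref{dfn:1290} --- a composite of the resolved squaring with the resolved reduction-to-the-diagonal $\opn{red}$ --- and check that each constituent morphism (Hom--tensor adjunctions, the structural quasi-isomorphisms $\til{B} \to B$ and $\til{C} \to C$, the derived tensor-evaluation morphisms) is natural with respect to the inclusion $\til{Q} \to \til{Q}'$. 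The one ingredient absent from Theorem~\ref{thm:845} is the reduction morphism $\opn{red}$, built from $\opn{RHom}(\opn{sec}_{C' / C}, \opn{id})$; this is where the essentially \'etale hypothesis on $w$ is used, through the decomposition $C' \ot_{C} C' \cong C' \times (C')^{\mrm{od}}$ of Theorem~\ref{thm:1240} and its section $\opn{sec}_{C' / C}$ from Corollary~\ref{cor:1246}, whose compatibility with the base change $(-) \ot_{C} C'$ is of the same flavour as Lemma~\ref{lem:1455}. Equivalently, $(\dag)$ expresses the compatibility of the cup product with the essentially \'etale induction of Definition~\ref{dfn:1300}: under the canonical isomorphisms of formula~(\ref{eqn:1352}), $\opn{cup}_{C' / B / A, M, N^{C'}}$ is the base change $\opn{cup}_{C / B / A, M, N} \ot_{C} C'$.

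Finally, pasting $(\dag)$ below the $\be$-square (Theorem~\ref{thm:845} for $\opn{id}_{C'}$) gives commutativity of the forward functoriality square of the first paragraph, hence, retracing the reduction through Definitions~\ref{dfn:1260} and~\ref{dfn:1293}, that $\opn{id}_M \ot^{\mrm{L}}_{B} \la : (M \ot^{\mrm{L}}_{B} N, \rho \cupprod \si) \to (M \ot^{\mrm{L}}_{B} N', \rho \cupprod \si')$ is a rigid forward morphism over $w / A$. The last steps may alternatively be organised via Proposition~\ref{prop:1675}: by Proposition~\ref{prop:1700} the morphism $\opn{q}^{\mrm{L}}_{w, \, M \ot^{\mrm{L}}_{B} N}$ is a rigid forward morphism over $w / A$ for the induced rigidity on $M \ot^{\mrm{L}}_{B} N^{C'}$, which $(\dag)$ identifies with $\rho \cupprod \opn{Ind}^{\mrm{rig}}_{w / B}(\si)$; then $\opn{id}_M \ot^{\mrm{L}}_{B} \be$ is a rigid morphism in $\cat{D}(C')_{\mrm{rig} / A}$ by Theorem~\ref{thm:845}, and Proposition~\ref{prop:1675} gives the claim for the composite $\opn{id}_M \ot^{\mrm{L}}_{B} \la$.
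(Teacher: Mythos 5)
Your reduction to the naturality square for the cup product (your first paragraph) is identical to the paper's: both unwind Definitions~\ref{dfn:1260} and~\ref{dfn:1293} and cancel the isomorphism $\rho \ot^{\mrm{L}}_{B} \si$. The genuine divergence is in how the remaining square is decomposed. The paper factors the squaring morphism \emph{internally} via Definition~\ref{dfn:1290}, writing $\opn{Sq}_{w/B}(\la) = \opn{red}_{C'/C/B,N'} \circ \opn{Sq}_{C/B}(\la)$; it then applies Theorem~\ref{thm:845} with $\la$ regarded as a morphism in $\cat{D}(C)$ and handles the reduction square separately, by a further commuting diagram involving tensor-evaluation and adjunction. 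You instead factor $\la$ \emph{externally} through forward adjunction, $\la = \be \circ \opn{q}^{\mrm{L}}_{w,N}$, apply Theorem~\ref{thm:845} with $\be$ regarded as a morphism in $\cat{D}(C')$, and push all the essentially-\'etale content into the square $(\dag)$. Both routes need exactly one new ingredient beyond Theorem~\ref{thm:845}: yours is $(\dag)$, the paper's is the compatibility of $\opn{red}$ (i.e. $\opn{RHom}(\opn{sec}_{C'/C},\opn{id})$) with the tensor-evaluation and adjunction morphisms in the cup product. These are the same phenomenon packaged differently; indeed, expanding $\opn{Sq}_{w/B}(\opn{q})$ via Definition~\ref{dfn:1290} inside $(\dag)$ splits your square into Theorem~\ref{thm:845} applied to $\opn{q}$ as a morphism in $\cat{D}(C)$ followed by a reduction square, recovering the paper's form. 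Your sketch of $(\dag)$ (resolution tracking plus a Lemma~\ref{lem:1455}-style compatibility of $\opn{sec}$ under flat base change) is at the same level of rigour as the paper's treatment of the corresponding step, and your observation that $(\dag)$ expresses compatibility of the cup product with essentially-\'etale induction is a nice structural bonus. One small slip at the end: since $(\dag)$ lands on the source of the $\be$-square, $(\dag)$ is pasted \emph{above} the $\be$-square, not below.
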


\begin{proof}
According to Definition \ref{dfn:1293} we want to prove that the outer 
boundary in diagram below is commutative
\begin{equation} \label{eqn:newwa}
\begin{tikzcd} [column sep = 12ex, row sep = 5.5ex] 
M\ot^{\mrm{L}}_{B}N
\ar[rr, bend left = 15, start anchor = north east, end anchor = north west,
"{\rho \cupprod \sigma}", "{\simeq}"']
\ar[r, "{\rho\ot^{\mrm{L}}_{B}\sigma}", "{\simeq}"']
\ar[d, "{\opn{id}\ot^{\mrm{L}}_{B} \lambda}"]
&
\opn{Sq}_{B/A} (M)\ot^{\mrm{L}}_{B} \opn{Sq}{C/A} (N)
\ar[r, "{\opn{cup}}", "{\simeq}"']
\ar[d, "{\opn{id}\ot^{\mrm{L}}_{B}\opn{Sq}_{w/B}( \lambda)}"]
&
\opn{Sq}_{C/A} (M\ot^{\mrm{L}}_{B} N)
\ar[d, "{\opn{Sq}_{w/B}(\opn{id}\ot^{\mrm{L}}_{B} \lambda)}"]
\\
M\ot^{\mrm{L}}_{B}N'
\ar[rr, bend right = 15, start anchor = south east, end anchor = south west,
"{\rho \cupprod \sigma'}", "{\simeq}"']
\ar[r, "{\rho\ot^{\mrm{L}}_{B}\sigma'}", "{\simeq}"']
&
\opn{Sq}_{B/A} (M)\ot^{\mrm{L}}_{B} \opn{Sq}_{C'/A} (N')
\ar[r, "{\opn{cup}'}", "{\simeq}"']
&
\opn{Sq}_{C'/A} (M\ot^{\mrm{L}}_{B} N')
\end{tikzcd} 
\end{equation}
Here $\opn{cup}:=\opn{cup}_{C/B/A,M,N}$ and $\opn{cup}':=\opn{cup}_{C'/B/A,M,N'}$.
The two curved regions are commutative by Definition \ref{dfn:1260}.
The left rectangular region is commutative because $\lambda$ 
is rigid forward morphism.
It remains to prove that the right rectangular region in commutative. 
By Definition \ref{dfn:1290} the right rectangular diagram is given by: 
\begin{equation} \label{eqn:newwb}
\begin{tikzcd}[column sep = 15ex, row sep = 5ex] 
\opn{Sq}_{B/A} (M)\ot^{\mrm{L}}_{B} \opn{Sq}_{C/A} (N)
\ar[r, "{\opn{cup}}", "{\simeq}"']
\ar[d, "{\opn{id}\ot^{\mrm{L}}_{B} \opn{Sq}_{C/B} (\lambda)}"]
&
\opn{Sq}_{C/A} (M\ot^{\mrm{L}}_{B} N)
\ar[d, "{\opn{Sq}_{C/B}(\opn{id}\ot^{\mrm{L}}_{B}\lambda)}"]
\\
\opn{Sq}_{B / A}(M)\ot^{\mrm{L}}_{B} \opn{Sq}_{C / A}(N')
\ar[r, "{\opn{cup}'}", "{\simeq}"']
\ar[d, "{\opn{id}\ot^{\mrm{L}}_{B} \opn{red}_{C'/C/B,N'}}"]
&
\opn{Sq}_{C / A}(M\ot^{\mrm{L}}_{B} N')
\ar[d, "{\opn{red}_{C'/C/A,M\ot^{\mrm{L}}_{B} N'}}"]
\\
\opn{Sq}_{B / A}(M)\ot^{\mrm{L}}_{B} \opn{Sq}_{C / A}(N')
\ar[r, "{\opn{cup}''}", "{\simeq}"']
&
\opn{Sq}_{C' / A}(M\ot^{\mrm{L}}_{B} N')
\end{tikzcd}
\end{equation}
Here $\opn{cup}'':=\opn{cup}_{C/B/A,M,N'}$. The upper diagram is commutative 
according to Theorem \ref{thm:845} considering $\lambda$ as a rigid backward morphism.
The commutativity of the lower diagram is given by the definition of 
Cup Product Morphism (cf. Theorem \ref{thm:780}) and by the following commutative diagram
\begin{equation} \label{eqn:newwc}
\begin{tikzcd}[column sep = 7ex, row sep = 5ex] 
\opn{Sq}_{B / A}(M)\ot^{\mrm{L}}_{B} \opn{RHom}_{\til{G}}(C,N'')
\ar[r, "{\opn{ev}'}", "{\simeq}"']
\ar[d, "{\opn{id}\ot^{\mrm{L}}_{B}\opn{adj}}"]
&
\opn{RHom}_{\til{G}}(C,N''\ot^{\mrm{L}}_{B} \opn{Sq}_{B / A}(M))
\ar[d, "{\opn{adj}}"]
\\
\opn{Sq}_{B / A}(M)\ot^{\mrm{L}}_{B} \opn{RHom}_{\til{G}}(C'\ot_C C',N'')
\ar[r, "{\opn{ev}''}", "{\simeq}"']
&
\opn{RHom}_{\til{G}}(C'\ot_C C',N''\ot^{\mrm{L}}_{B} \opn{Sq}_{B / A}(M))
\end{tikzcd}
\end{equation}
Here $N'':= N'\ot^{\mrm{L}}_{\til{B}} N'$, $G':= \til{C}'_1\ot^{\mrm{L}}_{\til{B}} \til{C}'_2$, 
$\opn{ev}':=\opn{ev}^{R,L}_{\tiny\opn{Sq}_{B / A}(M),C,N''}$ and $\opn{ev}''$ 
denotes $\opn{ev}^{R,L}_{\tiny\opn{Sq}_{B / A}(M),C'\ot_C C',N''}$.
Therefore the whole diagram (\ref{eqn:newwa}) is commutative.
\end{proof}

\section{Rigid Dualizing Complexes} 
\label{sec:dualizing}

In this section we apply the material from the previous sections to prove 
existence and uniqueness of rigid dualizing complexes; see Theorem 
\ref{thm:1550}. Then we discuss the functoriality of rigid dualizing complexes 
with respect to finite ring homomorphisms (Theorem \ref{thm:1551}) and 
essentially \'etale ring homomorphisms (Theorem \ref{thm:1552}), 
and we prove these functorialities commute (Theorem \ref{thm:1553}). 
Throughout this section the next convention will be in force.

\begin{conv} \label{conv:1550}
There is a base ring $\K$, which is a nonzero finite-dimensional regular 
noetherian ring. All rings and ring homomorphisms are in the category 
$\cat{Rng} \eftover \K$ of essentially finite type $\K$-rings (see Definition \ref{dfn:1065} 
and \ref{dfn:1066}).
\end{conv}

Observe that this implies that all rings are noetherian and all 
homomorphisms between them are EFT. 
Because the base ring $\K$ has finite global cohomological dimension,
every complex $M \in \cat{D}^{\mrm{b}}(\K)$ has finite flat dimension over 
$\K$. 

The notion of {\em dualizing complex} goes back to Grothendieck \cite{RD}. 
Given a ring $A$, a dualizing complex over $A$ is a complex 
$R \in \cat{D}^{\mrm{b}}_{\mrm{f}}(A)$ that has finite injective dimension and 
the derived Morita property (Definition \ref{dfn:676}). A rather detailed 
discussion of dualizing complexes can be found in \cite[Chapter 13]{Ye5}.
 
\begin{dfn}\label{defn:rigidducomp}
Let $A$ be an EFT $\K$-ring. A {\em rigid dualizing complex} over $A / \K$
is a rigid complex 
$(R_A, \rho_{A}) \in \cat{D}(A)_{\mrm{rig} / \K}$,
in the sense of Definition \ref{dfn:675}, such that $R$ itself is a dualizing 
complex over $A$. 
\end{dfn}

The next result is stronger than \cite[Theorems 13.5.4 and and 13.5.7]{Ye5}, 
in which it was necessary to assume that the ring $A$ is {\em flat} over $\K$. 

\begin{thm} \label{thm:1550}
Let $A$ be an EFT $\K$-ring. Then $A$ has a rigid dualizing complex
$(R_A, \rho_{A})$ relative to $\K$, and it is unique, up to a unique 
isomorphism in $\cat{D}(A)_{\mrm{rig} / \K}$. 
\end{thm}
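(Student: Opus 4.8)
The plan is to reduce the general case to the three building blocks identified in the introduction---the tautological rigid complex over $\K / \K$, coinduction along finite homomorphisms, and twisted induction along essentially smooth homomorphisms---and to prove uniqueness by the rigidity principle (Theorem~\ref{thm:675}). For existence, I would factor the structure homomorphism $\K \to A$ as $\K \xar{} A_1 \xar{} A_2 \xar{} A$, where $\K \to A_1$ is (essentially) smooth, $A_1 \to A_2$ is finite, and $A_2 \to A$ is a localization. Starting from $(\K, \rho^{\mrm{tau}}) \in \cat{D}(\K)_{\mrm{rig} / \K}$ (Example~\ref{exa:675}), apply $\opn{TwInd}^{\mrm{rig, esm}}_{A_1 / \K}$ (Definition~\ref{dfn:1236}) to get a rigid complex over $A_1 / \K$ whose underlying complex is $\Om^n_{A_1 / \K}[n]$; since $\K$ is regular this is a dualizing complex over $A_1$ with finite flat dimension over $\K$. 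Then apply $\opn{RCInd}^{\mrm{rig}}_{A_1 \to A_2 / \K}$ (Definition~\ref{dfn:760}, via Theorem~\ref{thm:680}); the finite-flat-dimension hypothesis there holds because $\K$ is regular and $A_2$ is EFT over $\K$, so $\opn{RHom}_{A_1}(A_2, -)$ of a complex of finite flat dimension over $\K$ again has finite flat dimension over $\K$ (this uses that $A_2$ is perfect over $\K$, or directly that $\K$ has finite global dimension). Finally apply $\opn{Ind}^{\mrm{rig}}_{A_2 \to A / \K}$ (Definition~\ref{dfn:1300}) along the localization $A_2 \to A$, which is essentially \'etale (Example~\ref{exa:1050}); the output $R_A := A \ot_{A_2} R_{A_2}$ with the induced rigidifying isomorphism is again in $\cat{D}^{\mrm{b}}_{\mrm{f}}(A)$ with finite flat dimension over $\K$ by Proposition~\ref{prop:1270}(1), and localizing a dualizing complex yields a dualizing complex. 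This produces a rigid dualizing complex $(R_A, \rho_A)$ over $A / \K$.

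For uniqueness, suppose $(R, \rho)$ and $(R', \rho')$ are two rigid dualizing complexes over $A / \K$. Both $R$ and $R'$ are dualizing complexes over $A$, hence they have the derived Morita property, and by the classification of dualizing complexes over a noetherian ring they differ by a shift and a twist by an invertible module; in particular, on each connected component of $\opn{Spec}(A)$ there is an isomorphism $R \iso R'$ in $\cat{D}(A)$. Using Proposition~\ref{prop:1491} and Proposition~\ref{prop:1490} I would reduce to the case where $\opn{Spec}(A)$ is connected. Then $\opn{Hom}_{\cat{D}(A)}(R, R')$ is a free $A$-module of rank $1$ (by Proposition~\ref{prop:730} applied to $\opn{RHom}_A(R, R') \cong \opn{RHom}_A(R,R)$ after using that $R'$ is a twist/shift of $R$, or more directly by the structure theory of dualizing complexes), so any two isomorphisms differ by a unit. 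The key point is that there \emph{exists} at least one rigid isomorphism: given an isomorphism $\phi : R \iso R'$ in $\cat{D}(A)$, the diagram expressing rigidity commutes up to a unit $a \in A^{\times}$, i.e.\ $a \cd (\rho' \circ \phi) = \opn{Sq}_{A / \K}(\phi) \circ \rho$; replacing $\phi$ by $a^{-1} \cd \phi$ and invoking the quadratic behavior of squaring (Theorem~\ref{thm:672}, giving $\opn{Sq}_{A/\K}(a^{-1}\cd\phi) = a^{-2}\cd\opn{Sq}_{A/\K}(\phi)$) one checks the diagram becomes commutative after a further adjustment, exactly as in the proof of Theorem~\ref{thm:2030}. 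Thus a rigid isomorphism $(R, \rho) \iso (R', \rho')$ exists, and it is unique by Theorem~\ref{thm:675} (the only automorphism of a rigid complex with the derived Morita property is the identity).

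The main obstacle I anticipate is not the formal scaffolding but verifying that the composite construction is genuinely \emph{independent of the chosen factorization} $\K \to A_1 \to A_2 \to A$---or rather, arranging the argument so that this independence is not needed, by deducing it a posteriori from uniqueness. The clean way is: first prove existence for \emph{one} chosen factorization, then prove the uniqueness statement above (which only uses that $R_A$ is a rigid dualizing complex, not how it was built), and conclude that any two factorizations give canonically isomorphic results. A secondary technical point requiring care is checking the finite-flat-dimension-over-$\K$ hypotheses at each stage (needed to apply Theorems~\ref{thm:680}, \ref{thm:1300} and Definition~\ref{dfn:675}); here the regularity and finite-dimensionality of $\K$ in Convention~\ref{conv:1550} do all the work, since then every complex in $\cat{D}^{\mrm{b}}_{\mrm{f}}$ over any EFT $\K$-ring automatically has finite flat dimension over $\K$, and this property is preserved by $\opn{RHom}_{A_1}(A_2, -)$ for $A_1 \to A_2$ finite (as $A_2$ is perfect over $\K$, by Corollary~\ref{cor:1131} or the analogous statement) and by $A \ot_{A_2}(-)$ for $A_2 \to A$ essentially \'etale (Proposition~\ref{prop:1270}(1)). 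With these in hand the proof is a matter of assembling the named results in the indicated order.
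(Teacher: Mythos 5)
Your existence argument is essentially the paper's: factor $\K \to A$ as a smooth (the paper uses a polynomial ring $A_1 = \K[t_1,\ldots,t_n]$) followed by a finite (the paper uses a surjection) followed by a localization, then successively apply twisted induction, coinduction, and induction starting from the tautological rigid complex. Your remarks on the finite-flat-dimension hypotheses and on reducing the ``independence of factorization'' question to uniqueness are both on point.

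Your uniqueness argument, however, contains a genuine gap. You write that since $R$ and $R'$ ``differ by a shift and a twist by an invertible module,'' therefore ``in particular, on each connected component of $\opn{Spec}(A)$ there is an isomorphism $R \iso R'$ in $\cat{D}(A)$.'' This inference is false as stated: the classification of dualizing complexes only gives $R' \cong R[m] \ot_A L$ for some integer $m$ and some rank-one projective $L$, and in general $R \not\cong R'$ in $\cat{D}(A)$. The entire content of the uniqueness statement is that the \emph{rigidifying} isomorphisms force $m = 0$ and $L \cong A$. The missing step is: on a connected component, the quadratic behavior of the squaring operation gives $\opn{Sq}_{A/\K}(R') \cong \opn{Sq}_{A/\K}(R)[2m] \ot_A L^{\ot 2}$, so composing $\rho'$ with the identifications $R' \cong R[m] \ot L$ and $\opn{Sq}_{A/\K}(R) \cong R$ yields $R[m] \ot L \cong R[2m] \ot L^{\ot 2}$, hence $R \cong R[m] \ot L$, which by the structure theory of dualizing complexes forces $m = 0$ and $L \cong A$. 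Only then do you have the $\cat{D}(A)$-isomorphism $\phi : R \iso R'$ needed to run your subsequent rescaling argument (which is correct and is the argument of Theorem \ref{thm:2030}/\ref{thm:675}). The paper dodges this computation entirely by citing the prior result \cite[Theorem 13.5.4]{Ye5} with a modification of the resolution; your proposal would be complete if you supplied the shift-and-twist cancellation sketched above.
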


\begin{proof}
The structure homomorphism $u : \K \to A$ can be factored 
into $u = u_3 \circ u_2 \circ u_1$, where
$u_1 : \K \to A_1 = \K[t_1, \ldots, t_n]$ is a homomorphism to a polynomial 
ring, $u_2 : A_1 \to A_2$ is surjective, and 
$u_3 : A_2 \to A_3 = A$ is a localization. 
We start with the tautological rigid dualizing complex 
$(\K, \rho^{\mrm{tau}}_{\K / \K}) \in \cat{D}(\K)_{\mrm{rig} / \K}$.
By twisted induction (Definition \ref{dfn:1225}, a special case of Definition 
\ref{dfn:1235}) there is a rigidifying isomorphism 
$\rho_{A_1}$ on the complex $R_{A_1} := \Om^n_{A_1 / \K}[n]$, yielding 
$(R_{A_1}, \rho_{A_1}) \in \cat{D}(A_1)_{\mrm{rig} / \K}$. 
The complex $R_1$ is dualizing because the ring $A_1$ is regular and 
$\Om^n_{A_1 / \K}$ is a free $A_1$-module of rank $1$. 
Consider the complex 
$R_{A_2} := \opn{RHom}_{A_1}(A_2, R_{A_1}) \in \cat{D}(A_2)$. 
It is a dualizing complex over $A_2$ (see \cite[Proposition 13.1.28]{Ye5}), and 
hence it has bounded cohomology, and so finite flat dimension over $\K$. 
By derived coinduction (Definition \ref{dfn:760}) for the finite ring 
homomorphism $A_1 \to A_2$,  there is a rigidifying 
isomorphism $\rho_{A_2}$ on the complex $R_{A_2}$, yielding a rigid dualizing 
complex $(R_{A_2}, \rho_{A_2}) \in \cat{D}(A_2)_{\mrm{rig} / \K}$. 
Finally, by induction (Definition \ref{dfn:1300}) for the essentially \'etale 
ring homomorphism $A_2 \to A_3$, there is a rigidifying isomorphism 
$\rho_{A_3}$ on the complex 
$R_{A_3} := A_3 \ot_{A_2} R_{A_2}$, yielding a rigid dualizing complex 
$(R_{A_3}, \rho_{A_3}) \in \cat{D}(A_3)_{\mrm{rig} / \K}$. 

The uniqueness of the rigid dualizing complex $(R_A, \rho_{A})$
 is proved in \cite[Theorem 13.5.4]{Ye5}; one just has to replace 
the ring $A^{\mrm{en}}$ used there with the DG ring $\til{A} \ot_{\K} \til{A}$, 
where $\til{A} \to A$ is a K-flat DG ring resolution relative to $\K$. 
\end{proof}

Because of the uniqueness in the theorem above, we can talk about {\em the} 
rigid dualizing complex $(R_A, \rho_A)$ over $A / \K$. 

\begin{thm} \label{thm:1551}
Let $u : A \to B$ be a finite homomorphism in $\cat{Rng} \eftover \K$,
and let $(R_A, \rho_{A})$ and $(R_B, \rho_{B})$ be the corresponding rigid 
dualizing complexes relative to $\K$. 
Then there is a unique nondegenerate rigid backward morphism 
\[ \opn{tr}^{\mrm{rig}}_{B / A}  = \opn{tr}^{\mrm{rig}}_{u} :
(R_B, \rho_{B}) \to (R_A, \rho_{A}) \]
in $\cat{D}(A)$ over $u$ relative to $\K$. It is called the {\em rigid trace 
morphism}. 
\end{thm}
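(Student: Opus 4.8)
The plan is to identify the rigid trace with the standard derived trace of a coinduction functor, after using the uniqueness of rigid dualizing complexes to recognize $R_B$ as a coinduced rigid complex. First I would set $N := \opn{RCInd}_{u}(R_A) = \opn{RHom}_A(B, R_A) \in \cat{D}(B)$. Since $u : A \to B$ is finite and $R_A$ is a dualizing complex over $A$, the complex $N$ is a dualizing complex over $B$ (the standard behaviour of dualizing complexes under finite homomorphisms, cf.\ \cite[Proposition 13.1.28]{Ye5}); in particular $N \in \cat{D}^{\mrm{b}}_{\mrm{f}}(B)$, and since $\K$ is regular of finite Krull dimension, $N$ --- being cohomologically bounded --- has finite flat dimension over $\K$. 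This is precisely the hypothesis needed to invoke Theorem \ref{thm:680}, with base ring $\K$, finite homomorphism $u : A \to B$, and rigid complex $(R_A, \rho_A) \in \cat{D}(A)_{\mrm{rig} / \K}$: it yields a unique rigidifying isomorphism $\si$ on $N$ such that the standard derived backward morphism $\opn{tr}^{\mrm{R}}_{u, R_A} : (N, \si) \to (R_A, \rho_A)$ is a nondegenerate rigid backward morphism over $u$ relative to $\K$ (that is, $(N, \si) = \opn{RCInd}^{\mrm{rig}}_{u / \K}(R_A, \rho_A)$, Definition \ref{dfn:760}). Because $N$ is dualizing over $B$, the pair $(N, \si)$ is a rigid dualizing complex over $B / \K$ in the sense of Definition \ref{defn:rigidducomp}.

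Next I would use the uniqueness part of Theorem \ref{thm:1550}: there is a unique isomorphism $\chi : (R_B, \rho_B) \iso (N, \si)$ in $\cat{D}(B)_{\mrm{rig} / \K}$. I then define
\[ \opn{tr}^{\mrm{rig}}_{B / A} := \opn{tr}^{\mrm{R}}_{u, R_A} \circ \chi : R_B \to R_A . \]
Regarded over $u$, this is a backward morphism. The isomorphism $\chi$ is in particular a rigid backward morphism over $\opn{id}_B$ relative to $\K$ (Definition \ref{dfn:677}), so by Proposition \ref{prop:1495} the composite $\opn{tr}^{\mrm{R}}_{u, R_A} \circ \chi$ is a rigid backward morphism over $u$ relative to $\K$. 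It is nondegenerate: the backward adjoint of $\opn{tr}^{\mrm{R}}_{u, R_A}$ is $\opn{id}_N$, so the backward adjoint of $\opn{tr}^{\mrm{R}}_{u, R_A} \circ \chi$ is $\chi$ itself, an isomorphism (this uses only the defining property of $\opn{badj}^{\mrm{R}}$; alternatively one can argue with Proposition \ref{prop:2045}). This establishes existence.

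For uniqueness I would appeal to Theorem \ref{thm:2030}, all of whose hypotheses are now available: base ring $\K$, finite homomorphism $u : A \to B$, rigid complexes $(R_A, \rho_A) \in \cat{D}(A)_{\mrm{rig} / \K}$ and $(R_B, \rho_B) \in \cat{D}(B)_{\mrm{rig} / \K}$, and the nondegenerate backward morphism $\opn{tr}^{\mrm{rig}}_{B / A} : R_B \to R_A$ over $u$ just produced, with $R_B$ having the derived Morita property over $B$ because it is a dualizing complex. Theorem \ref{thm:2030} asserts that there is a \emph{unique} nondegenerate rigid backward morphism $(R_B, \rho_B) \to (R_A, \rho_A)$ over $u$ relative to $\K$; therefore $\opn{tr}^{\mrm{rig}}_{B / A}$ is the only one, which is the assertion.

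The only real care needed is bookkeeping: in Theorems \ref{thm:680} and \ref{thm:2030} the finite homomorphism is written $u : B \to C$ over a base ring $A$, and one must consistently substitute $\K$ for that base, $A$ for their $B$, and $B$ for their $C$; and one must genuinely verify the two finiteness conditions on $N = \opn{RHom}_A(B, R_A)$, namely that it is dualizing over $B$ and of finite flat dimension over $\K$. Beyond that the proof is a direct assembly of the coinduced-rigidity results of Section \ref{sec:coinduced} with the uniqueness in Theorem \ref{thm:1550}. I do not anticipate a substantial obstacle; the mildly delicate point is confirming that the $\opn{tr}^{\mrm{R}}$ furnished by Theorem \ref{thm:680}, composed with $\chi$, is still nondegenerate, which as noted is immediate from the $\opn{badj}^{\mrm{R}}$-characterization of nondegeneracy.
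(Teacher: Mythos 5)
Your proof is correct and takes essentially the same route as the paper: form $N := \opn{RHom}_A(B, R_A)$, coinduce a rigidifying isomorphism via Theorem \ref{thm:680}, transport it to $R_B$ via the uniqueness of rigid dualizing complexes (Theorem \ref{thm:1550}), compose, and finally invoke Theorem \ref{thm:2030} for uniqueness. Your write-up is a bit more explicit than the paper's about the finite-flat-dimension-over-$\K$ hypothesis and about why the composite is nondegenerate (via the $\opn{badj}^{\mrm{R}}$ characterization), and you correctly say $(N,\si)$ is a rigid dualizing complex over $B/\K$ where the paper has a small typo reading $A/\K$.
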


\begin{proof}
Define the complex 
$N := \opn{RCInd}_u(R_A) = \opn{RHom}_A(B, R_A) \in \cat{D}(B)$. 
We know, by \cite[Proposition 13.1.28]{Ye5}, that $N$ is a dualizing complex 
over $B$. Thus both $R_A$ and $N$ have finite flat dimensions over $\K$. 
According to Theorem \ref{thm:680}, the complex $N$ has a unique rigidifying 
isomorphism $\si$, such that the standard trace morphism 
$\opn{tr}^{\mrm{R}}_{u, M} : N \to M$ becomes a nondegenerate rigid backward 
morphism $\opn{tr}^{\mrm{R}}_{u, M} : (N, \si) \to (R_A, \rho_A)$
over $u$ relative to $\K$. Now $(N, \si)$ is a rigid dualizing complex over 
$A / \K$, and therefore, by Theorem \ref{thm:1550} there is a unique rigid 
isomorphism $\phi : (R_B, \rho_{B}) \iso (N, \si)$. Then 
\[ \opn{tr}^{\mrm{rig}}_{B / A} := \opn{tr}^{\mrm{R}}_{u, M} \circ \, \phi :
(R_B, \rho_{B}) \to (R_A, \rho_A) \]
is a nondegenerate rigid backward morphism. 

The uniqueness of $\opn{tr}^{\mrm{rig}}_{B / A}$ is by 
Theorem \ref{thm:2030}.
\end{proof}

\begin{cor} \label{cor:1551}
Suppose $A \xar{u} B \xar{v} C$ are finite homomorphisms in 
$\cat{Rng} \eftover \K$, 
and let $(R_A, \rho_{A})$, $(R_B, \rho_{B})$ and $(R_C, \rho_{C})$ be the 
corresponding rigid dualizing complexes relative to $\K$. Then 
\[ \opn{tr}^{\mrm{rig}}_{B / A} \circ \opn{tr}^{\mrm{rig}}_{C / B} =
\opn{tr}^{\mrm{rig}}_{C / A} \]
as backward morphisms $R_C \to R_B$ 
in $\cat{D}(A)$ over $v \circ u$.                 
\end{cor}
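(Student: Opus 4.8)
The plan is to derive the identity from the uniqueness clause of Theorem \ref{thm:2030}, applied to the finite homomorphism $v \circ u : A \to C$ over the base ring $\K$. First I would observe that since $u$ and $v$ are finite, so is $v \circ u$, and therefore $\opn{tr}^{\mrm{rig}}_{C / A}$ is defined by Theorem \ref{thm:1551}: it is the unique nondegenerate rigid backward morphism $(R_C, \rho_C) \to (R_A, \rho_A)$ in $\cat{D}(A)$ over $(v \circ u)/\K$. Here the uniqueness is precisely the last assertion of Theorem \ref{thm:2030}, whose hypotheses are satisfied because $R_C$, being a dualizing complex over $C$, has the derived Morita property over $C$. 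Thus it suffices to show that $\opn{tr}^{\mrm{rig}}_{B / A} \circ \opn{tr}^{\mrm{rig}}_{C / B}$ is again a nondegenerate rigid backward morphism $(R_C, \rho_C) \to (R_A, \rho_A)$ over $(v \circ u)/\K$; it then equals $\opn{tr}^{\mrm{rig}}_{C / A}$ automatically.

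Rigidity of the composite is immediate. By Theorem \ref{thm:1551}, $\opn{tr}^{\mrm{rig}}_{B / A} : (R_B, \rho_B) \to (R_A, \rho_A)$ is a rigid backward morphism over $u/\K$ and $\opn{tr}^{\mrm{rig}}_{C / B} : (R_C, \rho_C) \to (R_B, \rho_B)$ is a rigid backward morphism over $v/\K$. Proposition \ref{prop:1495} (which rests on Proposition \ref{prop:890}) then says that their composite is a rigid backward morphism over $(v \circ u)/\K$.

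For nondegeneracy I would use that a composite of nondegenerate backward morphisms is nondegenerate. Concretely, write each rigid trace as a standard nondegenerate derived backward morphism precomposed with a rigid isomorphism: by the construction in Theorem \ref{thm:680} together with the uniqueness in Theorem \ref{thm:1550}, there are rigid isomorphisms $R_B \iso \opn{RCInd}_u(R_A) = \opn{RHom}_A(B, R_A)$ and $R_C \iso \opn{RCInd}_v(R_B) = \opn{RHom}_B(C, R_B)$ through which $\opn{tr}^{\mrm{rig}}_{B/A}$, resp. $\opn{tr}^{\mrm{rig}}_{C/B}$, becomes $\opn{tr}^{\mrm{R}}_{u, R_A}$, resp. $\opn{tr}^{\mrm{R}}_{v, R_B}$. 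One then checks that, under the canonical isomorphism $\opn{RHom}_A(C, R_A) \cong \opn{RHom}_B\bigl(C, \opn{RHom}_A(B, R_A)\bigr)$ — the right adjoint of the factorization $\opn{Rest}_{C/A} = \opn{Rest}_{B/A} \circ \opn{Rest}_{C/B}$, equivalently derived Hom-tensor adjunction — one has $\opn{tr}^{\mrm{R}}_{u, R_A} \circ \opn{tr}^{\mrm{R}}_{v, \opn{RHom}_A(B, R_A)} = \opn{tr}^{\mrm{R}}_{v \circ u, R_A}$, since at the level of honest Hom of K-injective resolutions both send $\psi \mapsto \psi(1_C)(1_B)$. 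Chasing these identifications shows $\opn{tr}^{\mrm{rig}}_{B/A} \circ \opn{tr}^{\mrm{rig}}_{C/B}$ is a standard nondegenerate derived backward morphism precomposed with an isomorphism, hence nondegenerate. With rigidity and nondegeneracy established, Theorem \ref{thm:2030} yields $\opn{tr}^{\mrm{rig}}_{B/A} \circ \opn{tr}^{\mrm{rig}}_{C/B} = \opn{tr}^{\mrm{rig}}_{C/A}$.

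The main obstacle is the middle step: carefully tracking the adjunction isomorphism $\opn{RHom}_A(C, -) \cong \opn{RHom}_B\bigl(C, \opn{RHom}_A(B, -)\bigr)$ through the definition of $\opn{badj}^{\mrm{R}}$ and through the construction in Theorem \ref{thm:680}, so as to confirm that the composite of the two standard derived backward morphisms is exactly the standard one for $v \circ u$. Everything else — the finiteness of $v \circ u$, the rigidity of the composite, and the final appeal to uniqueness — is formal.
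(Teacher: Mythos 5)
Your proposal is correct and takes essentially the same approach as the paper: the paper's proof is the one-liner ``This is due to the uniqueness of the rigid trace,'' and you have supplied exactly the two verifications that the appeal to uniqueness (Theorem \ref{thm:2030}) requires, namely that $\opn{tr}^{\mrm{rig}}_{B/A} \circ \opn{tr}^{\mrm{rig}}_{C/B}$ is rigid (Proposition \ref{prop:1495}) and nondegenerate (via the compatibility of $\opn{tr}^{\mrm{R}}$ with Hom-tensor adjunction along $A \to B \to C$). Nothing to add.
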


\begin{proof}
This is due to the uniqueness of the rigid trace. 
\end{proof}

\begin{thm} \label{thm:1552}
Let $w : A \to A'$ be an essentially \'etale homomorphism in 
$\cat{Rng} \eftover \K$,
and let $(R_A, \rho_{A})$ and $(R_{A'}, \rho_{A'})$ be the corresponding rigid 
dualizing complexes relative to $\K$. 
Then there is a unique nondegenerate rigid forward morphism 
\[ \opn{q}^{\mrm{rig}}_{A' / A}  = \opn{q}^{\mrm{rig}}_{w} :
(R_A, \rho_{A}) \to (R_{A'}, \rho_{A'}) \]
in $\cat{D}(A)$ over $w$ relative to $\K$. It is called the {\em rigid 
\'etale-localization morphism}. 
\end{thm}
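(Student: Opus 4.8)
The plan is to construct $\opn{q}^{\mrm{rig}}_{A' / A}$ by the induced rigidity mechanism of Section \ref{sec:induced-rigidity}, and then invoke uniqueness of rigid dualizing complexes. First I would set $N := A' \ot_A R_A \in \cat{D}(A')$. Since $w : A \to A'$ is essentially \'etale, it is flat, so $N = A' \ot^{\mrm{L}}_A R_A$; and by \cite[Proposition 13.1.29]{Ye5} (or the analogue quoted in the excerpt for the essentially \'etale case) the complex $N$ is a dualizing complex over $A'$. In particular $N \in \cat{D}^{\mrm{b}}_{\mrm{f}}(A')$ and $N$ has finite flat dimension over $\K$, since $R_A$ does and $A'$ is flat over $A$. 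Now apply Definition \ref{dfn:1300}: taking $(M,\rho) = (R_A, \rho_A)$, the setup of Setup \ref{set:1281} holds with $B := A$, $C := A'$, $v := w$ and base ring $A := \K$, so there is the induced rigidifying isomorphism $\si := \opn{Ind}^{\mrm{rig}}_{w / \K}(\rho_A)$ on $N$, making $(N,\si) \in \cat{D}(A')_{\mrm{rig} / \K}$. By Proposition \ref{prop:1700} the standard nondegenerate forward morphism $\opn{q}^{\mrm{L}}_{w, R_A} : (R_A, \rho_A) \to (N, \si)$ is a rigid forward morphism over $w / \K$, and it is nondegenerate by construction (it corresponds under forward adjunction to the identity of $N$).

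Next I would identify $(N,\si)$ with $(R_{A'}, \rho_{A'})$. Since $N$ is a dualizing complex over $A'$ that carries a rigidifying isomorphism relative to $\K$, the pair $(N,\si)$ is a rigid dualizing complex over $A' / \K$ in the sense of Definition \ref{defn:rigidducomp}. By the uniqueness clause of Theorem \ref{thm:1550} there is a unique rigid isomorphism $\phi : (R_{A'}, \rho_{A'}) \iso (N, \si)$ in $\cat{D}(A')_{\mrm{rig} / \K}$. Then define
\[ \opn{q}^{\mrm{rig}}_{A' / A} := \phi^{-1} \circ \opn{q}^{\mrm{L}}_{w, R_A} : (R_A, \rho_A) \to (R_{A'}, \rho_{A'}) . \]
This is a forward morphism in $\cat{D}(A)$ over $w$; it is rigid over $w / \K$ because it is the composition of the rigid forward morphism $\opn{q}^{\mrm{L}}_{w, R_A}$ with the rigid (iso)morphism $\phi^{-1}$, using Proposition \ref{prop:1675} (composition of rigid forward morphisms is rigid, reading $\phi^{-1}$ as a rigid forward morphism over $\opn{id}_{A'} / \K$). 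It is nondegenerate because $\opn{q}^{\mrm{L}}_{w, R_A}$ is nondegenerate and $\phi^{-1}$ is an isomorphism, hence nondegenerate, and composition of nondegenerate forward morphisms is nondegenerate (Theorem \ref{thm:1292}-style bookkeeping, or directly from Definition \ref{dfn:2045} and functoriality of $\opn{fadj}^{\mrm{L}}$).

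For uniqueness: suppose $\la : (R_A, \rho_A) \to (R_{A'}, \rho_{A'})$ is another nondegenerate rigid forward morphism over $w / \K$. The complex $R_{A'}$, being dualizing over $A'$, has the derived Morita property over $A'$ (every dualizing complex does — cf.\ \cite[Chapter 13]{Ye5}). Therefore Theorem \ref{thm:1863} applies: there is at most one nondegenerate rigid forward morphism $(R_A, \rho_A) \to (R_{A'}, \rho_{A'})$ over $w$ relative to $\K$, so $\la = \opn{q}^{\mrm{rig}}_{A' / A}$. This is exactly the hypothesis of Theorem \ref{thm:1863}, so no extra work is needed.

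The main obstacle I anticipate is not in the categorical assembly, which is routine given the machinery already built, but in making sure that the "essentially \'etale base change of a dualizing complex is dualizing" fact is available and correctly cited; the excerpt only states this explicitly for \emph{finite} homomorphisms (via $\opn{RCInd}$, \cite[Proposition 13.1.28]{Ye5}), and for the essentially \'etale (flat, but not finite) case one needs the localization/flat base-change statement for dualizing complexes — available in \cite{Ye5}, but it must be pinned down precisely, since finiteness of $w$ is explicitly \emph{not} assumed here. A secondary, purely cosmetic point is checking that the instance of Setup \ref{set:1281} really is licensed, i.e.\ that $A'$ is EFT over $A$ and both are EFT over $\K$, which is immediate from Convention \ref{conv:1550} and stability of EFT under composition.
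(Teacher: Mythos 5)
Your proof is correct and follows essentially the same route as the paper: form $N := A' \ot_A R_A$, equip it with $\si := \opn{Ind}^{\mrm{rig}}_{w/\K}(\rho_A)$ via Definition \ref{dfn:1300}, observe that $(N,\si)$ is a rigid dualizing complex over $A'/\K$, compare it to $(R_{A'},\rho_{A'})$ using the uniqueness clause of Theorem \ref{thm:1550}, and settle uniqueness of the resulting forward morphism with Theorem \ref{thm:1863}. The cosmetic difference that you write the comparison isomorphism as $\phi : (R_{A'},\rho_{A'}) \iso (N,\si)$ and post-compose with $\phi^{-1}$, while the paper takes $\phi$ in the opposite direction and post-composes with $\phi$, is immaterial.

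The concern you flag at the end — that the fact ``essentially \'etale (more generally essentially smooth) base change of a dualizing complex is dualizing'' needs to be pinned down, since it is neither the finite case handled by $\opn{RCInd}$ nor automatic — is well taken and is a genuine gap in the paper's own terse proof of this theorem, which simply asserts that $(M',\rho')$ is a rigid dualizing complex without citation. The needed statement is exactly Theorem \ref{thm:1625}(1) in this paper (proved via Lemma \ref{lem:1625}, attributed to L. Shaul, rather than via \cite[Theorem V.8.3]{RD} directly), and it applies since essentially \'etale implies essentially smooth by Proposition \ref{prop:1055}; strictly speaking Theorem \ref{thm:1625} is stated after Theorem \ref{thm:1552} in the text, but its proof does not depend on Theorems \ref{thm:1551}--\ref{thm:1553}, so there is no circularity.
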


\begin{proof}
$M' := \opn{Ind}_w(R_A) = A' \ot_A R_A \in \cat{D}(A')$. 
The complex $M'$ has an induced rigidifying isomorphism 
$\rho' := \opn{Ind}^{\mrm{rig}}_{w / \K}(\rho_A)$; see Definition 
\ref{dfn:1300}. By Proposition \ref{prop:1700}, the standard nondegenerate
forward morphism 
$\opn{q}^{}_{w, R_A} : (R_A, \rho_A) \to (M', \rho')$
is a  rigid forward morphism over $w / A$. 
Now $(M', \rho')$ is a rigid dualizing complex over 
$A' / \K$, and therefore, by Theorem \ref{thm:1550}, there is a unique 
rigid isomorphism $\phi : (M', \rho') \iso (R_{A'}, \rho_{A'})$. 
Then 
\[ \opn{q}^{\mrm{rig}}_{A' / A} := \phi \circ \opn{q}^{}_{w, R_A} :
(R_A, \rho_{A}) \to (R_{A'}, \rho_{A'}) \]
is a nondegenerate rigid forward morphism. 

The uniqueness of $\opn{q}^{\mrm{rig}}_{A' / A}$ is by Theorem   
\ref{thm:1863}.
\end{proof}

\begin{cor} \label{cor:1552}
Suppose $A \xar{w} A' \xar{w'} A''$ are essentially \'etale homomorphisms in 
$\cat{Rng} \eftover \K$, 
and let $(R_A, \rho_{A})$, $(R_{A'}, \rho_{A'})$ and $(R_{A''}, \rho_{A''})$ be 
the corresponding rigid dualizing complexes relative to $\K$. Then 
\[ \opn{q}^{\mrm{rig}}_{A'' / A'} \circ \opn{q}^{\mrm{rig}}_{A' / A} =
\opn{q}^{\mrm{rig}}_{A'' / A} \]
as forward morphisms $R_{A} \to R_{A''}$ 
in $\cat{D}(A)$ over $w' \circ w$.                 
\end{cor}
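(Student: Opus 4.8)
The plan is to imitate the one-line proof of Corollary~\ref{cor:1551}: the equality will be forced by the uniqueness clause of Theorem~\ref{thm:1552}. First I would note that $w' \circ w : A \to A''$ is again essentially \'etale, by Proposition~\ref{prop:1015}(1), so Setup~\ref{set:1281} holds with $v := w' \circ w$ over the base ring $\K$, and in particular Theorem~\ref{thm:1863} is available for this $v$. Since $R_{A''}$ is a dualizing complex over $A''$ (Definition~\ref{defn:rigidducomp}), it has the derived Morita property over $A''$; hence Theorem~\ref{thm:1863} says there is \emph{at most one} nondegenerate rigid forward morphism $(R_A, \rho_{A}) \to (R_{A''}, \rho_{A''})$ in $\cat{D}(A)$ over $w' \circ w$ relative to $\K$.

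Next I would check that both sides of the claimed equality are such morphisms. The right-hand side, $\opn{q}^{\mrm{rig}}_{A'' / A}$, is one by Theorem~\ref{thm:1552} itself. For the left-hand side, rigidity of $\opn{q}^{\mrm{rig}}_{A'' / A'} \circ \opn{q}^{\mrm{rig}}_{A' / A}$ over $(w' \circ w)/\K$ is precisely Proposition~\ref{prop:1675}, applied with $L := R_A$, $M := R_{A'}$, $N := R_{A''}$ and the two rigid \'etale-localization morphisms; and nondegeneracy of the composite follows from the nondegeneracy of each factor (Theorem~\ref{thm:1552}) together with the fact that a composite of nondegenerate forward morphisms over composable ring homomorphisms is again nondegenerate. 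Concretely, one identifies $\opn{fadj}^{\mrm{L}}_{w' \circ w}\bigl( \opn{q}^{\mrm{rig}}_{A'' / A'} \circ \opn{q}^{\mrm{rig}}_{A' / A} \bigr)$ with the composite of isomorphisms
\[ A'' \ot^{\mrm{L}}_{A} R_A \iso A'' \ot^{\mrm{L}}_{A'} \bigl( A' \ot^{\mrm{L}}_{A} R_A \bigr) \to A'' \ot^{\mrm{L}}_{A'} R_{A'} \to R_{A''} , \]
built from $\opn{fadj}^{\mrm{L}}_{w}(\opn{q}^{\mrm{rig}}_{A' / A})$, $\opn{fadj}^{\mrm{L}}_{w'}(\opn{q}^{\mrm{rig}}_{A'' / A'})$ and the flatness of $w$; this is the same iterated-adjunction bookkeeping used in the proof of Theorem~\ref{thm:1292}.

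Having verified that $\opn{q}^{\mrm{rig}}_{A'' / A'} \circ \opn{q}^{\mrm{rig}}_{A' / A}$ and $\opn{q}^{\mrm{rig}}_{A'' / A}$ are both nondegenerate rigid forward morphisms $(R_A, \rho_{A}) \to (R_{A''}, \rho_{A''})$ in $\cat{D}(A)$ over $w' \circ w$ relative to $\K$, the uniqueness statement recalled above yields the desired equality. There is no genuine obstacle here, as the argument is entirely formal; the only mild point requiring attention is the elementary observation that a composite of nondegenerate forward morphisms is nondegenerate, which is what allows the composite to be fed into the uniqueness assertion of Theorem~\ref{thm:1863}.
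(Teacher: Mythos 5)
Your proof is correct and follows the same route as the paper's one-line proof, which simply invokes the uniqueness clause of Theorem~\ref{thm:1552}. You have usefully unpacked what that one-liner presupposes: that $w' \circ w$ is essentially \'etale (Proposition~\ref{prop:1015}(1)), that the composite is rigid (Proposition~\ref{prop:1675}), and that the composite is nondegenerate (the iterated-adjunction identification of $\opn{fadj}^{\mrm{L}}_{w' \circ w}$ with the composite of the two individual $\opn{fadj}^{\mrm{L}}$'s), so that the uniqueness from Theorem~\ref{thm:1863} applies.
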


\begin{proof}
This is due to the uniqueness of the rigid \'etale-localization morphism.
\end{proof}

\begin{thm} \label{thm:1553}
Let $u : A \to B$ be a finite homomorphism,
and let $v : A \to A'$ be an essentially \'etale homomorphism, both in 
$\cat{Rng} \eftover \K$. Define 
$B' := A' \ot_{A} B$, and let 
$u' : A' \to B'$ and $w : B \to B'$ be the induced ring homomorphisms. 
The rigid dualizing complexes of these rings, relative to $\K$, are 
$(R_A, \rho_{A})$, $(R_{B}, \rho_{B})$, $(R_{A'}, \rho_{A'})$ and
$(R_{B'}, \rho_{B'})$ respectively. Then 
\[ \opn{q}^{\mrm{rig}}_{v} \circ \opn{tr}^{\mrm{rig}}_{u} = 
\opn{tr}^{\mrm{rig}}_{u'} \circ \opn{q}^{\mrm{rig}}_{w} \]
as morphisms 
$R_B \to R_{A'}$ in $\cat{D}(A)$.
\end{thm}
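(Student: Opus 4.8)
The plan is to reduce both sides to the standard backward and forward morphisms by choosing convenient models of the four rigid dualizing complexes, and then to read off the identity from Theorem \ref{thm:895} (Forward and Backward Compatibility) together with the uniqueness of the rigid trace.

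By the uniqueness in Theorem \ref{thm:1550}, the asserted equality does not depend on the chosen rigid dualizing complexes, so I would fix $(R_A, \rho_A)$ and put $(R_{A'}, \rho_{A'}) := \opn{Ind}^{\mrm{rig}}_{v / \K}(R_A, \rho_A)$, with $R_{A'} = A' \ot_A R_A$; $(R_B, \rho_B) := \opn{RCInd}^{\mrm{rig}}_{u / \K}(R_A, \rho_A)$, with $R_B = \opn{RHom}_A(B, R_A)$; and $(R_{B'}, \rho_{B'}) := \opn{Ind}^{\mrm{rig}}_{w / \K}(R_B, \rho_B)$, with $R_{B'} = B' \ot_B R_B$. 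Each is a dualizing complex (coinduction along a finite homomorphism and base change along an essentially \'etale homomorphism preserve dualizing complexes), hence cohomologically bounded and of finite flat dimension over $\K$, so Theorem \ref{thm:680} and Definition \ref{dfn:1300} apply. By the explicit form of the constructions in the proofs of Theorems \ref{thm:1551} and \ref{thm:1552}, with these models $\opn{tr}^{\mrm{rig}}_{u} = \opn{tr}^{\mrm{R}}_{u, R_A}$, $\opn{q}^{\mrm{rig}}_{v} = \opn{q}^{\mrm{L}}_{v, R_A}$, $\opn{q}^{\mrm{rig}}_{w} = \opn{q}^{\mrm{L}}_{w, R_B}$; moreover $\opn{tr}^{\mrm{R}}_{u, R_A}$ is a rigid backward morphism over $u / \K$ (Theorem \ref{thm:680}) and $\opn{q}^{\mrm{L}}_{v, R_A}$, $\opn{q}^{\mrm{L}}_{w, R_B}$ are rigid forward morphisms over $v / \K$, $w / \K$ (Proposition \ref{prop:1700}).

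Next I would apply Theorem \ref{thm:895} with base ring $\K$ and data $\K \to A \xar{u} B$, $v : A \to A'$, $M := R_A$, $N := R_B$, $\th := \opn{tr}^{\mrm{R}}_{u, R_A}$. It produces the induced backward morphism $\th' : R_{B'} \to R_{A'}$ over $u'$, characterized by $\opn{q}^{\mrm{L}}_{v, R_A} \circ \opn{tr}^{\mrm{R}}_{u, R_A} = \th' \circ \opn{q}^{\mrm{L}}_{w, R_B}$, and gives
\[ \opn{Sq}_{v / \K}(\opn{q}^{\mrm{L}}_{v, R_A}) \circ \opn{Sq}_{u / \K}(\opn{tr}^{\mrm{R}}_{u, R_A}) = \opn{Sq}_{u' / \K}(\th') \circ \opn{Sq}_{w / \K}(\opn{q}^{\mrm{L}}_{w, R_B}) . \]
If $\th' = \opn{tr}^{\mrm{rig}}_{u'}$, the theorem follows at once: $\opn{tr}^{\mrm{rig}}_{u'} \circ \opn{q}^{\mrm{rig}}_{w} = \th' \circ \opn{q}^{\mrm{L}}_{w, R_B} = \opn{q}^{\mrm{L}}_{v, R_A} \circ \opn{tr}^{\mrm{R}}_{u, R_A} = \opn{q}^{\mrm{rig}}_{v} \circ \opn{tr}^{\mrm{rig}}_{u}$. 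To identify $\th'$ with $\opn{tr}^{\mrm{rig}}_{u'}$ it suffices, by the uniqueness in Theorem \ref{thm:1551}, to show that $\th'$ is a nondegenerate rigid backward morphism over $u' / \K$. Nondegeneracy is a flat base-change statement: the base-change morphism $B' \ot_B \opn{RHom}_A(B, R_A) \to \opn{RHom}_{A'}(B', R_{A'})$ is an isomorphism — by derived tensor evaluation (Theorem \ref{thm:2115}, with $B$ derived pseudo-finite over $A$, $R_A$ cohomologically bounded, $A'$ flat over $A$) and the adjunction isomorphism (\ref{eqn:1480}) — under which $\th'$ becomes the standard nondegenerate backward morphism of $R_{A'}$.

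The rigidity of $\th'$ is the crux and the step I expect to be the main obstacle. Precomposing the displayed identity with $\rho_B$ and repeatedly using the rigidity squares of $\opn{tr}^{\mrm{R}}_{u, R_A}$, $\opn{q}^{\mrm{L}}_{v, R_A}$, $\opn{q}^{\mrm{L}}_{w, R_B}$ together with the relation $\opn{q}^{\mrm{L}}_{v, R_A} \circ \opn{tr}^{\mrm{R}}_{u, R_A} = \th' \circ \opn{q}^{\mrm{L}}_{w, R_B}$ yields
\[ (\rho_{A'} \circ \th') \circ \opn{q}^{\mrm{L}}_{w, R_B} = \bigl( \opn{Sq}_{u' / \K}(\th') \circ \rho_{B'} \bigr) \circ \opn{q}^{\mrm{L}}_{w, R_B} . \]
By Proposition \ref{prop:1871}(1) applied to the nondegenerate backward morphism $\rho_{A'} \circ \th'$, the $B'$-module of backward morphisms $R_{B'} \to \opn{Sq}_{A' / \K}(R_{A'})$ over $u'$ is free of rank one on $\rho_{A'} \circ \th'$, so $\opn{Sq}_{u' / \K}(\th') \circ \rho_{B'} = e \cd (\rho_{A'} \circ \th')$ for a unique $e \in B'$; substituting gives $(\rho_{A'} \circ \th') \circ \opn{q}^{\mrm{L}}_{w, R_B} = (\rho_{A'} \circ \th') \circ (e \cd \opn{q}^{\mrm{L}}_{w, R_B})$. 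Since $\rho_{A'} \circ \th'$ is nondegenerate, backward adjunction makes precomposition with it injective on forward morphisms over $w$; these are in bijection, by forward adjunction, with $\opn{End}_{\cat{D}(B')}(R_{B'})$, which equals $B' \cd \opn{id}$ by the derived Morita property of the dualizing complex $R_{B'}$ (Proposition \ref{prop:730}); hence $e = 1$ and $\th'$ is rigid. Assembling these ingredients — Theorem \ref{thm:895}, the three rigidity squares, Proposition \ref{prop:1871}, and the interplay of forward and backward adjunction — is the delicate part, being the mixed backward/forward analogue of the uniqueness arguments behind Theorems \ref{thm:675} and \ref{thm:2030}.
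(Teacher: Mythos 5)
Your proposal is correct and follows essentially the same path as the paper. The paper factors the argument into two lemmas: Lemma \ref{lem:1590} (existence and uniqueness of the induced backward morphism $\th'$, plus preservation of nondegeneracy under \'etale base change) and Lemma \ref{lem:1591} (rigidity of $\th'$, proved by a cubical diagram chase whose front face is precisely the identity of Theorem \ref{thm:895}); Theorem \ref{thm:1553} then follows from these and the uniqueness of the rigid trace (Theorem \ref{thm:2030}). Your proof inlines the same ingredients: you fix the same canonical models, invoke Theorem \ref{thm:895} for the squared identity, and reduce to showing that $\th'$ is a nondegenerate rigid backward morphism.

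The only step I would flag is the final cancellation in the rigidity argument. You introduce $e \in B'$ via Proposition \ref{prop:1871}(1) and then assert that ``backward adjunction makes precomposition with $\rho_{A'} \circ \th'$ injective on forward morphisms over $w$.'' The injectivity is true, but the mechanism is not backward adjunction for the composite: it is that the map $\th' \mapsto \th' \circ \opn{q}^{\mrm{L}}_{w, R_B}$ from backward morphisms $R_{B'} \to \opn{Sq}_{A' / \K}(R_{A'})$ over $u'$ to morphisms $R_B \to \opn{Sq}_{A' / \K}(R_{A'})$ in $\cat{D}(A)$ is the adjunction isomorphism $\opn{Hom}_{\cat{D}(A')}(A' \ot^{\mrm{L}}_A R_B, X) \cong \opn{Hom}_{\cat{D}(A)}(R_B, X)$ for $\opn{LInd}_v \dashv \opn{Rest}_v$, using that $R_{B'} \cong A' \ot_A R_B$ and that $\opn{q}^{\mrm{L}}_{w, R_B}$ restricts to the unit of that adjunction. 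This is exactly the content of the paper's Lemma \ref{lem:1590}(1), and once you phrase it that way the whole $e$ detour becomes unnecessary: both $\rho_{A'} \circ \th'$ and $\opn{Sq}_{u'/\K}(\th') \circ \rho_{B'}$ are backward morphisms $R_{B'} \to \opn{Sq}_{A'/\K}(R_{A'})$ over $u'$ satisfying $(\cdot) \circ \opn{q}^{\mrm{L}}_{w} = \opn{Sq}_{v/\K}(\opn{q}^{\mrm{L}}_{v}) \circ (\rho_A \circ \opn{tr}^{\mrm{R}}_u)$, hence are equal by uniqueness directly, which is precisely how the paper closes Lemma \ref{lem:1591}.
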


The theorem asserts that given the first commutative diagram below in 
$\cat{Rng} \eftover \K$, the second diagram in $\cat{D}(A)$ is also 
commutative. 
\begin{equation} \label{eqn:1780}
\begin{tikzcd} [column sep = 6ex, row sep = 4ex] 
A
\ar[r, "{u}"]
\ar[d, "{v}"']
&
B
\ar[d, "{w}"]
\\
A'
\ar[r, "{u'}"]
&
B' 
\end{tikzcd} 
\qquad \qquad 
\begin{tikzcd} [column sep = 6ex, row sep = 4ex] 
R_A
\ar[d, "{\opn{q}^{\mrm{rig}}_{v}}"']
&
R_B
\ar[l, "{\opn{tr}^{\mrm{rig}}_{u}}"']
\ar[d, "{\opn{q}^{\mrm{rig}}_{w}}"]
\\
R_{A'}
&
R_{B'}
\ar[l, "{\opn{tr}^{\mrm{rig}}_{u'}}"']
\end{tikzcd} 
\end{equation}

We need several lemmas before we can prove the theorem. In them we assume the 
situation of the theorem.

\begin{lem} \label{lem:1590}
Let $\th : N \to M$ be a backward morphism in $\cat{D}(A)$ over $u$, 
let $\mu : M \to M'$ be a nondegenerate forward morphism in $\cat{D}(A)$ over 
$v$, and let $\nu : N \to N'$ be a nondegenerate forward morphism in 
$\cat{D}(B)$ over $w$.
\begin{enumerate}
\item There is a unique backward morphism $\th' : N' \to M'$ in $\cat{D}(A')$ 
over $u'$, such that $ \th' \circ \nu = \mu \circ \th$. 

\item If $\th$ is a nondegenerate backward morphism, then $\th'$ is also a 
nondegenerate backward morphism.
\end{enumerate}
\end{lem}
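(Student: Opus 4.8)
The plan is to prove Lemma \ref{lem:1590} using the forward-adjunction machinery from Section \ref{sec:recall-dg}, specifically Propositions \ref{prop:2045} and \ref{prop:2046}, applied to the cocartesian square $B' \cong A' \ot_A B$.

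For part (1), the idea is the following. We have ring homomorphisms $A \xar{u} B \xar{w} B'$ and $A \xar{v} A' \xar{u'} B'$ with $B' = A' \ot_A B$. First I would observe that the composite forward morphism $\mu \circ \th : N \to M'$ is a backward morphism in $\cat{D}(A)$ over $v \circ u = u' \circ w$; more precisely, view $M'$ as an object of $\cat{D}(A')$ restricted to $\cat{D}(A)$, and $\mu \circ \th : N \to M'$ is a morphism in $\cat{D}(A)$ that factors the $B$-action on $N$ through $w$. What we want is a morphism $\th' : N' \to M'$ in $\cat{D}(A')$ such that $\th' \circ \nu = \mu \circ \th$. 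Since $\nu : N \to N'$ is a \emph{nondegenerate} forward morphism over $w$, by Definition \ref{dfn:2045} the induced morphism $\opn{fadj}^{\mrm{L}}_{w, N, N'}(\nu) : B' \ot^{\mrm{L}}_B N \to N'$ is an isomorphism in $\cat{D}(B')$. The morphism $\mu \circ \th : N \to M'$, being a forward morphism over $w$ (with target in $\cat{D}(B')$), yields by forward adjunction $\opn{fadj}^{\mrm{L}}_{w, N, M'}(\mu \circ \th) : B' \ot^{\mrm{L}}_B N \to M'$ in $\cat{D}(B')$. Then I would define
\[ \th' := \opn{fadj}^{\mrm{L}}_{w, N, M'}(\mu \circ \th) \circ
\opn{fadj}^{\mrm{L}}_{w, N, N'}(\nu)^{-1} : N' \to M' \]
in $\cat{D}(B')$, and check that restricting to $\cat{D}(A')$ it is a backward morphism over $u'$ satisfying $\th' \circ \nu = \mu \circ \th$. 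Uniqueness follows because $\nu$ is nondegenerate: any $\th'$ with $\th' \circ \nu = \mu \circ \th$ must have $\opn{fadj}^{\mrm{L}}_{w, N, M'}(\th' \circ \nu) = \opn{fadj}^{\mrm{L}}_{w, N, M'}(\mu \circ \th)$, and the left side equals $\th' \circ \opn{fadj}^{\mrm{L}}_{w, N, N'}(\nu)$ by functoriality of forward adjunction, so $\th'$ is forced. Alternatively — and this may be cleaner to write — I would phrase this directly as an application of Proposition \ref{prop:2046}: the nondegenerate forward morphisms $\nu$ and $\mu$ over $w$ and $v$ respectively, together with the morphism $\th$ (playing the role of $\phi$), produce the unique $\th'$ via the universal property there, once one matches up the categories correctly.

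For part (2), the plan is to use the compatibility of backward adjunction with the base change, in the spirit of Theorem \ref{thm:895} (Forward and Backward Compatibility). Concretely, nondegeneracy of $\th$ means $\opn{badj}^{\mrm{R}}_{u, M, N}(\th) : N \to \opn{RHom}_A(B, M)$ is an isomorphism in $\cat{D}(B)$. Applying $B' \ot^{\mrm{L}}_B (-)$ and using the flat base change isomorphism $B' \ot^{\mrm{L}}_B \opn{RHom}_A(B, M) \cong \opn{RHom}_{A'}(B', A' \ot^{\mrm{L}}_A M)$ — which holds because $B$ is (derived) finite over $A$ and $A'$ is flat (indeed essentially étale) over $A$, so one can invoke derived tensor evaluation, Theorem \ref{thm:2115}, or the perfectness of $B$ over $A$ when applicable — one deduces that $\opn{badj}^{\mrm{R}}_{u', M', N'}(\th')$ is an isomorphism. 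The point is that $N' = B' \ot^{\mrm{L}}_B N$, $M' = A' \ot^{\mrm{L}}_A M$, and $\th'$ is the morphism induced from $\th$ by this base change, so its backward adjoint is obtained from that of $\th$ by applying $B' \ot^{\mrm{L}}_B (-)$ followed by the base change isomorphism.

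The main obstacle I anticipate is part (2): one must carefully verify that the base change isomorphism $B' \ot^{\mrm{L}}_B \opn{RHom}_A(B, M) \iso \opn{RHom}_{A'}(B', M')$ is compatible with the backward adjunction morphisms, i.e.\ that under this identification $\opn{id}_{B'} \ot^{\mrm{L}}_B \opn{badj}^{\mrm{R}}_{u, M, N}(\th)$ corresponds to $\opn{badj}^{\mrm{R}}_{u', M', N'}(\th')$. This is essentially a diagram chase tracking the standard trace morphisms $\opn{tr}^{\mrm{R}}_{u, M}$ and $\opn{tr}^{\mrm{R}}_{u', M'}$ under base change, together with the defining property $\th' \circ \nu = \mu \circ \th$ and the commutative diagram \eqref{eqn:1873}. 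It should go through, but requires care with which functor ($\opn{Rest}$, $\opn{LInd}$, $\opn{RCInd}$) acts where, and with the hypothesis that makes the base change of $\opn{RHom}_A(B,-)$ behave well — here the essential étaleness of $v$ (hence flatness, Theorem \ref{thm:1140}) is what is used.
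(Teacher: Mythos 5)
Your proposal is correct and follows essentially the same route as the paper's proof. For part (1) the paper reduces to the standard nondegenerate forward morphisms $\mu = \opn{q}_{v,M}$, $\nu = \opn{q}_{w,N}$ and observes that $\th' := \opn{id}_{A'} \ot_A \th$ is forced; your phrasing via $\th' := \opn{fadj}^{\mrm{L}}_{w,N,M'}(\mu \circ \th) \circ \opn{fadj}^{\mrm{L}}_{w,N,N'}(\nu)^{-1}$ is the same morphism written intrinsically, and the uniqueness argument is identical. For part (2) both proofs apply base change along $A \to A'$ to the isomorphism $\opn{badj}^{\mrm{R}}_{u,M,N}(\th) : N \iso \opn{RHom}_A(B,M)$ and use the flat base change $A' \ot_A \opn{RHom}_A(B,M) \iso \opn{RHom}_{A'}(B',M')$. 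The paper justifies this by noting that $A$ is noetherian and $B$ is a finite $A$-module (so $B$ has a finite free $A$-module resolution, which pulls back along the flat map $A \to A'$); your invocation of derived tensor evaluation plus the adjunction (\ref{eqn:1480}) is an equally valid way to get the same isomorphism, and correctly identifies essential étaleness of $v$ as the source of the needed flatness. The compatibility check you flag — that the composed base-change isomorphism really is $\opn{badj}^{\mrm{R}}_{u',M',N'}(\th')$ — is exactly what the paper dispatches with "not hard to see, using resolutions."
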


Item (1) of the lemma says that there is a unique morphism $\th'$, the 
dashed arrow, making this diagram in $\cat{D}(A)$ is commutative:
\begin{equation} \label{eqn:1800}
\begin{tikzcd} [column sep = 8ex, row sep = 4ex] 
M
\ar[d, "{\mu}"']
&
N
\ar[l, "{\th}"']
\ar[d, "{\nu}"]
\\
M'
&
N'
\ar[l, dashed, "{\th'}"']
\end{tikzcd} 
\end{equation}

\begin{proof} \mbox{}

\smallskip \noindent 
(1) We can assume that $M' = A' \ot_A M$, 
$N' = B' \ot_B N \cong A' \ot_A N$, and $\mu = \opn{q}_{v, M}$, 
$\nu = \opn{q}_{w, N}$ are the standard nondegenerate backward morphisms. Then 
the only option for $\th'$ is $\opn{id}_{A'} \ot_{A} \, \th$. 

\medskip \noindent 
(2) Here we can also assume that $N = \opn{RHom}_A(B, M)$, and 
$\th = \opn{tr}^{\mrm{R}}_{v, M}$ is the standard nondegenerate backward 
morphism. Applying $A' \ot_A (-)$ to the isomorphism 
$\opn{badj}^{\mrm{R}}_{u, M, N}(\th) : N \iso \opn{RHom}_A(B, M)$
in $\cat{D}(B)$ we obtain the isomorphism
\begin{equation} \label{eqn:1590}
N' = A' \ot_A N \iso A' \ot_A \opn{RHom}_A(B, M) \cong 
\opn{RHom}_{A'}(B', M') . 
\end{equation}
Here we use the fact that $A$ is noetherian and $B$ is a finite $A$-module. 
It is not hard to see (using resolutions) that the composed isomorphism 
(\ref{eqn:1590}) is $\opn{badj}^{\mrm{R}}_{u', M', N'}(\th')$. Thus $\th'$ is 
nondegenerate.
\end{proof}

\begin{lem} \label{lem:1591} 
Let $\th : (N, \si) \to (M, \rho)$ be a rigid backward morphism in $\cat{D}(A)$ 
over $u / \K$, let $\mu : (M, \rho) \to (M', \rho')$ be a nondegenerate rigid 
forward morphism in $\cat{D}(A)$ over $v / \K$, and let 
$\nu : (N, \si) \to (N', \si')$ be a nondegenerate rigid forward morphism in 
$\cat{D}(B)$ over $w / \K$. Then the unique backward morphism 
$\th' : N' \to M'$ from Lemma \ref{lem:1590}(1) is a rigid backward 
morphism $\th' : (N', \si') \to (M', \rho')$  in $\cat{D}(A')$ 
over $u' / \K$.
\end{lem}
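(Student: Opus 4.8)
The plan is to combine the squaring-compatibility result of Theorem \ref{thm:895} with the three rigidity hypotheses, arranging everything into a single commutative cube in $\cat{D}(A)$ whose six faces are, respectively: the defining square of the rigid backward morphism $\th$; the defining square of the rigid backward morphism $\th'$ (the face we want to conclude is commutative); two faces expressing that $\mu$ and $\nu$ are rigid \emph{forward} morphisms (Definition \ref{dfn:1293}); and two faces provided by Theorem \ref{thm:895}, which relate the squarings of $\th$ and $\th'$ via the forward-squarings of the localization morphisms $\mu$ and $\nu$. Concretely, I would set $M' = A' \ot_A M$, $N' = A' \ot_A N \cong B' \ot_B N$, and take $\mu = \opn{q}^{\mrm{L}}_{v,M}$, $\nu = \opn{q}^{\mrm{L}}_{w,N}$ and $\th' = \opn{id}_{A'} \ot_A \th$ (this is legitimate by Lemma \ref{lem:1590}(1), which identifies $\th'$ up to unique isomorphism, and by uniqueness it suffices to verify rigidity for this model).

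First I would write down the relevant data. The hypothesis that $\th$ is rigid over $u/\K$ gives the commutative square $\rho \circ \th = \opn{Sq}_{u/\K}(\th) \circ \si$ in $\cat{D}(A)$; here $\opn{Sq}_{u/\K}(\th)$ is the backward square of Theorem \ref{thm:632}. The hypothesis that $\mu$ is rigid over $v/\K$ gives $\opn{Sq}_{v/\K}(\mu) \circ \rho = \rho' \circ \mu$, where now $\opn{Sq}_{v/\K}(\mu)$ is the \emph{forward} square of Definition \ref{dfn:1290}; similarly rigidity of $\nu$ gives $\opn{Sq}_{w/\K}(\nu) \circ \si = \si' \circ \nu$. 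Finally, Theorem \ref{thm:895}, applied with the base ring $\K$ in place of $A$ there, with $(u,v)$ playing the roles of $(u,v)$ and $w = $ the induced essentially \'etale map $B\to B'$, yields the key identity
\[
\opn{Sq}_{v/\K}(\opn{q}^{\mrm{L}}_{v,M}) \circ \opn{Sq}_{u/\K}(\th) = \opn{Sq}_{u'/\K}(\th') \circ \opn{Sq}_{w/\K}(\opn{q}^{\mrm{L}}_{w,N})
\]
of morphisms $\opn{Sq}_{C/\K}(N) \to \opn{Sq}_{B'/\K}(M')$ — wait, in our notation $\opn{Sq}_{B/\K}(N) \to \opn{Sq}_{A'/\K}(M')$. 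This is precisely the ``mixed'' face that lets the two rigid-forward faces talk to the two backward-square faces.

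Then the computation is a diagram chase: starting from $\rho' \circ \mu \circ \th$, use rigidity of $\mu$ to rewrite it as $\opn{Sq}_{v/\K}(\mu) \circ \rho \circ \th$, then rigidity of $\th$ to get $\opn{Sq}_{v/\K}(\mu) \circ \opn{Sq}_{u/\K}(\th) \circ \si$, then Theorem \ref{thm:895} to get $\opn{Sq}_{u'/\K}(\th') \circ \opn{Sq}_{w/\K}(\nu) \circ \si$, then rigidity of $\nu$ to get $\opn{Sq}_{u'/\K}(\th') \circ \si' \circ \nu$. On the other hand $\rho' \circ \mu \circ \th = \rho' \circ \th' \circ \nu$ by the defining equation $\th' \circ \nu = \mu \circ \th$ of $\th'$ (Lemma \ref{lem:1590}(1)). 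Comparing, we obtain $\rho' \circ \th' \circ \nu = \opn{Sq}_{u'/\K}(\th') \circ \si' \circ \nu$. Since $\nu$ is nondegenerate, Proposition \ref{prop:2046}(1) (uniqueness in the forward-adjunction cancellation) lets us cancel $\nu$ on the right — equivalently, pass to $\cat{D}(B')$ via $\opn{fadj}^{\mrm{L}}$ where $\nu$ becomes an isomorphism — to conclude $\rho' \circ \th' = \opn{Sq}_{u'/\K}(\th') \circ \si'$, which is exactly the statement that $\th'$ is a rigid backward morphism over $u'/\K$. The main obstacle I anticipate is purely bookkeeping: making sure the instance of Theorem \ref{thm:895} is invoked with the correct matching of rings and complexes (its $A,B,B',C,C'$ versus our $\K,A,A',B,B'$, and the direction of $\th$ there being a backward morphism while $\mu,\nu$ there are the standard forward morphisms $\opn{q}^{\mrm{L}}$), and that the equation $\th' \circ \nu = \mu \circ \th$ together with nondegeneracy of $\nu$ genuinely licenses the right-cancellation — this is where one must be careful that $\nu$ is an epimorphism in the appropriate sense, which is guaranteed by its nondegeneracy and Proposition \ref{prop:2046}.
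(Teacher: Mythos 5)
Your argument is essentially the paper's, just written as a diagram chase rather than as the explicit commutative cube; the five ingredients (rigidity of $\th$, rigidity of $\mu$, rigidity of $\nu$, the defining equation of $\th'$, and Theorem~\ref{thm:895}) are identified and combined in the same order. The one imprecision is the final cancellation step. Proposition~\ref{prop:2046}(1) is about two nondegenerate forward morphisms $\la_1, \la_2$ over a \emph{single} ring homomorphism, with the target $N_2$ of $\la_2$ lying in $\cat{D}(B)$; here the target $\opn{Sq}_{A'/\K}(M')$ lies in $\cat{D}(A')$, not in $\cat{D}(B')$, and there is no ring map $B \to A'$, so neither that proposition nor ``pass to $\cat{D}(B')$ via $\opn{fadj}^{\mrm{L}}$'' applies literally as stated. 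What licenses the cancellation is the uniqueness clause of Lemma~\ref{lem:1590}(1), which is tailored precisely to this square $u, v, u', w$ of ring homomorphisms: since $\nu$ and $\opn{Sq}_{v/\K}(\mu)$ are nondegenerate forward morphisms, the backward morphism $\be : N' \to \opn{Sq}_{A'/\K}(M')$ over $u'$ satisfying $\be \circ \nu = \opn{Sq}_{v/\K}(\mu)\circ(\rho\circ\th)$ is unique, which forces $\rho'\circ\th' = \opn{Sq}_{u'/\K}(\th')\circ\si'$. Replace the citation of Proposition~\ref{prop:2046} by Lemma~\ref{lem:1590}(1) and the proof is correct and matches the paper's.
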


The next diagram illustrates the lemma. It is given that that solid arrows are 
rigid, and the assertion is that the dashed arrow is also rigid.
\[ \begin{tikzcd} [column sep = 8ex, row sep = 4ex] 
(M, \rho)
\ar[d, "{\mu}"']
&
(N, \si)
\ar[l, "{\th}"']
\ar[d, "{\nu}"]
\\
(M', \rho')
&
(N', \si')
\ar[l, dashed, "{\th'}"']
\end{tikzcd} \]

\begin{proof}
We need to prove that the bottom face of this cubical diagram is commutative:
\[ \begin{tikzcd}[row sep = 4ex, column sep = 10ex]
& 
M
\arrow[dl, "{\rho}"'] 
\arrow[dd, "{\mu}"' near start] 
& 
& 
N
\ar[ll, "{\th}"' near start]
\arrow[dl, "{\si}"'] 
\arrow[dd, "{\nu}"' near start] 
\\
\opn{Sq}_{A / \K}(M)
\arrow[dd, "{\opn{Sq}_{v / \K}(\mu)}"' near start] 
& 
& 
\opn{Sq}_{B / \K}(N) 
\arrow[ll, crossing over, "{\opn{Sq}_{u / \K}(\th)}"' near start] 
\\
& 
M' 
\arrow[dl, "{\rho'}"'] 
& 
& 
N' 
\arrow[dl, "{\si'}"'] 
\arrow[ll, "{\th'}"' near start] 
\\
\opn{Sq}_{A' / \K}(M')
& 
& 
\opn{Sq}_{B' / \K}(N')
\arrow[ll, "{\opn{Sq}_{u' / \K}(\th')}"' near start] 
\arrow[from=uu, crossing over, "{\opn{Sq}_{w / \K}(\nu)}"' near start] 
\end{tikzcd} \]
The top face is commutative because $\th$ is a rigid backward morphism; 
the rear face is commutative by the definition of $\th'$
(it is diagram (\ref{eqn:1800})); the right face is 
commutative because $\nu$ is a rigid forward morphism; and the left face is 
commutative because $\mu$ is a rigid forward morphism.
As for the front face: like in the proof of Lemma \ref{lem:1590}, we can assume 
that $M' = A' \ot_A M$, $N' = B' \ot_B N$, 
$\mu = \opn{q}^{}_{v, M}$ and $\nu = \opn{q}^{}_{w, N}$. Then, according  to   
Theorem \ref{thm:895}, the front face is commutative. 
Define $\al_0 := \rho \circ \th$ and 
$\al_1 := \opn{Sq}_{u / \K}(\th) \circ \si$; these morphisms are on the  
diagonal of the top face, and $\al_0 = \al_1$ because this face is commutative. 
Define $\al'_0 := \rho' \circ \th'$ and 
$\al'_1 := \opn{Sq}_{u' / \K}(\th') \circ \si'$; 
these morphisms are on the diagonal of the bottom face, and we need to 
prove they are equal. 
The commutativity of the rear and left faces implies that 
$\opn{Sq}_{v / \K}(\mu) \circ \al_0 = \al'_0 \circ \nu$.  
The commutativity of the right and front faces implies that 
$\opn{Sq}_{v / \K}(\mu) \circ \al_1 = \al'_1 \circ \nu$. We know that $\nu$ and 
$\opn{Sq}_{v / \K}(\mu)$ are nondegenerate forward morphisms. By the uniqueness 
in Lemma \ref{lem:1590}(1) it follows that $\al'_0 = \al'_1$. 
\end{proof}

\begin{proof}[Proof of Theorem \tup{\ref{thm:1553}}]
We shall use Lemmas \ref{lem:1590} and \ref{lem:1591}.
Define the rigid complexes $(M, \rho) := (R_A, \rho_A)$, 
$(M', \rho') := (R_{A'}, \rho_{A'})$,
$(N, \si) := (R_{B}, \rho_{B})$ and
$(N', \si') := (R_{B'}, \rho_{B'})$.
There is a nondegenerate rigid backward morphism 
$\th := \opn{tr}^{\mrm{rig}}_{u}$,
and nondegenerate rigid forward morphisms 
$\mu := \opn{q}^{\mrm{rig}}_{v}$ and $\nu := \opn{q}^{\mrm{rig}}_{w}$. 
Let $\th' : N' \to M'$ be the unique nondegenerate backward morphism from Lemma 
\ref{lem:1590}; so $\mu \circ \th = \th' \circ \nu$. 
According to Lemma \ref{lem:1591} the nondegenerate backward morphism 
$\th' : (R_{B'}, \rho_{B'}) \to (R_{A'}, \rho_{A'})$
is rigid. But then, by Theorem \ref{thm:2030}, there is equality 
$\th' = \opn{tr}^{\mrm{rig}}_{u'}$. 
\end{proof}

\begin{lem} \label{lem:1720}
Let $A$ be a local ring, with maximal ideal $\m$ and residue field 
$\bk(\m)$. Suppose $M \in \cat{D}^{+}_{\mrm{f}}(A)$ and 
$q_0 \leq q_1$ are integers such that 
$\opn{inf}(\opn{H}(M)) \geq q_0$ and 
\[ \opn{sup} \bigl( \opn{H} \bigl (\opn{RHom}_A(\bk(\m), M) \bigr) \bigr) \leq 
q_1 . \]
Then the injective concentration of $M$ is inside the integer interval 
$[q_0, q_1]$, and $M$ has injective dimension $\leq q_1 - q_0$. 
\end{lem}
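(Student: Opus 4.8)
The plan is to reduce both assertions to the single claim that the \emph{minimal} injective resolution of $M$ is concentrated in the degree interval $[q_0,q_1]$, and then to read off the ``injective concentration'' property from such a resolution by a degree-counting argument. So first I would recall that, since $A$ is noetherian and $M \in \cat{D}^{+}_{\mrm{f}}(A)$, the complex $M$ admits a minimal injective resolution: a quasi-isomorphism $M \to J$ in $\cat{C}_{\mrm{str}}(A)$ with $J^i = \boplus_{\p \in \Sp(A)} \opn{E}_A(A / \p)^{\oplus \mu_i(\p, M)}$, the Bass numbers $\mu_i(\p, M) = \dim_{\bk(\p)} \opn{H}^i\bigl(\opn{RHom}_{A_\p}(\bk(\p), M_\p)\bigr)$ being finite. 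Minimality forces $J^i = 0$ for $i < \opn{inf}(\opn{H}(M))$, hence $J^i = 0$ for $i < q_0$ by the first hypothesis. Thus everything reduces to proving $J^i = 0$ for $i > q_1$, i.e.\ $\mu_i(\p, M) = 0$ for all $\p$ and all $i > q_1$.

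The second hypothesis says exactly $\mu_i(\m, M) = 0$ for $i > q_1$ (as $A$ is local, $A_\m = A$). To propagate this to an arbitrary prime $\p$ I would use the standard Bass-number inequality: if $\p \subsetneq \q$ are primes with no prime strictly between them, then $\mu_i(\p, M) \neq 0$ implies $\mu_{i+1}(\q, M) \neq 0$. Since $A$ is local, $\p$ is the bottom of a saturated chain $\p = \p_0 \subsetneq \p_1 \subsetneq \dotsb \subsetneq \p_c = \m$ for some $c \geq 0$, with $c \geq 1$ when $\p \neq \m$. Iterating the inequality along this chain gives $\mu_i(\p, M) \neq 0 \Rightarrow \mu_{i+c}(\m, M) \neq 0$; if $i > q_1$ then $i + c > q_1$, contradicting the vanishing at $\m$, while the case $\p = \m$ is the hypothesis itself. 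Hence $\mu_i(\p, M) = 0$ for all $\p$ and $i > q_1$, so $J$ is concentrated in $[q_0,q_1]$; in particular $J$ is a bounded complex of injectives, hence K-injective.

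Finally, given any $N \in \cat{D}(A)$ I would replace $N$ by a quasi-isomorphic complex $N'$ concentrated in the interval $\opn{con}(\opn{H}(N))$, via smart truncations $\opn{smt}^{\geq \cd}$, $\opn{smt}^{\leq \cd}$ (if $\opn{H}(N) = 0$ there is nothing to prove). Since $J$ is K-injective, $\opn{RHom}_A(N, M) \cong \opn{Hom}_A(N', J)$ in $\cat{D}(A)$, and
\[ \opn{Hom}_A(N', J)^k = \prod\nolimits_{i \in \Z} \opn{Hom}_A\bigl((N')^{i}, J^{i + k}\bigr) , \]
which vanishes unless there is $i \in \opn{con}(\opn{H}(N))$ with $i + k \in [q_0, q_1]$, i.e.\ unless $k \in [q_0, q_1] - \opn{con}(\opn{H}(N))$. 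Thus the complex $\opn{Hom}_A(N', J)$, and a fortiori its cohomology, is concentrated in $[q_0, q_1] - \opn{con}(\opn{H}(N))$. This is precisely the statement that $M$ has injective concentration inside $[q_0, q_1]$, and the bound $\leq q_1 - q_0$ on the injective dimension is then immediate from the definition.

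I expect the main obstacle to be the Bass-number propagation inequality in the generality of complexes $M \in \cat{D}^{+}_{\mrm{f}}(A)$: for $M$ a finitely generated module it is classical (and can be cited from \cite{Ma}), and for complexes it follows by the same dévissage after localizing at $\q$ and reducing to the case $\q = \m$ immediately above $\p$, chasing the long exact $\opn{Ext}$-sequence attached to an element of $\m$ that is a system-of-parameters element modulo $\p$. If a sufficiently packaged statement is available in \cite{Ye5}, one would simply quote it and skip this step; otherwise the short dévissage argument is inserted here.
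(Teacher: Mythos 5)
Your proof is correct, but it follows a genuinely different route from the paper's. The paper proves the Ext-vanishing $\opn{Ext}^q_A(L, M) = 0$ for $q > q_1$ and all $L \in \cat{M}_{\mrm{f}}(A)$ by a dévissage on the pair $(\opn{dim} L, \opn{len} L) \in \N^2$ with the lexicographic order (following \cite[Proposition V.3.4]{RD}): non-critical modules are split by a prime filtration, and critical modules $A/\p$ with $\p \subsetneq \m$ are handled by choosing $a \in \m - \p$, taking the short exact sequence $0 \to A/\p \xar{a} A/\p \to L'' \to 0$, and invoking Nakayama. Only at the very end does the paper produce a bounded complex of injectives, by smart-truncating an injective resolution and checking that the truncation $\opn{Z}^{q_1}(J)$ is injective. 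You instead take the minimal injective resolution $J$ as the primary object, express its shape via Bass numbers $\mu_i(\p, M)$, use the hypothesis to kill $\mu_i(\m, M)$ for $i > q_1$, and propagate the vanishing down to all primes via the Bass-number step inequality $\mu_i(\p, M) \neq 0 \Rightarrow \mu_{i+1}(\q, M) \neq 0$ along a saturated chain $\p = \p_0 \subsetneq \cdots \subsetneq \p_c = \m$. The final degree-counting step is essentially the same in both proofs.

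Both arguments are sound. The trade-offs: your approach yields the slightly sharper conclusion that the \emph{minimal} injective resolution of $M$ lies in $[q_0, q_1]$, and it is arguably more structural; but it front-loads a nontrivial external ingredient, namely the theory of Bass numbers and the Bass propagation inequality for complexes in $\cat{D}^{+}_{\mrm{f}}(A)$ over a noetherian local ring, which is not developed in \cite{Ye5} and would need to be cited from the literature (Bass for modules, Foxby/Avramov--Foxby for complexes). The paper's dévissage is more elementary and self-contained within its own framework, at the cost of a longer and more ad-hoc induction. You correctly flag the Bass-number step for complexes as the main external input; your sketch of how to establish it (localize at $\q$, reduce to $\q = \m$ immediately above $\p$, and chase the long exact Ext sequence attached to an element of $\m$ regular modulo $\p$) is accurate and is in fact essentially the same dévissage move the paper performs directly on the Ext groups, so the two proofs are not unrelated — yours simply packages that step once and for all inside the Bass-number machinery.

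One small remark: when you say minimality forces $J^i = 0$ for $i < \opn{inf}(\opn{H}(M))$, this is correct, but is where you genuinely need minimality (a non-minimal injective resolution of a bounded-below complex can have nonzero acyclic terms in arbitrarily low degrees only if it is unbounded below, which for a K-injective resolution cannot happen, but can have a split acyclic summand shifting the lower bound). The paper sidesteps this by simply quoting \cite[Corollary 11.5.27]{Ye5} to get a complex of injectives with $\opn{inf}(J) \geq q_0$ directly.
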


The notions we use in this lemma (the numbers $\opn{inf}(N)$ and $\opn{sup}(N)$
for a graded module $N$, such as $N := \opn{H}(M)$ for a complex 
$M \in \cat{D}(A)$, the injective concentration of $M$, and the injective 
dimension of $M$) are taken from \cite[Sections 12.1 and 12.4]{Ye5}. 
The assertion about the injective concentration of $M$ means that for every 
$L \in \cat{M}(A)$ the cohomology of the complex 
$\opn{RHom}_A(L, M)$ is concentrated inside the integer interval $[q_0, q_1]$. 

\begin{proof}
The proof is in a few steps. It is based on the proof of 
\cite[Proposition V.3.4]{RD}, but we felt more details are necessary, 
especially regarding the inductive procedure.  

\medskip \noindent 
Step 1. Here we establish the inductive procedure. The induction is on the 
set $\N^2$ with the lexicographic order, which makes it a well-ordered set. 

We know that every nonzero finitely generated $A$-module $L$ admits a 
filtration 
\begin{equation} \label{eqn:1720}
0 = L_0 \subsetneqq L_1 \subsetneqq \cdots \subsetneqq L_n = L 
\end{equation}
where $L_i / L_{i - i} \cong A / \p_i$ for some prime ideal $\p_i$. See 
\cite[Theorem 6.4]{Ma}. Let's call the smallest such number $n$ the {\em prime 
filtration length} of $L$, and denote it by $\opn{len}(L)$. To the module $L$ 
we attach the pair of natural numbers 
$\opn{dl}(L) := \bigl( \opn{dim}(L), \opn{len}(L) \bigr) \in \N^2$,
where $\opn{dim}(L)$ is the Krull dimension of the support of $L$. 
If $\opn{len}(L) = 1$ then $L$ is called a {\em critical} module. 

We are going to prove that  
\begin{equation} \label{eqn:1721}
\opn{sup} \bigl( \opn{H} \bigl (\opn{RHom}_A(L, M) \bigr) \bigr) \leq q_1 
\end{equation}
for every $L \in \cat{M}_{\mrm{f}}(A)$, by induction on 
$\opn{dl}(L) \in \N^2$. 
This means that given $(d, l) \in \N^2$, and assuming that 
(\ref{eqn:1721}) holds for every $L \in \cat{M}_{\mrm{f}}(A)$ 
with $\opn{dl}(L) < (d, l)$, we need to prove that (\ref{eqn:1721}) holds 
for every $L \in \cat{M}_{\mrm{f}}(A)$ with $\opn{dl}(L) = (d, l)$.

\medskip \noindent 
Step 2. Here we prove the inductive assertion. We break the proof of this 
assertion into two cases: when $L$ is not critical, and when $L$ is critical. 

When $L$ is not critical, then from a filtration (\ref{eqn:1720}) of minimal 
length we obtain a short exact sequence 
$0 \to L' \to L \to L'' \to 0$ 
with $\opn{len}(L'), \opn{len}(L'') < l$.
Because $\opn{dim}(L'), \opn{dim}(L'') \leq d$, 
it follows that $\opn{dl}(L'), \opn{dl}(L'') < (d, l)$.
In the exact sequence 
\[ \cdots \to
\opn{Ext}^q_A(L'', M) \to \opn{Ext}^q_A(L, M) \to 
\opn{Ext}^q_A(L', M) \to \cdots \]
for $q \geq q_1 + 1$ the extreme terms vanish due to the induction hypothesis.
Therefore so does the middle term. We see that (\ref{eqn:1721}) holds for
$L$. 

When $L$ is critical and $d = 0$, then $L \cong \bk(\m)$, 
and (\ref{eqn:1721}) holds for $L$ by assumption. When $L$ is critical and 
$d > 1$, then $L \cong A / \p$ for a prime $\p \subsetneqq \m$. 
Take some element $a \in \m - \p$. The element $a$ is regular on $L$, so there 
is a short exact sequence 
$0 \to L \xar{a \cd (-)} L \to L'' \to 0$ 
in $\cat{M}_{\mrm{f}}(A)$. Now $\opn{dim}(L'') < d$, and hence   
$\opn{dl}(L'') < (d, l)$. In the exact sequence 
\[ \cdots \to
\opn{Ext}^q_A(L, M) \xar{a \cd (-)}
\opn{Ext}^q_A(L, M) \to \opn{Ext}^{q + 1}_A(L'', M)  \to \cdots  \]
the third term vanishes for $q \geq q_1$, by the induction hypothesis. 
Since $a \in \m$ and $\opn{Ext}^q_A(L, M)$ is a finitely generated $A$-module, 
the Nakayama Lemma implies that \lb $\opn{Ext}^q_A(L, M) = 0$. So in this case 
too we see that (\ref{eqn:1721}) holds for $L$. 

\medskip \noindent 
Step 3. According to \cite[Corollary 11.5.27]{Ye5}
there is a quasi-isomorphism 
$M \to J$, where $J$ is a complex of injective $A$-modules, and 
$\opn{inf}(J) \geq q_0$. By formula (\ref{eqn:1721}) with 
$L := A$ we see that $\opn{sup}(\opn{H}(M)) \leq q_1$. Let $I$ the smart 
truncation $I := \opn{smt}^{\leq q_1}(J)$, so $I \to J$ is a quasi-isomorphism.
For $q < q_1$ we have $I^q = J^q$. We claim that 
$I^{q_1} = \opn{Z}^{q_1}(J)$ is also an injective 
$A$-module. Arguing like in the proof of \cite[Proposition 12.4.13]{Ye5}, but 
taking the test modules $L$ only from $\cat{M}_{\mrm{f}}(A)$, 
we deduce that $\opn{Ext}^q_A(L, I^{q_1}) = 0$ for all 
$L \in \cat{M}_{\mrm{f}}(A)$ and $q > 0$. This implies that $I^{q_1}$ is 
injective. 

We have shown that $I$ is a complex of injective $A$-modules, concentrated in 
the degree interval $[q_0, q_1]$. The quasi-isomorphisms 
$M \to J$ and $I \to J$ yield a quasi-isomorphism $M \to I$. Therefore 
$M$ has injective concentration inside $[q_0, q_1]$.
\end{proof}

\begin{lem} \label{lem:1625}
Let $u : A \to B$ be an essentially smooth ring homomorphism, and let 
$M \in \cat{D}^{\mrm{b}}_{\mrm{f}}(A)$ be a complex of finite injective 
dimension over $A$. Then 
$B \ot_A M \in \cat{D}^{\mrm{b}}_{\mrm{f}}(B)$
has finite injective dimension over $B$. 
\end{lem}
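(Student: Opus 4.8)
`\textbf{Proof plan.}`
The statement is local on $\opn{Spec}(B)$, so the first reduction I would make is to replace $B$ by a suitable principal localization. By Theorem \ref{thm:1015} every point of $\opn{Spec}(B)$ has a principal open neighborhood $\opn{Spec}(B_s)$ such that $A \to B_s$ factors as a smooth homomorphism $A \to \til B$ followed by a localization $\til B \to B_s$. Since $B_s \ot_A M \cong B_s \ot_{B} (B \ot_A M)$, it suffices to prove the claim after this localization; and since a localization $\til B \to B_s$ is essentially \'etale (Example \ref{exa:1050}), the composition property (Proposition \ref{prop:1015}(1)) together with Lemma \ref{lem:1625} applied to $\til B \to B_s$ reduces us to the case where $A \to B$ is actually \emph{smooth}, in particular finite type and flat (Theorem \ref{thm:1140}(1)). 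Here I should be a little careful: Lemma \ref{lem:1625} is the very statement being proved, so the honest structure is to first prove the smooth case and then deduce the general essentially smooth case by localization — the localization step being an instance of essentially \'etale, hence of the smooth-plus-localization factorization, so no circularity arises.

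The boundedness and finiteness of cohomology of $B \ot_A M$ is immediate: $B$ is flat over $A$, so $\opn{H}^i(B \ot_A M) \cong B \ot_A \opn{H}^i(M)$, which is finitely generated over $B$ and vanishes for $|i| \gg 0$; thus $B \ot_A M \in \cat{D}^{\mrm{b}}_{\mrm{f}}(B)$ (this is exactly the argument already used in the proof of Proposition \ref{prop:1270}(1)). The real content is the finite injective dimension. The plan is to use Lemma \ref{lem:1720}: finite injective dimension of a complex in $\cat{D}^{+}_{\mrm{f}}$ over a noetherian ring can be checked locally at maximal ideals, and at a local ring it is controlled by $\opn{RHom}$ against the residue field. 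So fix a maximal ideal $\q \subset B$, with $\p := u^{-1}(\q) \subset A$, and pass to the essentially smooth local homomorphism $A_{\p} \to B_{\q}$ (Example \ref{exa:1015}); write $N := B_{\q} \ot_{A_{\p}} M_{\p}$. I want to compute $\opn{RHom}_{B_{\q}}(\bk(\q), N)$ and bound both its infimum and supremum in terms of the corresponding data for $M$ over $A$.

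The key computation is a change-of-rings identity. Since $A_{\p} \to B_{\q}$ is essentially smooth, $B_{\q}$ has a finite resolution by finite free $A_{\p}$-modules after further localization — more precisely, $B_{\q}$ is flat and (by Theorem \ref{thm:1141} and Corollary \ref{cor:1131}) well-behaved — but the cleanest route is: $\bk(\q)$ is a $B_{\q}$-module, and there is a standard adjunction/flat-base-change isomorphism
\[
\opn{RHom}_{B_{\q}}(\bk(\q),\, B_{\q} \ot_{A_{\p}} M_{\p}) \;\cong\; \opn{RHom}_{A_{\p}}\bigl(\opn{Rest}(\bk(\q)),\, M_{\p}\bigr) \ot_{?}^{} (\cdots),
\]
which one makes precise using that $\bk(\q)$ is derived-pseudo-finite over $B_{\q}$ and $B_{\q}$ is flat over $A_{\p}$, invoking Theorem \ref{thm:2115} (derived tensor evaluation) or, more simply, a Koszul resolution of $\bk(\q)$ obtained by combining a regular system of parameters coming from $A_{\p}$ with an \'etale coordinate system of $B_{\q}$ relative to $A_{\p}$ (Corollary \ref{cor:1175}). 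Concretely: choosing such coordinates, $\bk(\q)$ has a finite free $B_{\q}$-resolution whose restriction to $A_{\p}$ is built from a finite free $A_{\p}$-resolution of $\bk(\p)$ tensored with a Koszul complex in the $n$ coordinate variables. This shows $\opn{RHom}_{B_{\q}}(\bk(\q), N)$ has cohomology concentrated in the same range as $\opn{RHom}_{A_{\p}}(\bk(\p), M_{\p})$, up to a shift by the relative dimension $n$, and with $\opn{inf}(\opn{H}(N)) \geq \opn{inf}(\opn{H}(M_{\p}))$ by flatness. Feeding the resulting bounds $q_0, q_1$ into Lemma \ref{lem:1720} gives that $N$ has finite injective dimension over $B_{\q}$; doing this at every maximal ideal and bounding the injective dimensions uniformly (they depend only on the injective dimension of $M$ over $A$ and the relative dimension $n$, which is bounded on $\opn{Spec}(B)$) yields finite injective dimension of $B \ot_A M$ over $B$.

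I expect the main obstacle to be making the change-of-rings computation for $\opn{RHom}_{B_{\q}}(\bk(\q), N)$ fully rigorous — identifying the correct finite free resolution of $\bk(\q)$ over $B_{\q}$ whose $A_{\p}$-restriction is transparent, and checking that the spectral-sequence or Koszul bookkeeping gives exactly the shift by $n$ and the claimed vanishing ranges. The rest (the localization reduction, the boundedness of cohomology, and the invocation of Lemma \ref{lem:1720}) is routine once that computation is in place.
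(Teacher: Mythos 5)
Your high-level plan is right — reduce to the local situation, compute $\opn{RHom}_{B_\q}(\bk(\q), N)$, and feed the resulting degree bounds into Lemma~\ref{lem:1720} — and this is essentially the skeleton of the paper's proof. You have also correctly flagged the change-of-rings step as the crux. But the concrete route you propose through it has a genuine gap.

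You claim that ``$\bk(\q)$ has a finite free $B_\q$-resolution whose restriction to $A_\p$ is built from a finite free $A_\p$-resolution of $\bk(\p)$ tensored with a Koszul complex in the $n$ coordinate variables.'' Neither half of this is available in general: $\bk(\p)$ has finite projective dimension over $A_\p$ only when $A_\p$ is regular, and $\bk(\q)$ has finite projective dimension over $B_\q$ only when $B_\q$ is regular. Since $A$ is an arbitrary noetherian ring, there is no finite free resolution to build, so the "Koszul bookkeeping" you imagine cannot get off the ground, and the displayed flat-base-change isomorphism $\opn{RHom}_{B_\q}(\bk(\q), B_\q \ot_{A_\p} M_\p) \cong \opn{RHom}_{A_\p}(\opn{Rest}(\bk(\q)), M_\p) \ot_{?}(\cdots)$ is not of a form that one can make precise directly. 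The initial reduction via Theorem~\ref{thm:1015} to the smooth case is also unnecessary and (as you yourself noticed) risks circularity; the paper simply localizes at $\q$ directly, with no factorization of $u$.

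The missing idea is to route the computation through the \emph{derived fiber ring} $C := \bk(\p) \ot_A B$. Since $B$ is flat over $A$, $C = \bk(\p) \ot^{\mrm{L}}_A B$ is an honest ring; since $A \to B$ is essentially smooth, $C$ is an essentially smooth $\bk(\p)$-ring, hence a \emph{regular} noetherian ring, of dimension bounded by the relative differential dimension $e$. This regularity is the entire reason the computation terminates. The paper then runs a short chain of identifications: adjunction over $B \to C$ gives $\opn{RHom}_B(\bk(\q), N) \cong \opn{RHom}_C\bigl(\bk(\q), \opn{RHom}_B(C, N)\bigr)$; adjunction over $A \to B$ turns $\opn{RHom}_B(C, N)$ into $\opn{RHom}_A(\bk(\p), N)$; derived tensor evaluation (using flatness of $B$ over $A$, noetherianity of $A$, and $N = B\ot_A M$) pulls the $B\ot_A(-)$ outside to get $C \ot_{\bk(\p)} \opn{RHom}_A(\bk(\p), M)$; and a final tensor evaluation yields
\[
\opn{RHom}_B(\bk(\q), N) \;\cong\; \opn{RHom}_C(\bk(\q), C) \ot_{\bk(\p)} \opn{RHom}_A(\bk(\p), M).
\]
The first factor is concentrated in cohomological degrees $[0, e]$ because $C$ is regular; the second has bounded cohomology because $M$ has finite injective dimension over $A$. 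Lemma~\ref{lem:1720} then closes the argument. Your instinct that adjunction and derived tensor evaluation are the right tools was correct; what was missing was the recognition that one must pass through $C$, and that its regularity — not any finite free resolution over $A_\p$ or $B_\q$ — is what makes the degree count work.
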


The result and its proof were communicated privately to us by L. Shaul. 

\begin{proof}
Let's write $N := B \ot_A M$.
Say the injective concentration of the $A$-module $M$ is inside the 
integer interval $[d_0, d_1]$, and the relative 
differential dimension of $u$ is $\leq e$. We will show that the injective 
concentration of the $B$-module $N$ is inside the integer interval 
$[d_0, d_1 + e]$,

For a prime ideal $\q \sub B$ and a module $L \in \cat{M}_{\mrm{f}}(B)$
we have   
\begin{equation} \label{eqn:1715}
\opn{RHom}_{B}(L, N) \ot_{B} B_{\q} \cong 
\opn{RHom}_{B_{\q}}(L_{\q}, N_{\q}) .
\end{equation}
So it suffices to show that for every $\q$ the 
$B_{\q}$-module $N_{\q}$ has injective concentration inside $[d_0, d_1 + e]$.

Given $\q \in \opn{Spec}(B)$, let $\p := u^{-1}(\q) \sub A$, so  
$N_{\q} \cong B_{\q} \ot_{A_{\p}} M_{\p}$. 
Because every finitely generated 
$A_{\p}$-module is induced from a  finitely generated $A$-module, an 
isomorphism like (\ref{eqn:1715}) shows that the injective 
concentration of the $A_{\p}$-module $M_{\p}$ is inside $[d_0, d_1]$.
Note also that the ring homomorphism $A_{\p} \to B_{\q}$ is essentially smooth
of relative differential dimension $\leq e$. 

As the two paragraphs above show, we can assume that $A$ is local 
with maximal ideal $\p$, $B$ is local with maximal ideal $\q$, $u : A \to B$ 
is an essentially smooth local ring homomorphism of relative differential 
dimension $\leq e$, and $M \in \cat{D}^{\mrm{b}}_{\mrm{f}}(A)$ has injective 
concentration inside $[d_0, d_1]$.
We need to prove that $N := B \ot_A M$ has injective 
concentration inside $[d_0, d_1 + e]$.

According to Lemma \ref{lem:1720} it suffices to prove that 
\[ \opn{con} \bigl( \opn{H} \bigl( \opn{RHom}_B(\bk(\q), N) \bigr) \bigr) 
\sub [d_0, d_1 + e] , \]
where $\bk(\q)$ is the residue field of $B$, and $\opn{con}(-)$ is the 
concentration of a graded module (see \cite[Definition 12.1.4]{Ye5}). 

Define the ring $C := \bk(\p) \ot_A B$. Here is the main calculation:
\begin{equation} \label{eqn:1625}
\begin{aligned}
&
\opn{RHom}_B(\bk(\q), N) \cong^{(1)}
\opn{RHom}_{C} \bigl( \bk(\q), \opn{RHom}_{B}(C, N) \bigr)  
\\ & \quad 
\cong^{(2)} \opn{RHom}_{C} \bigl( \bk(\q), \opn{RHom}_{A}( \bk(\p), N) \bigr) 
\\ & \quad 
\cong^{(3)} \opn{RHom}_{C} \bigl( \bk(\q), 
B \ot_{A} \opn{RHom}_{A}( \bk(\p), M) \bigr)
\\ & \quad 
\cong^{(4)} \opn{RHom}_{C} \bigl( \bk(\q), 
C \ot_{\bk(\p)} \opn{RHom}_{A}( \bk(\p), M) \bigr)
\\ & \quad 
\cong^{(5)} \opn{RHom}_{C}(\bk(\q), C) 
\ot_{\bk(\p)} \opn{RHom}_{A}( \bk(\p), M)  . 
\end{aligned} 
\end{equation}
Here are the explanations of the various isomorphisms above: 
the isomorphism $\cong^{(1)}$ is by adjunction for the ring homomorphism 
$B \to C$, noting that $B \to \bk(\q)$ factors through $C$. 
The isomorphism $\cong^{(2)}$ is by adjunction for the ring homomorphism 
$A \to B$. The isomorphism $\cong^{(3)}$ relies on derived tensor-evaluation, 
noting that $N = B \ot_A M$, $B$ is flat over $A$, and $A$ is noetherian; see 
\cite[Theorem 12.10.14]{Ye5}. The isomorphism $\cong^{(4)}$ is because 
for the complex of $\bk(p)$-modules $K := \opn{RHom}_{A}( \bk(\p), M)$
we have $B \ot_A K \cong C \ot_{\bk(\p)} K$. 
Since $M$ has injective concentration inside the integer interval $[d_0, d_1]$, 
it follows that the cohomology of the complex $K$ is inside $[d_0, d_1]$.
The isomorphism $\cong^{(5)}$ is another instance of derived 
tensor-evaluation.

The ring $C$ is essentially smooth over $\bk(\p)$ of relative differential 
dimension $\leq e$. Theorem \ref{thm:1015} implies that $C$ is a 
regular local ring of dimension $\leq e$. Therefore the cohomology of the 
complex $\opn{RHom}_{C}(\bk(\q), C)$ is concentrated inside the integer 
interval $[0, e]$. We conclude that the cohomology of the complex 
(\ref{eqn:1625}) is concentrated in the interval 
$[d_0, d_1] + [0, e] = [d_0, d_1 + e]$.
\end{proof}

\begin{thm} \label{thm:1625}
Let $u : A \to B$ be be an essentially smooth ring homomorphism.
\begin{enumerate}
\item Let $R \in \cat{D}^{\mrm{b}}_{\mrm{f}}(A)$ be a dualizing complex over 
$A$. Then $B \ot_A R \in \cat{D}^{\mrm{b}}_{\mrm{f}}(B)$ is a dualizing complex 
over $B$.

\item Let $(R_A, \rho_A)$ be the rigid dualizing complex of $A$ relative to 
$\K$, and assume that $B$ has constant differential relative dimension $n$ over 
$A$. Then the rigid complex 
\[ \opn{TwInd}^{\mrm{rig}}_{u / \K}(R_A, \rho_A)
= \bigl( R_A \ot_A \Om^n_{B / A}[n], \rho_A \cupprod \rho_{u}^{\mrm{esm}} \bigr)
\]
from Definition \ref{dfn:1235} is a rigid dualizing complex over $B$ relative 
to $\K$. 
\end{enumerate}
\end{thm}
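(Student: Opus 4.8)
The plan is to establish part (1) first — that essentially smooth base change sends dualizing complexes to dualizing complexes — and then deduce part (2) by combining it with the rigidity of the twisted induced complex, which is already in hand from Section \ref{sec:twisted-induced}. For part (1), write $N := B \otimes_A R$ and verify the three defining conditions of a dualizing complex over $B$ (Definition \ref{defn:rigidducomp} and the text preceding it). Since $B$ is flat over $A$ by Theorem \ref{thm:1140}(1), we have $\opn{H}^i(N) \cong B \otimes_A \opn{H}^i(R)$; as each $\opn{H}^i(R)$ is a finitely generated $A$-module vanishing for $|i| \gg 0$, it follows that $N \in \cat{D}^{\mrm{b}}_{\mrm{f}}(B)$. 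That $N$ has finite injective dimension over $B$ is exactly Lemma \ref{lem:1625}, applied with $M := R$. For the derived Morita property (Definition \ref{dfn:676}), combine the adjunction isomorphism (\ref{eqn:1480}) for $A \to B$ — using flatness, so $B \otimes^{\mrm{L}}_{A} R = N$ — to get $\opn{RHom}_B(N, N) \cong \opn{RHom}_A(R, N)$, together with the derived tensor evaluation isomorphism of Theorem \ref{thm:2115} applied to $\opn{RHom}_A(R, R) \otimes^{\mrm{L}}_A B \to \opn{RHom}_A(R, R \otimes^{\mrm{L}}_A B)$ (over the ring homomorphism $A \to A$, with $L = M = R$ and right-hand module $B$); its hypotheses hold because $R$ is derived pseudo-finite over $A$, being in $\cat{D}^{\mrm{b}}_{\mrm{f}}(A) \subseteq \cat{D}^{-}_{\mrm{f}}(A)$, $R$ is cohomologically bounded, and $B$ is flat over $A$. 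This gives $\opn{RHom}_A(R, N) \cong \opn{RHom}_A(R, R) \otimes_A B$. Since $R$ has the derived Morita property over $A$, the homothety morphism $\opn{hm}^{\mrm{R}}_R : A \to \opn{RHom}_A(R, R)$ is an isomorphism, so tensoring with $B$ yields $B \cong \opn{RHom}_A(R, R) \otimes_A B$; chasing $1 \in B$ through the composite isomorphism $B \cong \opn{RHom}_B(N, N)$ shows it agrees with $\opn{hm}^{\mrm{R}}_N$, so $N$ has the derived Morita property over $B$. (Alternatively one could reduce to the honestly smooth case using the local structure Theorem \ref{thm:1015}, since being dualizing is a Zariski-local condition, but the direct argument is just as short.)

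For part (2), the object $\opn{TwInd}^{\mrm{rig}}_{u/\K}(R_A, \rho_A) = \bigl( R_A \otimes_A \Om^n_{B/A}[n],\, \rho_A \cupprod \rho_{u}^{\mrm{esm}} \bigr)$ is a genuine object of $\cat{D}(B)_{\mrm{rig}/\K}$ by Definition \ref{dfn:1235}: here $R_A \in \cat{D}^{\mrm{b}}_{\mrm{f}}(A)$ has finite flat dimension over $\K$ because it is a rigid complex over $A/\K$, the hypothesis of constant differential relative dimension $n$ is in force, and Proposition \ref{prop:1270}(2) ensures the cup product morphism is an isomorphism, so that Definitions \ref{dfn:1225} and \ref{dfn:1260} apply. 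It therefore remains only to see that the underlying complex $R_A \otimes_A \Om^n_{B/A}[n]$ is a dualizing complex over $B$. By part (1) applied to the rigid dualizing complex $R_A$, the complex $B \otimes_A R_A$ is dualizing over $B$; and $R_A \otimes_A \Om^n_{B/A}[n] \cong (B \otimes_A R_A) \otimes_B \Om^n_{B/A}[n]$. By Theorem \ref{thm:1140}(2) and the constant-rank-$n$ assumption, $\Om^n_{B/A} = \bwedge^n_B \Om^1_{B/A}$ is a projective $B$-module of constant rank $1$, hence invertible and perfect over $B$. Tensoring by an invertible module and shifting preserves all three conditions: membership in $\cat{D}^{\mrm{b}}_{\mrm{f}}(B)$ is clear; finite injective dimension is preserved up to the shift; and the derived Morita property is preserved because $\opn{RHom}_B\bigl( L \otimes_B \Om^n_{B/A}[n],\, L \otimes_B \Om^n_{B/A}[n] \bigr) \cong \opn{RHom}_B(L, L)$, obtained by extracting the perfect module $\opn{Hom}_B(\Om^n_{B/A}, B)$ from the second argument of $\opn{RHom}$ (derived tensor evaluation, or locally) and cancelling it against $\Om^n_{B/A}$. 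Hence $R_A \otimes_A \Om^n_{B/A}[n]$ is dualizing over $B$, and the twisted induced rigid complex is a rigid dualizing complex over $B/\K$.

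The substantive technical difficulty — preservation of finite injective dimension under essentially smooth base change — lies entirely in Lemma \ref{lem:1625}, which is already proved. Granting that, the only genuine care needed is the bookkeeping in the derived Morita step of part (1): one must check that the composite base-change isomorphism actually transports the homothety morphism of $R$ to that of $N = B \otimes_A R$, rather than merely producing the abstract isomorphism $B \cong \opn{RHom}_B(N, N)$. The compatibility of the adjunction isomorphism (\ref{eqn:1480}) and of derived tensor evaluation with the canonical (homothety) maps makes this routine, but it is the one point that must be verified with care.
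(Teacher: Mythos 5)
Your proof is correct and takes essentially the same route as the paper: for part (1) it uses Lemma \ref{lem:1625} for finite injective dimension, then the chain adjunction $\Rightarrow$ derived tensor evaluation $\Rightarrow$ derived Morita property of $R$ to get $\opn{RHom}_B(B \ot_A R, B \ot_A R) \cong B$; for part (2) it reduces to the observation that $\Om^n_{B/A}$ is invertible and twisting a dualizing complex by an invertible module and a shift preserves the dualizing property. The only difference is that you add some bookkeeping the paper leaves implicit (membership of $N$ in $\cat{D}^{\mrm{b}}_{\mrm{f}}(B)$, tracking the homothety morphism through the chain of isomorphisms, and spelling out that Proposition \ref{prop:1270}(2) guarantees the cup product is an isomorphism), which are worthwhile clarifications but not a different argument.
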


\begin{proof} \mbox{}
\smallskip \noindent
(1) This is \cite[Theorem V.8.3]{RD}, but the proof given there is very 
complicated. We give an easier proof, communicated to us by L. Shaul. 
Lemma \ref{lem:1625} says that $B \ot_A R$ has finite injective dimension over 
$B$. It remains to verify that $B \ot_A R$ has the derived Morita property over 
$B$, and here is the calculation:
\begin{equation} \label{eqn:1626}
\begin{aligned}
&
\opn{RHom}_B(B \ot_A R, B \ot_A R) \cong^{\mrm{(i)}} 
\opn{RHom}_A(R, B \ot_A R) 
\\ & \quad 
\cong^{\mrm{(ii)}}  B \ot_A \opn{RHom}_A(R, R) \cong^{\mrm{(iii)}}  B .
\end{aligned} 
\end{equation}
The isomorphism $\cong^{\mrm{(i)}}$ is by adjunction for the ring homomorphism 
$u$; the isomorphism $\cong^{\mrm{(ii)}}$ is by derived tensor-evaluation; 
and the isomorphism $\cong^{\mrm{(iii)}}$ is due to the derived Morita 
property of $R$. The isomorphisms in (\ref{eqn:1626}) respect the derived 
homothety morphisms from $B$. 

\medskip \noindent
(2) We need to show that the complex 
$R_B := R_A \ot_A \Om^n_{B / A}[n] \in \cat{D}^{\mrm{b}}_{\mrm{f}}(B)$ 
is dualizing. We know by item (1) that $R_A \ot_A B$ is a dualizing complex 
over $B$. Since $\Om^n_{B / A}$ is an invertible $B$-module, the same is true 
for $R_B$. 
\end{proof}

\begin{cor} \label{cor:1705}
Let $u : A \to B$ be be an essentially smooth ring homomorphism of constant 
relative differential dimension $n$. There is a unique isomorphism 
\[ (R_B, \rho_B) \cong (R_A, \rho_A) 
\ot^{\mrm{L, rig}}_{B / A/ \K} 
\bigl( \Om^n_{B / A}[n], \rho_{B / A}^{\mrm{esm}} \bigr) \]
in $\cat{D}(B)_{\mrm{rig} / \K}$. Here 
$(- \ot^{\mrm{L, rig}}_{B / A/ \K} -)$ is the derived tensor product of rigid 
complexes from Definition \ref{dfn:1260}.

In particular we get a canonical isomorphism 
$R_B \cong R_A \ot_{B} \Om^n_{B / A}[n]$
in $\cat{D}(B)$. 
\end{cor}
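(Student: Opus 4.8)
The plan is to recognize the right-hand side as the twisted induced rigid complex $\opn{TwInd}^{\mrm{rig}}_{u / \K}(R_A, \rho_A)$ of Definition \ref{dfn:1235}, to invoke Theorem \ref{thm:1625}(2) to see that this is a rigid dualizing complex over $B$ relative to $\K$, and then to conclude by the uniqueness clause of Theorem \ref{thm:1550}.

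First I would unwind the definitions. We have $(R_A, \rho_A) \in \cat{D}(A)_{\mrm{rig} / \K}$, and $(\Om^n_{B / A}[n], \rho_{B / A}^{\mrm{esm}}) \in \cat{D}(B)_{\mrm{rig} / A}$ by Definition \ref{dfn:1225} (Setup \ref{set:1165} holds for $u : A \to B$). Since $R_A$ lies in $\cat{D}^{\mrm{b}}_{\mrm{f}}(A)$ and $\K$ is regular of finite dimension, $R_A$ has finite flat dimension over $\K$; hence Proposition \ref{prop:1270}(2), applied to the triple $B / A / \K$ with the complex $R_A$, shows that the cup product morphism $\opn{cup}_{B / A / \K, R_A, \Om^n_{B / A}[n]}$ is an isomorphism, so Definition \ref{dfn:1260} applies to this triple and produces
\[ (R_A, \rho_A) \ot^{\mrm{L, rig}}_{B / A / \K} \bigl( \Om^n_{B / A}[n], \rho_{B / A}^{\mrm{esm}} \bigr) = \bigl( R_A \ot^{\mrm{L}}_A \Om^n_{B / A}[n], \ \rho_A \cupprod \rho_{B / A}^{\mrm{esm}} \bigr) . \]
Because $B$ is flat over $A$ and $\Om^n_{B / A}$ is an invertible $B$-module (Theorem \ref{thm:1140}), the canonical morphism $R_A \ot^{\mrm{L}}_A \Om^n_{B / A} \to R_A \ot_A \Om^n_{B / A}$ is an isomorphism, so the pair above is precisely $\opn{TwInd}^{\mrm{rig}}_{u / \K}(R_A, \rho_A)$ in the sense of Definition \ref{dfn:1235}.

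Next I would cite Theorem \ref{thm:1625}(2): the rigid complex $\opn{TwInd}^{\mrm{rig}}_{u / \K}(R_A, \rho_A)$ is a rigid dualizing complex over $B$ relative to $\K$, in the sense of Definition \ref{defn:rigidducomp}. The uniqueness part of Theorem \ref{thm:1550} then furnishes a unique isomorphism $(R_B, \rho_B) \cong \opn{TwInd}^{\mrm{rig}}_{u / \K}(R_A, \rho_A)$ in $\cat{D}(B)_{\mrm{rig} / \K}$, which is the asserted isomorphism; composing it with the forgetful functor $\cat{D}(B)_{\mrm{rig} / \K} \to \cat{D}(B)$ and with the canonical identification $R_A \ot^{\mrm{L}}_A \Om^n_{B / A}[n] \cong R_A \ot_A \Om^n_{B / A}[n]$ noted above gives the final ``in particular'' statement. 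There is no genuine obstacle in this argument: all the substance is already contained in Theorem \ref{thm:1625}(2) and in the uniqueness statement of Theorem \ref{thm:1550}, and the only point requiring a moment's care is the bookkeeping matching Definition \ref{dfn:1235} with Definition \ref{dfn:1260} together with the verification, via Proposition \ref{prop:1270}(2), that the relevant cup product morphism is an isomorphism.
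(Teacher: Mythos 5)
Your proposal is correct and follows exactly the route the paper takes: identify the right-hand side with the twisted induced rigid complex of Definition \ref{dfn:1235}, apply Theorem \ref{thm:1625}(2) to see it is a rigid dualizing complex over $B / \K$, and invoke the uniqueness in Theorem \ref{thm:1550}. The paper states this in a single sentence, so your version simply fills in the bookkeeping (including the check via Proposition \ref{prop:1270}(2) that the cup product is an isomorphism) that the authors left implicit.
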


\begin{proof}
This is a consequence of Definition \ref{dfn:1260}, Theorem \ref{thm:1550} and Theorem \ref{thm:1625}(2).
\end{proof}

\section{The Twisted Induction Pseudofunctor} 
\label{sec:twisted}

Here we use the rigid dualizing complexes from the previous section to 
construct the {\em twisted induction pseudofunctor}. Its geometric counterpart, 
the {\em twisted inverse image pseudofunctor}, is discussed in Remark 
\ref{rem:1555}. We prove that the twisted induction pseudofunctor has specific 
behaviors with respect to finite and to essentially smooth ring homomorphisms. 

Throughout this section Convention \ref{conv:1550} will be in force, namely all 
rings are EFT over the regular base ring $\K$.  

\begin{dfn}[Rigid Autoduality Functor] \label{dfn:1556}
Let $A$ be an EFT $\K$-ring, with rigid dualizing complex $(R_A, \rho_A)$ 
relative to $\K$. The {\em rigid autoduality functor} of $A$ relative to $\K$ 
is the functor 
\[ \opn{D}_{A}^{\mrm{rig}} : \cat{D}_{\mrm{f}}(A)^{\mrm{op}} \to 
\cat{D}_{\mrm{f}}(A) , \quad
\opn{D}_{A}^{\mrm{rig}} := \opn{RHom}_{A}(-, R_A) . \]
\end{dfn}

\begin{prop} \label{prop:1556}
Let $A$ be an EFT $\K$-ring. 
\begin{enumerate}
\item The derived Hom-evaluation morphism 
$\opn{ev} : \opn{Id} \to \opn{D}_{A}^{\mrm{rig}} \circ \opn{D}_{A}^{\mrm{rig}}$ 
is an isomorphism of triangulated functors from $\cat{D}_{\mrm{f}}(A)$ to 
itself.

\item For every boundedness condition $\star$, with negated 
boundedness condition $-\star$, the functor 
$\opn{D}_{A}^{\mrm{rig}} : \cat{D}^{\star}_{\mrm{f}}(A)^{\mrm{op}} \to 
\cat{D}_{\mrm{f}}^{-\star}(A)$ 
is an equivalence of triangulated categories. 
\end{enumerate}
\end{prop}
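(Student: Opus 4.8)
\textbf{Proof plan for Proposition \ref{prop:1556}.}

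The plan is to reduce both statements to the corresponding well-known properties of an arbitrary dualizing complex, which are available in \cite[Chapter 13]{Ye5}, since the rigid structure $\rho_A$ plays no role in either assertion — only the underlying complex $R_A$ matters, and by Theorem \ref{thm:1550} (together with Definition \ref{defn:rigidducomp}) $R_A$ is genuinely a dualizing complex over $A$ in the classical sense: it lies in $\cat{D}^{\mrm{b}}_{\mrm{f}}(A)$, has finite injective dimension, and has the derived Morita property. So the first thing I would do is record this observation explicitly, so that $\opn{D}_A^{\mrm{rig}} = \opn{RHom}_A(-, R_A)$ is exactly the duality functor associated to a dualizing complex.

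For part (1): I would recall that for a complex $R$ of finite injective dimension, the functor $\opn{RHom}_A(-, R)$ preserves $\cat{D}_{\mrm{f}}(A)$ (not just $\cat{D}^{\star}_{\mrm{f}}(A)$), and that the bidualization evaluation morphism $\opn{ev}_M : M \to \opn{RHom}_A(\opn{RHom}_A(M, R), R)$ is defined for all $M$. The claim that $\opn{ev}$ is a natural isomorphism on $\cat{D}_{\mrm{f}}(A)$ when $R$ is dualizing is precisely the content of the relevant theorem in \cite[Chapter 13]{Ye5} (the "dualizing complexes give a duality" statement); I would cite it. If one wants to see it concretely rather than merely cite: $\opn{ev}_M$ is an isomorphism for $M = A$ by the derived Morita property, hence for every perfect complex, hence — using finite injective dimension of $R$ to pass to cohomology in each degree, or a standard truncation/devissage argument on $\cat{D}_{\mrm{f}}(A)$ via way-out functor lemmas — for every $M \in \cat{D}_{\mrm{f}}(A)$. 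Naturality and triangulatedness of $\opn{ev}$ are formal. The main subtlety worth flagging is that one really needs finite injective dimension (not just $\cat{D}^{\mrm{b}}_{\mrm{f}}$) to conclude that $\opn{D}_A^{\mrm{rig}}$ sends unbounded $\cat{D}_{\mrm{f}}(A)$ to itself and that $\opn{ev}$ holds on the whole of $\cat{D}_{\mrm{f}}(A)$ rather than only on bounded subcategories; this is exactly why the hypothesis in Definition \ref{defn:rigidducomp} includes finite injective dimension.

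For part (2): again this is standard. Finite injective dimension of $R_A$, say injective concentration inside $[q_0, q_1]$, gives the bound $\opn{con}(\opn{H}(\opn{RHom}_A(N, R_A))) \subseteq [q_0, q_1] - \opn{con}(\opn{H}(N))$ for all $N$; hence $\opn{D}_A^{\mrm{rig}}$ sends $\cat{D}^{\star}_{\mrm{f}}(A)$ into $\cat{D}^{-\star}_{\mrm{f}}(A)$ (bounded above $\leftrightarrow$ bounded below, etc.), and it is obviously triangulated and contravariant. By part (1) applied on both sides, $\opn{D}_A^{\mrm{rig}} \circ \opn{D}_A^{\mrm{rig}} \cong \opn{Id}$ on $\cat{D}^{\star}_{\mrm{f}}(A)$ and on $\cat{D}^{-\star}_{\mrm{f}}(A)$, which exhibits $\opn{D}_A^{\mrm{rig}} : \cat{D}^{\star}_{\mrm{f}}(A)^{\mrm{op}} \to \cat{D}^{-\star}_{\mrm{f}}(A)$ as a quasi-inverse of itself, hence an equivalence. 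I do not anticipate a genuine obstacle here; the only care needed is bookkeeping about which boundedness condition maps to which (unbounded, bounded, bounded-above, bounded-below all behave as expected under a functor of finite cohomological amplitude), and, as in part (1), making sure the results from \cite[Chapter 13]{Ye5} are quoted with hypotheses that $R_A$ indeed satisfies. Overall this proposition is a short "unwind the definitions and cite the classical theory" argument; the substantive work is all upstream in Theorem \ref{thm:1550}.
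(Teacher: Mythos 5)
Your proposal is correct and matches the paper's proof in essence: the paper simply observes that the assertions hold for any dualizing complex over $A$ and cites \cite{RD} and \cite[Theorem 13.1.18 and Corollary 13.1.19]{Ye5}, which is exactly the "unwind the definitions and cite the classical theory" reduction you propose (your extra sketch of the devissage argument is just an expansion of the cited result).
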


\begin{proof}
These assertions are true for any dualizing complex over $A$. See \cite{RD}, or 
\cite[Theorem 13.1.18 and Corollary 13.1.19]{Ye5}.
\end{proof}

Recall that to a homomorphism $u : A \to B$ in $\cat{Rng} \eftover \K$
we associate the left derived induction functor
\begin{equation} \label{eqn:1610}
\opn{LInd}_u : \cat{D}_{A} \to \cat{D}_{B}, \quad 
\opn{LInd}_u = B \ot^{\mrm{L}}_{A} (-) .
\end{equation}

\begin{prop} \label{prop:1557}
Let $u : A \to B$ be a homomorphism in $\cat{Rng} \eftover \K$. The 
left derived induction functor restricts to a triangulated functor 
\[  \opn{LInd}_u : \cat{D}_{\mrm{f}}^{-}(A) \to 
\cat{D}_{\mrm{f}}^{-}(B) . \]
\end{prop}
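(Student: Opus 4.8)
The plan is to reduce everything to an honest bounded-above complex of finitely generated free modules, where the assertion is immediate. Note first that $\opn{LInd}_u : \cat{D}(A) \to \cat{D}(B)$ is already a triangulated functor (Section \ref{sec:recall-dg}), and $\cat{D}^{-}_{\mrm{f}}(B)$ is a full triangulated subcategory of $\cat{D}(B)$; hence it is enough to show that $\opn{LInd}_u(M) \in \cat{D}^{-}_{\mrm{f}}(B)$ for every $M \in \cat{D}^{-}_{\mrm{f}}(A)$, and then the restriction is automatically triangulated.

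First I would recall that, since $A$ is a noetherian ring, the objects of $\cat{D}^{-}_{\mrm{f}}(A)$ are exactly the derived pseudo-finite DG $A$-modules (\cite[Theorem 11.4.40]{Ye5}, quoted at the end of Section \ref{sec:recall-dg}). Thus for $M \in \cat{D}^{-}_{\mrm{f}}(A)$ there is an isomorphism $P \iso M$ in $\cat{D}(A)$ with $P$ a pseudo-finite semi-free DG $A$-module; because $A$ is a ring this simply means that $P$ is a bounded-above complex of finitely generated free $A$-modules. (Alternatively, one can construct such a $P$ by hand as a bounded-above resolution of $M$ by finitely generated free modules, using that $A$ is noetherian.) A bounded-above complex of free $A$-modules is K-projective, in particular K-flat, so by formula (\ref{eqn:621}) we get $\opn{LInd}_u(M) = B \ot^{\mrm{L}}_A M \cong B \ot^{\mrm{L}}_A P \cong B \ot_A P$ in $\cat{D}(B)$.

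It then remains to observe that $B \ot_A P$ is again a bounded-above complex of finitely generated free $B$-modules. It is bounded above, so its cohomology is bounded above; and each $\opn{H}^i(B \ot_A P)$ is a subquotient of the finitely generated $B$-module $(B \ot_A P)^i = B \ot_A P^i$, hence is finitely generated over $B$ because $B$ — being essentially finite type over the noetherian ring $\K$ — is noetherian. Therefore $\opn{LInd}_u(M) \cong B \ot_A P$ lies in $\cat{D}^{-}_{\mrm{f}}(B)$, which finishes the argument. There is no real obstacle here: the only step needing a word of justification is replacing $M$ by the finite free model $P$, for which we invoke the description of $\cat{D}^{-}_{\mrm{f}}(A)$ via derived pseudo-finite modules; the rest is a routine check that $B \ot_A(-)$ preserves boundedness above and finite generation of cohomology at the level of genuine complexes.
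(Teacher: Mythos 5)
Your proof is correct and takes essentially the same route as the paper: replace $M$ by a pseudo-finite semi-free (bounded-above, finitely generated free) resolution $P$, and observe that $B \ot_A P$ is again such a complex over the noetherian ring $B$. The paper states this in one sentence; you have simply spelled out the details.
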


\begin{proof}
Every complex $M \in \cat{D}_{\mrm{f}}^{-}(A)$ admits a 
resolution $P \to M$, where $P$ is a bounded above complex of finitely 
generated free $A$-modules. Cf.\ \cite[Section 11.4]{Ye5}, where such a complex 
$P$ is called pseudo-finite semi-free. Then 
$\opn{LInd}_u(M) \cong B \ot_A P \in\cat{D}_{\mrm{f}}^{-}(B)$. 
\end{proof}

In the next few paragraphs we shall talk about $2$-categories. 
It will be convenient to denote $2$-morphisms in a $2$-category
$\cat{D}$ by $\twoto$,
and vertical composition of $2$-morphisms by $\ast$, as was done in 
\cite[Section 1]{Ye1}.

\begin{dfn} \label{dfn:1555}
The set of $\K$-linear triangulated categories is denoted by 
$\cat{TrCat} \over \K$. It is the set of objects of a strict $2$-category, 
in which the $1$-morphisms are the $\K$-linear triangulated functors 
$F : \cat{C} \to \cat{D}$, and the $2$-morphisms 
$\ze : F \twoto G$ are the morphisms of triangulated functors. See 
\cite[Sections 5.1-5.2]{Ye5} for details. 
\end{dfn}

Let's recall the notion of {\em normalized pseudofunctor} 
$F : \cat{C} \to \cat{D}$ from a category $\cat{C}$ to a (strict) $2$-category 
$\cat{D}$, following \cite[Section 1]{Ye1}. 
The pseudofunctor $F$ has three 
components. To every object $C \in \cat{C}$, $F$ assigns an object $F(C) \in 
\cat{D}$. To every morphism 
$f : C_0 \to C_1$ in $\cat{C}$ it assigns a $1$-morphism 
$F(f) : F(C_0) \to F(C_1)$ in $\cat{D}$. And to every composable pair of 
morphisms $C_0 \xar{f_1} C_1 \xar{f_2} C_2$ in $\cat{C}$ it assigns a 
$2$-isomorphism 
$F(f_1, f_2) : F(f_2) \circ F(f_1) \twoiso F(f_2 \circ f_1)$ in $\cat{D}$.  
The normalization condition says that for every $C$ there is an equality of 
$1$-morphisms $F(\opn{id}_C) = \opn{id}_{F(C)}$, and for every 
$C_0 \xar{f} C_1$ there are equalities of $2$-morphisms 
$F(\opn{id}_{C_0}, f) = F(f, \opn{id}_{C_1}) = \opn{id}_{F(f)}$.  
Finally there is the coherence condition: for every 
$C_0 \xar{f_1} C_1 \xar{f_2} C_2 \xar{f_3} C_3$
there is equality 
\begin{equation} \label{eqn:1607}
F(f_2 \circ f_1, f_3) \ast F(f_1, f_2) = 
F(f_1, f_3 \circ f_2) \ast F(f_2, f_3) 
\end{equation}
of $2$-isomorphisms 
\[ F(f_3) \circ F(f_2) \circ F(f_1) \twoiso F(f_3 \circ f_2 \circ f_1) . \]

Note that given functors $F_1, G_1 : \cat{C}_0 \to \cat{C}_1$
and $F_2, G_2 : \cat{C}_1 \to \cat{C}_2$,
and morphisms of functors 
$\al_i : F_i \twoto G_i$, there is equality 
\begin{equation} \label{eqn:1613}
\al_2 \circ \al_1 = 
(\opn{id}_{G_2} \circ \, \al_1) \ast (\al_2 \circ \opn{id}_{F_1}) 
= (\al_2 \circ \opn{id}_{G_1}) \ast (\opn{id}_{F_2} \circ \, \al_1) 
\end{equation}
of morphisms
$F_2 \circ F_1 \twoto G_2 \circ G_1$
of functors $\cat{C}_0 \to \cat{C}_2$. This is the {\em exchange condition}.

An unnormalized pseudofunctor $F$ has a $2$-isomorphism 
$\al_C : F(\opn{id}_C) \twoiso \opn{id}_{F(C)}$
instead of equality, and these isomorphisms $\al_C$ need to satisfy some 
conditions. See \cite[Definition tag = \texttt{003N}]{SP} for details.
It is always possible to normalize a pseudofunctor $F$, by declaring 
$F(\opn{id}_C) := \opn{id}_{F(C)}$, and changing the $2$-isomorphisms 
$F(\opn{id}_{C_0}, f)$ and $F(f, \opn{id}_{C_1})$ using the given 
$2$-isomorphisms $\al_{C_0}$ and $\al_{C_1}$. 
We now stop using the special $2$-categorical symbols. 

Given ring homomorphisms $A \to B \to C$, there is a unique isomorphism 
\begin{equation} \label{eqn:1605}
\ga : C \ot^{\mrm{L}}_{B} (B \ot^{\mrm{L}}_{A} (-)) \iso C \ot^{\mrm{L}}_{A} (-)
\end{equation}
of triangulated functors 
$\cat{D}(A) \to \cat{D}(C)$, such that for every K-flat complex of 
$A$-modules $P$, the isomorphism 
$\ga : C \ot_{B} (B \ot_{A} P) \iso C \ot_{A} P$
is $\ga(c \ot b \ot p) = (c \cd b) \ot p$. 

\begin{prop} \label{prop:1605}
There is a unique normalized pseudofunctor 
\[ \opn{LInd} : \cat{Rng} \eftover \K \to \cat{TrCat} \over \K , \]
called {\em left derived induction}, with these properties:
\begin{enumerate}
\item To an object $A \in \cat{Rng} \eftover \K$ it assigns the category
$\opn{LInd}(A) := \cat{D}^{-}_{\mrm{f}}(A) \in \cat{TrCat} \over \K$.

\item To a morphism $u : A \to B$ in $\cat{Rng} \eftover \K$ it assigns the 
triangulated functor  
$\opn{LInd}(u) := \opn{LInd}_{u}$
from formula (\ref{eqn:1610}). 
Except if $B = A$ and $u = \opn{id}_A$, where we take 
$\opn{LInd}(\opn{id}_A) := \opn{Id}_{\opn{LInd}(A)}$. 

\item To composable homomorphisms $u : A \to B$ and $v : B \to C$ in 
$\cat{Rng} \eftover \K$, the isomorphism of triangulated functors 
\[ \opn{LInd}(u, v) : \opn{LInd}(v) \circ \opn{LInd}(u) 
\iso \opn{LInd}(v \circ u) \]
is from formula (\ref{eqn:1605}). Except if $u = \opn{id}_A$ or 
$v = \opn{id}_B$, where we take 
$\opn{LInd}(u, v) := \opn{id}_{\opn{LInd}(v \circ u)}$. 
\end{enumerate}
\end{prop}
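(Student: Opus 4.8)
The plan is to first construct an \emph{un}normalized pseudofunctor and then normalize it by the standard recipe recalled in the text just before the statement. Define $F : \cat{Rng} \eftover \K \to \cat{TrCat} \over \K$ on objects by $F(A) := \cat{D}^{-}_{\mrm{f}}(A)$; on \emph{every} morphism $u : A \to B$ (identities included) by $F(u) := \opn{LInd}_u = B \ot^{\mrm{L}}_A (-)$; and on every composable pair $A \xar{u} B \xar{v} C$ by $F(u, v) := \ga$, the isomorphism of triangulated functors from formula (\ref{eqn:1605}). Three things must be checked at this stage: that each $F(u)$ is a $\K$-linear triangulated functor actually landing in $\cat{D}^{-}_{\mrm{f}}(B)$, which is precisely Proposition \ref{prop:1557}; that each $F(u,v)$ is a natural isomorphism of triangulated functors, which is the content of the paragraph preceding the proposition; and the coherence (associativity) condition together with the unit conditions required of an unnormalized pseudofunctor.

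For coherence, fix $A \xar{u} B \xar{v} C \xar{w} D$; one must show
\[ F(v \circ u, w) \ast F(u, v) = F(u, w \circ v) \ast F(v, w) \]
as $2$-isomorphisms $\opn{LInd}_w \circ \opn{LInd}_v \circ \opn{LInd}_u \iso \opn{LInd}_{w \circ v \circ u}$ (with the whiskerings left implicit, as in the text). Both sides are morphisms of triangulated functors $\cat{D}^{-}_{\mrm{f}}(A) \to \cat{D}^{-}_{\mrm{f}}(D)$, so it suffices to check equality componentwise at each complex; and, by naturality applied to a K-flat resolution, it suffices to do so at a K-flat complex $P$ of $A$-modules, using that $B \ot_A P$ is K-flat over $B$ and $C \ot_B (B \ot_A P) \cong C \ot_A P$ is K-flat over $C$. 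Then every derived tensor product in sight is the underlying tensor product, and every instance of $\ga$ is given by the explicit formula $c \ot b \ot p \mapsto (c \cd b) \ot p$. Tracing the pure tensor $d \ot c \ot b \ot p$ through the two composites, the left-hand side regroups it as $\bigl( (d \cd c) \cd b \bigr) \ot p$ and the right-hand side as $\bigl( d \cd (c \cd b) \bigr) \ot p$; these coincide by associativity of multiplication in $D$. The two unit conditions are handled identically: on a K-flat $P$ the maps $F(\opn{id}_A, u)$ and $F(u, \opn{id}_B)$ are given by $1 \ot b \ot p \mapsto b \ot p$, hence are identities.

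It remains to normalize $F$ and to derive uniqueness. For each $A$ there is a canonical isomorphism $\al_A : F(\opn{id}_A) = A \ot^{\mrm{L}}_A (-) \iso \opn{Id}_{F(A)}$, described on K-flat complexes by $a \ot p \mapsto a \cd p$, and the compatibility of the family $\{\al_A\}$ demanded by the normalization procedure is verified on K-flat complexes just as above. Applying the recipe of \cite[Definition tag = \texttt{003N}]{SP} — setting $\opn{LInd}(\opn{id}_A) := \opn{Id}_{F(A)}$, keeping $\opn{LInd}(u) := F(u)$ for $u \neq \opn{id}$, and conjugating $F(\opn{id}_A, u)$ and $F(u, \opn{id}_B)$ by the $\al$'s — one computes, once more on K-flat complexes, that the structure $2$-isomorphisms involving an identity become the identity, so the normalized pseudofunctor $\opn{LInd}$ has exactly properties (1)--(3) of the statement. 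Uniqueness is then immediate: a normalized pseudofunctor is completely determined by its values on objects, on morphisms, and on composable pairs of morphisms, and (1)--(3) prescribe all three; the exceptions imposed for identity maps in (2) and (3) are not a choice but are forced by the normalization axioms.

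The only real work is the coherence verification, and even there the difficulty is organizational rather than conceptual: one has to be disciplined about reducing all the functor identities to statements about K-flat complexes and about the K-flatness of the iteratively induced complexes, after which the associativity pentagon collapses to associativity of ring multiplication.
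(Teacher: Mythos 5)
Your proof is correct and follows essentially the same strategy as the paper's: the decisive step in both is to verify the coherence pentagon (\ref{eqn:1607}) on a K-flat complex $P$ by tracing pure tensors $d \ot c \ot b \ot p$ through the two composites and appealing to associativity of multiplication in $D$. The detour through an unnormalized pseudofunctor that you then normalize, while cleanly presented, adds bookkeeping rather than a new idea (the paper simply builds the exceptional identity cases into the definition directly); one small imprecision in your write-up is the assertion that $F(\opn{id}_A, u)$ and $F(u, \opn{id}_B)$ ``are identities'' — as maps $B \ot_A A \ot_A P \to B \ot_A P$ and $B \ot_A B \ot_A P \to B \ot_A P$ they are the canonical isomorphisms coming from the unit constraints, not literal identities; they only become identities after the normalization step you describe next, so the wording gets ahead of the argument even though the conclusion is right.
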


\begin{proof}
The only nontrivial verification is that the coherence condition 
(\ref{eqn:1607}) holds. Let 
$A \to B \to C \to D$ be ring homomorphisms. For a K-flat complex of 
$A$-modules $P$, both composed morphisms send 
$d \ot c \ot b \ot p \mapsto (d \cd c \cd b) \ot p$, so they are equal.
\end{proof}

\begin{thm}[Twisted Induction] \label{thm:1555}
There is a unique normalized pseudofunctor 
\[ \opn{TwInd} : \cat{Rng} \eftover \K \to \cat{TrCat} \over \K , \]
called {\em twisted induction}, with these properties:
\begin{enumerate}
\item To an object $A \in \cat{Rng} \eftover \K$ it assigns the category
\[ \opn{TwInd}(A) := \cat{D}^{+}_{\mrm{f}}(A) \in \cat{TrCat} \over \K . \]

\item To a morphism $u : A \to B$ in $\cat{Rng} \eftover \K$ it assigns the 
triangulated functor 
\[ \opn{TwInd}(u) = \opn{TwInd}_{u} := 
\opn{D}_{B}^{\mrm{rig}} \circ \opn{LInd}_u \circ \opn{D}_{A}^{\mrm{rig}} . \]
Except if $B = A$ and $u = \opn{id}_A$, where we take 
$\opn{TwInd}(\opn{id}_A) := \opn{Id}_{\opn{TwInd}(A)}$. 

\item To composable homomorphisms $u : A \to B$ and $v : B \to C$ in 
$\cat{Rng} \eftover \K$, the isomorphism of triangulated functors 
\[ \opn{TwInd}(u, v) = \opn{TwInd}_{u, v} : 
\opn{TwInd}_{v} \circ \opn{TwInd}_{u} \iso 
\opn{TwInd}_{v \circ u} \]
is gotten by first applying the inverse of the evaluation isomorphism 
$\opn{ev}_B : \opn{Id} \to \opn{D}_{B}^{\mrm{rig}} \circ 
\opn{D}_{B}^{\mrm{rig}}$, 
and then applying the isomorphism $\opn{LInd}(u, v)$ from 
Proposition \lb \ref{prop:1605}(3). 
Except if $u = \opn{id}_A$ or $v = \opn{id}_B$, where we take 
$\opn{TwInd}(u, v) := \opn{id}_{\opn{TwInd}(v \circ u)}$. 
\end{enumerate}
\end{thm}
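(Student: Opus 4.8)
The plan is to construct the pseudofunctor $\opn{TwInd}$ by combining the pseudofunctor $\opn{LInd}$ from Proposition \ref{prop:1605} with the rigid autoduality functors $\opn{D}_A^{\mrm{rig}}$, and then to verify the normalization and coherence axioms. First I would check that the assignments in (1)--(3) are well-defined. For (1), $\cat{D}^{+}_{\mrm{f}}(A)$ is a $\K$-linear triangulated category by the discussion at the end of Section \ref{sec:recall-dg} together with Convention \ref{conv:1550}. For (2), I must check that $\opn{TwInd}_u = \opn{D}_B^{\mrm{rig}} \circ \opn{LInd}_u \circ \opn{D}_A^{\mrm{rig}}$ really maps $\cat{D}^{+}_{\mrm{f}}(A)$ into $\cat{D}^{+}_{\mrm{f}}(B)$: by Proposition \ref{prop:1556}(2) the first functor $\opn{D}_A^{\mrm{rig}} : \cat{D}^{+}_{\mrm{f}}(A)^{\mrm{op}} \to \cat{D}^{-}_{\mrm{f}}(A)$ is an equivalence, then $\opn{LInd}_u$ lands in $\cat{D}^{-}_{\mrm{f}}(B)$ by Proposition \ref{prop:1557}, and finally $\opn{D}_B^{\mrm{rig}} : \cat{D}^{-}_{\mrm{f}}(B)^{\mrm{op}} \to \cat{D}^{+}_{\mrm{f}}(B)$ is an equivalence again by Proposition \ref{prop:1556}(2). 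So $\opn{TwInd}_u$ is a well-defined $\K$-linear triangulated functor; note we are composing two contravariant equivalences with one covariant functor, giving a covariant functor. For (3), the $2$-isomorphism $\opn{TwInd}_{u,v}$ is defined as the composite of $\opn{id}_{\opn{D}_C^{\mrm{rig}}} \circ \bigl(\opn{ev}_B^{-1}\bigr) \circ \opn{id}$ (inserted between the two $\opn{LInd}$ factors inside the big composite) followed by $\opn{id}_{\opn{D}_C^{\mrm{rig}}} \circ \opn{LInd}(u,v) \circ \opn{id}_{\opn{D}_A^{\mrm{rig}}}$, using Proposition \ref{prop:1556}(1) that $\opn{ev}_B$ is an isomorphism; all constituents are $2$-isomorphisms, so $\opn{TwInd}_{u,v}$ is a $2$-isomorphism.

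Next I would verify the normalization conditions. The object part $\opn{TwInd}(\opn{id}_A) = \opn{Id}$ and the degenerate cases of $\opn{TwInd}(u,v)$ are defined by fiat, so I only need to check these forced choices are consistent with the coherence axiom (handled below) and that, in the non-degenerate definition, $\opn{TwInd}(\opn{id}_{A}, f)$ and $\opn{TwInd}(f, \opn{id}_{B})$ would reduce to identities anyway --- which is exactly why we are allowed to set them to identities and stay within the framework of normalized pseudofunctors (cf.\ the normalization discussion preceding the theorem). The real content is the coherence condition (\ref{eqn:1607}) for a chain $A \xar{u} B \xar{v} C \xar{w} D$ (when all three are non-identities; the cases with an identity are trivial by the normalization convention together with the exchange condition (\ref{eqn:1613})). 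Expanding $\opn{TwInd}_u, \opn{TwInd}_v, \opn{TwInd}_w$ and their composites, one sees that each $\opn{TwInd}_{-,-}$ contributes two pieces: an ``$\opn{ev}_B^{-1}$'' (resp.\ $\opn{ev}_C^{-1}$) piece that deletes a $\opn{D}^{\mrm{rig}} \circ \opn{D}^{\mrm{rig}}$ sandwich, and an $\opn{LInd}(-,-)$ piece. The plan is to reduce coherence for $\opn{TwInd}$ to coherence for $\opn{LInd}$, which is Proposition \ref{prop:1605}, plus the naturality and triangle identities satisfied by the evaluation isomorphism $\opn{ev}$. Concretely: since $\opn{ev}_B$ is a morphism of triangulated functors, it commutes with whiskering by $\opn{LInd}_v$ and $\opn{D}_C^{\mrm{rig}}$, etc.; this lets one slide the two occurrences of $\opn{ev}^{-1}$ past each other, and then the $\opn{LInd}$ pieces assemble exactly into the coherence pentagon for $\opn{LInd}$. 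I would draw the $2$-diagram once, label each region, and invoke (a) Proposition \ref{prop:1605} for the $\opn{LInd}$-subdiagram, (b) the exchange law (\ref{eqn:1613}) to reorganize horizontal/vertical compositions, and (c) the fact that $\opn{ev}_B^{-1} \ast \opn{ev}_B = \opn{id}$ and its naturality to cancel the duality-sandwich pieces.

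Finally, uniqueness: any normalized pseudofunctor satisfying (1)--(3) has its object map, $1$-morphism map, and structure $2$-isomorphisms all prescribed by the formulas in the statement, so there is literally nothing left to choose --- the only point to address is that the formulas in (2) and (3) are internally consistent with being a pseudofunctor, which is exactly what the existence argument establishes. Thus uniqueness is immediate once existence is proved. The main obstacle I anticipate is bookkeeping in the coherence verification: keeping track of \emph{where} inside the length-five composite $\opn{D}_D^{\mrm{rig}} \circ \opn{LInd}_w \circ \opn{D}_C^{\mrm{rig}} \circ \opn{D}_C^{\mrm{rig}} \circ \opn{LInd}_v \circ \opn{D}_B^{\mrm{rig}} \circ \opn{D}_B^{\mrm{rig}} \circ \opn{LInd}_u \circ \opn{D}_A^{\mrm{rig}}$ each $2$-cell acts, and making sure the two orders of collapsing the $\opn{D}^{\mrm{rig}}\!\circ\!\opn{D}^{\mrm{rig}}$ blocks agree; this is a diagram-chase of moderate size but no conceptual difficulty, and it is the analogue of the standard fact that composing a pseudofunctor with a ($2$-)natural family of equivalences again yields a pseudofunctor.
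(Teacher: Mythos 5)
Your proposal is correct and follows essentially the same route as the paper: both proofs reduce the coherence condition for $\opn{TwInd}$ to the coherence of $\opn{LInd}$ (Proposition \ref{prop:1605}) by means of the exchange law (\ref{eqn:1613}), which is precisely what lets the two $\opn{ev}^{-1}$-cells and the two $\opn{LInd}(-,-)$-cells be slid past each other inside the length-five composite. The paper presents this as a single large commutative diagram (\ref{eqn:1611}) with a top diamond, two middle triangles (all commuting by exchange), and a bottom diamond (commuting by coherence of $\opn{LInd}$), which is exactly the ``diagram-chase of moderate size'' you anticipate; the additional well-definedness checks and uniqueness remarks you spell out are implicit in the paper but entirely consistent with it.
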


\begin{proof}
Again, the only nontrivial verification is that the coherence condition 
(\ref{eqn:1607}) holds. Let 
$A_0 \xar{u_1} A_1 \xar{u_2} A_2 \xar{u_3} A_3$ be ring homomorphisms. 
Consider the diagram of functors and isomorphisms between them.  
\begin{equation} \label{eqn:1611}
\begin{tikzcd} [column sep = 0ex, row sep = 5ex]
&
\begin{minipage}[]{12em}
$\opn{D}_{A_3} \circ \opn{LI}_{u_3} \circ \opn{D}_{A_2} 
\circ \opn{D}_{A_2} \circ \opn{LI}_{u_2} \\
\circ \opn{D}_{A_1} \circ \opn{D}_{A_1} 
\circ \opn{LI}_{u_1} \circ 
\opn{D}_{A_0}$ 
\end{minipage}
\ar[dl, "{\opn{ev}_{A_2}^{-1}}"']
\ar[dr, "{\opn{ev}_{A_1}^{-1}}"]
\\
\begin{minipage}[]{10em}
$\opn{D}_{A_3} \circ \opn{LI}_{u_3} \circ \opn{LI}_{u_2} \\
\circ \opn{D}_{A_1} \circ \opn{D}_{A_1} \circ  \opn{LI}_{u_1} \circ 
\opn{D}_{A_0}$ 
\end{minipage}
\ar[d, "{\opn{LI}_{u_2, u_3}}"']
\ar[dr, "{\opn{ev}_{A_1}^{-1}}"]
&
&
\begin{minipage}[]{10em}
$\opn{D}_{A_3} \circ \opn{LI}_{u_3} \circ \opn{D}_{A_2} \\ 
\circ \opn{D}_{A_2} \circ \opn{LI}_{u_2}  \circ  \opn{LI}_{u_1} \circ 
\opn{D}_{A_0}$ 
\end{minipage}
\ar[d, "{\opn{LI}_{u_1, u_2}}"]
\ar[dl, "{\opn{ev}_{A_2}^{-1}}"']
\\
\begin{minipage}[]{8em}
$\opn{D}_{A_3} \circ \opn{LI}_{u_3 \circ u_2} \circ \opn{D}_{A_1} \\
\circ \opn{D}_{A_1} \circ  \opn{LI}_{u_1} \circ \opn{D}_{A_0}$ 
\end{minipage}
\ar[d, "{\opn{ev}_{A_1}^{-1}}"]
&
\begin{minipage}[]{8em}
$\opn{D}_{A_3} \circ \opn{LI}_{u_3} \circ \opn{LI}_{u_2} \\
\circ  \opn{LI}_{u_1} \circ \opn{D}_{A_0}$ 
\end{minipage}
\ar[dl, "{\opn{LI}_{u_2, u_3}}"]
\ar[dr, "{\opn{LI}_{u_1, u_2}}"']
&
\begin{minipage}[]{9em}
$\opn{D}_{A_3} \circ \opn{LI}_{u_3} \circ \opn{D}_{A_2} \\ 
\circ \opn{D}_{A_2} \circ \opn{LI}_{u_2 \circ u_1}  \circ \opn{D}_{A_0}$ 
\end{minipage}
\ar[d, "{\opn{ev}_{A_2}^{-1}}"']
\\
\begin{minipage}[]{9em}
$\opn{D}_{A_3} \circ \opn{LI}_{u_3 \circ u_2} \\
\circ  \opn{LI}_{u_1} \circ \opn{D}_{A_0}$ 
\end{minipage}
\ar[dr, "{\opn{LI}_{u_1, u_3 \circ u_2}}"]
&
&
\begin{minipage}[]{9em}
$\opn{D}_{A_3} \circ \opn{LI}_{u_3} \\ 
\circ \opn{LI}_{u_2 \circ u_1}  \circ \opn{D}_{A_0}$ 
\end{minipage}
\ar[dl, "{\opn{LI}_{u_2 \circ u_1, u_3}}"']
\\
&
\begin{minipage}[]{9em}
$\opn{D}_{A_3} \circ \opn{LI}_{u_3 \circ u_2 \circ u_1} \circ \opn{D}_{A_0}$ 
\end{minipage}
\end{tikzcd} 
\end{equation}
We are using the abbreviations 
$\opn{D}_{A_i} := \opn{D}^{\mrm{rig}}_{A_i}$,
$\opn{LI}_{u_i} := \opn{LInd}_{u_i}$ and 
$\opn{LI}_{u_i, u_j} := \opn{LInd}(u_i, u_j)$. 
The path along the left edge of this diagram is the isomorphism 
$\opn{TwInd}(u_1, u_3 \circ u_2) \circ \opn{TwInd}(u_2, u_3)$, 
and the path along the right edge is the isomorphism 
$\opn{TwInd}(u_2 \circ u_1, u_3) \circ \opn{TwInd}(u_1, u_2)$; 
we need to prove they are equal. By the exchange property the top diamond and 
middle two triangles in diagram (\ref{eqn:1611}) are commutative. By 
Proposition \ref{prop:1605}, i.e.\ by the coherence property of the 
pseudofunctor $\opn{LInd}$,  the bottom diamond is commutative. Therefore the 
whole diagram is commutative. 
\end{proof}

\begin{lem} \label{lem:1620}
Suppose $R \in \cat{D}^{\mrm{b}}_{\mrm{f}}(A)$ is some dualizing complex, and 
define the functor $\opn{D} := \opn{RHom}_A(-, R)$. 
Given $M, N \in \cat{D}^{}_{}(A)$, there is a morphism 
\[ \tau_{M, N} : \opn{RHom}_{A}(M, N) \iso 
\opn{RHom}_{A} \bigl( \opn{D}(N), \opn{D}(M) \bigr) \]
in $\cat{D}(A)$, which is functorial is $M$ and $N$, and such that 
the diagram 
\[ \begin{tikzcd} [column sep = 10ex, row sep = 5ex] 
\opn{H}^0 \bigl( \opn{RHom}_{A}(M, N) \bigr)
\ar[r, "{\opn{H}^0(\tau_{M, N})}"]
\ar[d, "{}"', "{\simeq}"]
&
\opn{H}^0 \bigl( \opn{RHom}_{A} \bigl( \opn{D}(N), \opn{D}(M) \bigr) \bigr)
\ar[d, "{}", "{\simeq}"']
\\
\opn{Hom}_{\cat{D}(A)}(M, N)
\ar[r, "{\opn{D}}"]
&
\opn{Hom}_{\cat{D}(A)} \bigl( \opn{D}(N), \opn{D}(M) \bigr)
\end{tikzcd} \]
is commutative. If $M, N \in \cat{D}^{}_{\mrm{f}}(A)$,
then $\tau_{M, N}$ is an isomorphism. 
\end{lem}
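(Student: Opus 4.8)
The statement packages a standard ``duality reverses arrows'' phenomenon, so the plan is to construct $\tau_{M,N}$ by unfolding the definition of $\opn{D}$ and using adjunction, then identify its effect on $\opn{H}^0$, and finally upgrade to an isomorphism under the finiteness hypothesis via d\'evissage. First I would fix a K-injective resolution $R \to I$ of the dualizing complex, so that $\opn{D}(M) = \opn{RHom}_A(M, I)$ is computed by honest Hom-complexes; then there is a canonical morphism in $\cat{C}_{\mrm{str}}(A)$
\[ \opn{Hom}_A(M, N) \to \opn{Hom}_A\bigl( \opn{Hom}_A(N, I), \opn{Hom}_A(M, I) \bigr), \quad \phi \mapsto \bigl( \psi \mapsto \psi \circ \phi \bigr) , \]
which is strictly functorial in $M$ and $N$. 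Passing through $\opn{Q}$ and the comparison morphisms $\eta^{\mrm{R}}$ from formula (\ref{eqn:618}), and replacing $M$ by a K-projective resolution $P \to M$ where needed, yields the desired morphism $\tau_{M,N}$ in $\cat{D}(A)$, functorial in both arguments. (A cleaner alternative route I would keep in mind: use the tensor-Hom adjunction $\opn{RHom}_A(M,\opn{RHom}_A(N,R)) \cong \opn{RHom}_A(M \ot^{\mrm{L}}_A N, R) \cong \opn{RHom}_A(N, \opn{RHom}_A(M,R))$, which produces a canonical isomorphism $\opn{RHom}_A(M,\opn{D}(N)) \cong \opn{RHom}_A(N,\opn{D}(M))$ for all $M,N$; composing with $\tau$ one sees these match, and this symmetric form is often more convenient.)

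Next I would check the compatibility with $\opn{H}^0$. Applying $\opn{H}^0$ to the strict map above and using the bifunctorial isomorphism $\opn{H}^0(\opn{RHom}_A(-,-)) \cong \opn{Hom}_{\cat{D}(A)}(-,-)$ recalled after (\ref{eqn:618}), the map $\opn{H}^0(\tau_{M,N})$ becomes, on the level of homotopy classes, precisely $\phi \mapsto \opn{D}(\phi) = \opn{RHom}_A(\phi, R)$; this is a direct computation with representatives and I would not grind through it, only note that the square in the statement commutes by construction of the vertical identifications. This settles the first two assertions for arbitrary $M, N \in \cat{D}(A)$.

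Finally, for $M, N \in \cat{D}_{\mrm{f}}(A)$ I would prove $\tau_{M,N}$ is an isomorphism. The quickest argument is via the symmetric description: since $R$ is dualizing over the noetherian ring $A$, the autoduality $\opn{D} : \cat{D}^{\star}_{\mrm{f}}(A)^{\mrm{op}} \to \cat{D}^{-\star}_{\mrm{f}}(A)$ is an equivalence with $\opn{ev} : \opn{Id} \iso \opn{D} \circ \opn{D}$ an isomorphism (this is Proposition \ref{prop:1556} applied to $A$ and $R$, or \cite[Theorem 13.1.18]{Ye5}). An equivalence of categories induces isomorphisms on $\opn{RHom}$-complexes; concretely one checks that the composite
\[ \opn{RHom}_A(M,N) \xrightarrow{\tau} \opn{RHom}_A(\opn{D}(N),\opn{D}(M)) \xrightarrow{\tau} \opn{RHom}_A(\opn{D}^2 M, \opn{D}^2 N) \]
is the isomorphism induced by $\opn{ev}_M$ and $\opn{ev}_N$, so the first $\tau$ is a split mono and, by symmetry (running the same argument for the pair $\opn{D}(N),\opn{D}(M)$), a split epi, hence an isomorphism. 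Alternatively, and this is the route I expect to be the main technical obstacle if one avoids invoking the equivalence wholesale, one does d\'evissage: reduce to $M = N = A$ where $\tau_{A,A} : R = \opn{RHom}_A(A,A) \to \opn{RHom}_A(\opn{D}(A),\opn{D}(A))$ is the derived homothety morphism of the dualizing complex $\opn{D}(A)$, which is an isomorphism by the derived Morita property (Definition \ref{dfn:676}); then propagate along distinguished triangles and shifts using the five lemma, working first in $\cat{D}^{-}_{\mrm{f}}$ with bounded-above finite free resolutions and then handling general $\star$ by a truncation/limit argument. The finiteness of $M, N$ is exactly what makes the d\'evissage terminate at $A$ and what keeps all the $\opn{RHom}$'s in $\cat{D}_{\mrm{f}}(A)$ along the way.
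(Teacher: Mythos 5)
Your construction of $\tau_{M,N}$ is essentially the paper's: fix a K-injective resolution of $R$, write $\opn{D}$ as an honest $\opn{Hom}$-complex, and use the canonical ``precompose'' map $\phi \mapsto (\psi \mapsto \psi \circ \phi)$ on the level of DG complexes. The compatibility of $\opn{H}^0(\tau_{M,N})$ with $\opn{D}$ on $\opn{Hom}_{\cat{D}(A)}$ also matches the paper's argument (it is built into the construction once one tracks the representatives).

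Where you diverge is the proof that $\tau_{M,N}$ is an isomorphism when $M,N \in \cat{D}_{\mrm{f}}(A)$, and here both of your proposed routes are more work than what the paper does, and the first one as stated has real imprecisions. The paper observes that the commuting square plus Proposition \ref{prop:1556} already show $\opn{H}^0(\tau_{M,N})$ is a bijection; it then \emph{replaces $N$ by $N[q]$} and uses functoriality together with $\opn{D}(N[q]) = \opn{D}(N)[-q]$ to conclude that $\opn{H}^q(\tau_{M,N}) = \opn{H}^0(\tau_{M,N[q]})$ is a bijection for every $q$, hence $\tau_{M,N}$ is an isomorphism. This shift trick makes the step essentially free. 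Your first route asserts, without proof, that the composite $\tau_{\opn{D}(N),\opn{D}(M)} \circ \tau_{M,N}$ agrees with the isomorphism induced by $\opn{ev}_M, \opn{ev}_N$ --- a coherence between biduality and your $\tau$ that would itself require a diagram chase; and even granted that, your phrase ``split mono and, by symmetry, split epi'' skips a step: from $g \circ f$ iso with $g$ split mono you must first upgrade $g$ to an isomorphism (it now has both a left and a right inverse) before concluding $f$ is an isomorphism. Your second route (d\'evissage to $M = N = A$ through distinguished triangles, then truncation and limits for unbounded complexes) would work but is substantially longer. In short: no mathematical dead end, but you missed the one-line shift argument that collapses the final step.
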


\begin{proof}
Fix a K-injective resolution $R \to K$, so 
$\opn{D} \cong \opn{Hom}_A(-, K)$. Given $M$ and $N$, let 
$N \to J$ be a K-injective resolution, and let $P \to M$ be a K-projective 
resolution. The complex of $A$-modules 
$\opn{Hom}_A(P, K)$ is also K-injective. The morphism $\tau_{M, N}$ is defined 
to be the composition of these morphisms: 
\[ \begin{aligned}
& \opn{RHom}_{A}(M, N) \iso \opn{Hom}_{A}(P, J) 
\xar{\til{\tau}} 
\opn{Hom}_{A} \bigl( \opn{Hom}_{A}(J, K), \opn{Hom}_{A}(P, K) \bigr)
\\ & \qquad 
\iso \opn{RHom}_{A} \bigl( \opn{D}(N), \opn{D}(M) \bigr) , 
\end{aligned} \]
where $\til{\tau} := \opn{Hom}_A(-, K)$. For a morphism 
$\phi : M \to  N$ in $\cat{D}(A)$, 
represented by a homomorphism $\til{\phi} : P \to J$
in $\cat{C}_{\mrm{str}}(A)$, 
the morphism 
$\opn{D}(\phi) : \opn{D}(N) \to \opn{D}(M)$ is represented by 
$\til{\tau}(\til{\phi})$. Therefore the diagram is commutative. 
Functoriality of $\tau_{M, N}$ is clear. 

If $M, N \in \cat{D}^{}_{\mrm{f}}(A)$, then 
\[ \opn{D} : \opn{Hom}_{\cat{D}(A)}(M, N) \to 
\opn{Hom}_{\cat{D}(A)} \bigl( \opn{D}(N), \opn{D}(M) \bigr) \]
is bijective; see Proposition \ref{prop:1556} and its proof. Hence 
$\opn{H}^0(\tau_{M, N})$ is an isomorphism. 
By replacing $N$ with $N[q]$, we see that $\opn{H}^q(\tau_{M, N})$ is an 
isomorphism for every $q$. Therefore $\tau_{M, N}$ is an isomorphism. 
\end{proof}

\begin{prop} \label{prop:1610}
Let $u : A \to B$ be a finite ring homomorphism. Given a complex 
$M \in \cat{D}^{+}_{\mrm{f}}(A)$, there is an 
isomorphism 
\[ \opn{conc}^{\mrm{fin}}_{u, M} : \opn{TwInd}_u(M) \iso \opn{RHom}_A(B, M) \]
in $\cat{D}(B)$. The isomorphism $\opn{conc}^{\mrm{fin}}_{u, M}$ is functorial 
in $M$. 
\end{prop}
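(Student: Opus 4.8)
The plan is to reduce everything to the rigid trace morphism of Theorem~\ref{thm:1551} together with the double-duality isomorphism of Proposition~\ref{prop:1556}(1), followed by a chain of Hom--tensor adjunctions. First I would unwind the definitions: by Definition~\ref{dfn:1556} and Theorem~\ref{thm:1555}(2),
\[ \opn{TwInd}_u(M) = \opn{D}_B^{\mrm{rig}}\bigl(\opn{LInd}_u(\opn{D}_A^{\mrm{rig}}(M))\bigr) = \opn{RHom}_B\bigl(B \ot^{\mrm{L}}_A \opn{RHom}_A(M, R_A),\, R_B\bigr), \]
which lies in $\cat{D}^{+}_{\mrm{f}}(B)$ by Propositions~\ref{prop:1556}(2) and \ref{prop:1557}; note also that the target $\opn{RHom}_A(B, M)$ lies in $\cat{D}^{+}_{\mrm{f}}(B)$, since $B$ is a finite, hence derived pseudo-finite, $A$-module and $M \in \cat{D}^{+}_{\mrm{f}}(A)$.

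Next I would invoke Theorem~\ref{thm:1551}, which provides a nondegenerate rigid backward morphism $\opn{tr}^{\mrm{rig}}_{B / A} : (R_B, \rho_B) \to (R_A, \rho_A)$ over $u$. By Definition~\ref{dfn:2041}, nondegeneracy means that $\beta := \opn{badj}^{\mrm{R}}_{u, R_A, R_B}(\opn{tr}^{\mrm{rig}}_{B / A}) : R_B \iso \opn{RCInd}_u(R_A) = \opn{RHom}_A(B, R_A)$ is an isomorphism in $\cat{D}(B)$. Writing $N := \opn{D}_A^{\mrm{rig}}(M) = \opn{RHom}_A(M, R_A) \in \cat{D}(A)$, I would then compose the following isomorphisms in $\cat{D}(B)$: the adjunction isomorphism (\ref{eqn:1480}) for $u$ gives $\opn{RHom}_B(B \ot^{\mrm{L}}_A N, R_B) \cong \opn{RHom}_A(N, R_B)$; the isomorphism $\beta$ turns the second argument into $\opn{RHom}_A(B, R_A)$; derived Hom--tensor adjunction over $A$, commutativity of $\ot^{\mrm{L}}_A$, and Hom--tensor adjunction once more give
\[ \opn{RHom}_A\bigl(N, \opn{RHom}_A(B, R_A)\bigr) \cong \opn{RHom}_A(N \ot^{\mrm{L}}_A B, R_A) \cong \opn{RHom}_A(B \ot^{\mrm{L}}_A N, R_A) \cong \opn{RHom}_A\bigl(B, \opn{RHom}_A(N, R_A)\bigr); \]
and finally $\opn{RHom}_A(N, R_A) = \opn{D}_A^{\mrm{rig}}(\opn{D}_A^{\mrm{rig}}(M))$, so applying $\opn{RHom}_A(B, -)$ to the evaluation isomorphism $\opn{ev}_M : M \iso \opn{D}_A^{\mrm{rig}}(\opn{D}_A^{\mrm{rig}}(M))$ of Proposition~\ref{prop:1556}(1) identifies the last term with $\opn{RHom}_A(B, M)$. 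The composite is defined to be $\opn{conc}^{\mrm{fin}}_{u, M}$. Functoriality in $M$ is then immediate: the adjunction isomorphisms and $\opn{ev}$ are bifunctorial, and $\beta$ does not involve $M$.

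I expect the only delicate point to be the bookkeeping of the various $B$-module structures carried along the chain — in particular, in the ``swap'' step the $B$-action must be carried by the literal tensor factor $B$ throughout, so one must check that the adjunctions of (\ref{eqn:1480}) and of \cite{Ye5} are $B$-linear with respect to exactly that structure, and that $\beta$ (being a morphism in $\cat{D}(B)$) is $B$-linear, which is automatic. No ideas beyond Theorem~\ref{thm:1551} and formal adjunction manipulations are required; the substantive input is entirely the existence of the rigid trace.
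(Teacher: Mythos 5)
Your proof is correct and shares the paper's key idea: express $\opn{TwInd}_u(M)$ via the rigid auto-duality functors, use adjunction (\ref{eqn:1480}) to drop down to $\opn{RHom}_A(N, R_B)$ where $N := \opn{RHom}_A(M, R_A)$, then apply the backward-adjoint of the rigid trace of Theorem~\ref{thm:1551} to replace $R_B$ by $\opn{RHom}_A(B, R_A)$, and finally untangle the nested Hom's. The only place you diverge from the paper is in the last stretch: the paper invokes its Lemma~\ref{lem:1620}, which asserts directly (and functorially) that $\opn{RHom}_A(\opn{D}(M), \opn{D}(B)) \cong \opn{RHom}_A(B, M)$ for any dualizing complex $R_A$; you instead inline the same content via the chain $\opn{RHom}_A(N, \opn{RHom}_A(B, R_A)) \cong \opn{RHom}_A(N \ot^{\mrm{L}}_A B, R_A) \cong \opn{RHom}_A(B \ot^{\mrm{L}}_A N, R_A) \cong \opn{RHom}_A(B, \opn{RHom}_A(N, R_A))$ followed by the biduality isomorphism of Proposition~\ref{prop:1556}(1). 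This is essentially a proof of Lemma~\ref{lem:1620} specialized to the case at hand, so the two routes are interchangeable; your version stays entirely within standard derived Hom--tensor adjunctions from \cite{Ye5}, while the paper's saves a few lines by citing the pre-packaged lemma. Your flagged worry about $B$-module structures is genuine but resolves as you anticipate: each adjunction and the tensor-swap carry the $B$-action on the visible factor $B$, so the whole chain does live in $\cat{D}(B)$.
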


The notation $\opn{conc}^{\mrm{fin}}_{u, M}$ stands for ``concretization'' -- 
it is the concrete incarnation of the twisted induction functor in the finite 
case. Similar notation will be used for other kinds of homomorphisms $u$. 

\begin{proof}
The isomorphism $\opn{conc}^{\mrm{fin}}_{u, M}$
is the composition of the following isomorphisms: 
\[ \begin{aligned}
&
\opn{TwInd}_u(M) = 
\opn{RHom}_{B} \bigl( B \ot^{\mrm{L}}_{A} \opn{RHom}_{A}(M, R_A), R_B \bigr)
\\ & \quad 
\iso^{\mrm{(1)}} 
\opn{RHom}_{A} \bigl( \opn{RHom}_{A}(M, R_A), R_B \bigr)
\\ & \quad 
\iso^{\mrm{(2)}} 
\opn{RHom}_{A} \bigl( \opn{RHom}_{A}(M, R_A), 
\opn{RHom}_{A}(B, R_A) \bigr)
\iso^{\mrm{(3)}} 
\opn{RHom}_{A}(B, M) 
\end{aligned} \]
Here $\iso^{\mrm{(1)}}$ is adjunction for the ring homomorphism 
$A \to B$; the isomorphism $\iso^{\mrm{(2)}}$ is the one coming from the 
isomorphism 
\[ \opn{badj}^{\mrm{R}}_{u, M, N}(\opn{tr}^{\mrm{rig}}_{u}) : 
R_B \iso \opn{RHom}_{A}(B, R_A) , \]
where $\opn{tr}^{\mrm{rig}}_{u} : R_B \to R_A$ is the nondegenerate rigid 
backward morphism from Theorem \ref{thm:1551}; and 
the isomorphism $\iso^{\mrm{(3)}}$ is from Lemma \ref{lem:1620}.
\end{proof}

\begin{dfn} \label{dfn:1610}
Let $u : A \to B$ be a finite ring homomorphism.
For  $M \in \cat{D}^{+}_{\mrm{f}}(A)$
define the morphism 
\[ \opn{tr}^{\mrm{twind}}_{u, M} : \opn{TwInd}_u(M) \to  M \]
in $\cat{D}(A)$ to be the unique morphism making the diagram 
\[ \begin{tikzcd} [column sep = 10ex, row sep = 5ex] 
\opn{TwInd}_u(M)
\ar[r, "{\opn{conc}^{\mrm{fin}}_{u, M}}", "{\simeq}"']
\ar[d, "{\opn{tr}^{\mrm{twind}}_{u, M}}"', dashed]
&
\opn{RHom}_A(B, M) 
\ar[d, "{\opn{tr}^{\mrm{R}}_{u, M}}"]
\\
M
\ar[r, "{\opn{id}_M}", "{\simeq}"']
&
M
\end{tikzcd} \]
commutative. Here $\opn{tr}^{\mrm{R}}_{u, M}$ is the standard nondegenerate 
backward morphism, see formula (\ref{eqn:655}). 
\end{dfn}

Note that 
$\opn{tr}^{\mrm{twind}}_{u} : \opn{Rest}_u \circ \opn{TwInd}_u \to \opn{Id}$
is a morphism of functors from $\cat{D}^{+}_{\mrm{f}}(A)$ to itself.  

\begin{prop} \label{prop:1620}
Let $u : A \to B$ be a finite ring homomorphism, and let 
$M \in \cat{D}^{+}_{\mrm{f}}(A)$. Then 
$\opn{tr}^{\mrm{twind}}_{u, M} : \opn{TwInd}_u(M) \to M$
is a nondegenerate backward morphism in $\cat{D}(A)$ over $u$. 
\end{prop}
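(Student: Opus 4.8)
The plan is to unwind the definition of $\opn{tr}^{\mrm{twind}}_{u, M}$ and reduce the claim to Proposition~\ref{prop:1610}, which already provides the isomorphism $\opn{conc}^{\mrm{fin}}_{u, M} : \opn{TwInd}_u(M) \iso \opn{RHom}_A(B, M) = \opn{RCInd}_u(M)$ in $\cat{D}(B)$. By Definition~\ref{dfn:1610} one has the equality
\[ \opn{tr}^{\mrm{twind}}_{u, M} = \opn{tr}^{\mrm{R}}_{u, M} \circ \opn{conc}^{\mrm{fin}}_{u, M} \]
of backward morphisms $\opn{TwInd}_u(M) \to M$ in $\cat{D}(A)$ over $u$, where $\opn{tr}^{\mrm{R}}_{u, M}$ is the standard nondegenerate derived backward morphism from formula (\ref{eqn:655}); recall that by construction its backward adjoint is $\opn{badj}^{\mrm{R}}_{u, M, \opn{RCInd}_u(M)}(\opn{tr}^{\mrm{R}}_{u, M}) = \opn{id}_{\opn{RCInd}_u(M)}$.

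First I would record the naturality of backward adjunction under precomposition with morphisms in $\cat{D}(B)$: if $\th : N \to M$ is a backward morphism over $u$ and $\psi : N' \to N$ is a morphism in $\cat{D}(B)$, then $\opn{badj}^{\mrm{R}}_{u, M, N'}(\th \circ \psi) = \opn{badj}^{\mrm{R}}_{u, M, N}(\th) \circ \psi$. This is immediate from the uniqueness clause in the definition of $\opn{badj}^{\mrm{R}}$ (the text preceding diagram~(\ref{eqn:1873})), since $\opn{tr}^{\mrm{R}}_{u, M} \circ \bigl( \opn{badj}^{\mrm{R}}_{u, M, N}(\th) \circ \psi \bigr) = \bigl( \opn{tr}^{\mrm{R}}_{u, M} \circ \opn{badj}^{\mrm{R}}_{u, M, N}(\th) \bigr) \circ \psi = \th \circ \psi$. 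Applying this with $N := \opn{RCInd}_u(M)$, $\th := \opn{tr}^{\mrm{R}}_{u, M}$ and $\psi := \opn{conc}^{\mrm{fin}}_{u, M}$ gives
\[ \opn{badj}^{\mrm{R}}_{u, M, \opn{TwInd}_u(M)}(\opn{tr}^{\mrm{twind}}_{u, M}) = \opn{id}_{\opn{RCInd}_u(M)} \circ \opn{conc}^{\mrm{fin}}_{u, M} = \opn{conc}^{\mrm{fin}}_{u, M} . \]
Since $\opn{conc}^{\mrm{fin}}_{u, M}$ is an isomorphism in $\cat{D}(B)$ by Proposition~\ref{prop:1610}, Definition~\ref{dfn:2041} then yields that $\opn{tr}^{\mrm{twind}}_{u, M}$ is a nondegenerate backward morphism in $\cat{D}(A)$ over $u$.

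The argument is purely formal, so no real obstacle is expected; the only things to verify carefully are that Definition~\ref{dfn:1610} really presents $\opn{conc}^{\mrm{fin}}_{u, M}$ as the backward adjoint of $\opn{tr}^{\mrm{twind}}_{u, M}$, and that the naturality statement used is exactly the one furnished by the uniqueness in the definition of $\opn{badj}^{\mrm{R}}$ — both of which are immediate. All the substance sits in the construction of the concretization isomorphism in Proposition~\ref{prop:1610}, which rests in turn on the rigid trace morphism of Theorem~\ref{thm:1551} and the duality identity of Lemma~\ref{lem:1620}.
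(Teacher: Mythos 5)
Your proof is correct and is essentially the paper's argument spelled out in full: the paper's one-line proof ("This is because $\opn{tr}^{\mrm{R}}_{u, M}$ is a nondegenerate backward morphism") is exactly the compressed form of your observation that $\opn{badj}^{\mrm{R}}_{u, M, \opn{TwInd}_u(M)}(\opn{tr}^{\mrm{twind}}_{u, M}) = \opn{conc}^{\mrm{fin}}_{u, M}$, an isomorphism. The naturality-of-adjunction step you record is indeed the formal content left implicit in the paper.
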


\begin{proof}
This is because $\opn{tr}^{\mrm{R}}_{u, M}$ is a nondegenerate backward 
morphism.
\end{proof}

The next proposition gives a concrete formulation of twisted induction in the 
essentially smooth case. 

\begin{prop} \label{prop:1611} 
Let $u : A \to B$ be an essentially smooth ring 
homomorphism,  of constant differential relative dimension $n$.
Given a complex 
$M \in \cat{D}^{+}_{\mrm{f}}(A)$, there is an isomorphism
\[ \opn{conc}^{\mrm{esm}}_{u, M} : 
\opn{TwInd}_u(M) \iso M \ot_A \Om^n_{B / A}[n] \]
in $\cat{D}^{+}_{\mrm{f}}(B)$. The 
isomorphism $\opn{conc}^{\mrm{esm}}_{u, M}$ is functorial in $M$.

In particular, if $u$ is essentially \'etale, this becomes an isomorphism 
\[ \opn{conc}^{\mrm{eet}}_{u, M} : \opn{TwInd}_u(M) \iso M \ot_A B \]
in $\cat{D}^{+}_{\mrm{f}}(B)$.
\end{prop}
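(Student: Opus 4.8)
The plan is to chase the definition of $\opn{TwInd}_u$ through the chain of isomorphisms provided by the earlier results. Recall that $\opn{TwInd}_u(M) = \opn{D}_B^{\mrm{rig}}\bigl(\opn{LInd}_u(\opn{D}_A^{\mrm{rig}}(M))\bigr) = \opn{RHom}_B\bigl(B\ot^{\mrm{L}}_A\opn{RHom}_A(M,R_A), R_B\bigr)$. The first step is to apply the adjunction isomorphism (\ref{eqn:1480}) for $u:A\to B$ to rewrite this as $\opn{RHom}_A\bigl(\opn{RHom}_A(M,R_A), R_B\bigr)$. The second, and decisive, step is to replace $R_B$ inside this expression. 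By Corollary \ref{cor:1705}, for an essentially smooth homomorphism of constant relative differential dimension $n$ there is a canonical isomorphism $R_B\cong R_A\ot_B\Om^n_{B/A}[n]$ in $\cat{D}(B)$; since $\Om^n_{B/A}$ is an invertible $B$-module and $\Om^n_{B/A}[n]$ is a perfect complex over $B$, this fits into the picture. So $\opn{TwInd}_u(M)\cong \opn{RHom}_A\bigl(\opn{RHom}_A(M,R_A),\, R_A\ot_B\Om^n_{B/A}[n]\bigr)$.

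The third step is to pull the perfect complex $\Om^n_{B/A}[n]$ out of the inner $\opn{RHom}$ using derived tensor evaluation. Here I would use Theorem \ref{thm:2115} (or the characterization of perfect complexes in Remark \ref{rem:2116}): since $\Om^n_{B/A}[n]$ is perfect over $B$, and $R_A$ has bounded-below flat concentration over $\K$ hence the relevant boundedness holds, the evaluation morphism gives a canonical isomorphism $\opn{RHom}_A\bigl(\opn{RHom}_A(M,R_A), R_A\bigr)\ot_B\Om^n_{B/A}[n] \iso \opn{RHom}_A\bigl(\opn{RHom}_A(M,R_A),\, R_A\ot_B\Om^n_{B/A}[n]\bigr)$. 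One must take a little care that this tensor evaluation is over the ring $B$ applied after restriction, and that $R_A$ viewed as a complex over $A\ot_{\Z}B$ satisfies the cohomological boundedness hypothesis — this follows since $R_A\in\cat{D}^{\mrm{b}}_{\mrm{f}}(A)$. The fourth step is to identify $\opn{RHom}_A\bigl(\opn{RHom}_A(M,R_A),R_A\bigr)$ with $M$ itself: this is precisely the reflexivity isomorphism $\opn{ev}:\opn{Id}\iso\opn{D}_A^{\mrm{rig}}\circ\opn{D}_A^{\mrm{rig}}$ of Proposition \ref{prop:1556}(1), valid because $M\in\cat{D}^+_{\mrm{f}}(A)$ and $R_A$ is dualizing. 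Composing, $\opn{TwInd}_u(M)\cong M\ot_B\Om^n_{B/A}[n] = M\ot_A\Om^n_{B/A}[n]$ (the last equality of notation because $B$ is flat over $A$ by Theorem \ref{thm:1140}, so the tensor need not be derived). Functoriality in $M$ is inherited from functoriality of each isomorphism used. The \'etale case is the specialization $n=0$, where $\Om^0_{B/A}=B$, giving $\opn{conc}^{\mrm{eet}}_{u,M}:\opn{TwInd}_u(M)\iso M\ot_A B$; here one also invokes Proposition \ref{prop:1055} to know that essentially \'etale means essentially smooth of differential dimension $0$.

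The main obstacle I anticipate is the careful bookkeeping in the tensor-evaluation step: one must arrange the rings correctly so that $\Om^n_{B/A}[n]$ really plays the role of the perfect object $L$ in the evaluation statement, and verify that the output object $R_A\ot_B\Om^n_{B/A}[n]$ — which is a priori a complex over $B$ — carries the $B$-module structure that matches $R_B$ under Corollary \ref{cor:1705}. Since $B$ is a ring (not a DG ring) and $\Om^n_{B/A}[n]$ is a genuine perfect complex, this is routine but requires attention to which $\opn{RHom}$ is over $A$ and which tensor is over $B$. Everything else is a formal concatenation of named isomorphisms, and the functoriality claims are immediate.
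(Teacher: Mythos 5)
Your proof is correct and follows the same chain of isomorphisms as the paper's own argument: adjunction for $u$, substitution of $R_B$ via Corollary \ref{cor:1705}, derived tensor-evaluation to extract the perfect complex $\Om^n_{B/A}[n]$, and finally the Hom-evaluation (double-duality) isomorphism from Proposition \ref{prop:1556}. The only cosmetic difference is that you cite Theorem \ref{thm:2115} and Remark \ref{rem:2116} for the tensor-evaluation step where the paper cites \cite[Theorem~12.10.14]{Ye5}; this is immaterial, as is the wording ``inner $\opn{RHom}$'' for what is really the second argument of the outer $\opn{RHom}$.
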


\begin{proof}
The isomorphism $\opn{conc}^{\mrm{esm}}_{u, M}$ is the composition of the 
following isomorphisms: 
\[ \begin{aligned}
& \opn{TwInd}_u(M) = 
(\opn{D}_{B}^{\mrm{rig}} \circ \opn{LInd}_u \circ \opn{D}_{A}^{\mrm{rig}})(M) 
= \opn{RHom}_{B} \bigl(B \ot^{\mrm{L}}_{A} \opn{RHom}_{A}(M, R_A), R_B \bigr)
\\
& \quad \iso^{\mrm{(1)}}
\opn{RHom}_{A} \bigl(\opn{RHom}_{A}(M, R_A), R_B \bigr)
\\
& \quad \iso^{\mrm{(2)}}
\opn{RHom}_{A} \bigl(\opn{RHom}_{A}(M, R_A), R_A \ot^{\mrm{L}}_{B} 
\Om^n_{B / A}[n] \bigr)
\\
& \quad \iso^{\mrm{(3)}}
\opn{RHom}_{A} \bigl(\opn{RHom}_{A}(M, R_A), R_A \bigr)
\ot^{\mrm{L}}_{B} \Om^n_{B / A}[n] 
\\
& \quad = (\opn{D}_{A}^{\mrm{rig}} \circ \opn{D}_{A}^{\mrm{rig}})(M)
\ot^{\mrm{L}}_{B} \Om^n_{B / A}[n] 
\iso^{\mrm{(4)}} M \ot^{\mrm{L}}_{B} \Om^n_{B / A}[n] .
\end{aligned} \]
The isomorphism $\iso^{\mrm{(1)}}$ is Hom-tensor adjunction for the ring 
homomorphism $u$; the isomorphism $\iso^{\mrm{(2)}}$ comes from Corollary 
\ref{cor:1705}; the isomorphism $\iso^{\mrm{(3)}}$ is derived 
tensor-evaluation, see \cite[Theorem 12.10.14]{Ye5}; and 
the isomorphism $\iso^{\mrm{(4)}}$ is due to the 
Hom-evaluation isomorphism 
$\opn{ev} : \opn{Id} \to \opn{D}_{A}^{\mrm{rig}} \circ \opn{D}_{A}^{\mrm{rig}}$ 
from Proposition \ref{prop:1556}.

According to Proposition \ref{prop:1055}, $u$ is essentially \'etale iff it is 
essentially smooth of relative dimension $0$.
\end{proof}

\begin{dfn} \label{dfn:1650}
Let $u : A \to B$ be an essentially \'etale ring homomorphism, and let 
$M \in \cat{D}^{+}_{\mrm{f}}(A)$. Define 
\[ \opn{q}^{\mrm{twind}}_{u, M} : M \to \opn{TwInd}_{u}(M) \] 
to be the unique forward morphism in $\cat{D}(A)$ over $u$ such that the 
diagram 
\[ \begin{tikzcd} [column sep = 10ex, row sep = 6ex] 
M
\ar[r, "{\opn{id}_M}", "{\simeq}"']
\ar[d, "{\opn{q}^{\mrm{twind}}_{u, M}}"', dashed]
&
M
\ar[d, "{\opn{q}^{}_{u, M}}"]
\\
\opn{TwInd}_u(M)
\ar[r, "{\opn{conc}^{\mrm{eet}}_{u, M}}", "{\simeq}"']
&
M \ot_A B 
\end{tikzcd} \]
commutative. Here $\opn{q}^{}_{u, M}$ is the standard nondegenerate 
forward morphism from equation (\ref{eqn:1476}). 
\end{dfn}

\begin{prop} \label{prop:1612} 
If $u : A \to B$ is an essentially \'etale ring homomorphism, then  
for every $M \in \cat{D}^{+}_{\mrm{f}}(A)$ the morphism
\[ \opn{q}^{\mrm{twind}}_{u, M} : M \to \opn{TwInd}_{u}(M) \] 
is a nondegenerate forward morphism in $\cat{D}(A)$ over $u$. 
\end{prop}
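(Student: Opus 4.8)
The plan is to reduce the nondegeneracy of $\opn{q}^{\mrm{twind}}_{u, M}$ to the nondegeneracy of the \emph{standard} forward morphism $\opn{q}^{}_{u, M}$, mirroring the way Proposition \ref{prop:1620} reduces to $\opn{tr}^{\mrm{R}}_{u, M}$. First I would note that an essentially \'etale homomorphism is flat: it is essentially smooth (Definition \ref{dfn:1015} together with Proposition \ref{prop:1055}), so Theorem \ref{thm:1140}(1) applies. Hence $M \ot^{}_A B = M \ot^{\mrm{L}}_A B$ and the standard derived forward morphism coincides with the naive one, $\opn{q}^{\mrm{L}}_{u, M} = \opn{q}^{}_{u, M} : M \to M \ot_A B$; by the remark following Definition \ref{dfn:2045}, this morphism is nondegenerate, its forward adjoint being $\opn{id}_{M \ot_A B}$.

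Next I would unwind the defining commutative diagram of Definition \ref{dfn:1650}: the isomorphism $\opn{conc}^{\mrm{eet}}_{u, M} : \opn{TwInd}_u(M) \iso M \ot_A B$ in $\cat{D}(B)$ from Proposition \ref{prop:1611} satisfies $\opn{conc}^{\mrm{eet}}_{u, M} \circ \opn{q}^{\mrm{twind}}_{u, M} = \opn{q}^{}_{u, M}$ as forward morphisms $M \to M \ot_A B$ over $u$. The one general fact I need is that forward adjunction is functorial in its target argument: for any morphism $\phi : N \to N'$ in $\cat{D}(B)$ and any forward morphism $\la : M \to N$ over $u$, one has $\opn{fadj}^{\mrm{L}}_{u, M, N'}(\phi \circ \la) = \phi \circ \opn{fadj}^{\mrm{L}}_{u, M, N}(\la)$. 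This is immediate from the uniqueness characterization $\opn{fadj}^{\mrm{L}}_{u, M, N}(\la) \circ \opn{q}^{\mrm{L}}_{u, M} = \la$ (just post-compose by $\phi$ and invoke uniqueness), so I would state it in one line rather than grind it out.

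Applying this with $\phi = \opn{conc}^{\mrm{eet}}_{u, M}$ and $\la = \opn{q}^{\mrm{twind}}_{u, M}$ gives $\opn{conc}^{\mrm{eet}}_{u, M} \circ \opn{fadj}^{\mrm{L}}_{u, M, \opn{TwInd}_u(M)}(\opn{q}^{\mrm{twind}}_{u, M}) = \opn{fadj}^{\mrm{L}}_{u, M, M \ot_A B}(\opn{q}^{}_{u, M})$, whose right-hand side is an isomorphism by the first paragraph. Since $\opn{conc}^{\mrm{eet}}_{u, M}$ is an isomorphism, so is $\opn{fadj}^{\mrm{L}}_{u, M, \opn{TwInd}_u(M)}(\opn{q}^{\mrm{twind}}_{u, M})$, which is exactly the statement that $\opn{q}^{\mrm{twind}}_{u, M}$ is nondegenerate (Definition \ref{dfn:2045}). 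There is no real obstacle here; the only subtlety worth spelling out is the compatibility of $\opn{fadj}^{\mrm{L}}$ with post-composition, and the (routine) observation that essential \'etaleness supplies the flatness that makes $\opn{conc}^{\mrm{eet}}_{u, M}$ available in the first place.
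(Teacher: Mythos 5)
Your proof is correct and takes the same approach as the paper, which simply observes that $\opn{q}^{\mrm{twind}}_{u, M}$ differs from the nondegenerate $\opn{q}^{}_{u, M}$ by post-composition with the isomorphism $\opn{conc}^{\mrm{eet}}_{u, M}$. You have just spelled out the routine functoriality-of-adjunction argument that the paper leaves implicit.
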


\begin{proof}
This is because $\opn{q}^{}_{u, M}$ is a nondegenerate forward 
morphism.
\end{proof}

\begin{prop} \label{prop:1614} 
Assume that $u : A \to B$ is a finite \'etale ring homomorphism, such that 
$B$ has constant rank $r$ as an $A$-module. Then for every 
$M \in \cat{D}^{+}_{\mrm{f}}(A)$ the composed morphism
\[ M \xar{\opn{q}^{\mrm{twind}}_{u, M}} \opn{TwInd}_u(M) 
\xar{\opn{tr}^{\mrm{twind}}_{u, M}} M \]
equals multiplication by $r$.  
\end{prop}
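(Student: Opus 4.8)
The plan is to reduce everything to the analogous statement about the standard trace and forward morphisms, using the concretizations already established. Recall that for a finite étale homomorphism $u : A \to B$, the module $B$ is projective over $A$ (étale homomorphisms are flat, and finite flat of finite presentation gives projective), so the composition
\[ \opn{tr}^{\mrm{R}}_{u, B} \circ \opn{q}^{}_{u, B} : A \to \opn{Hom}_A(B, A) \to A \]
when applied to an arbitrary complex $M$, should be exactly multiplication by the ``generic rank'' $r$. Concretely, $\opn{q}^{}_{u, M} : M \to M \ot_A B$ sends $m \mapsto m \ot 1$, and $\opn{tr}^{\mrm{R}}_{u, M} = \opn{tr}_{u, M}$ (since after a suitable resolution $M = I$ is K-injective) is evaluation at $1$, i.e. $\varphi \mapsto \varphi(1_B)$ under the identification $\opn{Hom}_A(B, M) = M \ot_A \opn{Hom}_A(B, A)$ (using that $B$ is finite projective). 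The composite $M \to M \ot_A B \to M \ot_A \opn{Hom}_A(B,A) \to M$ is then $\opn{id}_M \ot (\opn{tr}^{\mrm{oper}}_{B/A} \circ \iota)$ where $\iota : A \to B$, $a \mapsto a \cdot 1_B$ — but since $B$ is finite projective of constant rank $r$, the operator trace $\opn{tr}^{\mrm{oper}}_{B/A}(1_B) = r$. This gives the key numerical input.

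The steps in order are: First, translate $\opn{q}^{\mrm{twind}}_{u, M}$ and $\opn{tr}^{\mrm{twind}}_{u, M}$ into the concrete pictures. By Definition \ref{dfn:1650}, $\opn{conc}^{\mrm{eet}}_{u, M} \circ \opn{q}^{\mrm{twind}}_{u, M} = \opn{q}^{}_{u, M}$, and by Definition \ref{dfn:1610}, $\opn{tr}^{\mrm{twind}}_{u, M} = \opn{tr}^{\mrm{R}}_{u, M} \circ \opn{conc}^{\mrm{fin}}_{u, M}$. Second, I must reconcile the two concretizations: for a finite étale $u$, the isomorphism $\opn{conc}^{\mrm{esm}}_{u, M} = \opn{conc}^{\mrm{eet}}_{u, M} : \opn{TwInd}_u(M) \iso M \ot_A B$ (coming from $\Om^0_{B/A} = B$, relative dimension $0$) and $\opn{conc}^{\mrm{fin}}_{u, M} : \opn{TwInd}_u(M) \iso \opn{RHom}_A(B, M)$ must be compared through the canonical nondegenerate backward morphism $B \to \opn{RHom}_A(B, M)$, i.e. through $\opn{badj}^{\mrm{R}}$ applied to $\opn{q}^{}_{u, M}$ or equivalently the map dual to $\opn{q}^{}_{u,M}$; concretely, both concretizations are built from the rigid dualizing complexes and the rigid trace/localization morphisms, and one checks that $\opn{conc}^{\mrm{fin}}_{u, M} = \bigl(\text{canonical iso } M \ot_A B \cong \opn{RHom}_A(B, M)\bigr) \circ \opn{conc}^{\mrm{eet}}_{u, M}$, where the canonical iso uses finite projectivity of $B$. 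Third, with this identification, $\opn{tr}^{\mrm{twind}}_{u, M} \circ \opn{q}^{\mrm{twind}}_{u, M}$ becomes $\opn{tr}^{\mrm{R}}_{u, M} \circ (\text{canonical iso}) \circ \opn{q}^{}_{u, M}$, which under $\opn{Hom}_A(B, M) = M \ot_A \opn{Hom}_A(B, A)$ is $\opn{id}_M \ot \opn{ev}_1 \circ \opn{id}_M \ot \iota = \opn{id}_M \ot (\opn{tr}^{\mrm{oper}}_{B/A}|_{A\cdot 1_B})$, and this is multiplication by $r$.

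The main obstacle I expect is Step 2: verifying precisely that the finite-case concretization $\opn{conc}^{\mrm{fin}}_{u, M}$ and the essentially-smooth-case concretization $\opn{conc}^{\mrm{esm}}_{u, M}$ agree, up to the canonical (finite-projective) identification $M \ot_A B \cong \opn{RHom}_A(B, M)$. Both are assembled from the same raw materials — the rigid dualizing complexes $R_A, R_B$, the rigid trace $\opn{tr}^{\mrm{rig}}_u$ (Theorem \ref{thm:1551}) and, implicitly via Corollary \ref{cor:1705}, the étale-localization structure — but the two chains of isomorphisms in Propositions \ref{prop:1610} and \ref{prop:1611} route through adjunction and Hom-evaluation in different orders. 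The cleanest route is probably to invoke the uniqueness clauses: by Theorem \ref{thm:2030} (or Theorem \ref{thm:1863}) there is at most one nondegenerate rigid backward (resp.\ forward) morphism with prescribed target, so it suffices to check that $\opn{tr}^{\mrm{twind}}_{u,M}$ for $M = R_A$ recovers, up to scalar, the rigid trace, and then track the scalar. Once this compatibility is nailed down the rest is the routine finite-projective module computation of the operator trace of $1_B$, giving $r$.
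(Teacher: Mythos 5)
Your Step 2 is a genuine gap, and you've correctly diagnosed it yourself. The proposal hinges on reconciling the two concretizations $\opn{conc}^{\mrm{fin}}_{u, M}$ (Proposition \ref{prop:1610}, built from the rigid trace $\opn{tr}^{\mrm{rig}}_{u}$ through adjunction and Lemma \ref{lem:1620}) and $\opn{conc}^{\mrm{eet}}_{u, M}$ (Proposition \ref{prop:1611}, built from Corollary \ref{cor:1705} and derived tensor-evaluation), up to the canonical finite-projective identification $M \ot_A B \cong \opn{RHom}_A(B, M)$. You write ``one checks that $\opn{conc}^{\mrm{fin}}_{u, M} = (\text{canonical iso}) \circ \opn{conc}^{\mrm{eet}}_{u, M}$'' without carrying this out; and the fallback you sketch --- invoking the uniqueness theorems \ref{thm:2030} or \ref{thm:1863} and tracking the scalar --- is plausible but would require verifying that the relevant morphisms are indeed \emph{rigid} over the right triple, which is not automatic: these uniqueness theorems concern rigid morphisms between rigid complexes, whereas the concretizations are morphisms of plain derived functors. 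Until that compatibility is actually proved, the composite $\opn{tr}^{\mrm{twind}}_{u,M} \circ \opn{q}^{\mrm{twind}}_{u,M}$ is not identified with $\opn{id}_M \ot_A \opn{tr}^{\mrm{oper}}_{B/A}(1_B \cdot -)$, so the multiplication-by-$r$ conclusion is not established.

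The paper takes a different, more hands-on route: it localizes at an arbitrary prime $\p \in \opn{Spec}(A)$, where $B_\p$ becomes a free $A_\p$-module of rank $r$ (by Nakayama). After localization the twisted induction is explicitly $\opn{TwInd}_{u_\p}(M_\p) \cong M_\p \ot_{A_\p} B_\p \cong \oplus_r M_\p$, and both $\opn{q}^{\mrm{twind}}$ and $\opn{tr}^{\mrm{twind}}$ are computed coordinatewise, giving $m \mapsto \oplus_r m \mapsto r \cd m$. One then concludes by a commutative diagram comparing the global and local maps. The localization collapses the concretization-compatibility issue to an explicit calculation with free modules, so the paper never needs to state the global compatibility of $\opn{conc}^{\mrm{fin}}$ and $\opn{conc}^{\mrm{eet}}$. (That compatibility \emph{is} true, but see Remark \ref{rem:1786}: the authors deliberately avoid stating and proving it precisely because it involves long diagram chases.) Your approach would, if completed, yield a genuinely stronger result --- a global identification of the two concretizations --- but as written it leaves exactly that step as an assertion.
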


\begin{proof}
Since $u$ is a finite homomorphism, by using Proposition \ref{prop:1610}, 
we know that $\opn{TwInd}_{u}(M)\simeq\opn{RHom}_{A}(B, M)$.
Moreover $u$ is \'etale so $B$ is a projective $A$-modules hence 
$\opn{RHom}_{A}(B, M)\simeq \opn{Hom}_{A}(B, M)$.
For every prime $\mathfrak{p} \in A$ we get a finite and \'etale homomorphism 
of $A$-modules $u_{\mathfrak{p}}:A_{\mathfrak{p}} \to B_{\mathfrak{p}}$ 
where $B_{\mathfrak{p}}:=B\ot_{A}A_{\mathfrak{p}}$.
By Corollary \ref{cor:1245} we have that $B_{\mathfrak{p}}$ 
is a finite projective $A_{\mathfrak{p}}$.\
Since $A_{\mathfrak{p}}$ is local by Nakayama Lemma we have that 
$B_{\mathfrak{p}}$ is a free $A_{\mathfrak{p}}$-module of rank $r$, 
in formula $B_{\mathfrak{p}}\simeq\oplus_rA_{\mathfrak{p}}$. 
So $\opn{TwInd}_{u_\mathfrak{p}}(A_\mathfrak{p})=B_\mathfrak{p}\simeq \oplus_{r} A_\mathfrak{p}$, 
and we have:
\[ \begin{aligned}
A_{\mathfrak{p}} \xar{\opn{q}^{\mrm{twind}}_{u_{A_{\mathfrak{p}}}, A_{\mathfrak{p}}}}& \opn{TwInd}_{u_{\mathfrak{p}}}(A_{\mathfrak{p}}) 
\xar{\opn{tr}^{\mrm{twind}}_{u_{A_{\mathfrak{p}}}, A_{\mathfrak{p}}}} A_{\mathfrak{p}}, \\
a\mapsto &\oplus_r a \mapsto r\cdot a.
\end{aligned} \]
In the same vein we have $M_{\mathfrak{p}}:=M\otimes A_{\mathfrak{p}}$
and $\opn{TwInd}_{u_{\mathfrak{p}}}(M_{\mathfrak{p}})=M\ot_{A_{\mathfrak{p}}}B_{\mathfrak{p}}$.
On the other side the corresponding homomorphisms are given by 
$\opn{tr}^{\mrm{twind}}_{u_{A_{\mathfrak{p}}}}=\opn{tr}_{B_{\mathfrak{p}}}\ot_{A_{\mathfrak{p}}}\opn{id}_{A_{\mathfrak{p}}}$,
and $\opn{q}^{\mrm{twind}}_{u_{A_{\mathfrak{p}}}}(\overline{m})=\overline{m}\ot 1_{B_{\mathfrak{p}}}$. 
We get
\[ \begin{aligned}
M_{\mathfrak{p}} \xar{\opn{q}^{\mrm{twind}}_{u_{A_{\mathfrak{p}}},M_{\mathfrak{p}}}} &\opn{TwInd}_{u_{\mathfrak{p}}}(M_{\mathfrak{p}}) 
\xar{\opn{tr}^{\mrm{twind}}_{u_{A_{\mathfrak{p}}}, A_{\mathfrak{p}}}} M_{\mathfrak{p}},\\
m\mapsto &\oplus_r m\mapsto r\cdot m.
\end{aligned}
\]
To conclude the proof we consider the following commutative diagram:
\[ \begin{tikzcd} [column sep = 10ex, row sep = 6ex] 
M
\ar[d]
\ar[r, "{\opn{q}^{\mrm{twind}}_{u, M}}"]
&
\opn{TwInd}_{u}(M) 
\ar[d]
\ar[r, "{\opn{tr}^{\mrm{twind}}_{u,M}}"]
&
M
\ar[d]
\\
M_{\mathfrak{p}}
\ar[r, "{\opn{q}^{\mrm{twind}}_{u_{A_{\mathfrak{p}}}, M_{\mathfrak{p}}}}"]
&
\opn{TwInd}_{u_{\mathfrak{p}}}(M_{\mathfrak{p}}) 
\ar[r, "{\opn{tr}^{\mrm{twind}}_{u_{A_{\mathfrak{p}}}, M_{\mathfrak{p}}}}"]
&
M_{\mathfrak{p}}
\end{tikzcd} \]
Here the vertical arrows are the localizations at $\mathfrak{p}$.
The left square is commutative since $q$ commutes with the localizations and 
the right square is commutative by definition of $\opn{tr}^{\mrm{twind}}_{u_{A_{\mathfrak{p}}}, M_{\mathfrak{p}}}$ and we are done.
\end{proof}

\begin{rem} \label{rem:1785}
Let $u : A \to B$ be a finite ring homomorphism, let 
$v : A \to A'$ be an essentially \'etale ring homomorphism, and let
$B' := A' \ot_A B$, with induced finite ring homomorphism
$u' : A' \to B'$ and  induced essentially \'etale ring homomorphism
$w : B \to B'$. Theorem \ref{thm:1553} tells us that the rigid traces 
$\opn{tr}^{\mrm{rig}}_{(-)}$ and the rigid \'etale-localization morphisms 
$\opn{q}^{\mrm{rig}}_{(-)}$ commute. 

One would like to make a similar statement for the morphisms 
$\opn{tr}^{\mrm{twind}}_{(-, -)}$ and 
$\opn{q}^{\mrm{twind}}_{(-, -)}$
from Definitions \ref{dfn:1610} and \ref{dfn:1650}.
However, since $\opn{TwInd}$ is a pseudofunctor, such a formula becomes 
rather messy. Therefore we have decided not to include this result in the 
paper.
\end{rem}

\begin{rem} \label{rem:1786}
In Propositions \ref{prop:1610} and \ref{prop:1611}, the concretization 
isomorphisms $\opn{conc}^{\mrm{fin}}_{u, M}$ and $\opn{conc}^{\mrm{esm}}_{u, M}$
are pseudofunctorial in the ring homomorphism $u$. But stating and then proving 
these facts requires unpleasant formulas and long diagram chases; so we opted 
not to include these facts in the paper. 
\end{rem}

\begin{rem} \label{rem:1555}
Here is the geometric interpretation of Theorem \ref{thm:1555}. 
Consider a ring homomorphism $u : A \to B$. Let $X := \opn{Spec}(A)$,
$Y := \opn{Spec}(B)$ be the corresponding affine schemes, and let 
$f := \opn{Spec}(u) : Y \to X$ be the corresponding map of affine 
schemes. We know that the functor 
$\mrm{R} \Ga(X, -) : \cat{D}^{+}_{\mrm{c}}(X) \to \cat{D}^{+}_{\mrm{f}}(A)$
is an equivalence, and likewise for $\mrm{R} \Ga(Y, -)$. 
The {\em twisted inverse image functor} 
$f^! : \cat{D}^{+}_{\mrm{c}}(X) \to \cat{D}^{+}_{\mrm{c}}(Y)$
is defined by this commutative diagram: 
\[ \begin{tikzcd} [column sep = 8ex, row sep = 5ex] 
\cat{D}^{+}_{\mrm{c}}(X)
\ar[r, "{f^!}"]
\ar[d, "{\mrm{R} \Ga(X, -)}"', "{\approx}"]
&
\cat{D}^{+}_{\mrm{c}}(Y)
\ar[d, "{\mrm{R} \Ga(Y, -)}", "{\approx}"']
\\
\cat{D}^{+}_{\mrm{f}}(A)
\ar[r, "{\opn{TwInd}_u}"]
&
\cat{D}^{+}_{\mrm{f}}(B)
\end{tikzcd} \]
Theorem \ref{thm:1555} says that $f \mapsto f^!$ is a contravariant 
pseudofunctor from the category of affine strictly EFT $\K$-schemes to 
$\cat{TrCat} \over \K$. (An affine $\K$-scheme $X$ is called strictly EFT if 
$\Ga(X, \OO_X)$ is an EFT $\K$-ring.)

In the original book \cite{RD} the twisted inverse image pseudofunctor
$f \mapsto f^!$ from $\cat{Sch}$ to $\cat{TrCat}$
is constructed in a totally different way, geometrically. For a finite map of 
schemes $f : Y \to X$ there is the functor
\[ f^{\flat} : \cat{D}^{+}_{\mrm{c}}(X) \to \cat{D}^{+}_{\mrm{c}}(Y) \, , \quad 
f^{\flat}(\MM) := f^{-1} \bigl( \mrm{R} \Hom_{\OO_X}(f_*(\OO_Y), \MM) \bigr) . 
\]
For a smooth map $f : Y \to X$ of constant relative dimension $n$ there is the 
functor 
\[ f^{\sharp} : \cat{D}^{+}_{\mrm{c}}(X) \to \cat{D}^{+}_{\mrm{c}}(Y) \, , 
\quad f^{\sharp}(\MM) := f^{*}(\MM) \ot_{\OO_Y} \Om^n_{Y / X}[n] . \]
The immense difficulty in \cite{RD} is to prove that there is a pseudofunctor
$f^!$ such that $f^! \cong f^{\flat}$ when $f$ is finite, and 
$f^! \cong f^{\sharp}$ when $f$ is smooth. This is proved using global and 
local duality, and an enormous web of compatibilities. 

Our approach to constructing the twisted inverse image pseudofunctor
$f \mapsto f^!$ from $\cat{Sch} \eftover \K$ to $\cat{TrCat} / \K$
in the subsequent papers \cite{Ye6} and \cite{Ye7} is much easier, given the 
results of the present paper.
We first pass from rigid dualizing complexes $(R_A, \rho_A)$ to {\em rigid 
residue complexes} $(\KK_A, \rho_A)$, in the paper \cite{OY}. The assignment 
$A \mapsto \KK_A$ is a complex of quasi-coherent sheaves  on 
the big \'etale site of $\cat{Rng} \eftover \K$; this is a direct consequence of
Theorem \ref{thm:1552} and Corollary \ref{cor:1552}. Therefore the 
rigid residue complexes can be glued on schemes, and even on DM stacks.
Once we have a rigid residue complex $\KK_X$ on every scheme $X$ (or DM 
stack $\XX$), we can define the rigid auto-duality functor 
$\opn{D}_{X}^{\mrm{rig}} := \Hom_{\OO_X}(-, \KK_X)$. For a map of schemes (or 
DM stacks) $f : Y \to X$ we define 
\[ f^! := 
\opn{D}_{Y}^{\mrm{rig}} \circ \, \mrm{L} f^* \circ \opn{D}_{X}^{\mrm{rig}} :
\cat{D}^{+}_{\mrm{c}}(X) \to \cat{D}^{+}_{\mrm{c}}(Y) . \]
This is a pseudofunctor (the proof of Theorem \ref{thm:1555} works here too).
The isomorphisms $f^! \cong f^{\flat}$ and $f^! \cong f^{\sharp}$ come  
from the concretizations (Propositions \ref{prop:1610} and \ref{prop:1611} 
respectively). There is also the {\em ind-rigid trace homomorphism} 
$\opn{tr}^{\mrm{irig}}_{f} : f_*(\KK_Y) \to \KK_X$,
coming from the rigid trace 
$\opn{tr}^{\mrm{rig}}_{u}$ of Theorem \ref{thm:1551} and a limiting process, 
which eventually accounts for global duality when $f$ is proper. 
\end{rem}

\section{Rigid Relative Dualizing Complexes} 
\label{sec:relative}

In this final section we discuss a relative variant of rigid dualizing 
complexes. Given a ring homomorphism $u : A \to B$ of finite flat dimension, 
the complex $R_{B / A} := \opn{TwInd}_{u}(A) \in  \cat{D}(B)$
admits a rigidifying isomorphic $\rho_{B / A}$ relative to $A$, and the 
pair $(R_{B / A}, \rho_{B / A})$ is called the {\em relative rigid dualizing 
complex} of $B / A$. The rigid complex 
$(R_{B / A}, \rho_{B / A})$ has forward and backward functorialities, just like 
the absolute variant (i.e.\ when $A = \K$). 
We work out several examples, and indicate further applications of this 
relative construction.

Convention \ref{conv:1550} is assumed throughout this section; so there is a
regular noetherian base ring $\K$, and all rings and ring homomorphisms 
are in the category $\cat{Rng} \eftover \K$. 

The next definition will simplify many of our formulas. 

\begin{dfn} \label{dfn:1705}
Given a ring homomorphism $u : A \to B$, we write 
\[ u^! := \opn{TwInd}_u : \cat{D}^{+}_{\mrm{f}}(A) \to 
\cat{D}^{+}_{\mrm{f}}(B) , \]
where $\opn{TwInd}_u$ is the functor from Theorem \ref{thm:1555}(2). 
\end{dfn}

The notation $u^!$ is borrowed from the geometric setting, see 
Remark \ref{rem:1555}.

We are going to use various concepts of cohomological dimensions of functors 
and objects, following \cite[Sections 11.1 and 12.4]{Ye5}. 
Here is a brief recollection of them. By an {\em integer interval} we mean a 
subset 
$S = [i_0, i_1] := \{ i \in \Z \mid i_0 \leq i \leq i_1 \}$ 
for some  $-\infty \leq i_0 \leq i_1 \leq \infty$, or $S = \varnothing$. The 
{\em length} of $S = [i_0, i_1]$ is $i_1 - i_0 \in \N \cup \{ \infty \}$, and 
the length of $S = \varnothing$ is $-\infty$. 

For a graded abelian group $M = \boplus_{i \in \Z} M^i$, its {\em 
concentration} $\opn{con}(M)$ is the smallest integer interval $S$ such that 
$\{ i \mid M^i \neq 0 \} \sub S$, and its {\em amplitude} $\opn{amp}(M)$ is the 
length of $\opn{con}(M)$.
For a complex $M \in \cat{D}(\Z)$, its {\em cohomological concentration} is 
$\opn{con}(\opn{H}(M))$, the concentration of the graded abelian group 
$\opn{H}(M)$, and its {\em cohomological amplitude} is $\opn{amp}(\opn{H}(M))$.

Suppose now that $A$ and $B$ are DG rings. 
Let $\cat{E} \sub \cat{D}(A)$ be a full subcategory (not necessarily 
triangulated) and let 
$F : \cat{E} \to \cat{D}(B)$ be an additive functor. 
The {\em cohomological displacement} of $F$ relative to $\cat{E}$ is the 
smallest integer interval $S$ such that 
$\opn{con}(\opn{H}(F(M))) \sub \opn{con}(\opn{H}(M)) + S$
for every $M \in \cat{E}$. 
If $F : \cat{E}^{\mrm{op}} \to \cat{D}(B)$ is an additive functor, then the 
cohomological displacement $S$ has to satisfy the modified equation
$\opn{con}(\opn{H}(F(M))) \sub -\opn{con}(\opn{H}(M)) + S$.
The {\em cohomological dimension} of $F$ relative to $\cat{E}$ is the length of 
its cohomological displacement. If the subcategory $\cat{E}$ is not mentioned, 
then it is assumed to be $\cat{D}(A)$. 

Let $A$ be a DG ring. 
For a DG module $M \in \cat{D}(A)$, its {\em projective}, {\em injective} and 
{\em flat dimensions} are the cohomological dimensions of the functors 
$\opn{RHom}_A(M, -)$, $\opn{RHom}_A(-, M)$ and 
$M \ot^{\mrm{L}}_{A} (-)$, respectively. 

In case $A$ is a ring, then for an $A$-module $M$ the dimensions in the 
previous paragraph coincide with the classical definitions. 
A DG $A$-module $M$, i.e.\ a complex of $A$-modules, has finite projective 
dimension iff it admits a quasi-isomorphism $P \to M$ from a bounded complex of 
projective modules $P$; likewise for injective and flat. See \cite[Section 
12.4]{Ye5}. 
A ring homomorphism $A \to B$ has finite flat (resp.\ projective) 
dimension if $B$ has finite flat (resp.\ projective) dimension as a complex of 
$A$-modules. 

\begin{lem} \label{lem:1685}
Let $A \to B$ be a ring homomorphism of finite flat dimension. 
Then the rigid dualizing complex $R_B$ has finite injective dimension over $A$. 
To be precise, if the flat dimension of $B$ over $A$ is $\leq d_1$, and the 
injective dimension of $R_B$ over $B$ is $\leq d_2$, then the injective 
dimension of $R_B$ over $A$ is $\leq d_1 + d_2$. 
\end{lem}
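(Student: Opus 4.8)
The plan is to control the cohomological displacement of the contravariant functor $\opn{RHom}_A(-, R_B)$ directly, rather than building an explicit injective resolution. First I would record the two ingredients. Since $(R_B, \rho_B)$ is a rigid dualizing complex, $R_B$ is in particular a dualizing complex over $B$, so $R_B \in \cat{D}^{\mrm{b}}_{\mrm{f}}(B)$ and $R_B$ has finite injective dimension over $B$; fix a bound $d_2$, so that there is an integer interval $[q_0, q_1]$ with $q_1 - q_0 \leq d_2$ and $\opn{con}(\opn{H}(\opn{RHom}_B(N, R_B))) \sub [q_0, q_1] - \opn{con}(\opn{H}(N))$ for every $N \in \cat{D}(B)$. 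On the other hand, the hypothesis that $B$ has flat dimension $\leq d_1$ over $A$ says that the functor $B \ot^{\mrm{L}}_A (-)$ has cohomological displacement inside $[-d_1, 0]$; concretely, $\opn{H}^i(B \ot^{\mrm{L}}_A L) = 0$ whenever $i > \opn{sup}(\opn{H}(L))$ or $i < \opn{inf}(\opn{H}(L)) - d_1$, since tensoring does not raise the top cohomological degree and the flat-dimension bound controls how far down the cohomology can reach.

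Then, for an arbitrary $L \in \cat{D}(A)$, I would invoke the adjunction isomorphism (\ref{eqn:1480}) for the homomorphism $A \to B$:
\[ \opn{RHom}_A(L, R_B) \cong \opn{RHom}_B \bigl( B \ot^{\mrm{L}}_A L, R_B \bigr) \]
in $\cat{D}(B)$. Feeding the flat-dimension estimate into the injective-dimension estimate yields
\[ \opn{con} \bigl( \opn{H}(\opn{RHom}_A(L, R_B)) \bigr)
\sub [q_0, q_1] - \opn{con} \bigl( \opn{H}(B \ot^{\mrm{L}}_A L) \bigr)
\sub [q_0, q_1] - \bigl( \opn{con}(\opn{H}(L)) + [-d_1, 0] \bigr) . \]
Using the interval identity $[q_0, q_1] - [-d_1, 0] = [q_0, q_1 + d_1]$, this says that $R_B$ has injective concentration inside $[q_0, q_1 + d_1]$ over $A$. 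Since that interval has length $q_1 + d_1 - q_0 \leq d_1 + d_2$, the injective dimension of $R_B$ over $A$ is at most $d_1 + d_2$, which in particular is finite because $d_1 < \infty$ by hypothesis and $d_2 < \infty$ since $R_B$ is dualizing over $B$.

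The argument is essentially formal once the adjunction isomorphism is available, so I do not expect any conceptual obstacle; the only place needing care is the interval bookkeeping. One must use the flat-dimension hypothesis in the form of the displacement interval $[-d_1, 0]$ — i.e.\ the top cohomological degree is not raised by $B \ot^{\mrm{L}}_A (-)$ and the bottom drops by at most $d_1$ — and one must track the minus sign in the definition of injective concentration correctly when composing the contravariant functor $\opn{RHom}_B(-, R_B)$ with the covariant functor $B \ot^{\mrm{L}}_A (-)$; getting these straight is exactly what produces the sum $d_1 + d_2$ rather than a difference. All the relevant notions and conventions are the ones recalled at the end of Section \ref{sec:recall-dg}.
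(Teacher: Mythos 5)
Your argument is essentially the one in the paper: both rest on the Hom-tensor adjunction $\opn{RHom}_A(L, R_B) \cong \opn{RHom}_B(B \ot^{\mrm{L}}_A L, R_B)$, combined with the two displacement bounds; the paper merely restricts the test objects $L$ to modules, invoking \cite[Proposition 12.4.13]{Ye5} to justify that reduction, whereas you track the intervals for arbitrary complexes directly. The bookkeeping is correct, so the proof is fine.
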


\begin{proof}
If suffices to compute the cohomological amplitude of 
$\opn{RHom}_A(M, R_B)$ for $M \in \cat{M}(A)$, see 
\cite[Proposition 12.4.13]{Ye5}. 
But using derived Hom-tensor 
adjunction for the ring homomorphism $A \to B$, we have 
\[ \opn{RHom}_A(M, R_B) \cong \opn{RHom}_B(B \ot^{\mrm{L}}_{A} M, R_B) \]
in $\cat{D}(B)$. The cohomological amplitude of 
$B \ot^{\mrm{L}}_{A} M$ is $\leq d_1$, and hence the 
cohomological amplitude of 
$\opn{RHom}_B(B \ot^{\mrm{L}}_{A} M, R_B)$
is $\leq d_1 + d_2$. 
\end{proof}

\begin{lem} \label{lem:1686}
Let $u : A \to B$ be a ring homomorphism of finite flat dimension. 
Then the functor 
$u^! = \opn{TwInd}_u : \cat{D}^{+}_{\mrm{f}}(A) \to \cat{D}^{+}_{\mrm{f}}(B)$
has finite cohomological dimension. 
To be precise, if the flat dimension of $B$ over $A$ is $\leq d_1$, the 
injective dimension of $R_B$ over $B$ is $\leq d_2$, and the 
injective dimension of $R_A$ over $A$ is $\leq d_3$, 
then the cohomological dimension of $u^!$ is $\leq d_1 + d_2 + d_3$. 
\end{lem}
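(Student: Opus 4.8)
The plan is to use the factorization $u^{!}=\opn{TwInd}_{u}=\opn{D}^{\mrm{rig}}_{B}\circ\opn{LInd}_{u}\circ\opn{D}^{\mrm{rig}}_{A}$ from Theorem \ref{thm:1555}(2), together with the elementary fact that the cohomological displacement of a composite of additive functors is contained in the Minkowski sum of the displacements of the factors --- with the usual sign reversal for contravariant factors --- so that cohomological dimensions add up. It then suffices to bound the cohomological displacement of each of the three functors occurring in the composition. All three $d_i$ are finite: $d_1$ by the standing hypothesis on $u$, and $d_2,d_3$ because $R_B,R_A$ are dualizing complexes, hence of finite injective dimension over $B$ resp.\ $A$.

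First I would record those three displacement bounds, each of which is immediate from the definitions of flat/injective concentration recalled just before this lemma. Since the injective dimension of $R_{A}$ over $A$ is $\le d_{3}$, the complex $R_{A}$ has injective concentration inside some interval $[q_{0},q_{1}]$ of length $d_{3}$, so $\opn{con}(\opn{H}(\opn{D}^{\mrm{rig}}_{A}(M)))=\opn{con}(\opn{H}(\opn{RHom}_{A}(M,R_{A})))\subseteq[q_{0},q_{1}]-\opn{con}(\opn{H}(M))$ for every $M\in\cat{D}(A)$. Since $B$ has flat dimension $\le d_{1}$ over $A$, it has flat concentration inside some interval $[p_{0},p_{1}]$ of length $d_{1}$, so $\opn{con}(\opn{H}(\opn{LInd}_{u}(N)))=\opn{con}(\opn{H}(B\ot^{\mrm{L}}_{A}N))\subseteq[p_{0},p_{1}]+\opn{con}(\opn{H}(N))$ for every $N\in\cat{D}(A)$. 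And since the injective dimension of $R_{B}$ over $B$ is $\le d_{2}$, there is an interval $[r_{0},r_{1}]$ of length $d_{2}$ with $\opn{con}(\opn{H}(\opn{D}^{\mrm{rig}}_{B}(L)))\subseteq[r_{0},r_{1}]-\opn{con}(\opn{H}(L))$ for every $L\in\cat{D}(B)$.

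Then I would compose these three inclusions --- first for $M$, then for $N:=\opn{RHom}_{A}(M,R_{A})$, then for $L:=B\ot^{\mrm{L}}_{A}N$, noting that $u^{!}(M)=\opn{RHom}_{B}(L,R_{B})$ --- which yields, for every $M\in\cat{D}^{+}_{\mrm{f}}(A)$,
\[ \opn{con}\bigl(\opn{H}(u^{!}(M))\bigr)\ \subseteq\ \bigl([r_{0},r_{1}]-[p_{0},p_{1}]-[q_{0},q_{1}]\bigr)+\opn{con}\bigl(\opn{H}(M)\bigr) , \]
and the interval $[r_{0},r_{1}]-[p_{0},p_{1}]-[q_{0},q_{1}]$ has length $d_{1}+d_{2}+d_{3}$. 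Hence $u^{!}$ has cohomological displacement contained in an interval of length $\le d_{1}+d_{2}+d_{3}$, which is the assertion; it is precisely the finiteness of the flat dimension of $u$ that keeps the middle factor $\opn{LInd}_{u}$, and therefore the whole composite, of finite cohomological dimension (this is parallel to the proof of Lemma \ref{lem:1685}). I do not expect any genuine obstacle here: the only point requiring a little care is the bookkeeping of the two order reversals coming from $\opn{D}^{\mrm{rig}}_{A}$ and $\opn{D}^{\mrm{rig}}_{B}$; since there are two of them the composite $u^{!}$ is covariant, and in any case the lengths of the displacement intervals add regardless of orientation.
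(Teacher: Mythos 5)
Your argument is correct and gives the stated bound, but it proceeds along a genuinely different route from the paper. You compose the three displacement bounds directly: contravariant $\opn{D}^{\mrm{rig}}_{A}=\opn{RHom}_{A}(-,R_{A})$ contributes $d_{3}$ (injective dimension of $R_A$ over $A$), covariant $\opn{LInd}_{u}=B\ot^{\mrm{L}}_{A}(-)$ contributes $d_{1}$ (flat dimension of $B$ over $A$), and contravariant $\opn{D}^{\mrm{rig}}_{B}=\opn{RHom}_{B}(-,R_{B})$ contributes $d_{2}$ (injective dimension of $R_B$ over $B$), and the chain $\opn{con}(\opn{H}(u^{!}(M)))\subseteq[r_{0},r_{1}]-[p_{0},p_{1}]-[q_{0},q_{1}]+\opn{con}(\opn{H}(M))$ is carried out correctly, two sign reversals cancelling so the composite is covariant. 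The paper instead rewrites $u^{!}(M)\cong\opn{RHom}_{A}(\opn{RHom}_{A}(M,R_{A}),R_{B})$ by Hom-tensor adjunction for $A\to B$, invokes Lemma \ref{lem:1685} (which you do not need) to get that $R_{B}$ has injective dimension $\le d_{1}+d_{2}$ over $A$, and then picks bounded injective $A$-resolutions $I$ of $R_{A}$ and $J$ of $R_{B}$ to present $u^{!}(M)$ as $\opn{Hom}_{A}(\opn{Hom}_{A}(M,I),J)$ and reads off the amplitude. Your version is more modular and entirely avoids the auxiliary lemma; the paper's version is more concrete (explicit bounded injective resolutions over $A$) and reuses Lemma \ref{lem:1685}, which it has already proven for independent reasons (e.g.\ it is invoked in Lemma \ref{lem:1835}). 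Both are valid; the only point you should keep in mind is that the middle step of your chain applies the displacement bound for $\opn{LInd}_{u}$ to the complex $N=\opn{RHom}_{A}(M,R_{A})\in\cat{D}^{-}_{\mrm{f}}(A)$ and the outer step applies the bound for $\opn{RHom}_{B}(-,R_{B})$ to $L=B\ot^{\mrm{L}}_{A}N\in\cat{D}^{-}_{\mrm{f}}(B)$; since each of those bounds is stated for the full derived category, the restriction to these subcategories causes no issue, as you implicitly use.
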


\begin{proof}
If the cohomology of $M \in \cat{D}^{+}_{\mrm{f}}(A)$ is not bounded above, 
then we do not need to worry about the cohomological amplitude of the complex 
$u^!(M)$. 
For a complex $M \in \cat{D}^{\mrm{b}}_{\mrm{f}}(A)$, we can assume 
(by smart truncation) that $M$ has the same amplitude as its cohomology, say 
$d_0 \in \N \cup \{ -\infty \}$.  
By definition of the functor $u^!$, and using Hom-tensor adjunction, we have
\begin{equation} \label{eqn:1840}
u^!(M) = \opn{RHom}_B \bigl( B \ot^{\mrm{L}}_{A} 
\opn{RHom}_A(M, R_A), R_B \bigr) 
\cong \opn{RHom}_A \bigl( \opn{RHom}_A(M, R_A), R_B \bigr)
\end{equation}
in $\cat{D}(B)$. Choose injective resolutions 
$R_A \to I$ and $R_B \to J$ over $A$, with amplitudes 
$\opn{amp}(I) \leq d_3$ and $\opn{amp}(J) \leq d_1 + d_2$, see Lemma 
\ref{lem:1685}. We get an isomorphism
$u^!(M) \cong \opn{Hom}_A \bigl( \opn{Hom}_A(M, I), J \bigr)$ 
in $\cat{D}(A)$. 
The last complex has amplitude $\leq d_0 + d_1 + d_2 + d_3$. 
\end{proof}

The next lemma is a variation of the theorem on derived tensor evaluation, see 
\cite[Theorem 12.9.10]{Ye5}. 

\begin{lem} \label{lem:1835}
Let $A \to B$ be a ring homomorphism. Given complexes 
$L, M \in \cat{D}(A)$ and $N \in \cat{D}(B)$, there is a morphism 
\[ \tag{$*$} \ze_{L, M, N} : \opn{RHom}_A(L, N) \ot^{\mrm{L}}_{A} M \to
\opn{RHom}_A \bigl( \opn{RHom}_A(M, L),N \bigr) \]
in $\cat{D}(B)$ with these properties: 
\begin{itemize}
\item[$\triangleright$] The morphism $\ze_{L, M, N}$ is functorial in the 
complexes $L, M, N$. 

\item[$\triangleright$] If $L \in \cat{D}^{\mrm{b}}(A)$,
$M \in \cat{D}^{-}_{\mrm{f}}(A)$ and $N$ has finite 
injective dimension over $A$, then $\ze_{L, M, N}$ is an isomorphism.
\end{itemize}
\end{lem}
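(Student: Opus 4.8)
The plan is to construct $\ze_{L,M,N}$ by a standard "tensor-Hom" manipulation followed by a duality twist, and then to verify the isomorphism claim by reduction to simple complexes. First I would build the morphism $(*)$ itself: there is an obvious evaluation/composition morphism
\[
\opn{RHom}_A(L,N) \ot^{\mrm{L}}_{A} M \to \opn{RHom}_A\bigl(\opn{RHom}_A(M,L),N\bigr),
\]
which on the level of complexes comes from the natural map sending $\phi \ot m$ (with $\phi \in \opn{Hom}_A(L,N)$ and $m \in M$) to the homomorphism $\psi \mapsto \phi(\psi(m))$ for $\psi \in \opn{Hom}_A(M,L)$. Concretely I would choose a K-injective resolution $N \to J$ over $A$ and a K-projective resolution $P \to M$ over $A$ (so that $\opn{RHom}_A(M,L)$ is computed via $\opn{Hom}_A(P,L)$ and, since $J$ is K-injective, $\opn{RHom}_A(\opn{RHom}_A(M,L),N) \cong \opn{Hom}_A(\opn{Hom}_A(P,L),J)$), and define $\ze_{L,M,N}$ on representatives. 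The $B$-module structure is carried along through $N$, hence through $J$, in the first argument of $\opn{RHom}$; one checks the map is $B$-linear and respects the differentials, so it descends to $\cat{D}(B)$. Functoriality in $L, M, N$ is immediate from the explicit formula together with the uniqueness up to homotopy of the resolutions, exactly as in the paragraph after formula (\ref{eqn:618}).

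Next I would prove the isomorphism assertion under the stated hypotheses. The standard device is to fix $N$ (with $N$ of finite injective dimension over $A$, so that I may replace it by a bounded complex $J$ of injective $A$-modules) and to regard both sides of $(*)$ as functors of $M \in \cat{D}^{-}_{\mrm{f}}(A)$; both are triangulated (contravariant for the target in the $M$-slot after the inner $\opn{RHom}$, but the composite functor $M \mapsto \opn{RHom}_A(\opn{RHom}_A(M,L),N)$ is covariant triangulated), and $\ze_{L,M,N}$ is a morphism of triangulated functors. Since every object of $\cat{D}^{-}_{\mrm{f}}(A)$ admits a resolution by a bounded-above complex of finitely generated free $A$-modules, and using a way-out/truncation argument together with the finite injective dimension of $N$ to control amplitudes, it suffices to check that $\ze_{L,M,N}$ is an isomorphism when $M = A$. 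But for $M = A$ the morphism $(*)$ reduces to
\[
\opn{RHom}_A(L,N) \ot^{\mrm{L}}_{A} A \iso \opn{RHom}_A(L,N) = \opn{RHom}_A\bigl(\opn{RHom}_A(A,L),N\bigr),
\]
which is the obvious identification, hence an isomorphism. Taking shifts $M = A[q]$ handles all $q$, and then the way-out argument (in the style of the proof of Theorem \ref{thm:2115}, or \cite[Theorem 12.9.10]{Ye5}) propagates the isomorphism to all finite semi-free, then all pseudo-finite semi-free, then all derived pseudo-finite $M$; the boundedness of $L$ and the finite injective dimension of $N$ are precisely what make the relevant amplitudes finite so the limiting step goes through.

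The main obstacle I anticipate is the bookkeeping in the way-out reduction: one must set up the short exact sequences $0 \to F_j(P) \to P \to \bar F_j(P) \to 0$ of the pseudo-finite semi-free filtration, apply $\opn{RHom}_A(-,L)$, $(-)\ot^{\mrm{L}}_A(\text{stuff})$ and $\opn{RHom}_A(-,N)$, and track the concentration intervals so that for a fixed cohomological degree $i$ the truncated tail $\bar F_j(P)$ contributes nothing once $j$ is large — exactly as in Steps 3–5 of the proof of Theorem \ref{thm:2115}. Here the bound $d_2$ on the injective dimension of $N$ over $A$ and the boundedness of $L$ enter. Everything else — the construction of $\ze$, its $B$-linearity, its functoriality, and the base case $M = A$ — is routine. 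I would present the construction in a short paragraph and then invoke the way-out machinery, citing the proof of Theorem \ref{thm:2115} for the details of the filtration argument rather than repeating it.
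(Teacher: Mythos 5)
Your construction of $\ze_{L,M,N}$ has a genuine gap in the $B$-linearity step, and it is precisely the issue the paper flags in Remark \ref{rem:1835}. You choose a K-injective resolution $N \to J$ \emph{over $A$} and claim the $B$-module structure ``is carried along through $N$, hence through $J$.'' But a K-injective resolution of $N$ as a complex of $A$-modules is a priori only a complex of $A$-modules; there is no reason $J$ should carry a $B$-action extending the one on $N$. Consequently $\opn{Hom}_A(L,J) \ot_A P$ and $\opn{Hom}_A(\opn{Hom}_A(P,L),J)$ are just complexes of $A$-modules, and your chain-level map is a morphism in $\cat{C}_{\mrm{str}}(A)$, not in $\cat{C}_{\mrm{str}}(B)$ — so it does not directly produce a morphism in $\cat{D}(B)$. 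The obvious fix — take $J$ to be a K-injective resolution of $N$ over $B$ instead — breaks the other requirement: a complex that is K-injective over $B$ need not be K-injective over $A$ unless $B$ is well-behaved (e.g.\ K-flat) over $A$, and here $B$ is an arbitrary EFT $A$-ring. The paper resolves this tension by choosing a semi-free DG ring resolution $\til{B} \to B$ over $A$ (so $\til{B}$ is K-flat over $A$), regarding $N$ as a $\til{B}$-complex, and taking a K-injective resolution $N \to \til{J}$ over $\til{B}$; K-flatness of $\til{B}$ over $A$ guarantees $\til{J}$ is also K-injective over $A$, and one gets the chain-level map $\til{\ze}_{L,P,\til{J}}$ in $\cat{C}_{\mrm{str}}(\til{B})$, hence a morphism in $\cat{D}(\til{B}) \simeq \cat{D}(B)$. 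Without some device of this kind your definition of $\ze$ does not land in $\cat{D}(B)$.

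On the isomorphism assertion, your route is genuinely different from the paper's and would also work once the construction is repaired. The paper avoids the way-out machinery entirely: after replacing $L$ by a bounded complex, $P$ by a bounded-above complex of finite-rank frees, and $J$ by a bounded complex of injectives (using the homotopy equivalence $\til{J} \to J$ to pass from the $\til{B}$-resolution to the honest $A$-injective one), it observes that in each fixed cohomological degree both sides of $\til{\ze}_{L,P,J}$ involve only finitely many of the $P^d$, and for each finite-rank free $P^d$ the map is a bijection of $A$-modules — so $\til{\ze}_{L,P,J}$ is an isomorphism of complexes on the nose, not merely a quasi-isomorphism. Your proposal instead reduces to $M = A$ and propagates by a way-out argument along the pseudo-finite semi-free filtration in the style of the proof of Theorem \ref{thm:2115}. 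That buys nothing here and costs the concentration-interval bookkeeping you yourself flag as the main obstacle; the paper's degree-by-degree bijectivity check is shorter and sidesteps the long exact sequences entirely. If you do keep the way-out version, you should note explicitly that the reduction to $M = A$ requires both $L$ bounded and $N$ of finite injective dimension over $A$ to bound the relevant amplitudes, which you gesture at but do not pin down.
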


\begin{proof} \mbox{}

\smallskip \noindent Step 1.
We start by explaining how the two objects in formula ($*$) are obtained. 
The complex $X := \opn{RHom}_A(L, N) \in \cat{D}(B)$ is calculated using a 
K-projective resolution of $L$ over $A$, and then the complex  
$X \ot^{\mrm{L}}_{A} M \in \cat{D}(B)$,
the source of $\ze_{L, M, N}$, is calculated using a K-projective 
resolution of $M$ over $A$. 
The object $Y := \opn{RHom}_A(M, L) \in \cat{D}(A)$ is 
calculated using a K-projective resolution of $M$ over $A$, and then the 
complex 
$\opn{RHom}_A(Y, N) \in \cat{D}(B)$, the target of $\ze_{L, M, N}$, 
is calculated using a K-projective resolution of $Y$ over $A$.

\medskip \noindent Step 2.
Now we construct the morphism $\ze_{L, M, N}$.
It turns out that the resolutions in step 1 are not 
sufficient to do that (see Remark \ref{rem:1835}). 
 
Let us choose a semi-free DG ring resolution $\til{B} \to B$ over $A$. 
This allows us to consider $N$ as an object of $\cat{C}_{\mrm{str}}(\til{B})$.
Choose a K-injective resolution $N \to \til{J}$ over $\til{B}$, and a 
K-projective resolution $P \to M$ over $A$. Because the DG ring $\til{B}$ is 
K-flat over $A$, it follows that the DG module $\til{J}$ is K-injective over 
$A$.

The restriction functor 
$\opn{Rest}_{B / \til{B}} : \cat{D}(B) \to \cat{D}(\til{B})$
is an equivalence of triangulated categories, and it respects 
$\opn{RHom}$ and $\ot^{\mrm{L}}$; see \cite[Theorem 12.7.2]{Ye5}.
Therefore there are \lb isomorphisms 
$\opn{Hom}_A(L, \til{J}) \cong \opn{RHom}_A(L, N) = X$
and 
$\opn{Hom}_A(L, \til{J}) \ot^{}_{A} P \cong X \ot^{\mrm{L}}_{A} M $
in $\cat{D}(\til{B})$. 
Similarly there is an isomorphism 
$\opn{Hom}_A(P, L) \cong \opn{RHom}_A(M, L) = Y$
in $\cat{D}(A)$ and an isomorphism 
$\opn{Hom}_A \bigl( \opn{Hom}_A(P, L), \til{J} \bigr) \cong 
\opn{RHom}_A(Y, N)$ in $\cat{D}(\til{B})$. 
All these isomorphisms are canonical.

Consider the canonical homomorphism 
\begin{equation} \label{eqn:1841}
\til{\ze}_{L, P, \til{J}}  : \opn{Hom}_A(L, \til{J}) \ot^{}_{A} P \to
\opn{Hom}_A \bigl( \opn{Hom}_A(P, L), \til{J} \bigr) 
\end{equation}
in $\cat{C}_{\mrm{str}}(\til{B})$; the formula is 
\[ \til{\ze}_{L, P, \til{J}}(\chi \ot p)(\xi) = 
(-1)^{c \cd e} \cd \chi(\xi(p)) \in \til{J}^{c + d + e} \]
for $p \in P^c$, $\chi \in \opn{Hom}_A(L, \til{J})^d$ and 
$\xi \in \opn{Hom}_A(P, L)^e$. 
As explained in the paragraph above, this gives us a morphism
\[ \opn{Q}(\til{\ze}_{L, P, \til{J}}) : 
\opn{RHom}_A(L, N) \ot^{\mrm{L}}_{A} M \to
\opn{RHom}_A \bigl( \opn{RHom}_A(M, L),N \bigr) \]
in $\cat{D}(\til{B})$. 

To finish this step we take $\ze_{L, M, N}$ to be the unique morphism in 
$\cat{D}(B)$ such that 
$\opn{Rest}_{B / \til{B}}(\ze_{L, M, N}) = \opn{Q}(\til{\ze}_{L, P, \til{J}})$
in $\cat{D}(\til{B})$. It is functorial in $L, M, N$, because all the DG module
resolutions we have chosen are unique up to homotopy. 
(See Remark \ref{rem:1835} regarding the possible dependence on the DG ring 
resolution $\til{B} \to B$.)

\medskip \noindent Step 3.
In this step we assume that the complexes $L, M, N$ satisfy the extra 
conditions, and we prove that $\ze_{L, M, N}$ is an isomorphism. Since 
$L \in \cat{D}(A)$ 
has bounded cohomology, we can assume (by replacing it with a suitable smart 
truncation) that $L$ is itself bounded. Since 
$M \in \cat{D}^{-}_{\mrm{f}}(A)$, we can choose its K-projective resolution 
$P$, in step 2, to be a bounded above complex of finite rank free $A$-modules. 
Let $N \to \til{J}$ be a K-injective resolution over $\til{B}$, as in step 2. 
Because $N$ has finite injective dimension over $A$, there is a resolution 
$N \to J$ over $A$, where $J$ is a bounded complex of injective $A$-modules. 
Let $\phi : \til{J} \to J$ be a homomorphism in 
$\cat{C}_{\mrm{str}}(A)$ that commutes up to homotopy with the 
homomorphisms from $N$. Because both $J$ and $\til{J}$ are K-injective over 
$A$, the morphism $\phi$ is a homotopy equivalence. We obtain this commutative 
diagram 
\begin{equation} \label{eqn:1839}
\begin{tikzcd} [column sep = 8ex, row sep = 6ex] 
\opn{Hom}_A(L, \til{J}) \ot^{}_{A} P
\ar[r, "{\til{\ze}_{L, P, \til{J}}}"]
\ar[d, "{\opn{Hom}(\opn{id}, \phi) \ot \opn{id}}"']
&
\opn{Hom}_A \bigl( \opn{Hom}_A(P, L), \til{J} \bigr)
\ar[d, "{\opn{Hom}(\opn{Hom}(\opn{id}, \opn{id}), \phi)}"]
\\
\opn{Hom}_A(L, J) \ot^{}_{A} P
\ar[r, "{\til{\ze}_{L, P, J}}"]
&
\opn{Hom}_A \bigl( \opn{Hom}_A(P, L), J \bigr)
\end{tikzcd}
\end{equation}
in $\cat{C}_{\mrm{str}}(A)$. 
The morphism $\ze_{L, M, N}$ is an isomorphism in $\cat{D}(B)$ iff the 
homomorphism $\til{\ze}_{L, P, \til{J}}$ is a quasi-isomorphism; and it does 
not matter if we consider $\til{\ze}_{L, P, \til{J}}$ as a homomorphism in
$\cat{C}_{\mrm{str}}(\til{B})$ or in $\cat{C}_{\mrm{str}}(A)$. 
Since $\phi$ is a homotopy equivalence, the vertical arrows in diagram 
(\ref{eqn:1837}) are quasi-isomorphisms. It remains to prove that 
$\til{\ze}_{L, P, J}$ is a quasi-isomorphism. 

But in fact $\til{\ze}_{L, P, J}$ is an isomorphism in 
$\cat{C}_{\mrm{str}}(A)$.
To see this, let's note that in each degree $c$, the $A$-modules  
$\bigl( \opn{Hom}_A (L, J)\ot_A P \bigr)^c$ and 
$\opn{Hom}_A \bigl( \opn{Hom}_A(P, L), J \bigr)^c$
involve only finitely many of the $A$-modules $P^d$; this is due to the fact 
that $L$ and $J$ are bounded. Hence it is enough to verify that for every $d$ 
the homomorphism 
\[ \opn{Hom}_A (L, J) \ot_A P^d[-d] \to
\opn{Hom}_A \bigl( \opn{Hom}_A(P^d[-d], L), J \bigr) \]
is bijective. This is true because $P^d$ is a finite rank free $A$-module. 
\end{proof}

\begin{prop} \label{prop:1690}
Let $u : A \to B$ be a ring homomorphism of finite flat dimension. 
Then for every $M \in \cat{D}^{\mrm{b}}_{\mrm{f}}(A)$ there is an isomorphism
$u^!(M) \cong u^!(A) \ot^{\mrm{L}}_{A} M$
in $\cat{D}(B)$. This isomorphism is functorial in $M$. 
\end{prop}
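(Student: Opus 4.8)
The plan is to unwind the definition of $u^{!} = \opn{TwInd}_u$ and then invoke the derived tensor evaluation morphism of Lemma \ref{lem:1835}. Recall that by Theorem \ref{thm:1555}(2) we have $u^{!} = \opn{D}_{B}^{\mrm{rig}} \circ \opn{LInd}_u \circ \opn{D}_{A}^{\mrm{rig}}$, so for any $M \in \cat{D}^{+}_{\mrm{f}}(A)$ there are canonical isomorphisms
\[
u^{!}(M) = \opn{RHom}_{B}\bigl( B \ot^{\mrm{L}}_{A} \opn{RHom}_{A}(M, R_A),\, R_B \bigr)
\cong \opn{RHom}_{A}\bigl( \opn{RHom}_{A}(M, R_A),\, R_B \bigr)
\]
in $\cat{D}(B)$, the second isomorphism being derived Hom-tensor adjunction for $u : A \to B$ (as in formula (\ref{eqn:1480})); this is the same computation used in equation (\ref{eqn:1840}). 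Taking $M = A$ and noting $\opn{RHom}_{A}(A, R_A) \cong R_A$, we obtain in particular a canonical isomorphism $u^{!}(A) \cong \opn{RHom}_{A}(R_A, R_B)$ in $\cat{D}(B)$.

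Next I would apply Lemma \ref{lem:1835} with $L := R_A$, $N := R_B$, and with the given $M \in \cat{D}^{\mrm{b}}_{\mrm{f}}(A)$. The hypotheses are met: $R_A \in \cat{D}^{\mrm{b}}_{\mrm{f}}(A) \sub \cat{D}^{\mrm{b}}(A)$ since it is a dualizing complex; $M \in \cat{D}^{\mrm{b}}_{\mrm{f}}(A) \sub \cat{D}^{-}_{\mrm{f}}(A)$; and $R_B$ has finite injective dimension over $A$ by Lemma \ref{lem:1685}, where we use precisely the assumption that $u$ has finite flat dimension. Hence the morphism
\[
\ze_{R_A, M, R_B} : \opn{RHom}_{A}(R_A, R_B) \ot^{\mrm{L}}_{A} M \iso
\opn{RHom}_{A}\bigl( \opn{RHom}_{A}(M, R_A),\, R_B \bigr)
\]
is an isomorphism in $\cat{D}(B)$. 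Combining this with the two identifications from the first paragraph — the target being $u^{!}(M)$ and the first tensor factor of the source being $u^{!}(A)$ — yields the desired isomorphism $u^{!}(A) \ot^{\mrm{L}}_{A} M \iso u^{!}(M)$.

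Finally, functoriality in $M$ follows by assembling functorialities: the adjunction isomorphism $\opn{RHom}_{B}(B \ot^{\mrm{L}}_{A} -, R_B) \cong \opn{RHom}_{A}(-, R_B)$ is functorial, $\opn{RHom}_{A}(-, R_A)$ is a functor, and $\ze_{R_A, -, R_B}$ is functorial in its middle argument by the first bullet of Lemma \ref{lem:1835}. I do not expect any genuine obstacle here; the only point requiring care is verifying that the composite of these canonical identifications is itself functorial (and, if one wants, independent of the auxiliary DG ring resolution hidden inside $\ze$, cf. Remark \ref{rem:1835}), but this is routine given that every isomorphism in sight has already been shown to be natural. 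If a completely resolution-free argument is preferred, one can alternatively first check the statement for $M$ a bounded complex of finite free $A$-modules, where it reduces to the case $M = A$ by additivity and shift, and then extend to all of $\cat{D}^{\mrm{b}}_{\mrm{f}}(A)$ using a pseudo-finite semi-free resolution together with the finite cohomological dimension of $u^{!}$ from Lemma \ref{lem:1686} and of $u^{!}(A) \ot^{\mrm{L}}_{A} (-)$.
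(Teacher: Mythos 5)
Your proof is correct and follows essentially the same route as the paper: unwind $u^{!}$ via Hom-tensor adjunction to get $u^{!}(M)\cong\opn{RHom}_A(\opn{RHom}_A(M,R_A),R_B)$, identify $u^{!}(A)\cong\opn{RHom}_A(R_A,R_B)$, and then apply Lemma \ref{lem:1835} with $L=R_A$, $N=R_B$, using Lemma \ref{lem:1685} to verify the injective-dimension hypothesis. The only additions are your closing remarks on functoriality and an alternative resolution-free sketch, which the paper leaves implicit.
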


\begin{proof}
From formula (\ref{eqn:1840}) we have an isomorphism 
\begin{equation} \label{eqn:1837}
u^!(M) 
\cong \opn{RHom}_{A} \bigl( \opn{RHom}_{A}(M, R_A), R_B \bigr) 
\end{equation}
in $\cat{D}(B)$. Likewise there are isomorphisms 
$u^!(A) \cong \opn{RHom}_{A}(R_A, R_B)$
and 
\begin{equation} \label{eqn:1838}
u^!(A) \ot^{\mrm{L}}_{A} M \cong \opn{RHom}_{A}(R_A, R_B) 
\ot^{\mrm{L}}_{A} M
\end{equation}
in $\cat{D}(B)$. We know that $M \in \cat{D}^{-}_{\mrm{f}}(A)$
and $R_A \in \cat{D}^{\mrm{b}}(A)$.
According to Lemma \ref{lem:1685}, the complex $R_B$ has finite injective 
dimension over $A$. Thus Lemma \ref{lem:1835} provides an isomorphism in 
$\cat{D}(B)$ from the object in (\ref{eqn:1837}) to the object in 
(\ref{eqn:1838}).
\end{proof}

\begin{rem} \label{rem:1835}
The reason we take $M \in \cat{D}^{\mrm{b}}_{\mrm{f}}(A)$ in Proposition 
\ref{prop:1690} is that the functor $u^!$ is defined only on 
$\cat{D}^{+}_{\mrm{f}}(A)$, whereas in Lemma \ref{lem:1835} we get an 
isomorphism only for $M \in \cat{D}^{-}_{\mrm{f}}(A)$.

The reader might wonder why we need the DG ring resolution 
$\til{B} \to B$ is step 2 of the proof of Lemma \ref{lem:1835}. 
Here is the explanation. The complexes 
$\opn{Hom}_A(L, J) \ot^{}_{A} P$ and 
$\opn{Hom}_A \bigl( \opn{Hom}_A(P, L), J \bigr)$
appearing in step 3 of the proof of the lemma do not have $B$-module 
structures, and this is why diagram (\ref{eqn:1839}) is in the category 
$\cat{C}_{\mrm{str}}(A)$. 
On the other hand, the presentations of the complexes
$\opn{RHom}_A(L, N) \ot^{\mrm{L}}_{A} M$ and 
$\opn{RHom}_A \bigl( \opn{RHom}_A(M, L),N \bigr)$
as objects of $\cat{D}(B)$, which are used in step 1 of that proof, do not 
permit a DG presentation of the morphism 
$\ze_{L, M, N}$. The only way we can do that is by the homomorphism 
$\til{\ze}_{L, P, \til{J}}$ in formula (\ref{eqn:1841}), which exists in 
the category $\cat{C}_{\mrm{str}}(\til{B})$. 

We do not stipulate that the isomorphism in Proposition \ref{prop:1690}  is 
canonical; this is not needed for Corollaries \ref{cor:1695} 
and \ref{cor:1690}.
However, it is possible to prove that the isomorphism in the proposition is 
indeed canonical. This is done by showing that the morphism 
$\ze_{L, M, N}$ in Lemma \ref{lem:1835} does not depend on the choice of DG 
ring resolution $\til{B} \to B$ in step 2 of the proof of that lemma. 
The idea is this: given any other semi-free DG ring resolution 
$\til{B}' \to B$, it is possible to lift it to a DG ring homomorphism 
$\til{B}' \to \til{B}$; see \cite[Theorem 3.22]{Ye4}. We obtain restriction 
functors 
$\cat{D}(B) \to \cat{D}(\til{B}) \to \cat{D}(\til{B}')$, 
and a calculation shows that the morphism 
$\ze_{L, M, N}$ constructed in $\cat{D}(\til{B})$ is sent to the morphism 
$\ze'_{L, M, N}$ constructed in $\cat{D}(\til{B}')$. Such calculations were 
made in \cite[Section 5]{Ye4}.
\end{rem}

\begin{cor} \label{cor:1695}
If $u : A \to  B$ has finite flat dimension, then the complex $u^!(A)$ has 
finite flat dimension over $A$. 
\end{cor}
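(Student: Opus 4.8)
The plan is to read the statement off from Proposition \ref{prop:1690} together with Lemma \ref{lem:1686}. By Lemma \ref{lem:1686} the functor $u^{!} = \opn{TwInd}_{u}$ has finite cohomological dimension; I would fix a finite-length integer interval $[a, b] \sub \Z$ realizing its cohomological displacement, so that $\opn{con}(\opn{H}(u^{!}(M))) \sub \opn{con}(\opn{H}(M)) + [a, b]$ for every $M \in \cat{D}^{+}_{\mrm{f}}(A)$. Applying this, together with Proposition \ref{prop:1690}, to an arbitrary finitely generated $A$-module $M$ placed in cohomological degree $0$ gives an isomorphism $u^{!}(A) \ot^{\mrm{L}}_{A} M \cong u^{!}(M)$ in $\cat{D}(A)$, and hence $\opn{con}(\opn{H}(u^{!}(A) \ot^{\mrm{L}}_{A} M)) \sub [a, b]$ for all such $M$.

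Second, I would remove the finiteness assumption on $M$. Every $A$-module $L$ is the filtered colimit of its finitely generated submodules $L_{\al}$; computing $u^{!}(A) \ot^{\mrm{L}}_{A}(-)$ by means of a fixed flat resolution of $u^{!}(A)$ over $A$, and using that $(-) \ot_{A} (-)$ and $\opn{H}^{i}$ commute with filtered colimits (filtered colimits being exact), one gets $\opn{H}^{i}(u^{!}(A) \ot^{\mrm{L}}_{A} L) = \varinjlim_{\al} \opn{H}^{i}(u^{!}(A) \ot^{\mrm{L}}_{A} L_{\al}) = 0$ for every $i \notin [a, b]$ and every $L \in \cat{M}(A)$. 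Specializing to $L = A$ shows in particular that $u^{!}(A)$ is cohomologically concentrated in $[a, b]$, hence cohomologically bounded over $A$.

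The third step, which is also where the only genuine work lies, is to upgrade this uniform $\opn{Tor}$-vanishing against modules to the statement that $u^{!}(A)$ has finite flat dimension over $A$ in the sense of the paper — recall that that definition quantifies over all of $\cat{D}(A)$, whereas Proposition \ref{prop:1690} only controls $u^{!}(A) \ot^{\mrm{L}}_{A}(-)$ on $\cat{D}^{\mrm{b}}_{\mrm{f}}(A)$. Here I would run the classical dévissage: choose a resolution $P \to u^{!}(A)$ over $A$ by a bounded-above complex of flat $A$-modules with $P^{i} = 0$ for $i > b$; since $\opn{H}^{i}(P) = \opn{H}^{i}(u^{!}(A)) = 0$ for $i < a$, the canonical surjection $P \to P' := \opn{smt}^{\geq a}(P)$ onto the smart truncation is a quasi-isomorphism, and $P'$ is concentrated in degrees $[a, b]$ with all terms flat except possibly the degree-$a$ term $Q := \opn{coker}(P^{a-1} \to P^{a})$. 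Applying $(-) \ot_{A} L$ to the degreewise split short exact sequence $0 \to \opn{stt}^{> a}(P') \to P' \to Q[-a] \to 0$ (the stupid truncation keeping degrees $> a$ on the left) and reading the long exact cohomology sequence of the resulting triangle, with $u^{!}(A) \cong P'$ in the middle term, in degree $a - 1$ forces $\opn{Tor}^{A}_{1}(Q, L) = 0$ for every $A$-module $L$; hence $Q$ is flat. Then $P'$ is a bounded complex of flat $A$-modules quasi-isomorphic to $u^{!}(A)$, so $u^{!}(A)$ has flat concentration inside $[a, b]$ and therefore flat dimension $\le b - a < \infty$. Alternatively, the passage from module-wise $\opn{Tor}$-vanishing to a bounded flat resolution can be quoted from the argument in the proof of \cite[Proposition 12.4.19]{Ye5}. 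The main obstacle is precisely this last bootstrap across the boundedness/finiteness conditions; the first two steps are formal.
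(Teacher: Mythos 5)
Your overall strategy mirrors the paper's proof: rewrite $u^{!}(A) \ot^{\mrm{L}}_{A} M$ as $u^{!}(M)$ for $M \in \cat{D}^{\mrm{b}}_{\mrm{f}}(A)$ via Proposition \ref{prop:1690}, get a uniform cohomological bound $[a,b]$ from Lemma \ref{lem:1686}, and pass from this bound to finite flat dimension. The paper does this compactly by invoking \cite[Proposition 12.4.19]{Ye5} to reduce the test to $\cat{M}_{\mrm{f}}(A)$, which is exactly the alternative you flag at the end; your steps 1 and 2 and that fallback citation are all fine.

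The explicit dévissage you spell out in step 3, however, contains a genuine error. You apply $(-) \ot_{A} L$ to the short exact sequence $0 \to \opn{stt}^{>a}(P') \to P' \to Q[-a] \to 0$ and claim to read off $\opn{Tor}^{A}_{1}(Q,L)$ in degree $a-1$. But $Q[-a] \ot_{A} L$ is a complex concentrated in the single degree $a$, so $\opn{H}^{a-1}(Q[-a] \ot_{A} L) = 0$ trivially and says nothing about $\opn{Tor}^{A}_{1}(Q,L)$. Compounding this, $P'$ is not K-flat --- its degree-$a$ term is the module $Q$ whose flatness you are trying to prove --- so $P' \ot_{A} L$ is not $u^{!}(A) \ot^{\mrm{L}}_{A} L$ either. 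The long exact sequence you write down collapses to $0 = \opn{H}^{a-1}(P' \ot_{A} L) = 0$, which is vacuous, and you have not shown $Q$ is flat.

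The fix is to run the dévissage on $P$ rather than on $P'$. Use the degreewise split short exact sequence $0 \to \opn{stt}^{\geq a+1}(P) \to P \to \opn{stt}^{\leq a}(P) \to 0$. Because $\opn{H}^{i}(P) = 0$ for $i < a$, the complex $\opn{stt}^{\leq a}(P)$ (a bounded-above complex of flats with $P^{a}$ in top degree) is a K-flat resolution of $Q[-a]$, so $\opn{H}^{a-1}\bigl(\opn{stt}^{\leq a}(P) \ot_{A} L\bigr) \cong \opn{Tor}^{A}_{1}(Q,L)$. Now tensor with $L$ and read degree $a-1$ of the long exact cohomology sequence: the term $\opn{H}^{a-1}(P \ot_{A} L) = \opn{H}^{a-1}(u^{!}(A) \ot^{\mrm{L}}_{A} L)$ vanishes by your step 2, and $\opn{H}^{a}\bigl(\opn{stt}^{\geq a+1}(P) \ot_{A} L\bigr)$ vanishes because that complex lives in degrees $\geq a+1$. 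Hence $\opn{Tor}^{A}_{1}(Q,L) = 0$ for all $L$, so $Q$ is flat, and your truncation $P'$ is then an honest bounded complex of flats. With that substitution your argument is correct.
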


\begin{proof}
It is enough to bound the flat dimension of $u^!(A)$ relative to 
$\cat{M}_{\mrm{f}}(A)$, which is, by definition, the cohomological dimension of 
the functor 
$u^!(A)  \ot^{\mrm{L}}_{A} (-) : \cat{M}_{\mrm{f}}(A) \to \cat{D}(B)$; 
cf.\ \cite[Proposition 12.4.19]{Ye5}. 
Now the flat dimension of $u^!(A)$ relative to $\cat{M}_{\mrm{f}}(A)$
is at most the flat dimension of $u^!(A)$ relative to the bigger category
$\cat{D}^{\mrm{b}}_{\mrm{f}}(A)$.
By Proposition \ref{prop:1690}, the flat dimension of $u^!(A)$ relative to 
$\cat{D}^{\mrm{b}}_{\mrm{f}}(A)$ equals the cohomological dimension of the 
functor $u^! : \cat{D}^{\mrm{b}}_{\mrm{f}}(A) \to \cat{D}(B)$, and this is 
finite by Lemma \ref{lem:1686}. 
\end{proof}

\begin{cor} \label{cor:1690}
If $u : A \to  B$ has finite flat dimension, then there is an isomorphism 
$R_B \cong u^!(A) \ot^{\mrm{L}}_{A} R_A$
in $\cat{D}(B)$.
\end{cor}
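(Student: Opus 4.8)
The plan is to recognize that the complex $u^{!}(A) \ot^{\mrm{L}}_{A} R_A$ appearing in the statement is, by Proposition \ref{prop:1690}, nothing but $u^{!}(R_A)$, and then to compute $u^{!}(R_A)$ directly from the definition of the twisted induction functor. Concretely: since $R_A$ is a dualizing complex over $A$ it lies in $\cat{D}^{\mrm{b}}_{\mrm{f}}(A)$, and since $u$ has finite flat dimension Proposition \ref{prop:1690} applies with $M := R_A$, giving an isomorphism $u^{!}(R_A) \cong u^{!}(A) \ot^{\mrm{L}}_{A} R_A$ in $\cat{D}(B)$. So the corollary reduces to producing an isomorphism $u^{!}(R_A) \cong R_B$.

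For that second step I would unwind $u^{!} = \opn{TwInd}_u = \opn{D}^{\mrm{rig}}_B \circ \opn{LInd}_u \circ \opn{D}^{\mrm{rig}}_A$ (Theorem \ref{thm:1555}(2) and Definition \ref{dfn:1705}). First, $\opn{D}^{\mrm{rig}}_A(R_A) = \opn{RHom}_A(R_A, R_A)$, which is isomorphic to $A$ in $\cat{D}(A)$ because $R_A$ is a dualizing complex and hence has the derived Morita property (Definition \ref{dfn:676}, Proposition \ref{prop:730}). Next $\opn{LInd}_u(A) = B \ot^{\mrm{L}}_A A \cong B$ in $\cat{D}(B)$, and finally $\opn{D}^{\mrm{rig}}_B(B) = \opn{RHom}_B(B, R_B) \cong R_B$. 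Composing these three canonical isomorphisms yields $u^{!}(R_A) \cong R_B$, and combining with the first step finishes the proof.

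There is essentially no obstacle here: the real work — the derived tensor-evaluation isomorphism and the bookkeeping about cohomological dimensions — has already been done in Lemmas \ref{lem:1835}, \ref{lem:1685}, \ref{lem:1686} and Proposition \ref{prop:1690}. The only points one must verify are the hypotheses needed to invoke Proposition \ref{prop:1690}, namely $R_A \in \cat{D}^{\mrm{b}}_{\mrm{f}}(A)$ (automatic, being dualizing) and the finite flat dimension of $u$ (the hypothesis of the corollary). If one wanted the isomorphism to be canonical one would additionally check that the isomorphism of Proposition \ref{prop:1690} is independent of the auxiliary choices, as indicated in Remark \ref{rem:1835}, but this is not needed for the statement as phrased.
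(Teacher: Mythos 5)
Your proposal is correct and follows essentially the same route as the paper: apply Proposition~\ref{prop:1690} with $M := R_A$, after noting $R_B \cong u^{!}(R_A)$. The paper dismisses that last isomorphism as holding ``almost by definition,'' whereas you spell out why (unwinding $u^{!} = \opn{D}^{\mrm{rig}}_B \circ \opn{LInd}_u \circ \opn{D}^{\mrm{rig}}_A$ and using the derived Morita property of $R_A$); this is a harmless, indeed helpful, expansion of the same argument.
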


\begin{proof}
Almost by definition we have $R_B \cong u^!(R_A)$. Now 
take $M := R_A$ in Proposition \ref{prop:1690}. 
\end{proof}

\begin{lem} \label{lem:1820}
Let $u : A \to B$ and $v : B \to C$ be ring homomorphisms, both of finite 
flat dimension. 
\begin{enumerate}
\item The homomorphisms $v \circ u$ has finite flat dimension. 

\item Let $A[\bt] = A[t_1, \ldots, t_n]$ be the polynomial ring in $n$ 
variables, and let 
$w : A[\bt] \to B$ be some $A$-ring 
homomorphism. Then $w$ has finite flat dimension. 

\item Suppose $S \sub B$ is a multiplicatively closed set such that 
$v(S) \sub C^{\times}$. Let $B_{S}$ be the localized ring, and let
$v_S : B_S \to C$ the corresponding $B$-ring homomorphism. Then 
$v_S$ has finite flat dimension.
\end{enumerate}  
\end{lem}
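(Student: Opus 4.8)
The plan is to reduce all three statements to part (1) --- stability of finite flat dimension under composition --- by writing the homomorphism in question as a composite of maps each of which visibly has finite flat dimension. For part (1), I would observe that the functor $C \ot^{\mrm{L}}_A (-) \colon \cat{D}(A) \to \cat{D}(C)$ is naturally isomorphic to the composite of $B \ot^{\mrm{L}}_A (-) \colon \cat{D}(A) \to \cat{D}(B)$ followed by $C \ot^{\mrm{L}}_B (-) \colon \cat{D}(B) \to \cat{D}(C)$, by associativity of the derived tensor product. The cohomological dimension of a composite of additive functors is at most the sum of the cohomological dimensions of the factors, so $C \ot^{\mrm{L}}_A (-)$ has finite cohomological dimension once each factor does; in the language introduced just before the lemma, this says exactly that $v \circ u$ has finite flat dimension, and in fact its flat dimension is bounded by the sum of the flat dimensions of $u$ and of $v$.

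For part (2), set $b_i := w(t_i) \in B$ and $B[\bt] := B \ot_A A[\bt]$, and factor $w$ as $A[\bt] \xar{p} B[\bt] \xar{q} B$, where $p$ is extension of scalars along $A \to B$ (so $p(t_i) = t_i$) and $q$ is the $B$-algebra homomorphism with $q(t_i) = b_i$; one checks $q \circ p = w$. By part (1) it suffices to bound the flat dimensions of $p$ and of $q$. For $p$: a bounded flat resolution $P \to B$ over $A$ exists since $u$ has finite flat dimension, and $P \ot_A A[\bt]$ is then a bounded complex of flat $A[\bt]$-modules mapping quasi-isomorphically to $B \ot_A A[\bt] = B[\bt]$, because $A[\bt]$ is flat over $A$; hence $p$ has finite flat dimension. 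For $q$: the kernel of $q$ is generated by the sequence $(t_1 - b_1, \ldots, t_n - b_n)$ in $B[\bt]$, which is a regular sequence because $B[\bt]/(t_1 - b_1, \ldots, t_i - b_i) \cong B[t_{i+1}, \ldots, t_n]$ for each $i$. By Corollary \ref{cor:1093} this sequence is Koszul regular, so the associated Koszul complex $\opn{K}\bigl(B[\bt]; (t_1 - b_1, \ldots, t_n - b_n)\bigr)$ is a finite free resolution of $B$ over $B[\bt]$; thus $q$ has finite projective, hence finite flat, dimension.

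For part (3), since $v(S) \subseteq C^{\times}$ the homomorphism $v$ factors uniquely through the localization as $B \to B_S \xar{v_S} C$. I would take a bounded flat resolution $P \to C$ of $C$ over $B$ and localize it: $B_S \ot_B P$ is a bounded complex of flat $B_S$-modules, the induced map $B_S \ot_B P \to B_S \ot_B C$ is a quasi-isomorphism because $B \to B_S$ is flat, and $B_S \ot_B C \cong C$ since $S$ already maps into $C^{\times}$; so $B_S \ot_B P \to C$ is a bounded flat resolution of $C$ over $B_S$, and $v_S$ has finite flat dimension. The only step requiring genuine attention is the identification of $\ker(q)$ in part (2) as generated by a regular sequence, since this is what upgrades the elementary bounded flat resolution to an honest finite free resolution and lets Corollary \ref{cor:1093} be applied; the rest is bookkeeping with cohomological dimensions and flat resolutions.
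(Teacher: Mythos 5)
Your proof is correct and follows the same overall strategy as the paper's: part (1) via associativity of derived tensor and additivity of cohomological dimension under composition, part (2) by factoring $w$ through $B \ot_A A[\bt] = B[\bt]$, and part (3) by noting that for a $B_S$-module the tensor product with $C$ over $B$ and over $B_S$ coincide. The only difference worth recording is in part (2): the paper reduces to $n = 1$ by induction (invoking part (1)) and then simply asserts that the flat dimension of the surjection $B[t] \to B$ is at most $1$, whereas you treat all $n$ variables at once, identify the kernel of $q$ as generated by the regular sequence $(t_1 - b_1, \ldots, t_n - b_n)$, and then get the finite free resolution from Koszul regularity via Corollary~\ref{cor:1093} --- this spells out more explicitly the step the paper leaves implicit, but it is the same underlying idea.
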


\begin{proof} \mbox{} 

\smallskip \noindent
(1) Say $u$ has flat dimension $\leq d$ and $v$ has flat dimension $\leq e$. 
Take some $M \in \cat{M}(A)$. We have an isomorphism 
$M \ot^{\mrm{L}}_{A} C \cong M \ot^{\mrm{L}}_{A} B \ot^{\mrm{L}}_{B} C$
in $\cat{D}(A)$. The cohomological amplitude of $M \ot^{\mrm{L}}_{A} B$ is at 
most $d$, so the cohomological amplitude of $M \ot^{\mrm{L}}_{A} C$ is at most 
$d + e$.

\medskip \noindent
(2) The proof of this item was communicated to us by Asaf Yekutieli.
In view of item (1) we can assume that $n = 1$; and let's write $t := t_1$.
Consider the $A$-ring homomorphism $\til{u} : A[t] \to B[t]$ such that 
$\til{u}(t) = t$, 
and the $B$-ring homomorphism $\til{w} : B[t] \to B$ such that 
$\til{w}(t) = w(t) \in B$. Then $w = \til{w} \circ \til{u}$ 
as homomorphisms 
$A[t] \to B$. An easy calculation shows that for every $A[t]$-module $M$
the obvious homomorphism 
$M \ot_A B \to M \ot_{A[t]} B[t]$ is bijective. Since
$A \to A[t]$ is flat, it follows that the flat dimension of $\til{u}$ is at 
most the flat dimension of $u$, which is finite by assumption, say $d < \infty$. 
The flat dimension of $\til{w}$ is at most $1$.  Again using item (1) we see 
that the flat dimension of $w$ is $\leq d + 1$. 

\medskip \noindent
(3) Suppose $M$ is some $B_S$-module. An easy calculation shows that 
$M \ot_{B} C \cong M \ot_{B_S} C$. 
Since $B \to B_S$ is flat, this implies that 
$M \ot^{\mrm{L}}_{B} C \cong M \ot^{\mrm{L}}_{B_S} C$
in $\cat{D}(B)$. We see that the flat dimension of 
$v_S$ is at most that of $v$. 
\end{proof}

\begin{lem} \label{lem:1821}
Let $u : A \to C$ be an EFT ring homomorphism. 
\begin{enumerate}
\item The homomorphism $u$ can be factored as $u = w \circ v$, where
$B := A[t_1, \ldots, t_n]$, the polynomial ring in $n$ variables for some $n$;
$S \sub B$ is a  multiplicatively closed set; 
$B_S$ is the localization; $v : A \to B_S$ is the canonical 
homomorphism; and $w : B_S \to C$ is a surjection. 

\item If $u$ is of finite flat dimension, then $C$ is 
perfect as a complex of $B_S$-modules.
\end{enumerate}
\end{lem}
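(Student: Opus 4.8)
The plan is to establish the two parts separately; part (1) is a routine unwinding of the definitions, and the substance is in part (2).

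For part (1), I would begin from Definition \ref{dfn:1065}: write $u = v' \circ \til{u}$ with $\til{u} : A \to \til{C}$ of finite type and $v' : \til{C} \to C$ a localization, say $C \cong \til{C}_T$ for some multiplicatively closed $T \sub \til{C}$. Since $\til{u}$ is of finite type, pick a surjection of $A$-rings $\pi_0 : B := A[t_1, \ldots, t_n] \surj \til{C}$. Put $S := \pi_0^{-1}(T) \sub B$; this is multiplicatively closed, and $\pi_0$ being surjective forces $\pi_0(S) = T$. Hence $\pi_0$ localizes to a surjection $w : B_S \surj \til{C}_T \cong C$, and with $v : A \to B \to B_S$ the canonical homomorphism one has $u = w \circ v$, which is exactly the asserted factorization; in particular $C \cong B_S / \a$ with $\a := \opn{Ker}(w)$.

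For part (2), the point is that $v : A \to B_S$ is \emph{essentially smooth}, being the polynomial extension $A \to A[t_1, \ldots, t_n]$ followed by a localization (Example \ref{exa:1050}, Proposition \ref{prop:1015}). By Theorem \ref{thm:1140} (see also Remark \ref{rem:1050}), $B_S$ is then flat over $A$ with regular fibres. Because $C \cong B_S / \a$ is a finitely generated module over the noetherian ring $B_S$, by \cite[Theorem 14.1.33]{Ye5} it suffices to prove that $C$ has finite flat (equivalently projective) dimension over $B_S$, and this is a local matter. So fix $\q \in \opn{Spec}(B_S)$ in the support of $C$, let $\p := v^{-1}(\q) \sub A$, and set $\Lambda := (B_S)_{\q}$, a local ring which is essentially smooth over $A_{\p}$; let $\bar{\Lambda} := \Lambda \ot_{A_{\p}} \bk(\p)$ be the closed fibre, a \emph{regular} local ring. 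Since $\Lambda$ is flat over $A_{\p}$,
\[ C_{\q} \ot^{\mrm{L}}_{\Lambda} \bar{\Lambda} \cong C_{\q} \ot^{\mrm{L}}_{A_{\p}} \bk(\p) , \]
and the right-hand side has bounded cohomology because $u$ has finite flat dimension, hence $C$ — and therefore also $C_{\q}$ — has finite flat dimension over $A$ (flat dimension does not grow under localization of the module). Thus $C_{\q} \ot^{\mrm{L}}_{\Lambda} \bar{\Lambda}$ is a complex with bounded, finitely generated cohomology over the regular local ring $\bar{\Lambda}$, so it is perfect over $\bar{\Lambda}$. Consequently
\[ C_{\q} \ot^{\mrm{L}}_{\Lambda} \bk(\q) \cong
\bigl( C_{\q} \ot^{\mrm{L}}_{\Lambda} \bar{\Lambda} \bigr) \ot^{\mrm{L}}_{\bar{\Lambda}} \bk(\q) \]
has bounded cohomology — a perfect complex stays cohomologically bounded after any derived base change — and by the standard local criterion for finiteness of projective dimension over a noetherian local ring this forces the projective dimension of $C_{\q}$ over $\Lambda$ to be finite, as required.

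The main obstacle is exactly this last step of part (2): one has to pass from ``$C$ has finite flat dimension over $A$'' to ``$C$ is perfect over $B_S$'' \emph{without} $A$ (and hence $B_S$) being regular. The mechanism that makes this possible is the regularity of the closed fibre $\bar{\Lambda}$ of the essentially smooth homomorphism $A_{\p} \to \Lambda$: reducing modulo the maximal ideal of $A_{\p}$ trades the non-regular ring $\Lambda$ for the regular ring $\bar{\Lambda}$, and the short derived-category computation above then recovers finite projective dimension of $C_{\q}$ over $\Lambda$. I would also record at the outset — by the same type of localization argument used in the proof of Lemma \ref{lem:1820} — that the flat dimension of $C$ over $A$ is unchanged on replacing $A$ and $C$ by the localizations $A_{\p}$ and $C_{\q}$, which legitimizes the reduction to local rings.
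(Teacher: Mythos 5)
Your part (1) is essentially the paper's argument verbatim; part (2) is correct but follows a genuinely different path. The paper's proof of (2) stays entirely within flat-dimension bookkeeping: Lemma~\ref{lem:1820}(2) (a polynomial-ring induction) shows that the composite $A[\bt]\to C$ has finite flat dimension whenever $A\to C$ does, Lemma~\ref{lem:1820}(3) shows this survives localization in the source, so $C$ has finite flat dimension over the noetherian ring $B_S$; being a finite $B_S$-module, finite flat dimension equals finite projective dimension, and \cite[Theorem 14.1.33]{Ye5} gives perfection. Your proof instead leans on the geometric content of $A\to B_S$ being essentially smooth: you pass to the regular local closed fibre $\bar\Lambda = \Lambda\ot_{A_\p}\bk(\p)$, identify $C_\q\ot^{\mrm{L}}_{\Lambda}\bar\Lambda$ with $C_\q\ot^{\mrm{L}}_{A_\p}\bk(\p)$ via flatness, deduce perfection over $\bar\Lambda$ from regularity, and then invoke the local Tor-vanishing criterion. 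Both approaches work; the paper's is shorter and needs no information about fibres, while yours makes the role of smoothness (regular fibres) explicit. One small gap to close in your version: after showing $\opn{pd}_{(B_S)_\q} C_\q < \infty$ for each $\q$, you should say why the local projective dimensions are uniformly bounded (e.g. by Auslander--Buchsbaum they are bounded by $\opn{depth}(B_S)_\q \leq \dim B_S$, which is finite since $B_S$ is essentially finite type over the finite-dimensional $\K$), so that $\opn{pd}_{B_S} C$ is indeed finite.
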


\begin{proof} \mbox{} 

\smallskip \noindent
(1) By definition there exists a factorization 
$u = h \circ g \circ f$,
where $f : A \to B$ is the embedding of $A$ in a polynomial ring
$B = A[t_1, \ldots, t_n]$;
$g : B \to \bar{B}$ is a surjection; and 
$h : \bar{B} \to C$ is the localization of $\bar{B}$ at some multiplicatively 
closed set $\bar{S} \sub \bar{B}$.
Let $S := g^{-1}(\bar{S}) \sub B$, and let $B_S$ be the localized ring. The 
corresponding homomorphism $w : B_S \to C$ is surjective, and we obtain a new 
factorization $u = w \circ v$ as required.

\medskip \noindent
(2) By Lemma \ref{lem:1820}(2), the ring $C$ has finite flat dimension over 
$B$, and by Lemma \ref{lem:1820}(3) we know that $C$ has finite flat dimension 
over $B_S$. The ring $B_S$ is noetherian, and $C$ is a finite module over 
it, so the projective dimension of $C$ as a $B_S$-module equals its flat 
dimension. Finally, the perfect complexes over $B_S$ are the 
complexes in $\cat{D}^{\mrm{b}}_{\mrm{f}}(B_S)$ of finite projective 
dimension (see \cite[Theorem 14.1.33]{Ye5}).
\end{proof}

\begin{prop} \label{prop:1815}
Let $u : A \to B$ be a ring homomorphism of finite flat dimension.
Then the complex $u^!(A)$ has the derived Morita property over $B$. 
\end{prop}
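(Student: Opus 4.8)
The plan is to identify $u^!(A)$ with the relative dualizing complex and to deduce its derived Morita property from that of the absolute dualizing complexes $R_A$ and $R_B$, which we already have. First I would unwind the definition: by Theorem \ref{thm:1555}(2), together with $\opn{D}_{A}^{\mrm{rig}}(A) = R_A$ and the adjunction isomorphism (\ref{eqn:1480}),
\[ u^!(A) = \opn{TwInd}_u(A) = \opn{D}_{B}^{\mrm{rig}}\bigl(B \ot^{\mrm{L}}_A R_A\bigr) = \opn{RHom}_B\bigl(B \ot^{\mrm{L}}_A R_A,\, R_B\bigr) \cong \opn{RHom}_A(R_A, R_B) \]
in $\cat{D}(B)$. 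By Lemma \ref{lem:1686}, applied to $A \in \cat{D}^{\mrm{b}}_{\mrm{f}}(A)$, the complex $u^!(A)$ lies in $\cat{D}^{\mrm{b}}_{\mrm{f}}(B)$. I would also record two facts that use the finite flat dimension hypothesis: Corollary \ref{cor:1690} gives an isomorphism $u^!(A) \ot^{\mrm{L}}_A R_A \cong R_B$ in $\cat{D}(B)$, and $R_B$, being a dualizing complex over $B$, has the derived Morita property, i.e.\ $\opn{hm}^{\mrm{R}}_{R_B} : B \iso \opn{RHom}_B(R_B, R_B)$.

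The core of the argument is the chain of isomorphisms in $\cat{D}(B)$
\[ \opn{RHom}_B\bigl(u^!(A), u^!(A)\bigr) \cong \opn{RHom}_B\Bigl(u^!(A),\, \opn{RHom}_B\bigl(B \ot^{\mrm{L}}_A R_A, R_B\bigr)\Bigr) \cong \opn{RHom}_B\Bigl(u^!(A) \ot^{\mrm{L}}_B \bigl(B \ot^{\mrm{L}}_A R_A\bigr),\, R_B\Bigr) \]
\[ \cong \opn{RHom}_B\bigl(u^!(A) \ot^{\mrm{L}}_A R_A,\, R_B\bigr) \cong \opn{RHom}_B(R_B, R_B) \cong B , \]
where the first step inserts the identification of $u^!(A)$ into the second argument, the second is derived tensor--Hom adjunction over $B$, the third collapses $u^!(A) \ot^{\mrm{L}}_B (B \ot^{\mrm{L}}_A R_A) \cong u^!(A) \ot^{\mrm{L}}_A R_A$, the fourth applies $\opn{RHom}_B(-, R_B)$ to Corollary \ref{cor:1690}, and the last is the derived Morita property of $R_B$. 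By Proposition \ref{prop:730} it then suffices to verify that this composite isomorphism $\opn{RHom}_B(u^!(A), u^!(A)) \iso B$ is $B$-linear and carries $\opn{id}_{u^!(A)}$ to $1_B$, equivalently that it coincides with the derived homothety $\opn{hm}^{\mrm{R}}_{u^!(A)}$.

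Tracking $\opn{id}_{u^!(A)}$ through this chain is the only delicate point, and I expect it to be the main obstacle. Under the tensor--Hom adjunction, $\opn{id}_{u^!(A)} = \opn{id}_{\opn{RHom}_B(B \ot^{\mrm{L}}_A R_A, R_B)}$ corresponds to the evaluation morphism $\opn{RHom}_B(B \ot^{\mrm{L}}_A R_A, R_B) \ot^{\mrm{L}}_B (B \ot^{\mrm{L}}_A R_A) \to R_B$; unwinding the construction of the isomorphism of Proposition \ref{prop:1690} and Corollary \ref{cor:1690} — it is assembled from the evaluation morphism $\ze$ of Lemma \ref{lem:1835} together with the derived Morita isomorphism of $R_A$ — one checks that, after the identification $u^!(A) \ot^{\mrm{L}}_B (B \ot^{\mrm{L}}_A R_A) \cong u^!(A) \ot^{\mrm{L}}_A R_A$, this evaluation morphism is precisely the isomorphism $u^!(A) \ot^{\mrm{L}}_A R_A \iso R_B$ of Corollary \ref{cor:1690}. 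Hence $\opn{id}_{u^!(A)}$ is sent to $\opn{id}_{R_B}$ under $\opn{RHom}_B(u^!(A), u^!(A)) \cong \opn{RHom}_B(R_B, R_B)$, and finally to $1_B$. Since every isomorphism in the chain is $B$-linear, the composite is the derived homothety, so Proposition \ref{prop:730} gives that $u^!(A)$ has the derived Morita property over $B$.

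A variant that sidesteps part of this bookkeeping is to set $P := B \ot^{\mrm{L}}_A R_A \in \cat{D}^{\mrm{b}}_{\mrm{f}}(B)$, so $u^!(A) \cong \opn{D}_{B}^{\mrm{rig}}(P)$. Since $\opn{D}_{B}^{\mrm{rig}}$ is a contravariant autoequivalence of $\cat{D}^{\mrm{b}}_{\mrm{f}}(B)$ with $\opn{D}_{B}^{\mrm{rig}} \circ \opn{D}_{B}^{\mrm{rig}} \cong \opn{Id}$ (Proposition \ref{prop:1556}), Lemma \ref{lem:1620} provides a $B$-linear isomorphism $\opn{RHom}_B(u^!(A), u^!(A)) \cong \opn{RHom}_B(P, P)$ carrying $\opn{id}$ to $\opn{id}$, so it is enough to prove that $P$ has the derived Morita property over $B$; and there $\opn{RHom}_B(P, P) \cong \opn{RHom}_A(R_A, B \ot^{\mrm{L}}_A R_A) \cong \opn{RHom}_A(R_A, R_A) \ot^{\mrm{L}}_A B \cong A \ot^{\mrm{L}}_A B = B$, the middle isomorphism being derived tensor evaluation (Theorem \ref{thm:2115}), which applies because $R_A$ is derived pseudo-finite over $A$ and $B$ has finite flat dimension over $A$, and the last using the derived Morita property of $R_A$; these steps are again compatible with the derived homotheties, so $\opn{hm}^{\mrm{R}}_P$ is an isomorphism. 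In either route the finite flat dimension hypothesis on $u$ is used exactly once, to ensure that $B$ has bounded flat concentration over $A$.
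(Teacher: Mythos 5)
Your proposal is correct, and both of your routes differ genuinely from the paper's own argument, so a comparison is in order. The paper proves Proposition~\ref{prop:1815} by first factoring $u = w \circ v$, with $v : A \to \til{B}$ essentially smooth of relative dimension $n$ and $w : \til{B} \to B$ surjective (Lemma~\ref{lem:1821}), so that $u^!(A) \cong \opn{RHom}_{\til{B}}(B, L)$ with $L := \Om^n_{\til{B}/A}[n]$; since $B$ is perfect over $\til{B}$ and $L$ is a shifted invertible, the functor $G := \opn{RHom}_{\til{B}}(-, L)$ is a duality on the category $\cat{E}$ of $B$-complexes perfect over $\til{B}$, so $u^!(A) = G(B) \in \cat{E}$ and $\opn{End}_{\cat{D}(B)}(G(B)) \cong \opn{End}_{\cat{D}(B)}(B)^{\mrm{op}} \cong B$, and similarly for the vanishing of the higher $\opn{Ext}$. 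You never factor $u$ and never leave $A$ and $B$: your cleaner second route sets $P := B \ot^{\mrm{L}}_A R_A$, observes $u^!(A) = \opn{D}_B^{\mrm{rig}}(P)$, transfers derived Morita from $P$ to $\opn{D}_B^{\mrm{rig}}(P)$ via the autoduality $\opn{D}_B^{\mrm{rig}}$ and Lemma~\ref{lem:1620}, and then establishes derived Morita of $P$ by adjunction for $u$, derived tensor evaluation (Theorem~\ref{thm:2115}, using finite flat dimension of $B/A$), and the derived Morita property of $R_A$. The trade-off: the paper's argument relies only on perfect-complex duality over the auxiliary ring $\til{B}$ and does not need the tensor-evaluation theorem, whereas yours avoids introducing $\til{B}$ and a factorization entirely and instead leverages the absolute dualizing complexes and their autodualities directly. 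Your argument is somewhat shorter, at the cost of bringing in Theorem~\ref{thm:2115} (and, in the first route, of a more delicate diagram chase through Lemma~\ref{lem:1835}, which you rightly flag as the fragile spot; the second route avoids this and is the one to keep). One small point you should make explicit in the second route: the transfer through $\tau_{P,P}$ from Lemma~\ref{lem:1620} carries the derived homothety of $P$ to that of $\opn{D}_B^{\mrm{rig}}(P)$ because the duality functor $\opn{D}_B^{\mrm{rig}}$ is $B$-linear on $\opn{Hom}$-groups, so $\opn{D}_B^{\mrm{rig}}(b \cd \opn{id}_P) = b \cd \opn{id}_{\opn{D}_B^{\mrm{rig}}(P)}$; this is what lets you invoke Proposition~\ref{prop:730}(iii) at the end.
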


\begin{proof}
By Lemma \ref{lem:1821}(1) we can factor the ring homomorphism $u$ into
$u = w \circ v$,
where $v : A \to \til{B}$ is essentially smooth of differential relative 
dimension $n$, and $w : \til{B} \to B$ is surjective. 
Define $L := \Om^n_{\til{B} / A}[n] \in \cat{D}(\til{B})$.
According to Proposition \ref{prop:1611} there is an isomorphism
$v^!(A) \cong L$ in $\cat{D}(\til{B})$, and according to 
Theorem \ref{thm:1555} and Proposition \ref{prop:1610} there are isomorphisms
\begin{equation} \label{eqn:1695}
u^!(A) \cong w^!(v^!(A)) \cong \opn{RHom}_{\til{B}}(B, L) 
\end{equation}
in $\cat{D}(B)$. 

Let $\cat{E} \sub \cat{D}^{\mrm{b}}_{\mrm{f}}(B)$ be the full subcategory on 
the complexes that are perfect over $\til{B}$. Since 
$L \cong  \til{B}[n]$, the 
functor $G : \cat{E}^{\mrm{op}} \to \cat{E}$,
$G := \opn{RHom}_{\til{B}}(-, L)$, is an equivalence, and 
$G \circ G \cong \opn{Id}_{\cat{E}}$;
see \cite[Lemma 5.4]{PSY} and its proof. 
By formula (\ref{eqn:1695}) we know that 
$u^!(A) \cong G(B) \in \cat{D}(B)$, and by Lemma \ref{lem:1821}
we have $B \in \cat{E}$; hence $u^!(A) \in \cat{E}$.

To verify that $u^!(A)$ has the derived Morita property over $B$ is suffices 
to prove that the homothety ring homomorphism 
\begin{equation} \label{eqn:1696}
B \to \opn{End}_{\cat{D}(B)} \bigl( u^!(A) \bigr)
\end{equation}
is bijective, and that 
\begin{equation} \label{eqn:1825}
\opn{Hom}_{\cat{D}(B)} \bigl( u^!(A), u^!(A)[i] \bigr) = 0
\end{equation}
for all $i \neq 0$. In these formulas we can replace $\cat{D}(B)$ with its full 
subcategory $\cat{E}$. 
Using the duality $G$ of the category $\cat{E}$ we get NC ring isomorphisms 
\[ \opn{Hom}_{\cat{E}} \bigl( u^!(A), u^!(A) \bigr) \cong 
\opn{Hom}_{\cat{E}} \bigl( G(B), G(B) \bigr) \cong 
\opn{Hom}_{\cat{E}}(B, B)^{\mrm{op}} \cong B , \]
and these respect the homothety ring homomorphisms from $B$. Thus we have an 
isomorphism in formula (\ref{eqn:1696}). 
Formula (\ref{eqn:1825}) for $i \neq 0$ 
is proved
similarly.
\end{proof}

\begin{dfn} \label{dfn:1697}
Let $u : A \to B$ be a ring homomorphism of finite flat dimension.
A {\em rigid relative dualizing complex over $B / A$}, or {\em over $u$}, is a 
rigid complex 
$(R_{B / A}, \rho_{B / A}) \in \cat{D}(B)_{\mrm{rig} / A}$,
such that $R_{B / A} \cong u^!(A)$ in $\cat{D}(B)$. 
\end{dfn}

\begin{thm} \label{thm:1695}
Let $A \to B$ be a ring homomorphism of finite flat dimension.
There exists a rigid relative dualizing complex 
$(R_{B / A}, \rho_{B / A})$ over $B / A$, and it is unique, up to a unique 
isomorphism in $\cat{D}(B)_{\mrm{rig} / A}$. 
\end{thm}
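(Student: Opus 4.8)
The plan is to produce $R_{B / A}$ by composing twisted induction along an essentially smooth homomorphism with coinduction along a finite one, applied to the tautological rigid complex $(A, \rho^{\mrm{tau}}_{A / A}) \in \cat{D}(A)_{\mrm{rig} / A}$; this is the relative analogue of the argument proving Theorem \ref{thm:1550}. Using Lemma \ref{lem:1821}(1), factor $u = w \circ v$ where $v : A \to \til{B}$ is the canonical homomorphism into a localized polynomial ring $\til{B} = A[t_1, \ldots, t_n]_S$, and $w : \til{B} \to B$ is surjective, hence finite. The homomorphism $v$ is essentially smooth, and $\Om^1_{\til{B} / A} \cong \til{B} \ot_{A[\bt]} \Om^1_{A[\bt] / A}$ is free of rank $n$, so $v$ has constant differential relative dimension $n$; moreover $\til{B}$ is flat over $A$ by Theorem \ref{thm:1140}(1), so that Proposition \ref{prop:1270} applies with $M = A$. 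Applying Definition \ref{dfn:1235} gives the twisted induced rigid complex $\opn{TwInd}^{\mrm{rig, esm}}_{v / A}(A, \rho^{\mrm{tau}}_{A / A}) = (\Om^n_{\til{B} / A}[n], \, \rho^{\mrm{tau}}_{A / A} \cupprod \rho^{\mrm{esm}}_{\til{B} / A})$ over $\til{B}$ relative to $A$.

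Next, applying Definition \ref{dfn:760} (Theorem \ref{thm:680}) to the finite homomorphism $w$ produces the rigid complex $(R_{B / A}, \rho_{B / A}) := \opn{RCInd}^{\mrm{rig}}_{w / A}(\Om^n_{\til{B} / A}[n], \, \rho^{\mrm{tau}}_{A / A} \cupprod \rho^{\mrm{esm}}_{\til{B} / A})$ over $B$ relative to $A$, where $R_{B / A} = \opn{RHom}_{\til{B}}(B, \Om^n_{\til{B} / A}[n])$. The hypothesis needed to invoke Theorem \ref{thm:680} is that $R_{B / A}$ have finite flat dimension over $A$; but formula (\ref{eqn:1695}) in the proof of Proposition \ref{prop:1815} already identifies $R_{B / A} \cong w^!(v^!(A)) \cong u^!(A)$ in $\cat{D}(B)$, and $u^!(A)$ has finite flat dimension over $A$ by Corollary \ref{cor:1695}. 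Also $R_{B / A} \in \cat{D}^{\mrm{b}}_{\mrm{f}}(B)$, because $B$ is perfect as a complex of $\til{B}$-modules (Lemma \ref{lem:1821}(2)) and $\Om^n_{\til{B} / A}$ is an invertible $\til{B}$-module. Hence $(R_{B / A}, \rho_{B / A})$ is a rigid complex over $B / A$ in the sense of Definition \ref{dfn:675}, with $R_{B / A} \cong u^!(A)$; this is a rigid relative dualizing complex over $B / A$, proving existence.

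For uniqueness the key input is Proposition \ref{prop:1815}: $u^!(A)$, and therefore any rigid relative dualizing complex $R$ over $B / A$, has the derived Morita property over $B$. Let $(R, \rho)$ and $(R', \rho')$ be two such complexes. Since both are isomorphic to $u^!(A)$, choose an isomorphism $\psi : R \iso R'$ in $\cat{D}(B)$. Then $\opn{Sq}_{B / A}(R')$ is isomorphic to $R$ via $\rho^{-1} \circ \opn{Sq}_{B / A}(\psi^{-1})$, so by Proposition \ref{prop:730} the $B$-module $\opn{Hom}_{\cat{D}(B)}(R, \opn{Sq}_{B / A}(R'))$ is free of rank $1$, any isomorphism being a basis. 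Write $\rho' \circ \psi = c \cd (\opn{Sq}_{B / A}(\psi) \circ \rho)$ for the unique $c \in B^{\times}$. Setting $\phi := c \cd \psi$ and using the quadraticity of squaring (Theorem \ref{thm:672}), $\opn{Sq}_{B / A}(\phi) \circ \rho = c^2 \cd \opn{Sq}_{B / A}(\psi) \circ \rho = c \cd (\rho' \circ \psi) = \rho' \circ \phi$, so $\phi : (R, \rho) \iso (R', \rho')$ is an isomorphism in $\cat{D}(B)_{\mrm{rig} / A}$. Its uniqueness follows from Theorem \ref{thm:675}: if $\phi_1, \phi_2$ were two such rigid isomorphisms, then $\phi_2^{-1} \circ \phi_1$ would be a rigid automorphism of $(R, \rho)$, hence the identity.

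I expect the only real obstacle to be the bookkeeping in the existence step — checking that the intermediate twisted induced complex and the final coinduced complex meet the finiteness conditions (boundedness, finite flat dimension over $A$) demanded by Definitions \ref{dfn:675}, \ref{dfn:1235} and \ref{dfn:760}, and that the coinduced complex is identified with $u^!(A)$. All of this is supplied by Lemma \ref{lem:1821}, Corollary \ref{cor:1695}, and the computation already performed inside the proof of Proposition \ref{prop:1815}; note in particular that independence of the chosen factorization $u = w \circ v$ needs no separate argument, since it is subsumed by the uniqueness statement. Everything else is a routine assembly of earlier results.
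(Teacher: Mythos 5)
Your proof is correct and takes essentially the same approach as the paper's: factor $u = w \circ v$ via Lemma \ref{lem:1821} into an essentially smooth $v : A \to \til{B}$ and a finite surjection $w : \til{B} \to B$, put a rigid structure on $\Om^n_{\til{B}/A}[n]$ over $\til{B}/A$, coinduce along $w$ using Theorem \ref{thm:680} (with the needed finite flat dimension over $A$ coming from Corollary \ref{cor:1695}), and deduce uniqueness from the derived Morita property of $u^!(A)$ (Proposition \ref{prop:1815}) together with Theorem \ref{thm:675}. The only cosmetic difference is that you obtain the rigidifying isomorphism on $\Om^n_{\til{B}/A}[n]$ by twisted induction from the tautological rigid complex $(A,\rho^{\mrm{tau}}_{A/A})$, whereas the paper invokes Definition \ref{dfn:1225} directly; by the uniqueness clause this choice is immaterial.
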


\begin{proof}
Denoting by $u : A \to B$ the given ring homomorphism, define the complex 
$R_{B / A} := u^!(A) \in \cat{D}^{+}_{\mrm{f}}(B)$. 
According to Corollary \ref{cor:1695} the complex $R_{B / A}$ has finite flat 
dimension over $A$. 
We need to produce a rigidifying isomorphism 
$\rho_{B / A} : R_{B / A} \iso \opn{Sq}_{B / A}(R_{B / A})$
in $\cat{D}(B)$, and then to prove uniqueness. 

Choose a factorization $u = w \circ v$, in which $v : A \to \til{B}$ is 
essentially smooth of differential relative dimension $n$ for some $n$, and 
$w : \til{B} \to B$ is surjective; this is possible according to 
Lemma \ref{lem:1821}.
From Definition \ref{dfn:1225} we have the rigid complex 
\[ \bigl( \Om^n_{\til{B} / A}[n], \, \rho_{\til{B} / A}^{\mrm{esm}} 
\bigr) \in \cat{D}(\til{B})_{\mrm{rig} / A} . \]
According to Propositions \ref{prop:1611} and \ref{prop:1610} and Theorem 
\ref{thm:1555} we know that 
\[ R_{B / A} = u^!(A) \cong w^!(v^!(A)) \cong 
\opn{RHom}_{\til{B}} \bigl( B, \Om^n_{\til{B} / A}[n] \bigr) \]
in $\cat{D}(B)$. Both complexes 
$\Om^n_{\til{B} / A}[n]$ and $R_{B / A}$ have finite flat dimensions over 
$A$ (for $R_{B / A}$ we use Corollary \ref{cor:1695}). Theorem \ref{thm:680} 
provides a rigidifying isomorphism 
\[ \rho_{B / A} :=
\opn{RCInd}^{\mrm{rig}}_{w / A}(\rho_{\til{B} / A}^{\mrm{esm}})
: R_{B / A} \iso \opn{Sq}_{B / A}(R_{B / A}) \]
in $\cat{D}(B)$. 

Regarding uniqueness: suppose $(R, \rho)$ is some other 
rigid relative dualizing complex over $B / A$. The existence of an isomorphism 
$R \cong u^!(A)$ gives an isomorphism $\phi : R_{B / A} \iso R$ 
in $\cat{D}(B)$. By Proposition \ref{prop:1815} the complex $R_{B / A}$ has the 
derived Morita property. Hence there is a unique element 
$b \in B^{\times}$ such that 
$b \cd \phi : (R_{B / A}, \rho_{B / A}) \iso (R, \rho)$ 
is an isomorphism in $\cat{D}(B)_{\mrm{rig} / A}$; cf.\ proof of Theorem 
\ref{thm:675}.
\end{proof}

\begin{rem} \label{rem:1695}
In general the rigid relative dualizing complex  complex $R_{B / A}$ is {\em 
not a dualizing complex over $B$}. 
It does belong to $\cat{D}^{\mrm{b}}_{\mrm{f}}(B)$, and it has the derived 
Morita property over $B$; what is missing is having 
finite injective dimension over $B$. It is not hard to prove that when $A$ is a 
Gorenstein ring (i.e.\ it has finite injective dimension over itself), then 
$R_{B / A}$ is a dualizing complex over $B$. 

What is always true (but is beyond the scope of this paper) is that 
$R_{B / A}$ induces rigid dualizing complexes on the derived fibers of $u$. 
By this we mean that for every $\p \in \opn{Spec}(A)$ the DG module 
$\bk(\p) \ot^{\mrm{L}}_{A} R_{B / A}$
is a rigid dualizing DG module over the DG ring 
$\bk(\p) \ot^{\mrm{L}}_{A} B$ relative to $A$, in the sense of 
\cite[Section 7]{Ye3}.
\end{rem}

\begin{thm} \label{thm:1845}
Let $B$ and $C$ be $A$-rings of finite flat dimension, and let 
$v : B \to C$ be a finite $A$-ring homomorphism. Then there exists a unique 
nondegenerate rigid backward morphism 
\[ \opn{tr}^{\mrm{rig}}_{v / A} = \opn{tr}^{\mrm{rig}}_{C / B / A} : 
(R_{C / A}, \rho_{C / A}) \to (R_{B / A}, \rho_{B / A}) \]
over $v$ relative to $A$. It is called the {\em relative rigid trace morphism}. 
\end{thm}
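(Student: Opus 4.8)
The plan is to mimic the proof of Theorem \ref{thm:1551}, replacing the absolute base $\K$ by the relative base $A$, and using the relative constructions already assembled in this section. First I would set $u : A \to B$ and $u' : A \to C$ for the two structural homomorphisms, so that $v \circ u = u'$ and $u' $ has finite flat dimension by Lemma \ref{lem:1820}(1) (since $v$ is finite, hence of flat dimension $0$). By Theorem \ref{thm:1695} the rigid relative dualizing complexes $(R_{B/A}, \rho_{B/A})$ and $(R_{C/A}, \rho_{C/A})$ exist and are unique. The key observation is that $R_{C/A} \cong (u')^!(A) \cong v^!(u^!(A)) \cong v^!(R_{B/A})$ in $\cat{D}(C)$, using Theorem \ref{thm:1555} (the pseudofunctoriality of $\opn{TwInd}$) together with Proposition \ref{prop:1610}, which identifies $v^!$ with $\opn{RCInd}_v = \opn{RHom}_B(C, -)$ on $\cat{D}^+_{\mrm{f}}(B)$ since $v$ is finite.

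Next I would apply Theorem \ref{thm:680} to the finite homomorphism $v : B \to C$ and the rigid complex $(R_{B/A}, \rho_{B/A}) \in \cat{D}(B)_{\mrm{rig}/A}$. Its hypotheses are met: $R_{B/A}$ has finite flat dimension over $A$ by Corollary \ref{cor:1695}, and the complex $N := \opn{RCInd}_v(R_{B/A}) = \opn{RHom}_B(C, R_{B/A}) \in \cat{D}(C)$ also has finite flat dimension over $A$ — this follows because $N \cong v^!(R_{B/A}) \cong R_{C/A}$, and $R_{C/A}$ has finite flat dimension over $A$, again by Corollary \ref{cor:1695} applied to $u'$. Theorem \ref{thm:680} then endows $N$ with a unique rigidifying isomorphism $\si$ such that the standard trace $\opn{tr}^{\mrm{R}}_{v, R_{B/A}} : (N, \si) \to (R_{B/A}, \rho_{B/A})$ is a nondegenerate rigid backward morphism over $v / A$. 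Since $(N, \si)$ is a rigid relative dualizing complex over $C / A$ (it satisfies Definition \ref{dfn:1697}), the uniqueness clause of Theorem \ref{thm:1695} gives a unique rigid isomorphism $\phi : (R_{C/A}, \rho_{C/A}) \iso (N, \si)$ in $\cat{D}(C)_{\mrm{rig}/A}$. Composing, I set
\[
\opn{tr}^{\mrm{rig}}_{v / A} := \opn{tr}^{\mrm{R}}_{v, R_{B/A}} \circ \, \phi :
(R_{C/A}, \rho_{C/A}) \to (R_{B/A}, \rho_{B/A}) ,
\]
which is a nondegenerate rigid backward morphism over $v$ relative to $A$, being the composition of two such (nondegeneracy and rigidity are both preserved under composition — Proposition \ref{prop:1495} for rigidity, and the composition of nondegenerate backward morphisms is nondegenerate as noted after Definition \ref{dfn:2041}).

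For uniqueness I would invoke Theorem \ref{thm:2030}: given any nondegenerate rigid backward morphism $(R_{C/A}, \rho_{C/A}) \to (R_{B/A}, \rho_{B/A})$ over $v / A$, the theorem asserts it is the unique one, provided $R_{C/A}$ has the derived Morita property over $C$. That last point is exactly Proposition \ref{prop:1815} applied to $u' : A \to C$, which says $u'^!(A) \cong R_{C/A}$ has the derived Morita property over $C$. I do not expect a genuine obstacle here; the main thing requiring care is the bookkeeping identification $v^!(R_{B/A}) \cong R_{C/A}$ and the verification that $N = \opn{RHom}_B(C, R_{B/A})$ has finite flat dimension over $A$ — both of which reduce cleanly to Corollary \ref{cor:1695} and the pseudofunctoriality of $\opn{TwInd}$, so the argument is essentially formal once those citations are lined up. If one wanted to avoid even invoking the identification $N \cong R_{C/A}$ for the flat-dimension point, one could instead observe directly that $N = \opn{RHom}_B(C,R_{B/A})$ has bounded coherent cohomology over $C$ and finite injective dimension issues do not arise, but the cleanest route is via $u'$.
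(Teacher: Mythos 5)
Your proof is correct but follows a route different from the paper's. You mimic the proof of Theorem \ref{thm:1551}: apply Theorem \ref{thm:680} (coinduced rigidity) to $v : B \to C$ and $(R_{B/A}, \rho_{B/A})$ to obtain an auxiliary rigid structure $\si$ on $N := \opn{RHom}_B(C, R_{B/A})$ making the standard trace rigid, then identify $(N, \si)$ with $(R_{C/A}, \rho_{C/A})$ via the uniqueness clause of Theorem \ref{thm:1695}, and compose. The paper's own proof of Theorem \ref{thm:1845} is more direct: it takes the concrete nondegenerate backward morphism $\th := \opn{tr}^{\mrm{twind}}_{v, R_{B/A}}$ furnished by Definition \ref{dfn:1610}, invokes Proposition \ref{prop:1815} for the derived Morita property of $R_{C/A}$, and then argues inline (as in Theorem \ref{thm:675}) that a unique unit $c \in C^{\times}$ rescales $\th$ into a rigid morphism. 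Both routes package the same underlying mechanism — nondegeneracy plus the derived Morita property plus the quadratic behaviour of squaring — and either is acceptable; your version has the virtue of paralleling Theorem \ref{thm:1551} visibly, while the paper's avoids introducing the intermediate rigid complex $(N, \si)$.

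One small point worth correcting: your parenthetical ``since $v$ is finite, hence of flat dimension $0$'' is false — a finite ring homomorphism need not be flat (e.g.\ $\K[t] \to \K[t]/(t^2)$). Fortunately it is harmless here: the hypothesis that $A \to C$ has finite flat dimension is stated directly in the theorem, so there is nothing to derive via Lemma \ref{lem:1820}(1). Simply cite the hypothesis and drop the spurious justification.
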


\begin{proof}
Let us denote the structural homomorphisms by 
$u_B : A \to B$ and $u_C : A \to C$. 
By definition there are isomorphisms 
$R_{B / A} \cong u_B^!(A)$ and $R_{C / A} \cong u_C^!(A)$
in $\cat{D}(B)$ and $\cat{D}(C)$, respectively. 
Because $u_C = v \circ u_B$, pseudofunctoriality says that 
$u_C^!(A) \cong v^!(u_B^!(A))$, and thus 
$R_{C / A} \cong v^!(R_{B / A})$ in $\cat{D}(C)$. 

According to Definition \ref{dfn:1610} there is a nondegenerate backward 
morphism 
$\th := \opn{tr}^{\mrm{twind}}_{v, R_{B / A}} : R_{C / A} \to R_{B / A}$
in $\cat{D}(B)$ over $v$. By Proposition \ref{prop:1815} the complex 
$R_{C / A}$ has the derived Morita property over $C$. Hence, by Proposition 
\ref{prop:1871}, $\opn{Hom}_{\cat{D}(B)}(R_{C / A}, R_{B / A})$ 
is a free $C$-module of 
rank $1$, with basis $\th$. As in the proof of Theorem \ref{thm:675}, there 
is a unique element $c \in C^{\times}$ such that 
$\opn{tr}^{\mrm{rig}}_{v / A} := c \cd \th$ is a rigid backward morphism
$(R_{C / A}, \rho_{C / A}) \to (R_{B / A}, \rho_{B / A})$
relative to $A$.
\end{proof}

\begin{cor} \label{cor:1845}
Let $B, C$ and $D$ be $A$-rings of finite flat dimension, and let 
$v : B \to C$ and $w : C \to D$ be finite $A$-ring homomorphisms. Then there 
is equality 
\[ \opn{tr}^{\mrm{rig}}_{v / A} \circ \opn{tr}^{\mrm{rig}}_{w / A} =
\opn{tr}^{\mrm{rig}}_{w \circ v / A} \]
of backward morphisms 
$R_{D / A} \to R_{B / A}$ over $w \circ v$. 
\end{cor}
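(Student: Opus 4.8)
The plan is to deduce the corollary from the uniqueness part of Theorem \ref{thm:1845}, exactly as Corollary \ref{cor:1551} is deduced from Theorem \ref{thm:1551} in the absolute case. By Theorem \ref{thm:1845} the morphism $\opn{tr}^{\mrm{rig}}_{w \circ v / A}$ is \emph{the} unique nondegenerate rigid backward morphism $(R_{D / A}, \rho_{D / A}) \to (R_{B / A}, \rho_{B / A})$ over $w \circ v$ relative to $A$; here the uniqueness is available because $R_{D / A}$ has the derived Morita property over $D$ by Proposition \ref{prop:1815}. Hence it suffices to show that the composite $\opn{tr}^{\mrm{rig}}_{v / A} \circ \opn{tr}^{\mrm{rig}}_{w / A}$ is again a nondegenerate rigid backward morphism over $w \circ v$ relative to $A$.

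Rigidity of the composite is Proposition \ref{prop:1495}, applied to $A \to B \xar{v} C \xar{w} D$ with the rigid complexes $(R_{B / A}, \rho_{B / A})$, $(R_{C / A}, \rho_{C / A})$, $(R_{D / A}, \rho_{D / A})$ — all of finite flat dimension over $A$ by Corollary \ref{cor:1695} — and the rigid backward morphisms $\opn{tr}^{\mrm{rig}}_{v / A}$, $\opn{tr}^{\mrm{rig}}_{w / A}$ provided by Theorem \ref{thm:1845}. Nondegeneracy of the composite follows from the pseudofunctoriality of derived coinduction: since $\opn{tr}^{\mrm{rig}}_{v / A}$ is nondegenerate over $v$, the morphism $\opn{badj}^{\mrm{R}}_{v}(\opn{tr}^{\mrm{rig}}_{v / A}) : R_{C / A} \to \opn{RCInd}_{v}(R_{B / A})$ is an isomorphism in $\cat{D}(C)$; applying $\opn{RCInd}_{w}$, composing with the isomorphism $\opn{badj}^{\mrm{R}}_{w}(\opn{tr}^{\mrm{rig}}_{w / A}) : R_{D / A} \iso \opn{RCInd}_{w}(R_{C / A})$ and with the canonical isomorphism $\opn{RCInd}_{w} \circ \opn{RCInd}_{v} \cong \opn{RCInd}_{w \circ v}$, one obtains an isomorphism $R_{D / A} \iso \opn{RCInd}_{w \circ v}(R_{B / A})$, which a routine diagram chase identifies with $\opn{badj}^{\mrm{R}}_{w \circ v}(\opn{tr}^{\mrm{rig}}_{v / A} \circ \opn{tr}^{\mrm{rig}}_{w / A})$. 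Therefore the composite is nondegenerate, and by the uniqueness in Theorem \ref{thm:1845} it equals $\opn{tr}^{\mrm{rig}}_{w \circ v / A}$.

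There is no serious obstacle: the argument is a formal consequence of Proposition \ref{prop:1495} and the uniqueness in Theorem \ref{thm:1845}. The only point that needs a line of care is the verification that a composite of nondegenerate backward morphisms over composable ring homomorphisms is again nondegenerate, which is where pseudofunctoriality of $\opn{RCInd}$ (equivalently, iterated use of the adjunction isomorphism (\ref{eqn:1480})) is invoked; everything else is bookkeeping already set up in the preceding sections.
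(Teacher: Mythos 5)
Your proof is correct and follows the same route as the paper, whose proof is the single sentence "This is due to the uniqueness in Theorem \ref{thm:1845}." You simply spell out what that uniqueness argument requires — rigidity of the composite (Proposition \ref{prop:1495}) and nondegeneracy of the composite (iterated backward adjunction / pseudofunctoriality of $\opn{RCInd}$) — both of which are correct and are exactly the content the paper leaves implicit.
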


\begin{proof}
This is due to the uniqueness in Theorem \ref{thm:1845}.
\end{proof}

Next we have a relative version of Theorem \ref{thm:1552}.

\begin{thm} \label{thm:1850}
Let $B$ and $B'$ be $A$-rings of finite flat dimension, and let 
$v : B \to B'$ be an essentially \'etale $A$-ring homomorphism. Then 
there exists a unique nondegenerate rigid forward morphism 
\[ \opn{q}^{\mrm{rig}}_{v / A} = \opn{q}^{\mrm{rig}}_{B' / B / A} : 
(R_{B / A}, \rho_{B / A}) \to (R_{B' / A}, \rho_{B' / A}) \]
over $v$ relative to $A$. It is called the {\em relative rigid 
\'etale-localization morphism}. 
\end{thm}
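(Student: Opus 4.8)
The plan is to adapt the proof of Theorem \ref{thm:1552} (the absolute étale-localization morphism) to the relative setting, replacing the base ring $\K$ by $A$ and using the relative inputs of this section. Write $u : A \to B$ and $u' : A \to B'$ for the structural homomorphisms, so that $u' = v \circ u$.

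First I would set $N := B' \ot_B R_{B / A} \in \cat{D}(B')$. By Corollary \ref{cor:1695} the complex $R_{B / A} = u^!(A)$ has finite flat dimension over $A$, and it lies in $\cat{D}^{\mrm{b}}_{\mrm{f}}(B)$; hence by Proposition \ref{prop:1270}(1), applied to $v$ viewed as an essentially smooth homomorphism of relative dimension $0$, the complex $N$ lies in $\cat{D}^{\mrm{b}}_{\mrm{f}}(B')$ and has finite flat dimension over $A$. Setup \ref{set:1281} is in force with base ring $A$ and the essentially étale homomorphism $v : B \to B'$, so Definition \ref{dfn:1300} produces the induced rigidifying isomorphism $\si := \opn{Ind}^{\mrm{rig}}_{v / A}(\rho_{B / A})$, making $(N, \si)$ an object of $\cat{D}(B')_{\mrm{rig} / A}$, and by Proposition \ref{prop:1700} the standard nondegenerate forward morphism $\opn{q}^{\mrm{L}}_{v, R_{B / A}} : (R_{B / A}, \rho_{B / A}) \to (N, \si)$ is a rigid forward morphism over $v / A$.

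Next I would identify $(N, \si)$ with the rigid relative dualizing complex $(R_{B' / A}, \rho_{B' / A})$. By Definition \ref{dfn:1697} it suffices to produce an isomorphism $N \cong (u')^!(A)$ in $\cat{D}(B')$. Pseudofunctoriality of $\opn{TwInd}$ (Theorem \ref{thm:1555}), together with $u' = v \circ u$, gives $(u')^!(A) \cong v^!(u^!(A)) = v^!(R_{B / A})$; and since $v$ is essentially étale, the concretization $\opn{conc}^{\mrm{eet}}_{v, R_{B / A}}$ of Proposition \ref{prop:1611} yields $v^!(R_{B / A}) \cong R_{B / A} \ot_B B' = N$. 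Thus $N$ is a rigid relative dualizing complex over $B' / A$, and the uniqueness part of Theorem \ref{thm:1695} gives a unique isomorphism $\phi : (N, \si) \iso (R_{B' / A}, \rho_{B' / A})$ in $\cat{D}(B')_{\mrm{rig} / A}$. Then $\opn{q}^{\mrm{rig}}_{v / A} := \phi \circ \opn{q}^{\mrm{L}}_{v, R_{B / A}}$ is a nondegenerate rigid forward morphism over $v$ relative to $A$ (composing a rigid forward morphism with a rigid isomorphism, cf.\ Proposition \ref{prop:1675} with the identity treated as an essentially étale homomorphism, and Proposition \ref{prop:1671}).

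Finally, for uniqueness I would invoke Theorem \ref{thm:1863}: since $B'$ has finite flat dimension over $A$, Proposition \ref{prop:1815} shows that $R_{B' / A} = (u')^!(A)$ has the derived Morita property over $B'$, and therefore there is at most one nondegenerate rigid forward morphism $(R_{B / A}, \rho_{B / A}) \to (R_{B' / A}, \rho_{B' / A})$ over $v$ relative to $A$. The only real obstacle is the identification in the third paragraph: the isomorphism of underlying complexes is the routine compatibility between $\opn{TwInd}$'s pseudofunctoriality and the étale concretization, but one must check that these isomorphisms are arranged so that Definition \ref{dfn:1697} is literally met — once that is done, everything else is forced by the Morita-rigidity uniqueness machinery already established in the paper.
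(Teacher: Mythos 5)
Your proof is correct, but it takes a genuinely different route from the paper's own argument for Theorem \ref{thm:1850}, while paralleling the paper's proof of the absolute analogue Theorem \ref{thm:1552} almost exactly. You produce the rigidifying isomorphism on $N = B' \ot_B R_{B/A}$ via the induced-rigidity functor $\opn{Ind}^{\mrm{rig}}_{v/A}$ of Definition \ref{dfn:1300} and Proposition \ref{prop:1700}, then identify $(N,\si)$ with $(R_{B'/A},\rho_{B'/A})$ by combining pseudofunctoriality of $\opn{TwInd}$, the concretization $\opn{conc}^{\mrm{eet}}$, and the uniqueness clause of Theorem \ref{thm:1695}, finally composing. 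The paper instead starts from the nondegenerate forward morphism $\opn{q}^{\mrm{twind}}_{v, R_{B/A}} : R_{B/A} \to R_{B'/A}$ supplied by Proposition \ref{prop:1612}, uses Proposition \ref{prop:1815} (derived Morita property) together with Proposition \ref{prop:1871}(2) to see that the relevant $\opn{Hom}$-module is free of rank $1$ with basis this morphism, and then scales by a unique unit $b' \in (B')^{\times}$ (as in the proof of Theorem \ref{thm:675}) to force rigidity. Your route imports more machinery (induced rigidity and the classification of rigid relative dualizing complexes), while the paper's route is more economical, working directly with the twisted-induction functor and a scaling argument; both establish the same uniqueness through the derived Morita property of $R_{B'/A}$. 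The one point you flag as needing care — that the isomorphism $N \cong (u')^!(A)$ is literally what Definition \ref{dfn:1697} requires — is indeed the only thing to check, and it does go through because Definition \ref{dfn:1697} asks only for an isomorphism of underlying complexes, after which Theorem \ref{thm:1695} supplies the unique rigid isomorphism.
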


\begin{proof}
Let us denote the structural homomorphisms by 
$u_B : A \to B$ and $u_{B'} : A \to B'$. 
By definition there are isomorphisms 
$R_{B / A} \cong u_B^!(A)$ and $R_{B' / A} \cong u_{B'}^!(A)$
in $\cat{D}(B)$ and $\cat{D}(B')$, respectively. 
Because $u_{B'} = v \circ u_B$, pseudofunctoriality says that 
$u_C^!(A) \cong v^!(u_B^!(A))$, and thus 
$R_{C / A} \cong v^!(R_{B / A})$ in $\cat{D}(B')$. 

According to Proposition \ref{prop:1612} there is a nondegenerate forward 
morphism 
$\la := \opn{q}^{\mrm{twind}}_{v, R_{B / A}} : R_{B / A} \to R_{B' / A}$
in $\cat{D}(B)$ over $v$. By Proposition \ref{prop:1815} the complex 
$R_{B' / A}$ has the derived Morita property over $B'$. Hence, by Proposition 
\ref{prop:1871}(2), $\opn{Hom}_{\cat{D}(B)}(R_{B / A}, R_{B' / A})$ is a 
free $B'$-module of rank $1$, with basis $\la$. As in the proof of Theorem 
\ref{thm:675}, there is a unique element 
$b' \in (B')^{\times}$ such that 
$\opn{q}^{\mrm{rig}}_{v / A} := b' \cd \la$ is a nondegenerate rigid forward 
morphism
$(R_{B / A}, \rho_{B / A}) \to (R_{B' / A}, \rho_{B' / A})$
relative to $A$.
\end{proof}

\begin{cor} \label{cor:1850}
Let $B, B', B''$ be $A$-rings of finite flat dimension, and let 
$v : B \to B'$ and $v' : B' \to B''$ be essentially \'etale $A$-ring 
homomorphisms. Then there is equality 
\[ \opn{q}^{\mrm{rig}}_{v' / A} \circ \opn{q}^{\mrm{rig}}_{v / A} =
\opn{q}^{\mrm{rig}}_{v' \circ v / A} \]
of forward morphisms 
$R_{B / A} \to R_{B'' / A}$ over $v' \circ v$. 
\end{cor}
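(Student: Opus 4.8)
The plan is to appeal to the uniqueness clause of Theorem~\ref{thm:1850}. Since $v'\circ v\colon B\to B''$ is again essentially \'etale (Proposition~\ref{prop:1015}(1)) and $B,B''$ are $A$-rings of finite flat dimension, Theorem~\ref{thm:1850} produces a \emph{unique} nondegenerate rigid forward morphism $(R_{B/A},\rho_{B/A})\to(R_{B''/A},\rho_{B''/A})$ over $v'\circ v$ relative to $A$, namely $\opn{q}^{\mrm{rig}}_{v'\circ v/A}$. It therefore suffices to check that the composite $\opn{q}^{\mrm{rig}}_{v'/A}\circ\opn{q}^{\mrm{rig}}_{v/A}$ is a nondegenerate rigid forward morphism over $v'\circ v$ relative to $A$; it will then coincide with $\opn{q}^{\mrm{rig}}_{v'\circ v/A}$.

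First I would verify rigidity. By construction $\opn{q}^{\mrm{rig}}_{v/A}\colon(R_{B/A},\rho_{B/A})\to(R_{B'/A},\rho_{B'/A})$ is a rigid forward morphism over $v/A$ and $\opn{q}^{\mrm{rig}}_{v'/A}\colon(R_{B'/A},\rho_{B'/A})\to(R_{B''/A},\rho_{B''/A})$ is a rigid forward morphism over $v'/A$ (Theorem~\ref{thm:1850}); all three rings are essentially finite type over $A$ (Conventions~\ref{conv:1070} and~\ref{conv:1550}), so we are in Setup~\ref{set:1281} with base $A$, and Proposition~\ref{prop:1675} gives that $\opn{q}^{\mrm{rig}}_{v'/A}\circ\opn{q}^{\mrm{rig}}_{v/A}$ is a rigid forward morphism over $(v'\circ v)/A$.

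Next I would verify nondegeneracy. Here I would use the general fact that the composite of a nondegenerate forward morphism $\la\colon M\to N$ over a ring homomorphism $u\colon A\to B$ with a nondegenerate forward morphism $\mu\colon N\to P$ over $u'\colon B\to C$ is a nondegenerate forward morphism $\mu\circ\la\colon M\to P$ over $u'\circ u$. This follows from the compatibility of forward adjunction with composition: up to the canonical isomorphism $\opn{LInd}_{u'}\circ\opn{LInd}_u\cong\opn{LInd}_{u'\circ u}$ of Proposition~\ref{prop:1605}(3), one has $\opn{fadj}^{\mrm{L}}_{u'\circ u}(\mu\circ\la)=\opn{fadj}^{\mrm{L}}_{u'}(\mu)\circ\opn{LInd}_{u'}\bigl(\opn{fadj}^{\mrm{L}}_{u}(\la)\bigr)$, a composite of isomorphisms; the identity is checked by precomposing both sides with $\opn{q}^{\mrm{L}}_{u'\circ u,M}$ and using the defining property $\opn{fadj}^{\mrm{L}}_{u,M,N}(\la)\circ\opn{q}^{\mrm{L}}_{u,M}=\la$ twice. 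Applying this with $(\la,\mu)=(\opn{q}^{\mrm{rig}}_{v/A},\opn{q}^{\mrm{rig}}_{v'/A})$, which are nondegenerate by Theorem~\ref{thm:1850}, yields nondegeneracy of the composite.

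The only genuine point is the nondegeneracy step, whose ingredient --- stability of nondegeneracy of forward morphisms under composition over composable ring homomorphisms --- is not isolated as a lemma in the text and has to be spelled out; it is, however, a routine adjunction bookkeeping exercise of the same kind as Propositions~\ref{prop:2046} and~\ref{prop:1300}. With it in hand, the corollary is immediate from the uniqueness in Theorem~\ref{thm:1850}, exactly parallel to the proof of Corollary~\ref{cor:1845}.
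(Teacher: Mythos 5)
Your proof is correct and takes exactly the paper's approach: the paper's proof is the single sentence "This is due to the uniqueness in Theorem \ref{thm:1850}," which is precisely the argument you spell out in full (composition is rigid by Proposition \ref{prop:1675}, nondegenerate by the routine adjunction computation, hence must equal the unique such morphism). Your observation that stability of nondegenerate forward morphisms under composition is never isolated as a lemma is accurate, and your sketch of it is the right one.
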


\begin{proof}
This is due to the uniqueness in Theorem \ref{thm:1850}.
\end{proof}

Here is a relative variant of Theorem \ref{thm:1553}.

\begin{thm} \label{thm:1851}
Let $B$, $C$ and $B'$ be $A$-rings of finite flat dimension,
Let $u : B \to C$ be a finite homomorphism,
and let $v : B \to B'$ be an essentially \'etale homomorphism.
Define $C' := B' \ot_{B} C$, and let 
$u' : B' \to C'$ and $w : C \to C'$ be the induced ring homomorphisms. 
The relative rigid dualizing complexes of these rings are 
$(R_{B / A}, \rho_{B / A})$,
$(R_{C / A}, \rho_{C / A})$,
$(R_{B' / A}, \rho_{B' / A})$ and
$(R_{C' / A}, \rho_{C' / A})$, respectively. Then 
\[ \opn{q}^{\mrm{rig}}_{v / A} \circ \opn{tr}^{\mrm{rig}}_{u / A} = 
\opn{tr}^{\mrm{rig}}_{u' / A} \circ \opn{q}^{\mrm{rig}}_{w / A} \]
as morphisms 
$R_{C / A} \to R_{B' / A}$ in $\cat{D}(B)$.
\end{thm}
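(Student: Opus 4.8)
The plan is to follow closely the proof of Theorem \ref{thm:1553}, with the absolute base ring $\K$ there replaced by the base ring $A$ here, and the pair of rings $A \to B$ there replaced by $B \to C$ here. First I would establish relative analogues of Lemmas \ref{lem:1590} and \ref{lem:1591}: given a backward morphism $\th : N \to M$ in $\cat{D}(B)$ over $u$, a nondegenerate forward morphism $\mu : M \to M'$ in $\cat{D}(B)$ over $v$, and a nondegenerate forward morphism $\nu : N \to N'$ in $\cat{D}(C)$ over $w$ (with $N' := C' \ot_C N$), there should be a unique backward morphism $\th' : N' \to M'$ in $\cat{D}(B')$ over $u'$ satisfying $\th' \circ \nu = \mu \circ \th$; this $\th'$ is nondegenerate whenever $\th$ is; and if in addition $\th$, $\mu$, $\nu$ are rigid relative to $A$, then $\th'$ is a rigid backward morphism over $u' / A$.

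These relative lemmas are proved by the same arguments as the originals. For existence and uniqueness of $\th'$ one reduces to the case $M' = B' \ot_B M$, $N' = C' \ot_C N \cong B' \ot_B N$, $\mu = \opn{q}_{v, M}$, $\nu = \opn{q}_{w, N}$, where $\th' = \opn{id}_{B'} \ot_B \th$ is forced; nondegeneracy of $\th'$ when $\th$ is nondegenerate follows by applying $B' \ot_B (-)$ to the backward-adjunction isomorphism $\opn{badj}^{\mrm{R}}_{u, M, N}(\th)$, using that $B$ is noetherian and $C$ is finite over $B$ (cf. the proof of Lemma \ref{lem:1590}). For rigidity one runs the cubical diagram chase of Lemma \ref{lem:1591}: the top face commutes because $\th$ is rigid, the rear face by the definition of $\th'$, the left and right faces because $\mu$ and $\nu$ are rigid, and the front face commutes by Theorem \ref{thm:895}, whose statement already permits an arbitrary noetherian base ring (taken here to be $A$). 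The uniqueness in Lemma \ref{lem:1590}(1), applied to the nondegenerate forward morphisms $\nu$ and $\opn{Sq}_{v / A}(\mu)$, then forces the two diagonals of the bottom face of the cube to coincide, which is exactly the rigidity of $\th'$.

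With these relative lemmas available, I would put $(M, \rho) := (R_{B / A}, \rho_{B / A})$, $(M', \rho') := (R_{B' / A}, \rho_{B' / A})$, $(N, \si) := (R_{C / A}, \rho_{C / A})$, $(N', \si') := (R_{C' / A}, \rho_{C' / A})$, and take $\th := \opn{tr}^{\mrm{rig}}_{u / A}$ (a nondegenerate rigid backward morphism, Theorem \ref{thm:1845}), $\mu := \opn{q}^{\mrm{rig}}_{v / A}$ and $\nu := \opn{q}^{\mrm{rig}}_{w / A}$ (nondegenerate rigid forward morphisms, Theorem \ref{thm:1850}; here $w$ is essentially étale as the base change of $v$, and $C'$ is an $A$-ring of finite flat dimension since $C \to C'$ is flat and $C$ has finite flat dimension over $A$). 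Lemma \ref{lem:1590} then produces the unique nondegenerate backward morphism $\th' : N' \to M'$ over $u'$ with $\mu \circ \th = \th' \circ \nu$, and the relative Lemma \ref{lem:1591} shows $\th'$ is a rigid backward morphism $(R_{C' / A}, \rho_{C' / A}) \to (R_{B' / A}, \rho_{B' / A})$ over $u' / A$. By the uniqueness in Theorem \ref{thm:1845}, whose proof uses the derived Morita property of $R_{C' / A}$ over $C'$ (Proposition \ref{prop:1815}), one concludes $\th' = \opn{tr}^{\mrm{rig}}_{u' / A}$; the identity $\th' \circ \nu = \mu \circ \th$ is then precisely $\opn{tr}^{\mrm{rig}}_{u' / A} \circ \opn{q}^{\mrm{rig}}_{w / A} = \opn{q}^{\mrm{rig}}_{v / A} \circ \opn{tr}^{\mrm{rig}}_{u / A}$, which is the assertion of the theorem.

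The main obstacle is not conceptual but bookkeeping: one must check that every finiteness hypothesis invoked in Theorems \ref{thm:1845} and \ref{thm:1850}, in Proposition \ref{prop:1815}, and in the relative lemmas is actually available for $C'$ — namely that $C'$ is an $A$-ring of finite flat dimension — and that Theorem \ref{thm:895} genuinely applies with base $A$ in this relative setting. Both are routine: $C'$ is flat over $C$ (being essentially étale over it), so for any $A$-module $L$ one has $L \ot^{\mrm{L}}_A C' \cong (L \ot^{\mrm{L}}_A C) \ot_C C'$, whose cohomology is bounded by finiteness of the flat dimension of $C$ over $A$ (cf. Lemma \ref{lem:1820}); and Theorem \ref{thm:895} is already stated over an arbitrary noetherian base, so no new input is needed beyond verifying the hypotheses $v$ essentially étale and $u$ finite, which hold by assumption.
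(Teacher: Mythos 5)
Your proposal is correct and follows essentially the same route as the paper, which simply observes that the proof of Theorem \ref{thm:1553} and its auxiliary Lemmas \ref{lem:1590} and \ref{lem:1591} carry over to the relative setting with $\K$ replaced by $A$. You have filled in the bookkeeping (verifying that $C'$ is an $A$-ring of finite flat dimension and that Theorem \ref{thm:895} applies with base $A$) that the paper leaves implicit.
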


The theorem asserts that given the first commutative diagram below in 
$\cat{Rng} \eftover A$, the second diagram in $\cat{D}(B)$ is also 
commutative. 
\begin{equation} \label{eqn:1850}
\begin{tikzcd} [column sep = 6ex, row sep = 4ex] 
B
\ar[r, "{u}"]
\ar[d, "{v}"']
&
C
\ar[d, "{w}"]
\\
B'
\ar[r, "{u'}"]
&
C' 
\end{tikzcd} 
\qquad \qquad 
\begin{tikzcd} [column sep = 6ex, row sep = 4ex] 
R_{B / A}
\ar[d, "{\opn{q}^{\mrm{rig}}_{v / A}}"']
&
R_{C / A}
\ar[l, "{\opn{tr}^{\mrm{rig}}_{u / A}}"']
\ar[d, "{\opn{q}^{\mrm{rig}}_{w / A}}"]
\\
R_{B' / A}
&
R_{C' / A}
\ar[l, "{\opn{tr}^{\mrm{rig}}_{u' / A}}"']
\end{tikzcd} 
\end{equation}

\begin{proof}
The proof of Theorem \ref{thm:1553}, including the proofs of the 
auxiliary lemmas \ref{lem:1590} and \ref{lem:1591}, hold in the relative setting 
as well. 
\end{proof}

\section{An Example: Finite \'Etale Ring Homomorphisms} \label{sec:fin-etale}

The purpose of this section is to prove that when $A \to B$ is a finite 
\'etale homomorphism of noetherian rings, the operator trace is a nondegenerate 
rigid backward morphism.  

Recall our Conventions \ref{conv:615} and \ref{conv:1070}, both assumed in this 
section; they say that all rings are noetherian commutative by default, and all 
ring homomorphisms are EFT.  
When talking about free $A$-modules, matrices, etc.\ the default assumption 
is that the ring $A$ is nonzero (thus avoiding silly trivialities, such as the 
rank of a free module not being defined). 

Let $A \to B$ be a finite flat ring homomorphism.  The {\em operator trace 
homomorphism} 
$\opn{tr}^{\mrm{oper}}_{B / A} : B \to A$ 
is the $A$-linear homomorphism defined as follows. 
First assume that $A \neq 0$ and $B$ is a free $A$-module of rank $n \geq 1$. 
By choosing a basis for $B$ we get an isomorphism of $A$-modules $B \cong A^n$, 
and thus an $A$-ring isomorphism $g : \opn{End}_A(B) \iso \opn{Mat}_n(A)$. An 
element $b \in B$ acts on $B$ by multiplication; this is an $A$-linear 
operator, so there is a matrix $g(b) \in \opn{Mat}_n(A)$, and we let 
$\opn{tr}^{\mrm{oper}}_{B / A}(b) \in A$ be the usual trace of the matrix 
$g(b)$.
Since the trace of a matrix is invariant under conjugation, the element
$\opn{tr}^{\mrm{oper}}_{B / A}(b) \in A$ does not depend on the basis of $B$ 
that was chosen. In the general situation, with $A$ nonzero, we can find a 
covering sequence $(s_1, \ldots, s_m)$ of $A$ s.t.\ $A_{s_i} \neq 0$, and 
$B_{s_i} := A_{s_i} \ot_A B$ is a free 
module over $A_{s_i}$ of rank $n_i \geq 0$. 
If $n_i \geq 1$ then the $A_{s_i}$-linear trace 
$\opn{tr}^{\mrm{oper}}_{B_{s_i} / A_{s_i}} : B_{s_i} \to A_{s_i}$
is as explained above. In case $n_i = 0$ then of course we take 
$\opn{tr}^{\mrm{oper}}_{B_{s_i} / A_{s_i}} := 0$. 
The homomorphisms $\opn{tr}^{\mrm{oper}}_{B_{s_i} / A_{s_i}}$
agree on the double intersections 
$\opn{Spec}(A_{s_i \cd s_j}) \sub \opn{Spec}(A)$, and therefore 
they can be glued uniquely to an $A$-linear homomorphism 
$\opn{tr}^{\mrm{oper}}_{B / A} : B \to A$.

We know that if a matrix $\ba \in \opn{Mat}_n(A)$ has a diagonal block 
decomposition
\[ \ba = 
\scalebox{0.8}{$\bmat{ \ba_1 & \dots & 0 
\\ \vdots & \ddots & \vdots
\\ 0 & \dots & \ba_m}$} , \]
where $\ba_i \in \opn{Mat}_{n_i}(A)$
and $\sum_i n_i = n$, then 
$\opn{tr}(\ba) = \sum_i \opn{tr}(\ba_i)$.
Therefore, if the ring $B$ decomposes into 
$B = \prod_i B_i$, then 
$\opn{tr}^{\mrm{oper}}_{B / A} = \sum_i \opn{tr}^{\mrm{oper}}_{B_i / A}$.

From the construction it is clear that given any ring homomorphism 
$A \to A'$, letting $B' := A' \ot_A B$, and letting $A' \to B'$ be the induced 
finite flat ring homomorphism, the diagram 
\begin{equation} \label{eqn:1370}
\begin{tikzcd} [column sep = 8ex, row sep = 4ex]
A
\ar[d]
&
B
\ar[d]
\ar[l, "{\opn{tr}^{\mrm{oper}}_{B / A}}"'] 
\\
A'
&
B'
\ar[l, "{\opn{tr}^{\mrm{oper}}_{B' / A'}}"'] 
\end{tikzcd} 
\end{equation}
in $\cat{M}(A)$ is commutative. 

\begin{dfn} \label{dfn:1379}
Let us say that a ring homomorphism $A \to B$ is {\em faithfully \'etale} if 
it is \'etale and faithfully flat. Geometrically this means that the map of 
schemes $\opn{Spec}(B) \to \opn{Spec}(A)$ is \'etale and surjective. 
\end{dfn}

\begin{thm} \label{thm:1325}
Assume $A \to B$ is finite \'etale. Then the operator trace 
$\opn{tr}^{\mrm{oper}}_{B / A} : B \to A$
is a nondegenerate backward homomorphism in $\cat{M}(A)$ over $B / A$. 
\end{thm}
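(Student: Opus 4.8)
The plan is to reduce the statement to the case where $B$ is free over $A$, and then verify nondegeneracy by an explicit local computation using the étale structure. Recall that nondegeneracy of a backward homomorphism $\theta : N \to M$ over $u : A \to B$ means that the adjoint map $\operatorname{badj}_{u,M,N}(\theta) : N \to \operatorname{Hom}_A(B,M)$ is an isomorphism (here in $\cat{M}(A)$, since everything is in degree $0$ and $B$ is projective over $A$ so coinduction is exact). In our situation $N = B$, $M = A$, and $\theta = \operatorname{tr}^{\mrm{oper}}_{B/A}$, so we must show the $B$-linear map
\[ \beta : B \to \operatorname{Hom}_A(B, A) = \De^{\mrm{fifl}}_{B/A}, \quad \beta(b)(b') = \operatorname{tr}^{\mrm{oper}}_{B/A}(b \cd b') , \]
is bijective. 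This is precisely the classical statement that the trace pairing $B \times B \to A$ is perfect for a finite étale extension.

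First I would reduce to the local free case. Using the connected component decomposition and the compatibility $\operatorname{tr}^{\mrm{oper}}_{B/A} = \sum_i \operatorname{tr}^{\mrm{oper}}_{B_i/A}$ together with the additivity of $\De^{\mrm{fifl}}$, plus the fact that bijectivity of a module homomorphism can be checked after a faithfully flat (in fact, after a Zariski localization covering $\operatorname{Spec}(A)$) base change — this uses the base-change compatibility diagram (\ref{eqn:1370}) and the analogous compatibility of $\operatorname{Hom}_A(B,A)$ with localization — I may assume $A$ is a nonzero ring and $B$ is free of some rank $n$ over $A$. Actually it is cleanest to check bijectivity of $\beta$ locally on $\operatorname{Spec}(A)$, and then further localize so that $B$ is free; so the real content is: \emph{if $A \to B$ is finite étale with $B$ free of rank $n$ as an $A$-module, then $\beta$ is an isomorphism.} To prove this, I would pass to residue fields: for each prime $\p \subset A$, the map $\beta \ot_A \bk(\p)$ is the trace-pairing adjoint for the finite étale $\bk(\p)$-algebra $\bk(\p) \ot_A B$, which is a finite product of finite separable field extensions of $\bk(\p)$; the trace form of a finite separable field extension is nondegenerate (a standard fact, e.g. via the primitive element theorem and the Vandermonde determinant of the conjugates), and the trace form of a product is the orthogonal sum. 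Hence $\beta \ot_A \bk(\p)$ is bijective for every $\p$. Since $B$ and $\operatorname{Hom}_A(B,A)$ are both finitely generated projective (indeed free) $A$-modules of the same rank $n$, a homomorphism between them that is bijective modulo every maximal ideal is bijective (Nakayama, or: its determinant is a unit since it is a unit in every residue field). This gives bijectivity of $\beta$, hence nondegeneracy.

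I expect the main obstacle to be organizing the descent/localization bookkeeping cleanly — namely checking that nondegeneracy (equivalently bijectivity of $\beta$) is a local property on $\operatorname{Spec}(A)$ and is compatible with the reductions, without circularity. The key technical points are: (i) $\operatorname{badj}$ commutes with flat base change $A \to A'$ when $B$ is $A$-finite (so that $\operatorname{Hom}_A(B,A) \ot_A A' \cong \operatorname{Hom}_{A'}(B',A')$), which together with (\ref{eqn:1370}) shows $\beta \ot_A A'$ is the $\beta$ for $B'/A'$; and (ii) an $A$-linear map between finite projective $A$-modules is an isomorphism iff it is after passing to all residue fields. Both are routine. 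Everything else is the classical nondegeneracy of the separable trace form, which I would invoke rather than reprove. One small point to be careful about: the theorem is stated for étale, not faithfully étale, so some fibers $\bk(\p) \ot_A B$ may be zero; in that case both source and target of $\beta \ot_A \bk(\p)$ vanish and bijectivity is trivial, so the argument still goes through uniformly.
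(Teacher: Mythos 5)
Your proof is correct, but it takes a genuinely different route from the paper's. After the common initial step of reducing to the case where $B$ is free over $A$ (using that nondegeneracy, i.e.\ bijectivity of $\beta = \opn{badj}_{B/A}(\opn{tr}^{\mrm{oper}}_{B/A})$, is local on $\opn{Spec}(A)$ and compatible with flat base change via diagram (\ref{eqn:1370})), the two arguments diverge. The paper invokes its Splitting Lemma \ref{lem:1325}: it constructs a finite faithfully \'etale base change $A \to A'$ after which $B' := A' \ot_A B$ decomposes as a finite product $\prod_i A'$, and then uses the additivity $\opn{tr}^{\mrm{oper}}_{B'/A'} = \sum_i \opn{tr}^{\mrm{oper}}_{B'_i/A'}$ to reduce to the trivial case $B' = A'$. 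This keeps the whole argument internal to the paper's \'etale machinery (it uses the diagonal decomposition of Corollary \ref{cor:1280} to build the splitting cover) and never leaves the world of ring homomorphisms. You instead pass to residue fields: $\beta \ot_A \bk(\p)$ is the trace-pairing adjoint for the finite \'etale $\bk(\p)$-algebra $\bk(\p) \ot_A B$, and you invoke the classical nondegeneracy of the separable trace form over a field, concluding via the fact that a map of finite projective $A$-modules of equal rank is an isomorphism iff it is so modulo every prime. Your route is shorter and leans on a standard field-theory result (together with the characterization of finite \'etale algebras over a field as products of finite separable extensions), so it is arguably more economical if one is content to cite that result; the paper's route avoids it entirely and proves a self-contained splitting statement that it likely wants to have on hand anyway. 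Both are sound; your one noted caveat about possibly empty fibers (the theorem allows \'etale, not faithfully \'etale) is handled correctly, since both sides of $\beta \ot_A \bk(\p)$ vanish simultaneously.
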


We need these lemmas first. 

\begin{lem} \label{lem:1330}
Let $B$ be a finite flat $A$-ring. Then $B$ is faithfully flat iff for 
each $\p \in \opn{Spec}(A)$ the rank of 
the free $A_{\p}$-module $B_{\p} := A_{\p} \ot_A B$ is at least $1$. 
\end{lem}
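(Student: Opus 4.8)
The statement is a basic fact about finite flat modules, and the plan is to reduce it to the local case. First I would recall that faithful flatness of a flat module $B$ over $A$ is equivalent to the condition that $\Sp(A) \to \Sp(A)$ is hit at every prime, i.e.\ $\p B \neq B$ for every $\p \in \Sp(A)$, equivalently $B \ot_A \bk(\p) \neq 0$ for every $\p$; see for instance \cite[Theorem 7.2]{Ma} or the standard characterization of faithful flatness. Since $B$ is a finite $A$-module, $B \ot_A \bk(\p) \neq 0$ is equivalent (by Nakayama over the local ring $A_\p$) to $B_\p \neq 0$, which in turn (since $B_\p$ is a finite \emph{free} $A_\p$-module, as $B$ is finite flat over the noetherian ring $A$) is equivalent to $\opn{rank}_{A_\p}(B_\p) \geq 1$.

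So the key steps, in order, are: (1) Note that $B$ is a finitely generated flat module over the noetherian ring $A$, hence finitely presented, hence locally free; in particular $B_\p$ is a free $A_\p$-module of some finite rank $n_\p \geq 0$ for each $\p \in \Sp(A)$. (2) Recall the criterion: a flat $A$-algebra $B$ is faithfully flat iff the structure map $\Sp(B) \to \Sp(A)$ is surjective, iff $B \ot_A \bk(\p) \neq 0$ for all $\p \in \Sp(A)$. (3) Compute $B \ot_A \bk(\p) \cong B_\p \ot_{A_\p} \bk(\p) \cong \bk(\p)^{\oplus n_\p}$, which is nonzero iff $n_\p \geq 1$. (4) Combine (2) and (3): $B$ is faithfully flat iff $n_\p = \opn{rank}_{A_\p}(B_\p) \geq 1$ for every $\p \in \Sp(A)$. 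This is exactly the assertion.

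There is no real obstacle here; the only point requiring a word of care is step (3)—specifically that $B \ot_A \bk(\p)$ computes the fiber and that $B_\p$ is free so that its fiber is just a power of $\bk(\p)$—and step (2), which is the textbook criterion for faithful flatness. I would simply cite \cite{Ma} (or \cite{SP}) for the faithful flatness criterion and for the local freeness of finitely presented flat modules, and present the chain of equivalences in two or three lines. The lemma is stated as a preparatory fact for Theorem \ref{thm:1325}, and its proof is genuinely routine; the write-up should be correspondingly short.
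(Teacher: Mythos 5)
Your proof is correct and follows essentially the same route as the paper's: reduce faithful flatness to surjectivity of $\opn{Spec}(B) \to \opn{Spec}(A)$, equivalently nonvanishing of the fibers $\bk(\p) \ot_A B$, and then use Nakayama (via local freeness of $B_\p$) to translate that into $\opn{rank}_{A_\p}(B_\p) \geq 1$. The paper's write-up is just a more compressed version of your chain of equivalences.
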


\begin{proof}
The scheme map $\opn{Spec}(B) \to \opn{Spec}(A)$ is surjective iff all the 
fibers $\opn{Spec}(\bk(\p) \ot_A B)$ are nonempty. 
But by Nakayama's Lemma, $\bk(\p) \ot_A B$ is nonzero iff 
$\opn{rank}_{A_{\p}}(B_{\p}) \geq 1$. 
\end{proof}

\begin{lem}[Splitting] \label{lem:1325}
Let $A \to B$ be finite \'etale, and assume that as a projective $A$-module, 
$B$ has constant rank $n \geq 1$. Then there is a finite faithfully \'etale 
$A$-ring $A'$, s.t.\ the $A'$-ring $B' := A' \ot_A B$ decomposes into 
$B' \cong \prod\nolimits_{i = 1, \ldots, n} A'$.
\end{lem}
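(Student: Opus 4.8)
\textbf{Proof proposal for Lemma \ref{lem:1325}.}

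The plan is to induct on $n$, the rank of $B$ over $A$. The base case $n=1$ is trivial, since then $B \cong A$ already (a finite \'etale $A$-ring that is free of rank $1$ is the trivial one, via the structure map, which is then an isomorphism by Nakayama). For the inductive step, the key observation is that $B$ itself is a candidate for the first splitting factor: consider the $B$-ring $B \ot_A B$. By Corollary \ref{cor:1280} (applied to the essentially \'etale, indeed \'etale, homomorphism $v : A \to B$, which satisfies Setup \ref{set:1280}) there is a canonical ring isomorphism $B \ot_A B \cong B \times B^{\mrm{od}}$, compatible with the multiplication map $\opn{mult}_{B/A}$ and the first projection. So after the faithfully \'etale base change $A \rightsquigarrow B$, the ring $B \ot_A B$ splits off one copy of $B$, leaving the ``off-diagonal'' ring $B^{\mrm{od}}$.

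The next step is to control the rank of $B^{\mrm{od}}$ as a $B$-module. Since $B \ot_A B$ is finite \'etale over $B$ of constant rank $n$ (the \'etale and finite properties, and constancy of rank, are stable under the base change $A \to B$ by Proposition \ref{prop:1015}(2) and the explicit rank formula), and $B \ot_A B \cong B \times B^{\mrm{od}}$ with the $B$-factor of rank $1$, we conclude that $B^{\mrm{od}}$ is finite \'etale over $B$ of constant rank $n-1$. Here I should note the case $n = 1$ again makes $B^{\mrm{od}} = 0$; for $n \geq 2$ the rank is $n - 1 \geq 1$, so Lemma \ref{lem:1330} shows $B^{\mrm{od}}$ is faithfully flat over $B$, hence $B \to B^{\mrm{od}}$ is faithfully \'etale. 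Now apply the inductive hypothesis to the finite \'etale homomorphism $B \to B^{\mrm{od}}$ of constant rank $n-1$: there is a finite faithfully \'etale $B$-ring $A''$ such that $A'' \ot_B B^{\mrm{od}} \cong \prod_{i=1}^{n-1} A''$.

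Finally, set $A' := A''$, viewed as an $A$-ring via $A \to B \to A''$. This is finite faithfully \'etale over $A$: finiteness and \'etaleness compose (Proposition \ref{prop:1015}(1)), and faithful flatness composes as well. Then
\[
A' \ot_A B \cong A'' \ot_B (B \ot_A B) \cong A'' \ot_B (B \times B^{\mrm{od}})
\cong A'' \times (A'' \ot_B B^{\mrm{od}}) \cong A'' \times \prod_{i=1}^{n-1} A''
\cong \prod_{i=1}^{n} A' ,
\]
as required. The main obstacle I anticipate is verifying cleanly that $B^{\mrm{od}}$ is \emph{finite \'etale of constant rank $n-1$} over $B$ — one must check that the rank of $B^{\mrm{od}}$ is indeed constant (not just locally constant) and equals $n-1$ everywhere, which follows because $B \ot_A B$ has constant rank $n$ over $B$ and the complementary direct summand $B$ has constant rank $1$; and one must confirm that a direct-summand ideal of a finite \'etale ring, being idempotent-generated, is itself finite \'etale over the base (projectivity and the vanishing $\Om^1 = 0$ both pass to direct summands). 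Everything else is bookkeeping with composition and base change of the properties ``finite'', ``\'etale'', and ``faithfully flat''.
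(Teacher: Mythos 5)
Your proof is correct and takes essentially the same route as the paper's: induct on $n$, use $B$ itself as the first faithfully \'etale base change, invoke Corollary \ref{cor:1280} to split off the diagonal factor of $B \ot_A B$, apply the inductive hypothesis to $B \to B^{\mrm{od}}$ of rank $n-1$, and then compose. The paper's $A^{\diamondsuit}$, $B^{\diamondsuit}$, $B^{\diamondsuit}_1$, $B^{\diamondsuit}_2$ correspond exactly to your $B$, $B \ot_A B$, $B$, $B^{\mrm{od}}$, and the final tensor-juggling is identical.
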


\begin{proof}
The proof is by induction on $n$. If $n = 1$ then $B = A$, so there is nothing 
to prove. Now assume $n \geq 2$ and the lemma holds for lower ranks. 
Define $A^{\diamondsuit} := B$, which, by Lemma \ref{lem:1330}, is a finite 
faithfully \'etale $A$-ring. 
Then define 
$B^{\diamondsuit} := A^{\diamondsuit} \ot_A B$,
which is a finite \'etale $A^{\diamondsuit}$-ring of constant rank $n$. 
By Corollary \ref{cor:1280} there is a ring isomorphism 
$B^{\diamondsuit} = A^{\diamondsuit} \ot_A B = B \ot_A B \cong 
B \times B^{\mrm{od}}$.
Writing $B^{\diamondsuit}_1 := B = A^{\diamondsuit}$ and 
$B^{\diamondsuit}_2 := B^{\mrm{od}}$,
we obtain the ring decomposition 
$B^{\diamondsuit} \cong B_1^{\diamondsuit} \times B_2^{\diamondsuit}$. 
The rank of $B_1^{\diamondsuit}$ as an $A^{\diamondsuit}$-module is $1$, and 
therefore the $A^{\diamondsuit}$-module $B_2^{\diamondsuit}$
has constant rank $n - 1$. 

By induction there is a finite faithfully \'etale
ring homomorphism $A^{\diamondsuit} \to A'$ s.t.\ 
the $A'$-ring $B'_2 := A' \ot_{A^{\diamondsuit}} B_2^{\diamondsuit}$
has a decomposition 
$B'_2 \cong \prod\nolimits_{i = 1, \ldots, n - 1} A'$.
Then
\[ \begin{aligned}
&
A' \ot_A B \cong A' \ot_{A^{\diamondsuit}} (A^{\diamondsuit} \ot_A B) \cong
A' \ot_{A^{\diamondsuit}} B^{\diamondsuit} 
\\
& \quad 
\cong  A' \ot_{A^{\diamondsuit}} (A^{\diamondsuit} \times B_2^{\diamondsuit})
\cong A' \times (A' \ot_{A^{\diamondsuit}} B_2^{\diamondsuit})
= A' \times B'_2 \cong \prod\nolimits_{i = 1, \ldots, n} A' . 
\end{aligned} \]

Lastly, we note that $A \to A'$ is finite faithfully \'etale. 
\end{proof}

\begin{proof}[Proof of Theorem \tup{\ref{thm:1325}}]
The condition that $\opn{tr}^{\mrm{oper}}_{B / A}$ is a nondegenerate backward 
homomorphism, namely that the $B$-module homomorphism
$\opn{badj}_{B / A}(\opn{tr}^{\mrm{oper}}_{B / A}) : B \to \opn{Hom}_A(B, A)$
is an isomorphism, is local on $\opn{Spec}(A)$. Hence we can assume that 
$\opn{Spec}(A)$ is connected. This implies that the rank of $B$ as an 
$A$-module is constant. 

Because of the commutativity of diagram (\ref{eqn:1370}), and by 
the standard properties of faithfully flat 
homomorphisms, it suffices to prove that for some  
faithfully flat ring homomorphism $A \to A'$, with 
$B' := A' \ot_A B$, the trace
$\opn{tr}^{\mrm{oper}}_{B' / A'} : B' \to A'$ 
is a nondegenerate backward homomorphism.

Let $A'$ be a finite faithfully \'etale $A$-ring that splits $B$, as in Lemma 
\ref{lem:1325}. Thus $B' := \lb A' \ot_A B$ decomposes into 
$B' = \prod\nolimits_{i = 1, \ldots, n} B_i'$,
where $B'_i \cong A'$. We know that 
$\opn{tr}^{\mrm{oper}}_{B' / A'} = \sum_i \opn{tr}^{\mrm{oper}}_{B'_i / A'}$;
and also that 
$\opn{Hom}_{A'}(B', A') \cong \bigoplus\nolimits_i \opn{Hom}_{A'}(B'_i, A')$.
Therefore it suffices to prove that 
$\opn{tr}^{\mrm{oper}}_{B'_i / A'}$ is nondegenerate for every $i$.
But $A' \to B'_i$ is an isomorphism of rings, so 
$\opn{tr}^{\mrm{oper}}_{B'_i / A'}$ is an $A$-module isomorphism.
\end{proof}

\begin{rem} \label{rem:1325}
Suppose $A \to B$ is a {\em Galois} ring homomorphism; namely it is finite and 
faithfully \'etale, with automorphism group $G := \opn{Aut}_A(B)$, such that 
the ring homomorphism 
$B \ot_A B \to \prod_{g \in G} B$, 
$b_1 \ot b_2 \mapsto \{ b_1 \cd g(b_2) \}_{g \in G}$, is bijective. 
Then the trace satisfies 
$\opn{tr}^{\mrm{oper}}_{B / A}(b) = \sum\nolimits_{g \in G} g(b)$,
just as in Galois theory of fields. 

A proof of the nondegeneracy of the trace using the Galois approach can be 
found in \cite[Lemma V.5.12]{Mi}. 
\end{rem}

Again let $A \to B$ be a finite \'etale ring homomorphism. 
There are two rigid complexes associated to this situation:
$\bigl( B, \rho^{\mrm{eet}}_{B / A} \bigr)$ and 
$\bigl( \De^{\mrm{fifl}}_{B / A}, \rho^{\mrm{fifl}}_{B / A} \bigr)$
in $\cat{D}(B)_{\mrm{rig} / A}$.
The rigidifying isomorphism 
$\rho_{B / A}^{\mrm{eet}} : B \iso \opn{Sq}_{B / A}(B)$
in $\cat{D}(B)$ is from Definition \ref{dfn:1225} (since it is also essentially smooth).
The dual module of $B$ relative to $A$ is 
$\De^{\mrm{fifl}}_{B / A} = \opn{Hom}_A(B, A)$,
and its  rigidifying isomorphism 
$\rho^{\mrm{fifl}}_{B / A} : \De^{\mrm{fifl}}_{B / A} \iso 
\opn{Sq}_{B / A}(\De^{\mrm{fifl}}_{B / A})$
is from Definition \ref{dfn:1505}. 
We want to compare these two rigid complexes. This will be done in Theorem 
\ref{thm:1336} below.

It turns out that in the present situation the squares 
$\opn{Sq}_{B / A}(B)$ and $\opn{Sq}_{B / A}(\De^{\mrm{fifl}}_{B / A})$, as well 
as the rigidifying isomorphisms 
$\rho^{\mrm{eet}}_{B / A}$ and $\rho^{\mrm{fifl}}_{B / A}$,
can all be described within $\cat{M}(B)$, namely without recourse to derived 
categories. 

As K-flat DG ring resolution of $B / A$ we choose
$\til{B} / \til{A} := B / A$.
We know that $B$ is a projective $(B \ot_A B)$-module. Also both $B$ and 
$\De^{\mrm{fifl}}_{B / A}$ are flat $A$-modules. Therefore we have 
\[ \opn{Sq}_{B / A}^{B / A}(B) = \opn{Hom}_{B \ot_A B}(B, B \ot_A B) \]
and 
\[ \opn{Sq}_{B / A}^{B / A}(\De^{\mrm{fifl}}_{B / A}) = 
\opn{Hom}_{B \ot_A B}(B, \De^{\mrm{fifl}}_{B / A} \ot_A \De^{\mrm{fifl}}_{B / 
A}) \]
in $\cat{M}(B)$. We shall work with the rigidifying isomorphisms 
$\rho^{\mrm{eet}}_{B / A} : B \iso \opn{Sq}_{B / A}^{B / A}(B)$
and 
$\rho^{\mrm{fifl}}_{B / A} : \De^{\mrm{fifl}}_{B / A} \iso 
\opn{Sq}_{B / A}^{B / A}(\De^{\mrm{fifl}}_{B / A})$
in $\cat{M}(B)$. This involves a slight abuse of notation, because we are 
neglecting the isomorphisms 
$\opn{sq}_{B / A, B}^{B / A}$ and 
$\opn{sq}_{B / A, \De^{\mrm{fifl}}_{B / A}}^{B / A}$,
see Theorem \ref{thm:631}.

For the next three lemmas we consider a finite flat ring homomorphism 
$f : A \to B$, and an arbitrary EFT ring homomorphism $g : A \to A'$.
Let $B' := A' \ot_A B$. We get this cocartesian commutative diagram of rings 
\begin{equation} \label{eqn:1380}
\begin{tikzcd} [column sep = 8ex, row sep = 4ex]
A
\ar[d, "{g}"']
\ar[r, "{f}"] 
&
B
\ar[d, "{h}"]
\\
A'
\ar[r, "{f'}"] 
&
B'
\end{tikzcd} 
\end{equation}
and $f' : A' \to B'$ is also finite flat.

Let 
$\be : \De^{\mrm{fifl}}_{B / A} \to \De^{\mrm{fifl}}_{B' / A'}$ 
be the composed homomorphism 
\[ \De^{\mrm{fifl}}_{B / A} = \opn{Hom}_{A}(B, A) 
\xar{\opn{Hom}_{}(\opn{id}, g)} 
\opn{Hom}_{A}(B, A') \iso^{(\dag)} \opn{Hom}_{A'}(B', A') =  
\De^{\mrm{fifl}}_{B' / A'} ,  \]
where the isomorphism $\iso^{(\dag)}$ is adjunction for the ring homomorphism 
$g$. Let 
$\ga : B \to \De^{\mrm{fifl}}_{B / A}$ be the homomorphism 
$\ga := \opn{badj}_{B / A}(\opn{tr}^{\mrm{oper}}_{B / A})$,
and similarly we have the homomorphism
$\ga' : B' \to \De^{\mrm{fifl}}_{B' / A'}$. 

\begin{lem} \label{lem:1380}
In the situation described above, these diagrams in $\cat{M}(B)$ are 
commutative:
\[ \tag{1} 
\begin{tikzcd} [column sep = 8ex, row sep = 4ex]
B
\ar[d, "{h}"']
\ar[r, "{\ga}"] 
&
\De^{\mrm{fifl}}_{B / A}
\ar[d, "{\be}"]
\\
B'
\ar[r, "{\ga'}"] 
&
\De^{\mrm{fifl}}_{B' / A'}
\end{tikzcd} \]
\[ \tag{2} 
\begin{tikzcd} [column sep = 8ex, row sep = 5ex]
\De^{\mrm{fifl}}_{B / A}
\ar[d, "{\be}"]
\ar[r, "{\rho^{\mrm{fifl}}_{B / A}}", "{\simeq}"']
&
\opn{Hom}_{B \ot_A B}(B, 
\De^{\mrm{fifl}}_{B / A} \ot_A \De^{\mrm{fifl}}_{B / A})
\ar[d, "{\opn{adj} \circ \opn{Hom}(\opn{id}, \be \ot \be)}"]
\\
\De^{\mrm{fifl}}_{B' / A'}
\ar[r, "{\rho^{\mrm{fifl}}_{B' / A'}}", "{\simeq}"']
&
\opn{Hom}_{B' \ot_{A'} B'}(B', 
\De^{\mrm{fifl}}_{B' / A'} \ot_{A'} \De^{\mrm{fifl}}_{B' / A'})
\end{tikzcd} \]
\[ \tag{3} 
\begin{tikzcd} [column sep = 14ex, row sep = 5ex]
\opn{Hom}_{B \ot_A B}(B, B \ot_A B)
\ar[d, "{\opn{adj} \circ \opn{Hom}(\opn{id}, h \ot h)}"]
\ar[r, "{\opn{Hom}(\opn{id}, \ga \ot \ga)}"]
&
\opn{Hom}_{B \ot_A B}(B, 
\De^{\mrm{fifl}}_{B / A} \ot_A \De^{\mrm{fifl}}_{B / A})
\ar[d, "{\opn{adj} \circ \opn{Hom}(\opn{id}, \be \ot \be)}"']
\\
\opn{Hom}_{B' \ot_{A'} B'}(B', B' \ot_{A'} B')
\ar[r, "{\opn{Hom}(\opn{id}, \ga' \ot \ga')}"]
&
\opn{Hom}_{B' \ot_{A'} B'}(B', 
\De^{\mrm{fifl}}_{B' / A'} \ot_{A'} \De^{\mrm{fifl}}_{B' / A'})
\end{tikzcd} \]
Here the expression $\opn{adj}$ appearing is some vertical homomorphisms
refers to adjunction for the ring homomorphism $g : A \to A'$. 
\end{lem}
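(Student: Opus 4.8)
All three diagrams in Lemma~\ref{lem:1380} assert compatibility with the base change $g : A \to A'$: diagram (1) for $\ga = \opn{badj}_{B / A}(\opn{tr}^{\mrm{oper}}_{B / A})$, diagram (3) for the induced map on squares, and diagram (2) for the rigidifying isomorphism $\rho^{\mrm{fifl}}_{B / A}$. The only genuine input is the already established commutative square \textup{(\ref{eqn:1370})}, namely that the operator trace commutes with base change. The plan is to prove (1) by a direct element computation, deduce (3) from (1) by pure functoriality, and then settle (2) from (1), (3), and the characterisation of $\rho^{\mrm{fifl}}$ via coinduced rigidity.

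For (1): by construction $\ga(b) = \opn{tr}^{\mrm{oper}}_{B / A}(b \cd (-)) \in \opn{Hom}_A(B, A) = \De^{\mrm{fifl}}_{B / A}$, so unwinding the definition of $\be$, the element $\be(\ga(b)) \in \De^{\mrm{fifl}}_{B' / A'} = \opn{Hom}_{A'}(B', A')$ sends a pure tensor $a' \ot b' \in B' = A' \ot_A B$ to $a' \cd g(\opn{tr}^{\mrm{oper}}_{B / A}(bb'))$, whereas $\ga'(h(b))$ sends $a' \ot b'$ to $\opn{tr}^{\mrm{oper}}_{B' / A'}(h(b)\cd(a'\ot b')) = a' \cd \opn{tr}^{\mrm{oper}}_{B' / A'}(h(bb'))$, which equals $a' \cd g(\opn{tr}^{\mrm{oper}}_{B / A}(bb'))$ by \textup{(\ref{eqn:1370})}; hence $\be\circ\ga = \ga'\circ h$. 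The same computation yields the identity $\opn{tr}^{\mrm{R}}_{B' / A'}\circ\be = g\circ\opn{tr}^{\mrm{R}}_{B / A}$, which I will reuse in (2). Diagram (3) is then formal: $\opn{Hom}_{B\ot_A B}(B,-)$ is additive, and the base-change homomorphism $\opn{Hom}_{B\ot_A B}(B,-)\to\opn{Hom}_{B'\ot_{A'}B'}(B',-)$ appearing as the vertical arrows of (3) --- restriction along $h$ followed by $\opn{Hom}$-$\ot$ adjunction for $B\ot_A B\to B'\ot_{A'}B'$, using the canonical isomorphism $(B'\ot_{A'}B')\ot_{B\ot_A B}B\cong B'$ --- is natural in the argument; since by (1) the $(B\ot_A B)$-linear map $\ga\ot\ga$ base-changes to $\ga'\ot\ga'$, pasting the naturality square of $\opn{Hom}_{B\ot_A B}(B,-)$ at $\ga\ot\ga$ with the naturality square of the base-change homomorphism reproduces diagram (3) verbatim. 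Running the same argument with the backward morphism $\opn{tr}^{\mrm{R}}_{B / A}$ in place of $\ga$ gives the base-change compatibility of the squared standard traces needed below.

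For (2), the substantive part: by Theorem~\ref{thm:680} (see Definition~\ref{dfn:1505} and Example~\ref{exa:1505}), $\rho^{\mrm{fifl}}_{B / A}$ is the unique rigidifying isomorphism for which $\opn{tr}^{\mrm{R}}_{B / A}:\De^{\mrm{fifl}}_{B / A}\to A$ is rigid over $B / A$, and likewise $\rho^{\mrm{fifl}}_{B' / A'}$ for the finite flat homomorphism $f':A'\to B'$. Since $B$ is finitely generated projective over $A$, and flat over $A$, one has a canonical isomorphism $\De^{\mrm{fifl}}_{B' / A'}\cong B'\ot^{\mrm{L}}_B\De^{\mrm{fifl}}_{B / A}$ in $\cat{D}(B)$ (rewriting $B'\ot^{\mrm{L}}_B\De^{\mrm{fifl}}_{B / A}\cong\De^{\mrm{fifl}}_{B / A}\ot^{\mrm{L}}_A A'=\De^{\mrm{fifl}}_{B / A}\ot_A A'$ using flatness of $B$ over $A$), under which $\be$ becomes the standard nondegenerate forward morphism $\opn{q}^{\mrm{L}}_{h,\De^{\mrm{fifl}}_{B / A}}$ over $h:B\to B'$; moreover $\De^{\mrm{fifl}}_{B' / A'}$ has the derived Morita property over $B'$ --- in the \'etale case, which is where Lemma~\ref{lem:1380} is used (towards Theorem~\ref{thm:1336}), it is an invertible $B'$-module by Theorem~\ref{thm:1325}, and in general this is the relative rigid dualizing complex (Proposition~\ref{prop:1815}). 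By Proposition~\ref{prop:1871}(2) the $B'$-module $\opn{Hom}_{\cat{D}(B)}(\De^{\mrm{fifl}}_{B / A},\opn{Sq}_{B' / A'}(\De^{\mrm{fifl}}_{B' / A'}))$ is free of rank one with basis $\rho^{\mrm{fifl}}_{B' / A'}\circ\be$, so the two composites around diagram (2) differ by a scalar $c\in B'$. To force $c=1$ I would post-compose both with $\opn{Sq}_{f' / A'}(\opn{tr}^{\mrm{R}}_{B' / A'}):\opn{Sq}_{B' / A'}(\De^{\mrm{fifl}}_{B' / A'})\to\opn{Sq}_{A' / A'}(A')=A'$, which is nondegenerate by Theorem~\ref{thm:2031} (as $f'$ is finite and $\De^{\mrm{fifl}}_{B' / A'}$ has finite flat dimension over $A'$) and hence injective to post-compose with. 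On the $\rho^{\mrm{fifl}}_{B' / A'}\circ\be$ side, rigidity of $\opn{tr}^{\mrm{R}}_{B' / A'}$ collapses the result to $\opn{tr}^{\mrm{R}}_{B' / A'}\circ\be$; on the other side, the base-change compatibility of the squared standard traces (the (3)-type statement above) followed by rigidity of $\opn{tr}^{\mrm{R}}_{B / A}$ collapses the result to $g\circ\opn{tr}^{\mrm{R}}_{B / A}$, and these agree by the by-product identity from (1). Hence $c=1$ and (2) commutes.

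The hard part is the base-change compatibility of the squared standard traces invoked in (2): since the base change is along an \emph{arbitrary} ring homomorphism $g$, the functoriality $(**)$ of Theorem~\ref{thm:632}, which requires the base map to be a quasi-isomorphism, is unavailable, so the intertwining of $\opn{Sq}_{f / A}(\opn{tr}^{\mrm{R}}_{B / A})$ with $\opn{Sq}_{f' / A'}(\opn{tr}^{\mrm{R}}_{B' / A'})$ under the base-change map on squares must be produced by hand. This is feasible precisely because $B$ and $B'$ are flat over $A$ and $A'$: one computes every square using the trivial DG ring resolutions $\til{B} / \til{A}=B / A$ and $\til{B}' / \til{A}'=B' / A'$, whereupon the squares, the squared traces, and the base-change map all become explicit $\opn{Hom}$ and $\ot$ constructions over ordinary rings, and the required identity is a lengthy but entirely routine diagram chase of honest module homomorphisms, of the same flavour as the verifications in (1) and (3).
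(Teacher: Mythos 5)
The paper disposes of all three diagrams in one sentence: the commutativity can be checked locally on $\opn{Spec}(A)$, so one may assume $B$ is a free $A$-module of constant rank, at which point everything (the operator trace, $\ga$, $\be$, $\rho^{\mrm{fifl}}$, and the adjunction maps) is explicit and the verifications are linear algebra. Your approach is genuinely different: you prove (1) by an element-by-element check directly from \textup{(\ref{eqn:1370})}, deduce (3) from (1) by functoriality/naturality of the base-change maps, and then derive (2) from a uniqueness argument for the rigidifying isomorphism $\rho^{\mrm{fifl}}$ together with some scalar bookkeeping. Your treatments of (1) and (3) are fine and track the free-case computation closely. The problem is (2).

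For (2), the uniqueness strategy is attractive, but the proposal leaves its crucial input unproved. You reduce commutativity of (2) to the statement that the squared standard traces $\opn{Sq}_{f / A}(\opn{tr}^{\mrm{R}}_{B / A})$ and $\opn{Sq}_{f' / A'}(\opn{tr}^{\mrm{R}}_{B' / A'})$ intertwine with the base-change map on squares. You flag this as ``the hard part'' and then assert it is ``a lengthy but entirely routine diagram chase.'' But this intertwining is itself a base-change compatibility statement of exactly the same shape as diagrams (2) and (3), involving the squaring operation of a backward morphism along a base map $g$ that is \emph{not} a quasi-isomorphism, so nothing in the paper (in particular Theorem~\ref{thm:632} and its functoriality $(**)$) produces it for free. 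In effect you have traded one unproved compatibility for another of comparable difficulty, without proving either; the honest way to finish is to unwind the definitions in the free case --- which is what the paper's own proof does for the original diagrams.

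There is also a secondary imprecision worth flagging: the claim that nondegeneracy of $\psi := \opn{Sq}_{f' / A'}(\opn{tr}^{\mrm{R}}_{B' / A'})$ makes post-composition with $\psi$ injective. For a general source $L$ this is simply false --- a nondegenerate backward morphism need not be a monomorphism in the derived category. What saves you here is special to the situation: because the square $B' \cong A' \ot_A B$ is cocartesian and $\De^{\mrm{fifl}}_{B / A}$ is $A$-flat, one has $B' \ot^{\mrm{L}}_{B} \De^{\mrm{fifl}}_{B / A} \cong A' \ot^{\mrm{L}}_{A} \De^{\mrm{fifl}}_{B / A}$, and then a chain of forward and backward adjunctions shows that $\al \mapsto \psi \circ \al$ is an isomorphism $\opn{Hom}_{\cat{D}(B)}(\De^{\mrm{fifl}}_{B / A}, N) \iso \opn{Hom}_{\cat{D}(A)}(\De^{\mrm{fifl}}_{B / A}, A')$. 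This needs to be said; as written the step looks like a consequence of Theorem~\ref{thm:2031} alone, which it is not. Finally, the derived Morita property of $\De^{\mrm{fifl}}_{B' / A'}$ over $B'$ should not be outsourced to Proposition~\ref{prop:1815} (that proposition lives under Convention~\ref{conv:1550}, which Section~12 does not assume); it is an elementary adjunction computation $\opn{RHom}_{B'}(\De^{\mrm{fifl}}_{B' / A'}, \De^{\mrm{fifl}}_{B' / A'}) \cong \opn{RHom}_{A'}(\De^{\mrm{fifl}}_{B' / A'}, A') \cong B'$ using that $B'$ is finitely generated projective over $A'$, and is best done directly.
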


\begin{proof} 
The commutativity of these diagrams can be checked locally on $\opn{Spec}(A)$. 
Therefore we can assume that $B$ is a free $A$-module of constant rank. Then 
the verification is easy linear algebra. 
\end{proof}

\begin{lem} \label{lem:1381}
Assume the ring homomorphism $f : A \to B$ is finite \'etale. Then the 
diagram 
\[ \begin{tikzcd} [column sep = 8ex, row sep = 5ex]
B
\ar[d, "{h}"']
\ar[r, "{\rho^{\mrm{eet}}_{B / A}}", "{\simeq}"'] 
&
\opn{Hom}_{B \ot_A B}(B, B \ot_A B)
\ar[d, "{\opn{adj} \circ \opn{Hom}(\opn{id}, h \ot h)}"]
\\
B'
\ar[r, "{\rho^{\mrm{eet}}_{B' / A'}}", "{\simeq}"'] 
&
\opn{Hom}_{B' \ot_{A'} B'}(B', B' \ot_{A'} B')
\end{tikzcd} \]
is commutative. 
\end{lem}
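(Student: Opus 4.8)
The plan is to prove Lemma \ref{lem:1381} by reduction to the explicit description of the residue isomorphism, in the same spirit as the proof of Lemma \ref{lem:1380}. First I would recall that $\rho^{\mrm{eet}}_{B / A}$ is the special case of $\rho^{\mrm{esm}}_{B / A}$ from Definition \ref{dfn:1225} with differential relative dimension $n = 0$: here $\Om^0_{B / A} = B$, the residue isomorphism $\opn{res}_{B / A} : B \iso \opn{Ext}^0_{B^{\mrm{en}}}(B, B \ot_A B) = \opn{Hom}_{B^{\mrm{en}}}(B, B \ot_A B)$ is just the identification coming from Definition \ref{dfn:1215}, and the various translations in formulas (\ref{eqn:1225}) and (\ref{eqn:1226}) are trivial (everything is in degree $0$). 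So $\rho^{\mrm{eet}}_{B / A}$ is, up to the canonical identifications, the isomorphism $\opn{res}_{B / A}$ itself. This lets me replace the derived-categorical diagram by a diagram in $\cat{M}(B)$ involving only $\opn{res}_{B / A}$ and $\opn{res}_{B' / A'}$, adjunction for $g : A \to A'$, and the base-change homomorphism $h^{\mrm{en}} = h \ot h : B^{\mrm{en}} \to (B')^{\mrm{en}}$.

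Next I would invoke Theorem \ref{thm:1935}(1), which already states that the residue isomorphism $\opn{res}_{-/-}$ is compatible with \emph{localization} $A$-ring homomorphisms. Here, though, $h : B \to B'$ need not be a localization — it is the base change of an arbitrary EFT homomorphism $g : A \to A'$. The key observation is that the construction of $\opn{res}_{B / A}$ in Definition \ref{dfn:1215} is assembled out of three ingredients: the canonical isomorphism (\ref{eqn:1215}) $\De^{\mrm{rs}}_{B / B^{\mrm{en}}} \ot_B \Om^n_{B / A} \iso B$, the canonical isomorphism (\ref{eqn:1486}), and the Fundamental Local Isomorphism $\opn{fund}_{B / A, M}$ of Theorem \ref{thm:1080} applied to $M := \Om^n_{B / A} \ot_A \Om^n_{B / A}$. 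In the \'etale case $n = 0$ all the $\Om^n$ factors are just $B$, so $M = B \ot_A B = B^{\mrm{en}}$, and $\opn{res}_{B / A}$ reduces to $\opn{fund}_{B / A, B^{\mrm{en}}} : \De^{\mrm{rs}}_{B / B^{\mrm{en}}} \ot_{B^{\mrm{en}}} B^{\mrm{en}} \iso \opn{Hom}_{B^{\mrm{en}}}(B, B^{\mrm{en}})$ composed with the canonical identification $\De^{\mrm{rs}}_{B / B^{\mrm{en}}} \iso \opn{Hom}_{B^{\mrm{en}}}(B, B^{\mrm{en}})$ (cf. Corollary \ref{cor:1280} and formula (\ref{eqn:1245})). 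The compatibility of $\opn{fund}$ with the \emph{localization} $A \to A'$ is Proposition \ref{prop:1260}(2); but what I actually need is compatibility of $\opn{fund}_{B^{\mrm{en}} \to B}$ with the base change along $g$. Both $\opn{fund}_{B / A, B^{\mrm{en}}}$ and $\opn{fund}_{B' / A', (B')^{\mrm{en}}}$ are, by property ($\dag$) of Theorem \ref{thm:1080} and property ($*$) of Lemma \ref{lem:1100}, locally determined by the formula $\smfrac{1}{\opn{wdg}(\bar{\ba})} \ot \mu \mapsto \smgfrac{\mu}{\ba}$ for a Koszul regular sequence $\ba$ generating $\opn{I}_{B / A}$. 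Since $\opn{I}_{B' / A'} = (B')^{\mrm{en}} \ot_{B^{\mrm{en}}} \opn{I}_{B / A}$ by Lemma \ref{lem:1195}, a Koszul regular sequence generating $\opn{I}_{B / A}$ locally maps to one generating $\opn{I}_{B' / A'}$, and generalized fractions are manifestly preserved by this base change. So the compatibility follows by checking it on local generators.

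Concretely, the proof would go: (i) reduce, as above, to showing the base-change square for $\opn{res}_{-/-}$ in the $n = 0$ case, i.e. a diagram in $\cat{M}(B)$ with vertical arrows built from $h$ and adjunction along $g$; (ii) unwind $\opn{res}$ into $\opn{fund}_{B^{\mrm{en}} \to B}$ together with the canonical isomorphisms (\ref{eqn:1215}), (\ref{eqn:1245}) — all of which are obviously compatible with base change, since they are functorial; (iii) verify the base-change compatibility of $\opn{fund}$ itself by reducing to a principal affine open on $\opn{Spec}(B)$ where $\opn{I}_{B / A}$ is generated by a Koszul regular sequence $\ba$ (possible by Theorem \ref{thm:1141} and Definition \ref{dfn:1081}), using Lemma \ref{lem:1195} to see that the image of $\ba$ generates $\opn{I}_{B' / A'}$, and computing directly that generalized fractions $\smgfrac{\mu}{\ba}$ are sent to $\smgfrac{\mu'}{\ba'}$. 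I expect step (iii) — pinning down the base-change behaviour of $\opn{fund}$ for a \emph{non-localization} $g$ — to be the main obstacle, since Proposition \ref{prop:1260} only covers localizations; but the local Koszul description from Lemma \ref{lem:1100} together with Lemma \ref{lem:1195} should make it routine. As in Lemma \ref{lem:1380}, after localizing so that $B$ is free over $A$ the whole verification becomes elementary linear algebra / direct manipulation of Koszul complexes, so I would remark that the commutativity can be checked locally on $\opn{Spec}(A)$ and leave the explicit computation to the reader.
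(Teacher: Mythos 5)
Your proposal is, at bottom, a valid alternative route, but it is considerably longer than the paper's argument and contains one incorrect citation that would need to be patched. The paper's proof of Lemma~\ref{lem:1381} is far more economical: it simply observes (cf.\ Lemma~\ref{lem:1596}) that $\rho^{\mrm{eet}}_{B/A}$ \emph{is} the essentially \'etale section homomorphism $\opn{sec}_{B/A}$, viewed with target the submodule $\opn{Hom}_{B \ot_A B}(B, B \ot_A B) \sub B \ot_A B$, and likewise for $\rho^{\mrm{eet}}_{B'/A'}$. Commutativity then reduces to showing $\opn{id}_{A'} \ot \opn{sec}_{B/A} = \opn{sec}_{B'/A'}$, which follows at once from the \emph{uniqueness} clause in Corollary~\ref{cor:1246}(1): the base-changed map $\opn{id}_{A'} \ot \opn{sec}_{B/A}$ is a $(B' \ot_{A'} B')$-linear section of $\opn{mult}_{B'/A'}$, hence must equal $\opn{sec}_{B'/A'}$. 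No residue isomorphism, Fundamental Local Isomorphism, Koszul resolution, or localization on $\opn{Spec}(A)$ is needed. You identify the right object (you even note that $\De^{\mrm{rs}}_{B/B^{\mrm{en}}}$ reduces to $\opn{Hom}_{B^{\mrm{en}}}(B, B^{\mrm{en}})$ via Corollary~\ref{cor:1280} and (\ref{eqn:1245})) but miss the uniqueness shortcut, which is what makes the paper's proof one short paragraph instead of a page of gluing.

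Two concrete issues in your outline. First, you invoke Lemma~\ref{lem:1195} to get $\opn{I}_{B'/A'} = (B')^{\mrm{en}} \ot_{B^{\mrm{en}}} \opn{I}_{B/A}$; but that lemma only treats the case where $g : B \to B'$ is a \emph{localization}, whereas here $h : B \to B'$ is the base change of an arbitrary EFT homomorphism $g : A \to A'$, and $h \ot h : B^{\mrm{en}} \to (B')^{\mrm{en}}$ is likewise not a localization. The identity you want is still true in the finite \'etale case, but the correct reason is flatness of $B$ and $B^{\mrm{en}}$ over $A$ (apply $A' \ot_A (-)$ to the short exact sequence $0 \to \opn{I}_{B/A} \to B^{\mrm{en}} \to B \to 0$), not Lemma~\ref{lem:1195}. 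Second, the compatibility of $\opn{fund}_{-/-, -}$ with non-localization base change is simply not available in the paper --- Proposition~\ref{prop:1260} covers only localizations $A \to A'$ --- so you would have to prove it yourself. In the $n = 0$ case this is salvageable (the Koszul regular sequence on the diagonal principal open is empty, so the check is essentially vacuous, and you would be re-deriving, via generalized fractions, exactly the statement that the idempotent decomposition $B^{\mrm{en}} \cong B \times B^{\mrm{od}}$ is stable under base change), but the argument requires more care than ``routine''. The uniqueness-of-section route sidesteps all of this.
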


\begin{proof}
Recall, from Definition \ref{dfn:1225} that the isomorphism 
$\rho^{\mrm{eet}}_{B / A}$ is the homomorphism 
$\opn{sec}_{B / A} : B \to B \ot_A B$; the only change is that the target is 
not $B \ot_A B$ but a submodule of it. 
Likewise for $\rho^{\mrm{eet}}_{B' / A'}$. 
Hence it is enough to prove that the solid subdiagram below
\begin{equation} \label{eqn:1387}
\begin{tikzcd} [column sep = 10ex, row sep = 5ex]
B
\ar[d, "{h}"']
\ar[r, "{\opn{sec}_{B / A}}"] 
\ar[rr, dashed, bend left = 20, start anchor = north east, end anchor = north 
west, "{\opn{id}_{B}}"]
&
B \ot_A B
\ar[d, "{h \ot h}"]
\ar[dashed, r, "{\opn{mult}_{B / A}}"] 
&
B
\ar[dashed, d, "{h}"']
\\
B'
\ar[r, "{\opn{sec}_{B' / A'}}"]
\ar[rr, dashed, bend right = 20, start anchor = south east, 
end anchor = south west, "{\opn{id}_{B'}}"']
&
B' \ot_{A'} B'
\ar[dashed, r, "{\opn{mult}_{B' / A'}}"]
&
B'
\end{tikzcd}
\end{equation}
is commutative. Now if we were to replace the homomorphism 
$\opn{sec}_{B' / A'}$
with the homomorphism $\opn{id}_{A'} \ot \opn{sec}_{B / A}$
that's induced from $\opn{sec}_{B / A}$, then the whole diagram would be 
commutative. Note that $\opn{id}_{A'} \ot \opn{sec}_{B / A}$ is a
$(B' \ot_{A'} B')$-linear homomorphism. According to the uniqueness clause in 
Corollary \ref{cor:1246}(1), the commutativity of the lower 
portion of diagram (\ref{eqn:1387}) implies that 
$\opn{id}_{A'} \ot \opn{sec}_{B / A} = \opn{sec}_{B' / A'}$.
\end{proof}

\begin{lem} \label{lem:1387}
Suppose $B = \prod_{i = 1, \ldots, n} B_i$.   
\begin{enumerate}
\item There are obvious decompositions
$\De^{\mrm{fifl}}_{B / A} \cong \bigoplus_i \De^{\mrm{fifl}}_{B_i / A}$,
\[ \opn{Sq}_{B / A}^{B / A}(\De^{\mrm{fifl}}_{B / A}) \cong 
\bigoplus\nolimits_i \opn{Sq}_{B_i / A}^{B_i / A}(\De^{\mrm{fifl}}_{B_i / A}) \]
and 
\[ \opn{Sq}_{B / A}^{B / A}(B) \cong 
\bigoplus_i \opn{Sq}_{B_i / A}^{B_i / A}(B_i) . \] 

\item With respect to the decompositions in item (1) we have 
$\rho^{\mrm{fifl}}_{B / A} = \sum_i \rho^{\mrm{fifl}}_{B_i / A}$
and
$\ga = \sum_i \ga_i$, where 
$\ga_i :=  \opn{badj}_{B_i / A}(\opn{tr}^{\mrm{oper}}_{B_i / A})$. 

\item Assume $A \to B$ is finite \'etale. Then, w.r.t.\ the decompositions in 
item (1), there is equality 
$\rho^{\mrm{eet}}_{B / A} = \sum_i \rho^{\mrm{eet}}_{B_i / A}$.
\end{enumerate}
\end{lem}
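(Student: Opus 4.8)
The statement is Lemma \ref{lem:1387}, asserting that the dual module $\De^{\mrm{fifl}}_{B/A}$, its square, the square of $B$, the rigidifying isomorphisms $\rho^{\mrm{fifl}}_{B/A}$ and $\rho^{\mrm{eet}}_{B/A}$, and the homomorphism $\ga$ all decompose compatibly along a finite product decomposition $B = \prod_{i} B_i$. The guiding principle is that everything in sight is built from tensor products and $\opn{Hom}$ over $A$ (using the flat resolution $\til B/\til A = B/A$, permissible because $B$ is projective over $B\ot_A B$ and $B$, $\De^{\mrm{fifl}}_{B/A}$ are flat over $A$), so the three decomposition statements will follow from the additivity of $(-)\ot_A(-)$ and $\opn{Hom}_A(-,-)$ in each argument, together with the orthogonality of the idempotents cutting out the factors $B_i$. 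The key point to invoke is Proposition \ref{prop:1491} (and its rigid counterpart Proposition \ref{prop:1490}), which already establishes that $\opn{Sq}_{B/A}(M)$ decomposes as $\bigoplus_i \opn{Sq}_{B_i/A}(M_i)$ when $B$ decomposes; here one only needs the underived version, since in our setting all squares are honestly computed inside $\cat{M}(B)$.

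\textbf{Step 1 (item (1)).} First I would record the ring decomposition $B = \prod_i B_i$ as arising from orthogonal idempotents $e_i \in B$ with $\sum_i e_i = 1$, so that $B\ot_A B = \prod_{i,j} B_i \ot_A B_j$, and the $(B\ot_A B)$-module $B$ decomposes as $\bigoplus_i B_i$ supported on the diagonal blocks. Then $\De^{\mrm{fifl}}_{B/A} = \opn{Hom}_A(B,A) = \opn{Hom}_A(\bigoplus_i B_i, A) = \bigoplus_i \opn{Hom}_A(B_i,A) = \bigoplus_i \De^{\mrm{fifl}}_{B_i/A}$, the $B_i$-module structures matching up because the $e_i$ act as the corresponding projectors. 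For the squares I would compute, using the resolution $B/A$,
\[
\opn{Sq}^{B/A}_{B/A}(\De^{\mrm{fifl}}_{B/A}) = \opn{Hom}_{B\ot_A B}\bigl(B, \De^{\mrm{fifl}}_{B/A} \ot_A \De^{\mrm{fifl}}_{B/A}\bigr),
\]
expand $\De^{\mrm{fifl}}_{B/A} \ot_A \De^{\mrm{fifl}}_{B/A} = \bigoplus_{i,j} \De^{\mrm{fifl}}_{B_i/A} \ot_A \De^{\mrm{fifl}}_{B_j/A}$, and note that $\opn{Hom}_{B\ot_A B}(B, -)$ kills the off-diagonal summands (for $i\neq j$ the summand is annihilated by $e_i \ot e_j$ on the first factor while $B$ is supported where $e_i\ot e_i$ acts by $1$), leaving $\bigoplus_i \opn{Hom}_{B_i\ot_A B_i}(B_i, \De^{\mrm{fifl}}_{B_i/A}\ot_A \De^{\mrm{fifl}}_{B_i/A}) = \bigoplus_i \opn{Sq}^{B_i/A}_{B_i/A}(\De^{\mrm{fifl}}_{B_i/A})$. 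The same computation with $\De^{\mrm{fifl}}_{B/A}$ replaced by $B$ gives the decomposition of $\opn{Sq}^{B/A}_{B/A}(B)$. (Alternatively, this is just Proposition \ref{prop:1491} read in $\cat{M}(B)$.)

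\textbf{Steps 2--3 (items (2), (3)) and the main obstacle.} For item (2), the homomorphism $\ga = \opn{badj}_{B/A}(\opn{tr}^{\mrm{oper}}_{B/A})$ decomposes because $\opn{tr}^{\mrm{oper}}_{B/A} = \sum_i \opn{tr}^{\mrm{oper}}_{B_i/A}$ (stated in the text, from the block-diagonal additivity of the matrix trace) and backward adjunction is additive; so $\ga = \sum_i \ga_i$ under the decomposition of Step 1. That $\rho^{\mrm{fifl}}_{B/A} = \sum_i \rho^{\mrm{fifl}}_{B_i/A}$ follows from the coinduced-rigidity construction (Definition \ref{dfn:1505}, via Theorem \ref{thm:680} and Proposition \ref{prop:1490}): $\rho^{\mrm{fifl}}_{B/A}$ is the unique rigidifying isomorphism making $\opn{tr}^{\mrm{R}}_{B/A}$ rigid, and since $\opn{tr}^{\mrm{R}}_{B/A} = \sum_i \opn{tr}^{\mrm{R}}_{B_i/A}$ and the squaring decomposes functorially (Proposition \ref{prop:1491}, Theorem \ref{thm:632}), the uniqueness forces $\rho^{\mrm{fifl}}_{B/A}$ to be the sum of the $\rho^{\mrm{fifl}}_{B_i/A}$; more directly I would cite Proposition \ref{prop:1490} with $M_i = \De^{\mrm{fifl}}_{B_i/A}$. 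For item (3), $\rho^{\mrm{eet}}_{B/A}$ is, by Definition \ref{dfn:1225}, essentially the section homomorphism $\opn{sec}_{B/A}: B \to B\ot_A B$ landing in $\opn{Hom}_{B\ot_A B}(B, B\ot_A B)$; I would check $\opn{sec}_{B/A} = \sum_i \opn{sec}_{B_i/A}$ using the uniqueness in Corollary \ref{cor:1246}(1) (the sum of the $\opn{sec}_{B_i/A}$, transported into $B\ot_A B$, is $(B\ot_A B)$-linear, annihilated by off-diagonal idempotents, and splits $\opn{mult}_{B/A}$), hence $\rho^{\mrm{eet}}_{B/A} = \sum_i \rho^{\mrm{eet}}_{B_i/A}$. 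The main obstacle is the bookkeeping in Step 1: one must check carefully that the identifications of $\opn{Sq}^{B/A}_{B/A}(\De^{\mrm{fifl}}_{B/A})$ and of $\opn{Sq}^{B/A}_{B/A}(B)$ with the direct sums are the \emph{canonical} ones compatible with the $B_i$-actions and with the adjunction isomorphisms appearing in Lemmas \ref{lem:1380} and \ref{lem:1381}, so that the sum formulas in (2) and (3) are literally equalities and not merely isomorphisms-up-to-scalar; this is where one genuinely uses that the off-diagonal tensor summands $\De^{\mrm{fifl}}_{B_i/A}\ot_A\De^{\mrm{fifl}}_{B_j/A}$ (and $B_i\ot_A B_j$) for $i\neq j$ contribute nothing to $\opn{RHom}_{B\ot_A B}(B,-)$, exactly as in the vanishing argument at the end of the proof of Proposition \ref{prop:1491}. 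Everything else is routine additivity of functors.
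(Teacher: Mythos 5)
Your proof is correct, and it carries out the calculations that the paper disposes of with the single sentence ``These are easy calculations.'' You correctly identify the underlying mechanism: the orthogonal idempotents $e_i\in B$ force the vanishing of all off-diagonal summands in $\opn{Hom}_{B\ot_A B}(B,-)$ applied to a tensor square, which gives item (1); items (2) and (3) then follow from additivity of the operator trace (stated in the text preceding Theorem~\ref{thm:1325}) plus the uniqueness clauses in Theorem~\ref{thm:680}, Proposition~\ref{prop:1490}, and Corollary~\ref{cor:1246}(1). You are also right to flag that the relevant citations (Propositions \ref{prop:1491} and \ref{prop:1490}) are stated in the derived category while the lemma at hand works with the resolved square $\opn{Sq}^{B/A}_{B/A}$ inside $\cat{M}(B)$, and that this discrepancy is harmless because $B$ is projective over $B\ot_A B$ and the modules in sight are flat over $A$; making the identifications canonical, as you insist on in your final paragraph, is exactly the part that needs care. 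In short: same approach as the paper intends, just spelled out.
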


\begin{proof}
These are easy calculations. 
\end{proof}

\begin{thm} \label{thm:1336}
Let $A \to B$ be a finite \'etale ring homomorphism. Then the isomorphism 
\[ \opn{badj}_{B / A}(\opn{tr}^{\mrm{oper}}_{B / A}) : B \iso 
\De^{\mrm{fifl}}_{B / A} \]
in $\cat{M}(B)$, which we have due to Theorem \ref{thm:1325}, becomes an 
isomorphism of rigid complexes
\[ \opn{Q} \bigl( \opn{badj}_{B / A}(\opn{tr}^{\mrm{oper}}_{B / A}) \bigr) :
(B, \rho^{\mrm{eet}}_{B / A}) \iso 
(\De^{\mrm{fifl}}_{B / A}, \rho^{\mrm{fifl}}_{B / A}) \]
over $B$ relative to $A$. 
\end{thm}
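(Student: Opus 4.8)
The statement asserts that the single map $\phi := \opn{badj}_{B / A}(\opn{tr}^{\mrm{oper}}_{B / A}) : B \to \De^{\mrm{fifl}}_{B / A}$, already known to be an isomorphism in $\cat{M}(B)$ by Theorem \ref{thm:1325}, is a \emph{rigid} morphism, i.e.\ that the square
\[ \begin{tikzcd} [column sep = 8ex, row sep = 5ex]
B
\ar[r, "{\rho^{\mrm{eet}}_{B / A}}", "{\simeq}"']
\ar[d, "{\phi}"']
&
\opn{Sq}_{B / A}(B)
\ar[d, "{\opn{Sq}_{B / A}(\phi)}"]
\\
\De^{\mrm{fifl}}_{B / A}
\ar[r, "{\rho^{\mrm{fifl}}_{B / A}}", "{\simeq}"']
&
\opn{Sq}_{B / A}(\De^{\mrm{fifl}}_{B / A})
\end{tikzcd} \]
commutes in $\cat{D}(B)$. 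The first reduction, which is essentially already made in the paragraphs preceding the theorem, is that because $B$ is finite \'etale over $A$, the ring $B$ is a projective $(B\ot_A B)$-module and both $B$ and $\De^{\mrm{fifl}}_{B/A}$ are flat over $A$; choosing the trivial K-flat resolution $\til B / \til A = B / A$, the resolved squares become honest $B$-modules $\opn{Hom}_{B\ot_A B}(B, B\ot_A B)$ and $\opn{Hom}_{B\ot_A B}(B, \De^{\mrm{fifl}}_{B/A}\ot_A \De^{\mrm{fifl}}_{B/A})$, the rigidifying isomorphisms $\rho^{\mrm{eet}}_{B/A}$ and $\rho^{\mrm{fifl}}_{B/A}$ are morphisms in $\cat{M}(B)$, and $\opn{Sq}_{B/A}(\phi)$ is represented (via Definition \ref{dfn:880} and formula (\ref{eqn:890})) by $\opn{Hom}_{B\ot_A B}(\opn{id}_B, \phi\ot_A \phi)$ composed with the obvious identification of the $\opn{Hom}$-modules. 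Hence the entire diagram can be checked in $\cat{M}(B)$, and I only need to verify a commutative square of honest $B$-module homomorphisms.

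\textbf{Strategy: reduce to the split case by faithfully flat descent.} The commutativity of a square of $B$-module homomorphisms can be checked after a faithfully flat base change $A \to A'$ (with $B' := A'\ot_A B$, which is again finite \'etale over $A'$), and all four objects and all four maps in the square are compatible with such base change. This compatibility is exactly the content of Lemmas \ref{lem:1380}, \ref{lem:1381} and \ref{lem:1387}(1,2,3): Lemma \ref{lem:1381} says $\rho^{\mrm{eet}}$ is compatible with base change; Lemma \ref{lem:1380}(1) says $\phi$ (i.e.\ $\ga$) is; Lemma \ref{lem:1380}(2) says $\rho^{\mrm{fifl}}$ is; and Lemma \ref{lem:1380}(3) says $\opn{Sq}_{B/A}(\phi) = \opn{Hom}(\opn{id},\ga\ot\ga)$ is. Therefore it suffices to prove the theorem after passing to a faithfully flat $A' $ over which $B$ splits completely. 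By the Splitting Lemma \ref{lem:1325} (applied on each connected component of $\opn{Spec}(A)$, using that the square-commutativity is local on $\opn{Spec}(A)$, so we may assume $B$ has constant rank $n\ge 1$), there is a finite faithfully \'etale $A\to A'$ with $B' \cong \prod_{i=1}^{n} A'$. Finally, Lemma \ref{lem:1387} shows that over a product decomposition $B' \cong \prod_i B'_i$ all of $\phi$, $\rho^{\mrm{eet}}$, $\rho^{\mrm{fifl}}$, and hence $\opn{Sq}_{B'/A'}(\phi)$, decompose as direct sums of the corresponding data for the factors $B'_i = A'$. So the whole problem reduces to the single trivial case $A' \to B' = A'$.

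\textbf{The trivial case.} When $A = B$ and the homomorphism is $\opn{id}_A$, one has $\De^{\mrm{fifl}}_{A/A} = \opn{Hom}_A(A,A) = A$, the operator trace $\opn{tr}^{\mrm{oper}}_{A/A}$ is $\opn{id}_A$, and $\phi = \opn{badj}_{A/A}(\opn{id}_A) = \opn{id}_A$. By Example \ref{exa:675}, $\opn{Sq}_{A/A}(A) = A$ and $\rho^{\mrm{tau}}_{A/A} = \opn{id}_A$; unwinding Definitions \ref{dfn:1225} and \ref{dfn:1505} in this degenerate situation (relative dimension $0$, trivial diagonal ideal) shows that both $\rho^{\mrm{eet}}_{A/A}$ and $\rho^{\mrm{fifl}}_{A/A}$ reduce to $\rho^{\mrm{tau}}_{A/A} = \opn{id}_A$, and $\opn{Sq}_{A/A}(\phi) = \opn{id}_A$. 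Thus the square is $\opn{id}_A$ on all four edges and trivially commutes. (Alternatively, one may invoke Theorem \ref{thm:675}: since $A$ has the derived Morita property, $(A,\rho^{\mrm{tau}}_{A/A})$ has no nontrivial automorphisms, and both rigidifying isomorphisms on $A$ must coincide.)

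\textbf{Main obstacle.} The genuinely content-bearing steps are the base-change compatibility lemmas, i.e.\ Lemmas \ref{lem:1380} and \ref{lem:1381}, which are already proved in the excerpt; granting those, the proof is a short assembly: localize on $\opn{Spec}(A)$ to get constant rank, split by faithfully \'etale descent, decompose along the product, and observe the trivial case. The one place requiring a little care is making sure that after the reduction the identification of $\opn{Sq}_{B/A}(\phi)$ with $\opn{Hom}_{B\ot_A B}(\opn{id}_B,\phi\ot_A\phi)$ (together with the suppressed isomorphisms $\opn{sq}^{B/A}_{B/A,B}$ and $\opn{sq}^{B/A}_{B/A,\De^{\mrm{fifl}}_{B/A}}$ of Theorem \ref{thm:631}) is compatible with base change in the precise sense used in Lemma \ref{lem:1380}(3); but this is exactly what that lemma records, so no new argument is needed. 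Hence the only real work has already been isolated into the preceding lemmas, and the proof of Theorem \ref{thm:1336} itself is the bookkeeping described above.
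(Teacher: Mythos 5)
Your proposal is correct and follows essentially the same route as the paper: reduce commutativity of the square in $\cat{M}(B)$ to a faithfully flat base change using the compatibility Lemmas \ref{lem:1380} and \ref{lem:1381}, localize on $\opn{Spec}(A)$ to get constant rank, split $B$ via Lemma \ref{lem:1325}, decompose via Lemma \ref{lem:1387}, and conclude by the trivial case $A = B$. The paper's proof is, as you predicted, just this bookkeeping.
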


\begin{proof}
Consider the diagram 
\begin{equation} \label{eqn:1373}
\begin{tikzcd} [column sep = 8ex, row sep = 5ex]
B
\ar[r, "{\rho^{\mrm{eet}}_{B / A}}", "{\simeq}"']
\ar[d, "{\ga}"', "{\simeq}"]
&
\opn{Hom}_{B \ot_A B}(B, B \ot_A B)
\ar[d, "{\opn{Hom}(\opn{id}_B, \ga \ot \ga)}", "{\simeq}"'] 
\\
\De^{\mrm{fifl}}_{B / A}
\ar[r, "{\rho^{\mrm{fifl}}_{B / A}}", "{\simeq}"']
\ar[r]
&
\opn{Hom}_{B \ot_A B}(B, 
\De^{\mrm{fifl}}_{B / A} \ot_A \De^{\mrm{fifl}}_{B / A})
\end{tikzcd}
\end{equation}
in $\cat{M}(B)$, were 
$\ga := \opn{badj}_{B / A}(\opn{tr}^{\mrm{oper}}_{B / A})$.
We need to prove it is commutative.

According to Lemmas \ref{lem:1380} and \ref{lem:1381}, diagram (\ref{eqn:1373}) 
is functorial w.r.t.\ any base change $g : A \to A'$. Thus it suffices to prove 
that for some faithfully flat ring homomorphism $g : A \to A'$ the induced 
diagram 
\begin{equation} \label{eqn:1365}
\begin{tikzcd} [column sep = 8ex, row sep = 5ex]
B'
\ar[r, "{\rho^{\mrm{eet}}_{B' / A'}}", "{\simeq}"']
\ar[d, "{\ga'}"', "{\simeq}"]
&
\opn{Hom}_{B' \ot_{A'} B'}(B', B' \ot_{A'} B')
\ar[d, "{\opn{Hom}(\opn{id}_{B'}, \ga' \ot \ga')}", "{\simeq}"'] 
\\
\De^{\mrm{fifl}}_{B' / A'}
\ar[r, "{\rho^{\mrm{fifl}}_{B' / A'}}", "{\simeq}"']
\ar[r]
&
\opn{Hom}_{B' \ot_{A'} B'}(B', 
\De^{\mrm{fifl}}_{B' / A'} \ot_{A'} \De^{\mrm{fifl}}_{B' / A'})
\end{tikzcd}
\end{equation}
of isomorphisms  $\cat{M}(B')$, where 
$B' := A' \ot_A B$, is commutative. 
Moreover, if $A' = \prod_{i = 1, \ldots, n} A'_i$, then it is enough to prove 
that diagram (\ref{eqn:1365}) is commutative for each $A'_i$ separately
(namely after replacing $A'$ with $A'_i$ in (\ref{eqn:1365})).

Consider the connected component decomposition 
$A = \prod_{i = 1, \ldots, n} A_i$.
As noted above, it is enough to verify the commutativity of 
diagram (\ref{eqn:1365}) after the base change $A \to A_i$
for each $i$. Thus we may assume that $\opn{Spec}(A)$ is connected. 
In this case $B$ has constant rank as a projective $A$-module.

Since $B$ has constant rank as a projective $A$-module, we can take 
$A'$ to be the finite faithfully \'etale $A$-ring from Lemma 
\ref{lem:1325}. For this choice we have an $A'$-ring isomorphism
$B' \cong \prod\nolimits_{i = 1, \ldots, n} A'$.
By Lemma \ref{lem:1387} we can replace $B'$ with $A'$. 

Finally, we have reduced matters to the case where $A \to B$ is bijective. 
So we can assume $A = B$ and $f = \opn{id}_A$. 
Now the four modules in diagram (\ref{eqn:1373}) are canonically isomorphic
to $A$; and the four homomorphisms in diagram become $\opn{id}_A$. Thus 
diagram (\ref{eqn:1373}) is commutative.
\end{proof}

\begin{cor} \label{cor:1336}
Let $A \to B$ be a finite \'etale ring homomorphism. Then the operator trace
\[ \opn{tr}^{\mrm{oper}}_{B / A} : (B, \rho^{\mrm{eet}}_{B / A}) \to 
(A, \rho^{\mrm{tau}}_{A / A}) \]
is a nondegenerate rigid backward morphism in $\cat{D}(A)$ over $B / A$. 
\end{cor}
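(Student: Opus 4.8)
\textbf{Proof plan for Corollary \ref{cor:1336}.}
The plan is to deduce this corollary from Theorem \ref{thm:1336} together with the known properties of the standard rigid structures that were assembled earlier in the section. First I would recall that the standard rigid complex of the finite flat homomorphism $A \to B$, namely $\bigl( \De^{\mrm{fifl}}_{B / A}, \rho_{B / A}^{\mrm{fifl}} \bigr)$, was defined (Definition \ref{dfn:1505}) precisely so that $\opn{tr}^{\mrm{R}}_{B / A} : \bigl( \De^{\mrm{fifl}}_{B / A}, \rho_{B / A}^{\mrm{fifl}} \bigr) \to (A, \rho^{\mrm{tau}}_{A / A})$ is a nondegenerate rigid backward morphism over $B / A$; this is exactly the content of Example \ref{exa:1505} and Definition \ref{dfn:760} applied with $M = A$ and $\rho = \rho^{\mrm{tau}}_{A / A}$. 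Here $\opn{tr}^{\mrm{R}}_{B / A}$ is the standard derived backward morphism $\opn{RHom}_A(B, A) \to A$ of formula (\ref{eqn:655}), which, since $B$ is flat (indeed projective) over $A$, equals the ordinary evaluation-at-$1$ homomorphism $\opn{tr}_{B / A} : \opn{Hom}_A(B, A) \to A$ from formula (\ref{eqn:2040}).

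Next I would identify the operator trace with this standard trace, up to the canonical backward-adjunction isomorphism. By definition $\opn{badj}_{B / A}(\opn{tr}^{\mrm{oper}}_{B / A}) : B \to \opn{Hom}_A(B, A) = \De^{\mrm{fifl}}_{B / A}$ is the unique $B$-linear homomorphism with $\opn{tr}_{B / A} \circ \opn{badj}_{B / A}(\opn{tr}^{\mrm{oper}}_{B / A}) = \opn{tr}^{\mrm{oper}}_{B / A}$; and by Theorem \ref{thm:1325} this homomorphism is an isomorphism in $\cat{M}(B)$. Passing to the derived category via $\opn{Q}$, we get a commutative triangle in $\cat{D}(B)$
\[ \begin{tikzcd} [column sep = 8ex, row sep = 5ex]
B
\ar[r, "{\opn{Q}(\opn{badj}_{B / A}(\opn{tr}^{\mrm{oper}}_{B / A}))}", "{\simeq}"']
\ar[dr, "{\opn{tr}^{\mrm{oper}}_{B / A}}"']
&
\De^{\mrm{fifl}}_{B / A}
\ar[d, "{\opn{tr}^{\mrm{R}}_{B / A}}"]
\\
&
A
\end{tikzcd} \]
which exhibits $\opn{tr}^{\mrm{oper}}_{B / A}$ as the composition of the isomorphism $\opn{Q}(\opn{badj}_{B / A}(\opn{tr}^{\mrm{oper}}_{B / A}))$ with the nondegenerate backward morphism $\opn{tr}^{\mrm{R}}_{B / A}$. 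Since a composition of a nondegenerate backward morphism with an isomorphism is again nondegenerate (the corresponding $\opn{badj}^{\mrm{R}}$ morphism is composed with an isomorphism, by the description in Proposition \ref{prop:2045}), it follows that $\opn{tr}^{\mrm{oper}}_{B / A}$ is a nondegenerate backward morphism over $B / A$.

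It remains to check rigidity, i.e.\ that $\opn{tr}^{\mrm{oper}}_{B / A}$ is a rigid backward morphism $(B, \rho^{\mrm{eet}}_{B / A}) \to (A, \rho^{\mrm{tau}}_{A / A})$ in the sense of Definition \ref{dfn:677}. By Theorem \ref{thm:1336} the isomorphism $\opn{Q}(\opn{badj}_{B / A}(\opn{tr}^{\mrm{oper}}_{B / A}))$ is a morphism of rigid complexes $(B, \rho^{\mrm{eet}}_{B / A}) \iso (\De^{\mrm{fifl}}_{B / A}, \rho^{\mrm{fifl}}_{B / A})$; and by Definition \ref{dfn:1505} (via Example \ref{exa:1505}) the morphism $\opn{tr}^{\mrm{R}}_{B / A}$ is a rigid backward morphism $(\De^{\mrm{fifl}}_{B / A}, \rho^{\mrm{fifl}}_{B / A}) \to (A, \rho^{\mrm{tau}}_{A / A})$ over $B / A$. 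Composition of rigid morphisms is rigid — Proposition \ref{prop:1495} for the composite of a rigid backward morphism with a rigid morphism, or directly Proposition \ref{prop:890} applied to the defining square — so $\opn{tr}^{\mrm{oper}}_{B / A} = \opn{tr}^{\mrm{R}}_{B / A} \circ \opn{Q}(\opn{badj}_{B / A}(\opn{tr}^{\mrm{oper}}_{B / A}))$ is a rigid backward morphism over $B / A$, and combined with the nondegeneracy established above this proves the corollary. The only genuine work here is Theorem \ref{thm:1336}, which has already been proven; the corollary itself is a short formal consequence, and I do not anticipate any real obstacle beyond bookkeeping the identification of $\opn{tr}^{\mrm{R}}_{B / A}$ with the un-derived evaluation homomorphism, which is immediate since $B$ is $A$-projective.
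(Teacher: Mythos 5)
Your proof is correct and takes essentially the same approach as the paper: the paper's own proof is a one-liner citing Theorem \ref{thm:1336} and the fact that $\opn{tr}_{B/A,A} : \bigl( \De^{\mrm{fifl}}_{B / A}, \rho^{\mrm{fifl}}_{B / A} \bigr) \to (A, \rho^{\mrm{tau}}_{A / A})$ is a nondegenerate rigid backward morphism (via Theorem \ref{thm:680} and Definition \ref{dfn:1505}), exactly the two ingredients you compose. You merely spell out the factorization $\opn{tr}^{\mrm{oper}}_{B / A} = \opn{tr}^{\mrm{R}}_{B / A} \circ \opn{Q}(\opn{badj}_{B / A}(\opn{tr}^{\mrm{oper}}_{B / A}))$ and the composition-of-rigid-morphisms bookkeeping that the paper leaves implicit.
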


\begin{proof}
Clear from Theorem \ref{thm:1336} and the fact that the standard nondegenerate 
backward morphism 
\[ \opn{tr}_{B / A, A} :  
\bigl( \De^{\mrm{fifl}}_{B / A}, \rho^{\mrm{fifl}}_{B / A} \bigr) \to 
(A, \rho^{\mrm{tau}}_{A / A}) \]
is a rigid backward morphism in $\cat{D}(A)$ over $B / A$
(see Theorem \ref{thm:680} and Definition \ref{dfn:1505}).
\end{proof}

\appendix 

\section{Induced Rigidity -- Solution of the Question in a Special Case}
\label{sec:flat-case}

In this appendix we give a positive answer to Question \ref{que:1660} under a 
flatness assumption. Namely

\begin{thm} [Easy Case] \label{thm:1570}
Let $A$ be a noetherian ring, let $u : A \to B$ and $v : B \to C$ be respectively
flat and essentially \'etale ring homomorphisms, and let 
$(M, \rho) \in \cat{D}(B)_{\mrm{rig} / A}$ with $M$ a flat essentially finite type $B$-module. 
Define the rigid complex 
\[ (N, \si) := \opn{TwInd}^{\mrm{rig}}_{v / A}(M, \rho) 
\in \cat{D}(C)_{\mrm{rig} / A} , \]
as in Definition \ref{dfn:1235}. 
Then the standard nondegenerate forward morphism 
\[ \opn{q}^{\mrm{L}}_{v, M} : (M, \rho) \to (N, \si) \]
is a rigid forward morphism over $v / A$. 
\end{thm}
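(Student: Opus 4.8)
The strategy is to reduce the assertion that $\opn{q}^{\mrm{L}}_{v, M}$ is a rigid forward morphism over $v / A$ to two facts that are already at our disposal: Proposition~\ref{prop:1700}, which says that the standard forward morphism is a rigid forward morphism when the target is equipped with the \emph{induced} rigidifying isomorphism $\opn{Ind}^{\mrm{rig}}_{v / A}(\rho)$, and Question~\ref{que:1660}, which asks whether $\opn{Ind}^{\mrm{rig}}_{v / A}(\rho)$ and the twisted-induced rigidifying isomorphism $\rho \cupprod \rho^{\mrm{esm}}_{C / B}$ coincide on $M \ot_B C$. So the content of this appendix is precisely the resolution of Question~\ref{que:1660} under the stated flatness hypotheses; once that equality of rigidifying isomorphisms is known, the theorem is immediate from Proposition~\ref{prop:1700} together with the fact that $v$ is essentially \'etale, hence essentially smooth of differential relative dimension $0$, so that $\opn{TwInd}^{\mrm{esm}}_{v}(M) = M \ot_B \Om^0_{C/B}[0] = M \ot_B C = N$ as complexes.

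\textbf{Key steps.} First I would unwind both rigidifying isomorphisms on $N = M \ot_B C$ as explicitly as the flatness assumptions permit. Because $u : A \to B$ is flat and $M$ is a flat (EFT) $B$-module, $M$ is also flat over $A$, so one may take the K-flat DG ring resolution $\til{B}/\til{A} = B/A$ and compute $\opn{Sq}_{B/A}(M)$ and $\opn{Sq}_{C/A}(N)$ directly inside the strict category, using $\opn{Sq}^{B/A}_{B/A}(M) = \opn{RHom}_{B \ot_A B}(B, M \ot_A M)$ and $\opn{Sq}^{C/A}_{C/A}(N) = \opn{RHom}_{C \ot_A C}(C, N \ot_A N)$. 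Since $v$ is essentially \'etale, Corollary~\ref{cor:1280} gives the ring decomposition $C \ot_B C \cong C \times C^{\mrm{od}}$ and Corollary~\ref{cor:1246} gives the section homomorphism $\opn{sec}_{C/B}$, which is exactly the ingredient entering the ``reduction to the diagonal'' morphism $\opn{red}_{C/B/A,N}$ of Definition~\ref{dfn:1282}. Second, I would observe that $\rho^{\mrm{esm}}_{C/B}$ in differential relative dimension $0$ is, by Definition~\ref{dfn:1225}, built from the residue isomorphism $\opn{res}_{C/B} : C \iso \opn{Ext}^0_{C \ot_B C}(C, C \ot_B C)$, and that via the decomposition $C \ot_B C \cong C \times C^{\mrm{od}}$ this $\opn{Ext}^0$ is canonically $\opn{Hom}_{C \ot_B C}(C, C \ot_B C) \cong C$, with $\opn{res}_{C/B}$ being the identity under these identifications --- this is precisely the content of Theorem~\ref{thm:1935}(1) combined with Corollary~\ref{cor:1246}(2). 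Third, I would then chase through Definition~\ref{dfn:1510} (the resolved cup product) and Definition~\ref{dfn:1290} (squaring of forward morphisms), checking that the composite $\opn{cup}_{C/B/A,M,\rho^{\mrm{esm}}}\circ(\rho \ot^{\mrm L}_B \rho^{\mrm{esm}}_{C/B})$ agrees, on the nose, with $\opn{fadj}^{\mrm{L}}_v(\opn{Sq}_{v/A}(\opn{q}_{v,M}))\circ(\rho \ot_B \opn{id}_C)$. Both sides are compositions of adjunction isomorphisms, derived tensor-evaluation morphisms, and the Hom applied to $\opn{sec}_{C/B}$; the matching of these building blocks is the technical heart. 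Finally, by Definition~\ref{dfn:1300} the right-hand side is $\opn{Ind}^{\mrm{rig}}_{v/A}(\rho)$, and by Definition~\ref{dfn:1235} the left-hand side is $\rho \cupprod \rho^{\mrm{esm}}_{C/B} = \si$; hence $\si = \opn{Ind}^{\mrm{rig}}_{v/A}(\rho)$, and Proposition~\ref{prop:1700} finishes the proof.

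\textbf{Main obstacle.} The serious work is Step three: carefully aligning the sequence of canonical morphisms in the resolved cup-product construction (Definition~\ref{dfn:1510}, in particular the derived tensor-evaluation morphisms $(\mrm{2})$ and $(\mrm 3)$ and the adjunction isomorphisms $(\mrm 1)$, $(\mrm 4)$) with the sequence of isomorphisms $\iso^{(1)}$--$\iso^{(6)}$ appearing in the proof of Theorem~\ref{thm:1300} (equation~(\ref{eqn:1315})), which is what computes $\opn{fadj}^{\mrm{L}}_v(\opn{Sq}_{v/A}(\opn{q}_{v,M}))$. The flatness hypotheses are exactly what make this feasible: they let one replace every $\ot^{\mrm{L}}$ by $\ot$ and every $\opn{RHom}$-over-a-DG-ring by an honest $\opn{Hom}$ (using that $C$ is projective over $C \ot_B C$ and that $M$, $N$, $M \ot_A M$, $N \ot_A N$ are all flat over $A$), so that the diagram chase takes place in $\cat{C}_{\mrm{str}}$ and reduces, after the decomposition $C \ot_B C \cong C \times C^{\mrm{od}}$ kills the off-diagonal summand, to a bookkeeping check on pure tensors --- essentially the observation that ``reduction to the diagonal'' and ``cup with the tautological-type class $\opn{res}_{C/B}(1)$'' are two descriptions of the same operation of restricting along $\opn{sec}_{C/B}$. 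I would spell out this bookkeeping using the explicit formulas for $\opn{sec}_{C/B}$, for the isomorphism~(\ref{eqn:1486}), and for the derived tensor-evaluation morphism, but would not belabor the signs, which are all trivial here since $M$ and $C$ sit in degree $0$.
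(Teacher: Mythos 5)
Your plan and the paper's proof are the same argument up to a forward--adjunction reformulation. Checking that $\opn{q}^{\mrm{L}}_{v,M}$ is a rigid forward morphism with respect to $\si$ (the commutativity of the paper's diagram~(\ref{eqn:1600})) is equivalent, by applying $\opn{fadj}^{\mrm L}_v$ to both composites, to the equality $\si = \opn{Ind}^{\mrm{rig}}_{v/A}(\rho)$ that you propose to establish in your Step~3; and Proposition~\ref{prop:1700} holds essentially by construction of $\opn{Ind}^{\mrm{rig}}_{v/A}$ (its proof is one line quoting Definitions~\ref{dfn:1300} and~\ref{dfn:1293}), so routing through it does not shortcut any work. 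What your outline gestures at but does not isolate are the two lemmas that make the ``technical heart'' tractable: Lemma~\ref{lem:1596}, which shows $\rho^{\mrm{esm}}_{C/B}$ is \emph{equal} (not merely canonically isomorphic) to the section homomorphism $\opn{sec}_{C/B}$ as a map $C \iso \opn{Hom}_{C\ot_B C}(C, C\ot_B C)$ in $\cat{M}(C)$, and Lemma~\ref{lem:1595}, which gives an explicit pure-tensor formula for $(\rho \cupprod \tau)(m\ot l)(1_C)$ in terms of expansions of $\rho(m)(1_B)$ and $\tau(l)(1_C)$. The flatness hypotheses ($A\to B$ flat, $M$ flat over $B$) are used exactly as you say: they make $\opn{Sq}_{B/A}(M)$ and $\opn{Sq}_{C/A}(N)$ honest $\opn{Hom}$'s over $B\ot_A B$ and $C\ot_A C$ between modules, and combined with the projectivity of $C$ over $C\ot_B C$ (Corollary~\ref{cor:1245}) they let the whole verification take place in $\cat{C}_{\mrm{str}}$. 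The actual check is then the element chase of formulas~(\ref{eqn:1602})--(\ref{eqn:1604}), which traces $m\in M$ around the expanded diagram~(\ref{eqn:1601}) and records that restricting along $\opn{sec}_{C/B}$ (the reduction-to-the-diagonal arrow in $\opn{Sq}_{v/A}(\opn{q})$) produces the same pure-tensor sum as cupping with $\opn{sec}_{C/B}(1_C)(1_C)=\sum_j c'_j\ot c''_j$. So your plan is sound and in fact not a different route; to turn it into a proof you would state and prove the analogues of Lemmas~\ref{lem:1595} and~\ref{lem:1596} and carry out that chase.
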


We shall use the shortcut (a slight abuse of notation) 
$\opn{Sq}_{C / B}(C) := \opn{Sq}^{C / B}_{C / B}(C)$. 

Here are two lemmas needed for the proof. 
Recall the rigidifying isomorphism 
$\rho_{C / B}^{\mrm{esm}}$ from Definition \ref{dfn:1225}. It was then 
used in Definition \ref{dfn:1235} of twisted induced rigidity.  

\begin{lem} \label{lem:1596}
There is equality 
\[ \rho_{C / B}^{\mrm{esm}} = \opn{sec}_{C / B} \]
of $C$-module isomorphisms 
\[ C \iso \opn{Sq}_{C / B}(C) = \opn{Hom}_{C \ot_B C}(C, C \ot_B C) .  \]
\end{lem}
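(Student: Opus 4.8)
The statement to prove is Lemma \ref{lem:1596}: under Setup \ref{set:1165} specialized to the essentially étale case (so $v : B \to C$ is essentially étale, and the relative differential dimension is $0$), the rigidifying isomorphism $\rho_{C/B}^{\mrm{esm}} : C \iso \opn{Sq}_{C/B}(C)$ of Definition \ref{dfn:1225} agrees, after the natural identification $\opn{Sq}_{C/B}(C) = \opn{Sq}^{C/B}_{C/B}(C) = \opn{Hom}_{C\ot_B C}(C, C\ot_B C)$, with the essentially étale section homomorphism $\opn{sec}_{C/B}$ of Corollary \ref{cor:1246}(1), viewed as landing in $\opn{Hom}_{C\ot_B C}(C, C\ot_B C)$ via the canonical isomorphism (\ref{eqn:1245}).

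The plan is to unwind Definition \ref{dfn:1225} in the étale case $n=0$. There $\Om^0_{C/B} = C$, so $\rho_{C/B}^{\mrm{esm}}$ is characterized by the commutative square in Definition \ref{dfn:1225}, whose left vertical arrow is the residue isomorphism $\opn{res}_{C/B} : C \iso \opn{Ext}^0_{C\ot_B C}(C, C\ot_B C)$, whose right vertical arrow is the isomorphism $\opn{sq}^{C/B}_{C/B,C}$ of (\ref{eqn:1216}), and whose bottom arrow is the (trivial, for $n=0$) translation isomorphism (\ref{eqn:1226}). So the first step is to observe that for $n=0$ all the shifts disappear and the bottom horizontal arrow and the right vertical arrow reduce to the canonical identifications $\opn{Sq}_{C/B}(C) \cong \opn{RHom}_{C\ot_B C}(C, C\ot_B C) \cong \opn{Ext}^0_{C\ot_B C}(C, C\ot_B C) = \opn{Hom}_{C\ot_B C}(C, C\ot_B C)$, using that $\opn{mult}_{C/B}$ is a regular surjection of codimension $0$ (Theorem \ref{thm:1141}) — equivalently, by Corollary \ref{cor:1245}, that $C$ is a projective $(C\ot_B C)$-module, so $\opn{RHom}$ is just $\opn{Hom}$ with no higher cohomology. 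Thus it suffices to prove $\opn{res}_{C/B} = \opn{sec}_{C/B}$ under the identification (\ref{eqn:1245}).

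The second step is to compute $\opn{res}_{C/B}$ for $n=0$ directly from Definition \ref{dfn:1215} and formula (\ref{eqn:1271}). With $n=0$ we have $\Om^0_{C/B}=C$, the module $M := \Om^0_{C/B}\ot_B\Om^0_{C/B} = C\ot_B C$, the relative dualizing module is $\De^{\mrm{rs}}_{C/C^{\mrm{en}}} = \opn{Hom}_C(\bwedge^0_C(\opn{I}_{C/B}/\opn{I}^2_{C/B}), C) = \opn{Hom}_C(C,C) = C$, and the isomorphism (\ref{eqn:1215}) becomes the identity $C\ot_C C \iso C$. The composite (\ref{eqn:1271}) then reduces to the identity on $C$ followed by the canonical isomorphism (\ref{eqn:1486}) $C\ot_{C^{\mrm{en}}}(C\ot_B C)\iso C$, and the final leg (the isomorphism (\ref{eqn:1210})) is $\opn{fund}_{C/B,M}$ from the Fundamental Local Isomorphism (Theorem \ref{thm:1080}) in codimension $0$. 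By Lemma \ref{lem:1100}(1) and its property ($*$), $\opn{fund}_{C/B, M}$ in codimension $0$ sends, locally where $\opn{I}_{C/B}$ is generated by the empty regular sequence (i.e.\ where $\opn{mult}_{C/B}$ is an isomorphism), $\smfrac{1}{\opn{wdg}(\bar{\bsym{a}})}\ot\mu = 1\ot\mu \mapsto \smgfrac{\mu}{\emptyset}$, which via $\opn{kpres}^0$ is just the class of $\mu$ in $\opn{H}^0(\opn{K}(C\ot_B C;\emptyset)) = (C\ot_B C)/\opn{I}_{C/B}$-localized $= C$. One then checks this global $(C\ot_B C)$-module homomorphism $\opn{res}_{C/B} : C \to \opn{Ext}^0_{C^{\mrm{en}}}(C, C\ot_B C) = \opn{Hom}_{C^{\mrm{en}}}(C, C\ot_B C) = \opn{Ann}_{C^{\mrm{en}}}(\opn{I}_{C/B})$ satisfies $\opn{mult}_{C/B}\circ\opn{res}_{C/B} = \opn{id}_C$ — this being exactly the content of the Koszul presentation identifications in codimension $0$ — and hence, by the uniqueness in Corollary \ref{cor:1246}(1), equals $\opn{sec}_{C/B}$.

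The main obstacle I expect is bookkeeping: carefully tracking the several canonical identifications (formulas (\ref{eqn:1245}), (\ref{eqn:1486}), (\ref{eqn:1215}), the Koszul presentation $\opn{kpres}^0$, the isomorphism (\ref{eqn:1226}) and (\ref{eqn:1216})) and verifying they are all compatible so that the composite really is $\opn{sec}_{C/B}$ rather than $\opn{sec}_{C/B}$ twisted by some sign or unit. The cleanest way to avoid sign pitfalls is to use the local characterization: by Theorem \ref{thm:1935}(1) and Corollary \ref{cor:1246} (which is compatible with localization, cf.\ Lemma \ref{lem:1195}), both $\opn{res}_{C/B}$ and $\opn{sec}_{C/B}$ are determined by their behavior after localizing at primes of $C$, and by Theorem \ref{thm:1015} we may further reduce to the case where $v$ factors through an étale homomorphism followed by a localization; localizations are handled by Lemma \ref{lem:1195} and the functoriality (\ref{eqn:1208}), and the genuinely étale case is where $\opn{mult}_{C/B}$ has a trivial Koszul complex, making the computation above transparent. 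Once the identification $\opn{res}_{C/B} = \opn{sec}_{C/B}$ is established, Lemma \ref{lem:1596} follows immediately from Definition \ref{dfn:1225} with all shifts being zero.
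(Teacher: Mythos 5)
Your proposal is correct and follows essentially the same route as the paper's proof: unwind Definition~\ref{dfn:1225} in the case $n=0$, observe that the shift and $\opn{RHom} \to \opn{Hom}$ simplifications collapse the right vertical and bottom arrows, and reduce the claim to identifying $\opn{res}_{C/B}$ with $\opn{sec}_{C/B}$, which both proofs settle by invoking the uniqueness of the section in Corollary~\ref{cor:1246}(1). Your version is somewhat more careful about tracking the identifications and uses the local reduction via Theorem~\ref{thm:1935}(1) and Lemma~\ref{lem:1100}($*$) to pin down $\opn{res}_{C/B}$ on principal opens where $\opn{mult}_{C/B}$ is an isomorphism, whereas the paper argues more directly that $\opn{res}_{C/B}$ and $\opn{sec}_{C/B}$ are both inverse to $\opn{Hom}_{C\ot_B C}(C, \opn{mult}_{C/B})$; these are the same argument phrased slightly differently.
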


\begin{proof}
By Definition \ref{dfn:1225}, and since $B\to C$ is \'etale, $\rho_{C / B}^{\mrm{esm}}$ is the unique isomorphism in $\cat{M}(C)$ 
fitting in the following diagram:
\begin{equation}\label{eqn:MO5}
\begin{tikzcd} [column sep = 8ex, row sep = 5ex]
C
\ar[r, "{\rho_{C / B}^{\mrm{esm}}}", "{\simeq}"']
&
\opn{Sq}_{C / B} \bigl( C \bigr) 
\ar[d, "{\opn{sq}^{C / B}_{C / B, C} }", , "{\simeq}"']
\ar[r, "{}", ,"{=}"']
&
\opn{Hom}_{C^{\mrm{en}}} \bigl( C, C \ot_B C \bigr)
\ar[d, "{}", ,"{=}"']
\\
\opn{Ext}^0_{C^{\mrm{en}}} 
\bigl( C, C \ot_B C \bigr)
\ar[u, "{\opn{res}_{C / B}}"', "{\simeq}"] 
&
\opn{Sq}^{C / B}_{C / B} \bigl( C \bigr) 
\ar[l, "{\simeq}"]
&
\opn{Hom}_{C^{\mrm{en}}} \bigl( C, C \ot_B C \bigr)
\ar[l, "{}", ,"{=}"]
\end{tikzcd}
\end{equation}
Let us focus on the residue isomorphism $\opn{res}_{C / B}$, i.e. the left vertical arrow in the diagram above.\\
Since $\opn{Ext}^0_{C^{\mrm{en}}} \bigl( C, C \ot_B C \bigr) =\opn{Hom}_{C \ot_B C}(C, C \ot_B C)$, we have:
\[ \opn{res}_{C / B} : 
\opn{Hom}_{C^{\mrm{en}}} \bigl( C, C \ot_B C \bigr)
 \iso C. \]
By Definition \ref{dfn:1215} $\opn{res}_{C / B}$ is given by the homomorphism of $C$-modules
\begin{equation}\label{eqn:MO4}
\opn{res}_{C / B} : 
\opn{Hom}_{C \ot_B C}(C, C \ot_B C)
 \to C,  \quad
c_1\ot c_2\mapsto \opn{mult}(c_1\ot c_2).
\end{equation}
Now we recall that since the map $B\to C$ is \'etale, by Corollary \ref{cor:1246}(1) we have:
\[ \begin{tikzcd} [column sep = 8ex, row sep = 5ex]
C\ar[rr, "\opn{Id}_C"', bend right=25]
\ar[r, "{\opn{sec}_{C/B}}"]
&
C\ot_BC
\ar[r, "{\opn{mult}_{C/B}}"]
&
C
\end{tikzcd}
\]
in the category $\cat{M}(C\ot_BC)$.\ 
Applying the functor $\opn{Hom}_{C\ot_BC}(C,\opn{-})$ we get:
\[ \begin{tikzcd} [column sep = 18ex, row sep = 5ex]
C\ar[rr, "\opn{Id}_C"', bend right=25]
\ar[r, "{\opn{Hom}_{C \ot_B C}(C,\opn{sec}_{C/B})}"]
&
\opn{Sq}_{C/B}(C)
\ar[r, "{\opn{Hom}_{C \ot_B C}(C,\opn{mult}_{C/B})}"]
&
C
\end{tikzcd}
\]
in the category $\cat{M}(C)$.\ It means that $\opn{mult}_{C/B}$ and $\opn{sec}_{C/B}$ are mutually inverse in $\cat{M}(C)$.\\ Since $\opn{res}_{C / B}=\opn{Hom}_{C \ot_B C}(C,\opn{mult}_{C/B})$ by (\ref{eqn:MO5}) we have $\rho_{C / B}^{\mrm{esm}}=\opn{sec}_{C/B}$ as $C$-modules isomorphism.
\end{proof}

Recalling that $M$ is flat over $B$, and hence over $A$, we note that 
\[ \opn{Sq}_{B / A}(M) = \opn{Hom}_{B \ot_A B}(B, M \ot_A M) \sub M \ot_A M . \]
Because $B$ is a cyclic $(B \ot_A B)$-module with generator $1_B$, 
an element $\phi \in \opn{Sq}_{B / A}(M)$ is determined by its 
value on the generator $1_B$. Now $\phi(1_B) \in M \ot_A M$,
so there are finitely many elements 
$m'_i, m''_i \in M$ such that 
\begin{equation} \label{eqn:1595}
\phi(1_B) = \sum_i m'_i \ot m''_i \in M \ot_A M . 
\end{equation}
(Of course this expansion is not unique.)

\begin{lem} \label{lem:1595}
Suppose $(L, \tau) \in \cat{D}(C)_{\mrm{rig} / B}$ where $L$ is a flat 
$C$-module. Consider the rigidifying isomorphism 
\[ \rho \cupprod \tau : M \ot_B L \iso \opn{Sq}_{C / A}(M \ot_B L) . \]
Let $m \in M$ and $l \in L$. 
Suppose the elements $\rho(m) \in \opn{Sq}_{B / A}(M)$ and 
$\tau(l) \in \opn{Sq}_{C / B}(L)$ 
have these expansions, like in (\ref{eqn:1595}):
\[ \rho(m)(1_B) = \sum_i m'_i \ot m''_i \in M \ot_A M \]
and 
\[ \tau(l)(1_C) = \sum_j l'_j \ot l''_j \in L \ot_B L . \]
Then the element 
\[ (\rho \cupprod \tau) (m \ot l) \in \opn{Sq}_{C / A}(M 
\ot_B L) \]
has this expansion: 
\[ ((\rho \cupprod \tau)(m \ot l))(1_C) = \sum_{i, j} (m'_i \ot l'_j) \ot 
(m''_i \ot l''_j) \in 
(M \ot_B L) \ot_A (M \ot_B L) .  \]
\end{lem}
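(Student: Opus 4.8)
\textbf{Proof proposal for Lemma \ref{lem:1595}.}

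The plan is to trace through the construction of the resolved cup product morphism in Definition \ref{1510}, using the trivial resolution afforded by the flatness hypotheses, and to compute the effect of each of the maps $\iso^{(1)}$, $\xrightarrow{(2)}$, $\xrightarrow{(3)}$, $\iso^{(4)}$ on the distinguished element $1_C$. First I would observe that, because $A \to B$ is flat, $B \to C$ is flat (it is essentially \'etale, hence essentially smooth, hence flat by Theorem \ref{thm:1140}(1)), $M$ is a flat $B$-module, and $L$ is a flat $C$-module, we may take the K-flat triple resolution $\til C / \til B / \til A := C / B / A$ and use the honest (underived) presentations
\[ \opn{Sq}_{B / A}(M) = \opn{Hom}_{B \ot_A B}(B, M \ot_A M), \quad
\opn{Sq}_{C / B}(L) = \opn{Hom}_{C \ot_B C}(C, L \ot_B L), \]
\[ \opn{Sq}_{C / A}(M \ot_B L) = \opn{Hom}_{C \ot_A C}(C, (M \ot_B L) \ot_A (M \ot_B L)). \]
Here $M \ot_A M$, $L \ot_B L$ and $(M\ot_B L)\ot_A(M\ot_B L)$ are all flat over the relevant enveloping rings, so the $\opn{RHom}$'s and the derived tensor products in $(\til\lozenge)$ collapse to ordinary $\opn{Hom}$'s and tensor products, and each of the morphisms $\iso^{(1)}$--$\iso^{(4)}$ becomes an explicit map of actual modules. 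Under these identifications $\rho(m)$ is the $(B\ot_A B)$-linear map sending $1_B \mapsto \sum_i m'_i \ot m''_i$, and likewise $\tau(l)(1_C) = \sum_j l'_j \ot l''_j$; the cup product morphism applied to $\rho(m)\ot\tau(l)$ gives an element of $\opn{Sq}_{C/A}(M\ot_B L)$ whose value at $1_C$ we must identify.

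Next I would chase $1_C$ through $(\til\lozenge)$. The map $\iso^{(1)}$ is just the isomorphism coming from the identity resolution $\til B = B$ together with \eqref{eqn:719}, so it does nothing to the underlying element. The tensor-evaluation morphism $\xrightarrow{(2)}$ rearranges a pure tensor $\phi \ot \psi$ with $\phi \in \opn{Sq}_{B/A}(M)$ and $\psi \in \opn{Hom}_{C\ot_B C}(C, L\ot_B L)$ into the homomorphism $C \to (L\ot_B L)\ot_B \opn{Sq}_{B/A}(M)$ built from $\psi$ and $\phi$; evaluating at $1_C$ yields $\psi(1_C) \ot \phi = \bigl(\sum_j l'_j \ot l''_j\bigr)\ot\phi$. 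The morphism $\xrightarrow{(3)}$ is obtained by applying $\opn{RHom}_{C\ot_B C}(C,-)$ to the morphism $(\heartsuit)$ of Lemma \ref{lem:1430}, which — unwinding $(\til\heartsuit)$, whose constituent maps $\iso^{(a)}$, $\to^{(b)}$, $\to^{(c)}$, $\iso^{(d)}$, $\iso^{(e)}$ are, in our flat situation, all explicit module maps given on pure tensors by reshuffling factors via \eqref{eqn:1486}, \eqref{eqn:1487}, the derived tensor-evaluation \cite[Theorem 12.10.14]{Ye5}, and DG adjunction — carries $\bigl(\sum_j l'_j\ot l''_j\bigr)\ot \phi$, with $\phi(1_B) = \sum_i m'_i\ot m''_i$, to the homomorphism $C_1\ot_{B}C_2 \to (M\ot_B L)\ot_A(M\ot_B L)$ sending $1_{C_1\ot_B C_2}$ to $\sum_{i,j}(m'_i\ot l'_j)\ot(m''_i\ot l''_j)$, the sign $(-1)^{j_2\cdot k_1}$ in \eqref{eqn:1486} being trivial since $M$ and $L$ are ungraded modules concentrated in degree $0$. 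Finally $\iso^{(4)}$ is the adjunction isomorphism of \eqref{eqn:1480} for $C\ot_A C \to C_1\ot_B C_2$, which restricts a homomorphism along $C \to C_1\ot_B C_2$; composing with this restriction and evaluating at $1_C$ leaves the element $\sum_{i,j}(m'_i\ot l'_j)\ot(m''_i\ot l''_j)$ unchanged. Hence $((\rho\cupprod\tau)(m\ot l))(1_C) = \sum_{i,j}(m'_i\ot l'_j)\ot(m''_i\ot l''_j)$, as claimed; here I am implicitly using Theorem \ref{thm:810} (conditions (i)--(iii) and (iv)(a), the latter by Corollary \ref{cor:1060}) to know $\opn{cup}_{C/B/A,M,L}$ is an isomorphism so that $\rho\cupprod\tau$ is defined via Definition \ref{dfn:1260}, and Theorem \ref{thm:780} to pass between $\opn{cup}$ and its resolved form $\opn{cup}^{C/B/A}_{C/B/A}$.

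The main obstacle I anticipate is bookkeeping rather than anything conceptual: one must verify that each of the maps $\iso^{(a)}$ through $\iso^{(e)}$ in $(\til\heartsuit)$, and the morphism $\xrightarrow{(2)}$, really does collapse to the naive module-theoretic formula when all the flatness hypotheses hold — in particular that the K-projective resolution $\til C_1\ot_{B}\til C_2$ of $C$ over $\til B_1\ot_A\til B_2$ and the K-injective resolutions used in the general construction can be dispensed with here — and that no nontrivial Koszul signs intervene (they do not, since everything is in cohomological degree $0$). A secondary point worth being careful about is that the expansions $\rho(m)(1_B)=\sum_i m'_i\ot m''_i$ and $\tau(l)(1_C)=\sum_j l'_j\ot l''_j$ are not unique, so the statement must be read as: \emph{for any} choice of such expansions, the displayed expansion of $(\rho\cupprod\tau)(m\ot l)(1_C)$ is a valid one — which is automatic from the bilinearity of all the maps involved. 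I would phrase the write-up so that the chase is done once on pure tensors and extended by additivity.
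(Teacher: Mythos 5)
Your proposal is correct and takes essentially the same approach as the paper's own proof: under the flatness hypotheses one replaces the K-flat triple resolution by the trivial one $\til C/\til B/\til A = C/B/A$, so that all $\opn{RHom}$'s and derived tensor products in Definition \ref{1510} and Lemma \ref{lem:1430} collapse to ordinary module-theoretic ones, and then one chases $m\ot l$ (and its value at $1_C$) through the composition $(\til\lozenge)$. The paper organizes the same chase via the commutative diagram (\ref{eqn:MO2}) — with $\opn{ev}_1$ playing the role of your $\xrightarrow{(2)}$, the map $\opn{Hom}(\opn{id},\varepsilon)$ (where $\varepsilon$ unwinds $(\til\heartsuit)$ as in (\ref{eqn:MO7})) playing the role of your $\xrightarrow{(3)}$, and $\opn{adj}_2\circ\opn{Hom}(\opn{id},\opn{adj}_1)$ playing the role of your $\iso^{(4)}$ — but the computation on pure tensors is the same, and both arrive at $\sum_{i,j}(m'_i\ot l'_j)\ot(m''_i\ot l''_j)$.
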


\begin{proof}
By definition $\rho\smile\tau$ is the unique isomorphism fitting the diagram
\begin{equation}  \label{eqn:MO2}
\begin{tikzcd} [column sep = 6ex, row sep = 5ex] 
M\ot_BL
\ar[rr, "{\rho\smile \tau}", "{\simeq}"']
\ar[d, "{\rho\ot\tau}"', "{\simeq}"]
&&
\opn{Sq}_{C / A}(M\ot_BL)
\\
\opn{Sq}_{B / A}(M)\ot \opn{Sq}_{C / B}(L) 
\ar[d, "\opn{ev}_1"']
&&
\\
\opn{Hom}_{C \ot_B C}(C, (L\ot_B L) \ot_B \opn{Sq}_{B / A}(M))
\ar[rr, "{\opn{Hom}(\opn{id},\varepsilon)}", "{\simeq}"']
&&
\opn{Hom}_{C \ot_B C}(C,\opn{Sq}_{B / A}(M\ot_BL))
\ar[uu, "{\opn{adj}_2\circ\opn{Hom}(\opn{id},\opn{adj}_1)}"', "{\simeq}"]
\end{tikzcd} 
\end{equation}
in $\cat{M}(C)$.\ Where $\opn{ev}_1$ is the tensor-evaluation, $\opn{adj}_1$ is adjunction 
for the ring homomorphism $B\ot_A B\to C\ot_A C$, $\opn{adj}_2$ is adjunction for the ring 
homomorphism $C\ot_A C\to C\ot_B C$ and $\varepsilon$ denotes the two times application 
of tensor-evaluation as in the following diagram in $\cat{M}((C\ot_A C)\ot_{B\ot_AB}B)$:
\begin{equation} \label{eqn:MO7}
\begin{tikzcd} [column sep = 6ex, row sep = 5ex] 
(L\ot_B L) \ot_B \opn{Sq}_{B / A}(M)
\ar[rr, "{\dagger}", "{\simeq}"']
\ar[d, "{\varepsilon}"', "{\simeq}"]
&&
L\ot_B (L \ot_B \opn{Sq}_{B / A}(M))
\ar[d, "{\opn{id}_L\ot\opn{ev}_3}"', "{\simeq}"]
\\
\opn{Sq}_{B / A}(M\ot_B L) 
&&
L\ot_B\opn{Hom}_{B \ot_A B}(B, M\ot_A (M\ot_B L)).
\ar[ll, "{\opn{ev}_2}"']
\end{tikzcd} 
\end{equation}
Here $\dagger$ is the isomorphism described in Lemma \ref{lem:1431}, $\opn{ev}_2$ and $\opn{ev}_3$ are the tensor-evaluation morphisms.\ 
We note that $\varepsilon$ is an homomorphism $C\ot_B C$-modules, the fact that (\ref{eqn:MO7}) is in $\cat{M}((C\ot_A C)\ot_{B\ot_AB}B)$ 
is just because the target of $\dagger$ does not belong to $\cat{M}(C\ot_B C)$.

Taking $m\in M$, $l\in L$ we want to calculate $\bigl((\rho\smile\tau)(m\ot l)\bigr)(1_C)$.\ 
We point out that the next calculations in this proof are in $\cat{M}(C)$.\\ 
First we fix $c\in C$, going down in the diagram \ref{eqn:MO2} we have:
$$\bigl(\opn{ev}_1\circ(\rho\ot_B \tau)\bigr)(m\ot l)(c)=\tau(l)(c)\ot_B\rho(m)\in\opn{Hom}_{C \ot_B C}(C, (L\ot_B L) \ot_B \opn{Sq}_{B / A}(M))$$
Then going right we have:
\begin{equation} \label{eqn:MO4}
\begin{aligned}
& 
\opn{Hom}_{}(\opn{id}, \varepsilon)
\bigl( \tau(l)(c)\ot_B\rho(m) \bigr)
\\ & \quad 
= \opn{Hom}_{}(\opn{id}, \varepsilon)
\bigl( (c\ot_B1)\cd\tau(l)(1)\ot_B\rho(m) \bigr)
\\ & \quad 
= (c\ot_B1)\cd\varepsilon
\bigl( \tau(l)(1)\ot_B\rho(m) \bigr)
\\ & \quad 
= (c\ot_B1)\cd\varepsilon
\bigl( \tau(l)(1)\ot_B (\opn{id}_B\ot_A1)\cd\rho(m)(1)) \bigr)
\\ & \quad 
=(c\ot_B1)\cd(\opn{id}_B\ot_A1)\cd \sum\nolimits_{i, j} (m'_i \ot l'_j) \ot (m''_i \ot l''_j) . 
\end{aligned}
\end{equation}
Now going up we have 
\begin{equation} \label{eqn:1603MO}
\begin{aligned}
& 
\bigl(\opn{adj}_2\circ\opn{Hom}_{}(\opn{id}, \opn{adj}_1)
\circ
\opn{Hom}_{}(\opn{id}, \varepsilon)
\bigr)
\bigl( \tau(l)(c)\ot_B\rho(m) \bigr)
\\ & \quad 
= 
\opn{adj}_2\circ\opn{Hom}_{}(\opn{id}, \opn{adj}_1)
\bigl(
(c\ot_B1)\cd(\opn{id}_B\ot_A1)\cd \sum\nolimits_{i, j} (m'_i \ot l'_j) \ot (m''_i \ot l''_j)
\bigr)
\\ & \quad 
= 
(c\ot_A1)\cd\sum\nolimits_{i, j} (m'_i \ot l'_j) \ot (m''_i \ot l''_j).
\end{aligned}
\end{equation}
It implies that
$(\rho\smile\tau)(m\ot l)(1)=\sum\nolimits_{i, j} (m'_i \ot l'_j) \ot (m''_i \ot l''_j)$
and we are done.
\end{proof}


\begin{proof}[Proof of Theorem \tup{\ref{thm:1570}}]

The rigidifying isomorphism of $C$ is 
\[ \rho_{C / B}^{\mrm{esm}} = \opn{sec}_{C / B} : C \iso 
\opn{Sq}_{C / B}(C) =  \opn{Hom}_{C \ot_B C}(C, C \ot_B C) . \]
See Lemma \ref{lem:1596}. Therefore the rigidifying isomorphism of 
$N = M \ot_B C$ is 
\[ \si = \rho \cupprod \opn{sec}_{C / B} : N \iso 
\opn{Sq}_{C / A}(N) =  \opn{Hom}_{C \ot_A C}(C, N \ot_A N) . \]
The standard nondegenerate forward homomorphism
$\opn{q}^{}_{v, M} : M \to N$ is 
$\opn{q}^{}_{v, M}(m) = m \ot 1_C$
for an element $m \in M$. 
Our goal is to prove that the diagram 
\begin{equation} \label{eqn:1600}
\begin{tikzcd} [column sep = 6ex, row sep = 5ex] 
M
\ar[r, "{\rho}"]
\ar[d, "{\opn{q}^{}_{v, M}}"']
&
\opn{Sq}_{B / A}(M)
\ar[d, "{\opn{Sq}_{v / A}(\opn{q}^{}_{v, M})}"]
\\
N
\ar[r, "{\si}"]
&
\opn{Sq}_{C / A}(N)
\end{tikzcd} 
\end{equation}
in $\cat{M}(B)$ is commutative. 

Let's embed diagram (\ref{eqn:1600}) as the left rectangular region of the 
next bigger diagram. 
\begin{equation} \label{eqn:1601}
\begin{tikzcd} [column sep = 10ex, row sep = 6ex] 
M
\ar[r, "{\rho}"]
\ar[d, "{\opn{q}^{}_{}}"']
&
\opn{Sq}_{B / A}(M)
\ar[d, "{\opn{Sq}_{v / A}(\opn{q}^{}_{})}"]
\ar[r, "{\opn{Sq}_{B / A}(\opn{q}^{}_{})}"]
&
\opn{Sq}_{B / A}(N)
\ar[d, "{\mrm{adj}}"', "{\simeq}"]
\\
N
\ar[r, "{\rho \cupprod \opn{sec}_{}}"]
&
\opn{Sq}_{C / A}(N)
&
\opn{Hom}_{C \ot_A C}(C \ot_B C, N \ot_A N)
\ar[l, "{\opn{Hom}_{}(\opn{sec}, \opn{id})}"']
\end{tikzcd} 
\end{equation}
We use the abbreviations $\opn{q} := \opn{q}^{}_{v, M}$ and 
$\opn{sec} := \opn{sec}_{C / B}$. The isomorphism $\opn{adj}$ is adjunction for 
the ring homomorphism $B \ot_A B \to C \ot_A C$. 
The right rectangular subdiagram is 
commutative by the definition of $\opn{Sq}_{v / A}(\opn{q})$; see Definitions \ref{dfn:1282}
and \ref{dfn:1290} and Theorem \ref{thm:1285}. Take an arbitrary element $m \in M$, and 
expand the element $\rho(m)$ to a finite sum as in Lemma \ref{lem:1595}:
\[ \rho(m)(1_B) = \sum_i m'_i \ot m''_i \in M \ot_A M . \] 
Similarly we expand the element $\opn{sec}(1_C)$ into a finite sum:
\[ \opn{sec}(1_C)(1_C) = \sum_j c'_j \ot c''_j \in C \ot_B C . \]
Let's calculate where $m$ goes by the down-then-right path.  
According to Lemma \ref{lem:1595} we have
\begin{equation} \label{eqn:1604}
\begin{aligned}
&
\bigl( ((\rho \cupprod \opn{sec}) \circ \opn{q})(m) \bigr) (1_C) = 
\bigl( (\rho \cupprod \opn{sec})(m \ot 1_C) \bigr) (1_C) 
\\ & \quad
= \sum_{i, j} (m'_i \ot c''_j) \ot (m''_i \ot c''_j) \in N \ot_A N .
\end{aligned} 
\end{equation}
Going all the way right, then down, i.e.\ applying 
$\opn{adj} \circ \opn{Sq}_{B / A}(\opn{q}) \circ \rho$, 
the image of $m$ expands to  
\begin{equation} \label{eqn:1602}
\begin{aligned}
& 
\bigl( (\opn{adj} \circ \opn{Sq}_{B / A}(\opn{q}) \circ \rho)(m) \bigr) 
(1_C \ot 1_C) 
\\ & \quad 
= \sum_i (m'_i \ot 1_C) \ot (m''_i \ot 1_C) \in N \ot_A N .  
\end{aligned}
\end{equation}
Therefore, moving to the left by $\opn{Hom}_{}(\opn{sec}, \opn{id})$, we have 
this element
\begin{equation} \label{eqn:1603}
\begin{aligned}
& 
\opn{Hom}_{}(\opn{sec}, \opn{id})
\bigl( (\opn{adj} \circ \opn{Sq}_{B / A}(\opn{q}) \circ \rho)(m) \bigr)(1_C) 
\\ & \quad 
= \bigl((\opn{adj} \circ \opn{Sq}_{B / A}(\opn{q}) \circ \rho)(m) \bigr) 
(\opn{sec}(1_C))
\\ & \quad 
= \bigl((\opn{adj} \circ \opn{Sq}_{B / A}(\opn{q}) \circ \rho)(m) \bigr)
\bigl( \sum\nolimits_j c'_j \ot c'_j \bigr)
\\ & \quad 
= \bigl( \sum\nolimits_j c'_j \ot c''_j \bigr) \cd 
\bigl((\opn{adj} \circ \opn{Sq}_{B / A}(\opn{q}) \circ \rho)(m) \bigr)
(1_C \ot 1_C)
\\ & \quad 
= \bigl( \sum\nolimits_j c'_j \ot c''_j \bigr) \cd 
\bigl( \sum\nolimits_i (m'_i \ot 1_C) \ot (m''_i \ot 1_C) \bigr) 
\\ & \quad 
= \sum\nolimits_{i, j} (m'_i \ot c'_j) \ot (m''_i \ot c''_j) \in N \ot_A N . 
\end{aligned}
\end{equation}
We see that (\ref{eqn:1603}) and (\ref{eqn:1604}) are equal. 
\end{proof}

\end{document}